\let\hide\iffalse
\let\unhide\fi
\newtheorem{theorem}{Theorem}[section]
\newtheorem{corollary}[theorem]{Corollary}
\newtheorem{definition}[theorem]{Definition}
\newtheorem{lemma}[theorem]{Lemma}
\newtheorem{proposition}[theorem]{Proposition}
\newtheorem{remark}[theorem]{Remark}
\let\e=\varepsilon
\let\p=\partial
\let\O=\Omega
\let\o=\omega
\let\b=\beta
\newcommand{\R}{\mathbb{R}}
\newcommand{\T}{\mathbb{T}}
\renewcommand{\P}{\mathbf{P}}
\renewcommand{\S}{\mathbb{S}}
\newcommand{\be}{\begin{equation}}
\newcommand{\bm}{\begin{multline}}
\newcommand{\ee}{\end{equation}}
\newcommand{\dd}{\mathrm{d}}
\newcommand{\xb}{x_{\mathbf{b}}}
\newcommand{\tb}{t_{\mathbf{b}}}
\newcommand{\vb}{v_{\mathbf{b}}}
\newcommand{\xf}{x_{\mathbf{f}}}
\newcommand{\tf}{t_{\mathbf{f}}}
\newcommand{\vf}{v_{\mathbf{f}}}
\newcommand{\VN}{\mathbb{V}^{N}}
\newcommand{\X}{\mathbf{x}}
\newcommand{\V}{\mathbf{v}}
\newcommand{\ephalf}{\frac{\varepsilon}{2}}
\newcommand{\Bes}{\begin{eqnarray*}}
	\newcommand{\Ees}{\end{eqnarray*}}
\newcommand{\Be}{\begin{equation}}
\newcommand{\Ee}{\end{equation}}
\def\p{\partial}
\def\O{\Omega}
\def\R{\mathbb{R}}
\def\B{\begin{equation}}
\def\E{\end{equation}}
\def\BN{\begin{eqnarray*}}
\def\EN{\end{eqnarray*}}
\numberwithin{equation}{section}
\newcommand{\Green}{\textcolor{green}}
\begin{document}
\date{ \today}

\title[Dynamical Billiard and the Boltzmann equation]{Dynamical Billiard and a long-time behavior of the Boltzmann equation in general 3D toroidal domains}


\author{Gyounghun Ko, Chanwoo Kim,\and Donghyun Lee}


 \address{Department of Mathematics, Pohang University of Science and Technology, South Korea, gyounghun347@postech.ac.kr}
 \address{Department of Mathematics, University of Wisconsin, Madison, WI 53706 USA, chanwookim.math@gmail.com}
 \address{Department of Mathematics, Pohang University of Science and Technology, South Korea, donglee@postech.ac.kr}
\begin{abstract}
	
	
	
\hide	Global well-posedness and convergence to equilibrium of the Boltzmann equation with specular reflection boundary condition in 3D non-convex domain is an outstanding open problem in kinetic theory. Inspired by \cite{guo10}, low regularity $L^{\infty}$ solutions of the Boltzmann equation for boundary conditions have been studied and general $C^{3}$ convex domain case was completely solved in \cite{KimLee}. However, it is still widely open for non-convex domains because of the existence of inflection points.  Two dimensional non-convexity was studied in \cite{cylinder} by considering 3D cylindrical domain with analytic non-convex cross-section. In this paper, we study the problem in general solid torus, i.e., solid torus with analytic convex cross-section. This is the first result for the domains which contain 3D nontrivial non-convex structure. 
	\unhide
	 
Establishing global well-posedness and convergence toward equilibrium of the Boltzmann equation with specular reflection boundary condition has been one of the central questions in the subject of kinetic theory. Despite recent significant progress in this question when domains are strictly convex, as shown by Guo and Kim-Lee, the same question without the strict convexity of domains is still totally open in 3D. The major difficulty arises when a billiard map has an infinite number of bounces in a finite time interval or when the map fails to be Lipschitz continuous, both of which happen generically when the domain is non-convex. In this paper, we develop a new method to control a billiard map on a surface of revolution generated by revolving any planar analytic convex closed curve (e.g., typical shape of tokamak reactors' chamber). In particular, we classify and measure the size (to be small) of a pullback set (along the billiard trajectory) of the infinite-bouncing and singular-bouncing cases. As a consequence, we solve the open question affirmatively in such domains. To the best of our knowledge, this work is the first construction of global solutions to the hard-sphere Boltzmann equation in generic non-convex 3-dimensional domains. In Appendix, we introduce a novel method for constructive coercivity of a linearized collision operator $L$ when the specular boundary condition is imposed. In particular, this method works for a periodic cylindrical domain with an annulus cross-section.
\end{abstract}

\maketitle

\tableofcontents

\section{Introduction}
\textit{The Boltzmann equation} is one of the fundamental kinetic models of collisional particles. Mathematically, the Boltzmann equation is a PDE for $F(t,x,v)\geq 0$, a probability distribution function of particles in the phase space. Without external fields, the Boltzmann equation is expressed as
\begin{align} 
			\p_{t}F + v\cdot\nabla_{x} F = Q(F,F) \ \ \text{in} \ \R_+ \times \O \times \R^3,\label{Boltzmann}\\
				F |_{t=0}=F_0 \ \ \text{on} \ \O \times \R^3. \label{initial data}
\end{align}
Here, $F(t,x,v)$ is the probability distribution function of gas particles with velocity $v\in \R^3$ at time $t\in \R_+= [0, \infty)$ and position $x\in \O$ where $\O$ is spatial domain. The Boltzmann collision operator $Q(\cdot ,\cdot)$ is given by 
\begin{equation}\begin{split} \label{collision operator}
	Q(F,G) = \frac{1}{2}\iint_{\R^3\times \S^2} B(v-u,\omega)&[F(u')G(v') +F(v')G(u')  \\
	 -&   F(u)G(v)-G(u)F(v)]\; d\omega du.
\end{split}\end{equation}
The collision kernel $B(v-u,\omega)$ in \eqref{collision operator} depends on $\vert v-u\vert$ and $\cos \theta:= \frac{v-u}{\vert v-u\vert} \cdot \omega$. In this paper, we consider the hard-sphere:
\Be\label{B_h}
B(v-u,\omega)=  |(v-u) \cdot \o|, 
\Ee
as well as the hard-potential with the Grad's angular cut-off assumption:
\Be\label{B}
	B(v-u,\omega)= \vert v-u \vert^{\zeta} b(\theta),  \ \ \text{with}  \ \  0<\zeta \leq 1, \; 0\leq b(\theta)\leq C \vert \cos \theta \vert,
\Ee
for some positive constant $C$. 

In \eqref{collision operator}, the post-collision velocity $(u',v')$ and the pre-collision velocity $(u,v)$ have the following relations: 
\begin{align*}
	u'=u+[(v-u)\cdot\omega]\,\omega,\quad v'=v-[(v-u)\cdot\omega]\,\omega, \quad \text{for } \omega \in \S^2.  
\end{align*}
Note that above expression implies binary elastic collision:
\begin{align*}
u+v=u'+v',\quad |u|^2+|v|^2=|u'|^2+|v'|^2.
\end{align*}
Inheriting from these local conservation laws in the collision, the Boltzmann operator enjoys so-called the collisional invariant property:  
\Be\label{Coll_Inv}
\int_{\R^3} 
\left(\begin{matrix}
 1 \\ 
 v\\
 |v|^2
\end{matrix}\right)
 Q(F,F) (v)\dd v=\left(\begin{matrix}
 0 \\ 
 	0\\
0
 \end{matrix}\right).
\Ee

 In many physical applications, e.g., tokamak reactor, solar wind, etc., particles interact with a physical boundary. The interaction is governed by a boundary condition among various types (\cite{CIP}). In this paper, we focus on a basic ideal law, a specular reflection boundary condition: when a gas particle hits the boundary, then it bounces back with the opposite normal velocity and the same tangential velocity, as a \textit{billiard}:  
\begin{align}\label{specular}
	F(t,x,v)=F(t,x,R_xv),  \ \ \text{on} \  \R_+\times \partial\O \times \R^3,
\end{align}
with a reflection operator $R_x v:=v-2({n}(x)\cdot v){n}(x)$ where ${n}(x)$ is the outward unit normal vector at $x\in \partial \O$. In particular, the specular reflection at the boundary conserves the local mass and energy in general. Therefore, the collisional invariance successfully leads conservation of total mass and total energy: for all $t \geq 0$,
\begin{align}
	\iint_{\O \times \R^3} F(t,x,v) 	 \dd v \dd x & = \iint_{\O \times \R^3} F(0,x,v) 	 \dd v \dd x ,\label{cons_mass}\\
	\iint_{\O \times \R^3} |v|^2 F(t,x,v) 	 \dd v \dd x& = \iint_{\O \times \R^3} |v|^2 F(0,x,v) 	 \dd v \dd x. \label{cons_energy}
\end{align}
The momentum can be conserved in a special case. A domain $\O$ is axis-symmetric if there are vectors $x_0$ and $\varpi$ such that
\Be\label{axis-sym}
[(x-x_0) \times \varpi] \cdot n(x ) =0 \ \ \text{for all } \ x \in \p\O.
\Ee
We note that the specular reflection at the boundary conserves a local angular momentum when the domain is axis-symmetric. Hence, when the domain is axis-symmetric \eqref{axis-sym}, the total angular momentum is conserved: for all $t \geq 0$,
\Be
\begin{split}
	&\iint_{\O \times \R^3}   [(x-x_0) \times \varpi]  \cdot v F(t,x,v) 	 \dd v  \dd x\\
	&= \iint_{\O \times \R^3} [(x-x_0) \times \varpi]  \cdot v  F(0,x,v) 	 \dd v \dd x.\label{cons_energyL}
\end{split}\Ee

Perhaps the most important feature of the Boltzmann equation is its time-irreversibility. In 1872, Boltzmann derived a celebrated \textit{H-theorem}, namely 
 \Be\label{H-theorem}
 \begin{split}
  \frac{d}{dt} \iint_{\O \times  \R^3} F \ln F \dd v  \dd x + D _H(F) =0 ,\end{split}
 \Ee
 with an entropy dissipation $D_H(F):= -\iint _{\O \times \R^3} Q(F,F) \ln F \dd v \dd x  \geq 0$. This non-negativity of the entropy dissipation comes from an algebraic structure of the Boltzmann operator. 
 Suppose $F(t,x,v)$ attains its limit $F_\infty(x,v)$ as $t \rightarrow \infty$. Then we may expect that an entropy dissipation vanishes at the limit, $D_H(F_\infty)=0$, which implies that $F_\infty(u')F_\infty(v')=F_\infty(u)F_\infty(v)$. 
In this case, we know that $F$ should take a form of local Maxwellian (\cite{CIP}), namely 
 \Be\label{def:lMax}
 F_\infty = \exp \left\{ a + b \cdot v + c |v|^2\right\}  \ \text{with $a,c \in \R$ and $b \in \R^3$. }
 \Ee
In general, we can determine the parameters $a,b,$ and $c$ using the conservative quantities (e.g., \eqref{cons_mass}, \eqref{cons_energy} \eqref{cons_energyL}). In this paper, we are interested in a normalized global Maxwellian with zero angular momentum:
 \begin{align}\label{def:gMax}
 	\mu(v)=\frac{1}{(2\pi)^{3/2}}\exp\left(-\frac{\vert v \vert^2}{2}\right). 
 \end{align}

Asymptotic stability of equilibria naturally arises as a central subject in mathematical physics and kinetic theory due to the H-theorem. There are basically two types of available results. The first type is a small perturbation framework based on linear analysis and strong control of the perturbation. Classical works of Ukai \cite{Ukai} and Guo, Strain \cite{GuoVPB, StrainJAMS} refer to this type. In particular, Guo initiated a novel nonlinear energy method in high Sobolev space in collisional kinetic theory without a physical boundary in \cite{GuoVPB}. Using this method, he has solved many open problems, namely constructing a smooth global-in-time solution near the global Maxwellian and proving its convergence as $t\rightarrow \infty$ in various collisional models.

\hide
An asymptotic stability of equilibria (due to the H-theory, essentially) naturally arises as a central subject in the mathematical physics and kinetic theory. 
There are basically two types of available results. The first type is a small perturbation framework based on a linear analysis and strong control of the perturbation. We refer to classical works of Ukai \cite{Ukai} and Guo, Strain et al. \cite{GuoVPB, StrainJAMS} and the reference therein. In particular Guo initiated, in \cite{GuoVPB}, a novel nonlinear energy method in high Sobolev space in the collisional kinetic theory without a physical boundary. Using the method, he has solved many open problems, namely constructing a smooth global-in-time solution near the global Maxwellian and proving its convergence as $t\rightarrow \infty$ in various collisional models.  
\unhide

The second type of results is an asymptotic stability study of possibly-large solutions that are assumed to exist globally a priori. Desvillettes-Villani obtained a convergence of the solutions to equilibria polynomially fast as $t \rightarrow \infty$, under strong smoothness assumptions and a Gaussian lower bound for the solutions in \cite{DV}.
Later, Imbert-Silvestre replaces the strong smoothness assumption of Desvillettes-Villani with a priori pointwise bound of local mass, momentum, and energy by proving a global regularity of solutions in \cite{IS} under the same assumption when the angular cross section $b(\theta)$ is not integrable. In all these results, they work on assumed-to-exist global-in-time solutions with strong a priori uniform-in-time conditions. However, even a weaker condition of a local energy conservation law for the weak global solution `a la DiPerna-Lions (\cite{Diperna-Lion}) has not been settled affirmative over three decades. On the other hand, such class of solutions is non-empty due to Guo’s smooth solutions (and due to Strain-Gressman’s solutions in \cite{StrainJAMS} when the angular cross section is not integrable).


We also refer to \cite{DHWY2017,DKL2020,KLP2022} for an interesting recent construction of global solutions with large amplitude but small-in-$L^p$ data. Unlike small perturbation problems, it is very hard to study large amplitude problems for other kinetic equations such as BGK, of which nonlinear order (in collision term) of perturbation is higher than that of the Boltzmann equation. Therefore, constructing large amplitude solutions for such higher nonlinear models would be very interesting. Recently, in \cite{BKLY_BGK}, they obtained the large amplitude solutions in the BGK model by careful analysis to treat highly nonlinear structure.

  \subsection{Coercivity in $L^2$}\label{sec:coercivity} As the H-theorem suggests, the main stabilizing mechanism comes from the entropy dissipation. It is convenient to look at a relative entropy: with $\mathfrak{h}(z) = (1+ z ) \ln (1+ z)  -z$,
\Be\label{def:rH}
H(F | \mu) : = \iint_{\O \times \R^3}  \mu \mathfrak{h} \left(\frac{F-\mu}{\mu}\right)\dd v \dd x.
\Ee 
The relative entropy has the same dissipation $D_H$ of \eqref{H-theorem}.  
As the variation of relative entropy gives rise to an energy $\iint_{\O \times \R^3} \frac{1}{2\mu} (F(t,x,v) - \mu(v))^2 \dd v \dd x$ (since $\mathfrak{h}(z)\sim \frac{z^2}{2}$), \hide
\Be
 \frac{d}{dt}\iint_{\O \times \R^3} \frac{1}{\mu} (F(t,x,v) - \mu(v))^2 \dd v \dd x - \iint_{\O \times \R^3} Q(\mu, F- \mu ) (t,x,v) \frac{F(t,x,v)- \mu(v)}{\mu(v)}\dd v \dd x .\notag\label{DrH}
\Ee\unhide
a natural perturbation form turns to be 
\Be\label{pert}
F= \mu + \sqrt{\mu} f.
\Ee 
On the other hand, the entropy dissipation gives rise to a linearized Boltzmann operator  
\Be\label{Lf}
Lf := \frac{-1}{\sqrt{\mu}} Q(\mu, \sqrt{\mu} f).
\Ee
Due to the inherited collisional invariance \eqref{Coll_Inv}, $L$ has a 5-dimensional null space, whose projection is given by 
\Be\label{nullL}
\P 
f
=\big\{
a_f (t,x) +    b_f (t,x) \cdot v  +c_f (t,x)  \frac{|v|^2 -3}{\sqrt{6} }
\big\}\sqrt \mu.
\Ee

Furthermore, for the hard-potential models, $Lf = \nu(v)f - Kf$ has a natural decomposition by a compact operator $K
$ and the collisional frequency $\nu(v)$, which is given by
\begin{equation}\label{def:nu}
	\nu(v) := \int_{\mathbb{R}^{3}} \int_{\mathbb{S}^{2}} |v-u|^{\zeta}q_{0}(\theta) {\mu}(u) d\o du
	\sim \langle v \rangle^{\zeta}
	:=  [1+|v|^{2}]^{\zeta/2}
	.
\end{equation}

Weyl's theorem implies that $L$ is non-negative with respect to the local energy modulo $\P f$: 
\Be\label{lowerL}
\int_{  \R^3} fL f d v   \gtrsim 
\| (\mathbf{I}- \P)
f \|_{L^2_{\nu} (\R^3)}^2
:=
\int_{\R^3} \nu  | 
(\mathbf{I}- \P)
f|^2 \dd v .
\Ee 
Clearly, it originally comes from the non-negativity of the entropy dissipation $D_H$ in \eqref{H-theorem}, which is the major $L^2$-relaxation mechanism. By the missing part in the lower bound of \eqref{lowerL}, we need coercivity estimate to obtain $L^2$-decay. In \cite{Guo10}, Guo developed his micro-macro decomposition method in the presence of the boundary, in which the proof is non-constructive. In \cite{EGKM}, Esposito-Guo-Kim-Marra devised a constructive method to prove the coercivity in the presence of diﬀusive reﬂection boundary condition. It is natural to ask the same question of the constructive coercivity proof in the presence of specular reﬂection boundary. To the best of our knowledge, however, this is not available yet. The key diﬃculty is the lack of clear decomposition of tangential momentum and normal momentum, which is crucial to cancel a boundary terms eﬀectively. In Appendix, we present a special case study in which we can prove the coercivity constructively.


\hide

For an asymptotic stability study in the perturbation framework, the major challenge is to recover a missing control of $\P f$ in the dissipation lower bound. To overcome it, Guo initiated a novel nonlinear energy method in high Sobolev norms in the Boltzmann study (\cite{GuoVPB, GuoVMB}). The novelty is proving that $\iint fLf dv dx $ control the full $\|  f\|_{\nu}^2$ for small-in-$H^k_x$ solutions $f$ to the nonlinear Boltzmann equation. The key discovery in his 2002 paper \cite{GuoVPB} is so-called macroscopic equations for $(a_f,b_f,c_f)$ connecting $\P f$ and $(\mathbf{I} - \P) f$ via the Boltzmann equation as 
\Be
\Delta \P f \sim  \partial^2 (\mathbf{I} - \P) f +  \cdots. 
\Ee
Using such hidden ellipticity (see page 1129 in \cite{GuoVPB}), Guo can control the missing $\mathbf{P} f$ in \eqref{lowerL} via the microscopic part $(\mathbf{I} - \P) f$, so that $\iint fLf dv dx $ can control the full $\| f\|_{\nu}^2$. Liu-Yu used a similar decomposition with respect to a local Maxwellian of traveling waves in their 2004 paper \cite{LiuYu}.

The method of macroscopic equations \`a la Guo has been applied to several boundary problems. In a paper of Esposito-Guo-Kim-Marray \cite{EGKM}, they devise a method of test functions when the diffusive reflection boundary condition is imposed, inspired by the macroscopic equations. It is worth to note that they derive the elliptic operator in a slight different way than \cite{GuoVPB, GuoVMB}, so that the method of \cite{EGKM} can apply to 1D. Later, the method of test functions is generalized to the specular reflection boundary condition in using a special boundary condition for the Poisson equation. See Section \ref{sec:coercivity}.

\unhide

\subsection{$L^\infty$-Control} Now let us count the nonlinear operator
\begin{equation}\label{E_eqtn}
	\p_{t}f + v\cdot \nabla_{x} f + L f 
	= \Gamma(f,f):= \frac{1}{\sqrt{\mu}} Q(\sqrt{\mu}f, \sqrt{\mu}f ).
\end{equation} 
A major challenge to this nonlinear problem is then to achieve a pointwise-in-$x$ bound of $f$ in order to control the quadratic nonlinearity. It should be noted that a strategy to use high Sobolev norms via the embedding (e.g. \cite{GuoVPB, DV}) breaks down in the presence of boundary in general. In fact, solutions are singular in the presence of boundary \cite{Kim2011, cylinder,GKTT2017, GKTT2016,ChenK}, in general.

To overcome this difficulty, Guo developed a method of characteristics for the Boltzmann equation to bootstrap the $L^2$ bound into a pointwise bound, in \cite{Guo10}. The basic idea follows: Let $U(t)$ be the semigroup generated by $v \cdot \nabla_x + L$ and $G(t)$ be the semigroup generated by $v\cdot \nabla_x + \nu$ with the specular reflection boundary condition. By \textit{two} iterations, one can establish
\Be\begin{split}
U(t) &= G(t) + \int^t_0 G(t-s) K G(s) d  s\\
& \quad + 
\int^t_0 \int^{s}_0  G(t-s) KG (s-s_1) KU (s_1)ds_1 ds.\notag
\end{split}\Ee
From the compactness property of $K$, the main contribution is roughly
\Be\label{main_Guo}
\begin{split}
&\int^t_0  ds_1 e^{-\nu (t-s)} \int^{s_1}_0 ds   e^{-\nu (s-s_1)}  \\
& \times  \iint_{|v^\prime| \leq N, |v^{\prime\prime}| \leq N }
| f(s_1, X(s_1; s, X(s;t,x,v), v^\prime), v^{\prime\prime})| d v^\prime d v^{\prime\prime}
,
\end{split}
\Ee
where $(X(s;t,x,v), V(s;t,x,v))$  denotes the generalized characteristics associated with the specular reflection boundary condition for given $t\geq s \geq 0$ (See Definition \ref{def:XV}), and $N$ is some chosen large number.

The key idea of Guo in \cite{Guo10}, inspired by the Vidav's idea \cite{Vidav},  is to control the main term \eqref{main_Guo} by $L^2$-in-\textit{space}-norm of $f$ via a crucial change of variables 
\Be\label{COV_intro}
v^\prime \mapsto  X(s_1; s, X(s;t,x,v), v^\prime)
 \ \  \ \ \ \text{({a billiard mixing map})}.
\Ee
 In other words, a mixing effect of the transport operator transfers the $L^p_{x}$-control in Section \ref{sec:coercivity} to a pointwise control if the billiard mixing map has full rank in the following sense 
 \Be\label{C}
 \begin{split}
 &\det \left(\frac{\p X(s_1; s,X(s;t,x,v), v^\prime)}{\p v^\prime}\right)\gtrsim_\e 1, \\
& \ \ \  \ \ \  \text{except a small subset of  } (s, v^\prime)  \in [0, s_1] \times \R^3 .
 \end{split}
 \Ee
  If the trajectory is a straight line, then the change of variables is valid for the most of time $s_1$. However, in the presence of a reflexive boundary, verifying such a change of variables is obviously extremely difficult for its possible \textit{singularity} and \textit{complexity} in a general domain.

  In our opinion, verifying \eqref{C} (or \eqref{C'} in general) is not just a technical matter of some particular method. We call the authors' attention to the fact that the exponential relaxation mechanism of Boltzmann comes from the H-theorem of relative entropy. In general domains, any spatial derivative of solutions does not hold the H-theorem as they do not satisfy a boundary condition. Moreover, the characteristics of the specular reflection do not propagate $C^1$-norm (only propagates a $C^{0,\frac{1}{2}}$-H\"older norm is known \cite{KL_holder}) whenever it has hit the boundary tangentially (a grazing bounce). Therefore we expect that derivatives of solutions could oscillate (See \cite{KL_holder}), while the $L^2$-relaxation comes through (cf. relaxation of the gradient solutions in the case of diffuse reflection boundary \cite{ChenK2}). In this situation, if the change of variables \eqref{COV_intro} fails when $X(s;t,x,v)$ belongs to a codimension-$1$ subset of the phase space, then the $L^2$-relaxation does not transfer to a pointwise control and the pointwise bound of the nonlinear problem may not be available.

 In various areas of mathematics such as differential topology, geometric measure theory, and symplectic geometry, several weaker theorems close to the nondegeneracy condition \eqref{C} appear as important subjects. For example, 
 in the field of differential topology, the transversality theorem is a major result that describes the transverse intersection properties of a family of smooth maps. 
 \begin{theorem}[The Transversality Theorem, page 68 in Guillemin-Pollack \cite{GP}]
 	Suppose 
 	\Be\label{frakF}
 	\mathfrak{F} :  \mathfrak{X} \times  \mathfrak{S} \rightarrow \mathfrak{Y}
 	\Ee
 	is a smooth map of manifold $\mathfrak{X},  \mathfrak{S}, \mathfrak{Y}$ without boundary. Let $\mathfrak{Z}$ be any boundaryless submanifold of $\mathfrak{Y}$. Suppose $	\mathfrak{F} $ is transversal to $\mathfrak{Z}$, i.e. (see page 28, Section 5 of Chapter 1 in the Guillemin-Pollack \cite{GP}) 
 	\Be\label{def:transverse}
 	D_a \mathfrak{F} (T_a ( \mathfrak{X} \times  \mathfrak{S} ) )
 	+ T_{\mathfrak{F} (a)}\mathfrak{Z} =  T_{\mathfrak{F} (a)} \mathfrak{Y} \ \ \text{for every }
 	a \in 	\mathfrak{F}^{-1} (\mathfrak{Z}).
 	\Ee
 	Then for almost every $s \in \mathfrak{S}$, the map $	\mathfrak{F} _s : = \mathfrak{F} (\cdot, s) : \mathfrak{X}  \rightarrow \mathfrak{Y}$ is transversal to $\mathfrak{Z}.$
 \end{theorem}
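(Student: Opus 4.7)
The plan is to reduce the statement to Sard's theorem by introducing the preimage manifold $\mathfrak{W} := \mathfrak{F}^{-1}(\mathfrak{Z})$ and analyzing the projection $\pi : \mathfrak{W} \to \mathfrak{S}$ inherited from the second factor. The first step is to use the assumed global transversality of $\mathfrak{F}$ with $\mathfrak{Z}$ together with the Preimage Theorem (a routine consequence of the implicit function theorem in manifold form) to conclude that $\mathfrak{W}$ is a smooth submanifold of $\mathfrak{X} \times \mathfrak{S}$ whose codimension equals $\mathrm{codim}(\mathfrak{Z}, \mathfrak{Y})$. In particular, at every $a = (x,s) \in \mathfrak{W}$ with $y := \mathfrak{F}(a) \in \mathfrak{Z}$, one has the concrete description
\begin{equation*}
T_a \mathfrak{W} = \bigl\{ (h,k) \in T_x \mathfrak{X} \oplus T_s \mathfrak{S} : D_a \mathfrak{F}(h,k) \in T_y \mathfrak{Z} \bigr\}.
\end{equation*}

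The core of the argument is then the following algebraic claim, which I would prove directly from the definitions: for a fixed $s \in \mathfrak{S}$, the slice map $\mathfrak{F}_s$ is transversal to $\mathfrak{Z}$ at a point $x$ with $\mathfrak{F}(x,s) \in \mathfrak{Z}$ if and only if $s$ is a regular value of $\pi$ at $(x,s)$. To see this, pass to the quotient $V := T_y \mathfrak{Y}/T_y \mathfrak{Z}$ and let $\overline{D\mathfrak{F}} : T_x\mathfrak{X} \oplus T_s\mathfrak{S} \to V$ denote the induced linear map. Global transversality \eqref{def:transverse} says $\overline{D\mathfrak{F}}$ is surjective, while transversality of $\mathfrak{F}_s$ says the restriction $\overline{D\mathfrak{F}}(\cdot,0)$ alone is surjective. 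A short diagram chase, using the explicit tangent space above, shows that $D\pi : T_a \mathfrak{W} \to T_s \mathfrak{S}$ is surjective exactly when for every $k \in T_s\mathfrak{S}$ there exists $h \in T_x\mathfrak{X}$ with $\overline{D\mathfrak{F}}(h,0) = -\overline{D\mathfrak{F}}(0,k)$; combined with the surjectivity of $\overline{D\mathfrak{F}}$, this is equivalent to surjectivity of $\overline{D\mathfrak{F}}(\cdot,0)$, which is the desired transversality of $\mathfrak{F}_s$.

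Having established the equivalence, the proof concludes by applying Sard's theorem to the smooth map $\pi : \mathfrak{W} \to \mathfrak{S}$ between manifolds without boundary: the set of critical values of $\pi$ has measure zero in $\mathfrak{S}$. Consequently, for almost every $s \in \mathfrak{S}$ and every $x \in \pi^{-1}(s)$, the differential $D\pi$ is surjective at $(x,s)$, and by the equivalence above this is exactly the statement that $\mathfrak{F}_s$ is transverse to $\mathfrak{Z}$ at $x$. For $s$ with $\mathfrak{F}_s^{-1}(\mathfrak{Z}) = \emptyset$ the transversality condition is vacuous, so such $s$ can be included trivially.

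I expect the main conceptual obstacle to be the algebraic equivalence between transversality of the slice $\mathfrak{F}_s$ and regularity of $\pi$, since one must carefully identify the tangent space $T_a\mathfrak{W}$ and set up the quotient $V$ to translate a condition about images and sums in $T_y\mathfrak{Y}$ into one about surjectivity of a projection. The Preimage Theorem and Sard's theorem are standard black boxes; everything nontrivial is packed into verifying that the two transversality/regularity conditions correspond under the explicit description of $T_a\mathfrak{W}$.
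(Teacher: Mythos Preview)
Your proof is correct and is essentially the standard argument from Guillemin--Pollack itself. Note, however, that the paper does not give its own proof of this theorem: it is quoted as a black box from \cite{GP} and then applied to the billiard map, so there is no paper-internal proof to compare against.
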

 
 Now let us see how this theorem can apply to our case. Assume that the generalized characteristics $(X(s;t,x,v), V(s;t,x,v))$ is smooth locally.  
 Then the map $
 (   x,v) \mapsto  (   \mathcal {X}(x,v),    \mathcal  {V}(x,v))
 := (X(s;t,x,v), V(s;t,x,v))
 \in \O \times \R^3\label{def:Z}$ is a local diffeomorphism. Because of that, a map $( \mathcal X, 0): (x,v) \mapsto ( X(s;t,x,v), 0)$ is \textit{transversal} to $\{0\} \times \R^3_v$ (see \eqref{def:transverse}), i.e. for all $(x,v) \in \O \times \R^3$,
 \Be\label{transversal}
 \begin{split}
 	& \frac{\p (\mathcal  X, 0)}{ \p(x,v)} (T_{(x,v) }(\O \times \R^3) ) + T_{ (\mathcal  X, 0)(x,v)} ( \{0\} \times  \R^3_v) \\
 	&=\text{Proj}_{\R^3_x \times \{0\}} 
 	\left(\frac{\p (\mathcal  X, \mathcal  V)}{ \p(x,v)} (\R^6) \right)+  \{0\} \times  \R^3_v  \\
 	& = \R^3_x \times \{0\} + \{0\} \times  \R^3_v   = \R^6 =  T_{\mathcal (\mathcal  X,0)(x,v)}(\O \times \R^3) .
 \end{split}\Ee
 Here, a project on the space variables is denoted by $\text{Proj}_{\R^3_x \times \{0\}} = \begin{bmatrix}
 	Id_{3 \times 3}  & 0_{3 \times 3} \\
 	0_{3 \times 3}  & 0_{3 \times 3} 
 \end{bmatrix}$. From the second to the third line in \eqref{transversal}, we have used that $(x,v)  \mapsto (\mathcal{X}(x,v), \mathcal{V}(x,v))$ is a local diffeomorphism so that the Jacobian matrix $\frac{\partial (\mathcal  X, \mathcal  V)}{ \partial (x,v)} $ has a full rank 6, and hence $\text{Proj}_{\R^3_x \times \{0\}}  \frac{\partial (\mathcal  X, \mathcal  V)}{ \partial (x,v)} $ has a full rank 3 in $\R^3_x \times \{0\}$. Now we apply the transversality theorem. Set $\mathfrak{F} = (\mathcal{X}, 0)$, $\mathfrak{X} = \R^3_v$, $\mathfrak{S} =\O$, $\mathfrak{Y}  = \O \times \R^3_v$, and $\mathfrak{Z} = \R^3_v$. We can conclude that the map $(\mathcal X, 0): v \mapsto (\mathcal X(x,v), 0 )$ is transversal to $\{0 \} \times \R^3_v$ for almost every $x \in \O$, assuming the non-grazing condition \eqref{non-grazing} holds. This means that, under the same assumption of \eqref{non-grazing},   
 \Be
 \frac{\p ( \mathcal{X}, 0)}{\p v} ( \R^3\times \R^3) + \{0\} \times  \R^3_v  =  \R^3 \times \R^3 \ \ \text { for almost every }  x  .\notag
 \Ee
 Therefore we conclude that if the non-grazing condition \eqref{non-grazing} holds then 
 \Be\label{conc:Trans}
 \det \left(\frac{ \p X(s;t,x,v)}{\p v}\right) \neq 0  \ \ \text { for almost every $x$, and all $v$.} 
 \Ee
 It is important to understand that this \textit{almost-every-in-x} result \eqref{conc:Trans} is not sufficient for the $L^2$-$L^\infty$ bootstrap scheme! 
 For example if the nondegeneracy condition fails in some subset of codimension-$1$ then a pointwise control may not be possible in 3D via the $L^2$-$L^\infty$ bootstrap argument.

 \subsection{Major difficulty-I: Singularity of the billiard mixing map} \label{subsec:SS}

 
 In general, the map $v^\prime \mapsto X(s_1; s, *, v^\prime)$ in \eqref{COV_intro} is not Lipschitz continuous, and hence the Jacobian matrix $\p X / \p v^\prime$ is not always well-defined (the spatial argument is $*= X(s;t,x,v)$, in our case). When the boundary is smooth such a singularity happens if and only if when the following \textit{non-grazing condition} \eqref{non-grazing} fails:
 \Be\label{non-grazing}
 (X(\tau;s,*,v^\prime), V(\tau;s,*,v^\prime)) \notin \gamma_0 \  \text{ for all }  \tau \in (s_1, s).
 \Ee
 Here, the grazing phase boundary is defined as $\gamma_0 = \{ (x,v) \in \p\O \times \R^3: n(x) \cdot v=0\}$. 
 
 
 In a general non-convex domain, the generalized characteristics emanating from the grazing boundary $\gamma_0$ propagate into the interior of the phase space. It is very hard to locate such singularity due to loss of regularity of the billiard map and a possible infinite number of bouncing. We think that these two difficulties are best understood in the language of the billiard theory. Following \cite{Chernov}, let a collision space $\mathcal{M} : =   \{ (x,v)\in \partial \O \times  \mathbb{S}^2: n(x) \cdot v \leq 0 \}$ with a natural measure $|n(x) \cdot v|\dd S_x \dd v$ and a collision map $\mathcal{F}: \mathcal{M} \rightarrow \mathcal{M}$ such that $(x,v) \mapsto (\xb(x,v), R_{\xb(x,v)} v)$. The collision map $\mathcal{F}$ preserves a measure on $\mathcal{M}$. Here, we used a backward exit time and position defined as 
 \begin{align}\label{def:tb}
 	\begin{split}
 		t_{\mathbf{b}}(x,v)  & :=\sup\{\tau \geq 0 : x-sv \in \Omega \textrm{ for all } 0\leq s \leq \tau\},  \\
 		x_{\mathbf{b}}(x,v) &  :=x-\tb(x,v)v \in \p\O,
 	\end{split}
 \end{align}
 with a special convention $\sup \emptyset=0$. Now we can construct singular sets inductively. Set $\mathcal{S}_0 = \p \mathcal{M}$, which actually is the grazing phase boundary $\gamma_0$. Now we define a singular set for the map $\mathcal{F}$ which is $\mathcal{S}_1 := \mathcal{S}_0 \cup
 \mathcal{F}^{-1} (\mathcal{S}_0)$. We also set $\mathcal{S}_{-1} := \mathcal{S}_0 \cup
 \mathcal{F} (\mathcal{S}_0)$. In other words, $ \mathcal{F}^{-1} (\mathcal{S}_0) = \{(x,v) \in \mathcal{M}: n( x_{\mathbf{b}} (x,v)  ) \cdot v = 0 \}$, which is indeed a codimension-1 subset of $\mathcal{M}$ and a smooth submanifold away from $\mathcal{S}_0$. Therefore we can view $ \mathcal{F}^{-1} (\mathcal{S}_0) $ as a disjoint union of codimension-1 smooth submanifolds whose intersections are always on $\mathcal{S}_0$. These submanifolds fail to be $C^1$ on the intersection. Then we have that $\mathcal{F} : \mathcal{M}\backslash \mathcal{S}_{1} \rightarrow \mathcal{M} \backslash \mathcal{S}_{-1}$ is a smooth diffeomorphism. Inductively we construct a singular set of the k$^{\text{th}}$ composition $\mathcal{F}^{k}:=   {\mathcal{F} \circ  \cdots \circ \mathcal{F}} $ which is $\mathcal{S}_k = \mathcal{S}_{k-1} \cup \mathcal{F}^{-1} (\mathcal{S}_{k-1})$. Similarly to $\mathcal{F}^{-1} (S_0)$, the set  $\mathcal{F}^{-1} (\mathcal{S}_{k-1}): = \{ 
 (x,v) \in \mathcal{M}: n(x ^{k} (x,v)) \cdot  v ^{k-1} (x,v)=0
 \}$ is a disjoint union of submanifolds intersecting each other on $\mathcal{S}_{k-1}$. Here, we have used a billiard cycle, which is defined as $(t^0, x^0, v^0)= (t,x,v)$ and $(t^1, x^1, v^1)= (t-t_\mathbf{b}(x,v),x_\mathbf{b}(x,v), R_{x_\mathbf{b} (x,v)} v )$, and successively 
  \Be\label{bcycle}
 (t^{k+1},x^{k+1},v^{k+1})=(t^k-t_{\mathbf{b}}(x^k,v^k),x^k- t_{\mathbf{b}} (x^k, v^k) v^k,R_{x^{k+1}}v^k). 
 \Ee
 
 Finally, we can identify a singular set for given $s_1, s$ and arbitrary position $*$, in which \eqref{non-grazing} fails, with 
 \Be\label{singular_S}
\bigcup_{k: t^{k+1} (s, *, v^\prime) \geq s_1   }^\infty \left\{ v^\prime:  \left( x_\mathbf{b}(*, v^\prime), \frac{v_\mathbf{b}(*, v^\prime) }{| v_\mathbf{b}(*, v^\prime)|}\right) \in \mathcal{S}_k  \right\}.
 \Ee
 
 To realize Vidav's idea in the $L^\infty$ estimate of \eqref{main_Guo}, it is essential to construct a continuous $\e$-neighborhood of the singular set \eqref{singular_S} which has small measure in $v^\prime \in \R^3$. This is an extremely hard task because of the following reasons: The first reason is that $\left( x_\mathbf{b}(*, v^\prime), \frac{v_\mathbf{b}(*, v^\prime) }{| v_\mathbf{b}(*, v^\prime)|}\right) \in \mathcal{S}_k$ could be a large set in $v^\prime$ for $k\geq 1$ (it has zero measure for $k=0$ by Sard's theorem \cite{Guo10}). Second, $S_k$ is not a uniformly $C^1$ smooth manifold, but a disjoint union of many ($\# \sim (C_{\O})^k$) codimension-1 submanifolds which are H\"older regular at their intersections (\cite{KL_holder}). Lastly, $\bigcup_{k=0}^\infty \mathcal{S}_k$ is dense in $\mathcal{M}$ (\cite{Chernov})! In other words, the singular set could be dense, large-in-measure, and ill-behaved, in general!



 \subsection{Major difficulty-II: Number of bouncing} We already have seen why we have to control the number of bouncing in the construction of the singular set, otherwise the singular set could be dense! More precisely, we want to bound the maximal number $k \in \mathbb{N}$ satisfying $t^{k+1} (s, *, v^\prime) \geq s_1$.

 Now we discuss another reason which is closely related to our method (\cite{KimLee}) studying the lower bound estimate of Jacobian in \eqref{C}. In \cite{KimLee}, Kim-Lee develops a method to investigate the Jacobian \eqref{conc:Trans} using the billiard map $\mathcal{F}^k$. The key of the method is to relate the Jacobian $v \mapsto X(s;t,x,v)$ to the billiard map $(x,v) \mapsto \mathcal{F}^k(x,v)$ using a triple iteration of \eqref{main_Guo}. Hence they are able to prove a lower bound of Jacobian \eqref{conc:Trans}, which depends on a pointwise bound of Jacobian matrix $(x,v) \mapsto \mathcal{F}^k(x,v)$. Unfortunately, such a map blows up on the singular set $\mathcal{S}_k$. In a strictly $C^3$ convex domain, they can control the number of bounces using the velocity lemma. On the other hand, the number of bouncing could be unbounded if the domain fails either convexity or a $C^3$ boundedness \cite{Halpern}. Therefore it is very challenging and important to control the number of bounces. 
 
Both difficulties are also major topics in the contents of billiard theory. It would be worth mentioning a general aspect of our problem in billiard theory. First of all, our domains are essentially 3-dimensional, while most of the billiard theory is built for 2D domains. Second, our billiard is not hyperbolic, which means Lyapunov exponents could be zero and all dispersing, focusing, and degenerating billiard could happen in a single cycle. In particular, the billiard map is not even defined when it hits the inflection point of the boundary tangentially. Moreover, this set could have a large measure in the collision space. Thirdly, modern billiard theory based on the ergodic theory and many available results hold almost everywhere, which is not sufficient for our purpose as we have discussed across \eqref{conc:Trans}.

\hide where the nonlinear term becomes
\begin{equation} \label{Gamma}
	\Gamma(f,f) = \frac{1}{\sqrt{\mu}} Q(\sqrt{\mu}f, \sqrt{\mu}f ).
\end{equation}\unhide


 \subsection{Decomposition of 3D general toroidal domains}In this paper, we consider a \textit{toroidal} domain with \textit{convex} planar cross section whose boundary is a \textit{strictly convex analytic} closed curve. This type of domain appears in many applications, such as modern tokamak reactors.

 \begin{definition}[Domains $\O$ and their boundary $\p\O$]\label{def:domain}
 	Suppose a closed curve is sitting on the $xz$-plane away from the $z$-axis: with $\gamma(a)=\gamma(b)$ and $\gamma_1>0$,
 	\Be\label{def:gamma}
 	\gamma(\tau)=(\gamma_1(\tau),0,\gamma_2(\tau)) \ \ \text{for} \ a\leq \tau < b.
 	\Ee 
 	We assume that this curve is positively oriented, \textit{real analytic}, and \textit{unit-speed}, namely $\gamma_i :  \tau \in [a,b) \rightarrow xz\text{-plane}$ is real-analytic with respect to $\tau$; and $(\gamma_1'(\tau))^2+(\gamma_2'(\tau))^2=1$ for all $\tau \in [a,b)$. Furthermore, we assume that the curve is strictly convex, i.e.  
 	\Be\label{gamma:convex}
 	\gamma_1^\prime (\tau)\gamma_2^{\prime\prime} (\tau)- \gamma_2^{\prime} (\tau)\gamma_1^{\prime\prime} (\tau)>0 \ \ \text{for} \ a\leq \tau < b.
 	\Ee
 	Here and after, we always regard $[a,b)$ as a periodic interval. Then there exist $\tau_{1,*}, \tau_{2,*}$, and $\lambda_{*}$ in $[a,b)$ such that $\gamma_2'(\tau_{1,*}) =0=\gamma_2'(\tau_{2,*})$, $\gamma_2'(\tau)<0$ for $\tau_{1,*}<\tau<\tau_{2,*}$, and $\lambda_*\in(\tau_{1,*},\tau_{2,*})$ is the unique zero of the function $\gamma_1'$ in $(\tau_{1, *}, \tau_{2, *})$: $\gamma_1' (\lambda_*)=0$. Also it is easy to check that $\gamma_1'(\tau)<0$ for $\tau \in (\tau_{1,*},\lambda_*)$ and $\gamma_1'(\tau)>0$ for $\tau \in (\lambda_*,\tau_{2,*})$. See Figure \ref{analytic convex curve}. 
 	
 	We obtain our boundary $\p\O$ by rotating this curve about the $z$-axis, which can be parameterized as 
 	\Be \label{sigma para} 
 	\sigma(\tau,\varphi)=(\gamma_1(\tau)\cos\varphi,\gamma_1(\tau)\sin\varphi,\gamma_2(\tau)),  \ \  a\leq \tau < b, \; 0\leq \varphi < 2\pi.
 	\Ee
 	When we define the boundary, it suffices to restrict $\varphi \in [0, 2\pi)$. However, later we will extend $\varphi$ to the whole real number $\R$ to count a winding number of billiard trajectory winding around the $z$-axis. Finally, we define our domain $\O$ to be the bounded open subset of $\R^3$ surrounded by the boundary $\p\O$.  
 \end{definition}

 \begin{figure}[h]
	\centering
	\includegraphics[width=16cm]{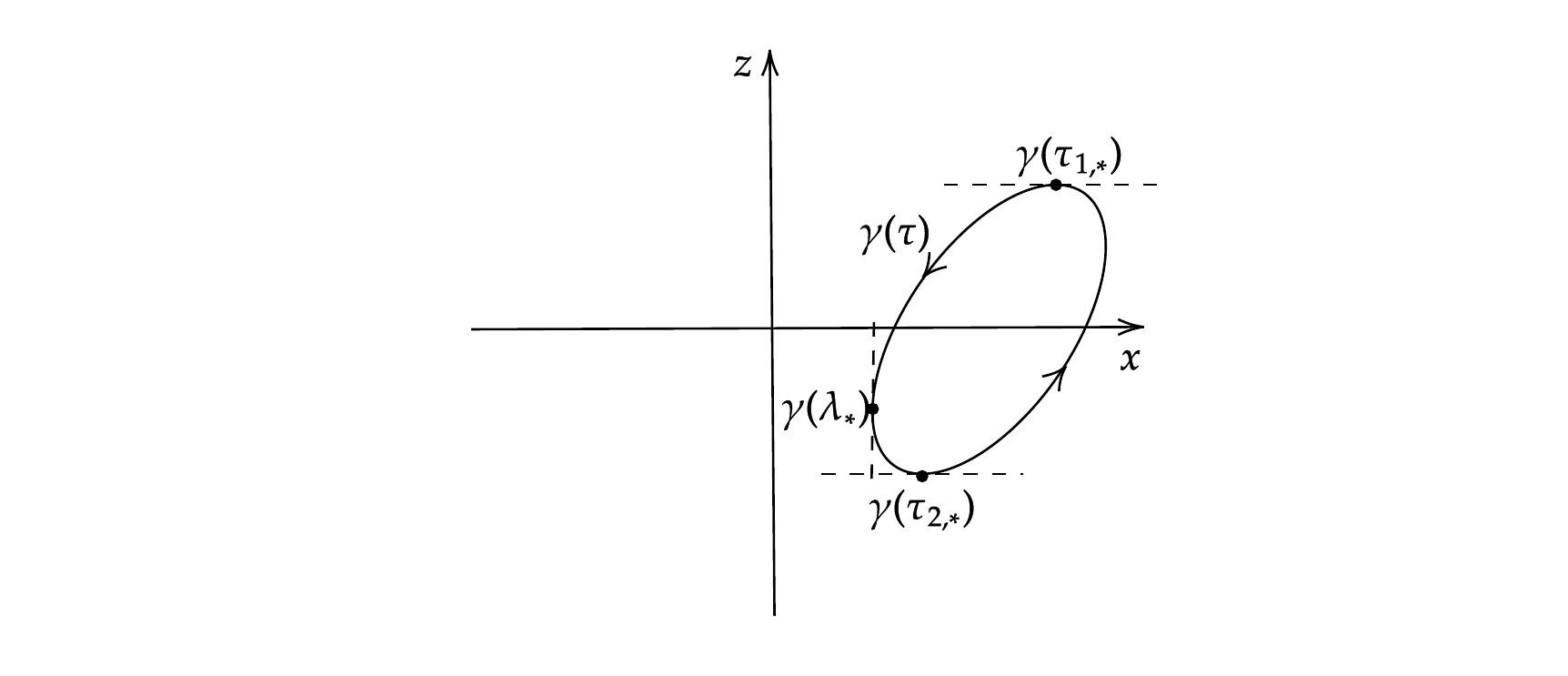} 
	\vspace{-25pt}
	\caption{Analytic convex curve $\gamma$ in $xz$-plane}
	\label{analytic convex curve_figure}
\end{figure}

 Although the above $\O$ and $\p\O$ are well-defined, it turns out that introducing an auxiliary indication function of the domain is convenient. Let $\overline{\xi}: xz\text{-plane} \rightarrow \R$ be a real-analytic function and satisfies that $\overline{\xi}<0$ in $\O \cap xz\text{-plane}$, $\overline{\xi}>0$ in $ \O^C \cap xz\text{-plane}$, and 
 \Be \label{analytic convex curve}
 \overline{\xi}=0 \ \text{on} \   \p\O \cap xz\text{-plane}.
 \Ee
For the construction of such functions, we refer to \cite{ChenK}. With this choice, $\overline{\xi}$ is real-analytic and convex. Finally we set an indication function in $\R^3$, which is real-analytic and convex, by 
 \Be \label{surface}
 \xi(x,y,z) :=\overline{\xi}(\sqrt{x^2+y^2},z).
 \Ee
 Now, we have the following relations
 \Be \label{def domain}
 \begin{split}
 	\O&:=\{(x,y,z)\in \R^3: \xi(x,y,z) :=\overline{\xi}(\sqrt{x^2+y^2},z)<0\},\\
 	\partial \O&:=\{(x,y,z)\in \R^3:\xi(x,y,z) := \overline{\xi}(\sqrt{x^2+y^2},z)=0\}.
 \end{split}
 \Ee

 \hide
 Since $\O$ do not include any points on $z$-axis, i.e., $\gamma_{1}(\tau) > 0$, $\xi$ is also anaytic.

 Now, let us introduce general solid torus domain $\O$. $\O$ is a kind of solid torus whose cross-section is a general analytic convex curve instead of a circle. First we define a general analytic convex curve. Let $z$-axis be rotational axis. (see Figure \ref{solid torus}) We define an analytic convex curve in $xz$ plane WLOG as follows:
 \begin{align}\label{analytic convex curve}
 	\overline{\xi}(x,z)=0,
 \end{align}
 where $\overline{\xi}$ is analytic and (uniformly) convex. The analytic convex curve $\overline{\xi}$ can be parametrized by analytic regularized(unit speed) curve, 
 \begin{align*}
 	\gamma(\tau)=(\gamma_1(\tau),0,\gamma_2(\tau)), 
 \end{align*}
 for $a\leq \tau \leq b$ with $\gamma(a)=\gamma(b)$. Here, $\gamma$ is a unit-speed curve, $(\gamma_1'(\tau))^2+(\gamma_2'(\tau))^2=1$ for all $\tau$. There exists unique $\tau_{1,*}, \tau_{2,*}$, and $\lambda_{*}$ such that $\gamma_2'(\tau_{1,*})=\gamma_2'(\tau_{2,*})=0$, $\gamma_2'(\tau)<0$ for $\tau_{1,*}<\tau<\tau_{2,*}$, and $\lambda_*\in(\tau_{1,*},\tau_{2,*})$ is the zero of the function $\gamma_1'$. See Figure \ref{analytic convex curve}. Also it is easy to check that $\gamma_1'(\tau)<0$ for $\tau \in (\tau_{1,*},\lambda_*)$ and $\gamma_1'(\tau)>0$ for $\tau \in (\lambda_*,\tau_{2,*})$. Rotating the curve about $z$-axis yields a $z$-axial symmetric surface
 \begin{align} \label{surface}
 	\overline{\xi}(\sqrt{x^2+y^2},z)=0.
 \end{align}

 The boundary $\p\O$ can be also parametrized by 
 \begin{align} \label{sigma para} 
 	\sigma(\tau,\varphi)=(\gamma_1(\tau)\cos\varphi,\gamma_1(\tau)\sin\varphi,\gamma_2(\tau)), \quad a\leq \tau < b, \; 0\leq \varphi < 2\pi.
 \end{align}
 Here, we might as well assume that $\gamma_1(\tau)>0$ for all $\tau \in[a,b)$, so that $\gamma_1(\tau)$ is the distance of $\sigma(\tau,\varphi)$ from the $z$-axis. When we restrict $0\leq \varphi < 2\pi$, note that $\sigma$ is surjective. (However, we will extend $\varphi$ to be larger than $2\pi$ later to consider multiple winding number around $z$-axis.) Now, a general solid torus domain $\O$ is 3D non-convex $z$-axial symmetric domain 
 \Be \label{def domain}
 \O:=\{(x,y,z)\in \R^3: \xi(x,y,z) :=\overline{\xi}(\sqrt{x^2+y^2},z)<0\},\quad \partial \O:=\{(x,y,z)\in \R^3:\xi(x,y,z) := \overline{\xi}(\sqrt{x^2+y^2},z)=0\}.
 \Ee
 Since $\O$ do not include any points on $z$-axis, i.e., $\gamma_{1}(\tau) > 0$, $\xi$ is also anaytic.  \\
 \unhide

 For $(t,x,v)\in [0,\infty) \times \overline\Omega \times \R^3$, ($\overline{\O}$ is a closure of $\O$)
 \hide
 \begin{align}
 	\begin{split}
 		X(s;t,x,v)&:=\textrm{position of the particle at time s which was at position $x$ with velocity $v$ at time $t$,}\\
 		V(s;t,x,v)&:=\textrm{velocity of the particle at time s which was at position $x$ with velocity $v$ at time $t$,}
 	\end{split}
 \end{align}
 where $X(s;t,x,v),V(s;t,x,v)$ satisfy 
 \begin{equation*}
 	\frac{d}{ds} X(s;t,x,v)=V(s;t,x,v), \quad \frac{d}{ds} V(s;t,x,v)=0,
 \end{equation*}
 with initial condition $X(t;t,x,v)=x, V(t;t,x,v)=v$. \\ 
 \unhide
 we let $\gamma=\partial \Omega \times \R^3$ denote the boundary of phase space $  \Omega \times \R^3$. We further split $\gamma$ into an outgoing boundary $\gamma_+$, an incoming boundary $\gamma_-$, and a grazing boundary $\gamma_0$:
 \Be\begin{split}\label{gamma}
 	\gamma_+&:=\{(x,v)\in \partial \Omega \times \R^3 : n(x)\cdot v >0\}, \\
 	\gamma_-&:=\{(x,v)\in \partial \Omega \times \R^3 : n(x)\cdot v<0\}, \\ 
 	\gamma_0&:=\{(x,v) \in \partial \Omega \times \R^3 : n(x)\cdot v=0\}. 
 \end{split}\Ee
 Here, $n(x)$ denotes the outward normal at $x \in \p\O$ such that $n(x) = \frac{\nabla \xi (x)}{|\nabla \xi (x)|}$. 
 
 Recall the backward exit time $\tb(x,v)$ in \eqref{def:tb}. Remark that $(x,v) \in \p\O \times \S^2  \mapsto (	x_{\mathbf{b}}(x,v), R_{	x_{\mathbf{b}}(x,v)} v) \in \p\O \times \S^2$ is the billiard map in the mathematical theory of billiard \cite{Chernov}. Here, the reflection operator is denoted by $R$:
\Be\label{reflectionO}
R_x v:=v-2({n}(x)\cdot v){n}(x).
\Ee

 We set $(t^0,x^0,v^0)=(t,x,v)$ and define backward cycles successively, for $k\geq 0$,  
 \Be\label{bcycle}
 	(t^{k+1},x^{k+1},v^{k+1})=(t^k-t_{\mathbf{b}}(x^k,v^k),x^k-(t^k-t^{k+1})v^k,R_{x^{k+1}}v^k), 
 \Ee
 where $R_{x^{k+1}} = I - 2 {n}(x^{k+1})\otimes {n}(x^{k+1}) $. We also define the forward exit time and position  \begin{align*}
 	t_{\mathbf{f}}(x,v)&:= \sup\{ \tau\geq 0 :  x+ s v \in \Omega \textrm{ for all } s \in (0,\tau)\},\\
 	x_{\mathbf{f}}(x,v)&:= x+t_{\mathbf{f}}(x,v)v\in \partial \Omega.
 \end{align*}
 We set $(t_{\mathbf{f}}^0,x_{\mathbf{f}}^0,v_{\mathbf{f}}^0)=(0,x,v)$. Then, we define forward cycles, for $k\geq 0$,
 \Be\label{fcycle}
 	(t_{\mathbf{f}}^{k+1},x_{\mathbf{f}}^{k+1},v_{\mathbf{f}}^{k+1}) =(t_{\mathbf{f}}^k+t_{\mathbf{f}}(x_{\mathbf{f}}^k,v_{\mathbf{f}}^k),\xf^k-(\tf^{k+1}-\tf^k)\vf^k,R_{x_{\mathbf{f}}^{k+1}}v_{\mathbf{f}}^k).
 \Ee

 In our analysis, it is important to classify points in the phase space where $v \mapsto  x^k - (t^k- s) v^k$ is Lipschitz continuous. It turns out that such a map fails to be Lipschitz continuous whenever there exists $\ell \leq k $ such that $n(x^\ell) \cdot v^\ell =0$, namely the billiard trajectory hits the grazing set $\gamma_0$ in past. The major difficulty arises from the fact that the billiard trajectory emanating from the grazing set is not confined near the boundary but impact on the interior dynamics globally in general. As the first step to analyze this case, we categorize the grazing set into several subsets:
 \begin{definition}
 	We split the grazing set $\gamma_0$ into three parts: the convex grazing $\gamma_0^V$, concave grazing $\gamma_0^C$, and inflection grazing $\gamma_0^I$ of 
 	\begin{align} \label{grazing}
 		\begin{split}
 			\gamma_0^V&:=\{(x,v)\in \gamma_0: t_{\mathbf{b}}(x,v)=0 \textrm{ and } t_{\mathbf{b}}(x,-v)=0\}, \\
 			\gamma_0^C&:=\{(x,v) \in \gamma_0: t_{\mathbf{b}}(x,v) \neq 0 \textrm{ and } t_{\mathbf{b}}(x,-v)\neq0\}, \\ 
 			\gamma_0^I &:=\gamma_0^{I_+} \cup \gamma_0^{I_-}, 
 		\end{split}
 	\end{align}
 	where the inflection grazing $\gamma_0^I$ is further decomposed as 
 	\begin{align} \label{inflection +-}
 		\begin{split}
 			\gamma_0^{I_+}&:= \left\{ (x,v) \in \gamma_0:
 			\begin{array}{cc}
 			 t_{\mathbf{b}}(x,v) \neq 0,  \ t_{\mathbf{b}}(x,-v)=0,\\ \textrm{ and } \exists \; \delta>0 \textrm{ s.t. } x+sv \in \R^3\backslash \overline\Omega \  \textrm{ for } s \in (0,\delta)
 			 \end{array}
 			\right\},\\
 			\gamma_0^{I_-}&:= \left\{ (x,v) \in \gamma_0: 
 				\begin{array}{cc}
 			t_{\mathbf{b}}(x,v) = 0, \ t_{\mathbf{b}}(x,-v)\neq0, \\
 			\textrm{ and } \exists \; \delta>0 \textrm{ s.t. } x+sv \in \R^3\backslash \overline\Omega \  \textrm{ for } s \in (-\delta,0)
 			\end{array}
 			\right\}.
 		\end{split}
 	\end{align}
 	The inflection grazing $\gamma_0^I$ can be redefined as follows using the analytic function $\xi$ defined in \eqref{def domain}. 
 \end{definition}

 \begin{remark}\label{inflection point} (Inflection grazing) For an arbitrary boundary point $x \in \p\O$, let $\sigma(\tau,\varphi)=x$ in \eqref{sigma para}. The fact $(\sigma(\tau,\varphi),v)\in \gamma_0^I$ is equivalent to the following: $(\sigma(\tau,\varphi),v)\in \gamma_0$ and  
 	there exists $\delta>0$ such that
 	$$
 	\xi(\sigma(\tau,\varphi)+sv)>0 \ \text{ and } \ \xi(\sigma(\tau,\varphi)-sv)<0 \ \text{ for all } \  s\in (0,\delta).$$
 	In this case, $(\sigma(\tau,\varphi),v) \in \gamma_0^{I_+}$ and $(\sigma(\tau,\varphi),-v) \in \gamma_0^{I_-}$. 
 \end{remark}
 
 \begin{remark}\label{cycles_not_defined}Due to our special convention $\sup \emptyset=0$, either $t_{\mathbf{b}}(x,v)$ or $t_{\mathbf{f}} (x,v)$ could be zero so that the forward or backward cycles can stop. There are two cases:
 	\begin{itemize}
 		\item[1.] If $(x,v)\in \gamma_0^{V}$, then $t_{\mathbf{b}}(x,v) = t_{\mathbf{f}}(x,v)$. In this case, $t^\ell (t,x,v) = t$ and $t^\ell _{\mathbf{f}}(t,x,v) = t$ for all $\ell \in \mathbb{N}$, and hence $x^\ell (x,v) =x  = x^\ell _{\mathbf{f}}(x,v) $ and $v^\ell (x,v) =v = v^\ell _{\mathbf{f}}(x,v) $  for all $\ell$. 
 		\item[2.]  If $(x,v)\in \gamma_0^{I+}$ (resp, $(x,v)\in \gamma_0^{I-}$), then $t_\mathbf{f}(x,v) =0$ (resp, $t_\mathbf{b}(x,v) =0$) and hence $ x^\ell _{\mathbf{f}}(x,v) =x$ and $ v^\ell _{\mathbf{f}}(x,v) =v$ (resp, $ x^\ell  (x,v) =x$ and $ v^\ell  (x,v) =v$) for all $\ell \in \mathbb{N}$. 
 	\end{itemize}
 \end{remark}

 \begin{definition}[Billiard Trajectory]\label{def:XV}
 	Let $(t,x,v) \in [0, \infty) \times \overline{\O} \times \R^3$. 
 	We define a billiard trajectory (or generalized specular characteristics)
 	\Be\begin{split} \label{XV}
 		X(s;t,x,v)&= \sum_{\ell=0}^\infty \mathbf{1}_{s \in [t^{\ell +1} (t,x,v),t^\ell(t,x,v))} \big( x^\ell(x,v)-(t^\ell(t,x,v)-s)v^\ell (x,v) \big), \\
 		V(s;t,x,v)&=  \sum_{\ell=0}^\infty \mathbf{1}_{s \in [t^{\ell+1}(t,x,v),t^\ell(t,x,v))}  v^\ell(x,v).
 	\end{split}\Ee
 	Hence the billiard trajectory solves weakly, for $s\in(t^{\ell+1},t^\ell)$, 
 	\begin{align*}
 	\begin{split}
 		\frac{d}{ds} X(s;t,x,v)=V(s;t,x,v), \quad \frac{d}{ds} V(s;t,x,v)=0, \quad &   \\
 		X(s;t,x,v)=x, \quad V(s;t,x,v)=v , \quad &\text{for } s=t.
 	\end{split}
	\end{align*}
 \end{definition}
\begin{remark}
It is a standard application of the	Poincare recurrence theorem (\cite{Chernov}) to guarantee that 
$\sum_{\ell=0}^\infty |t^{\ell} (t,x,v) - t^{\ell+1} (t,x,v)| = \infty$ for almost every $(x,v) \in \O \times \R^3$ for each $t \geq s \geq 0$. Therefore \eqref{XV} is well-defined almost everywhere in the phase space. 
	\end{remark}

 \hide
 \Green{[move somewhere]]
 	\[
 	(X(s;t,x,v), V(s;t,x,v)) = (x,v),\quad \text{ for all $s\geq t$ (resp, $s\leq t$) if $(x,v)\in \gamma_0^{I+}$ (resp, $(x,v)\in \gamma_0^{I-}$). }
 	\]
 	Since our domain $\O$ is axial symmtric with general analytic convex cross-section, $(\sigma(\tau,\varphi), v) \in \p\O \times \{ \{T_{\sigma(\tau,\varphi)} - \sigma(\tau,\varphi)\} \cap \S^{2}\}$ which satisfies
 	\begin{align*}
 		\xi(\sigma(\tau,\varphi)+sv)=0, \quad \forall s\in (0,\delta)
 	\end{align*}
 	does not exist. Thus, for both $\gamma_0^{I_+}$ and $\gamma_0^{I_-}$, it does not count the case $x+s v \in \p \O$ for all $s \in (0,\delta)$. Note that backward (resp, forward) in time trajectory cannot belong to $\gamma_0^{I+}$ (resp, $\gamma_0^{I-}$) within finite bounces.} 
 
\unhide

\subsection{Main Theorem} 
Global well-posedness of the Boltzmann equation and the long-time behavior of solutions have been central subjects (e.g., Boltzmann's H-theorem) in mathematical kinetic theory. Unfortunately, any rigorous answer to the \textit{hard-sphere} Boltzmann equation of the \textit{specular reflection} at a boundary of general convex domains had not been provided until a recent date, and yet much is unknown for the general domains! 
	
	 In a famous 1977's announcement \cite{SA}, Shizuta-Asano claimed to construct a global-in-time solution of the Boltzmann equation in a general smooth convex domain with a specular reflection boundary, while no proof of their logic has been available yet. As a matter of fact, a real breakthrough happened after several decades. In 2010, Guo developed a new method, called $L^2$-$L^\infty$ framework, and first rigorously constructed a global-in-time solution to the Boltzmann equation with the specular reflection boundary condition and proved its convergence toward the global Maxwellian, when a domain is both \textit{real-analytic} and convex (\cite{Guo10}). In 2018, Kim-Lee removed a crucial real-analytic condition (\cite{KimLee}), in which they completely resolved a longstanding open problem after the famous unsettled 1977's announcement. They made a major advance in the study of billiard trajectory by discovering its generic relation to the billiard map.

\hide
Global wellposedness of the hard-sphere Boltzmann equation and a long time behavior of solutions have been extensively studied for various physical boundary conditions. However, for the specular reflction boundary condition, much is unknown for the general domains due to its singular nature and complex characteristics. In 2010, Guo first contructed a unique global-in-time solution to the Boltzmann equation with the specular reflection boundary condition, when a domain is both real-analytic and strictly convex (\cite{Guo10}). In 2018, Kim-Lee removes a crucial real-analytic condition (\cite{KimLee}), in which they resolves a longstanding open problem after the famous unsettled announcement of Shizuta-Asano \cite{SA} in 1977. 
\unhide

 \hide

introduced $L^{2}L^{\infty}$ bootstrap argument to derive low regularity $L^{\infty}$ solution which converges to the global Maxwellian exponentially for some smooth bounded domains with several types of boundary conditions. In the case of specular reflection case, however, unlike to diffuse reflection condition, analytic convexity assumption was imposed to the domain. The difficulty of specular reflection essentially comes from complexity of trajectory up to initial time. Analyticity assumption was removed in \cite{KimLee} by the first and third author. But the problem for general non-convex domain is still widely open. 

In the context of low regularity theory introduced above, regularity of the Boltzmann solution with boundary conditions is very natural question. {\color{blue} Write about regularity works : convex, BV, Holder etc...}    

As mentioned before, global well-posedness and asymptotic behavior with specular reflection is also very important problem. Specular reflection condition is more sensitive to geometric property of domains. This is mainly because, it is required to keep track the backward in time trajectory until $t=0$. 

\unhide

The same problem without the convex condition of domain is very challenging due to the presence of infinitely-many bouncing trajectories and interior grazing sets. 
In this question, there is only one result so far by Kim-Lee \cite{cylinder} in 2018, in which they construct a global solution and prove its asymptotic stability near the Maxwellian in cylindrical domains whose cross-section is an analytic and non-convex closed planar curve. This result is essentially two dimensional and it is crucial in the proof of control number of bounces and interior grazing sets. 

 The hard-sphere model of specular reflection boundary is particularly hard in the study of global behavior because the problem is very hyperbolic! If the boundary reflection is not specular but either diffuse reflection of inflow then the problem turns out to be dissipative even without the intermolecular collision \cite{KimVP, JJKim}. If the intermolecular collision is grazing (e.g., Landau equation), then the analysis is not suffered from infinitely many bounces as the hard-sphere model due to a regularity gain \cite{GHJO}. On the other hand, it is worth noting that the hard-sphere model of specular reflection boundary is yet the sole boundary problem that can be rigorously derived from the particle systems (e.g., Lanford's theory of Boltzmann-Grad limit) \cite{EGM:BG}.



 In this paper, we consider a 3D toroidal domain, whose cross-section is a general analytic convex curve. To the best of the authors' knowledge, this is the first result on the global solution of the Boltzmann in a generic 3D non-convex domain with the specular reflection boundary. 

\begin{theorem}\label{main theorem}
	Let $w(v) = (1+|v|)^{\beta}$ with $\beta > \frac{5}{2}$. Consider a general toroidal domain $\O \subset \R^3$ in Definition \ref{def:domain}. We assume $F_{0} = \mu + \sqrt{\mu} f_{0} \geq 0$ with $\mu$ in \eqref{def:gMax} satisfies a normalization condition:
\Be	\begin{split} \label{normalization}
		\iint_{\O\times \R^3} (F_0(x,v) - \mu) dxdv& =0,\\
  \iint_{\O \times \R^3} \vert v \vert^2 (F_0(x,v) - \mu) dxdv  &= 0,\\
  \iint_{\O \times \R^3} \{x\times \hat{z}\} \cdot v F_0(x,v) dxdv & = 0.
	\end{split}\Ee





	


	Then, there exists $0 < \delta \ll 1$ such that if 
	\[
	\|wf_{0}\|_{L^\infty(\O \times \R^3)} < \delta, 
	\]
	then the Boltzmann equation \eqref{Boltzmann} with the specular boundary condition \eqref{specular} has a unique global solution $F(t) = \mu + \sqrt{\mu}f(t) \geq 0$. Moreover, there exists $\lambda > 0$ such that
	\[	
	\sup_{t \geq 0} e^{\lambda t}\|wf(t)\|_{\infty} \lesssim \|wf_{0}\|_{\infty},
	\] 
	and the following conservation laws hold:
	\begin{align*} 
	&	\iint_{\O\times \R^3} (F(t,x,v) - \mu) dxdv = \iint_{\O \times \R^3} \{x\times \hat{z}\} \cdot v F(t,x,v) dxdv\\
	&	 =   \iint_{\O \times \R^3} \vert v \vert^2 (F(t,x,v) - \mu) dxdv = 0.  
	\end{align*}
\end{theorem}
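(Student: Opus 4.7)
The plan is to implement the $L^2$-$L^\infty$ bootstrap framework pioneered by Guo \cite{Guo10} and extended by Kim-Lee in \cite{KimLee, cylinder} to handle non-convex billiard geometry. Writing $F = \mu + \sqrt{\mu} f$ gives the perturbed equation $\partial_t f + v \cdot \nabla_x f + L f = \Gamma(f,f)$ with specular reflection on $\partial \Omega \times \R^3$. The three conservation laws in \eqref{normalization} orthogonalize $f(t)$ to the full five-dimensional null space of $L$ in \eqref{nullL}; the third (angular momentum) constraint is essential, since the toroidal $\Omega$ is axis-symmetric in the sense of \eqref{axis-sym} and an angular-momentum mode of $L$ is dynamically preserved under specular reflection and must be killed at $t=0$.

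First I would establish exponential $L^2$ decay of solutions to the nonlinear problem under an a priori smallness $\|wf\|_\infty \lesssim \delta$. From the coercivity \eqref{lowerL} combined with the $\|\mathbf{P}f\|_{L^2}$-control produced by the macroscopic equations of Guo type (respecting the specular boundary condition through the test-function construction outlined in Section \ref{sec:coercivity}), one obtains a differential inequality $\frac{d}{dt}\|f\|_{L^2}^2 + c \|f\|_\nu^2 \leq C\delta \|f\|_\nu^2$; orthogonality to $\ker L$ kills the conservative modes, and after absorbing the right-hand side one gets $\|f(t)\|_{L^2} \lesssim e^{-\lambda t} \|f_0\|_{L^2}$. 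This part follows \cite{Guo10, KimLee} closely, without requiring the fine billiard analysis.

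Next, for the $L^\infty$ bound I would run the standard double Duhamel iteration with $G(t)$ the transport-plus-$\nu$ semigroup under specular reflection and $K$ the compact piece of $L$:
\begin{equation*}
 U(t) f_0 = G(t) f_0 + \int_0^t G(t-s) K G(s) f_0 \, ds + \int_0^t \!\! \int_0^s G(t-s) K G(s-s_1) K U(s_1) f_0 \, ds_1 \, ds.
\end{equation*}
The first two terms decay like $e^{-\nu_0 t}$ for free; the third (double-$K$) term of the form \eqref{main_Guo} is the heart of the matter. Applying Vidav's change of variables \eqref{COV_intro} in the inner $v'$ integral, one converts the velocity integral into a spatial integral of $|f(s_1, \cdot, v'')|$ over $X(s_1; s, X(s;t,x,v), v')$, where the previously established $L^2$ decay can be inserted to close the bootstrap and yield $\sup_t e^{\lambda t} \|wf(t)\|_\infty \lesssim \|w f_0\|_\infty$. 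Positivity of $F$ and global existence then follow from a Kaniel-Shinbrot-type monotone iteration, and uniqueness from a standard contraction estimate.

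The hard part, and the main contribution of this work, is verifying the nondegeneracy \eqref{C} of the billiard mixing map outside a controllably small exceptional set in $(s, v')$. This splits into three sub-obstacles, all of which were identified in Sections \ref{subsec:SS}: (i) bounding the number of bounces $k$ with $t^{k+1}(s,\ast,v') \geq s_1$, which without global convexity lacks a velocity lemma and must be extracted from the strict convexity and real-analyticity of the generating curve $\gamma$ in Definition \ref{def:domain} together with the toroidal geometry; (ii) classifying and measuring the pullback of the singular sets $\mathcal{S}_k$ in \eqref{singular_S}, carefully distinguishing the three grazing species $\gamma_0^V, \gamma_0^C, \gamma_0^I$ of \eqref{grazing}-\eqref{inflection +-}, where the axial symmetry permits reduction of large parts of the trajectory to the two-dimensional cross-section via conservation of angular momentum along each straight segment, allowing the Kim-Lee convex/analytic machinery to apply, with the azimuthal winding number acting as an extra discrete label; (iii) producing an explicit lower bound for $\det(\partial X / \partial v')$ on the complement of an $\varepsilon$-neighborhood of the singular set, so that the change of variables incurs only an $\varepsilon$-small loss that is absorbed against the $L^2$ decay. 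The genuinely new three-dimensional difficulties concentrated here are trajectories whose straight segments approach either the rotation axis or meridian-plane-tangent configurations, and the propagation of the inflection-grazing set $\gamma_0^I$ across many bounces with varying winding numbers; this is where the bulk of the paper's technical weight will lie.
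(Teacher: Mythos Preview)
Your high-level architecture is right---$L^2$-$L^\infty$ bootstrap, coercivity for $L^2$ decay, Duhamel iteration plus Vidav change of variables for $L^\infty$---but there is a genuine gap in the $L^\infty$ step. You propose the \emph{double} Duhamel expansion and the change of variables $v' \mapsto X(s_1;s,\ast,v')$ in \eqref{COV_intro}. That is Guo's original scheme for analytic convex domains, and it is \emph{not} what the paper does. The paper carries out a \emph{triple} Duhamel iteration (Section~6, equation \eqref{triple}), following the method of \cite{KimLee}. The reason is that in the double iteration the Jacobian $\partial X/\partial v'$ is controlled through derivatives of the billiard map $\mathcal{F}^k$, which blow up on the singular set $\mathcal{S}_k$ and cannot be bounded below uniformly once grazing is present. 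The triple iteration manufactures two extra ``spatial'' variables $y_1,y_2$ (coming from the second $K$-integration), so that one can choose a full-rank triple $(|u'|,\zeta_1,\zeta_2)$ with $\{\zeta_1,\zeta_2\}\subset\{|u|,\hat u_1,\hat u_1',\hat u_2'\}$ for which the Jacobian lower bound \eqref{lower_zeta12} holds; see Proposition~\ref{prop_full_rank} and Lemma~\ref{lemma rank 3}. Without this extra layer the nondegeneracy claim \eqref{C} cannot be established in the toroidal geometry.

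Two smaller points. First, for the $L^2$ coercivity the paper invokes the non-constructive estimate of Guo (Proposition~\ref{prop:coercivity}), not the constructive test-function construction you allude to; the constructive argument appears only in the Appendix and only for a periodic annular cylinder, not for the toroidal domain. Second, your description of the billiard analysis (``reduction to the two-dimensional cross-section via conservation of angular momentum'') is in the right spirit but understates the work: the paper needs a full classification of inflection grazing directions (Lemma~\ref{inflection}), a quantitative no-accumulation lemma (Lemma~\ref{no infty bounce}) derived from the relation $\Delta\tau_{i+1}-\Delta\tau_i = \mathcal{O}(\Delta\tau^2)+\mathcal{O}(\Delta\varphi^2)$, a uniform bounce bound on a compact good set (Proposition~\ref{unif bounce}, Lemma~\ref{bounce K}), and the crucial measure-zero result $\mathfrak{m}_2((B_j)_x)=0$ for the bad-direction sets (Proposition~\ref{mB zero}), which relies on analyticity, the axial symmetry identity $\partial_\varphi X = k\hat\varphi(0)$, and Lusin's property. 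These feed into the phase decomposition Theorem~\ref{bad set thm}, which in turn feeds into the triple-iteration nondegeneracy of Proposition~\ref{prop_full_rank}.
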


\begin{remark}
	Using the method in \cite{YunKim}, our proof also works for a non-zero angular momentum case.  For the sake of simplicity, we only give a proof for the zero angular momentum case.
	\end{remark}

\begin{remark}
	Extending the theorem to a general non-convex domain beyond the category of Definition \ref{def:domain} is challenging. We discuss two types of domains and anticipated difficulties. 
	\begin{itemize}
\item {\it No structural condition}: In this paper, axis-symmetric structure is used crucially several ways, in particular when we prove Proposition \ref{mB zero}. The proposition says that ``Sticky Grazing'' points, which was introduced in \cite{cylinder}, do not exist. (See Figure 2 and Lemma 8 in \cite{cylinder}.) Without any special symmetric structures, it seems hard to control the number (or the measure) of such Sticky Grazing points. The presence of inflection points is another major difficulty, in which the billiard map is not a union of straight lines. In general, billiard trajectory emanating from such inflection points could be large set in the velocity. 

\item  {\it No real analytic condition}: From the example of Halpern's billiard \cite{Halpern}, infinitely many bounces would happen in finite travel length ,i.e., accumulation of bouncing points. This is totally local phenomenon, and without real analytic condition of the boundary, this can happen generically.  
	\end{itemize}
	
\end{remark}

\hide

\indent Global well-posedness and asymptotic behavior of the Boltzmann equation with general large data may be one of the single most famous problem in the Boltzmann theory. Although there were several seminal results \cite{Diperna-Lion, Desvillettes-Villani, Sylestre}, complete anwer the for question is still open. (See \cite{Sylestre note}). In the perturbation framework, however, there have been lots of complete results in this direction. We refer classical works of Ukai \cite{Ukai} and Guo \cite{VPB, VMB} in which the authors used high order regularity setup. Inspired these works, many fruitful results have been studied in $\T^{3}_{x}$ or $\R^{3}_{x}$. We refer {\color{blue} \cite{Several works...}.}

it bounces back with the opposite normal velocity and the same tangential velocity, as a billiard

one of the most basic conditions, 

Various important phenomena occur when gas particles in- teract with the boundary, such as the formation and propagation of singularities ([15, 10, 11]). In the presence of the boundary, a kinetic equation has to be sup- plemented with boundary conditions modeling the interaction between the particles and the boundary. Among other boundary conditions (See [3, 13]), in this paper, we focus on one of the most basic conditions, a so-called specular reflection boundary condition. This mathematical model takes into account a case that if a gas particle hits the boundary, then it bounces back with the opposite normal velocity and the same tangential velocity, as a billiard:

e.g. dilute gases passing objects and a plasma
inside tokamak devices,

	 \unhide
	\hide
	{\color{blue}	
	(Cylindrical) Despite extensive developments in the study of the Boltzmann theory, many basic boundary problems, especially regarding the specular reflection BC with general domains, have remained open. In 1977, in \cite{SA}, Shizuta and Asano  announced the global existence of the Boltzmann equation with the specular boundary condition in smooth convex domain without a complete proof. The first mathematical proof of such problem was given by Guo in \cite{Guo10}, but with a strong extra assumption that the boundary should be a level set of a real analytic function. Very recently the authors proved the unique existence and asymptotic stability of the specular boundary problem for general smooth convex domains (with or without external potential) in \cite{KimLee}, using triple iteration method and geometric decomposition of particle
	trajectories. This marks a complete resolution of a 40-years open question after \cite{SA}.
	
	Meanwhile, there were even fewer results for general non-convex domains with the specular boundary condition. An asymptotic stability of the global Maxwellian is established in \cite{DV}, provided certain \textit{a-priori} strong Sobolev estimates can be verified. However, such strong estimates seem to fail especially when the domain is non-convex (\cite{GKTT1, GKTT2, Kim11}). Actually the authors believe that the solution cannot be in $C^1$ (but in $C^{0,\alpha}$) when the domain is non-convex because if a non-convex grazing interferes, the bouncing point is not differentiable. We refer forthcoming paper \cite{Holder}. To the best of our knowledge, our work is the first result on the global well-posedness and decay toward Maxwellian results for any kind of non-convex domains with the specular boundary condition! One of the intrinsic difficulties of a non-convex domain problem is the (billiard) trajectory is very complicated to control (e.g. infinite bouncing, grazing).  
	The problems of general smooth non-convex domains or three-dimensional non-convex domains are still open.   \\
	
	------------------------------------\\
	
	(Large amp) In contrast to the DiPerna-Lions' large-data theory for general initial data mentioned before, the global well-posedness has been well understood in the perturbation framework and many results exist in this direction. In particular, the problem was first solved by Ukai \cite{Ukai} in case when the space domain is a periodic box and initial data are chosen as $F_0(x,v)=\mu+\sqrt{\mu}f_0(x,v)\geq 0$ such that 
	$\sup_{v}\langle v\rangle^\beta \|f_0(\cdot,v)\|_{H^N_x}$
	is sufficiently small for suitably large $\beta$ and $N$. Later, Shizuta-Asano \cite{SA} announced an extension to general smooth bounded convex domains for specular reflection boundary condition without much details of the proof. For general bounded convex domains with specular reflection boundary condition, Guo \cite{Guo} gave a complete proof via a novel $L^2$-$L^\infty$ approach to construct the global unique solution provided that $\sup_{x,v}\langle v\rangle^\beta |f_0(x,v)|$ is sufficiently small for suitably large $\beta$ and the boundary of domain is a level set of a real analytic function. Notice that besides specular reflection boundary condition, other types of boundary conditions such as the inflow, bounce-back and diffuse reflection were also treated in \cite{Guo}. In case of the specular reflection boundary condition, Kim-Lee \cite{KL} removed the analyticity assumption in \cite{Guo} and replaced it by the general $C^3$ regularity for a bounded convex domain. Recently, Guo-Zhou \cite{GZ} constructed the global-in-time Boltzmann solution in the diffusive limit for the general Maxwell boundary condition with the full range of accommodation coefficients even in the case when the space domain is not necessarily convex and the boundary admits $C^2$ regularity only; see also Briant-Guo \cite{BG} on the construction of global close-to-equilibrium solutions for the finite Knudsen number and for the Maxwell boundary condition of the partial range of accommodation coefficients without the convexity and analyticity assumptions on $\Omega$. We also refer \cite{EGKM, EGKM-hy, GKTT, GL, KL-nc} for other recent results about non-convex domain, discontinuity, regularity,  and hyrodynamic limit results for the Boltzmann boundary problems.

	On the other hand, in those aforementioned results, the initial perturbation $f_0(x,v)$ has to have a small oscillation in space variables due to smallness of $L^\infty$ norm. An extension of \cite{Guo} to a class of large amplitude initial data in the sense that the $L^\infty$ norm was allowed to be arbitrarily large but the $L^p$ norm for some $p<\infty$ is suitably small was recently made in Duan-Huang-Wang-Yang \cite{DHWY17} and Duan-Wang \cite{DW} either for the whole space and periodic box or for the bounded domain with diffusion reflection boundary condition, respectively. Inspired by \cite{DHWY17,DW,KL}, we are going to explore in the paper the possibility of constructing the global Boltzmann solution not only for the specular reflection boundary condition with the $C^3$ boundary regularity but also for a class of large amplitude initial data. It turns out that the direct combination of those approaches in \cite{DHWY17,DW,KL} does not work well. We will have to figure out new difficulties and strategies to treat the problem under consideration.  \\
} 
\unhide
%

\subsection{Billiard Trajectory}
\begin{definition} (Velocity set $\VN$ and winding number)
For fixed $N\geq 1 $, 
\begin{equation} \label{V^N}
	\mathbb{V}^N:= \{ v \in \R^3 : \frac{1}{N} \leq \vert v \vert \leq N\}. 
\end{equation}
When $(X(s;t,x,v), V(s;t,x,v))$ travels inside of $\O$ under the specular reflection boundary condition up to total travel length $L>0$, we define a uniformly finite upper bound for the winding number around the $z$-axis: 
\begin{align}\label{winding number}
\mathbf{w} := \left(\left[\frac{L}{2\pi r}\right]+1\right), 
\end{align}
where $r=\gamma_{1}(\lambda_{*})$ is the minimum distance between $z$-axis and $\p\O$. Here $[\cdot]$ is the Gauss function. 
\end{definition}
\begin{remark} \label{LNT}
We often consider only a normalized speed $\frac{v}{|v|}$ with fixed travelling length $L$. We can easily study the trajectory of unscaled $v\in \VN$, by setting $L = NT$ (in Theorem \ref{bad set thm}) where $NT$ is the maximal travel length of the trajectory for fixed time interval $[0,T]$ and $v\in \mathbb{V}^N$. See Theorem \ref{bad set thm}.  
Also, note that the smallest perimeter around $z$-axis of the domain $\O$ is $2\pi r$. Hence, the winding number $\mathbf{w}$ of \eqref{winding number} is the maximum number that a trajectory can rotate $z$-axis during whole travel length $L$.  
\end{remark}

Since a trajectory can rotate around the $z$-axis multiple times, it is important to distinguish two trajectory on the same cross-section depending on how many times the trajectory rotated around the $z$-axis. So, it is natural to extend $\varphi$ into $\R$.
\begin{definition} \label{varphi cross section}
Let $\varphi \in \R$. We define $\varphi$ cross-section $S_{\varphi}$ as follows
\begin{align} \label{cross section}
	S_{\varphi}:=\{x \in \O : \phi(x)=\varphi \pmod{2\pi} \}, 
\end{align}
where $\phi(x)$ is the angle between the $x$-axis and the projected point $(x_1,x_2,0)$ of $x=(x_1,x_2,x_3)$. In other words, the map $\phi:\overline{\O}\mapsto [0,2\pi)$ satisfies
\begin{align*}
	\phi(x)=\cos^{-1} \left(\frac{(x_1,x_2,0)}{\sqrt{x_1^2+x_2^2}}\cdot (1,0,0)\right)=\cos^{-1}\left(\frac{x_1}{\sqrt{x_1^2+x_2^2}}\right). 
\end{align*}
We define the closure $\overline{S_\varphi}$ of $S_{\varphi}$ as
\begin{align*}
	\overline{S_\varphi}:=\{x\in \overline{\O}: \phi(x)=\varphi \pmod{2\pi}\},
\end{align*} 
and the boundary $\p S_{\varphi}$ of $S_{\varphi}$
\begin{align*}
	\p S_{\varphi}:=\{x\in \p\O : \phi(x)=\varphi \pmod{2\pi}\}.
\end{align*}
We also define a function $\hat{\varphi}=\hat{\varphi}(\varphi):(-2\pi\mathbf{w},0]\rightarrow \S^2$ as a direction which is perpendicular to $S_{\varphi}$ with positive orientation w.r.t the $z$-axis. Since the cross section $S_0$ is in the $xz$-plane, $\hat{\varphi}(0)=(0,1,0)$ and 
\begin{align*}
	\hat{\varphi}(\varphi)=\begin{pmatrix}
	\cos \varphi &-\sin \varphi & 0 \\ \sin \varphi & \cos \varphi & 0 \\ 0 & 0 & 1
	\end{pmatrix} \hat{\varphi}(0). 
\end{align*} 
\end{definition}

\begin{remark}
	In this paper, we carefully distinguish the cross-sections $S_{\varphi}$ and $S_{\varphi+2\pi}$. For fixed $x\in \O$, let us choose a $\varphi\in (-2\pi\mathbf{w},0]$ such that $\phi(x)=\varphi \pmod{2\pi}$. Then for $(x,\varphi,v)\in \overline{\O}\times (-2\pi\mathbf{w},0]\times \S^2$, we will define the arrival time function $s(x,\varphi,v)$ in Lemma \ref{lem time s} which represents time taking from $x\in S_{\varphi}$ with velocity $v$ to $S_0$ along the trajectory\eqref{XV}, rotating around the $z$-axis multiple times if necessary. Then, the arrival time function is uniquely determined with respect to $(x,\varphi,v)$, instead of $(x,v)$. 
\end{remark}

\begin{definition}
\noindent {(Definition for $x,y,\varphi$ coordinates)}  
We define a new coordinate on fixed $S_{0}$, $\varphi=0$ cross-section. We use the coordinate for $v$:
\Be \label{coord}
	\begin{pmatrix}
	v_x	  \\ v_{\varphi} \\ v_{y} 
\end{pmatrix}.
\Ee
Here, $(v_{x}, v_y)$ are perpendicular coordinates in the cross-section $S_0$ and $v_{\varphi}$ is coordinate of $\hat{\varphi}(0)$ direction which is perpendicular (positive orientation WLOG) to $S_{0}$ cross-section. See below Figure \ref{solid torus}.   
\end{definition}

\begin{figure}[h]
\centering
\includegraphics[width=12cm]{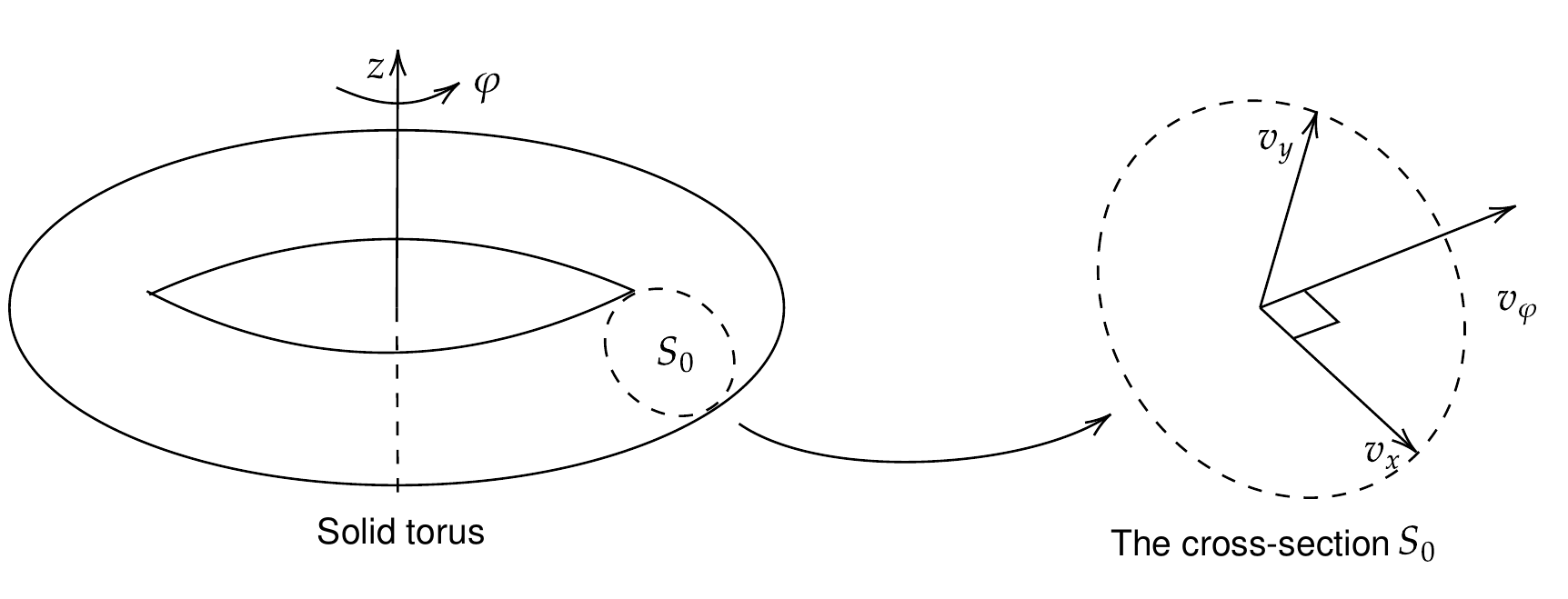}
\caption{Solid torus and the cross section}
\label{solid torus}
\end{figure}
In the figure above, the left-hand side represents the $\varphi=0$ cross section $S_0$ in \eqref{cross section}. On the right-hand side, we express the new coordinates as $v=(v_x,v_{\varphi},v_y)$. Here, $v_x$ and $v_y$ are perpendicular to each other, and $v_\varphi$-direction is perpendicular to $S_0$, as shown.   

\section{Ideas and Scheme to overcome the Main Difficulties}The major difficulty arises as the map $v^{\prime} \mapsto X(s_1;s,X(s;t,x,v),v^{\prime})$ can fail to be Lipschitz in a positive measure subset when the domain is not convex, in general. To overcome this difficulty, we first carefully study the inflection points of $\p\O$. In \cite{cylinder}, the billiard trajectory is two-dimensional essentially in the cylindrical domain. As the boundary is a real-analytic curve, inflection points (and corresponding tangential direction) are at most finitely many. On the other hand, in our case of Definition \ref{def:domain}, structure of inflection points and directions are complex. 
In fact, every point of the inner part (where $\tau_{1,*} < \tau < \tau_{2,*}$) is a saddle point, and inflection grazing always exists at all of these points. Therefore, in contrast to the cylindrical case of \cite{cylinder}, {\bf there are uncountably many inflection grazing in $\O$}. Moreover, it is impossible to keep track of the chaotic billiard trajectory emanating from such inflection grazing in $\O$. 
 We resolve this difficulty by classifying and measuring the size of trajectories of the inflection grazing using axial symmetric property and analyticity crucially. We explain the scheme for the trajectory with positive angular momentum without loss of generality: 
 \\ 

 {\bf Step 1.}  (Study the structure of inflection grazing and nonexistence of converging bounces.) In Section 3, we show all the point of  the inner part, i.e., $\tau_{1,*} < \tau < \tau_{2,*}$ (See Figure \ref{analytic convex curve_figure}), are saddle points with exactly two directions $I^{1}, I^{2}$, of which normal curvatures vanish. For another directions, we have positive or negative normal curvatures. (See Figure \ref{inflec-directions}.) Moreover, it is important to distinguish whether an inflection grazing is $\gamma_0^{I_{+}}$ or $\gamma_0^{I_{-}}$ as the backward in time trajectory must stop at $\gamma_0^{I_{-}}$ and a particle trajectory cannot propagate forward in time at $\gamma_0^{I_{+}}$ by definition \eqref{inflection +-}. We classify such cases using the intersection of $\p\O$ and tangential plane $T_{p}(\p\O)$ at $p\in\p\O$, which is determined by local convexity/concavity at the point. In Lemma \ref{inflection}, using analyticity and saddle structure of the boundary, we will prove that every saddle point $p\in\p\O$ has one $\gamma_0^{I_{-}}$ direction and one $\gamma_0^{I_{+}}$ direction on the tangential plane $T_{p}(\p\O)$ away from some finite points $\gamma(\tau)$ of $\tau \in (\tau_{1,*}, \tau_{2,*})$.   

For simplicity, let us consider standard solid torus (of which cross-section is a circle) with inner radius $r$ and revolving radius $R$. Then, boundary $\p\O$ can be parametrized by 
\begin{align*}
	\sigma(\theta,\varphi)=((R+r\cos \theta)\cos \varphi,(R+r\cos \theta)\sin \varphi,r\sin \theta), \quad 0\leq \theta <2\pi, \; 0\leq \varphi<2\pi,
\end{align*}
where $\theta$ represents rotation around the cross-section circle and $\varphi$ represents rotation around $z$-axis of revolution. Figure \ref{Intersection} below visualizes the intersection of tangent plane $T_p(\partial \O)$ at $p=\sigma(\theta,0)\in \p\O$ and the torus $(\p\O)$ for each $\theta=\frac{\pi}{2}+\frac{\pi}{12}, \frac{\pi}{2}+\frac{\pi}{3}$, and $\pi$. Except for $\theta=\pi$, there are two directions $I^1(\theta,\varphi)\in \gamma_0^{I_+}$ and $I^2(\theta,\varphi)\in \gamma_0^{I_-}$ on the tangent plane $T_p(\p\O)$. (See (A) and (B) of Figure \ref{Intersection} and Figure \ref{inflec-directions}.) The point $\theta=\pi$ is the case where there is no inflection grazing. ((C) of Figure \ref{Intersection}.)

\begin{figure}[h]
\centering
\begin{subfigure}[b]{0.3\textwidth}
\includegraphics[width=5cm]{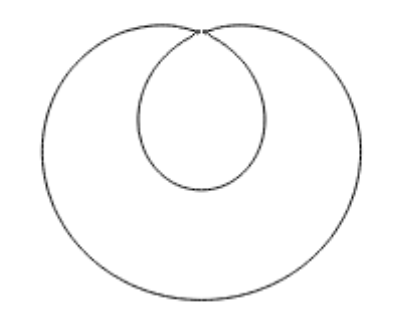}
\caption{$\theta=\frac{\pi}{2}+\frac{\pi}{12}$}
\end{subfigure}
\quad \quad \quad
\begin{subfigure}[b]{0.25\textwidth}
\includegraphics[width=4cm]{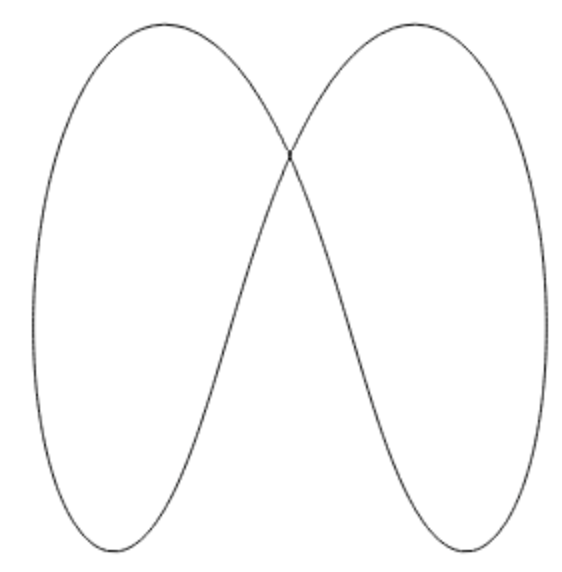}
\caption{$\theta=\frac{\pi}{2}+\frac{\pi}{3}$}
\end{subfigure}
\quad \quad \quad
\begin{subfigure}[b]{0.24\textwidth}
\includegraphics[width=4cm]{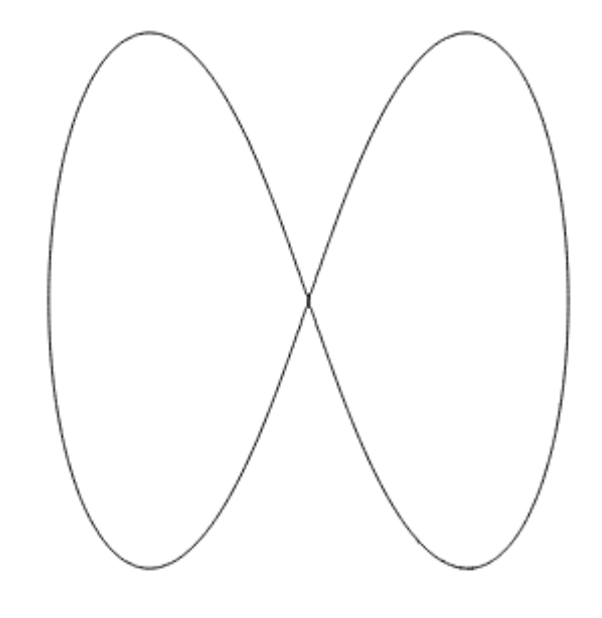}
\caption{$\theta=\pi$}
\end{subfigure}
\caption{Intersection between solid torus ($\p\O$) and tangent plane $T_p$ at $p=\sigma(\theta,0) \; (\varphi=0)$}
\label{Intersection}
\end{figure}

\hide
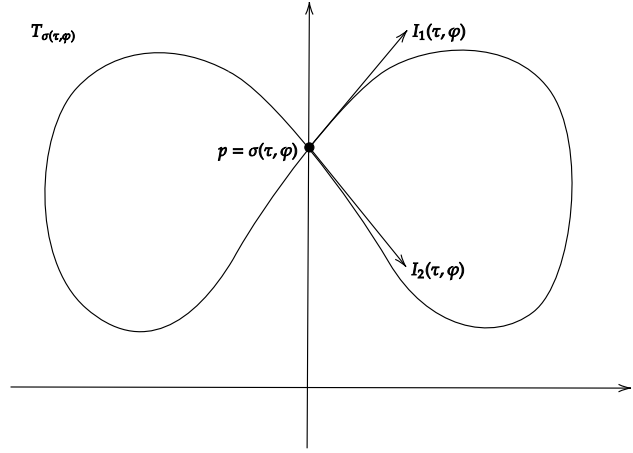
\begin{figure}
\begin{center}
\tikzset{every picture/.style={line width=0.75pt}} 
\begin{tikzpicture}[x=0.75pt,y=0.75pt,yscale=-0.8,xscale=0.8]

\draw    (275,606) .. controls (309,564) and (359,532) .. (408,532) .. controls (457,532) and (478,588) .. (477,641) .. controls (476,694) and (466,719) .. (421,720) .. controls (376,721) and (309,645) .. (275,606) .. controls (241,567) and (203,540) .. (159,541) .. controls (115,542) and (82,605) .. (83,648) .. controls (84,691) and (103,723) .. (147,724) .. controls (191,725) and (236,655) .. (275,606) -- cycle ;
\draw    (275,606) -- (326.93,523.69) ;
\draw [shift={(328,522)}, rotate = 122.25] [color={rgb, 255:red, 0; green, 0; blue, 0 }  ][line width=0.75]    (10.93,-3.29) .. controls (6.95,-1.4) and (3.31,-0.3) .. (0,0) .. controls (3.31,0.3) and (6.95,1.4) .. (10.93,3.29)   ;
\draw    (275,606) -- (326.86,680.36) ;
\draw [shift={(328,682)}, rotate = 235.11] [color={rgb, 255:red, 0; green, 0; blue, 0 }  ][line width=0.75]    (10.93,-3.29) .. controls (6.95,-1.4) and (3.31,-0.3) .. (0,0) .. controls (3.31,0.3) and (6.95,1.4) .. (10.93,3.29)   ;

\draw (282,685) node [anchor=north west][inner sep=0.75pt]    {$I^{2}( \theta ,\varphi )\in \gamma_0^{I_-}$};
\draw (298,499) node [anchor=north west][inner sep=0.75pt]    {$I^{1}( \theta ,\varphi )\in \gamma_0^{I_+}$};
\draw (191,595) node [anchor=north west][inner sep=0.75pt]    {$p=\sigma ( \theta ,\varphi )$};
\end{tikzpicture}
\end{center}
	\caption{General intersection structure of $T_p \cap \p\O$ and inflection directions} \label{inflec-directions}
\end{figure}
\unhide

\begin{figure}[h]
	\centering
	\includegraphics[width=10cm]{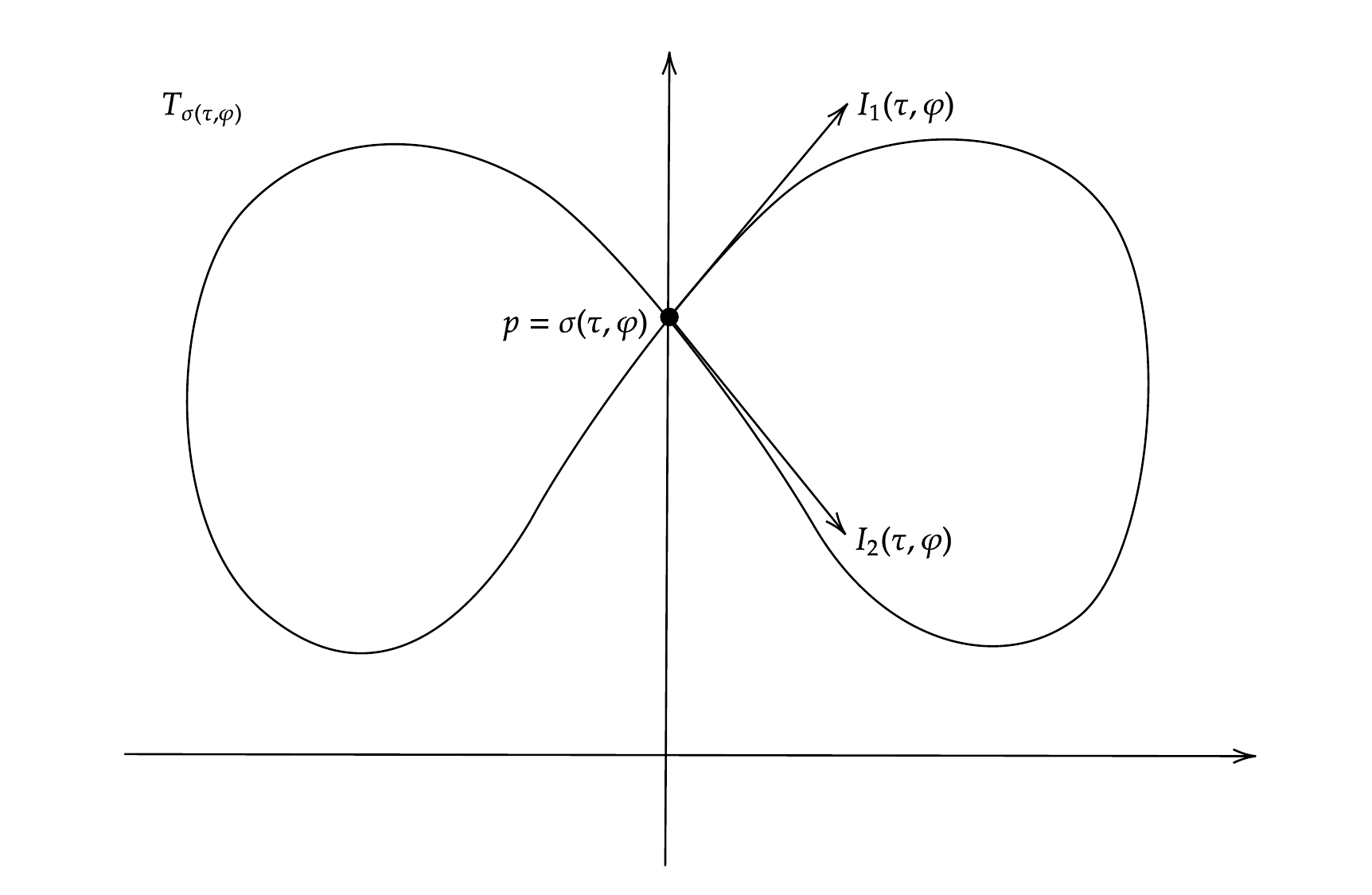}
	\caption{General intersection structure of $T_p \cap \p\O$ and inflection directions}
	\label{inflec-directions}
\end{figure}



Without loss of generality, let us fix a point $x\in \overline{\O}$ and consider velocity $v$ with unit speed $|v|=1$. We consider \eqref{XV}, the backward in time trajectory under specular reflection BC from $(x,v)$ and will study how many bounces are available up to finite travel length $L<\infty$. 
If there are infinitely many bounces in length $L$, bouncing points must be convergent. Such a phenomenon cannot happen in the outer part (where $\tau_{2,*} < \tau < \tau_{1,*}$) by uniform convexity (See velocity lemma : Lemma 1 of \cite{Guo10} or Lemma 2.7 of \cite{KimLee}, for example). Therefore we can assume the trajectory converges to a point in the inner part. See Figure \ref{dvarphidtau}. From axial symmetric structure, specular reflection, and convexity of the cross-section, we can derive (See Lemma \ref{difference})
\begin{align*}
	\Delta \tau_{i+1}-\Delta \tau_{i}= \mathcal{O}(\Delta \tau_{i+1}^2) +\mathcal{O}(\Delta \tau_{i}^2) +\mathcal{O}(\Delta \varphi_{i+1}^2) +\mathcal{O}(\Delta \varphi_{i}^2), \quad \text{for } \; i=1,2,\cdots,
\end{align*}
 for sufficiently small difference $\Delta\theta$ and $\Delta\tau$ except some cases : for example, if $\tau$ is too close to $\tau_{1,*}$ or $\tau_{2,*}$ of Figure \ref{analytic convex curve}, we remove inflection directions uniformly as in Figure \ref{remark 1}. If $p\in\p\O$ is an inflection point on the intersection $T_{p}(\p\O)\cap \p\O$, we also remove inflection direction as in Figure \ref{remark 2}. (Also see Figure \eqref{fig5:b}.) 
 The above estimate implies that $\varphi$ (axisymmetric) directional perturbation affects in higher order with small effect. Or equivalently, we can say that for converging bouncing points (small $\Delta\varphi$ and $\Delta\tau$), $\varphi$-directional perturbation is too weak to change the behavior of the $\tau$-directional property. 
However, this is a contradiction, because if $\Delta\tau$ is dominant, the trajectory cannot converge by the convexity of the cross-section with help of the velocity lemma again. 
In fact, we obtain $\sum_{j=1}^{N}\Delta \tau_{i}\rightarrow\infty$, and hence infinite bounce cannot happen. See Lemma \ref{no infty bounce} for a precise statement. \\

\hide
\begin{figure}
\begin{center}
\tikzset{every picture/.style={line width=0.75pt}} 
\begin{tikzpicture}[x=0.75pt,y=0.75pt,yscale=-1,xscale=1]

\draw    (64,822) .. controls (94,771) and (234,691) .. (311,706) ;
\draw    (304,821) .. controls (273.39,772.53) and (130.56,696.49) .. (52,710.75) ;
\draw    (179,669) -- (233,669) ;
\draw    (179,736) -- (233,669) ;
\draw    (179,736) -- (179,669) ;

\draw (233,660) node [anchor=north west][inner sep=0.75pt]    {$P_{i+1} =\sigma ( \tau _{i+1} ,\varphi _{i+1})$};
\draw (95,725) node [anchor=north west][inner sep=0.75pt]    {$P_{i} =\sigma ( \tau _{i} ,\varphi _{i})$};
\draw (186,635) node [anchor=north west][inner sep=0.75pt]    {$ \begin{array}{l}
\Delta \varphi _{i}\\
\ \ \wr 
\end{array}$};
\draw (141,690) node [anchor=north west][inner sep=0.75pt]    {$\Delta \tau _{i}  \sim $};
\end{tikzpicture}
\end{center}
\caption{Perturbation $\Delta\tau_{i}$ and $\Delta\varphi_{i}$} \label{dvarphidtau}
\end{figure}
\unhide

\begin{figure}[h]
	\centering
	\includegraphics[width=13cm]{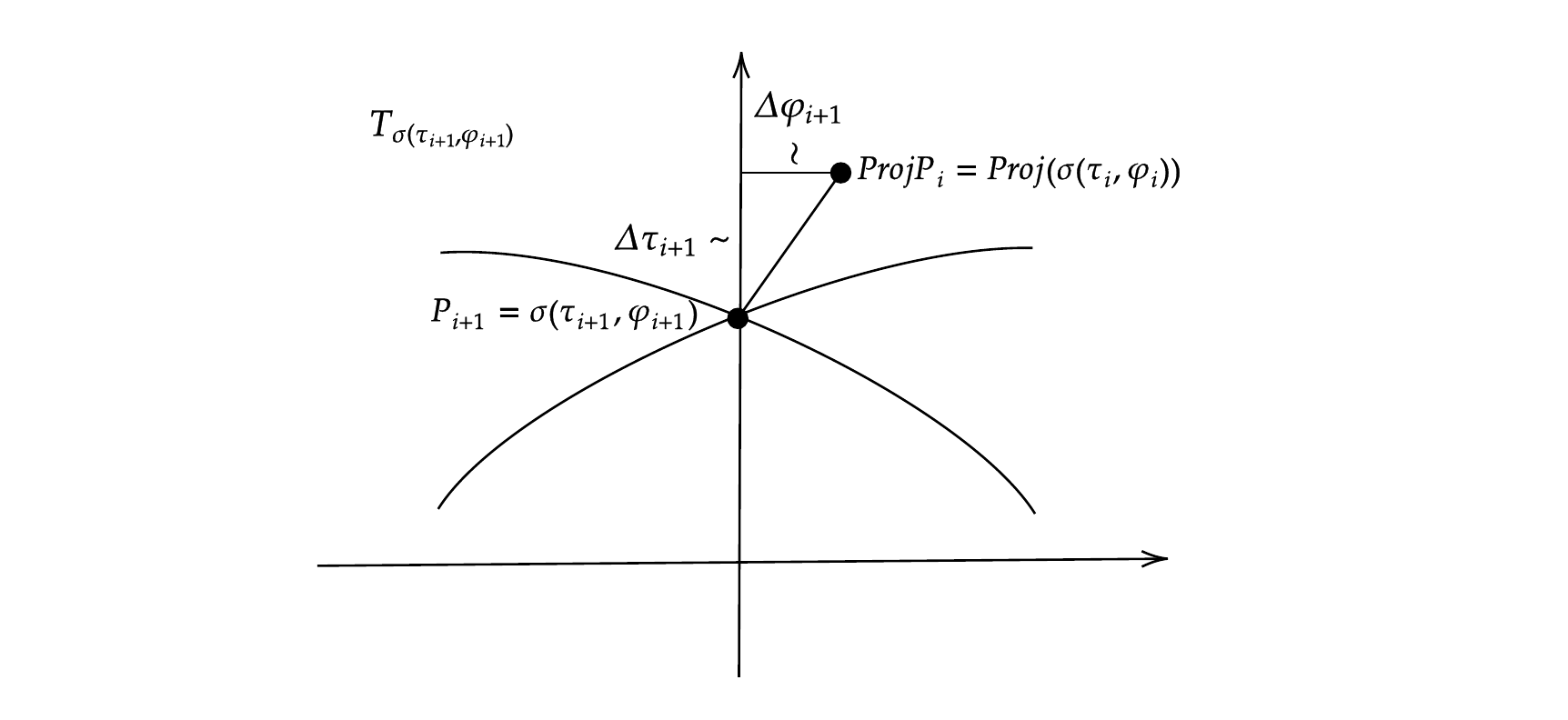}
	\caption{Perturbation $\Delta\tau_{i}$ and $\Delta\varphi_{i}$} 
	\label{dvarphidtau}
\end{figure}

{\bf Step 2.} (Uniform number of bounce and small neighborhood of bad directions.) In general toroidal domains, billiard trajectory \eqref{XV} is extremely chaotic with unpredictable encounter with inflection grazing. We will rely on analyticity, axis-symmetry, convex cross-section of $\p\O$, and compactness argument to derive some properties of the linear trajectory in $\O$. Of course, we note that the trajectory $(X(s;t,x,v), V(s;t,x,v))$ is not analytic in general if the trajectory belongs to $\gamma_0^{C}$ or $\gamma_0^{I}$. First, in previous {\bf Step 1}, we introduced some bad direction sets which have something to do with the possibility of infinite bounce in finite travel length, e.g., inflection directions at some points on $\p\O$. By conservation of angular momentum, such velocities look like a ring on $\S^{2}$ centered at $x\in \overline{\O}$ as in \eqref{small_ang} of Definition \ref{def ring}. Since we can confine such bad directions in some small sets on $\S^{2}$, we can define an open cover $\bigcup_{i=1}^{l_{R}} \{B(x_{i}^{R}, r_{i}^{R})\cap \overline{S_0}\}$ for $\overline{S_0}$ and the corresponding small bad directions $\mathcal{O}_{i}^{R}\subset \S^{2}$ such that the backward in time trajectory from $(x,v)$ avoids becoming bad phase points which described in {\bf Step 1},
\[
	 \text{ if $x\in B(x_{i}^{R}, r_{i}^{R})\cap \overline{S_0}$ and $v\notin \bigcap_{j\in I_x^R}\mathcal{O}_{j}^{R}$},
\]
where $I_x^R$ defined in \eqref{index_R}.\\

Using $ \{B(x_{i}^{R}, r_{i}^{R})\cap\overline{S_0}\}_{i=1}^{l_{R}}$ and $\{\mathcal{O}_{i}^{R}\}_{i=1}^{l_{R}}$, we carefully construct compact set $X^{\varepsilon}$ from which the backward in time trajectory for given finite length is ``good". (See Lemma \ref{lem Xe} for precise statement.) Unfortunately, however, $X^{\varepsilon}$ still contains some phase points whose trajectory becomes inflection grazing or concave grazing. For these cases, the backward in time trajectory is not analytic. In fact, we can expect H\"older regularity at most. So we define
\Be \notag
\begin{split}
	B^{\varepsilon}_{L} &:= \left \{ (x,v)\in X^{\varepsilon} : \exists k \in \mathbb{N} : (x^{k}, v^{k-1}(x,v))\in \gamma_{0}^{I-}, \ \text{and} \  \sum_{j=1}^k \vert x^j(x,v)-x^{j-1}(x,v) \vert \leq L \right \}, \\
\end{split}	
\Ee
which is proved to be closed set. By compactness of $B_{L}^{\varepsilon}$ and continuity of trajectory, we claim that there exist $K_{\varepsilon, L} < \infty$ such that the backward in time trajectory from $(x,v) \in B_{L}^{\varepsilon}$ does not generate inflection grazing after its $K_{\varepsilon, L}$ bounces up to length $L$. Motivated by this fact, we define the following sets 
\begin{align*} 
	(G_1)_x&:= \{v \in \S^2 : (x,v)\in X^{\varepsilon}, \ (x^1(x,v),v^0(x,v))\notin \gamma_0^C \cup \gamma_0^I\},\\ 
	(B_1)_x&:=\{v \in \S^2: (x,v)\in X^{\varepsilon}, \ (x^1(x,v),v^0(x,v))\in \gamma_0^C \cup \gamma_0^I\},\\ 
	&\vdots \notag \\
	(G_j)_x&:=\{v \in (G_{j-1})_x: (x^j(x,v),v^{j-1}(x,v))\notin \gamma_0^C \cup \gamma_0^I\}, \notag \\ 
	(B_j)_x&:=\{v \in (G_{j-1})_x: (x^j(x,v),v^{j-1}(x,v))\in \gamma_0^C \cup \gamma_0^I\}, \notag \end{align*}
for all $1\leq j \leq K_{\varepsilon, L}$ for a fixed point $x\in \overline{\O}$ . We note that the backward in time trajectory does not graze until its $j$-th bounce and is analytic if $(x,v) \in (G_{j})_{x}$. And then combining all $\{(B_{j})_{x}\}_{j=1}^{K}$ and using continuity, we construct a finite open cover $\{ B(x_{i}^{\mathcal{K}}, r_{i}^{\mathcal{K}})\cap\overline{S_0} \}_{i=1}^{l_{\mathcal{K}}}$ and corresponding bad direction sets $\{\mathcal{O}_{i}^{\mathcal{K}}\}_{i=1}^{l_{\mathcal{K}}}$ such that the backward in times trajectory from $(x,v)$ avoids being $\gamma_0^{I}$ or $\gamma_0^{C}$
\[
	\text{ if $x\in B(x_{i}^{\mathcal{K}}, r_{i}^{\mathcal{K}})\cap \overline{S_0}$ and $v\notin \bigcap_{j \in I_x^{\mathcal{K}}}\mathcal{O}_{j}^{\mathcal{K}}$},
\]
where $I_x^{\mathcal{K}}$ is defined in \eqref{index_K}. As we constructed $X^{\varepsilon}$ before, we can also construct an open set $\mathcal{K}$ and compact set $\mathcal{K}^{c}$ so that for $\mathcal{K}^{c}$, trajectory does not contain converging bounces and does not generate inflection grazing up to length $L$. And using compactness and continuity again, we obtain the uniform number of bounces $M_{\varepsilon, L}$ away from $\mathcal{K}$. See Lemma \ref{bounce K}. \\

{\bf Step 3.} ($(B_j)_{x}$ has measure zero) Roughly speaking, each $\mathcal{O}_{i}^{\mathcal{K}}$ must contain all $\bigcup_{x\in B(x_{i}^{\mathcal{K}}, r_{i}^{\mathcal{K}})}\bigcup_{j=1}^{M} (B_{j})_{x}$. 
Therefore, we should claim $\mathfrak{m}_{2}((B_{j})_{x})=0$ to conclude that $\mathcal{O}_{i}^{\mathcal{K}}$ can be chosen arbitrarily small. Here, let us briefly explain $\gamma_0^{I}$ case which is easier than $\gamma_0^{C}$ case. (See Proposition \ref{mB zero}.) We crucially use axial symmetry of $\O$. Throughout one-to-one corresponding property between $(\theta, \varphi)$ (parametrization of inflection grazing) or $(x,v)\in B_{L}^{\varepsilon}$ (when trajectory is well-defined and there are no converging bounces), we define
\begin{equation}\label{Fj}
	F_j(\theta,\varphi):=X(s(\theta,\varphi);0,\sigma(\theta,\varphi),I^{2}(\theta,\varphi)) - x , 
\end{equation}
when forward in time trajectory from $(\sigma(\theta,\varphi),I^{2}(\theta,\varphi))$ passes $\overline{S_0}$ after $j$-th bouncing. Here $s(\theta, \varphi)$ is arrival time function which measures traveling time from $(y,u) = (\sigma(\theta,\varphi),I^{2}(\theta,\varphi)) \in \gamma_{0}^{I_{-}}$ to $S_0$. See Lemma \ref{lem time s} for detail. Once we prove that $F_{j}$ is analytic, we have the following dichotomy (a) or (b) : \\

(a) If the above function is identically zero, all the inflection grazing which pass $x$ yield $\mathfrak{m}_{2}(B_{j})_{x} > 0$ in general. Fortunately, however, we can exclude such cases, away from small sets, using the axial symmetry structure of $\O$. From the following observation,
\[
\frac{\partial X(s;0,\tau,\varphi,\eta)}{\partial \varphi} = C \hat{\varphi}(0), \quad \text{where $\hat{\varphi}(0)$ is defined in Definition \ref{cross section}},
\]
the direction of $V(s(\theta, \varphi); 0, \sigma(\theta,\varphi),I^{2}(\theta,\varphi))$ must satisfy some specific direction which we can exclude in the definition of $X^{\varepsilon}$.  \\

(b) Therefore, from analyticity, the zero set of  \eqref{Fj} has measure zero in $(\theta, \varphi)$ space. And by Lusin's Theorem (Lemma \ref{Lusin}), corresponding $v=V(s(\theta, \varphi); 0, \sigma(\theta,\varphi),I^{2}(\theta,\varphi))$ has also measure zero in $\S^{2}$. \\

Treating concave grazing $\gamma_0^{C}$ in $(B_{j})_{x}$ is more tricky since concave grazing is parametrized by $(\theta, \varphi, \tau)$ for $0 < \tau < 1$. We carefully apply analytic version of implicit function theorem to identify dimension between domain and target space. (Note that we need the same dimension between domain and range to apply Lusin's theorem.) See Proposition \ref{mB zero} for detail. At last, after taking a small neighborhood and performing some simple scaling for general speed $\frac{1}{N} \leq |v| \leq N$,  we obtain the phase decomposition Theorem \ref{bad set thm} which says our trajectory up to fixed length is nongrazing away from small sets as follows.   

\begin{theorem}[Informal statement of Theorem \ref{bad set thm}]
 There exists a compact set $\mathcal{Z}^{\varepsilon} \subset \overline{S_{\varphi}}\times \VN$ such that if $(x,v) \in \mathcal{Z}^{\varepsilon}$, then the backward in time trajectory from $(x,v)$ up to length $L$ is uniformly nongrazing with uniformly finite number for bounce. Moreover, for each $x\in \overline{\O}$,
\[
	\mathfrak{m}_{3}\{ v\in \R^{3} \ : \ (x,v)\in \{\overline{S_{\varphi}}\times \VN\}\backslash \mathcal{Z}^{\varepsilon}  \} \lesssim \varepsilon \ll 1.
\]
\end{theorem}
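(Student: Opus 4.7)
The plan is to combine the three-step strategy outlined in the introduction into a constructive decomposition of the phase space $\overline{S_\varphi}\times\VN$. By the rescaling convention of Remark \ref{LNT} (setting $L=NT$), I would first carry out the entire argument at unit speed on $\overline{S_0}\times\mathbb{S}^2$ and then transfer the result to $\VN$ by the scaling $v\mapsto v/|v|$ together with the corresponding adjustment of the travel length.

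First, I would dispose of the accumulation-of-bounces obstruction (Step 1). On the uniformly convex outer part of $\partial\O$ the classical velocity lemma prevents infinite bouncing in finite time, so any converging subsequence of collision points must accumulate inside the inner part $\tau\in(\tau_{1,*},\tau_{2,*})$. Using axial symmetry, unit-speed specular reflection, and convexity of the planar cross-section, the difference identity $\Delta\tau_{i+1}-\Delta\tau_i=\mathcal{O}(\Delta\tau_{i+1}^2+\Delta\tau_i^2+\Delta\varphi_{i+1}^2+\Delta\varphi_i^2)$ of Lemma \ref{difference} forces $\sum\Delta\tau_i=\infty$, provided one first excludes uniformly small neighborhoods of the inflection directions and of the endpoints $\tau_{1,*},\tau_{2,*}$. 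This yields Lemma \ref{no infty bounce} and produces the first finite open cover $\{B(x_i^R,r_i^R)\cap\overline{S_0}\}_{i=1}^{l_R}$ together with the corresponding ring-shaped bad-direction sets $\mathcal{O}_i^R\subset\mathbb{S}^2$, whose removal defines the compact set $X^\varepsilon$ of Lemma \ref{lem Xe}.

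For Step 2, on $X^\varepsilon$ the only remaining obstruction is a backward trajectory that encounters $\gamma_0^{I_-}\cup\gamma_0^C$ within travel length $L$. I would use that the subset $B_L^\varepsilon\subset X^\varepsilon$ of trajectories ever hitting $\gamma_0^{I_-}$ before length $L$ is closed in the compact $X^\varepsilon$ and hence itself compact, so by continuity of the billiard cycle up to the first inflection encounter there exists a uniform bound $K_{\varepsilon,L}$ on the number of bounces preceding such an event. Iterating the splitting into $(G_j)_x$ and $(B_j)_x$ for $1\le j\le K_{\varepsilon,L}$ then assembles a second open cover $\{B(x_i^{\mathcal{K}},r_i^{\mathcal{K}})\cap\overline{S_0}\}$ with bad-direction sets $\{\mathcal{O}_i^{\mathcal{K}}\}$; combining it with the first cover defines $\mathcal{Z}^\varepsilon$ and yields the uniform bounce bound $M_{\varepsilon,L}$ on its complement, in the spirit of Lemma \ref{bounce K}.

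The principal obstacle is Step 3: proving $\mathfrak{m}_2((B_j)_x)=0$ so that each $\mathcal{O}_i^{\mathcal{K}}$ may be chosen with spherical measure of order $\varepsilon$. For the inflection-grazing contribution I would parametrize the incoming $\gamma_0^{I_-}$ orbits by $(\theta,\varphi)$ through the analytic selection $(\sigma(\theta,\varphi),I^2(\theta,\varphi))$, introduce the arrival-time function $s(\theta,\varphi)$ of Lemma \ref{lem time s}, and analyze the analytic map $F_j(\theta,\varphi)$ defined in \eqref{Fj}. By real-analyticity one obtains a dichotomy: either $F_j\equiv 0$ on a component, in which case axisymmetry forces the arrival velocity to lie along the single direction $C\hat\varphi(0)$ that was already removed in the construction of $X^\varepsilon$, or the zero set $F_j^{-1}(0)$ is a proper analytic subvariety of the $(\theta,\varphi)$ plane and hence has two-dimensional measure zero, and Lusin's Theorem (Lemma \ref{Lusin}) then transports this statement to the sphere of initial velocities. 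The concave-grazing contribution carries an extra parameter $\tau\in(0,1)$ and requires a preliminary application of an analytic implicit function theorem to equate source and target dimensions before Lusin applies; this is the most delicate point of Proposition \ref{mB zero}. Finally, an $\varepsilon$-thickening across both finite covers, a union over the finite index sets, and the rescaling $L\to NT$ deliver the desired bound $\mathfrak{m}_3(\{v:(x,v)\notin\mathcal{Z}^\varepsilon\})\lesssim\varepsilon$ for each fixed $x\in\overline{\O}$.
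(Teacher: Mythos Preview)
Your proposal follows the paper's strategy closely and identifies all the essential ingredients: the accumulation argument via Lemma \ref{difference} and Lemma \ref{no infty bounce}, the compact set $X^\varepsilon$ of Lemma \ref{lem Xe}, the compactness of $B_L^\varepsilon$ giving the first uniform bound $K_{\varepsilon,L}$, the analytic map $F_j$ with its dichotomy, Lusin's theorem, and the implicit-function-theorem reduction for the concave case.

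The one structural point you compress is that the paper runs the $(B_j)_x$ removal in \emph{two} passes, not one. After removing $\bigcup_{j\le K}(B_j)_x$ you obtain the compact set $\mathcal{K}^c$ of Lemma \ref{construct K}, on which inflection grazing is eliminated along the entire length $L$ (Proposition \ref{unif bounce} guarantees any inflection must occur within the first $K$ bounces). This is what allows Lemma \ref{bounce K} to produce the uniform total bound $M\ge K$. But on $\mathcal{K}^c$, \emph{concave} grazing can still occur at bounces $K+1,\dots,M$, so the trajectory is not yet uniformly nongrazing. The paper therefore extends $(B_j)_x$ to $K<j\le M$ (now detecting only concave grazing, see \eqref{def GB2}), proves $\mathfrak{m}_2((B_j)_x)=0$ for these indices as well in Proposition \ref{mB zero}, and performs a further removal to obtain $\mathcal{Y}^\varepsilon$ (Lemma \ref{S2 version}) before scaling to $\mathcal{Z}^\varepsilon$. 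Your Step 2 stops at $j\le K_{\varepsilon,L}$ and already declares the result to be $\mathcal{Z}^\varepsilon$; this would give a uniform bounce count but not the uniform nongrazing lower bound required by the theorem. The fix is mechanical with the tools you already list.

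A minor point in Step 3: ruling out the identically-zero branch of the dichotomy uses more than the single direction $\hat\varphi(0)$. In Proposition \ref{mB zero} the paper splits into subcases and invokes both the exclusion $R_{\hat\varphi}^\varepsilon$ (when $\partial\tilde F_j/\partial\varphi=0$ forces the arrival velocity to be parallel to $\hat\varphi$) and the exclusion $R_{sym}^\varepsilon$ (when the implicit-function reduction forces $|V_1|=|V_3|$); both were built into $X^\varepsilon$ precisely for this purpose.
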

We lastly give some comments about {\it sticky grazing points} which were introduced in \cite{cylinder}. Sticky grazing points are a point $x\in\overline{\O}$ from which backward in time trajectory with some directions of considerable measure in $\VN$ hits boundary grazingly, $\gamma_0^{I}$ or $\gamma_0^{C}$. In the case of a general 2D analytic non-convex domain such points exist in general, but at most uniformly finite number of such points only. Such an issue can be resolved by choosing an arbitrarily small neighborhood around the sticky grazing point and considering small traveling time of the trajectory near the point. In this paper, however, axial symmetry plays a crucial role to confine all such directions in a measure zero set as shown in $\mathfrak{m}_{2}((B_{j})_{x})=0$. \\



{\bf Step 4.} (Triple iteration technique and constructive $L^{2}$-coercivity)
Nondegeneracy condition
\Be \label{C'}
\Big| \det \frac{\p X(s;t,x,v)}{\p v}\Big|  \geq \epsilon > 0
\Ee
is the heart of $L^{2}$-$L^{\infty}$ bootstrap argument. The most generalized tool to obtain \eqref{C'} is the triple iteration technique which was introduced in \cite{KimLee}. Once we have phase decomposition theorem stated above (or see Theorem \ref{bad set thm} for full statement), we can apply triple Duhamel iteration analysis to obtain nondegeneracy \eqref{C'} away from some bad sets. Performing triple Duhamel iteration in \cite{KimLee} is very complicated and lengthy. In this paper, we do not provide full detail of triple iteration process, because we can directly adopt many key computation and Lemmas from \cite{KimLee}. 


In this paper, we often use the following convenient notation: 
\begin{definition}(Big O notation) \label{Big O}
	Let $f(x)$ and $g(x)$ be two real-valued functions defined on some subset $\subset \R$. For some $x_0 \in \R$, we say
	\begin{align*}
		f(x)=\mathcal{O}(g(x)), \quad as \; x\rightarrow x_0,
	\end{align*}
	if there exist positive numbers $\delta$ and $M$ such that 
	\begin{align*}
		\vert f(x) \vert \leq M \vert g(x) \vert \quad when \; 0<\vert x-x_0 \vert <\delta,
	\end{align*}
	where $M$ is a generic constant. For some variable $y \in \R$, we define 
	\begin{align*}
		f(x)=\mathcal{O}_{y} (g(x)), \quad as \; x\rightarrow x_0,
	\end{align*}
	if there exist positive numbers $\delta$ and $M_y$(depending on $y$) such that 
	\begin{align*}
		\vert f(x) \vert \leq M_y \vert g(x) \vert \quad when \; 0<\vert x-x_0 \vert<\delta. \\
	\end{align*}
\end{definition}

\section{Phase Boundary Decomposition}

Recall that the analytic convex curve $\gamma(\tau)=(\gamma_1(\tau),0,\gamma_2(\tau))$ satisfies $\gamma_2'(\tau)<0$ for $\tau\in (\tau_{1,*},\tau_{2,*})$ where $\tau_{1,*}$ and $\tau_{2,*}$ be the zeros of $\gamma_2'$, see Figure \ref{analytic convex curve}. Also, we assume that $\gamma_1'(\tau) < 0$ for $\tau \in (\tau_{1,*},\lambda_*)$ and $\gamma_1'(\tau)>0$ for $\tau \in (\lambda_*,\tau_{2,*})$. Here, $\lambda_*$ be the zero of $\gamma_1'$.   

\begin{lemma} \label{analytic_zeros} \cite{Guo10}
	Suppose that $f$ is a non-constant real-analytic function on a connected open domain $D\subset\R^n$. Then, the zero set 
	\begin{align*}
		Z_f:=\{x\in D : f(x)=0\}
	\end{align*}
	has zero $n$-dimensional Lebesgue measure. If $n=1$, the zero set $Z(f)$ has no limit point. Moreover, $f$ has only finitely many zeros on compact subsets of $\R$. 
	\end{lemma}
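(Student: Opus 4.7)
I would prove the three assertions in reverse order to their logical dependence: first the one-dimensional discreteness statement, then finiteness on compacta, and finally the measure-zero statement in general dimension.

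\textbf{Step 1 (The case $n=1$).} I would prove the contrapositive of the no-limit-point claim: if $Z_f$ has a limit point $x_0 \in D$, then $f \equiv 0$ on $D$. At such $x_0$, iterated applications of Rolle's theorem to a sequence of zeros accumulating at $x_0$ show $f^{(k)}(x_0)=0$ for every $k \geq 0$; hence the Taylor series of $f$ at $x_0$ vanishes identically. By real-analyticity, $f$ is zero on a neighborhood of $x_0$. The set $U:=\{x\in D: f \text{ vanishes in a neighborhood of } x\}$ is open by construction and closed in $D$ (if $x_n\in U$ and $x_n\to x$, then $x$ is a limit of zeros, so the same Taylor-vanishing argument applies to $x$); since $D$ is connected and $U\neq \emptyset$, we get $U=D$, i.e. $f\equiv 0$, contradicting the hypothesis. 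Therefore $Z_f$ has no limit point in $D$. Then finiteness on any compact $K\subset D$ follows from Bolzano–Weierstrass: an infinite subset of $K\cap Z_f$ would have a limit point in $K\subset D$.

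\textbf{Step 2 (The case $n\geq 2$, by induction).} Assume the measure-zero statement for dimensions up to $n-1$. Fix $p\in D$ with $f(p)=0$ (else there is nothing to prove locally). Since $f$ is non-constant real-analytic on a connected set, the same connectedness-of-$U$ argument as above shows there is no open subset of $D$ on which $f$ vanishes identically, so at every point some Taylor coefficient is nonzero. Pick a direction $e\in\mathbb{S}^{n-1}$ (e.g.\ $e_n$ after a generic linear change of coordinates) such that $f$ is not identically zero along some line parallel to $e$ through a neighborhood of $p$; write coordinates as $x=(x',x_n)\in \R^{n-1}\times \R$ so that $e=e_n$. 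Shrink to a box $U=U'\times I \subset D$ on which $f$ is analytic. Define
\begin{equation*}
A := \{x' \in U' : f(x',\cdot)\equiv 0 \text{ on } I\}.
\end{equation*}
On $U'\setminus A$, for each fixed $x'$ the function $f(x',\cdot)$ is a non-constant real-analytic function of one variable, so by Step~1 its zero set in $I$ is at most countable (discrete), hence of $1$-dimensional Lebesgue measure zero. By Fubini's theorem, the zero set of $f$ over $(U'\setminus A)\times I$ has $n$-dimensional measure zero.

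\textbf{Step 3 (Controlling $A$).} It remains to show that $A$ has $(n-1)$-dimensional measure zero. Observe that $x'\in A$ iff $\partial_{x_n}^k f(x',x_n^0)=0$ for every $k\geq 0$, where $x_n^0$ is any fixed point in $I$. In particular $A\subseteq \bigcap_{k\geq 0} Z_{g_k}$, where $g_k(x'):=\partial_{x_n}^k f(x',x_n^0)$ is real-analytic on $U'$. By the choice of direction $e=e_n$ and the point $p$, at least one $g_{k_0}$ is not identically zero on $U'$ (otherwise the full Taylor expansion of $f$ in $x_n$ at $x_n^0$ would vanish for all $x'$, making $f\equiv 0$ on $U$). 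By the inductive hypothesis applied to $g_{k_0}$, its zero set $Z_{g_{k_0}}\subset U'$ has $(n-1)$-dimensional measure zero, and so does $A$. Combining with Step~2 gives $|Z_f\cap U|=0$. Covering $D$ by countably many such boxes (since $D$ is second countable) finishes the argument.

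\textbf{Main obstacle.} The only delicate point is the global-to-local reduction: showing that because $f$ is non-constant on a \emph{connected} domain, at every point some Taylor coefficient is nonzero (so the induction can actually be set up at an arbitrary $p$). This is exactly the clopen/connectedness argument in Step~1, and it is the real content of the real-analytic identity theorem; once this is in hand, Fubini plus induction is essentially automatic.
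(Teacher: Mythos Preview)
Your proof is correct and in fact considerably more complete than the paper's. The paper simply cites Lemma~4 of \cite{Guo10} for both the measure-zero statement in general dimension and the no-limit-point statement in dimension one, and then only supplies the short Bolzano--Weierstrass argument deducing finiteness on compacta from the absence of limit points---exactly the argument you also give at the end of your Step~1. By contrast, you prove everything from scratch: the identity-theorem argument in Step~1 (Taylor vanishing plus the clopen-connectedness trick) and the induction-plus-Fubini scheme in Steps~2--3 for $n\ge 2$. What your approach buys is self-containment and transparency about where real-analyticity enters; what the paper's approach buys is brevity, since the result is classical and already recorded in \cite{Guo10}.
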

	\begin{proof}
	From Lemma 4 in \cite{Guo10}, the zero set $Z_f$ has zero $n$-dimensional Lebesgue measure and  has no limit point in $\R$. Thus, it suffices to show that a non-constant real analytic function $f$ in a compact region has finitely many zeros. If $f$ has infinitely many zeros in a compact region $K\subset \R$, then there is a sequence $(x_n)_{n\in \mathbb{N}}$ such that 
\begin{align*}
	   f(x_n)=0 \quad \forall n\in \mathbb{N} \quad \text{and} \quad x_n \rightarrow x_*\in K 
\end{align*}
by compactness. Due to continuity of $f$, it holds that $f(x_*)=0$. However, in $\R$, the zero set of identically nonzero real-analytic function has no limit point, which leads to a contradiction.   
	\end{proof}

\begin{lemma} \cite{Diff_Geom} \label{Euler thm} 
	(Euler's formula) Let $\gamma$ be a regularized (unit speed) curve on an oriented surface $\mathcal{S} \subset \R^{3}$.  The scalar
\begin{align*}
	\kappa_n(\tau) :=\ddot{\gamma}(\tau)\cdot n(\gamma(\tau)),
\end{align*}
is called the normal curvature of $\gamma$ where $n(\gamma(\tau))$ is a fixed unit normal vector of $\mathcal{S}$ at $\gamma(\tau)\in \mathcal{S}$. If $\kappa_1$ and $\kappa_2$ be the principal curvatures with non-zero principal vectors $\mathbf{t}_1$ and $\mathbf{t}_2$, then the normal curvature $\kappa_n$ of $\gamma$ is 
	\begin{align*}
		\kappa_n=\kappa_1 \cos^2 \vartheta + \kappa_2 \sin^2 \vartheta,
	\end{align*}
	where $\vartheta$ is the oriented angle between vectors $\mathbf{t}_1$ and $\dot \gamma$.  
\end{lemma}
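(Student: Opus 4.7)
The plan is to recognize this as the classical Euler formula from surface theory and to reduce it, via the second fundamental form (or equivalently the shape operator), to a routine linear-algebraic diagonalization. I would proceed as follows.

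First, I would translate the scalar $\kappa_n(\tau) = \ddot\gamma(\tau)\cdot n(\gamma(\tau))$ into the language of a symmetric bilinear form on the tangent plane $T_p\mathcal{S}$ at $p=\gamma(\tau)$. Since $\gamma$ is unit-speed, $\dot\gamma \in T_p\mathcal{S}$ and $|\dot\gamma|=1$. Differentiating the identity $\dot\gamma(\tau)\cdot n(\gamma(\tau))=0$ in $\tau$ gives
\begin{equation*}
\ddot\gamma\cdot n + \dot\gamma\cdot \frac{d}{d\tau}\bigl(n\circ\gamma\bigr) = 0,
\end{equation*}
so $\kappa_n = \ddot\gamma\cdot n = -\dot\gamma\cdot D_{\dot\gamma}n$. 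Define the shape operator $S_p : T_p\mathcal{S}\to T_p\mathcal{S}$ by $S_p(X) = -D_X n$; standard arguments (using $n\cdot n=1$, hence $D_X n \in T_p\mathcal{S}$, and the symmetry of mixed partials applied to a local parametrization) show that $S_p$ is a well-defined, self-adjoint linear operator, and that $\kappa_n = \langle S_p(\dot\gamma),\dot\gamma\rangle$.

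Next, I would invoke the spectral theorem for self-adjoint operators on the two-dimensional inner-product space $T_p\mathcal{S}$: the principal curvatures $\kappa_1,\kappa_2$ are by definition the eigenvalues of $S_p$, and the principal vectors $\mathbf{t}_1,\mathbf{t}_2$ can be chosen to form an orthonormal basis of $T_p\mathcal{S}$ (in the non-umbilic case $\kappa_1\ne\kappa_2$ this orthogonality is automatic; in the umbilic case any orthonormal basis works and the formula is trivial). Decomposing $\dot\gamma(\tau) = \cos\vartheta\,\mathbf{t}_1 + \sin\vartheta\,\mathbf{t}_2$, where $\vartheta$ is the oriented angle from $\mathbf{t}_1$ to $\dot\gamma$, I would simply compute
\begin{equation*}
\kappa_n = \langle S_p(\cos\vartheta\,\mathbf{t}_1+\sin\vartheta\,\mathbf{t}_2),\, \cos\vartheta\,\mathbf{t}_1+\sin\vartheta\,\mathbf{t}_2\rangle = \kappa_1\cos^2\vartheta + \kappa_2\sin^2\vartheta,
\end{equation*}
using $S_p\mathbf{t}_i = \kappa_i\mathbf{t}_i$ and $\langle\mathbf{t}_i,\mathbf{t}_j\rangle=\delta_{ij}$. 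The cross terms $\kappa_1\langle\mathbf{t}_1,\mathbf{t}_2\rangle$ and $\kappa_2\langle\mathbf{t}_2,\mathbf{t}_1\rangle$ vanish, which is exactly Euler's identity.

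The main obstacle here is not analytic but conceptual: one must set up the shape operator correctly and justify its self-adjointness before the eigen-decomposition can be applied. Once that is in place, the proof is a two-line calculation. Since the statement is a standard result cited to a differential geometry reference \cite{Diff_Geom}, I expect the paper itself to either omit the proof or present only the brief diagonalization step above; the substantive content of the lemma lies in its later application (e.g.\ in identifying the two vanishing-curvature directions $I^1, I^2$ at inflection points of $\partial\Omega$ in Section 3), not in its derivation.
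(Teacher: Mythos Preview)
Your proof is correct and is exactly the standard derivation of Euler's formula via the self-adjoint shape operator. As you anticipated, the paper does not prove this lemma at all; it simply cites it from the differential geometry textbook \cite{Diff_Geom} and uses it later in Lemma~\ref{inflection} to locate the two zero-normal-curvature directions on the tangent plane.
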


The following lemma is obvious every point is a saddle point in inner part $\tau_{1,*}< \tau <  \tau_{2,*}$.
\begin{lemma} \label{lem:nograzing_V}
	 $(\gamma(\tau),\dot\gamma(\tau))\in\gamma_0^{C} \cup \gamma_0^{I} $ only if $\tau_{1,*}< \tau <  \tau_{2,*}$. (See Section 1.1 and Figure \ref{analytic convex curve} for the definition of $\tau_{1,*}, \tau_{2,*}$.)
\end{lemma}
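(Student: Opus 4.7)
The plan is to prove the contrapositive: for every $\tau \in [a,\tau_{1,*}] \cup [\tau_{2,*},b)$ I will show that $(\gamma(\tau),\dot\gamma(\tau)) \in \gamma_0^V$, which by the disjoint partition in \eqref{grazing} forces $(\gamma(\tau),\dot\gamma(\tau)) \notin \gamma_0^C \cup \gamma_0^I$. The single ingredient is that the second-order behavior of $\xi$ along the tangent line at $\gamma(\tau)$ is controlled by the strict convexity inequality \eqref{gamma:convex}.

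First, applying the chain rule to $\xi(x,y,z)=\overline{\xi}(\sqrt{x^{2}+y^{2}},z)$ at $(x,y,z)=(\gamma_{1}(\tau),0,\gamma_{2}(\tau))$, and using that $\overline{\xi}=0$ on $\gamma$ while $\overline{\xi}<0$ inside the planar cross-section, I identify the outward unit normal as $n(\gamma(\tau))=(\gamma_{2}'(\tau),0,-\gamma_{1}'(\tau))$; this is a unit vector because $\gamma$ is unit-speed. One then checks immediately that $n\cdot\dot\gamma=0$ with $\dot\gamma(\tau)=(\gamma_{1}'(\tau),0,\gamma_{2}'(\tau))$, so $(\gamma(\tau),\dot\gamma(\tau))\in\gamma_{0}$.

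Next, differentiating the identity $\xi(\gamma(\tau))\equiv 0$ twice in $\tau$ yields $\dot\gamma^{\top} D^{2}\xi\,\dot\gamma + \nabla\xi\cdot\ddot\gamma = 0$, so the Taylor expansion of $\xi$ along the straight line $\gamma(\tau)+s\dot\gamma(\tau)$ becomes
\begin{align*}
\xi\bigl(\gamma(\tau)+s\dot\gamma(\tau)\bigr)
&= -\tfrac{s^{2}}{2}\,\nabla\xi(\gamma(\tau))\cdot\ddot\gamma(\tau) + O(s^{3}) \\
&= \tfrac{s^{2}}{2}\,|\nabla\xi(\gamma(\tau))|\,\bigl(\gamma_{1}'(\tau)\gamma_{2}''(\tau)-\gamma_{2}'(\tau)\gamma_{1}''(\tau)\bigr) + O(s^{3}),
\end{align*}
where the last equality substitutes $\nabla\xi=|\nabla\xi|\,n$ with the formula for $n$ above. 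The leading coefficient is strictly positive by \eqref{gamma:convex}, so $\xi(\gamma(\tau)+s\dot\gamma(\tau))>0$ for all sufficiently small $s\neq 0$; the tangent line therefore leaves $\overline{\Omega}$ immediately in both $\pm\dot\gamma$ directions.

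By the definition \eqref{def:tb} of the backward exit time, this two-sided immediate exit reads $t_{\mathbf{b}}(\gamma(\tau),\dot\gamma(\tau))=t_{\mathbf{b}}(\gamma(\tau),-\dot\gamma(\tau))=0$, which via \eqref{grazing} places $(\gamma(\tau),\dot\gamma(\tau))$ in $\gamma_{0}^{V}$, finishing the contrapositive. Note that the argument never uses the hypothesis $\tau\notin(\tau_{1,*},\tau_{2,*})$, so the ``only if'' clause is in fact vacuously satisfied; the introductory remark about inner-part saddle points refers instead to the \emph{other} tangent directions (the inflection directions $I^{1},I^{2}$ from the introduction), where Euler's formula in Lemma~\ref{Euler thm} allows the normal curvature to vanish and produce $\gamma_{0}^{C}\cup\gamma_{0}^{I}$, while the meridional principal direction $\dot\gamma$ itself retains strictly negative normal curvature throughout $[a,b)$. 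The only care point is sign-tracking for $n$ in the $(r,z)$-plane orientation implied by \eqref{gamma:convex}; once that is fixed, no real obstacle arises.
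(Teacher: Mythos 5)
Your Taylor-expansion computation is sound as far as it goes: since $\nabla\xi\cdot\dot\gamma=0$ and $\dot\gamma^{T}D^{2}\xi\,\dot\gamma=-\nabla\xi\cdot\ddot\gamma=|\nabla\xi|\bigl(\gamma_1'\gamma_2''-\gamma_2'\gamma_1''\bigr)>0$ by \eqref{gamma:convex}, the tangent line indeed leaves $\overline{\Omega}$ immediately on both sides, so $(\gamma(\tau),\dot\gamma(\tau))\in\gamma_0^{V}$ for every $\tau$, and your sign bookkeeping is consistent with the outward normal \eqref{out N}. The paper itself gives no proof (it declares the lemma obvious from the fact that only the inner band consists of saddle points), so there is no argument to compare line by line.

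The genuine gap is interpretive but consequential: by reading the lemma as a statement about the single meridional direction $\dot\gamma(\tau)$ and then declaring it vacuous, you prove something strictly weaker than what the lemma is later cited for. It is invoked to guarantee that concave and inflection grazing in \emph{any} tangential direction can only occur at $\sigma(\tau,\varphi)$ with $\tau\in(\tau_{1,*},\tau_{2,*})$ — see Step 1 of the proof of Lemma \ref{inflection} (``we only focus the domain $\tau\in(\tau_{1,*},\tau_{2,*})$ by Lemma \ref{lem:nograzing_V}'') and the start of Section 4.2 (``grazing of trajectory cannot happen for $\tau\in[\tau_{2,*}+\varepsilon,\tau_{1,*}-\varepsilon]$ \dots by Lemma \ref{lem:nograzing_V}''). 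Your closing remark explicitly pushes that content off to ``other tangent directions,'' so the version you establish cannot support those citations. The repair uses exactly your expansion with an arbitrary tangential $v$ at $\sigma(\tau,\varphi)$: $\xi(\sigma+sv)=\tfrac{s^{2}}{2}\,v^{T}D^{2}\xi\,v+O(s^{3})$, and $v^{T}D^{2}\xi\,v$ is $|\nabla\xi|$ times the normal curvature $\kappa_n(v)$ of $\partial\Omega$ in direction $v$. By Euler's formula (Lemma \ref{Euler thm}) together with the principal curvatures $\kappa_1=\gamma_2'(\tau)/\gamma_1(\tau)$, $\kappa_2=\kappa(\tau)$ computed in the proof of Lemma \ref{inflection}, these have a common nonzero sign precisely when $\gamma_2'(\tau)\neq 0$ with the outer-band sign, and your meridional computation pins that common sign to the ``exit'' side; hence every tangential phase point over the outer band lies in $\gamma_0^{V}$, which is the statement actually needed. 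At $\tau=\tau_{1,*},\tau_{2,*}$ the directions $\pm\hat\varphi$ have $\kappa_n=0$ and require the separate elementary observation that the corresponding latitude circle sits at an extremum of the height of the convex cross-section, so the tangent line still exits $\overline{\Omega}$ immediately. Finally, be careful not to argue via ``convexity of $\xi$'' for general directions: $v^{T}D^{2}\xi\,v$ genuinely changes sign on the inner band, which is exactly why the restriction $\tau\notin(\tau_{1,*},\tau_{2,*})$ matters.
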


\begin{definition} \label{tangent plane}
At $\sigma(\tau_0,0)=(\gamma_1(\tau_0),0,\gamma_2(\tau_0))\in \p \O \; (\varphi=0)$, the outward normal vector $N_{\sigma(\tau_0,0)}$ is 
\begin{align*}
	N_{\sigma(\tau_0,0)}=(\gamma_2'(\tau_0),0,-\gamma_1'(\tau_0)).
\end{align*}
In general, $N_{\sigma(\tau,\varphi)}$ denotes the outward normal vector at $\sigma(\tau,\varphi)$ throughout this paper. We define the tangent plane $T_{\sigma(\tau_0,0)}$ at $\sigma(\tau_0,0)$ as 
\begin{align*}
	T_{\sigma(\tau_0,0)}:=\Big \{ (x,y,z): x=\gamma_1(\tau_0)+\frac{\gamma_1'(\tau_0)}{\gamma_2'(\tau_0)}(z-\gamma_2(\tau_0))\Big \}.
\end{align*}
We define a new 2D coordinate of the tangent plane $T_{\sigma(\tau_0,0)}$,
\begin{align} \label{new coord in tp}
	(\tilde{y},\tilde{z}):=\Big(y, \frac{z}{\vert \gamma_2'(\tau_0)\vert}\Big), \quad \text{for } (x,y,z)\in T_{\sigma(\tau_0,0)}.
\end{align}
Moreover, direction $\hat{\tilde{y}},\hat{\tilde{z}}$ of $\tilde{y}, \tilde{z}$ are
\begin{align} \label{new direction}
	\hat{\tilde{y}}=\hat{y}, \quad \hat{\tilde{z}}=\begin{pmatrix}
	\cos \theta_* & 0 & \sin \theta_* \\ 0 & 1 & 0 \\ -\sin \theta_* & 0 & \cos \theta_*
	\end{pmatrix} \hat{z},
\end{align}
where $\hat{y}=(0,1,0),\hat{z}=(0,0,1)$ and $\theta_*$ satisfies that $ \cos \theta_* = \vert \gamma_2'(\tau_0)\vert$. Using the new coordinate \eqref{new coord in tp} and direction \eqref{new direction} of the tangent space $T_{\sigma(\tau_0,0)}$, we have the following coordinate map 
\begin{align} \label{coord map}
	(x,y,z)\mapsto y \hat{\tilde{y}}+ \frac{z}{\vert \gamma_2'(\tau_0)\vert} \hat{\tilde{z}},
\end{align}
for $(x,y,z) \in T_{\sigma(\tau_0,0)}$.   
\end{definition}

\begin{remark}
In \eqref{coord map}, $\hat{\tilde{y}}$ and $\hat{\tilde{z}}$ satisfy that 
\begin{align*}
&	N_{\sigma(\tau_0,0)}\cdot \hat{\tilde{y}}=(\gamma_2'(\tau_0),0,-\gamma_1'(\tau_0))\cdot (0,1,0) = 0, \\
& N_{\sigma(\tau_0,0)}\cdot \hat{\tilde{z}} =(\gamma_2'(\tau_0),0,-\gamma_1'(\tau_0))\cdot (\sin\theta_*,0,\cos \theta_*)=0,
\end{align*}
where we have used $\cos\theta_*=-\gamma_2'(\tau_0)$ and $\sin \theta_*=-\gamma_1'(\tau_0)$. Hence, we can consider $y\hat{\tilde{y}}+\frac{z}{\vert \gamma_2'(\tau_0)\vert} \hat{\tilde{z}}$ in \eqref{coord map} as a coordinate in the tangent plane $T_{\sigma(\tau_0,0)}$. The reason why we set $\tilde{z}$ in \eqref{new coord in tp} will be explained through Figure \ref{new coordinate}. Figure \ref{new coordinate} shows the tangent plane $T_{\sigma(\tau_0,0)}$ at $\sigma(\tau_0,0)$ in $xz$-plane $(\varphi=0)$. Since $\cos \theta_*=N_{\sigma(\tau_0,0)}\cdot (-1,0,0)=\vert \gamma_2'(\tau_0) \vert$, $\tilde{z}$ was put as $\frac{z}{\vert \gamma_2'(\tau_0)\vert}$. Throughout this paper, points in $T_{\sigma(\tau_0,0)}$ can be expressed through a new coordinate system such as \eqref{coord map}.   
\end{remark}

\begin{figure}[h]
\centering
\includegraphics[width=15cm]{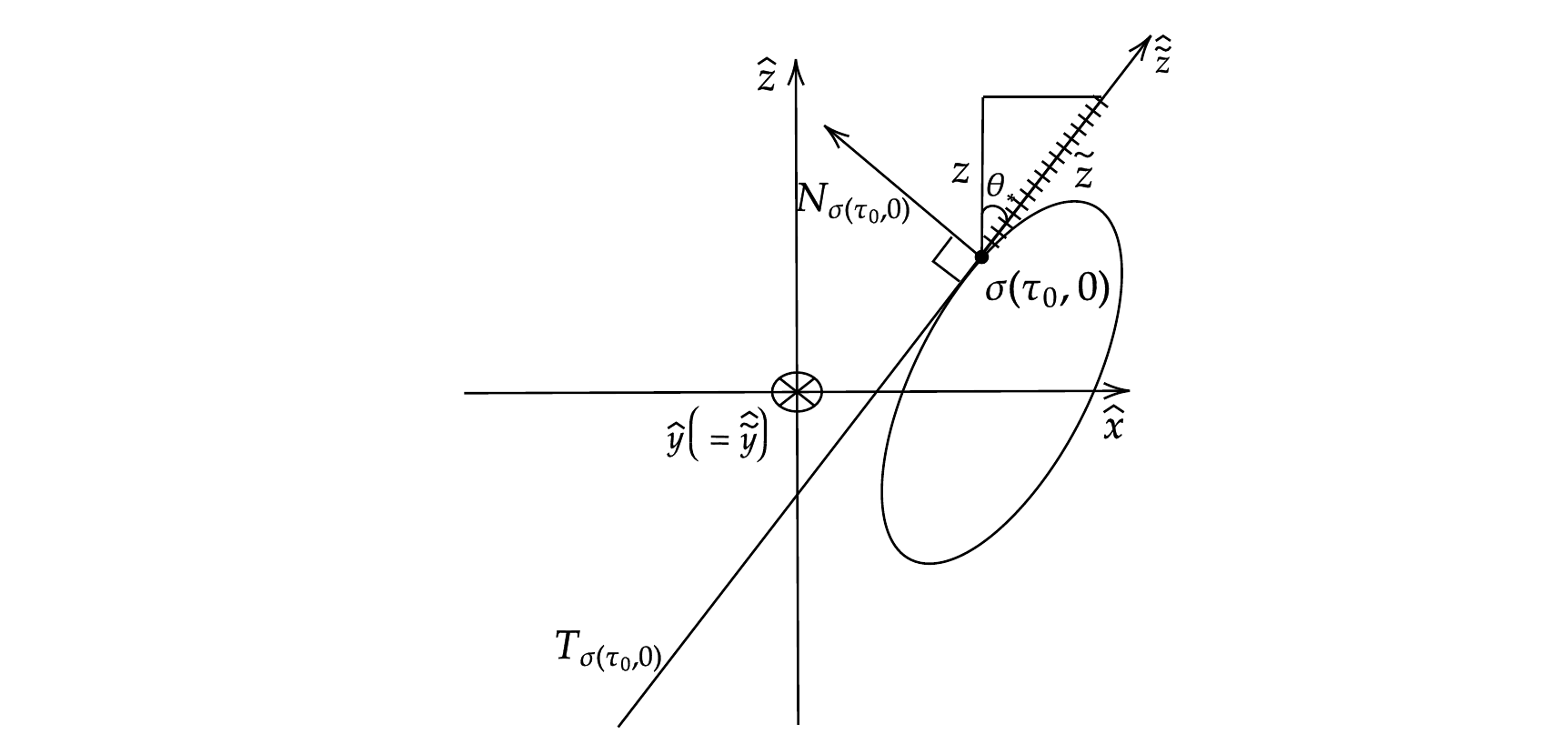}
\caption{New coordinate $\tilde{y},\tilde{z}$ of the tangent plane $T_{\sigma(\tau_0,0)}$ (direction of $\hat{y}=\hat{\tilde{y}}$ is inward to the paper)}
\label{new coordinate}
\end{figure}

\subsection{Inflection grazing set}

Now, we study local inflection structure.
\begin{lemma} \label{inflection}
Assume that $\tau_{1,*}$ and $\tau_{2,*}$ are zeros of $\gamma_2'$ for the unit-speed analytic convex curve $\gamma(\tau)=(\gamma_1(\tau),0,\gamma_2(\tau))$. Let us define
\begin{align} \label{def h}
	Z_{h} := \Big\{ \tau\in(\tau_{1,*},\tau_{2,*}) \ : \ h(\tau):= \frac{\gamma_1'(\tau)}{\gamma_1(\tau)}(\gamma_1(\tau)\kappa(\tau)+\vert \gamma_2'(\tau)\vert)+\frac{\vert \gamma_2'(\tau)\vert \kappa'(\tau)}{3\kappa(\tau)} = 0 \Big\},
\end{align}
where 
\begin{align} \label{curvature}
\kappa(\tau) :=\sqrt{(\gamma_1''(\tau))^2+(\gamma_2''(\tau))^2} 
\end{align}
is the curvature of a unit-speed curve $\gamma$ at $\tau$. Then, the zero set $Z_h$ is a finite union of points, denoted by 
\begin{align} \label{Zh_ell}
Z_h=\bigcup_{j=1}^{\ell}\{\tau_j^{\mathbf{z}}\}, \quad \text{for some finite } \ell =\ell_{\Omega} <\infty.
\end{align}
Moreover, for $(\tau,\varphi) \in (\tau_{1,*},\tau_{2,*})\backslash Z_h \times [0,2\pi)$, there exist two directions $I^1(\tau,\varphi),I^2(\tau,\varphi)$\\
$\in \S^2$ which satisfy 
\begin{align*}
	(\sigma(\tau,\varphi),I^1(\tau,\varphi))\in \gamma_0^{I_+}, \quad (\sigma (\tau,\varphi),I^2(\tau,\varphi))\in \gamma_0^{I_-},
\end{align*}
and have positivie angular momentums with respect to $z$-axis. (See Figure \ref{inflec-directions}.) Here, $\gamma_0^{\pm}$ are defined in \eqref{inflection +-}. In other words, $(\sigma(\tau,\varphi),I^k(\tau,\varphi))\in \gamma_0^I$ for $k=1,2$.  
\end{lemma}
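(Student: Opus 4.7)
\medskip
\noindent\textbf{Proof plan.}
The plan is to expand the defining function $\xi$ on the tangent plane $T_{\sigma(\tau,\varphi)}$ up to third order and read off the two inflection directions together with their signs. By the axial symmetry of $\xi$ it suffices to work at $\varphi=0$ and rotate; so fix $\tau_0\in(\tau_{1,*},\tau_{2,*})$ and use the coordinates $(\tilde y,\tilde z)$ of Definition \ref{tangent plane} centred at $p:=\sigma(\tau_0,0)$. Since $\overline\xi$ is real analytic and $\gamma_1>0$ on $[a,b)$, the function $\xi(x,y,z)=\overline\xi(\sqrt{x^2+y^2},z)$ is real analytic in a neighbourhood of $p$, and the restriction $G(\tilde y,\tilde z):=\xi(p+\tilde y\,\hat{\tilde y}+\tilde z\,\hat{\tilde z})$ is real analytic with $G(0,0)=0$ and $\nabla G(0,0)=0$. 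I would keep $\tilde y$ as the azimuthal direction and $\tilde z$ as the direction tangent to the rotated curve $\gamma$, so that a $\tilde z$-translation corresponds to a displacement along $\dot\gamma(\tau_0)$ and a $\tilde y$-translation corresponds (to first order) to an azimuthal rotation around the $z$-axis; these two facts let me compute the Taylor coefficients of $G$ purely in terms of $\gamma_1,\gamma_2$ and their derivatives at $\tau_0$.

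The quadratic part of $G$ is the second fundamental form of $\partial\Omega$ at $p$. The principal directions of $\partial\Omega$ coincide with $\hat{\tilde y}$ and $\hat{\tilde z}$: the $\tilde z$-principal curvature equals $\kappa(\tau_0)>0$ (the curvature \eqref{curvature} of the planar generator $\gamma$), while the $\tilde y$-principal curvature equals $-\gamma_1'(\tau_0)/\gamma_1(\tau_0)\cdot\text{(sign correction)}$; a direct computation starting from $\overline\xi(\gamma_1(\tau_0)+\rho,\gamma_2(\tau_0)+\zeta)$ and differentiating $\sqrt{x^2+y^2}$ twice at $p$ gives that this latter principal curvature has the opposite sign to $\kappa(\tau_0)$ on the inner part $(\tau_{1,*},\tau_{2,*})$ (which is exactly the saddle statement of Lemma \ref{lem:nograzing_V}). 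Hence the quadratic form has signature $(+,-)$, and its null cone consists of exactly two lines through the origin in the $(\tilde y,\tilde z)$-plane, namely the asymptotic directions $I^1,I^2$. Each of these directions is tangent to $\partial\Omega$ by construction, and an $O(3)$ change of sign argument shows that, modulo cubic terms, the signs of $\xi$ at $p\pm s I^{k}$ coincide with the sign of the cubic remainder; thus $I^1,I^2$ are exactly the candidates for $\gamma_0^{I_\pm}$.

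Next I would compute the cubic term of $G$ along a null direction. Parametrising the intersection $T_p\cap\partial\Omega$ near $p$ by analytic implicit-function theorem yields two analytic branches whose tangents at $p$ are $I^1,I^2$; pulling $\xi$ back onto one of these branches and Taylor-expanding gives a one-variable expansion $\xi(p+sI^k+O(s^2))=c_k\,s^3+O(s^4)$, where $c_k$ is a polynomial in $\gamma_1(\tau_0),\gamma_1'(\tau_0),\gamma_2'(\tau_0),\kappa(\tau_0),\kappa'(\tau_0)$. A routine (but careful) bookkeeping of the three contributions — differentiating the rotational curvature factor $\gamma_1'/\gamma_1$, the arclength derivative of $\kappa$, and the mixed term coming from $|\gamma_2'|$ picked up in passing from $\tilde z$ to the actual arclength of $\gamma$ — shows that, after factoring out a strictly positive analytic prefactor depending on $\kappa$ and $\gamma_1$, one has
\begin{equation*}
c_k \;=\; \pm\, C(\tau_0)\, h(\tau_0),\qquad C(\tau_0)>0,
\end{equation*}
with opposite signs for $k=1,2$. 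Therefore, provided $\tau_0\notin Z_h$, the cubic coefficient is nonzero along each $I^k$ and opposite on the two branches; reading off the sign conventions from Remark \ref{inflection point}, one branch yields $(\sigma(\tau_0,0),I^1)\in\gamma_0^{I_+}$ and the other $(\sigma(\tau_0,0),I^2)\in\gamma_0^{I_-}$. Rotating by $\varphi$ preserves this classification and, by choosing the orientation of $I^1,I^2$ with the azimuthal component $\hat\varphi(\varphi)$-component positive, both directions have positive angular momentum about the $z$-axis.

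Finally, for the finiteness of $Z_h$ in \eqref{Zh_ell}: the function $h$ is real analytic on $(\tau_{1,*},\tau_{2,*})$ (because $\gamma_1>0$ and $\kappa>0$ on this interval by strict convexity \eqref{gamma:convex}), and at the endpoints $|\gamma_2'|=0$ while $|\gamma_1'|=1$ by unit-speed, so the second summand in \eqref{def h} vanishes and the first reduces to $\gamma_1'(\tau_{j,*})\kappa(\tau_{j,*})\neq 0$; hence $h$ extends continuously to nonzero values at $\tau_{1,*},\tau_{2,*}$. Consequently $h$ is a nontrivial real-analytic function on a compact extension of $[\tau_{1,*},\tau_{2,*}]$, so by Lemma \ref{analytic_zeros} the zero set $Z_h$ is finite. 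The main obstacle I anticipate is the bookkeeping of step three: carrying out the Taylor expansion of $\xi(x,y,z)=\overline\xi(\sqrt{x^2+y^2},z)$ in the oblique coordinates $(\tilde y,\tilde z)$ and showing that all of the resulting cubic contributions assemble precisely into the combination $h(\tau_0)$ requires careful use of unit-speed, $(\gamma_1')^2+(\gamma_2')^2=1$, and the identity $\gamma_1'\gamma_1''+\gamma_2'\gamma_2''=0$ to simplify repeatedly; any reshuffling mistake would produce a superficially different polynomial that happens to be equal, but checking equality through the identities is the most error-prone part of the proof.
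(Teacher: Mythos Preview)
Your plan is sound and lands on the same invariant $h(\tau_0)$ as the paper, but the execution differs. The paper does not expand $\xi$ directly: instead it parametrises the intersection curve $T_p\cap\partial\Omega$ by the generator parameter $\tau$ via \eqref{para intersection}, computes $\frac{d\tilde z}{dy}\big|_{\tau_0}$ to locate the asymptotic direction (matching Euler's formula), and then computes $\frac{d^2\tilde z}{dy^2}\big|_{\tau_0}$ through a somewhat lengthy Taylor expansion in $\tau-\tau_0$, arriving at $h(\tau_0)$ up to a positive factor $\frac{1}{\kappa\gamma_1}$. When this second derivative is nonzero the branch bends strictly to one side of its tangent line, which via Remark~\ref{inflection point} forces one orientation into $\gamma_0^{I_+}$ and the other into $\gamma_0^{I_-}$. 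Your proposal---read off the cubic coefficient of $G=\xi|_{T_p}$ along each asymptotic direction---is equivalent: writing the branch as $\tilde z=m\tilde y+\alpha\tilde y^2+\cdots$ and inserting into $G=0$ gives $2\kappa_2 m\,\alpha+C(1,m)=0$, so your $c_k$ and the paper's $2\alpha=\frac{d^2\tilde z}{dy^2}$ differ by the nonzero factor $-1/(\kappa_2 m)$. Your route avoids the implicit $\tau$-parametrisation, though the third-order bookkeeping you flag is of comparable length either way.

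Two small points. First, your phrase ``pulling $\xi$ back onto one of these branches'' cannot be what you mean, since $\xi\equiv 0$ on $T_p\cap\partial\Omega$; your displayed formula $\xi(p+sI^k)=c_k s^3+O(s^4)$ is the expansion along the straight line $p+sI^k$, where the quadratic vanishes because $I^k$ is asymptotic---just say that. Second, the opposite-sign claim $c_1=-c_2$ need not be buried in the computation: since $\xi(x,y,z)=\overline\xi(\sqrt{x^2+y^2},z)$ depends on $y$ only through $y^2$, the restriction $G$ is even in $\tilde y$, so its cubic part has only $\tilde y^2\tilde z$ and $\tilde z^3$ terms and is therefore odd in $\tilde z$; as $I^1,I^2$ (with positive angular momentum) share the $\tilde y$-component and have opposite $\tilde z$-components, $c_1=-c_2$ follows immediately. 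The finiteness argument for $Z_h$ is identical to the paper's.
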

\begin{proof}
\textbf{Step 1.} In this step, we consider the intersection of the boundary $\p\O$ and the tangent plane $T_{\sigma(\tau_0,0)}$ at point $\sigma(\tau_0,0)$. Here, the tangent plane $T_{\sigma(\tau_0,0)}$ was defined in Definition \ref{tangent plane}. The aim of this step is to calculate a tangent line of the intersection (or $\frac{d\tilde{z}}{dy}$) at $\sigma(\tau_0,0)$. \\
To find where inflection grazing occurs, we only focus the domain $\tau \in (\tau_{1,*},\tau_{2,*})$ by Lemma \ref{lem:nograzing_V}. Also, to ensure $\gamma_1'(\tau),\gamma_2'(\tau)<0$, we restrict our domain from $(\tau_{1,*},\tau_{2,*})$ to $(\tau_{1,*},\lambda_*)$ where $\gamma_1'(\lambda_*)=0$. Since our domain $\O$ is symmetric about the $z$-axis, it would be enough to consider $\varphi=0$ case without loss of generality. Recall that $\gamma$ is a unit speed curve. For $\tau_0\in (\tau_{1,*},\lambda_*)$, we calculate a outward normal vector $N_{\sigma(\tau_0,0)}$ at $\sigma(\tau_0,0)=(\gamma_1(\tau_0),0,\gamma_2(\tau_0))\in \partial \Omega$
\begin{align} \label{out N}
	N_{\sigma(\tau_0,0)}=(\gamma_2'(\tau_0),0,-\gamma_1'(\tau_0)).
\end{align}
Then, the tangent plane $T_{\sigma(\tau_0,0)}$ at $\sigma(\tau_0,0)$ is 
\begin{align} \label{cond1}
	\{(x,y,z)\in \R^3: x=\gamma_1(\tau_0)+\tan\theta_0(z-\gamma_2(\tau_0))\}, \quad \text{where }\tan \theta_0=\frac{\gamma_1'(\tau_0)}{\gamma_2'(\tau_0)}.
\end{align}
In general, a smooth curve $(x(\tau),y(\tau),z(\tau))\in \partial \Omega$ on the general toroidal domain satisfies 
\begin{align} \label{cond2}
\sqrt{x^2(\tau)+y^2(\tau)}=\gamma_1(\tau), \quad z(\tau)=\gamma_2(\tau). 
\end{align}

As we mentioned before, we consider the intersection of the boundary $\p\O$ and the tangent plane $T_{\sigma(\tau_0,0)}$ at $\sigma(\tau_0,0)$. Since our domain is axisymmetric about the $z$-axis, we firstly consider the case $\left (y>0, \tilde{z}>\frac{\gamma_2(\tau_0)}{\vert \gamma_2'(\tau_0)\vert}\right)$ where $\tilde{z}$ in Definition \ref{tangent plane}. From \eqref{cond1} and \eqref{cond2}, we denote the intersection as 
\begin{align} \label{para intersection}
\mathcal{I}_{\sigma(\tau_0,0)}:=\left\{(x,y,z)=(x(\tau),y(\tau),z(\tau)):
\begin{array}{cc}
x(\tau)=\gamma_1(\tau_0)+\tan\theta_0(z(\tau)-\gamma_2(\tau_0)),\\
y(\tau)=\sqrt{\gamma_1^2(\tau)-x^2(\tau)}
\end{array}
\right\}.
\end{align}
Using the new coordinate \eqref{new coord in tp} and \eqref{new direction} in the tangent plane $T_{\sigma(\tau_0,0)}$, the intersection $\mathcal{I}_{\sigma(\tau_0,0)}$ becomes 
\begin{align*}
	\Pi_{\widetilde{yz}} I_{\sigma(\tau_0,0)} := \Big \{(\tilde{y},\tilde{z})_{\widetilde{yz}}=\Big(y,\frac{z}{\vert \gamma_2'(\tau_0)\vert}\Big)_{\widetilde{yz}}: (x,y,z)\in \mathcal{I}_{\sigma(\tau_0,0)}\Big\},
\end{align*}
where $(a,b)_{\widetilde{yz}}=a\hat{\tilde{y}}+b\hat{\tilde{z}}$. Especially, the point $\sigma(\tau_0,0)=(\gamma_1(\tau_0),0,\gamma_2(\tau_0))\in T_{\sigma(\tau_0,0)}$ becomes 
\begin{align} \label{projected point}
\Pi_{\widetilde{yz}}\sigma(\tau_0,0):=\frac{\gamma_2(\tau_0)}{\vert \gamma_2'(\tau_0) \vert}\hat{\tilde{z}}.
\end{align} 
 We check whether the tangent line $T_\ell(y)$ in \eqref{tangent line} to the intersection $\Pi_{\widetilde{yz}} \mathcal{I}_{\sigma(\tau_0,0)}$ at $\Pi_{\widetilde{yz}}\sigma(\tau_0,0)$ has the direction of which the sign of normal curvature $\kappa_n$ in Lemma \ref{Euler thm} changes. \\
Recall that boundary $\p\O$ can be represented by $\{(x,y,z): \xi(\sqrt{x^2+y^2},z)=0\}$ using \eqref{analytic convex curve}. Then, equation of the intersection $\Pi_{\widetilde{yz}} \mathcal{I}_{\sigma(\tau_0,0)}$ becomes
\begin{align} \label{implicit inter}
	\xi\left(\sqrt{\left(\gamma_1(\tau_0)+\tan\theta_0\left(\vert \gamma_2'(\tau_0)\vert \tilde{z}-\gamma_2(\tau_0)\right)\right)^2+y^2},\vert \gamma_2'(\tau_0)\vert \tilde{z}\right)=0.
\end{align}
Since it is hard to calculate $\frac{d\tilde{z}}{dy}$ at $\tau=\tau_0$ directly from \eqref{implicit inter}, we calculate $\frac{d\tilde{z}}{dy}$ at $\tau=\tau_0$ using the Chain rule and \eqref{cond2}. Due to $\left. \frac{d\tilde{z}}{dy}\right \vert_{\tau=\tau_0} = \frac{1}{\vert \gamma_2'(\tau_0)\vert} \left. \frac{dz}{dy}\right \vert_{\tau=\tau_0}$, it suffices to compute $ \left. \frac{dz}{dy}\right \vert_{\tau=\tau_0}$. To apply the Chain rule, we calculate $\frac{dz}{d\tau}$ and $\frac{dy}{d\tau}$. From \eqref{cond2},  
\begin{align} \label{deriv y,z}
	y(\tau)=\sqrt{\gamma_1^2(\tau)-x^2(\tau)}, \quad 
	y'(\tau)=\frac{\gamma_1(\tau)\gamma_1'(\tau)-x(\tau)x'(\tau)}{\sqrt{\gamma_1^2(\tau)-x^2(\tau)}}, \quad z'(\tau)=\gamma_2'(\tau).
\end{align}
Then,
\begin{align}\label{dz/dy}
	\begin{split}
	\left. \frac{dz}{dy}\right \vert_{\tau=\tau_0}= \left. \frac{\frac{dz}{d\tau}}{\frac{dy}{d\tau}} \right \vert_{\tau=\tau_0}&=\lim_{\tau \rightarrow \tau_0-} \frac{\sqrt{\gamma_1^2(\tau)-x^2(\tau)}}{\gamma_1(\tau)\gamma_1'(\tau)-x(\tau)x'(\tau)}\gamma_2'(\tau) \\
	&=\lim_{\tau \rightarrow \tau_0-} \frac{\sqrt{\vert \gamma_1(\tau)-x(\tau)\vert}\sqrt{\gamma_1(\tau)+x(\tau)}}{\gamma_1(\tau)\gamma_1'(\tau)-x(\tau)x'(\tau)} \gamma_2'(\tau).
	\end{split}
\end{align}
Using Taylor's expansion at $\tau=\tau_0$, \eqref{cond2}, and \eqref{para intersection}, we have 
\begin{align} \label{calc1}
	\gamma_1(\tau)-x(\tau) &= \left(\gamma_1(\tau_0)+\gamma'_1(\tau_0)(\tau-\tau_0)+\frac{\gamma_1''(\tau_0)}{2}(\tau-\tau_0)^2 +\mathcal{O}_{\gamma_1}(\vert\tau-\tau_0\vert^3)\right) \nonumber \\
	&\quad -(\gamma_1(\tau_0)+\tan\theta_0(\gamma_2(\tau)-\gamma_2(\tau_0)) \nonumber\\
	&=\left(\gamma_1(\tau_0)+\gamma'_1(\tau_0)(\tau-\tau_0)+\frac{\gamma_1''(\tau_0)}{2}(\tau-\tau_0)^2 +\mathcal{O}_{\gamma_1}(\vert \tau-\tau_0\vert^3)\right)\nonumber\\
	&\quad -\left(\gamma_1(\tau_0)+\tan\theta_0 \gamma'_2(\tau_0)(\tau-\tau_0)+\tan\theta_0 \frac{\gamma''_2(\tau_0)}{2}(\tau-\tau_0)^2 +\mathcal{O}_{\gamma_2}(\vert\tau-\tau_0\vert^3)\right)\nonumber\\
	&=\frac{1}{2}\left(\gamma_1''(\tau_0)-\tan\theta_0\gamma_2''(\tau_0)\right)(\tau-\tau_0)^2+\mathcal{O}_{\gamma}(\vert\tau-\tau_0\vert^3)\nonumber\\
	&=\frac{\kappa(\tau_0)}{2\vert \gamma_2'(\tau_0)\vert}(\tau-\tau_0)^2+ \mathcal{O}_{\gamma}(\vert\tau-\tau_0\vert^3),
\end{align}
where we used 
\begin{align} \label{second deriv gamma}
(\gamma_1''(\tau_0),\gamma_2''(\tau_0))=\kappa(\tau_0)(-\gamma_2'(\tau_0),\gamma_1'(\tau_0)),
\end{align}
$\tan\theta_0=\frac{\gamma_1'(\tau_0)}{\gamma_2'(\tau_0)}$ from \eqref{cond1}, and $\gamma_2'(\tau_0)=-\vert \gamma_2'(\tau_0)\vert$ for $\tau_0\in (\tau_{1,*},\lambda_*)$. Using $\tan\theta_0=-\frac{\gamma_1'(\tau_0)}{\vert \gamma_2'(\tau_0)\vert}$, \eqref{para intersection}, \eqref{second deriv gamma} and Taylor expansion at $\tau=\tau_0$, we derive 
\begin{align} \label{calc2}
&	\gamma_1(\tau)\gamma_1'(\tau)-x(\tau)x'(\tau)\notag \\ &=\Big(\gamma_1(\tau_0)+\gamma_1'(\tau_0)(\tau-\tau_0)+\mathcal{O}_{\gamma_1}(\vert\tau-\tau_0\vert^2)\Big)\times\Big(\gamma_1'(\tau_0)+\gamma_1''(\tau_0)(\tau-\tau_0)+\mathcal{O}_{\gamma_1}(\vert\tau-\tau_0\vert^2)\Big) \nonumber \\
	&\quad -\Big(\gamma_1(\tau_0)+\tan\theta_0(\gamma_2(\tau)-\gamma_2(\tau_0))\Big)\times(\tan \theta_0\gamma_2'(\tau))\nonumber \\
	&=\Big(\gamma_1(\tau_0)\gamma_1'(\tau_0)+\gamma_1(\tau_0)\gamma_1''(\tau_0)(\tau-\tau_0)+(\gamma_1'(\tau_0))^2(\tau-\tau_0)+\mathcal{O}_{\gamma_1}(\vert\tau-\tau_0\vert^2)\Big)\nonumber\\
	&\quad -\Big(\gamma_1(\tau_0)+\tan\theta_0\gamma_2'(\tau_0)(\tau-\tau_0)+\mathcal{O}_{\gamma_2}(\vert\tau-\tau_0\vert^2)\Big)\nonumber \\
	&\qquad \times\Big(\tan\theta_0 \gamma_2'(\tau_0)+\tan\theta_0\gamma_2''(\tau_0)(\tau-\tau_0)+\tan\theta_0\mathcal{O}_{\gamma_2}(\vert\tau-\tau_0\vert^2)\Big)\nonumber \\
	&=\Big(\gamma_1(\tau_0)\gamma_1''(\tau_0)-\tan\theta_0\gamma_1(\tau_0)\gamma_2''(\tau_0)\Big)(\tau-\tau_0)+\mathcal{O}_{\gamma}(\vert\tau-\tau_0\vert^2)\nonumber \\
	&=\frac{\kappa(\tau_0)\gamma_1(\tau_0)}{\vert \gamma_2'(\tau_0)\vert}(\tau-\tau_0)+\mathcal{O}_{\gamma}(\vert\tau-\tau_0\vert^2).
\end{align}
Then, plugging \eqref{calc1} and \eqref{calc2} into $\frac{\sqrt{ \vert \gamma_1(\tau)-x(\tau)\vert}}{\gamma_1(\tau)\gamma_1'(\tau)-x(\tau)x'(\tau)}$ of \eqref{dz/dy}, we have for $\tau-\tau_0<0$ 
\begin{align} \label{calc3}
	 \frac{\sqrt{\vert \gamma_1(\tau)-x(\tau)\vert}}{\gamma_1(\tau)\gamma_1'(\tau)-x(\tau)x'(\tau)}=\frac{\sqrt{\frac{\kappa(\tau_0)}{2\vert \gamma_2'(\tau_0)\vert}+\mathcal{O}(\vert\tau-\tau_0\vert)}}{-\frac{\kappa(\tau_0)\gamma_1(\tau_0)}{\vert \gamma_2'(\tau_0)\vert }+\mathcal{O}(\vert\tau-\tau_0\vert)}.
\end{align}
From \eqref{dz/dy} and \eqref{calc3}, we derive that
\begin{align} \label{deriv tilde z}
	\lim_{\tau\rightarrow \tau_0-}\frac{dz}{dy}=\sqrt{\frac{\vert \gamma_2'(\tau_0)\vert}{\kappa(\tau_0)\gamma_1(\tau_0)}} \vert\gamma_2'(\tau_0)\vert, \quad \lim_{\tau\rightarrow \tau_0-}\frac{d\tilde{z}}{dy}=\sqrt{\frac{\vert \gamma_2'(\tau_0)\vert}{\kappa(\tau_0)\gamma_1(\tau_0)}},
\end{align}
where $\tilde{z}=\frac{z}{\vert \gamma_2'(\tau_0)\vert}$.  Using the first and second fundamental forms of the surface patch $\sigma$ at $(\tau,\varphi)=(\tau_0,0)$, the principal curvatures are the roots of the equation
\begin{align*}
&\left. \det \begin{pmatrix}
\sigma_{\tau\tau}\cdot N_{\sigma(\tau,\varphi)}-\kappa (\sigma_{\tau}\cdot \sigma_{\tau})& \sigma_{\tau\varphi}\cdot N_{\sigma(\tau,\varphi)}-\kappa (\sigma_{\tau}\cdot \sigma_{\varphi})\\ \sigma_{\tau\varphi}\cdot N_{\sigma(\tau,\varphi)}-\kappa (\sigma_{\tau}\cdot \sigma_{\varphi})& \sigma_{\varphi\varphi}\cdot N_{\sigma(\tau,\varphi)}-\kappa (\sigma_{\varphi}\cdot \sigma_{\varphi})
\end{pmatrix}\right \vert_{(\tau,\varphi)=(\tau_0,0)}\\
&=\det \begin{pmatrix}
\kappa(\tau_0)-\kappa & 0 \\ 0 & \gamma_1(\tau_0)\gamma_2'(\tau_0)-\kappa(\gamma_1(\tau_0))^2
\end{pmatrix}=0,
\end{align*}
where $\kappa$ is curvature of $\gamma$ as in \eqref{curvature}. Thus, the principal curvatures of $\sigma$ at $(\tau,\varphi)=(\tau_0,0)$ are 
\begin{align*}
	\kappa_1(\tau_0) = \frac{\gamma_2'(\tau_0)}{\gamma_1(\tau_0)}, \quad \kappa_2(\tau_0)=\kappa(\tau_0). 
\end{align*}
Note that the principal vectors corresponding to the principal curvature $\kappa$ are $\xi\sigma_{\tau}+\eta \sigma_{\varphi}$ such that 
\begin{align*}
\begin{pmatrix}
\kappa(\tau_0)-\kappa & 0 \\ 0 & \gamma_1(\tau_0)\gamma_2'(\tau_0)-\kappa(\gamma_1(\tau_0))^2
\end{pmatrix}\begin{pmatrix} \xi \\ \eta \end{pmatrix}= \begin{pmatrix} 0 \\ 0 \end{pmatrix}. 
\end{align*}
Since $\kappa_2(\tau_0)-\kappa_1(\tau_0)\neq0$, the principal vector corresponding to $\kappa_1(\tau_0)=\frac{\gamma_2'(\tau_0)}{\gamma_1(\tau_0)}$ is $\sigma_\varphi(\tau_0,0)=(0,\gamma_1(\tau_0),0)$. Then, by using Lemma \ref{Euler thm} and $\gamma_2'(\tau_0)=-\vert \gamma_2'(\tau_0) \vert$, the normal curvature $\kappa_n$ of $\gamma$ at $\tau=\tau_0$ is zero if
\begin{align} \label{tan vartheta}
	\tan\vartheta = \sqrt{\frac{\vert \gamma_2'(\tau_0)\vert}{\kappa(\tau_0)\gamma_1(\tau_0)}},
\end{align} 	
where $\vartheta$ is an angle between the $y$-axis (the principal vector corresponding to $\kappa_1$) and $\dot \gamma$. Hence, the tangent line $T_{\ell}$ to the intersection $\Pi_{\widetilde{yz}} \mathcal{I}_{\sigma(\tau_0,0)}$ at $\Pi_{\widetilde{yz}}\sigma(\tau_0,0)$ in \eqref{projected point} is 
\begin{align} \label{tangent line}
	T_{\ell}(y)=\left. \frac{d\tilde{z}}{dy}\right \vert_{\tau=\tau_0} y + \frac{\gamma_2(\tau_0)}{\vert \gamma_2'(\tau_0)\vert} = (\tan \vartheta) y +\frac{\gamma_2(\tau_0)}{\vert \gamma_2'(\tau_0)\vert}. 
\end{align}
and from \eqref{dz/dy} and \eqref{deriv tilde z}, we have 
\begin{align*}
	\tan\vartheta = \left. \frac{d\tilde{z}}{dy}\right \vert_{\tau=\tau_0}=\frac{1}{\vert \gamma_2'(\tau_0)\vert}\left. \frac{dz}{dy}\right \vert_{\tau=\tau_0}=\sqrt{\frac{\vert \gamma_2'(\tau_0)\vert}{\kappa(\tau_0)\gamma_1(\tau_0)}},
\end{align*}
and the normal curvature $\kappa_n=0$ on the tangent line $T_\ell$ define in \eqref{tangent line}. See $I_{1}$ direction in the Figure \ref{remark 1}. \\

 
In terms of defining normal curvature in Lemma \ref{Euler thm}, let us choose inward normal vector $-N_{\sigma(\tau_0,0)}$ instead of outward normal vector for convenience. Then, in the upper region of the tangent line $T_{\ell}(y)$ 
\begin{align*}
	\left\{({y}, \tilde{z}) : {y}>0, \tilde{z} > \frac{\gamma_2(\tau_0)}{\vert \gamma_2'(\tau_0)\vert}, \tilde{z} > T_{\ell}({y})\right\}, 
\end{align*}
a direction of which slope larger than $\tan\vartheta$ in $\tilde{y}\tilde{z}$-plane has a positive normal curvature ($\kappa_n>0$) at $\tau=\tau_0$. 
Similarly, in the lower region of the tangent line $T_{\ell}$ 
\begin{align*}
	\left\{(y, \tilde{z}) : y > 0, \tilde{z} > \frac{\gamma_2(\tau_0)}{\vert \gamma_2'(\tau_0)\vert}, \tilde{z} < T_{\ell}(y)\right\}, 
\end{align*}
a direction of which slope lesser than $\tan\vartheta$ in $\tilde{y}\tilde{z}$-plane has a negative normal curvature ($\kappa_n<0$) at $\tau=\tau_0$. \\

\textbf{Step 2.} Next, we calculate $\frac{d^2\tilde{z}}{dy^2}$ at $\tau=\tau_0$ and check when it becomes zero. 
Using chain rule, 
\begin{align} \label{d^2z/dy^2}
	\frac{d^2z}{dy^2}=\frac{d}{d\tau} \left(\frac{\frac{dz}{d\tau}}{\frac{dy}{d\tau}}\right) \frac{1}{\frac{dy}{d\tau}}=\frac{z''(\tau)y'(\tau)-z'(\tau)y''(\tau)}{(y'(\tau))^3}.
\end{align}
Notice that, from \eqref{deriv y,z} and \eqref{second deriv gamma}, as $\tau\rightarrow \tau_0$, 
\begin{align} \label{limit}
	\begin{split}
	z''(\tau)&=\gamma_2''(\tau)=\kappa(\tau)\gamma_1'(\tau)\rightarrow \kappa(\tau_0)\gamma_1'(\tau_0),\\
	y'(\tau)&=\frac{\gamma_1(\tau)\gamma_1'(\tau)-x(\tau)x'(\tau)}{\sqrt{\gamma_1^2(\tau)-x^2(\tau)}}\rightarrow -\sqrt{\frac{\kappa(\tau_0)\gamma_1(\tau_0)}{\vert \gamma_2'(\tau_0)\vert}},\\
	z'(\tau)&=\gamma_2'(\tau)\rightarrow\gamma_2'(\tau_0),
	\end{split}
\end{align}
where the limit of $y'(\tau)$ comes from \eqref{calc3}. Thus, to obtain $\frac{d^2z}{dy^2}$ at $\tau=\tau_0$, it suffices to compute $\lim_{\tau\rightarrow \tau_0}y''(\tau)$. From \eqref{deriv y,z}, we have 
\begin{align*}
	y(\tau)y'(\tau)=\gamma_1(\tau)\gamma_1'(\tau)-x(\tau)x'(\tau). 
\end{align*}
Taking derivative yields that 
\begin{align} \label{I+II}
	\begin{split}
	y(\tau)y''(\tau)+(y'(\tau))^2 &=(\gamma_1'(\tau))^2+\gamma_1(\tau)\gamma_1''(\tau)-(x'(\tau))^2-x(\tau)x''(\tau)\\
	&=(\gamma_1'(\tau)-x'(\tau))(\gamma_1'(\tau)+x'(\tau))+(\gamma_1(\tau)\gamma_1''(\tau)-x(\tau)x''(\tau))\\
	&:=I+II. 
	\end{split}
\end{align}
For part $I$ above, we use Taylor expansion at $\tau=\tau_0$, $\tan\theta_0=-\frac{\gamma_1'(\tau_0)}{\vert \gamma_2'(\tau_0)\vert}$, and \eqref{para intersection}: 
\begin{align} \label{calc4}
	\gamma_1'(\tau)-x'(\tau)&=\left(\gamma_1'(\tau_0)+\gamma_1''(\tau_0)(\tau-\tau_0)+\mathcal{O}(\vert\tau-\tau_0\vert^2)\right)-\left(\tan\theta_0\gamma_2'(\tau)\right) \nonumber\\
	&=\left(\gamma_1'(\tau_0)+\gamma_1''(\tau_0)(\tau-\tau_0)+\mathcal{O}(\vert\tau-\tau_0\vert^2)\right)\nonumber\\
	&\quad - \left(\tan\theta_0 \gamma_2'(\tau_0)+\tan\theta_0\gamma_2''(\tau_0)(\tau-\tau_0)+\mathcal{O}(\vert\tau-\tau_0\vert^2)\right)\nonumber\\
	&=\left(\gamma_1''(\tau_0)-\tan\theta_0\gamma_2''(\tau_0)\right)(\tau-\tau_0)+\mathcal{O}(\vert\tau-\tau_0\vert^2) \nonumber\\
	&=\frac{\kappa(\tau_0)}{\vert \gamma_2'(\tau_0)\vert}(\tau-\tau_0)+\mathcal{O}(\vert\tau-\tau_0\vert^2).
\end{align}
Thus, from \eqref{calc4}, the part $I$ be 
\begin{align} \label{part I}
	I=(\gamma_1'(\tau)+x'(\tau))\frac{\kappa(\tau_0)}{\vert \gamma_2'(\tau_0)\vert}(\tau-\tau_0)+\mathcal{O}(\vert\tau-\tau_0\vert^2).
\end{align}
For part $II$, using \eqref{para intersection} and \eqref{second deriv gamma}, each term in $II$ will be represented by Taylor's expansion:
\begin{align}
&	\gamma_1(\tau)\gamma_1''(\tau)=-\kappa(\tau)\gamma_1(\tau)\gamma_2'(\tau)\notag\\
&=-\kappa(\tau)\left(\gamma_1(\tau_0)+\gamma_1'(\tau_0)(\tau-\tau_0)+\mathcal{O}(\vert\tau-\tau_0\vert^2)\right)\nonumber\\
	& \quad \times \left(\gamma_2'(\tau_0)+\gamma_2''(\tau_0)(\tau-\tau_0)+\mathcal{O}(\vert\tau-\tau_0\vert^2)\right)\nonumber\\
	&=-\kappa(\tau)\left(\gamma_1(\tau_0)\gamma_2'(\tau_0)+(\gamma_1(\tau_0)\gamma_2''(\tau_0)+\gamma_1'(\tau_0)\gamma_2'(\tau_0))(\tau-\tau_0)\right)+\mathcal{O}(\vert\tau-\tau_0\vert^2) \label{calc5}, \\
	\notag\\
&	x(\tau)x''(\tau)=\tan\theta_0x(\tau)\gamma_2''(\tau)\notag\\
	&=\tan\theta_0\kappa(\tau)x(\tau)\gamma_1'(\tau)\nonumber\\
	&=\tan\theta_0\kappa(\tau)(\gamma_1(\tau_0)+\gamma'_1(\tau_0)(\tau-\tau_0)+\mathcal{O}(\vert\tau-\tau_0\vert^2))\nonumber\\
	&\quad \times \left(\gamma_1'(\tau_0)+\gamma_1''(\tau_0)(\tau-\tau_0)+\mathcal{O}(\vert\tau-\tau_0\vert^2)\right)\nonumber\\
	&=\kappa(\tau)\tan\theta_0\left(\gamma_1(\tau_0)\gamma'_1(\tau_0)+(\gamma_1(\tau_0)\gamma_1''(\tau_0)+(\gamma_1'(\tau_0))^2)(\tau-\tau_0)\right)+\mathcal{O}(\vert \tau-\tau_0\vert^2). \label{calc6}
\end{align}
Thus, by \eqref{calc5} and \eqref{calc6}, the part $II$ becomes 
\begin{align} \label{part II}
	II&=-\kappa(\tau)\gamma_1(\tau_0)\left(\gamma_2'(\tau_0)+\tan\theta_0\gamma_1'(\tau_0)\right)\\
	&  -\kappa(\tau)\left(\gamma_1(\tau_0)\gamma_2''(\tau_0)+\tan\theta_0\gamma_1(\tau_0)\gamma_1''(\tau_0)+\gamma_1'(\tau_0)\gamma_2'(\tau_0)+\tan\theta_0(\gamma_1'(\tau_0))^2\right)(\tau-\tau_0)\nonumber\\
	&\quad +\mathcal{O}((\tau-\tau_0)^2)\nonumber \\
	&=\frac{\kappa(\tau)\gamma_1(\tau_0)}{\vert \gamma_2'(\tau_0)\vert}+\frac{\kappa(\tau)\gamma_1'(\tau_0)}{\vert \gamma_2'(\tau_0)\vert}(\tau-\tau_0)+\mathcal{O}((\tau-\tau_0)^2).\notag
\end{align}
Hence, from \eqref{I+II},\eqref{part I}, and \eqref{part II}, one obtains that 
\begin{align} \label{conc1}
	&y(\tau)y''(\tau)+(y'(\tau))^2=I+II
	\\
	&=(\gamma_1'(\tau)+x'(\tau))\frac{\kappa(\tau_0)}{\vert \gamma_2'(\tau_0)\vert}(\tau-\tau_0)+\frac{\kappa(\tau)\gamma_1(\tau_0)}{\vert \gamma_2'(\tau_0)\vert}+\frac{\kappa(\tau)\gamma_1'(\tau_0)}{\vert \gamma_2'(\tau_0)\vert}(\tau-\tau_0)+\mathcal{O}((\tau-\tau_0)^2).\notag
\end{align}
Also, from the calculation \eqref{calc1} and Taylor expansion at $\tau=\tau_0$, we have 
\begin{align} \label{calc7}
	\begin{split}
	y(\tau)&=\sqrt{\gamma_1^2(\tau)-x^2(\tau)}=\sqrt{\frac{\kappa(\tau_0)}{2\vert \gamma_2'(\tau_0)\vert}(\tau-\tau_0)^2+\mathcal{O}((\tau-\tau_0)^3)}\sqrt{\gamma_1(\tau)+x(\tau)},\\
	(y'(\tau))^2&=(y'(\tau_0))^2+2y'(\tau_0)y''(\tau_0)(\tau-\tau_0)+\mathcal{O}((\tau-\tau_0)^2)\\
	& =\frac{\kappa(\tau_0)\gamma_1(\tau_0)}{\vert \gamma_2'(\tau_0)\vert}+2y'(\tau_0)y''(\tau_0)(\tau-\tau_0)+\mathcal{O}((\tau-\tau_0)^2).
	\end{split}
\end{align}
We substitute \eqref{calc7} for the left-hand side in \eqref{conc1}, use \eqref{limit}, and then divide both sides by $\vert \tau-\tau_0\vert=-(\tau-\tau_0)$ to get 
\begin{align*}
	&\sqrt{\frac{\kappa(\tau_0)}{2\vert \gamma_2'(\tau_0)\vert}+\mathcal{O}(\tau-\tau_0)}\sqrt{\gamma_1(\tau)+x(\tau)}y''(\tau)+2\sqrt{\frac{\kappa(\tau_0)\gamma_1(\tau_0)}{\vert \gamma_2'(\tau_0)\vert}}y''(\tau_0)\\
	&=-\frac{\gamma_1(\tau_0)}{\vert \gamma_2'(\tau_0)\vert}\frac{\kappa(\tau)-\kappa(\tau_0)}{\tau-\tau_0}-(\gamma_1'(\tau)+x'(\tau))\frac{\kappa(\tau_0)}{\vert \gamma_2'(\tau_0)\vert}-\frac{\kappa(\tau)\gamma_1'(\tau_0)}{\vert \gamma_2'(\tau_0)\vert}
+\mathcal{O}(\tau-\tau_0).
\end{align*}
As $\tau\rightarrow \tau_0$, one obtains that 
\begin{align} \label{y''}
	3\sqrt{\frac{\kappa(\tau_0)\gamma_1(\tau_0)}{\vert \gamma_2'(\tau_0)\vert}}y''(\tau_0)&=-\frac{\gamma_1(\tau_0)\kappa'(\tau_0)}{\vert \gamma_2'(\tau_0)\vert}-3\frac{\kappa(\tau_0)\gamma_1'(\tau_0)}{\vert \gamma_2'(\tau_0)\vert}, \nonumber \\
	y''(\tau_0)&=-\frac{1/3\gamma_1(\tau_0)\kappa'(\tau_0)+\kappa(\tau_0)\gamma_1'(\tau_0)}{\sqrt{\kappa(\tau_0)\gamma_1(\tau_0)\vert \gamma_2'(\tau_0)\vert}}.
\end{align}
Hence, \eqref{d^2z/dy^2},\eqref{limit}, and \eqref{y''} yield $\frac{d^2z}{dy^2}$ at $\tau=\tau_0$:
\begin{align*}
	\left. \frac{d^2z}{dy^2}\right \vert_{\tau=\tau_0}&=\frac{z''(\tau_0)y'(\tau_0)-z'(\tau_0)y''(\tau_0)}{(y'(\tau_0))^3}\\
	&=\frac{-\kappa(\tau_0){\gamma_1'(\tau_0)}\sqrt{\frac{\kappa(\tau_0)\gamma_1(\tau_0)}{\vert \gamma_2'(\tau_0)\vert}}+\gamma_2'(\tau_0)\frac{1/3\gamma_1(\tau_0)\kappa'(\tau_0)+\kappa(\tau_0){\gamma_1'(\tau_0)}}{\sqrt{\kappa(\tau_0)\gamma_1(\tau_0)\vert \gamma_2'(\tau_0)\vert}}}{\left(-\sqrt{\frac{\kappa(\tau_0)\gamma_1(\tau_0)}{\vert \gamma_2'(\tau_0)\vert}}\right)^{3}}\\
	&=\frac{\gamma_1'(\tau_0)\vert \gamma_2'(\tau_0)\vert}{\kappa(\tau_0)\gamma_1^2(\tau_0)}(\gamma_1(\tau_0)\kappa(\tau_0)+\vert \gamma_2'(\tau_0)\vert)+\frac{\vert \gamma_2'(\tau_0)\vert^2 \kappa'(\tau_0)}{3\kappa^2(\tau_0)\gamma_1(\tau_0)},\\
	\left.\frac{d^2\tilde{z}}{dy^2}\right \vert_{\tau=\tau_0}&=\frac{\gamma_1'(\tau_0)}{\kappa(\tau_0)\gamma_1^2(\tau_0)}(\gamma_1(\tau_0)\kappa(\tau_0)+\vert \gamma_2'(\tau_0)\vert)+\frac{\vert \gamma_2'(\tau_0)\vert \kappa'(\tau_0)}{3\kappa^2(\tau_0)\gamma_1(\tau_0)},
\end{align*}
where $\tilde{z}=\frac{z}{\vert \gamma_2'(\tau_0)\vert}$. Finally, at $\tau=\tau_0$, we derive
\begin{align*}
	 \left.\frac{d^2\tilde{z}}{dy^2}\right \vert_{\tau=\tau_0}&=\frac{\gamma_1'(\tau_0)}{\kappa(\tau_0)\gamma_1^2(\tau_0)}(\gamma_1(\tau_0)\kappa(\tau_0)+\vert \gamma_2'(\tau_0)\vert)+\frac{\vert \gamma_2'(\tau_0)\vert \kappa'(\tau_0)}{3\kappa^2(\tau_0)\gamma_1(\tau_0)}.
\end{align*}

So far, we have only considered $\left(y>0,\tilde{z}>\frac{\gamma_2(\tau_0)}{\vert \gamma_2'(\tau_0)\vert}\right)$ case. Since our domain is $z$-axial symmetry, we have $\tilde{z}(y)=\tilde{z}(-y)$, which implies that  $\frac{d\tilde {z}(y)}{dy}=-\frac{d\tilde {z}(-y)}{dy}$ and $\frac{d^2\tilde{z}(y)}{dy^2}=\frac{d^2 \tilde{z}(-y)}{dy^2}$. Thus, for the case  $\left(y<0, \tilde{z}>\frac{\gamma_2(\tau_0)}{\vert \gamma_2'(\tau_0)\vert}\right)$, we have 
\begin{align*}
	&\left.\frac{d\tilde{z}}{dy}\right \vert_{\tau=\tau_0}=-\sqrt{\frac{\vert \gamma_2'(\tau_0)\vert}{\kappa(\tau_0)\gamma_1(\tau_0)}}, \\
	&\left.\frac{d^2\tilde{z}}{dy^2}\right \vert_{\tau=\tau_0}=\frac{\gamma_1'(\tau_0)}{\kappa(\tau_0)\gamma_1^2(\tau_0)}(\gamma_1(\tau_0)\kappa(\tau_0)+\vert \gamma_2'(\tau_0)\vert)+\frac{\vert \gamma_2'(\tau_0)\vert \kappa'(\tau_0)}{3\kappa^2(\tau_0)\gamma_1(\tau_0)} .
\end{align*}
Because two cases $\left(y>0,\tilde{z}>\frac{\gamma_2(\tau_0)}{\vert \gamma_2'(\tau_0)\vert}\right)$ and $\left(y<0,\tilde{z}<\frac{\gamma_2(\tau_0)}{\vert \gamma_2'(\tau_0)\vert}\right)$ share the same values $\frac{d\tilde{z}}{dy}$ and $\frac{d^2\tilde{z}}{dy^2}$ at $\tau_0$ , we obtain the values for the region  $\left(y<0,\tilde{z}<\frac{\gamma_2(\tau_0)}{\vert \gamma_2'(\tau_0)\vert}\right)$ . On the other hand, for the case $\left(y>0, \tilde{z}<\frac{\gamma_2(\tau_0)}{\vert \gamma_2'(\tau_0)\vert}\right)$, values $\frac{d\tilde{z}}{dy}$ and $\frac{d^2\tilde{z}}{dy^2}$ at $\tau_0$ can be deduced from the case $\left(y<0, \tilde{z}>\frac{\gamma_2(\tau_0)}{\vert \gamma_2'(\tau_0)\vert}\right)$. \\

\textbf{Step 3.} In this argument, we change the direction of the normal vector $N_{\sigma(\tau_0,0)}$ from outward to inward. Thus, in Lemma \ref{Euler thm}, we consider 
\begin{align*}
	n(\gamma(\tau_0))=(-\gamma_2'(\tau_0),0,\gamma_1'(\tau_0)). 
\end{align*}
By Lemma \ref{Euler thm}, the sign of the normal curvature $\kappa_n$ at $\sigma(\tau_0,0)$ does not change if the direction $v\in T_{\sigma} \cap\S^2$ at $\sigma(\tau_0,0)$ is between tangent lines. When an angle between $\hat{\tilde{y}}$ and the direction $v$ in the tangent space $T_{\sigma(\tau_0,0)}$ is in $(-\vartheta,\vartheta)\cup(\pi-\vartheta,\pi+\vartheta)$ where $\vartheta$ is defined in \eqref{tan vartheta}, the normal curvature $\kappa_n$ is negative. Thus, there exists $\varepsilon>0$ such that $\xi(\sigma(\tau_0,0)+tv)<0$ for any $t\in (-\varepsilon,0)\cup (0,\varepsilon) $. By Remark \ref{inflection point}, $(\sigma(\tau_0,0),v)$ cannot be an inflection point. \\
Similarly, when an angle between $\hat{\tilde{y}}$ and the direction $v$ in the tangent space $T_{\sigma(\tau_0,0)}$ is in $(\vartheta,\pi-\vartheta)\cup(\pi+\vartheta,2\pi-\vartheta)$, the normal curvature $\kappa_n$ is positive. Hence, there exists $\varepsilon>0$ such that $\xi(\sigma(\tau_0,0)+tv)>0$ for any $t\in (-\varepsilon,0)\cup(0,\varepsilon)$, so that $(\sigma(\tau_0,0),v)$ cannot be an inflection point. Thus, $(\sigma(\tau_0,0),v)$ can be an inflection point only if the direction $v$ have angles $\vartheta$ and $2\pi-\vartheta$ in $T_{\sigma(\tau_0,0)}$.  

To occur an inflection grazing at $\sigma(\tau_0,0)$, $\frac{d^2\tilde{z}}{dy^2}$ at $\tau=\tau_0$ must not be zero. Thus, we exclude the part that satisfies $\left.\frac{d^2\tilde{z}}{dy^2}\right\vert_{\tau=\tau_0}=0$. Since $\gamma$ is an analytic convex curve and $\gamma_1(\tau)>0$ for $\tau\in(\tau_{1,*},\tau_{2,*})$,
\begin{align*}
	\frac{d^2\tilde{z}}{dy^2}(\tau)=0 \Leftrightarrow \frac{\gamma_1'(\tau)}{\gamma_1(\tau)}(\gamma_1(\tau)\kappa(\tau)+\vert \gamma_2'(\tau)\vert)+\frac{\vert \gamma_2'(\tau)\vert \kappa'(\tau)}{3\kappa(\tau)}=0,
\end{align*}
and the function 
\begin{align*}
	 h(\tau):= \frac{\gamma_1'(\tau)}{\gamma_1(\tau)}(\gamma_1(\tau)\kappa(\tau)+\vert \gamma_2'(\tau)\vert)+\frac{\vert \gamma_2'(\tau)\vert \kappa'(\tau)}{3\kappa(\tau)}
\end{align*}
is also analytic. Notice that if $1$-dimensional real analytic function is not identically zero, then the zero set has no limit point in $(\tau_{1,*},\tau_{2,*})$ by Lemma \ref{analytic_zeros}. Also, 
\begin{align*}
	h(\tau_{i,*})=\gamma_1'(\tau_{i,*})\kappa(\tau_{i,*})\neq 0, \quad \text{for} \; i=1,2,
\end{align*}
which means that the zero set $Z_h$ of $h$ in $(\tau_{1,*},\tau_{2,*})$ is equal to the zero set in $[\tau_{1,*},\tau_{2,*}]$. Then, the zeros of $h(\tau)$ in $(\tau_{1,*},\tau_{2,*})$ consists of only finite points, say $\bigcup_{j=1}^\ell\{\tau_j^{\mathbf{z}}\}$ for some constant $\ell<\infty \;(\tau_{1,*}<\tau_1^{\mathbf{z}}<\tau_2^{\mathbf{z}}<\cdots<\tau_\ell^{\mathbf{z}}<\tau_{2,*})$.\\

Recall that we set an angle $\vartheta$ satisfying $\tan \vartheta = \sqrt{\frac{\vert \gamma_2'(\tau)\vert}{\kappa(\tau)\gamma_1(\tau)}}$. Due to axis-symmetry about $z$-axis and the specular reflection boundary condition, trajectory preserves the angular momentum. Thus, WLOG, we only consider trajectories with positive angular momentum.  Then, for $\tau \in (\tau_{1,*},\tau_{2,*}) \backslash \cup_{j=1}^l \{\tau_j^{\mathbf{z}}\}$ and  $\varphi\in [0,2\pi)$, we only consider two cases $(\sigma(\tau,\varphi),I^1(\tau,\varphi))$ and $(\sigma(\tau,\varphi),I^2(\tau,\varphi))$ where directions $I^1(\tau,\varphi)$ and $I^2(\tau,\varphi)$ have angles $\vartheta$ and $2\pi-\vartheta$ respectively in the tangent plane. By Remark \ref{inflection point}, we have $(\sigma(\tau,\varphi),I^1(\tau,\varphi))\in \gamma_0^{I_+}$ and $(\sigma(\tau,\varphi),I^2(\tau,\varphi))\in \gamma_0^{I_-}$.   
\end{proof}

Next, we introduce concave grazing directions using $I^1(\tau,\varphi)$ and $I^2(\tau,\varphi)$ defined in Lemma \ref{inflection}. Although $I^1(\tau,\varphi)$ and $I^2(\tau,\varphi)$ were not defined for $\tau \in Z_h=\cup_{j=1}^{\ell}  \{ \tau_z^{\textbf{z}}\}$ in Lemma \ref{inflection}, we will define them as the directions in the tangent plane $T_{\sigma(\tau,\varphi)}$ that make angles of $\vartheta$ and $2\pi-\vartheta$ (defined in the proof of Lemma \ref{inflection}), respectively, to define the concave grazing direction at $\sigma(\tau,\varphi)$ for $\tau \in Z_h$. Note that $(\sigma(\tau,\varphi),I^1(\tau,\varphi)), (\sigma(\tau,\varphi),I^2(\tau,\varphi)) \notin \gamma_0^{I}$ for $(\tau,\varphi) \in Z_h \times [0,2\pi)$.

\begin{definition}
At $\sigma(\tau,\varphi)\in \p \O$, let the direction $v_c\in\S^2$ be concave grazing direction which is parametrized by parameter $\eta\in\R^1$. For $(\sigma(\tau,\varphi), v_c(\eta))\in\gamma_0^C$, $v_c$ must be between directions $I^1(\tau,\varphi)$ and $I^2(\tau,\varphi)$ defined in Lemma \ref{inflection} and above. Thus, $v_c$ can be parametrized as 
\begin{align} \label{concave para}
v_c=v_c(\eta):= \frac{\eta I^1(\tau,\varphi)+(1-\eta)I^2(\tau,\varphi)}{|\eta I^1(\tau,\varphi)+(1-\eta)I^2(\tau,\varphi)|},
\end{align}
for $0<\eta <1$. See Figure \ref{remark 1}. Directions between $I_{1}$ and $I_{2}$ in the shaded region corresponds to concave grazings.  
\end{definition}

Notice that the sign of the normal curvature is always negative if the direction at $\sigma(\tau,\varphi)$ has $v_c$ from Euler's theorem. Thus, there exists $\varepsilon>0$ such that $\xi(\sigma(\tau,\varphi)+tv_c(\eta))<0$ for all $t\in (-\varepsilon,0)\cup(0,\varepsilon)$. In other words, $(\sigma(\tau,\varphi),v_c(\eta))\in \gamma_0^C$.   

\section{Control the Number of Bounces}
\subsection{Finite number of bounce away from inflection grazing}
Recall the definition of velocity set \eqref{V^N}. 
Let $(t,x,v)\in [0,T]\times \overline\Omega\times \mathbb{V}^N$. For each $i\in \mathbb{N}$, $x^i(x,v)\in \p\Omega$ can be parametrized as
\begin{align*}
	x^i(x,v) = \sigma(\tau_i, \varphi_i), \quad (\tau_i,\varphi_i) \in[a,b)\times (-2\pi \mathbf{w}, 0 ] := U.
\end{align*}
where $\mathbf{w}$ defined in \eqref{winding number} with maximal travel length $L=NT$. \\

We define non-negative angular momentum (since we consider only trajectory with positive orientation WLOG) of $(x,v)$, i.e,
\Be \label{def_ang moment}
\begin{split}
	\omega(x,v) &:= |(x\times \hat{z})\cdot v|  \\
	&= \text{angular momentum of a trajectory with respect to $z$-axis which passes $(x,v)$} \\
	&= \omega( X(s; t,x,v), V(s;t,x,v)).
\end{split}
\Ee
This implies that the angular momentum is conserved due to $z$-axial symmetric domain and specular reflection BC. For $x\in \O$, this \eqref{def_ang moment} can be easily checked:
\begin{align*}
\frac{d}{ds} \omega( X(s; t,x,v), V(s;t,x,v)) = \{(V \times \hat{z}) \cdot V 
+ (X \times \hat z) \cdot 0
\}\frac{X \times \hat z \cdot V}{|X \times \hat z \cdot V |}=0.
\end{align*}
Now, we only consider the case $x\in \partial \O$. WLOG we assume $x\in \partial S_0$ where $S_0$ is $\varphi=0$ cross section. It suffices to check that 
\begin{align*}
	\vert (x\times \hat{z}) \cdot v \vert = \vert (x\times \hat{z}) \cdot R(x)v\vert,
\end{align*}
where $R(x)=I-2n(x)\otimes n(x)$. Since $x\times \hat{z}$ is parallel to $\hat{\varphi}$ which is normal to $n(x)$, 
\begin{align*}
	\vert (x\times \hat{z})\cdot R(x)v \vert = \vert(x\times \hat{z})\cdot(v-2(n(x)\cdot v)n(x))\vert =\vert (x\times \hat{z})\cdot v\vert.  \\
\end{align*}

\begin{lemma} \label{compare}
Define $\Delta \tau_i=\tau_i-\tau_{i-1}$ and $\Delta \varphi_i=\varphi_i-\varphi_{i-1}$ for $i=1,2,\cdots$. Let $\sigma_i:=\sigma(\tau_i,\varphi_i)\in \p\O$ with $\tau_{i}\in [\tau_{1,*}-\varepsilon, \tau_{2,*}+\varepsilon]$ for $i=0,1,\cdots$. If the trajectory hits $\sigma_{i-1}$ and then $\sigma_i$, we denote the projection of $\sigma_{i-1}$ into the tangent plane $T_{\sigma(\tau_i,\varphi_i)}$ as $Proj \sigma_{i-1}$. We define an angle $\eta_i$ between $\overrightarrow{\sigma_{i-1,\widetilde{yz}}\sigma_{i,\widetilde{yz}}}$ and $\tilde{z}$-axis where $\sigma_{i-1,\widetilde{yz}}$ and $\sigma_{i,\widetilde{yz}}$ are representation of $Proj \sigma_{i-1}$ and $\sigma_i$ in $T_{\sigma(\tau_i,\varphi_i)}$ using \eqref{new coord in tp} and \eqref{new direction}. See Figure \ref{remark 1}. (Note that we will specify the angle $\eta_i$ in this proof \eqref{proj ang}.) 
Assume 
\begin{align} \label{assump 1}
	\eta_i \leq
		\frac{\pi}{2} - C\varepsilon \quad &\text{if } \tau_i\in[\tau_{1,*}-\varepsilon,\tau_{1,*}+\varepsilon] \cup[\tau_{2,*}-\varepsilon,\tau_{2,*}+\varepsilon],
\end{align}
in addition for all $i\geq 1$ and fixed small $\varepsilon\ll 1$, where $C$ is some generic constant. 
There exists $\delta \ll 1$ such that if there exists $I\in \mathbb{N}$ such that 
\begin{align} \label{assump 2}
	\vert \Delta \tau_i\vert < \delta \quad \text{and} \quad \vert \Delta \varphi_i\vert < \delta, \quad \forall i\geq I, 
\end{align}
we have
\begin{align} \label{tau and varphi}
	\vert \Delta \varphi_i \vert \leq C_{\varepsilon} \vert \Delta \tau_i\vert, \quad \forall i \geq I, 
\end{align}
where $C_{\varepsilon}$ is a constant depending on $\varepsilon$.  
\end{lemma}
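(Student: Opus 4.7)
The plan is to reduce the comparison \eqref{tau and varphi} to a leading-order Taylor expansion of the chord $\sigma_i - \sigma_{i-1}$ in the tangent plane $T_{\sigma_i}$ and read $\tan\eta_i$ off directly. By axial symmetry of $\Omega$, we may assume $\varphi_i = 0$ without loss of generality, so that the basis $\{\hat{\tilde y}, \hat{\tilde z}\}$ of Definition \ref{tangent plane} applies verbatim at $\sigma_i$. A direct computation using $\hat{\tilde z} = (-\gamma_1'(\tau_i), 0, |\gamma_2'(\tau_i)|)$ together with the unit-speed property of $\gamma$ yields $\sigma_\tau(\tau_i, 0) = -\hat{\tilde z}$ and $\sigma_\varphi(\tau_i, 0) = \gamma_1(\tau_i)\,\hat{\tilde y}$.

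Writing $\tau_{i-1} = \tau_i - \Delta\tau_i$ and $\varphi_{i-1} = -\Delta\varphi_i$, Taylor expansion gives
\[
\sigma_i - \sigma_{i-1} = \gamma_1(\tau_i)\Delta\varphi_i\,\hat{\tilde y} - \Delta\tau_i\,\hat{\tilde z} + Q_i,
\]
with the quadratic remainder $|Q_i| = O(\delta^2)$ under \eqref{assump 2}. Projecting onto $T_{\sigma_i}$ removes only the normal component of $Q_i$, leaving an in-plane error of the same order, so
\[
\overrightarrow{\sigma_{i-1,\widetilde{yz}}\sigma_{i,\widetilde{yz}}} = \gamma_1(\tau_i)\Delta\varphi_i\,\hat{\tilde y} - \Delta\tau_i\,\hat{\tilde z} + O(\delta^2).
\]
Since $\eta_i$ is the angle of this vector with the $\tilde z$-axis, a direct trigonometric identification yields
\[
\gamma_1(\tau_i)\,|\Delta\varphi_i| = \tan(\eta_i)\,|\Delta\tau_i| + O(\delta^2).
\]

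When $\tau_i$ lies in the critical neighborhood, the hypothesis \eqref{assump 1} gives $\tan\eta_i \leq \cot(C\varepsilon) =: M_\varepsilon$, and combining with the uniform lower bound $\gamma_1(\tau_i) \geq r := \gamma_1(\lambda_*) > 0$ furnishes $|\Delta\varphi_i| \leq (M_\varepsilon/r)|\Delta\tau_i| + O(\delta^2)/r$. For $\tau_i$ outside the critical zone, the velocity lemma on the strictly convex outer portion ($\tau < \tau_{1,*}$ or $\tau > \tau_{2,*}$) and the uniform lower bound $|\gamma_2'(\tau_i)| \geq c_\varepsilon > 0$ on the inner saddle portion (together with the explicit description of inflection directions from Lemma \ref{inflection}) play the role of \eqref{assump 1} and furnish an analogous bound on $\tan\eta_i$. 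Taking $\delta$ small enough absorbs the $O(\delta^2)$ remainder into $C_\varepsilon$ and yields \eqref{tau and varphi}. The main obstacle is careful bookkeeping of the quadratic remainder when $|\Delta\tau_i|$ is itself of size $O(\delta^2)$ or smaller; the cleanest remedy is to retain the estimate in the linear form $\gamma_1(\tau_i)|\Delta\varphi_i| \lesssim_\varepsilon |\Delta\tau_i|$ rather than dividing, which relocates the delicate part of the argument entirely into the constant $C_\varepsilon$.
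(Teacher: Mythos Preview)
Your overall strategy matches the paper's: compute $\tan\eta_i$ in terms of $\Delta\tau_i,\Delta\varphi_i$ via a leading-order expansion in the tangent plane, bound $\tan\eta_i$ uniformly, and conclude \eqref{tau and varphi}. Two points are missing or imprecise.

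\textbf{The non-critical zone.} For $\tau_i\in[\tau_{1,*}+\varepsilon,\tau_{2,*}-\varepsilon]$ you invoke Lemma~\ref{inflection} and $|\gamma_2'(\tau_i)|\ge c_\varepsilon$, but you never explain \emph{why} these furnish a bound on $\tan\eta_i$. The paper supplies the missing geometric step: since the trajectory passes from $\sigma_{i-1}$ to $\sigma_i$, one has $(\sigma_i-\sigma_{i-1})\cdot N_{\sigma_i}\ge 0$, which in the tangent-plane picture forces the projected point $\sigma_{i-1,\widetilde{yz}}$ to lie in the region bounded by the curve $\partial\Omega\cap T_{\sigma_i}$ near $\sigma_i$. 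That curve is tangent to the inflection directions $I^1,I^2$ at $\sigma_i$, so within the $\delta$-ball one obtains $\tan\eta_i\le(1+C\delta)\sqrt{\kappa(\tau_i)\gamma_1(\tau_i)/|\gamma_2'(\tau_i)|}$, which is then uniformly bounded by $C_{1,\varepsilon}$ thanks to $|\gamma_2'(\tau_i)|\ge c_\varepsilon$. Without this trajectory constraint, nothing prevents $\eta_i$ from being close to $\pi/2$. Also, your appeal to the velocity lemma is misplaced: the strictly convex outer portion lies entirely inside the critical neighborhood $[\tau_{1,*}-\varepsilon,\tau_{1,*}+\varepsilon]\cup[\tau_{2,*}-\varepsilon,\tau_{2,*}+\varepsilon]$, which is already handled by hypothesis~\eqref{assump 1}.

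\textbf{The remainder.} Writing the error as a fixed $O(\delta^2)$ creates exactly the difficulty you flag at the end, and your proposed remedy (``retain the estimate in linear form'') does not actually close it. The correct bookkeeping, as in the paper, is to keep the remainder as $O(|\Delta\tau_i|^2+|\Delta\varphi_i|^2)$. The bound $\tan\eta_i\le C_\varepsilon$ then reads
\[
|\Delta\varphi_i|\;\lesssim\;|\Delta\tau_i|+|\Delta\tau_i|^2+|\Delta\varphi_i|^2\;\le\;(1+\delta)|\Delta\tau_i|+\delta|\Delta\varphi_i|,
\]
and absorbing $\delta|\Delta\varphi_i|$ on the left gives \eqref{tau and varphi} directly.
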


\begin{remark} 
In Lemma \ref{compare}, we consider the trajectory segment between $\sigma_0$ and $\sigma_1$ (See Figure \ref{remark 1}). If $\tau_1\in[\tau_{1,*}-\varepsilon,\tau_{1,*}+\varepsilon] \cup[\tau_{2,*}-\varepsilon,\tau_{2,*}+\varepsilon]$, then $\vartheta_1=\tan^{-1}\left(\sqrt{\frac{\kappa(\tau_1)\gamma_1(\tau_1)}{\vert \gamma_2'(\tau_1)\vert}}\right)$ is near $\frac{\pi}{2}$. So, the two directions $I_1(\tau_1,\varphi_1)$ and $I_2(\tau_1,\varphi_1)$ are very close and almost identical. Later, in Lemma \ref{lem Xe}, we will exclude $I_{1}$ and $I_{2}$ directions uniformly using the angular momentum conservation. 
(If $\varepsilon$ is sufficiently small, near $\vartheta = \frac{\pi}{2}$ directions are removed as in shaded region in Figure \ref{remark 1}.) Hence, for $\tau_1\in[\tau_{1,*}-\varepsilon,\tau_{1,*}+\varepsilon] \cup[\tau_{2,*}-\varepsilon,\tau_{2,*}+\varepsilon]$, we assume that $\eta_1 \leq \frac{\pi}{2}-C\varepsilon$ for some constant $C>0$ and $\varepsilon \ll 1$. If $\tau_{1} \in [\tau_{1,*}+\varepsilon, \tau_{2,*}-\varepsilon]$, then direction of $I_{1}$ and $I_{2}$ are uniformly away from $\eta=\frac{\pi}{2}$ angle.  
\begin{figure}[h]
\centering
\includegraphics[width=13cm]{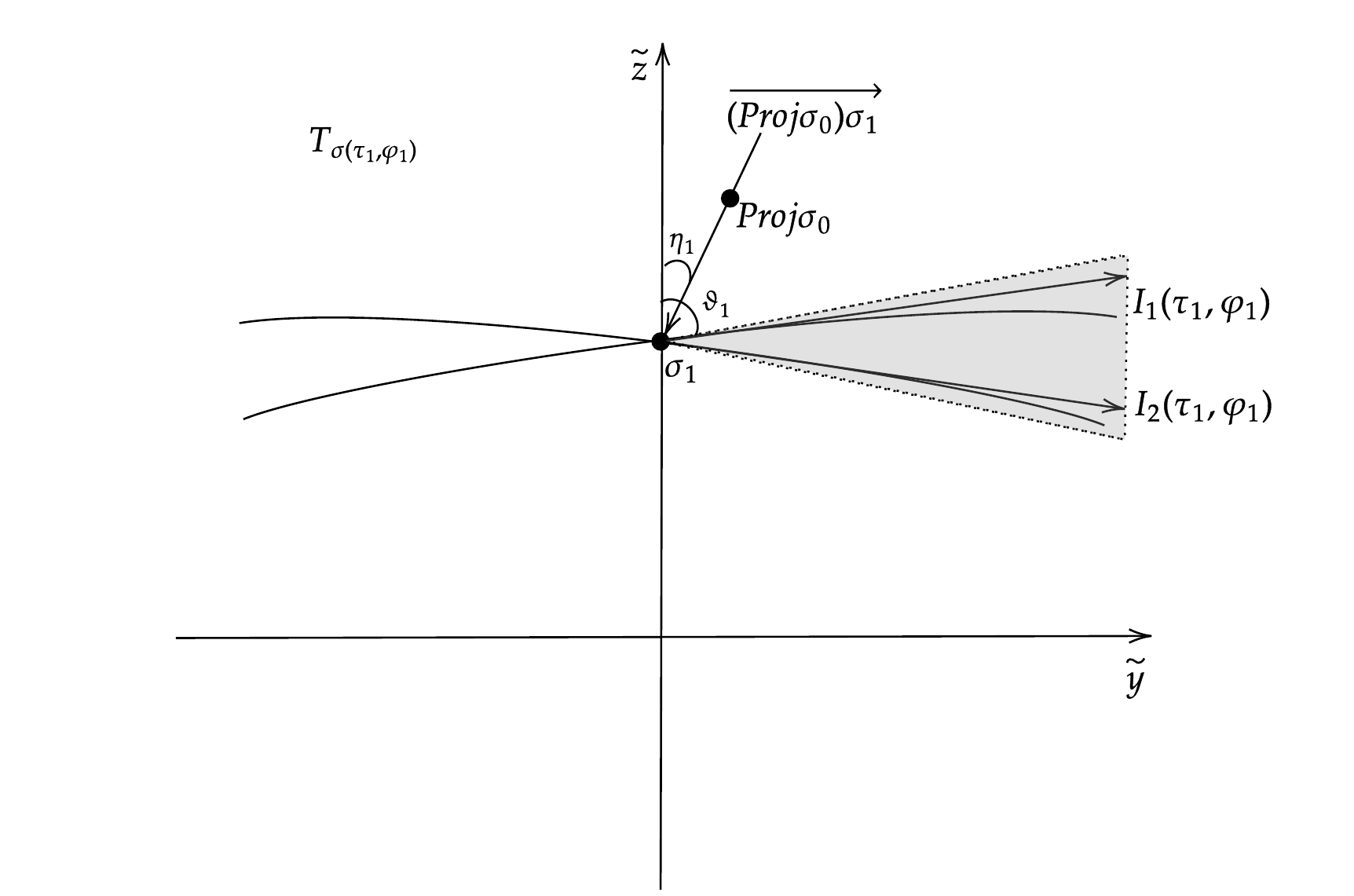}
\caption{The trajectory between $\sigma_0$ and $\sigma_1$ in the tangent plane $T_{\sigma(\tau_1,\varphi_1)}$ ($\tau_1$ near $\tau_{1,*}$ or $\tau_{2,*}$)}
\label{remark 1}
\end{figure}
\end{remark}

\begin{proof}
For notational convenience, we set $I=1$ and consider two boundary points $\sigma_0$ and $\sigma_1$
\begin{align*}
	\sigma_i:=\sigma(\tau_i,\varphi_i)=(\gamma_1(\tau_i)\cos\varphi_i,\gamma_1(\tau_i)\sin\varphi_i,\gamma_2(\tau_i)), \quad i=0,1,
\end{align*} 
satisfying $\vert \Delta \tau_1 \vert, \vert \Delta \varphi_1\vert \ll 1$. Since our domain $\O$ is symmetric with respect to $\varphi$, we set $\varphi_0=-\varphi$ and $\varphi_1=0$ to satisfy $\Delta \varphi_1=\varphi$. The normal vector $N_{\sigma(\tau_1,0)}$ at $\sigma_1$ is
\begin{align*}
	N_{\sigma(\tau_1,0)}=(\gamma_2'(\tau_1),0,-\gamma_1'(\tau_1)),
\end{align*}
and the tangent plane at $\sigma_1$ is 
\begin{align*}
	\{(x,y,z)\in\R^3:\gamma_2'(\tau_1)(x-\gamma_1(\tau_1))-\gamma_1'(\tau_1)(z-\gamma_2(\tau_1))=0\}. 
\end{align*}
We project the point $\sigma_0$ into the tangent plane at $\sigma_1$ and calculate the projected point
\begin{align*}
	Proj \sigma_0 & = \sigma_0-((\sigma_0-\sigma_1)\cdot N_{\sigma(\tau_1,0)})N_{\sigma(\tau_1,0)} \\
	& =(\gamma_1(\tau_0)\cos \varphi -A\gamma_2'(\tau_1),-\gamma_1(\tau_0)\sin\varphi,\gamma_2(\tau_0)+A\gamma_1'(\tau_1)),
\end{align*}
where $A=(\sigma_0-\sigma_1)\cdot N_{\sigma(\tau_1,0)}=(\gamma_1(\tau_0)\cos\varphi-\gamma_1(\tau_1))\gamma_2'(\tau_1)+(\gamma_2(\tau_1)-\gamma_2(\tau_0))\gamma_1'(\tau_1)$. Using the new coordinate \eqref{new coord in tp} and \eqref{new direction} in the tangent plane $T_{\sigma(\tau_1,0)}$ at $\sigma_1$, the points $Proj \sigma_0$ and $\sigma_1$ become 
\begin{align*}
	Proj \sigma_0 \rightarrow \sigma_{0,\widetilde{yz}}=\Big(-\gamma_1(\tau_0)\sin\varphi,\frac{\gamma_2(\tau_0)+A\gamma_1'(\tau_1)}{\vert \gamma_2'(\tau_1)\vert}\Big)_{\widetilde{yz}}, \quad \sigma_1 \rightarrow \sigma_{1,\widetilde{yz}}=\Big(0,\frac{\gamma_2(\tau_1)}{\vert \gamma_2'(\tau_1)\vert}\Big)_{\widetilde{yz}},
\end{align*}
where $(a,b)_{\widetilde{yz}}=a\hat{\tilde{y}}+b\hat{\tilde{z}}$ for $a,b\in\R$. Let $\eta_1$ be the angle between the $\tilde{z}$-axis and $\overrightarrow{\sigma_{0,\widetilde{yz}}\sigma_{1,\widetilde{yz}}}$, and then 
\begin{align} \label{proj ang}
	\tan\eta_1 = \frac{\gamma_1(\tau_0)\vert \gamma_2'(\tau_1)\vert \sin \varphi}{\gamma_2(\tau_1)-\gamma_2(\tau_0)-A\gamma_1'(\tau_1)}.
\end{align}
Using Taylor expansion yields that 
\begin{align*}
	 &\gamma_2(\tau_1)-\gamma_2(\tau_0)-A\gamma_1'(\tau_1)\\
	  &= (\gamma_2(\tau_1)-\gamma_2(\tau_0))(1-(\gamma_1'(\tau_1))^2)+(\gamma_1(\tau_1)-\gamma_1(\tau_0)\cos\varphi)\gamma_1'(\tau_1)\gamma_2'(\tau_1)\\
	&=(\gamma_2(\tau_1)-\gamma_2(\tau_0))(\gamma_2'(\tau_1))^2+(\gamma_1(\tau_1)-\gamma_1(\tau_0))\gamma_1'(\tau_1)\gamma_2'(\tau_1)\\
	&\quad+(1-\cos\varphi)\gamma_1(\tau_0)\gamma_1'(\tau_1)\gamma_2'(\tau_1)\\
	&=\gamma_2'(\tau_1)(\Delta \tau_1 +\mathcal{O}(\Delta \tau_1^2)+\mathcal{O}(\varphi^2)),
\end{align*}
where we have used $(\gamma_1'(\tau_1))^2+(\gamma_2'(\tau_1))^2=1$. \\
\indent Meanwhile, note that we are assuming $|\Delta \varphi_{1}|, |\Delta \tau_{1}| \ll 1$.  In Figure \ref{proof 1}, imagine a sufficiently small circle $B_1$ near $\sigma_1$ so that $\sigma_0$ locates inside of $B_1$. Since the trajectory hits $\sigma_0$ and $\sigma_1$, $(\sigma_1,  \overrightarrow{\sigma_0\sigma_1})$ must belong to $\gamma_0$ or $\gamma_{-}$, i.e., $(\sigma_1-\sigma_0)\cdot N_{\sigma(\tau_1,0)} \geq 0$. (Shaded region in Figure \ref{proof 1})

\begin{figure}[h]
\centering
\includegraphics[width=13cm]{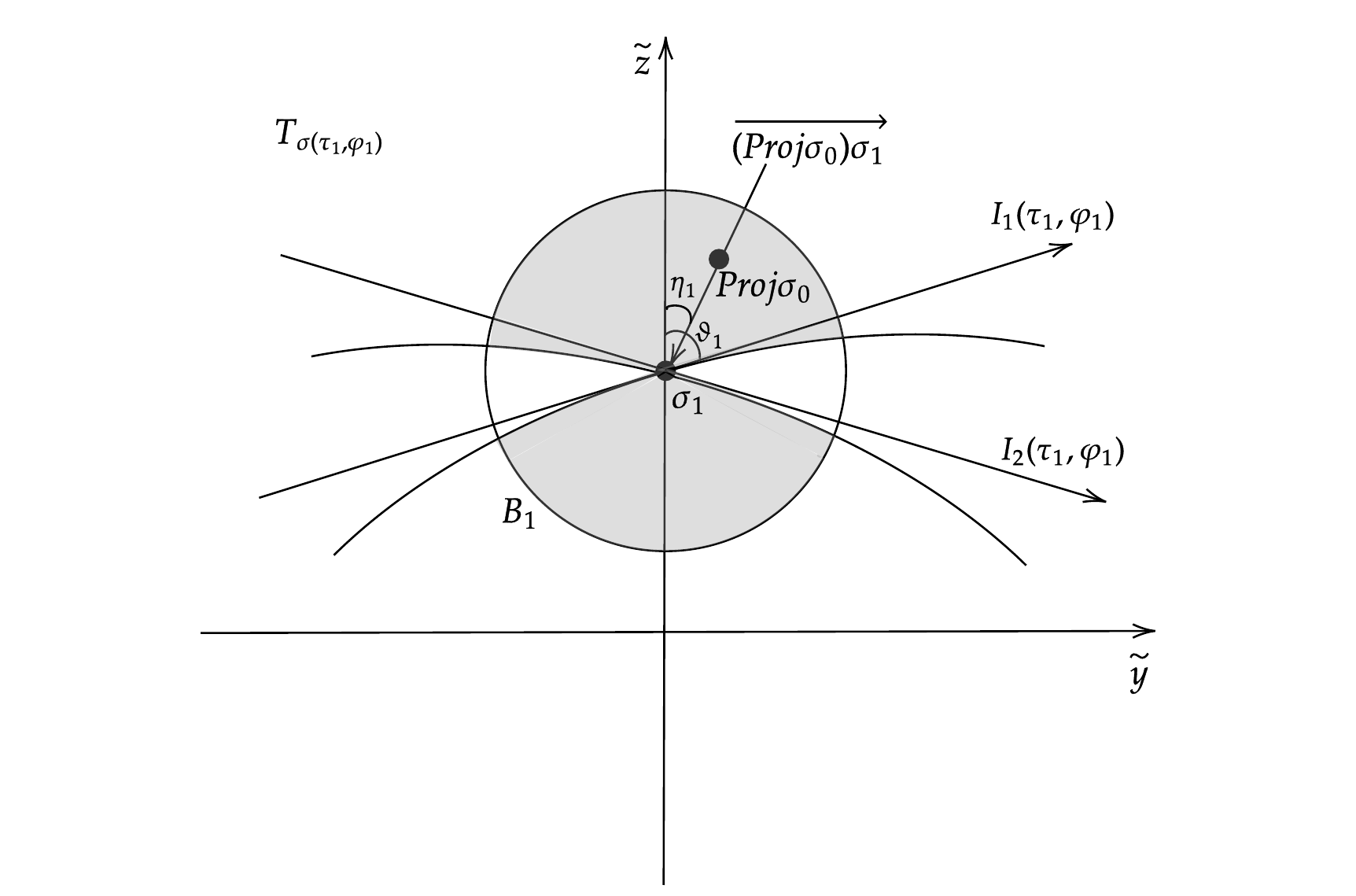}
\caption{The trajectory between $\sigma_0$ and $\sigma_1$ in the tangent plane $T_{\sigma(\tau_1,\varphi_1)}$ in a small ball $B_1$}
\label{proof 1}
\end{figure}

First, let  us consider $\tau_{i}\in [\tau_{1,*}+\varepsilon, \tau_{2,*}-\varepsilon]$ case. Considering the direction of zero normal curvature, (limit of $y^{\prime}(\tau)$ in \eqref{limit}) and sufficiently small circle $B_1$ (with sufficiently small radius $\delta$ by \eqref{assump 2}), we obtain 
\Be \label{angle bound}
	\tan\eta_1 \leq (1 + C\delta) \sqrt{\frac{\kappa(\tau_1)\gamma_1(\tau_1)}{\vert \gamma_2'(\tau_1)\vert}}, 
\Ee
for some $C>0$ when $\tau_1\notin [\tau_{1,*}-\varepsilon,\tau_{1,*}+\varepsilon]\cup[\tau_{2,*}-\varepsilon,\tau_{2,*}+\varepsilon]$. Coefficient $(1+C\delta)$ comes from the fact that tangential direction of $I_{1,2}$ and intersection curve $T_{\sigma_{1}}\cap \p\O$ are nearly parallel in sufficiently small ball $B_1$. (See Figure \ref{proof 1}
.)
 If $\tau_1\notin [\tau_{1,*}-\varepsilon,\tau_{1,*}+\varepsilon]\cup[\tau_{2,*}-\varepsilon,\tau_{2,*}+\varepsilon]$, then 
\begin{align*}
	\sqrt{\frac{\kappa(\tau_1)\gamma_1(\tau_1)}{\vert \gamma_2'(\tau_1)\vert}}\leq C_{1,\varepsilon},
\end{align*} 
by continuity of $\gamma_2'(\tau)$ and $\sup_{\tau \in [a,b]} \vert \kappa(\tau)\gamma_1(\tau)\vert <\infty$. \\
\indent On the other hand, for $\tau_1\in [\tau_{1,*}-\varepsilon,\tau_{1,*}+\varepsilon]\cup[\tau_{2,*}-\varepsilon,\tau_{2,*}+\varepsilon]$, we have
\begin{align*}
	\tan \eta_1 \leq \tan(\frac{\pi}{2}-C\varepsilon)=C_{2,\varepsilon},
\end{align*}
due to our assumption \eqref{assump 1}. 
 Since $\varphi =\Delta \varphi_1$, we have 
\begin{align*}
	\left \vert \frac{\sin(\Delta \varphi_1)}{\Delta \tau_1+\mathcal{O}(\Delta \tau_1^2)+\mathcal{O}(\Delta \varphi_1^2)}\right \vert\leq C_{\varepsilon},
\end{align*}
where $C_{\varepsilon}:=\max\{(1+C\delta)C_{1,\varepsilon},C_{2,\varepsilon}\}$. Since we assume 
smallness condition $\vert \Delta \tau_1\vert,\vert\Delta \varphi_1\vert < \delta \ll 1$, we have $\vert \Delta \varphi_1\vert \leq C_{\varepsilon} \vert \Delta \tau_1\vert$. By applying the above argument for every bound, we get \eqref{tau and varphi}.  
\end{proof}

The following analyltic implicit function theorem will be used in the proof of Lemma \ref{difference}.
\begin{lemma} \label{analytic IFT} \cite{KP2002}
	Let $f:\R^{n+m}\rightarrow \R^m$ be a real analytic function in a neighborhood of $(x_0,y_0)$ for some $(x_0,y_0)\in \R^n \times \R^m$. If $f(x_0,y_0)=0$ and the Jacobian matrix 
\begin{align*}
	\left[ \frac{\partial f_i}{\partial y_j}(x_0,y_0) \right]
\end{align*}	
is invertible, then there exists a function $g:\R^n \rightarrow \R^m$ which is real-analytic in a neighborhood of $x_0$ and satisfies 
\begin{align*}
	f(x,g(x))=0
\end{align*}
in a neighborhood of $x_0$.   
\end{lemma}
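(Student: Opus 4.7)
The plan is to reduce the real-analytic implicit function theorem to its holomorphic counterpart via complexification, since that route is both the cleanest and avoids tedious bookkeeping with multi-indexed power series. Without loss of generality, translate so that $(x_0, y_0) = (0, 0)$ and write $A := \big[\tfrac{\partial f_i}{\partial y_j}(0,0)\big] \in \mathrm{GL}_m(\mathbb{R})$. Because $f$ is real-analytic in a neighborhood $U \subset \mathbb{R}^{n+m}$ of the origin, its Taylor series converges on some polydisc $P := \{(x,y) : |x_i| < r,\ |y_j| < r\}$, and the same power series defines a holomorphic extension $\tilde f : P_{\mathbb{C}} \to \mathbb{C}^m$ on the complex polydisc $P_{\mathbb{C}} \subset \mathbb{C}^{n+m}$ of the same radii.

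Next I would invoke the holomorphic implicit function theorem: since $\tilde f(0,0) = 0$ and the complex derivative $\partial_y \tilde f(0,0)$ coincides (as a matrix over $\mathbb{C}$) with $A$, hence is invertible, there exist $0 < \rho \leq r$ and a unique holomorphic map $\tilde g : \{x \in \mathbb{C}^n : |x_i| < \rho\} \to \mathbb{C}^m$ satisfying $\tilde g(0) = 0$ and $\tilde f(x, \tilde g(x)) \equiv 0$. The holomorphic implicit function theorem is standard and is itself proved either by applying the Banach fixed-point theorem to $y \mapsto y - A^{-1} \tilde f(x, y)$ on a small ball in $\mathbb{C}^m$ (yielding a holomorphic solution by uniform limits of holomorphic maps) or, equivalently, by Cauchy's majorant method. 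Either proof I would only gesture at, not reproduce.

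Finally I would check that $\tilde g$ takes real values on real inputs, which gives the desired real-analytic $g := \tilde g|_{\mathbb{R}^n}$. For this, apply the classical $C^1$ implicit function theorem to $f$ itself: it produces a unique continuous function $g : \{x \in \mathbb{R}^n : |x| < \rho'\} \to \mathbb{R}^m$ with $g(0) = 0$ and $f(x, g(x)) = 0$, valid on some smaller neighborhood. Since $\tilde g$ restricted to the reals is continuous, takes value $0$ at $0$, and also satisfies $\tilde f(x, \tilde g(x)) = f(x, \tilde g(x)) = 0$ for small real $x$, the uniqueness clause of the real $C^1$ theorem forces $\tilde g(x) = g(x)$ for real $x$ near $0$. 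Hence $g$ equals the restriction of a holomorphic map and is therefore real-analytic.

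The only place requiring genuine care is the holomorphic implicit function theorem itself, and in the fixed-point proof the main obstacle is ensuring the contraction constant can be taken uniform in $x$ on a polydisc of positive radius; this is handled by shrinking $\rho$ so that $\|A^{-1} \partial_y \tilde f(x, y) - I\| \leq \tfrac{1}{2}$ uniformly on the chosen polydisc, which is possible by continuity of $\partial_y \tilde f$ at the origin.
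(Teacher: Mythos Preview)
Your proof is correct. The paper does not actually prove this lemma; it simply states it with a citation to Krantz--Parks \cite{KP2002} and uses it as a black box. Your complexification argument (extend to a holomorphic map, apply the holomorphic implicit function theorem, then descend to the reals via uniqueness in the $C^1$ implicit function theorem) is one of the standard routes and is essentially the approach taken in that reference.
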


\begin{lemma} \label{difference} 
We suppose the same assumption in Lemma \ref{compare}. Using the same notation in Lemma \ref{compare}, we also assume that 
\begin{align} \label{assump 3}
	\vert \tan \eta_i - \tan \vartheta_i \vert \geq \varepsilon, \quad \text{for } \tau_i \in (Z_h)_{\varepsilon},
\end{align}
where $\tan\vartheta_i:=\sqrt{\frac{\kappa(\tau_i)\gamma_1(\tau_i)}{\vert \gamma_2'(\tau_i)\vert}}$ and $(Z_h)_{\varepsilon}$ is $\varepsilon$-neighborhood of $Z_h$ in Lemma \ref{inflection}
\begin{align} \label{Z_h nbh}
	(Z_h)_{\varepsilon}:=\bigcup_{i=1}^{\ell} (\tau_i^{\mathbf{z}}-\varepsilon, \tau_i^{\mathbf{z}}+\varepsilon), \quad Z_h = \{\tau_i^{\mathbf{z}}\}_{i=1}^{\ell}, \quad \textrm{for some constant } \ell<\infty,
\end{align}
so that $(\sigma_i,\overrightarrow{\sigma_{i-1}\sigma_i})$ is not an inflection of the zero curvature curve $\p\O\cap T_{\sigma(\tau_i,\varphi_i)}$ on tangent plane $T_{\sigma(\tau_i,\varphi_i)}$, i.e., $\p\O\cap T_{\sigma(\tau_i,\varphi_i)}$ is concave or convex in $T_{\sigma(\tau_i,\varphi_i)}$. Then, we have the following relation:
\begin{align*}
\vert \Delta \tau_{i+1}-\Delta \tau_{i}\vert = \mathcal{O}_{\varepsilon}(\Delta \tau_{i+1}^2) +\mathcal{O}_{\varepsilon}(\Delta \tau_{i}^2) +\mathcal{O}_{\varepsilon}(\Delta \varphi_{i+1}^2) +\mathcal{O}_{\varepsilon}(\Delta \varphi_{i}^2), \quad \forall i \geq I,
\end{align*}
where the notation $\mathcal{O}_{\varepsilon}$ was defined in Definition \ref{Big O} and $I\in \mathbb{N}$ in Lemma \ref{compare}.
\end{lemma}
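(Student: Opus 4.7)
The plan is to work in a single analytic chart around $p_i := \sigma(\tau_i,\varphi_i)$ and decompose the specular reflection at $p_i$ into tangential and normal parts relative to the outward unit normal $n_i := n(p_i)$. First I would Taylor expand the two chords,
\begin{equation*}
p_i - \sigma_{i-1} = \sigma_\tau \Delta\tau_i + \sigma_\varphi \Delta\varphi_i - \tfrac{1}{2}\big(\sigma_{\tau\tau}\Delta\tau_i^2 + 2\sigma_{\tau\varphi}\Delta\tau_i\Delta\varphi_i + \sigma_{\varphi\varphi}\Delta\varphi_i^2\big) + \mathcal{O}_\varepsilon\big(|\Delta\tau_i|^3 + |\Delta\varphi_i|^3\big),
\end{equation*}
with the analogous expansion (with $+$ signs on the second-order terms) for $\sigma_{i+1}-p_i$ in the increments $(\Delta\tau_{i+1},\Delta\varphi_{i+1})$; all partials of $\sigma$ are evaluated at $(\tau_i,\varphi_i)$. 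From \eqref{second deriv gamma} and the axisymmetric form of $\sigma$, the first and second fundamental form coefficients at $p_i$ read
\begin{equation*}
E = 1, \quad F = 0, \quad G = \gamma_1(\tau_i)^2; \qquad L = -\kappa(\tau_i), \quad M = 0, \quad N = -\gamma_1(\tau_i)\gamma_2'(\tau_i).
\end{equation*}
The vanishing of the cross term $M=0$ is precisely the axial symmetry of $\Omega$ and is what prevents a $\Delta\tau\,\Delta\varphi$ cross term from appearing in the normal part of the expansion.

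Next I would rewrite the specular reflection at $p_i$ as two conditions on the unit rays $\hat u^{\text{in}} := (p_i - \sigma_{i-1})/|p_i - \sigma_{i-1}|$ and $\hat u^{\text{out}} := (\sigma_{i+1} - p_i)/|\sigma_{i+1} - p_i|$: equality of their tangential projections onto $T_{p_i}$, and equal magnitude with opposite signs of their normal components. Writing $r_j := \Delta\varphi_j/\Delta\tau_j$ (where $|r_j|\leq C_\varepsilon$ by Lemma \ref{compare}), the tangential identity forces
\begin{equation*}
r_{i+1} = r_i + \mathcal{O}_\varepsilon\big(|\Delta\tau_i| + |\Delta\tau_{i+1}|\big).
\end{equation*}
Substituting this into the normal identity, dividing by the common first-order denominator $|\Delta\tau_j|\sqrt{1+\gamma_1(\tau_i)^2 r_j^2}$, and collecting leading terms yields
\begin{equation*}
(L + Nr_i^2)\,(\Delta\tau_{i+1} - \Delta\tau_i) = \mathcal{O}_\varepsilon\big(\Delta\tau_i^2 + \Delta\tau_{i+1}^2 + \Delta\varphi_i^2 + \Delta\varphi_{i+1}^2\big).
\end{equation*}

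It then suffices to show $|L + Nr_i^2| \geq c_\varepsilon > 0$ uniformly. Up to positive normalization, $L + Nr^2$ is the normal curvature of $\partial\Omega$ at $p_i$ in the direction $\sigma_\tau + r\sigma_\varphi$, so it vanishes exactly at the two inflection directions $I^{1,2}(\tau_i,\varphi_i)$ of Lemma \ref{inflection} --- equivalently, when $\tan\eta_i$ takes the critical value $\tan\vartheta_i$ in the notation of Lemma \ref{compare}. Assumption \eqref{assump 3} is tailored exactly to this non-degeneracy: for $\tau_i \in (Z_h)_\varepsilon$ the uniform gap $|\tan\eta_i - \tan\vartheta_i| \geq \varepsilon$ translates through the explicit formula for $L + Nr^2$ into $|L + Nr_i^2| \geq c_\varepsilon$; for $\tau_i \notin (Z_h)_\varepsilon$ the classification in the proof of Lemma \ref{inflection} supplies the same lower bound (and in the outer part $\tau_i\notin(\tau_{1,*},\tau_{2,*})$ the coefficients $L,N$ even share the same sign, so $|L + Nr^2|\geq|L|$ is automatic). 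Dividing then proves the lemma. Alternatively, since both the tangential and normal reflection constraints are real-analytic in $(\Delta\tau_{i\pm 1},\Delta\varphi_{i\pm 1})$ near the origin with a Jacobian in $(\Delta\tau_{i+1},\Delta\varphi_{i+1})$ that is invertible under \eqref{assump 3}, the analytic implicit function theorem (Lemma \ref{analytic IFT}) produces $\Delta\tau_{i+1}-\Delta\tau_i$ as a convergent analytic series in the remaining variables whose linear terms vanish by the above calculation.

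The hard part will be extracting the leading-order cancellation cleanly --- that is, verifying that the linear contributions of the tangential and normal identities recombine exactly into $(L + Nr_i^2)(\Delta\tau_{i+1}-\Delta\tau_i)$, with no leftover first-order residue in $\Delta\varphi_{i+1}-\Delta\varphi_i$. This requires combining three separate ingredients in the correct order --- axisymmetry ($M=0$), the tangential reflection identity (which ties $r_{i+1}$ to $r_i$ at first order), and the non-degeneracy \eqref{assump 3} (which keeps $L + Nr^2$ bounded away from zero) --- along with careful sign-tracking of the outward normal so that the incoming and outgoing normal components emerge with opposite signs.
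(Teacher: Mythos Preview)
Your approach via the first and second fundamental forms and the tangential/normal decomposition of specular reflection is genuinely different from the paper's and conceptually cleaner. The paper instead works in explicit coordinates: it writes the specular relation $\sigma_{i+1}-\sigma_i = l\,R_{\sigma_i}(\sigma_i-\sigma_{i-1})$, cross-multiplies components to eliminate the unknown scale factor $l$, and after a lengthy Taylor computation arrives at a relation whose leading coefficient is $\kappa(\tau_i)/2$ --- the curvature of the cross-section, which is uniformly positive by convexity. The residual first-order term is $(\Delta\varphi_{i+1}-\Delta\varphi_i)\cdot\Delta\varphi_i/\Delta\tau_i$, and the paper handles this separately by showing $\Delta\varphi_{i+1}-\Delta\varphi_i=\mathcal O_\varepsilon(\Delta\varphi_i^2)$ via the analytic IFT and the two-dimensional (unperturbed $\varphi_0=0$) limit. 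Your route reaches the same conclusion more directly, but at the cost of dividing by $L+Nr_i^2$ rather than by $\kappa(\tau_i)$.

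That division is where your sketch has a genuine gap. Your claim that ``for $\tau_i\notin(Z_h)_\varepsilon$ the classification in the proof of Lemma~\ref{inflection} supplies the same lower bound'' is incorrect: Lemma~\ref{inflection} only tells you that at such $\tau_i$ the intersection curve $\partial\Omega\cap T_{\sigma_i}$ has nonvanishing second derivative (hence is locally convex or concave). It says nothing about where the actual chord direction $r_i$ sits relative to the inflection direction, so it cannot bound $|L+Nr_i^2|$ from below. Indeed, neither \eqref{assump 1} nor \eqref{assump 3} applies in the region $\tau_i\in(\tau_{1,*}+\varepsilon,\tau_{2,*}-\varepsilon)\setminus(Z_h)_\varepsilon$, and the upper bound \eqref{angle bound} from Lemma~\ref{compare} allows $\tan\eta_i$ arbitrarily close to $\tan\vartheta_i$. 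What is actually needed --- and what the paper supplies as its Step~1 before any expansion --- is the geometric obstruction: if $\tau_i\notin(Z_h)_\varepsilon$ and the bounce at $\sigma_i$ were grazing, the strict convexity/concavity of the intersection curve would force the reflected ray to miss the $\delta$-ball around $\sigma_i$, contradicting \eqref{assump 2}. This yields pointwise nongrazing; the paper then obtains the \emph{uniform} constant by explicitly invoking the convergence $\tau_i\to\tau_\infty$ from the ambient contradiction argument in Lemma~\ref{no infty bounce}. Your sketch omits both this geometric step and the appeal to convergence, so as written the division by $L+Nr_i^2$ is uncontrolled in the problematic middle region.
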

	\begin{remark} In Lemma \ref{difference}, we assumed \eqref{assump 3} to derive the relation about $\Delta \tau_{i+1}-\Delta \tau_i$. Assumption \eqref{assump 3} means that we do not consider shading region in Figure \ref{remark 2}. The case $\tau_i \in (Z_h)_{\varepsilon}$ is covered through these two pictures in Figure \ref{remark 2}. Later, we will remove this region through the angular momentum conservation of the trajectory which passes $(\sigma(\tau_i,\varphi_i),I_2(\tau_i,\varphi_i))$ whenever $\tau_i \in (Z_h)_{\varepsilon}$.
	\end{remark}
\begin{figure}[h]
	\centering
	\begin{subfigure}[h]{0.495\textwidth}
	\includegraphics[width=\linewidth]{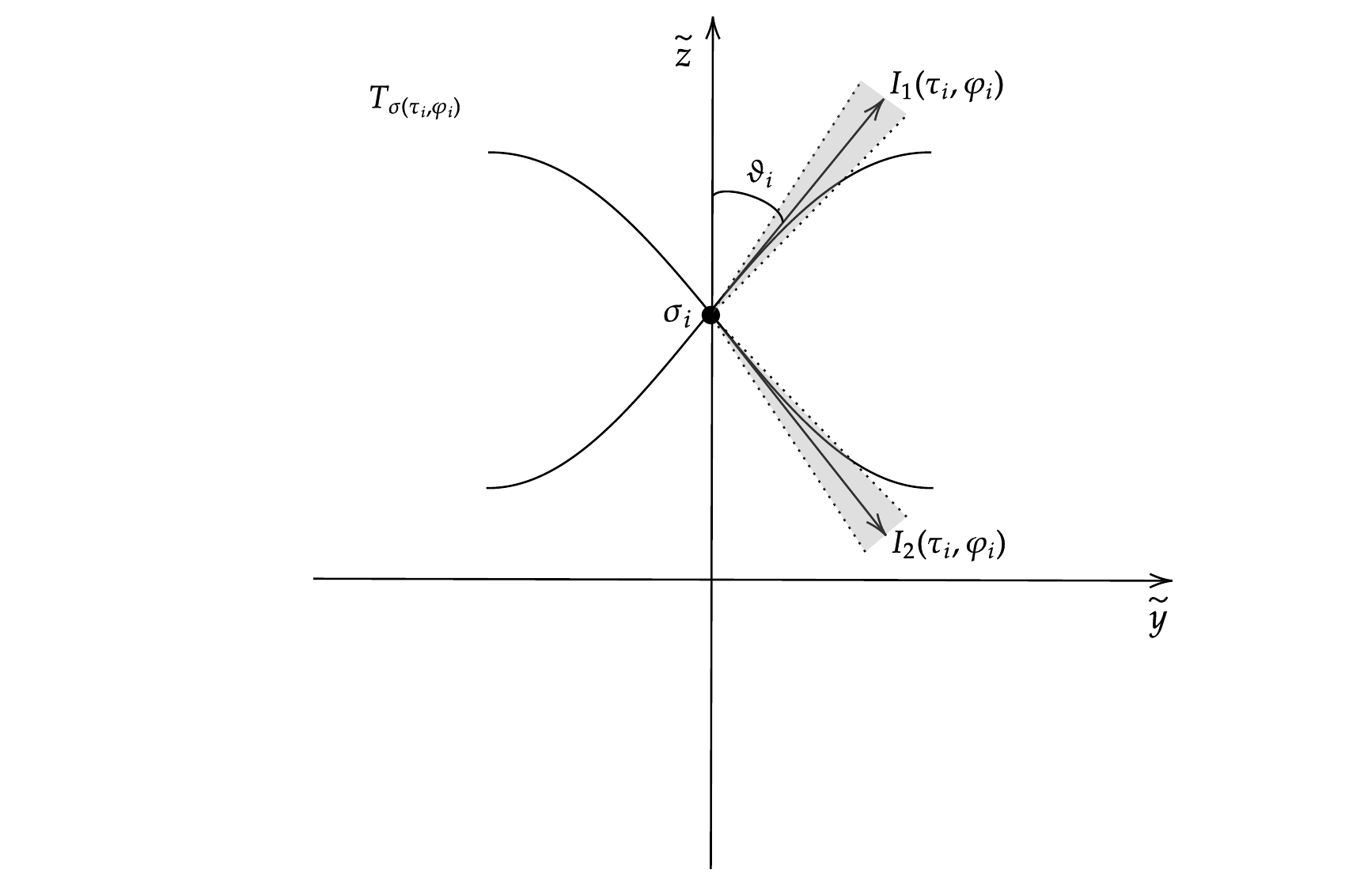}
	\end{subfigure}
	\hfill
	\begin{subfigure}[h]{0.495\textwidth}
	\includegraphics[width=\linewidth]{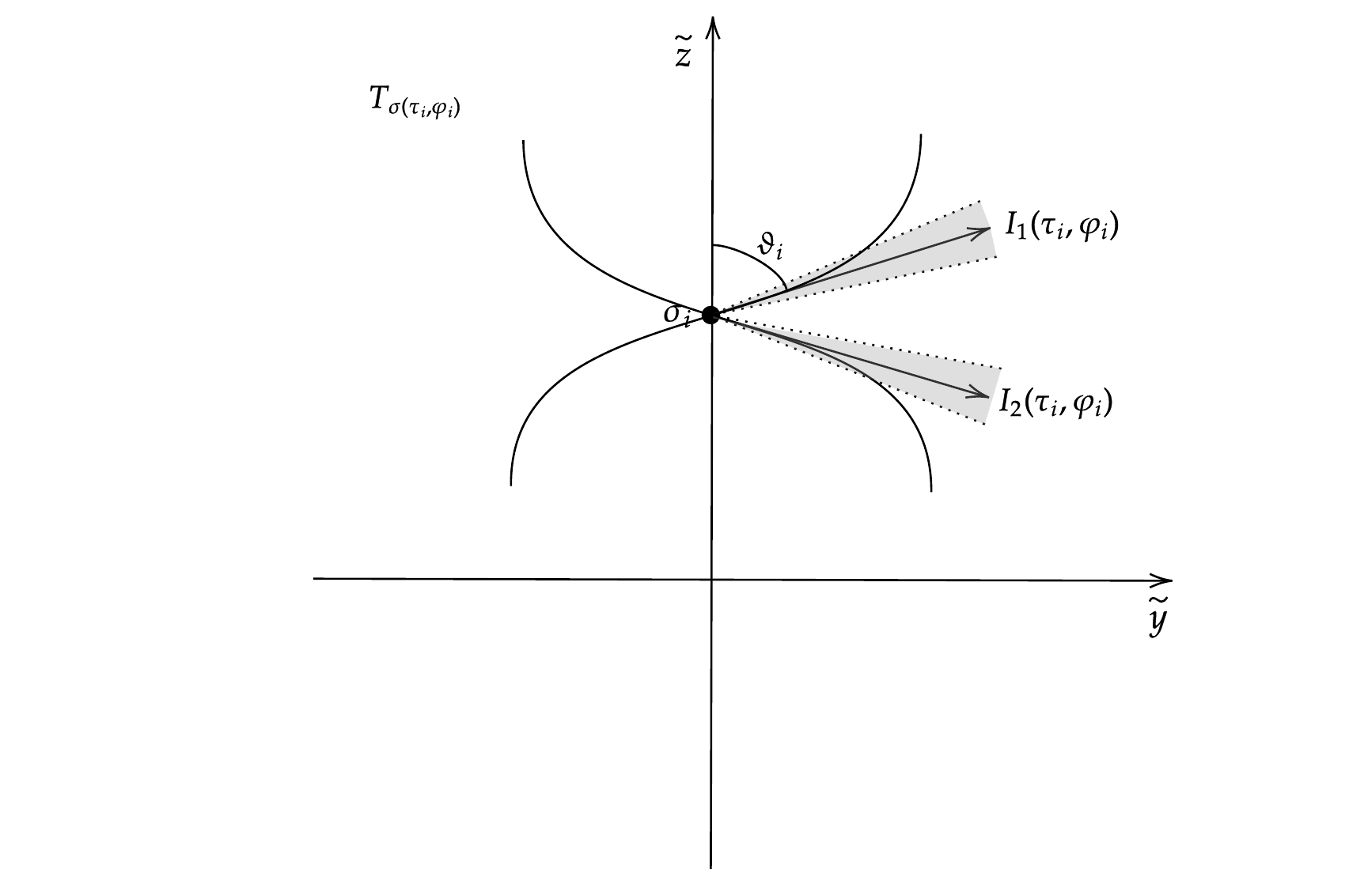}
	\end{subfigure}
	\caption{Assumption \eqref{assump 3}}
	\label{remark 2}
\end{figure}
\begin{proof}
\textbf{Step 1.} We first claim that each bounce is nongrazing. This is similar to the argument that we made in the proof of Lemma \ref{compare}. Since we assume $|\Delta \varphi_{i}|, |\Delta \tau_{i}|< \delta \ll 1$. we consider sufficiently small circle $B_i$ near $\sigma_i$ with radius $\delta$ so that both $\sigma_{i-1}$ and $\sigma_{i+1}$ locate inside of $B_i$. Note that both $(\sigma_i,  \overrightarrow{\sigma_{i-1}\sigma_{i}})$ and $(\sigma_i,  \overrightarrow{\sigma_{i+1}\sigma_{i}})$ must belong to $\gamma_0$ or $\gamma_{+}$. (Note that direction of $n(\sigma_{i})$ is inward to the paper in Figure \ref{proof 2}.) \\
		\indent Now we exclude $\gamma_0$ case using assumption \eqref{assump 3}. First, let us assume $\tau_{i}\notin (Z_h)_{\varepsilon}$ so that $T_{\sigma_{i}}\cap \p\O \cap B_{i}$ is concave or convex locally near $\sigma_{i}$. WLOG, let us assume it is locally concave as in Figure \eqref{fig5:a}. (For convex case, the argument is nearly similar.) If $(\sigma_i,  \overrightarrow{\sigma_{i-1}\sigma_{i}})\in\gamma_0$, $\sigma_{i-1}$ must be in $T_{\sigma_{i}}\cap \p\O \cap B_{i}$ as in Figure \eqref{fig5:a}. By concavity of the intersection curve in \eqref{fig5:a} of Figure \ref{proof 2}, $\sigma_{i+1}$ cannot hit $\tau_{i}\notin (Z_h)_{\varepsilon}$ for sufficiently small $\delta \ll 1$. See Figure \eqref{fig5:b} to check that $\gamma_0$ is possible if $\sigma_{i}$ is inflection point of $T_{\sigma_{i}}\cap \p\O \cap B_{i}$, for example $\tau_{i}\in Z_h$.   \\
		\indent Next, we consider $\tau_{i}\in (Z_h)_{\varepsilon}$ with extra assumption\eqref{assump 3}. We can choose sufficiently $\delta \ll 1$ so that excluded region via \eqref{assump 3} also excludes $T_{\sigma_{i}}\cap \p\O \cap B_{i}$ as in Figure \ref{remark 2}. So $\sigma_{i-1}, \sigma_{i+1}\in B_{i}$ is possible only when both $(\sigma_i,  \overrightarrow{\sigma_{i-1}\sigma_{i}})$ and $(\sigma_i,  \overrightarrow{\sigma_{i+1}\sigma_{i}})$ are $\gamma_{+}$.  \\
\begin{figure}[h]
	\begin{subfigure}[h]{0.495\textwidth}
	\includegraphics[width=\linewidth]{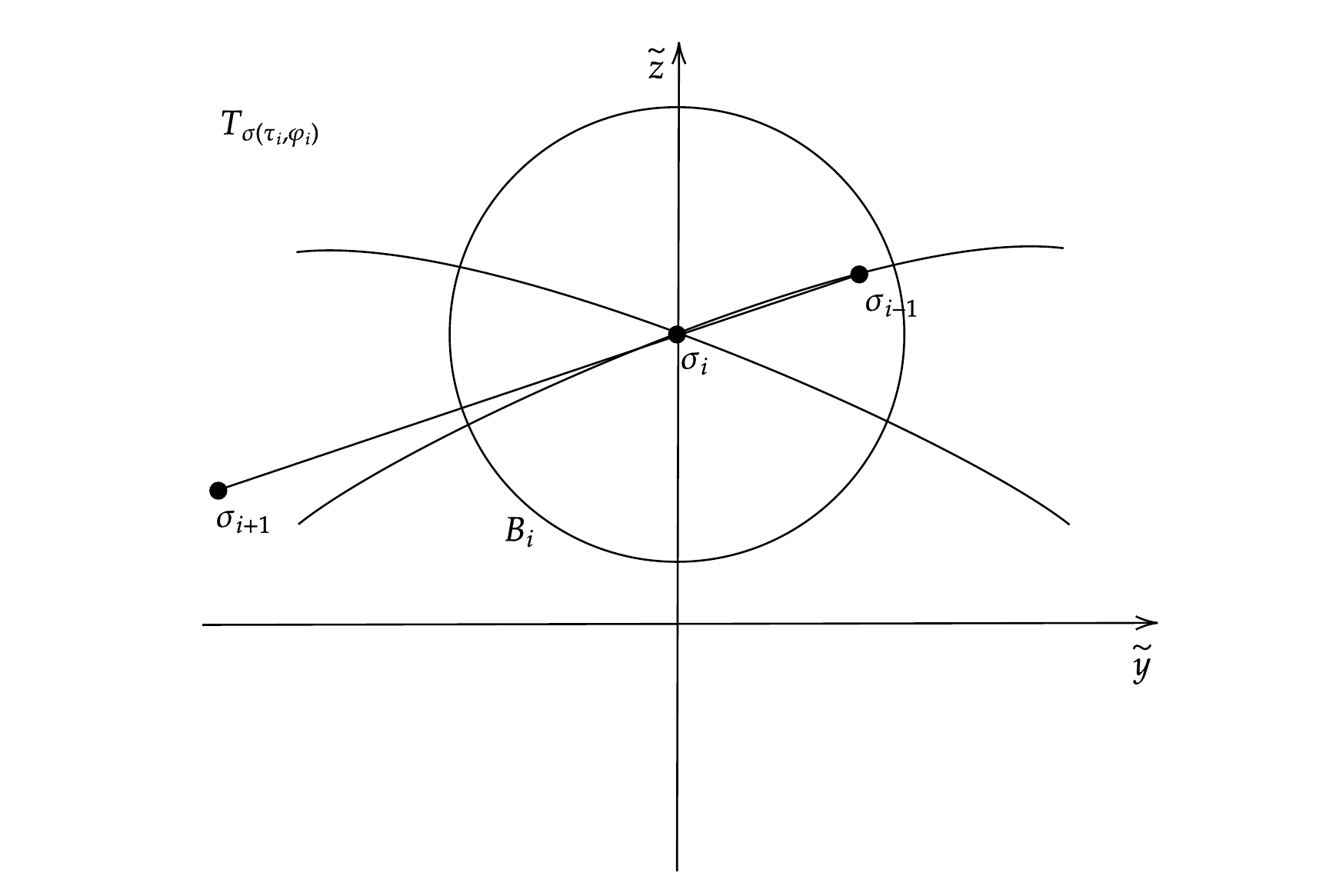}
	\caption{$\tau_i \notin (Z_h)_{\varepsilon}$}
	\label{fig5:a}
	\end{subfigure}
	\begin{subfigure}[h]{0.495\textwidth}
	\includegraphics[width=\linewidth]{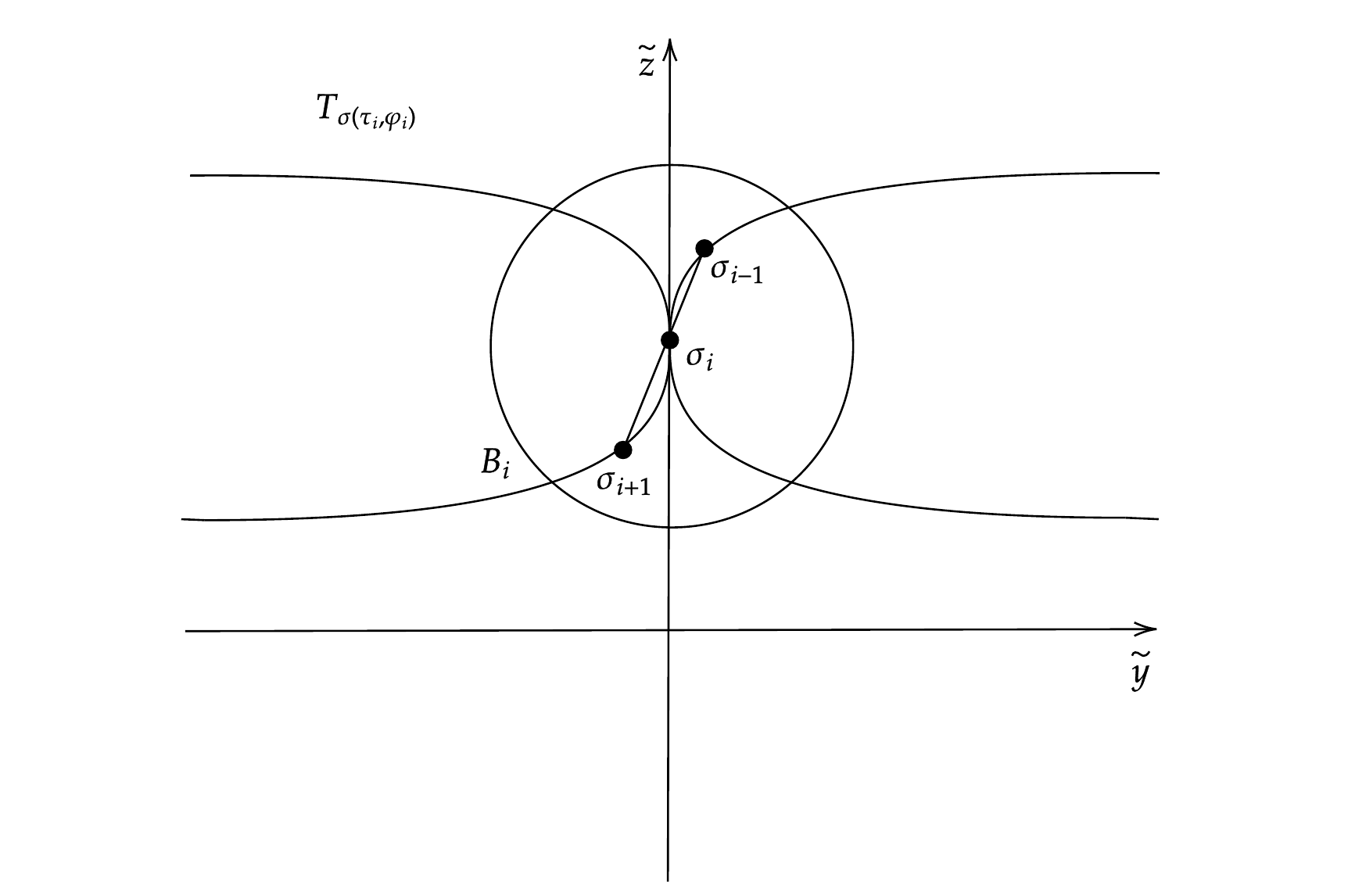}
	\caption{$\tau_i \in (Z_h)_{\varepsilon}$}
	\label{fig5:b}	
	\end{subfigure}
	\caption{The trajectory between $\sigma_{i-1}$ and $\sigma_{i+1}$ on the tangent plane $T_{\sigma(\tau_i,\varphi_i)}$}
	\label{proof 2}
\end{figure}

\textbf{Step 2.} For convenience of notation, we set $I=1$ in assumption \eqref{assump 2} of Lemma \ref{compare} and consider three boundary points 
\begin{align*}
	\sigma_i:=\sigma(\tau_i,\varphi_i)=(\gamma_1(\tau_i)\cos\varphi_i,\gamma_1(\tau_i)\sin\varphi_i,\gamma_2(\tau_i)), \quad 0\leq i \leq 2,
\end{align*}
which are in sufficiently small ball $B_{i}$. Without loss of generality, we set $\varphi_1=0$ due to axis-symmetry. Suppose that  
\begin{align*}
	\overrightarrow{\sigma_0\sigma_1} \parallel v \quad \textrm{ and } \quad \overrightarrow{\sigma_1\sigma_2} \parallel R_{\sigma(\tau_1,0)} v,
\end{align*}
where $R_{\sigma(\tau_1,0)}$ is reflection operator at the point $\sigma_1=\sigma(\tau_1,0)$. From the above, we have
\begin{align} \label{relation}
	\begin{bmatrix}
	\gamma_1(\gamma_2)\cos \varphi_2 -\gamma_1(\tau_1) \\ 
	\gamma_1(\tau_2)\sin\varphi_2\\
	\gamma_2(\tau_2)-\gamma_2(\tau_1)
	\end{bmatrix}=l \begin{bmatrix}
	1-2(\gamma_2'(\tau_1))^2 & 0 & 2\gamma_1'(\tau_1)\gamma_2'(\tau_1)\\
	0 &1 &0 \\
	2\gamma_1'(\tau_1)\gamma_2'(\tau_1) & 0 & 2(\gamma_2'(\tau_1))^2-1 
	\end{bmatrix}\begin{bmatrix}
	\gamma_1(\tau_1)-\gamma_1(\tau_0)\cos \varphi_0 \\
	-\gamma_1(\tau_0)\sin\varphi_0\\
	\gamma_2(\tau_1)-\gamma_2(\tau_0)
	\end{bmatrix},
\end{align}
for some constant $l$. From the first and third components, we have 
\Be
\begin{split}
	&\gamma_1(\tau_2)-\gamma_1(\tau_1) 
	\\
	&=l\Big((1-2(\gamma_2'(\tau_1))^2)(\gamma_1(\tau_1)-\gamma_1(\tau_0)\cos\varphi_0)+2\gamma_1'(\tau_1)\gamma_2'(\tau_1)(\gamma_2(\tau_1)-\gamma_2(\tau_0))\Big)\\
	&  \ \ 
+(1-\cos\varphi_2)\gamma_1(\tau_2), \label{gamma1}
\end{split}\Ee
\Be \begin{split}
&	\gamma_2(\tau_2)-\gamma_2(\tau_1)  \\
	&=l\Big(2\gamma_1'(\tau_1)\gamma_2'(\tau_1)(\gamma_1(\tau_1)-\gamma_1(\tau_0)\cos\varphi_0)+(2(\gamma_2'(\tau_1))^2-1)(\gamma_2(\tau_1)-\gamma_2(\tau_0))\Big)\label{gamma2}.
\end{split}\Ee
Using the second component of \eqref{relation}, we obtain that 
\begin{align*}
	\gamma_1(\tau_2)=-\frac{l \gamma_1(\tau_0)\sin\varphi_0}{\sin\varphi_2}.
\end{align*}
Then, the condition \eqref{gamma1} becomes 
\begin{align} \label{gamma1new}
	\begin{split}
&	\gamma_1(\tau_2)-\gamma_1(\tau_1)\\&=l\Big((1-2(\gamma_2'(\tau_1))^2)(\gamma_1(\tau_1)-\gamma_1(\tau_0)\cos\varphi_0)\\
&  \ \ \ +2\gamma_1'(\tau_1)\gamma_2'(\tau_1)(\gamma_2(\tau_1)-\gamma_2(\tau_0))-\gamma_1(\tau_0)\frac{1-\cos\varphi_2}{\sin\varphi_2}\sin\varphi_0\Big)\\
	&=l\Big((1-2(\gamma_2'(\tau_1))^2)(\gamma_1(\tau_1)-\gamma_1(\tau_0))+2\gamma_1'(\tau_1)\gamma_2'(\tau_1)(\gamma_2(\tau_1)-\gamma_2(\tau_0))\\
	&\quad +(1-2(\gamma_2'(\tau_1))^2)(1-\cos\varphi_0)\gamma_1(\tau_0)-\gamma_1(\tau_0)\frac{1-\cos\varphi_2}{\sin\varphi_2}\sin\varphi_0\Big).
	\end{split}
\end{align}
Multiply (LHS) in \eqref{gamma1new} by (RHS) in \eqref{gamma2}:
\begin{align*} 
	&l(\gamma_1(\tau_2)-\gamma_1(\tau_1))\\
	& \times \Big(2\gamma_1'(\tau_1)\gamma_2'(\tau_1)(\gamma_1(\tau_1)-\gamma_1(\tau_0)\cos\varphi_0)+(2(\gamma_2'(\tau_1))^2-1)(\gamma_2(\tau_1)-\gamma_2(\tau_0))\Big)\\
	&=l(\gamma_1(\tau_2)-\gamma_1(\tau_1))\\
	& \times \Big(2\gamma_1'(\tau_1)\gamma_2'(\tau_1)(\gamma_1(\tau_1)-\gamma_1(\tau_0))\\
	& \ \ \ \ +(2(\gamma_2'(\tau_1))^2-1)(\gamma_2(\tau_1)-\gamma_2(\tau_0))+2\gamma_1'(\tau_1)\gamma_2'(\tau_1)(1-\cos\varphi_0)\gamma_1(\tau_0)\Big).
\end{align*}
And, multiply (LHS) in \eqref{gamma2} by (RHS) in \eqref{gamma1new}:
\begin{align*}
&l(\gamma_2(\tau_2)-\gamma_2(\tau_1))\Big((1-2(\gamma_2'(\tau_1))^2)(\gamma_1(\tau_1)-\gamma_1(\tau_0))+2\gamma_1'(\tau_1)\gamma_2'(\tau_1)(\gamma_2(\tau_1)-\gamma_2(\tau_0))\\
	&\qquad \qquad \qquad \qquad  +(1-2(\gamma_2'(\tau_1))^2)(1-\cos\varphi_0)\gamma_1(\tau_0)-\gamma_1(\tau_0)\frac{1-\cos\varphi_2}{\sin\varphi_2}\sin\varphi_0\Big).
\end{align*}
 Since two forms above are the same, we get 
 \begin{align}\label{mult}
 	\begin{split}
 	&(\gamma_1(\tau_2)-\gamma_1(\tau_1))\Big(2\gamma_1'(\tau_1)\gamma_2'(\tau_1)(\gamma_1(\tau_1)-\gamma_1(\tau_0))+(2(\gamma_2'(\tau_1))^2-1)(\gamma_2(\tau_1)-\gamma_2(\tau_0)) \\
 	& \ \ \ \ \ \ \ \ \ \ \ \ \ \ \ \ \ \ \ \ \ \ \  +2\gamma_1'(\tau_1)\gamma_2'(\tau_1)(1-\cos\varphi_0)\gamma_1(\tau_0)\Big)\\
	&  = (\gamma_2(\tau_2)-\gamma_2(\tau_1))\Big((1-2(\gamma_2'(\tau_1))^2)(\gamma_1(\tau_1)-\gamma_1(\tau_0))+2\gamma_1'(\tau_1)\gamma_2'(\tau_1)(\gamma_2(\tau_1)-\gamma_2(\tau_0))\\
	&\qquad \qquad \qquad \qquad \qquad  +(1-2(\gamma_2'(\tau_1))^2)(1-\cos\varphi_0)\gamma_1(\tau_0)-\gamma_1(\tau_0)\frac{1-\cos\varphi_2}{\sin\varphi_2}\sin\varphi_0\Big).
	\end{split}
 \end{align}
 We expand some terms in \eqref{mult} by using Taylor's expansion at $\tau=\tau_1$:
 \begin{align*}
 	\gamma_1(\tau_2)-\gamma_1(\tau_1)&=\gamma'_1(\tau_1)\Delta \tau_2+\frac{\gamma_1''(\tau_1)}{2}(\Delta \tau_2)^2+\mathcal{O}((\Delta\tau_2)^3)\\
 	& =\gamma'_1(\tau_1)\Delta \tau_2-\frac{\kappa(\tau_1)\gamma_2'(\tau_1)}{2}(\Delta \tau_2)^2+\mathcal{O}((\Delta \tau_2)^3),\\
	\gamma_1(\tau_1)-\gamma_1(\tau_0)&=\gamma_1'(\tau_1)\Delta \tau_1-\frac{\gamma_1''(\tau_1)}{2}(\Delta \tau_1)^2+\mathcal{O}((\Delta \tau_1)^3)
	\\
	& =\gamma_1'(\tau_1)\Delta \tau_1+\frac{\kappa(\tau_1)\gamma_2'(\tau_1)}{2}(\Delta \tau_1)^2 +\mathcal{O}((\Delta \tau_1)^3),\\
	\gamma_2(\tau_2)-\gamma_2(\tau_1)&=\gamma'_2(\tau_1)\Delta \tau_2+\frac{\gamma_2''(\tau_1)}{2}(\Delta \tau_2)^2+\mathcal{O}((\Delta\tau_2)^3)\\
	& =\gamma'_2(\tau_1)\Delta \tau_2+\frac{\kappa(\tau_1)\gamma_1'(\tau_1)}{2}(\Delta \tau_2)^2+\mathcal{O}((\Delta \tau_2)^3),\\
	\gamma_2(\tau_1)-\gamma_2(\tau_0)&=\gamma_2'(\tau_1)\Delta \tau_1-\frac{\gamma_2''(\tau_1)}{2}(\Delta \tau_1)^2+\mathcal{O}((\Delta \tau_1)^3)\\
	& =\gamma_2'(\tau_1)\Delta \tau_1 -\frac{\kappa(\tau_1)\gamma_1'(\tau_1)}{2}(\Delta \tau_1)^2 +\mathcal{O}((\Delta \tau_1)^3),\\
	(1-\cos \varphi_0)\gamma_1(\tau_0)&=(\frac{1}{2}\varphi_0^2 +\mathcal{O}(\varphi_0^4))\times (\gamma_1(\tau_1)-\gamma_1'(\tau_1)\Delta \tau_1+\mathcal{O}(\Delta \tau_1^2))\\
	&=\frac{\gamma_1(\tau_1)}{2}\varphi_0^2  -\frac{\gamma_1'(\tau_1)}{2}\varphi_0^2\Delta \tau_1+\mathcal{O}(\Delta \tau_1^2)\varphi_0^2+\mathcal{O}(\varphi_0^4),\\
	\gamma_1(\tau_0)\frac{1-\cos\varphi_2}{\sin\varphi_2}\sin\varphi_0&=(\gamma_1(\tau_1)-\gamma_1'(\tau_1)\Delta \tau_1 + \mathcal{O}(\Delta \tau_1^2))\times (\frac{\varphi_2}{2}+\mathcal{O}(\varphi_2^3)) \times (\varphi_0+\mathcal{O}(\varphi_0^3))\\
	&=\frac{\gamma_1(\tau_1)}{2}\varphi_0\varphi_2-\frac{\gamma_1'(\tau_1)}{2}\varphi_0\varphi_2\Delta \tau_1\\
	&\quad+\mathcal{O}(\varphi_0^3)\varphi_2+\mathcal{O}(\varphi_2^3)\varphi_0+\mathcal{O}(\Delta \tau_1^2)\varphi_0\varphi_2,
 \end{align*}
 where $\Delta \tau_k=\tau_k-\tau_{k-1}$ and $\Delta \varphi_k=\varphi_k-\varphi_{k-1}$ for $k=1,2$. We compare coefficients of the left and right terms in \eqref{mult}:
 \begin{align*}
 	(\Delta \tau_1 \Delta \tau_2) \quad (LHS)&=\gamma_1'(\tau_1)(2(\gamma_1'(\tau_1))^2\gamma_2'(\tau_1)+\gamma_2'(\tau_1)(2(\gamma_2'(\tau_1))^2-1))\\
	&=\gamma_1'(\tau_1)\gamma_2'(\tau_1)(2(\gamma_1'(\tau_1))^2+2(\gamma_2'(\tau_1))^2-1)\\
	&=\gamma_1'(\tau_1)\gamma_2(\tau_1),\\
	(RHS)&=(1-2(\gamma_2'(\tau_1))^2)\gamma_1'(\tau_1)\gamma_2'(\tau_1)+2\gamma_1'(\tau_1)(\gamma_2'(\tau_1))^3\\
	&=\gamma_1'(\tau_1)\gamma_2'(\tau_1)(1-2(\gamma_2'(\tau_1))^2+2(\gamma_2'(\tau_1))^2)\\
	&=\gamma_1'(\tau_1)\gamma_2'(\tau_1),\\
	(\Delta \tau_1)^2(\Delta \tau_2)^2 \quad  (LHS)&=-\frac{k^2(\tau_1)}{2}\gamma_1'(\tau_1)(\gamma_2'(\tau_1))^3+\frac{k^2(\tau_1)}{4}\gamma_1'(\tau_1)\gamma_2'(\tau_1)(2(\gamma_2'(\tau_1))^2-1)\\
	&=\frac{k^2(\tau_1)}{4}\gamma_1'(\tau_1)\gamma_2'(\tau_1)(-2(\gamma_2'(\tau_1))^2+2(\gamma_2'(\tau_1))^2-1)\\
	&=-\frac{k^2(\tau_1)}{4}\gamma_1'(\tau_1)\gamma_2'(\tau_1),\\
	(RHS) &= \frac{k^2(\tau_1)}{4}\gamma_1'(\tau_1)\gamma_2'(\tau_1)(1-2(\gamma_2'(\tau_1))^2)-\frac{k^2(\tau_1)}{2}(\gamma_1'(\tau_1))^3\gamma_2'(\tau_1)\\
	&=\frac{k^2(\tau_1)}{4}\gamma_1'(\tau_1)\gamma_2'(\tau_1)(1-2(\gamma_2'(\tau_1))^2-2(\gamma_1'(\tau_1))^2)\\
	&=-\frac{k^2(\tau_1)}{4}\gamma_1'(\tau_1)\gamma_2'(\tau_1),\\
 	(\Delta \tau_2)(\Delta \tau_1)^2 \quad (LHS)&= \kappa(\tau_1)(\gamma_1'(\tau_1))^2(\gamma_2'(\tau_1))^2-\frac{\kappa(\tau_1)}{2}(\gamma_1'(\tau_1))^2(2(\gamma_2'(\tau_1))^2-1)\\
	&=\frac{\kappa(\tau_1)}{2}(\gamma_1'(\tau_1))^2(2(\gamma_2'(\tau_1))^2-2(\gamma_2'(\tau_1))^2+1)\\
	&=\frac{\kappa(\tau_1)}{2}(\gamma_1'(\tau_1))^2,\\
	(RHS)&=\frac{\kappa(\tau_1)}{2}(1-2(\gamma_2'(\tau_1))^2)(\gamma_2'(\tau_1))^2-\kappa(\tau_1)(\gamma_1'(\tau_1))^2(\gamma_2'(\tau_1))^2\\
	&=\frac{\kappa(\tau_1)}{2}(\gamma_2'(\tau_1))^2)(1-2(\gamma_2'(\tau_1))^2-2(\gamma_1'(\tau_1))^2)\\
	&=-\frac{\kappa(\tau_1)}{2}(\gamma_2'(\tau_1))^2, \\
	(\Delta \tau_1)(\Delta \tau_2)^2 \quad (LHS)&=-\kappa(\tau_1)(\gamma_1'(\tau_1))^2(\gamma_2'(\tau_1))^2-\frac{\kappa(\tau_1)}{2}(\gamma_1'(\tau_1))^2(2(\gamma_2'(\tau_1))^2-1)\\
	&=-\kappa(\tau_1)(\gamma_2'(\tau_1))^2((\gamma_1'(\tau_1))^2+(\gamma_2'(\tau_1))^2-1/2)\\
	&=-\frac{\kappa(\tau_1)}{2}(\gamma_2'(\tau_1))^2,\\
	(RHS)&=\frac{\kappa(\tau_1)}{2}(1-2(\gamma_2'(\tau_1))^2)(\gamma_1'(\tau_1))^2+\kappa(\tau_1)(\gamma_1'(\tau_1))^2(\gamma_2'(\tau_1))^2\\
	&=\frac{\kappa(\tau_1)}{2}(\gamma_1'(\tau_1))^2(1-2(\gamma_2'(\tau_1))^2+2(\gamma_2'(\tau_1))^2)\\
	&=\frac{\kappa(\tau_1)}{2}(\gamma_1'(\tau_1))^2,\\
 	(\varphi_0^2\Delta \tau_2) \quad (LHS)&=\gamma_1(\tau_1)(\gamma_1'(\tau_1))^2(\gamma_2'(\tau_1)),\\
	(RHS)&=\frac{\gamma_1(\tau_1)}{2}\gamma_2'(\tau_1)(1-2(\gamma_2'(\tau_1))^2),\\
	(\varphi_0\varphi_2\Delta \tau_2) \quad (LHS)&=0,\\
	(RHS)&=-\frac{\gamma_1(\tau_1)}{2}\gamma_2'(\tau_1),\\
	(Error \; term) \quad (LHS)&=\mathcal{O}(\Delta \tau_2^3)\mathcal{O}(\Delta \tau_1)+\mathcal{O}(\Delta \tau_1^3)\mathcal{O}(\Delta \tau_2) +\mathcal{O}(\varphi_0^2)\mathcal{O}(\Delta \tau_2^2)\\
	& +\mathcal{O}(\varphi_0^2)\mathcal{O}(\Delta \tau_1)\mathcal{O}(\Delta \tau_2)+\mathcal{O}(\varphi_0^4)\mathcal{O}(\Delta \tau_2),\\
	(RHS)&=\mathcal{O}(\Delta \tau_2^3)\mathcal{O}(\Delta \tau_1)+\mathcal{O}(\Delta \tau_1^3)\mathcal{O}(\Delta \tau_2) \\
	& +\mathcal{O}(\varphi_0^2)\mathcal{O}(\Delta \tau_2^2)+\mathcal{O}(\varphi_0^2)\mathcal{O}(\Delta \tau_1)\mathcal{O}(\Delta \tau_2)+\mathcal{O}(\varphi_0^4)\mathcal{O}(\Delta \tau_2)\\
	&\quad +\mathcal{O}(\varphi_0)\mathcal{O}(\varphi_2)\mathcal{O}(\Delta \tau_2^2)+\mathcal{O}(\varphi_0)\mathcal{O}(\varphi_2)\mathcal{O}(\Delta \tau_1)\mathcal{O}(\Delta \tau_2)\\
	&\quad +\mathcal{O}(\varphi_0^3)\mathcal{O}(\varphi_2)\mathcal{O}(\Delta \tau_2) +\mathcal{O}(\varphi_2^3)\mathcal{O}(\varphi_0)\mathcal{O}( \Delta \tau_2). 
 \end{align*}
 Therefore, summing the above yields that 
 \begin{align*}
 	&\frac{\kappa(\tau_1)}{2}(\Delta\tau_2)^2(\Delta\tau_1)-\frac{\kappa(\tau_1)}{2}(\Delta \tau_1)^2(\Delta \tau_2) -\frac{\gamma_1(\tau_1)}{2}\gamma_2'(\tau_1)\varphi_0^2\Delta \tau_2-\frac{\gamma_1(\tau_1)}{2}\gamma_2'(\tau_1)\varphi_0\varphi_2\Delta \tau_2 \\
	&\quad =\mathcal{O}((\Delta \tau_1)^3)\mathcal{O}(\Delta \tau_2)+\mathcal{O}((\Delta \tau_2)^3)\mathcal{O}(\Delta \tau_1)+\mathcal{O}(\varphi_0^2)\mathcal{O}(\Delta \tau_1)\mathcal{O}(\Delta \tau_2)\\
	&\quad \quad +\mathcal{O}(\varphi_0)\mathcal{O}(\varphi_2)\mathcal{O}(\Delta \tau_1)\mathcal{O}(\Delta \tau_2)  +\mathcal{O}(\varphi_0^2)\mathcal{O}(\Delta \tau_2^2)+\mathcal{O}(\varphi_0)\mathcal{O}(\varphi_2)\mathcal{O}(\Delta \tau_2^2)\\
		&\quad \quad+\mathcal{O}(\varphi_0^3)\mathcal{O}(\varphi_2)\mathcal{O}(\Delta \tau_2)+\mathcal{O}(\varphi_2^3)\mathcal{O}(\varphi_0)\mathcal{O}(\Delta \tau_2) +\mathcal{O}(\varphi_0^4)\mathcal{O}(\Delta \tau_2),
 \end{align*}
 where we have used $(\gamma'_1(\tau_1))^2+(\gamma_2'(\tau_1))^2=1$. Divide both sides above by $\frac{\kappa(\tau_1)}{2}\Delta \tau_1 \Delta \tau_2$: 
 \begin{align} \label{diff_comp_1}	
 \begin{split}
 \Delta \tau_2-\Delta \tau_1 &=\frac{\gamma_1(\tau_1)\gamma_2'(\tau_1)}{\kappa(\tau_1)}(\varphi_0+\varphi_2)\frac{\varphi_0}{\Delta \tau_1}\\
 &\quad+\mathcal{O}(\Delta \tau_1^2)+\mathcal{O}(\Delta \tau_2^2)+\mathcal{O}(\varphi_0^2)+\mathcal{O}(\varphi_0)\mathcal{O}(\varphi_2)+\frac{1}{\Delta \tau_1}\mathcal{O}(\varphi_0^2)\mathcal{O}(\Delta \tau_2)\\
 &\quad +\frac{1}{\Delta \tau_1}\mathcal{O}(\varphi_0)\mathcal{O}(\varphi_2)\mathcal{O}(\Delta \tau_2)\\
 &\quad+ \frac{1}{\Delta \tau_1} \mathcal{O}(\varphi_0^3)\mathcal{O}(\varphi_2) +\frac{1}{\Delta \tau_1} \mathcal{O}(\varphi_2^3)\mathcal{O}(\varphi_0)+\frac{1}{\Delta \tau_1} \mathcal{O}(\varphi_0^4).  
 \end{split}
 \end{align}
 Since we set $\varphi_1=0$, we have $\varphi_2=\varphi_2-\varphi_1=\Delta \varphi_2$,  $\varphi_0=-(\varphi_1-\varphi_0)=-\Delta \varphi_1$, and 
 \begin{align*}
 	\varphi_0+\varphi_2=\Delta \varphi_2 -\Delta \varphi_1.
 \end{align*}
 Thus, we have to derive that $\varphi_0+\varphi_2=\Delta \varphi_2-\Delta \varphi_1$ has the second-order truncation error. \\
  
 \noindent \textbf{Step 3.} From the second component in \eqref{relation}, we have 
 \begin{align} \label{second comp}
 	\gamma_1(\tau_2)\sin\varphi_2 = -l \gamma_1(\tau_0)\sin\varphi_0. 
 \end{align}
 By using the Taylor expansion, 
 \begin{align*}
 	\gamma_1(\tau_2)\sin\varphi_2&=(\gamma_1(\tau_1)+\mathcal{O}(\Delta \tau_2))(\varphi_2 +\mathcal{O}(\varphi_2^3)) = \gamma_1(\tau_1)\varphi_2 +\mathcal{O}(\Delta \tau_2)\varphi_2 + \mathcal{O}(\varphi_2^3),\\
	\gamma_1(\tau_0)\sin\varphi_0&=(\gamma_1(\tau_1)+\mathcal{O}(\Delta \tau_1))(\varphi_0+\mathcal{O}(\varphi_0^3))=\gamma_1(\tau_1)\varphi_0+\mathcal{O}(\Delta \tau_1)\varphi_0 +\mathcal{O}(\varphi_0^3),
 \end{align*}
 which implies that 
 \begin{align} \label{relation2}
 	\gamma_1(\tau_1)(\varphi_2+l\varphi_0)=\mathcal{O}(\Delta \tau_2)\varphi_2+\mathcal{O}(\Delta \tau_1)\varphi_0.
 \end{align}
 Since $\tau_2$ and $\varphi_2$ are determined by $\tau_0, \varphi_0, \tau_1,$ and $\varphi_1$,  $l$ in \eqref{second comp} can be seen as a function of $\tau_0,\varphi_0,\tau_1$ 
 \[
 	l=l(\tau_0,\tau_1,\varphi_0).
 \] 
 (Note that we have chosen $\varphi_{1}=0$ WLOG.)  \\
 
 We will show that $\tau_{2}$, $\varphi_{2}$ is locally smooth function of $\tau_{0}, \tau_{1}, \varphi_{0}, \varphi_{2}$ when $(\sigma_{1}-\sigma_{0})\cdot N_{\sigma(\tau_1,0)} \neq 0$. In fact, by \textbf{Step 1} of this proof, it suffices to consider non-grazing cases only. By definition parametrization, map 
 \[
 	\sigma : (\tau, \varphi)\mapsto \sigma(\tau, \varphi) \in \p\O
 \]
 is smooth and injective locally. Hence $\det \frac{\p\sigma(\tau, \varphi)}{ \p (\tau, \varphi)} \neq 0$ and by inverse function theorem, $(\tau_{2}, \varphi_{2})(\sigma)$ is locally smooth function of boundary point $\sigma\in\p\O$.  Now, let us read \eqref{relation} as ($\sigma_{2}:=\sigma(\tau_{2}, \varphi_{2})$ satisfies)
 \[
 	[\sigma_{2} - \sigma_{1}]\times R_{\sigma_{1}}(\sigma_{1}-\sigma_{0}) = 0,
 \] 
 which implies
 \Be \label{sigma2 cond}
 	\sigma_{2} = \sigma_{1} + k R_{\sigma_{1}}(\sigma_{1}-\sigma_{0}) \quad \text{for some $k\in \R$.}
 \Ee
 If $\sigma\in \p\O$ is locally parametrized by analytic function $F(\sigma)=0$, condition $F(\sigma_{2})=0$ yields
 \[
 	\tilde{F}(\sigma_{0}, \sigma_{1}, k) := F(\sigma_{1} + k R_{\sigma_{1}}(\sigma_{1}-\sigma_{0})) = 0.
 \]
 Observe that
 \[
	\p_{k}\tilde{F}(\sigma_{0}, \sigma_{1}, k) = \nabla F\vert_{\sigma_{2}} \cdot R_{\sigma_{1}}(\sigma_{1}-\sigma_{0}) \neq 0,
 \]
 because we assume that the trajectory hits $\sigma_{2}$ non-grazingly. Note that $\nabla F$ is normal to the boundary at the boundary point $\sigma_{2}$. Therefore, by Lemma \ref{analytic IFT}, $k$ is (local) smooth function of $\sigma_{0}, \sigma_{1}$ (or equivalently $\tau_{0}, \tau_{1}, \varphi_{0}, \varphi_{1}$) . From \eqref{sigma2 cond}, $\sigma_{2}$ is smooth function of $\tau_{0}, \tau_{1}, \varphi_{0}, \varphi_{1}$ and by inverse function theorem as explained above, $(\tau_{2}, \varphi_{2})(\tau_{0}, \tau_{1}, \varphi_{0}, \varphi_{1})$ is smooth. Finally, we obtain that $l$ is smooth function of $(\tau_{0}, \tau_{1}, \varphi_{0}, \varphi_{1})$.  \\

 \hide
 so $\sigma_{2}$ is analytic funtion of $\tau_{0}, \tau_{1}, \varphi_{0}, \varphi_{1}$. Hence $(\tau_{2}, \varphi_{2})(\tau_{0}, \tau_{1}, \varphi_{0}, \varphi_{1})$ is smooth. 
 \[
 	l(\tau_0,\tau_1,\varphi_0) = \frac{|\sigma(\tau_{2}, \varphi_{2}) - \sigma(\tau_{1}, \varphi_{1})| }{ |\sigma(\tau_{1}, \varphi_{1}) - \sigma(\tau_{0}, \varphi_{0})| }.
 \]
 Since $\O$ is smooth, $l$ is also locally smooth. \\
 \unhide
 
 In particular, if there is no $\varphi_{0}$ perturbation, trajectory lies on the $\varphi_{1}=0$ cross-section and $\varphi_2=0$ obviously. In this case, we consider a regularized parametrization $r(\tau) = (r_{1}(\tau), r_{2}(\tau))$ for a closed uniformly convex curve in a plane. If we write the normal vector of the curve $r$ at $\tau_1$ as $N(\tau_1)$, from 
 \Be \label{taylor}
 	r(\tau_{2}) - r(\tau_{1}) = r^{\prime}(\tau_{1})(\tau_{2}-\tau_{1}) + \frac{\kappa(\tau_{1})}{2} N(\tau_1)(\tau_{2}-\tau_{1})^{2} + \mathcal{O}(\Delta\tau_{2}^{3}),
 \Ee
 we have
 \begin{equation*}
 \begin{split}
	| N(\tau_{1})\cdot \big( r(\tau_{2}) - r(\tau_{1}) \big) | &= \frac{|\kappa(\tau_{1})|}{2} (\tau_{2}-\tau_{1})^{2} + \mathcal{O}(\Delta\tau_{2}^{3}), \\
	| r^{\prime}(\tau_{1})\cdot \big( r(\tau_{2}) - r(\tau_{1}) \big) | &= \Delta\tau_{2} + \mathcal{O}(\Delta\tau_{2}^{3}).
 \end{split}
 \end{equation*}
The specular reflection condition
 \[
 	\frac{ \frac{|\kappa(\tau_{1})|}{2} (\Delta\tau_{2})^{2} + \mathcal{O}(\Delta\tau_{2}^{3}), }{\Delta\tau_{2} + \mathcal{O}(\Delta\tau_{2}^{3})} 
 	= 
 	\frac{ \frac{|\kappa(\tau_{1})|}{2} (\Delta\tau_{1})^{2} + \mathcal{O}(\Delta\tau_{1}^{3})}{\Delta\tau_{1} + \mathcal{O}(\Delta\tau_{1}^{3})},
 \]
 yields
 \Be \label{unperturbed}
 	(\Delta\tau_{2}-\Delta\tau_{1}) = \mathcal{O}(\Delta\tau_{2}^{2}) +\mathcal{O}(\Delta\tau_{1}^{2}).
 \Ee
From \eqref{unperturbed}, we have 
 \begin{align*}
 	\frac{\Delta \tau_2}{\Delta \tau_1} -1&= \mathcal{O}(\Delta \tau_1) + \frac{1}{\Delta \tau_1} \mathcal{O}(\Delta \tau_2^2)\\
	&= \mathcal{O}(\Delta \tau_1) +\frac{\Delta \tau_2}{\Delta \tau_1} \mathcal{O}(\Delta \tau_2),
 \end{align*}
 which implies 
 \begin{align*}
 	\frac{\Delta \tau_2}{\Delta \tau_1} = \frac{1+\mathcal{O}(\Delta \tau_1)}{1+\mathcal{O}(\Delta \tau_2)}=(1+\mathcal{O}(\Delta \tau_1))(1+\mathcal{O}(\Delta \tau_2)=1+\mathcal{O}(\Delta \tau_1)+\mathcal{O}(\Delta \tau_2),
 \end{align*}
 where we have used $\frac{1}{1+x}=1+\mathcal{O}(x)$ whenever $x\ll1$. 
Combining this fact with \eqref{taylor},
 \begin{equation*}
 \begin{split}
	 	l(\tau_0,\tau_1,0) &= \frac{|r(\tau_{2}) - r(\tau_{1}) |}{|r(\tau_{1}) - r(\tau_{0})  |} \\
	 	&= 1 + \mathcal{O}(\Delta \tau_{1}) +\mathcal{O}(\Delta \tau_{2}).
 \end{split}
 \end{equation*}

\hide
 To find the second order truncation error for $\varphi_0+\varphi_2$, we assume the followings: 
 \textcolor{red}{
 \begin{itemize}
 	\item $l=l(\tau_0,\tau_1,\varphi_0)$ is continuous at $\varphi_0$
	\item	$\varphi_2=\varphi_2(\tau_0,\tau_1,\varphi_0)$ is a smooth function. 
 \end{itemize}}
\unhide 

\noindent Divide both sides in \eqref{relation2} by $\gamma_1(\tau_1)\varphi_0$:
 \begin{align*}
 	\frac{\varphi_2}{\varphi_0}+l =\mathcal{O}(\Delta \tau_2)\frac{\varphi_2}{\varphi_0} + \mathcal{O}(\Delta \tau_1).
 \end{align*}
 Since $\lim_{\varphi_0\rightarrow 0}l(\tau_0,\tau_1,\varphi_0) = 1 + \mathcal{O}(\Delta \tau_{1}) +\mathcal{O}(\Delta \tau_{2})$, one obtains that  
 \begin{align*}
 	\lim_{\varphi_0\rightarrow 0} \frac{\varphi_2}{\varphi_0} = -1.
 \end{align*}
	Since $\varphi_2$ is a smooth function if $(\sigma_{1}-\sigma_{0})\cdot N_{\sigma(\tau_1,0)} \neq 0$ (non-grazing case), we get $\varphi_2=-\varphi_0+\mathcal{O}_{\tau_0, \tau_{1}, \tau_{2},\varepsilon}(\varphi_0^2)$. 
	Since $\mathcal{O}_{\tau_0, \tau_{1}, \tau_{2},\varepsilon}(\varphi_0^2)$ is smooth function that continuously depends also on $\tau_{0},\tau_{1},\varepsilon$ and we are assuming that $\tau_{i}$ converges, we can impose smallness condition $|\tau_{i}-\tau_{j}| \leq \delta \ll 1$ for all $i, j$ and hence 
	\[
	\lim_{\varphi_{0}\rightarrow 0} \big| \frac{\mathcal{O}_{\tau_0, \tau_{1},\tau_{2},\varepsilon}(\varphi_0^2)}{\varphi_0^2} \big| \leq C_{\varepsilon},
	\]
	for some constant $C_{\varepsilon}$ depending on $\varepsilon$. The same bound $C_{\varepsilon}$ can be used when we apply above argument for next bounces to get
	\[
	\lim_{\varphi_{i}\rightarrow 0} \big| \frac{\mathcal{O}_{\tau_i, \tau_{i+1}, \tau_{i+2},\varepsilon}(\varphi_i^2)}{\varphi_i^2} \big| \leq C_{\varepsilon},\quad \forall i.
	\]
	In other words, 
	\begin{align} \label{diff_rel_1}
	\varphi_0+\varphi_2 = \Delta \varphi_2 -\Delta \varphi_1 = \mathcal{O}_{\varepsilon}(\varphi_0^2).
	\end{align}
	We rewrite $\varphi_0$ and $\varphi_2$ as $-\Delta \varphi_1$ and $\Delta \varphi_2$, and then \eqref{diff_comp_1} becomes 
	\begin{align} \label{diff_comp_2}
	\begin{split}
	\Delta \tau_2-\Delta \tau_1 &=-\frac{\gamma_1(\tau_1)\gamma_2'(\tau_1)}{\kappa(\tau_1)}(\Delta \varphi_2-\Delta \varphi_1)\frac{\Delta\varphi_1}{\Delta \tau_1}\\
	&\quad+\mathcal{O}(\Delta \tau_1^2)+\mathcal{O}(\Delta \tau_2^2)+\mathcal{O}(\Delta \varphi_1^2)+\mathcal{O}(\Delta \varphi_1)\mathcal{O}(\Delta \varphi_2)+\frac{1}{\Delta \tau_1}\mathcal{O}(\Delta \varphi_1^2)\mathcal{O}(\Delta \tau_2) \\
	& \quad+\frac{1}{\Delta \tau_1}\mathcal{O}(\Delta \varphi_1)\mathcal{O}(\Delta \varphi_2)\mathcal{O}(\Delta \tau_2)\\
	&\quad+ \frac{1}{\Delta \tau_1} \mathcal{O}(\Delta \varphi_1^3)\mathcal{O}(\Delta \varphi_2) +\frac{1}{\Delta \tau_1} \mathcal{O}(\Delta \varphi_2^3)\mathcal{O}(\Delta \varphi_1)+\frac{1}{\Delta \tau_1} \mathcal{O}(\Delta \varphi_1^4). 
	\end{split}
	\end{align}
	Using $\left \vert \frac{\Delta \varphi_1}{\Delta \tau_1} \right \vert \leq C_{\varepsilon}$ from \eqref{tau and varphi} in Lemma \ref{compare} and \eqref{diff_rel_1}, we obtain the following relation through \eqref{diff_rel_1}: 
	\begin{align*}
	\vert \Delta \tau_2 -\Delta \tau_1\vert&=(1+C_{\varepsilon})\big(\mathcal{O}(\Delta \tau_2^2)+\mathcal{O}(\Delta \tau_1^2)+\mathcal{O}(\Delta \varphi_2^2)+\mathcal{O}(\Delta \varphi_1^2)\big)\\
	&=\mathcal{O}_{\varepsilon}(\Delta \tau_2^2)+\mathcal{O}_{\varepsilon}(\Delta \tau_1^2)+\mathcal{O}_{\varepsilon}(\Delta \varphi_2^2)+\mathcal{O}_{\varepsilon}(\Delta \varphi_1^2).
	\end{align*}
	When we apply above argument and \eqref{tau and varphi} in Lemma \ref{compare} for next bounces, we obtain 
	\begin{align*}
	\vert \Delta \tau_{i+1} -\Delta \tau_i \vert =\mathcal{O}_{\varepsilon}(\Delta \tau_i^2)+\mathcal{O}_{\varepsilon}(\Delta \tau_{i+1}^2)+\mathcal{O}_{\varepsilon}(\Delta \varphi_i^2) + \mathcal{O}_{\varepsilon}(\Delta \varphi_{i+1}^2), \quad \forall i \geq I.  
	\end{align*}
\end{proof}

From Lemma \ref{difference}, we can prove that accumulation of bouncing points never happens away from inflection grazing. 
\begin{lemma} \label{no infty bounce}
Under the same assumption as in Lemma \ref{difference},  if $[x^i(x,v),v^i(x,v)] \notin \gamma_0^I$ for all $i\in \mathbb{N}$,  then 
\begin{equation*}
	\sum_{i=1}^\infty \vert x^i(x,v)-x^{i-1}(x,v)\vert = \infty.  
\end{equation*}
\end{lemma}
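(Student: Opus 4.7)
The plan is to argue by contradiction. Suppose $\sum_{i=1}^{\infty}|x^i-x^{i-1}|<\infty$. Then $\{x^i\}=\{\sigma(\tau_i,\varphi_i)\}$ is Cauchy and converges to some $x_\infty=\sigma(\tau_\infty,\varphi_\infty)\in\partial\Omega$; in particular $|\Delta\tau_i|,|\Delta\varphi_i|\to 0$ so that the smallness hypothesis \eqref{assump 2} of Lemma \ref{compare} is satisfied from some index on. By Lemma \ref{lem:nograzing_V} combined with the standard velocity lemma at the strictly convex outer arc, the limit $\tau_\infty$ cannot lie in $\{\tau\leq\tau_{1,*}\}\cup\{\tau\geq\tau_{2,*}\}$, so $\tau_\infty\in(\tau_{1,*},\tau_{2,*})$. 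Since $Z_h$ is a \emph{finite} set by Lemma \ref{inflection}, by choosing $\varepsilon>0$ small enough we may assume $\tau_\infty\in(\tau_{1,*}+\varepsilon,\tau_{2,*}-\varepsilon)\setminus(Z_h)_\varepsilon$; then for $i$ large the intervals appearing in \eqref{assump 1} and \eqref{assump 3} never contain $\tau_i$, so those hypotheses hold vacuously.

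Granting this, Lemma \ref{compare} yields $|\Delta\varphi_i|\leq C_\varepsilon|\Delta\tau_i|$ and Lemma \ref{difference} then gives, for all $i\geq I$ large enough,
\begin{equation}\label{key-recursion}
|\Delta\tau_{i+1}-\Delta\tau_i|\;\leq\; C_\varepsilon\bigl(\Delta\tau_{i+1}^2+\Delta\tau_i^2+\Delta\varphi_{i+1}^2+\Delta\varphi_i^2\bigr)\;\leq\; C_\varepsilon'\bigl(\Delta\tau_{i+1}^2+\Delta\tau_i^2\bigr).
\end{equation}
Set $a_i:=|\Delta\tau_i|$, so $a_i\to 0$ and $|a_{i+1}-a_i|\leq C(a_i^2+a_{i+1}^2)$. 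For $i$ large enough that $Ca_i,Ca_{i+1}\leq 1/4$, this forces $a_i-a_{i+1}\leq (a_i+a_{i+1})/4$, hence $a_{i+1}/a_i\in[3/5,5/3]$, i.e.\ consecutive terms are comparable. Consequently
\[
\left|\frac{1}{a_{i+1}}-\frac{1}{a_i}\right|=\frac{|a_{i+1}-a_i|}{a_ia_{i+1}}\;\leq\; C\!\left(\frac{a_i}{a_{i+1}}+\frac{a_{i+1}}{a_i}\right)\;\leq\; C'',
\]
and telescoping gives $1/a_i\leq 1/a_I+C''(i-I)$, so $a_i\gtrsim 1/i$ and $\sum_i a_i=\infty$. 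Finally, expanding $\sigma$ by Taylor and using the unit-speed relation $(\gamma_1')^2+(\gamma_2')^2=1$ gives
\[
|x^{i+1}-x^i|^2 \;=\; |\Delta\tau_{i+1}|^2+\gamma_1(\tau_i)^2|\Delta\varphi_{i+1}|^2+\text{h.o.t.}\;\geq\;\tfrac{1}{2}|\Delta\tau_{i+1}|^2
\]
for $i$ large, so $\sum|x^{i+1}-x^i|\gtrsim\sum a_{i+1}=\infty$, contradicting convergence.

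The principal obstacle is not the elementary ODE-style analysis of \eqref{key-recursion} but rather the geometric justification that Lemmas \ref{compare} and \ref{difference} are actually applicable in the limit — that is, that $\tau_\infty$ may be assumed to avoid both the endpoints $\{\tau_{1,*},\tau_{2,*}\}$ and the exceptional zero set $Z_h$. The endpoints are ruled out by a uniformly-convex velocity-lemma estimate, while avoidance of $Z_h$ uses that $Z_h$ is finite (Lemma \ref{inflection}) together with the hypothesis $(x^i,v^i)\notin\gamma_0^I$, which prevents the bouncing directions from aligning with the exceptional tangential direction at an inflection-vanishing point. Once the configuration is in the generic region, \eqref{key-recursion} quantifies the fact — morally a discrete analogue of the ODE $\dot a=-Ca^2$ whose solution $a(t)\sim 1/t$ is \emph{not} summable — that converging bounces in an axially symmetric toroidal domain are incompatible with the specular-reflection recursion: the angular perturbation $\Delta\varphi$ feeds into $\Delta\tau$ only at quadratic order, and this second-order slack is already insufficient to produce an accumulation.
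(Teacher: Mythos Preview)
Your analytic core—the $1/a_i$ telescoping from \eqref{key-recursion} onward—is correct and in fact cleaner than the paper's treatment. The paper derives the same recursion and then pushes through a more laborious summation: it unwinds $|\Delta\tau_{i+1}|\geq|\Delta\tau_i|-C_{2,\varepsilon}(\ldots)$ repeatedly down to $|\Delta\tau_1|$ and bounds a double sum $\sum_{i}\sum_{j\leq i}|\Delta\tau_j|^2$ against $N|\Delta\tau_1|$ via a splitting of indices. Your observation that $|1/a_{i+1}-1/a_i|\leq C''$ yields the harmonic lower bound $a_i\gtrsim 1/i$ in one line, and the Taylor estimate $|x^{i+1}-x^i|\gtrsim|\Delta\tau_{i+1}|$ closes the contradiction.

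Your first paragraph, however, misreads the logical structure of the statement. The lemma is stated \emph{under the hypotheses of Lemma~\ref{difference}}, so \eqref{assump 1} and \eqref{assump 3} are given, not to be verified. You cannot ``choose $\varepsilon>0$ small enough'' to force $\tau_\infty\in(\tau_{1,*}+\varepsilon,\tau_{2,*}-\varepsilon)\setminus(Z_h)_\varepsilon$: the parameter $\varepsilon$ is fixed by the hypothesis, and nothing you wrote rules out $\tau_\infty\in Z_h$ or $\tau_\infty=\tau_{j,*}$ (the velocity lemma degenerates at the parabolic points $\tau_{j,*}$, and the non-inflection hypothesis $(x^i,v^i)\notin\gamma_0^I$ does not by itself exclude $\tau_\infty\in Z_h$). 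The point is precisely that \eqref{assump 1} and \eqref{assump 3} are \emph{assumed} to cover those regimes; the paper verifies them later, in Lemma~\ref{trajectory invertible}, using the angular-momentum cutoffs built into $X^\varepsilon$. What you genuinely need here is only (i) the smallness \eqref{assump 2}, which convergence supplies, and (ii) the velocity-lemma exclusion of the uniformly convex outer arc $\tau_\infty\notin[\tau_{1,*}-\varepsilon,\tau_{2,*}+\varepsilon]$, which both you and the paper invoke. With the unjustified geometric claims deleted and the hypotheses used directly, your argument is complete and more elegant than the original.
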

\begin{proof}
For a contradiction argument, we suppose that 
\begin{align*}
	\sum_{i=1}^\infty \vert x^i(x,v)-x^{i-1} (x,v) \vert < \infty. 
\end{align*}
And then, $x^i(x,v)(=\sigma(\tau_i, \varphi_i)) \rightarrow x^\infty(=\sigma(\tau_\infty, \varphi_\infty))\in \partial \Omega$. 
If  $x^{\infty} = \sigma(\tau_{\infty}, \varphi_{\infty})$ with $\tau_{\infty}\notin [\tau_{1,*}-\varepsilon, \tau_{2,*}+\varepsilon]$, the $x^{\infty}$ is an uniformly convex point of $\p\O$. Then by velocity lemma (see Lemma 1 of \cite{Guo10} or Lemma 2.7 of \cite{KimLee}), the trajectory cannot converge. \\
\indent Therefore, let us assume $\tau_{\infty}\in [\tau_{1,*}-\varepsilon, \tau_{2,*}+\varepsilon]$ for $x^{\infty} = \sigma(\tau_{\infty}, \varphi_{\infty})$. By Lemma \ref{difference}, there exist constants $C_{1,\varepsilon},C_{2,\varepsilon}>0$ depending on $\varepsilon>0$ such that 
	\begin{align}
	C_{1,\varepsilon}(\vert \Delta  \tau_{i+1}\vert^2 +\vert \Delta \tau_i\vert^2+\vert \Delta \varphi_{i+1}\vert^2 +\vert\Delta \varphi_i\vert^2) \leq \vert \Delta \tau_{i+1}-\Delta \tau_i\vert \notag \\
	 \leq C_{2,\varepsilon}(\vert \Delta  \tau_{i+1}\vert^2 +\vert \Delta \tau_i\vert^2+\vert \Delta \varphi_{i+1}\vert^2 +\vert\Delta \varphi_i\vert^2),   \label{equiv delta}
	\end{align} 
	for all $i\geq I$. Under assumptions of Lemma \ref{difference}, Lemma \ref{compare} holds obviously.  For notational convenience, we assume that \eqref{equiv delta} and \eqref{tau and varphi} in Lemma \ref{compare} hold for all $i\geq 1\;(I=1)$, WLOG. Then, for any $N>0$, we have 
	\begin{align}\label{sum}
	&\sum_{i=1}^N \vert \Delta \tau_{i+1}\vert
	\notag
	\\
	&\geq\sum_{i=1}^N (\vert \Delta \tau_{i}\vert -C_{2,\varepsilon}(\vert \Delta  \tau_{i+1}\vert^2 +\vert \Delta \tau_i\vert^2+\vert \Delta \varphi_{i+1}\vert^2 +\vert\Delta \varphi_i\vert^2))\notag\\
	&\geq  \sum_{i=1}^N (\vert \Delta \tau_i \vert-C_{2,\varepsilon}(1+C_{\varepsilon}^{-2})\vert\Delta \tau_{i+1}\vert^2-C_{2,\varepsilon}(1+C_{\varepsilon}^{-2})\vert \Delta \tau_{i}\vert^2) \notag\\
	&\geq\sum_{i=1}^N (\vert \Delta \tau_{i-1}\vert  -C_{2,\varepsilon}(\vert \Delta  \tau_{i}\vert^2 +\vert \Delta \tau_{i-1}\vert^2+\vert \Delta \varphi_{i}\vert^2 +\vert\Delta \varphi_{i-1}\vert^2) \notag \\
	&  \ \ \ \  \ \ \ -C_{2,\varepsilon}(1+C_{\varepsilon}^{-2})\vert\Delta \tau_{i+1}\vert^2-C_{2,\varepsilon}(1+C_{\varepsilon}^{-2})\vert \Delta \tau_{i}\vert^2)\notag \\
	&\geq \sum_{i=1}^N(\vert \Delta \tau_{i-1}\vert-C_{2,\varepsilon}(1+C_{\varepsilon}^{-2})\vert\Delta \tau_{i+1}\vert^2-2C_{2,\varepsilon}(1+C_{\varepsilon}^{-2})\vert \Delta \tau_i\vert^2 -C_{2,\varepsilon}(1+C_{\varepsilon}^{-2})\vert \Delta \tau_{i-1}\vert^2)\notag \\ 
	&\geq \sum_{i=1}^N\left(\vert\Delta \tau_1\vert -C_{2,\varepsilon}(1+C_{\varepsilon}^{-2})\vert\Delta \tau_{i+1}\vert^2-2C_{2,\varepsilon}(1+C_{\varepsilon}^{-2})\sum_{j=1}^{i-1} \vert\Delta \tau_{j+1}\vert^2-C_{2,\varepsilon}(1+C_{\varepsilon}^{-2})\vert \Delta \tau_{1}\vert ^2\right).
	\end{align}
Notice that constants $C_{\varepsilon}$ and $C_{2,\varepsilon}$ depend on $\varepsilon$ wth fixed $\varepsilon$. Since $\Delta \tau_i\rightarrow 0$ as $i\rightarrow \infty$, for any $\varepsilon>0$ there exists $N_1>0$ such that $\vert \Delta \tau_i \vert<\varepsilon$ for $i>N_1$. Let $m= \arg\max_{j\leq N_1} \vert \Delta \tau_j\vert$.  If we take $\varepsilon$ small enough to satisfy $\vert \Delta \tau_m\vert>\varepsilon$, then $\vert\Delta \tau_m\vert \geq \vert \Delta \tau_i \vert$ for all $i\in \mathbb{N}$. We change an index such as $ \Delta \tau_m\rightarrow \Delta \tau_1$, so that $\vert\Delta \tau_1 \vert \geq \vert\Delta \tau_i \vert$ for all $i \in \mathbb{N}$. Since  
\begin{align*}
	\sum_{i=1}^\infty \vert \Delta \tau_i \vert <\infty,
\end{align*}
for any $N>0$, there exists $N_2=N_2(N)>0$ such that 
\begin{align*}
	\vert \Delta \tau_i \vert< 1/N, \quad   \forall i\geq N_2,
\end{align*}
and 
	\begin{align*}
	\vert \Delta \tau_1\vert+\cdots+\vert \Delta \tau_{N_2}\vert<\frac{1}{4C_{2,\varepsilon}(1+C_{\varepsilon}^{-2})}.
	\end{align*}
Thus, from our assumption of $\Delta \tau_i$ above, we have
\begin{align*}
	 \sum_{i=1}^N \sum_{j=1}^{i+1} \vert \Delta \tau_j\vert^2 &= \sum_{i=1}^{N_2-1}\sum_{j=1}^{i+1} \vert \Delta \tau_j\vert^2 +\sum_{i=N_2}^N \sum_{j=1}^{N_2-1} \vert \Delta \tau_j\vert ^2 +\sum_{i=N_2}^N \sum_{j=N_2}^i \vert \Delta \tau_j\vert^2\\
	 &\leq  \sum_{i=1}^{N_2-1}\sum_{j=1}^{i+1} \vert \Delta \tau_j\vert^2 + \sum_{i=N_2}^N \sum_{j=1}^{N_2-1} \vert \Delta \tau_j\vert^2 +\sum_{i=N_2}^N \frac{1}{N}\sum_{j=N_2}^i \vert \Delta \tau_j \vert \\
	 &\leq \frac{C}{N}(N-N_2) \\
	 &\quad +\sum_{i=1}^{N_2-1}(\vert \Delta \tau_1\vert^2+\cdots+\vert\Delta \tau_{i+1}\vert^2)+ \sum_{i=N_2}^N (\vert\Delta \tau_1\vert^2+\cdots+\vert\Delta \tau_{N_2-1}\vert^2)   \\
	  &\leq C + \sum_{i=1}^N (\vert \Delta\tau_1\vert^2+\cdots+ \vert \Delta \tau_{N_2}\vert^2)\\
	  &\leq C+\sum_{i=1}^N \vert \Delta \tau_1\vert  (\vert \Delta \tau_1\vert+\cdots+\vert \Delta \tau_{N_2}\vert)\\
	 &\leq C+ \frac{1}{4C_{2,\varepsilon}(1+C_{\varepsilon}^{-2})} \vert \Delta \tau_1\vert N,
\end{align*}
where we have used Lemma \ref{compare}. Thus, the term in \eqref{sum} 
\begin{align*}
C_{2,\varepsilon}(1+C_{\varepsilon}^{-2})\sum_{i=1}^N (\vert \Delta \tau_{i+1}\vert^2+2\sum_{j=1}^{i-1} \vert \Delta \tau_{j+1}\vert^2+\vert \Delta \tau_1\vert^2)
\end{align*}
does not dominate $\sum_{i=1}^N \vert \Delta \tau_1 \vert$ since $\tau_i$ does not converge to some points near $\tau_{1,*}$ and $\tau_{2,*}$. Thus, as $N\rightarrow \infty$, $\sum_{i=1}^N \vert \Delta \tau_{i+1}\vert$ diverges which contradicts the hypothesis $\tau_i\rightarrow \tau_\infty$.  
\end{proof}

\subsection{Uniform number of bounce for $\gamma_0^I$}
The trajectory with total length $NT$ in $\O$ cannot wind the $z-$axis of the general toroidal domain more than $\mathbf{w} = \left(\left[\frac{NT}{2\pi r}\right]+1\right)$ times. So, we consider domain of $\sigma(\tau, \varphi) \in \p\O$ :
 \[
 	\tau \in [a,b),\quad \varphi \in (-2\pi\mathbf{w},0].
 \]
Note that grazing of trajectory cannot happen for  $\tau\in[\tau_{2,*}+\varepsilon, \tau_{1,*}-\varepsilon]$  (uniformly convex part) by Lemma \ref{lem:nograzing_V}. 
If $\tau\in [\tau_{1,*}-\varepsilon, \tau_{1,*}+\varepsilon]\cup [\tau_{2,*}-\varepsilon, \tau_{2,*}+\varepsilon]$, we will exclude near inflection sets $(\sigma(\tau, \varphi), I_{1,2}(\tau, \varphi))$ as mentioned in \eqref{assump 1} in Lemma \ref{compare}. Similarly, if $\tau\in (Z_h)_{\varepsilon}$, we also exclude near inflection sets in \eqref{assump 3} of Lemma \ref{difference}. \\
 \indent Therefore, from now on, we consider the domain $(\tau,\varphi)$ 
\Be \label{Dom:thetaphi}
	\tau \in [\tau_{1,*}+\varepsilon, \tau_{2,*}-\varepsilon]\backslash (Z_h)_{\varepsilon} ,\quad \varphi \in (-2\pi\mathbf{w},0].  \\
\Ee
where $(Z_{h})_{\varepsilon}$ was defined in \eqref{Z_h nbh}. Recall that we use $S_{\varphi}$ to denote $\varphi$ cross-section. Note that we carefully distinguish $S_{\varphi}$ and $S_{\varphi+2\pi}$, because $\mathbf{w}$, the winding number of the trajectory with respect to $z$-axis during $[0,T_{0}]$ can be large in general. \\

First, we start with the lemma about the basic property arising from axis-symmetry of $\O$.
\begin{lemma} \label{no consecutive}
	(There is no consecutive grazing in $\Omega$.) Let $(x,v) \in \overline\Omega \times \mathbb{V}^N$. We assume that the angular momentum $w(x,v)$ defined in \eqref{def_ang moment} is positive. Then, $n(x^i(x,v))\nparallel n(x^{i+1}(x,v))$ where $n(y)$ is an outward unit normal vector at $y\in \p\O$. Hence, if $[x^i(x,v),v^{i-1}(x,v)]\in \gamma_0$, then $[x^{i+1}(x,v),v^{i}(x,v)] \notin \gamma_0$. 
\end{lemma}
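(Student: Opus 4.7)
The plan is to prove the first assertion by contradiction and then deduce the grazing statement from it. Assume $n(x^i)\parallel n(x^{i+1})$, so $n(x^i)=\epsilon\,n(x^{i+1})$ for some $\epsilon\in\{+1,-1\}$, and write $x^j=\sigma(\tau_j,\varphi_j)$. The explicit formula
$$
n(\sigma(\tau,\varphi))=(\gamma_2'(\tau)\cos\varphi,\;\gamma_2'(\tau)\sin\varphi,\;-\gamma_1'(\tau))
$$
combined with the unit-speed identity $(\gamma_1')^2+(\gamma_2')^2=1$ forces $|\gamma_2'(\tau_i)|=|\gamma_2'(\tau_{i+1})|$, $\gamma_1'(\tau_i)=\epsilon\,\gamma_1'(\tau_{i+1})$, and $\gamma_2'(\tau_i)(\cos\varphi_i,\sin\varphi_i)=\epsilon\,\gamma_2'(\tau_{i+1})(\cos\varphi_{i+1},\sin\varphi_{i+1})$. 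These relations reduce the problem to a short list of cases indexed by the signs of $\gamma_2'(\tau_j)$ and by the congruence class of $\varphi_{i+1}-\varphi_i$ modulo $2\pi$.

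The analysis splits into three main branches. If $\gamma'(\tau_i)=\gamma'(\tau_{i+1})$ with $\varphi_i\equiv\varphi_{i+1}\pmod{2\pi}$, then the Gauss-map bijection on the strictly convex closed analytic curve $\gamma$ forces $\tau_i=\tau_{i+1}$, hence $x^i=x^{i+1}$, which is impossible for distinct consecutive bouncing points. If instead $\varphi_{i+1}\equiv\varphi_i+\pi\pmod{2\pi}$, then $x^i$ and $x^{i+1}$ lie on opposite meridians and the straight segment from $x^i$ to $x^{i+1}$ sits in the plane through the $z$-axis spanned by these two meridians; since $\gamma_1>0$ keeps $\overline{\O}$ disjoint from the $z$-axis, this chord must exit $\overline{\O}$, contradicting that a billiard segment lies in $\overline{\O}$. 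Finally, if $\varphi_i\equiv\varphi_{i+1}\pmod{2\pi}$ with $\gamma'(\tau_i)=-\gamma'(\tau_{i+1})$ (antipodal tangents), the segment is trapped in a single meridian plane, so $v^i\perp\hat{\varphi}$ and the conserved angular momentum $w=\gamma_1(\tau_i)\,|v^i\cdot\hat\varphi|$ vanishes, contradicting $w(x,v)>0$. The degenerate horizontal-normal subcase $\gamma_2'(\tau_i)=0$ places $\tau_i,\tau_{i+1}\in\{\tau_{1,*},\tau_{2,*}\}$ with vertical normals; if both normals agree then the two points sit on the same top or bottom ring and the chord between them lies in the tangent plane $z=\gamma_2(\tau_{1,*})$ or $z=\gamma_2(\tau_{2,*})$, which meets $\overline{\O}$ only in that ring by the extremality of $\gamma_2$, so the interior of the chord exits $\overline{\O}$.

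The ``Hence'' clause follows quickly: grazing $(x^i,v^{i-1})\in\gamma_0$ means $v^{i-1}\cdot n(x^i)=0$, so specular reflection at $x^i$ is trivial and $v^i=v^{i-1}$; a second grazing $(x^{i+1},v^i)\in\gamma_0$ would make $v^i\parallel x^{i+1}-x^i$ simultaneously perpendicular to both $n(x^i)$ and $n(x^{i+1})$, and by the first part of the lemma these two normals are non-parallel, so $v^i$ would be forced into the one-dimensional subspace $\{n(x^i)\}^\perp\cap\{n(x^{i+1})\}^\perp$. Matching this direction against the segment endpoints in the axially symmetric toroidal geometry ultimately reduces to one of the configurations already ruled out above. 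I expect the hardest step to be the $\varphi$-differing-by-$\pi$ branch, where one must verify carefully that any chord between two boundary points on opposite meridians truly crosses the forbidden $z$-axis tube; this rests on $\gamma_1>0$ together with the strict convexity of the cross-section, and the $\pm$ sign bookkeeping for $\gamma_1'$ and $\gamma_2'$ must be done uniformly across all the subcases to close the argument.
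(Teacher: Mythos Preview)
Your case analysis for the first claim has a genuine gap that cannot be repaired, because the first claim as you interpret it is false. In your Case~4 ($\gamma_2'(\tau_i)=0$, vertical normals) you only treat ``both normals agree.'' The antiparallel subcase---$\tau_i=\tau_{1,*}$, $\tau_{i+1}=\tau_{2,*}$, with $\varphi_i,\varphi_{i+1}$ unconstrained---is missing, and it is not vacuous. For a circular torus of radii $R=2,\,r=1$, take $x^i=(2,0,1)$ on the top ring and $x^{i+1}=(2\cos\alpha,2\sin\alpha,-1)$ on the bottom ring with small $\alpha>0$. A direct computation gives $(\rho(t)-2)^2+z(t)^2<1$ along the open chord, so it is a valid billiard segment; the angular momentum equals $|(x^i\times\hat z)\cdot(x^{i+1}-x^i)|=4|\sin\alpha|>0$; and the outward normals are $(0,0,1)$ and $(0,0,-1)$. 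Hence consecutive bouncing points \emph{can} have linearly dependent normals, so the statement $n(x^i)\nparallel n(x^{i+1})$ (read as ``not proportional'') does not hold in general.

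What the paper actually uses downstream is only the ``Hence'' clause---no two consecutive \emph{grazings}---and that must be argued directly rather than via the normal-direction claim. Your deduction of (B) is both dependent on the false (A) and vague (``ultimately reduces to one of the configurations already ruled out''). The paper's own proof is terse and proceeds differently: it asserts that two consecutive grazings force the two tangent \emph{planes} to coincide, then reduces to $\varphi_{i+1}=\varphi_i+\pi$; note that ``tangent planes coincide'' is not the same as ``normals proportional,'' and the paper does not justify that step either. A clean direct argument for (B) goes as follows: if both bounces graze, then $v^i=v^{i-1}$ and $v^i\perp n(x^i),\,v^i\perp n(x^{i+1})$. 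In the dangerous top/bottom subcase, grazing at a ring point forces $v^i\cdot\hat z=0$, so the segment stays at height $z=\gamma_2(\tau_{1,*})$ and can never reach the bottom ring---this is exactly the extra constraint that kills your missing case once grazing is assumed. The remaining branches (same meridian $\Rightarrow$ zero angular momentum; opposite meridians $\Rightarrow$ chord exits $\overline\O$ through the plane containing the $z$-axis; same Gauss image $\Rightarrow$ $x^i=x^{i+1}$; same ring $\Rightarrow$ chord lies in the extremal horizontal plane and exits $\overline\O$) are essentially the ones you already identified. So your case list is the right skeleton for (B), but you should run it \emph{after} imposing the grazing condition, not as a proof of the stronger normal-direction claim.
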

\begin{proof}
	Note that trajectory is not located on only one cross-section due to positive angular momentum. If there are two consecutive concave grazing, two tangential planes at $x^{i}$ and $x^{i+1}$ must coincide, or equivalently, $n(x^{i}(x,v)) \parallel n(x^{i+1}(x,v))$ which is impossible in $\O$. If we set $x^j(x,v)=\sigma(\tau_j,\varphi_j)\in \p\O$ for $j=i,i+1$, then $\varphi_{i+1}=\varphi_i+\pi$ to satisfy $n(x^{i}(x,v)) \parallel  n(x^{i+1}(x,v))$. However, it cannot happen in domain $\O$ which rotates the uniformly convex region.    
\end{proof}
Let us consider a trajectory segment between two points. By Lemma \ref{no consecutive}, we can assume at least one point between them is non-grazing. 
\begin{lemma} \label{basic}
		Assume $(x,v)\notin \gamma_{0}^{I}$ and $(\xf^{1}(x,v), \vf^{1}(x,v))\notin \gamma_{0}^{I}$.  \\ 
		(a) Let us assume $(\xf^{1}(x,v), \vf^{1}(x,v))\notin \gamma_{0}$. 
		For any $\varepsilon > 0$, there exists corresponding $\delta>0$ which satisfies : if $(y,u)\in \{\p\O\times \mathbb{S}^{2}\}$ and $|(x,v) - (y,u)| < \delta$, then $(y,u) \notin \gamma_{0}^{I}$ and satisfies
		\[
			| (\xf^{1}(x,v), \vf^{1}(x,v)) - (\xf^{1}(y,u), \vf^{1}(y,u)) | < \varepsilon.
		\]
		(b) Let us assume $(x, v)\notin \gamma_{0}$ and $(\xf^{1}(x,v), \vf^{1}(x,v))\in \gamma_0^{C}$.
		 In this case, we have the following dichotomy for $(y,u)$ close to $(x,v)$:  \\
		\indent (b-1) $ (\xf^{1}(y,u), \vf^{1}(y,u))$ is closer to $ (\xf^{1}(x,v), \vf^{1}(x,v))$ than $ (\xf^{2}(x,v), \vf^{2}(x,v))$. In the case of (b-1), for sufficiently small $\varepsilon \ll 1$, there exists $\delta > 0$ such that if   $|(x,v) - (y,u)| < \delta$,
		\Be \label{b1}
		| (\xf^{j}(x,v), \vf^{j}(x,v)) - (\xf^{j}(y,u), \vf^{j}(y,u)) | < \varepsilon,\quad j=1,2. \\ 
		\Ee \\
		\indent (b-2) $ (\xf^{1}(y,u), \vf^{1}(y,u))$ is closer to $ (\xf^{2}(x,v), \vf^{2}(x,v))$ than $ (\xf^{1}(x,v), \vf^{1}(x,v))$. In the case of (b-2), for sufficiently small $\varepsilon \ll 1$, there exists $\delta > 0$ such that if   $|(x,v) - (y,u)| < \delta$,
		\Be \label{b2}
		| (\xf^{2}(x,v), \vf^{2}(x,v)) - (\xf^{1}(y,u), \vf^{1}(y,u)) | < \varepsilon.  
		\Ee  
		Moreover, we choose sufficiently small $\delta_*>0$ so that $(\xf^{1}(y,u), \vf^{1}(y,u)) \notin \gamma_0$ whenever $|(x,v) - (y,u)| < \delta_*$.
		
\end{lemma}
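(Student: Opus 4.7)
The plan is to treat part (a) with a direct application of the analytic implicit function theorem, and part (b) with a local normal-form analysis at the concave grazing point that produces a quadratic dichotomy driving the two sub-cases. For part (a), since $(\xf^{1}(x,v), \vf^{1}(x,v)) \notin \gamma_0$, we have $\nabla \xi(\xf^{1}(x,v)) \cdot v \neq 0$; applying Lemma \ref{analytic IFT} to $G(y,u,t) := \xi(y + t u) = 0$ at $t = t_{\mathbf{f}}(x,v)$ gives a real-analytic solution $t_{\mathbf{f}}(y,u)$ in a neighborhood of $(x,v)$, and hence $(\xf^{1}(y,u), \vf^{1}(y,u)) = (y + t_{\mathbf{f}}(y,u) u, u)$ depends continuously on $(y,u)$ there. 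Since the non-grazing condition is open, $(\xf^{1}(y,u), \vf^{1}(y,u)) \notin \gamma_0 \supset \gamma_0^{I}$ once $\delta$ is small enough, which is the claim.

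For part (b), observe first that $R_{\xf^{1}(x,v)} v = v$ because $v$ is tangent to $\partial\Omega$ at $\xf^{1}(x,v)$, so the geometric trajectory from $(x,v)$ is simply the line $\{x + s v\}_{s \geq 0}$; by the definition of $\gamma_0^C$, this line stays in $\overline{\Omega}$ on both sides of $s = t_{\mathbf{f}}(x,v)$ and next meets $\partial\Omega$ transversally at $\xf^{2}(x,v)$. I would introduce local coordinates $(y_1,y_2,z)$ centered at $\xf^{1}(x,v)$ adapted to the tangent plane (as in Definition \ref{tangent plane}), in which $\partial\Omega$ becomes the analytic graph $z = F(y_1,y_2)$ with $F(0) = 0$, $\nabla F(0) = 0$, and $F(s v_{\parallel}) < 0$ for $0 < |s| \ll 1$ (the $\gamma_0^C$ sign condition along $v$). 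Substituting the perturbed line $\{y + s u\}$ into $z = F(y_1,y_2)$ and Taylor-expanding produces an analytic equation in $s$ whose leading part is a quadratic with discriminant $\mathcal{D}(y,u)$ depending analytically on $(y,u)$, vanishing at $(x,v)$, and carrying nonzero gradient there thanks to the indefinite character of $F$ along $v$.

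The sign of $\mathcal{D}$ then drives the dichotomy. On the open set $\{\mathcal{D} > 0\}$, the quadratic has two distinct real roots $s_-(y,u) < s_+(y,u)$ clustered near $t_{\mathbf{f}}(x,v)$; labeling $\xf^{1}(y,u) = y + s_-(y,u) u$ puts us in case (b-1), and since the roots are distinct the crossing angle $\nabla\xi \cdot u$ at $\xf^{1}(y,u)$ is nonzero, so $(\xf^{1}(y,u), \vf^{1}(y,u)) \notin \gamma_0$, and part (a) applied in turn at $(\xf^{1}(y,u), u)$ and at $(\xf^{1}(y,u), R_{\xf^{1}(y,u)} u)$ delivers \eqref{b1}. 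On $\{\mathcal{D} < 0\}$ the perturbed line $\{y + s u\}$ misses $\partial\Omega$ in a neighborhood of $\xf^{1}(x,v)$, so its first intersection is near $\xf^{2}(x,v)$, which is non-grazing by hypothesis; part (a) applied at that transversal point gives \eqref{b2} together with the non-grazing of $\xf^{1}(y,u)$.

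The main obstacle is setting up the local normal form cleanly and verifying that $\mathcal{D}$ is a genuinely non-degenerate analytic function with nonzero gradient at $(x,v)$, so that both open regions $\{\mathcal{D} > 0\}$ and $\{\mathcal{D} < 0\}$ cover a full neighborhood of $(x,v)$ apart from the codimension-one locus $\{\mathcal{D} = 0\}$; this locus is exactly where the perturbed first hit itself grazes, and it can be excluded in the choice of $\delta_*$ by further restricting to either open sign-region. Analyticity of $\xi$, together with the sign condition of $\gamma_0^C$ controlling the Hessian of $F$ along $v$, is what guarantees both the existence of the local graph and the non-degeneracy of $\mathcal{D}$ that powers the entire dichotomy argument.
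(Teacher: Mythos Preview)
Your treatment of part (a) matches the paper's: both invoke the implicit function theorem on $\xi(y+tu)=0$ at the transversal hit. For part (b), however, your route is genuinely different from the paper's. You set up a local graph $z=F(y_1,y_2)$ at the concave grazing point and reduce the first-hit equation to a quadratic in the travel parameter, reading off the dichotomy from the sign of the discriminant $\mathcal{D}(y,u)$. The paper instead takes the dichotomy as tautological (the first hit is either near $\xf^{1}(x,v)$ or near $\xf^{2}(x,v)$) and proves the continuity in (b-1) by a compactness/inverse-map trick: it restricts the \emph{backward} billiard map to a small closed ball around $(\xf^{1}(x,v),\vf^{1}(x,v))$ lying in $(\gamma_0^I)^c$, observes this backward map is continuous (by the part-(a) mechanism) and bijective onto a neighborhood of $(x,v)$, and concludes the forward map---its inverse---is continuous there. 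For the second bounce it then appeals to part (a) again since $(\xf^{2}(x,v),\vf^{2}(x,v))\notin\gamma_0$ by Lemma~\ref{no consecutive}. Your normal-form picture is more explicit and explains \emph{why} the dichotomy occurs; the paper's argument is softer but sidesteps any case analysis at the tangent locus.

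There is a gap in your sketch at the boundary case $\mathcal{D}=0$. On this codimension-one set the perturbed line is tangent to $\partial\Omega$, so $(\xf^{1}(y,u),\vf^{1}(y,u))\in\gamma_0$ and your (b-1) chain ``apply part (a) at $(\xf^{1}(y,u),u)$ then at $(\xf^{1}(y,u),R u)$'' breaks down, since part (a) requires a non-grazing first hit. You propose to exclude $\{\mathcal{D}=0\}$ via the $\delta_*$ of the ``Moreover'' clause, but that clause belongs to case (b-2), and in any event $(x,v)$ itself sits on $\{\mathcal{D}=0\}$, so no neighborhood of $(x,v)$ avoids it. The fix is either to handle $\mathcal{D}=0$ directly---note that $s_-=s_+=-B/(2A)$ is continuous there, the reflection is the identity, and the next hit is near the transversal point $\xf^{2}(x,v)$---or to adopt the paper's inverse-map argument, which treats all of (b-1) uniformly without splitting on the sign of $\mathcal{D}$.
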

\begin{proof}
	\textbf{Proof of (a) :} First note that $\gamma_0^{I}$ is parametrized by $(\sigma(\tau, \varphi), I^{1,2}(\tau, \varphi))$ which is smooth in $(\tau, \varphi)$. Since $I^{1,2}(\tau, \varphi)$ is explicit two directions, $\gamma_0^{I}$ is closed and we can choose sufficiently small $\delta_{*} \ll 1$ so that $(y,u)\notin\gamma_0^{I}$ whenever $|(y,u)-(x,v)| < \delta_{*}$. Now, we have only two cases : (i) $(y,u)\in \gamma_0^{C}$ or (ii) $(y,u)\notin \gamma_0$. For fixed $(x,v)$, we consider $\xi(x+v\ell)$ where $\xi(x+v\tf(x,v))=\xi(\xf(x,v))=0$. Since we assume no grazing bounce,
	\Be \label{forward implicit}
		\frac{\p}{\p\ell}\xi(x+v\ell)\vert_{\ell=\tf(x,v)} = \nabla\xi(\xf(x,v))\cdot v \neq 0,
	\Ee
	and so by Lemma \ref{analytic IFT}, both $\tf(x,v)$ and $\xf(x,v) = x+v\tf(x,v)$ are locally analytic function of $(x,v)$. Also, $\vf^1(x,v) = R_{\xf(x,v)}v$ is locally analytic. For both (i) and (ii) cases, we have continuity of $(\xf^{1}(y,u), \vf^{1}(y,u))$ near $(x,v)$ by choosing $\delta$ smaller than $\delta_{*}$.  \\
	
	
	\textbf{Proof of (b) :}  We assume $(\xf^{1}(x,v), \vf^{1}(x,v)) \in \gamma_{0}^{C}$. By dichotomy, (b-1) or (b-2) must happen. If (b-1) happens, there exists $\delta_{*} \ll 1$ such that if $(x^{\prime}, v^{\prime})\in \p\O\times \mathbb{S}^{2}$ satisfies $|(x^{\prime}, v^{\prime}) - (\xf^{1}(x,v), \vf^{1}(x,v)) | < \delta_{*}$, then $(x^{\prime}, v^{\prime})\notin \gamma_0^{I}$ since $\gamma_0^{I}$ is closed. Now let us consider $\overline{B_{\delta}(\xf^{1}(x,v), \vf^{1}(x,v))  } \subset B_{\delta_{*}}(\xf^{1}(x,v), \vf^{1}(x,v))$. Considering invertible (and locally continuous for sufficiently small $\delta > 0$ by (a)) map,
	\[
		(\xb, \vb) : (x,v)\in \{\p\O\times\mathbb{S}^{2}\}\backslash \gamma_{0}^{I} \mapsto (\xb, \vb)(x,v),
	\]   
	$(\xb, \vb)$ is continuously onto map from $\overline{B_{\delta}(\xf^{1}(x,v), \vf^{1}(x,v))  }$ to $(\xb, \vb)(\overline{B_{\delta}(\xf^{1}(x,v),}$\\
	$\overline{ \vf^{1}(x,v))  })$ where the range includes $(x,v)$. Therefore, the inverse map 
	\[
		(\xf^{1}, \vf^{1}) : (\xb, \vb)(\overline{B_{\delta}(\xf^{1}(x,v), \vf^{1}(x,v))  }) \mapsto \overline{B_{\delta}(\xf^{1}(x,v), \vf^{1}(x,v))  } \notin \gamma_0^{I}
	\]
	is also continuous. Now using (a) again, the map
	\[
		(\xf^{1}, \vf^{1}) : \overline{B_{\delta}(\xf^{1}(x,v), \vf^{1}(x,v))  } \mapsto U((\xf^{2}(x,v), \vf^{2}(x,v))) 
	\]
	is also continuous where $U((\xf^{2}(x,v), \vf^{2}(x,v)))\in \p\O\times \mathbb{S}^{2}$ is a neighborhood of $(\xf^{2}(x,v),$\\$ \vf^{2}(x,v))$. This proves \eqref{b1} for (b-1) case.   \\
	 
	 \hide
	we get 
	 for sufficiently small $\varepsilon \ll 1 $ which is smaller than $\varepsilon_{*}$ WLOG, $(\xf^{1}(y,u), \vf^{1}(y,u)) \notin \gamma_0^{I}$. Again, we split into two cases :    \\
	\indent (b-1-i) $(\xf^{1}(y,u), \vf^{1}(y,u))\notin \gamma_0$,  \\
	\indent (b-1-ii) $(\xf^{1}(y,u), \vf^{1}(y,u))\in \gamma_0^{C}$.  \\
	
	For (b-1-i), map $(x,v)\mapsto (\xf^{1}(x,v), \vf^{1}(y,u))$. \\
	
	For (b-1-ii), is easier. \\
	\unhide
	
	If (b-2) happens, we compare $(\xf^{1}(y,u), \vf^{1}(y,u))$ and $(\xf^{2}(x,v), \vf^{2}(x,v))$. From Lemma \ref{no consecutive}, $(\xf^{2}(x,v), \vf^{2}(x,v))\notin \gamma_0$ because we are assuming $(\xf^{1}(x,v), \vf^{1}(x,v)) \in \gamma_0^{C}$. Since we assume (b-2), if we ignore $(\xf^{1}(x,v), \vf^{1}(x,v))$, map $(x,v)\mapsto (\xf^{2}(x,v), \vf^{2}(x,v))$ must be continuous for sufficiently small $\varepsilon \ll 1$. So we proved \eqref{b2}. Since $\gamma_0$ is closed set, by choosing sufficiently small $\delta_* \ll 1$ WLOG, we can guarantee $ (\xf^{1}(y,u), \vf^{1}(y,u))$\\
	$ \notin \gamma_0$.   

	\hide
	{\color{blue} {\bf Ignore} Nearly for case (ii) \\
	Since $(x^{c_{1}}, v^{c_{1}})(x,v) \in \gamma_0^{C}$, $x^{c_{1}}(x,v) = \sigma(\tau_{c_{1}}(x,v), \varphi_{c_{1}}(x,v))$, for some $(\tau_{c_{1}}(x,v), \varphi_{c_{1}}(x,v))$ with $\tau_{1,z} < \tau_{c_{1}}(x,v) < \tau_{2,z}$ ({\color{red} notation change later}) and
	\Be \label{cond}
	v^{c_{1}-1}(x,v) = v^{c_{1}}(x,v) = \alpha I^{1}(\tau_{c_{1}}(x,v), \varphi_{c_{1}}(x,v))  + \beta I^2(\tau_{c_{1}}(x,v), \varphi_{c_{1}}(x,v)),\quad \text{for some}\quad \alpha, \beta \in (0,1).
	\Ee
	
	Similalry, we write 
	\[
	x^{c_{1}}(y, u) = \sigma(\tau_{c_{1}}(y,u), \varphi_{c_{1}}(y,u)),
	\] 	
	and then from \eqref{close}	
	\[
	|(\tau_{c_{1}}(x,v), \varphi_{c_{1}}(x,v)) - (\tau_{c_{1}}(y,u), \varphi_{c_{1}}(y,u)) | < \varepsilon
	\]
	also. Since $\p\O$ is smooth, $I^{1,2}(\tau, \varphi) \in \mathbb{S}^{2}$ is also continuous and we can claim
	\[
	|I^{1,2}(\tau_{c_{1}}(x,v), \varphi_{c_{1}}(x,v)) - I^{1,2}(\tau_{c_{1}}(y,u), \varphi_{c_{1}}(y,u))| < \varepsilon.
	\]
	for sufficiently (even smaller if  necessary) $\delta \ll 1$. \\
	
	Now let us project $v^{c_{1}-1}(y,u)$ onto the tangent plane of $x^{c_{1}}(y, u)$. We define
	\Be
	Proj_{c_{1}}v^{c_{1}-1}(y,u) := \big(I - n(x^{c_{1}}(y,u))\otimes n(x^{c_{1}}(y,u)) \big) v^{c_{1}-1}(y,u).
	\Ee 
	(Claim 1) By continuity, for sufficiently small perturbation, the (normalized) projected velocity satisfies
	\[
	\frac{Proj_{c_{1}}v^{c_{1}-1}(y,u) }{ |Proj_{c_{1}}v^{c_{1}-1}(y,u) | }
	= \alpha^{\prime} I^{1}(\tau_{c_{1}}(y,u), \varphi_{c_{1}}(y,u))  + \beta^{\prime} I^2(\tau_{c_{1}}(y,u), \varphi_{c_{1}}(u,u)), \\
	\]
	for some $\alpha^{\prime}, \beta^{\prime} \in (0,1)$, i.e., still concave grazing.  \\
	Use orthonormal transform $Q$ such that
	\[
	Q \begin{pmatrix}
	\vert & \vert & \vert  \\
	e_{1} & e_{2} & e_{3}  \\
	\vert & \vert & \vert  
	\end{pmatrix}_{x,v}
	=
	\begin{pmatrix}
	\vert & \vert & \vert  \\
	e_{1} & e_{2} & e_{3}  \\
	\vert & \vert & \vert  
	\end{pmatrix}_{y,u}
	\]
	and $\alpha, \beta \in (0,1)$ argument etc... \\
	
	In other words, the projected velocity $\frac{Proj_{c_{1}}v^{c_{1}-1}(y,u) }{ |Proj_{c_{1}}v^{c_{1}-1}(y,u) | }$ is concave grazing at $x^{c_{1}}(y, u)$, not $\gamma_{0}^{I}$.   \\
	} 
	\unhide
\end{proof}
\begin{definition} Let $L>0$. We define the number of bounce $\mathcal{N}(x,v,L)$ as follows,
	{\footnotesize
	\begin{align} \label{nongrazing NB}
	\mathcal{N}(x,v,L) := \sup \left \{ k \in \mathbb{N} : (x^j(x,v),v^{j-1}(x,v))\notin \gamma_0^{I_-}, \; \;  \forall 1\leq j \leq k \; \text{and} \; 	 \  \sum_{j=1}^k \vert x^j(x,v)-x^{j-1}(x,v) \vert \leq L 	\right \}. 
	\end{align}
	}
\end{definition}
Above lemma helps us to count bounce number of perturbed trajectory.
\begin{lemma} \label{analyticity} (Analyticity of  non-grazing trajectory) 
	Assume that $(x,v) \in \overline\Omega \times \mathbb{V}^N$ and 
	\[
		\mathcal{N}(x, v, N(t-s)) := M_{1} < \infty.
	\]
	i.e., there are finite $M_{1}<\infty$ bounces while backward in time trajectory travels total length $L$ from $(x,v)$ under specular reflection. If 
	\begin{align}
	(x^i(x,v),v^i(x,v))\notin \gamma_0 \text{ for all } 1\leq i \leq M_{1},
\label{no_grazing}\\
	\text{resp, } (x^i(x,v),v^i(x,v))\notin \gamma_0^{I}
	 \text{ for all } 1\leq i \leq M_{1},
\label{no_Igrazing}
	\end{align}
	then \\
	(a) $(t^{i}(t,x,v), x^{i}(x,v), v^{i}(x,v))$ is locally analytic function of $(x,v)$ (resp, locally continuous function of $(x,v)$ up to some proper renumbered bounce number). \\
	(b) Under assumption of (a), if $s\notin t^{i}(t,x,v)$ for any $1\leq i \leq M_{1}$, then $(X(s;t,x,v),$\\
	$ V(s;t,x,v))$ is locally analytic  (resp, locally continuous) function of $(x,v)$ for fixed $s$.   \\
	(c) There existx $\delta_{x,v} \ll 1$ such that if $|(y,u)-(x,v)| < \delta_{x,v}$ then $\mathcal{N}(y, u, N(t-s)) \leq M_{1}$  (resp, $\mathcal{N}(y, u, N(t-s)) \leq M_{1}$).   
\end{lemma}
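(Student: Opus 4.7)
The plan is to prove (a) by induction on the bounce index $i$, using the analytic implicit function theorem (Lemma \ref{analytic IFT}) at each step, then deduce (b) from the explicit free-flight formula in \eqref{XV}, and finally deduce (c) from the continuity established in (a) together with the finite total-length budget.

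For (a) under the non-grazing hypothesis \eqref{no_grazing}: the base case is the map $(x,v)\mapsto (t^1, x^1, v^0)$. Writing $\xi(x - t^1 v) = 0$ and differentiating in $t^1$ gives $-\nabla\xi(x^1)\cdot v$, which is nonzero exactly because $(x^1,v^0)\notin\gamma_0$, as in \eqref{forward implicit}. Lemma \ref{analytic IFT} then yields local analyticity of $t^1(t,x,v)$ and hence of $x^1(x,v)=x-(t-t^1)v$. The reflected velocity $v^1 = R_{x^1}v = v - 2(n(x^1)\cdot v)\,n(x^1)$ is analytic since $n = \nabla\xi/|\nabla\xi|$ is analytic near $x^1$ (using $\nabla\xi\neq 0$ on $\partial\Omega$). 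The inductive step is identical: if $(t^i, x^i, v^{i-1})$ is locally analytic in $(x,v)$, then $(x^{i+1}, v^i)\notin \gamma_0$ yields $\nabla\xi(x^{i+1})\cdot v^i\neq 0$, and Lemma \ref{analytic IFT} applied to $\xi(x^i - (t^i-t^{i+1})v^i)=0$ produces $(t^{i+1}, x^{i+1}, v^i)$ as an analytic function of $(x^i, v^i, t^i)$ and hence of $(x,v)$.

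For (a) under the weaker hypothesis \eqref{no_Igrazing}, concave grazings $(x^i, v^{i-1})\in\gamma_0^C$ are permitted and analyticity generally fails at such bounces, but Lemma \ref{basic}(b) gives a dichotomy (b-1)/(b-2) at each concave-grazing step. In case (b-1) the perturbed trajectory preserves the bounce and continuity of $(x_{\mathbf f}^i, v_{\mathbf f}^i)$ holds by \eqref{b1}; in case (b-2) the perturbed trajectory skips the grazing bounce so its $i$-th bounce matches the unperturbed $(i+1)$-th, and \eqref{b2} gives continuity after this unit index shift. Iterating over the finitely many (at most $M_1$) concave-grazing indices produces a fixed finite relabeling of the bounce sequence, and between consecutive concave-grazing indices Lemma \ref{basic}(a) supplies the needed continuity. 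This proves the ``resp.'' part of (a).

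For (b): in any free-flight interval $s\in(t^{\ell+1}, t^\ell)$ the trajectory is the explicit affine expression $X(s;t,x,v) = x^\ell - (t^\ell - s)v^\ell$ and $V(s;t,x,v) = v^\ell$. Since the assumption $s\neq t^i(t,x,v)$ together with continuity of $t^i$ from (a) means that the same index $\ell$ parametrizes the free-flight interval containing $s$ for all $(y,u)$ in a small neighborhood of $(x,v)$, analyticity (resp.\ continuity) of $X, V$ in $(x,v)$ at fixed $s$ follows from (a). For (c), the total travel length $\mathcal L(x,v):=\sum_{j=1}^{M_1}|x^j-x^{j-1}|$ is a continuous function of $(x,v)$ by (a), and the minimal free-flight time between consecutive non-grazing bounces is bounded below on a compact neighborhood (by the velocity lemma on the uniformly convex part and by the smallness of the remaining interval lengths in the budget); therefore a strict perturbation cannot fit an additional bounce into the fixed length $N(t-s)$, giving $\mathcal{N}(y,u,N(t-s))\leq M_1$ for $|(y,u)-(x,v)|<\delta_{x,v}$.

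The main obstacle is part (a) under \eqref{no_Igrazing}, where one must keep track of the index relabeling globally across the trajectory: each concave-grazing site individually gives the dichotomy of Lemma \ref{basic}(b), but combining these choices across all at most $M_1$ concave-grazing indices produces up to $2^{M_1}$ local branches of relabeled continuous parametrizations, and one must verify that a single fixed relabeling works on a common neighborhood and that the resulting continuous map does not lose information about any bounce relevant to part (c).
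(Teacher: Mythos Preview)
Your argument for (a) and (b) is essentially the paper's argument: induction via the analytic implicit function theorem applied to $\xi(x-v\ell)=0$ with $\nabla\xi(x^i)\cdot v^{i-1}\neq 0$, then the explicit free-flight formula. For the ``resp.'' case you correctly invoke the dichotomy of Lemma~\ref{basic}(b); the worry about $2^{M_1}$ branches is misplaced, since at each concave-grazing index the alternative (b-1) versus (b-2) is \emph{determined} by the perturbed trajectory, not a free choice, so a single relabeling (with index shifts at the (b-2) sites) parametrizes the perturbed cycle on a common neighborhood.

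The gap is in your proof of (c). You appeal to ``the velocity lemma on the uniformly convex part,'' but the $M_1$-th bounce can occur on the saddle (inner) part of $\partial\Omega$, where the velocity lemma does not apply, so this does not give a uniform lower bound on the next segment length. The paper closes this differently: it Taylor-expands $\xi$ along the last segment,
\[
0=\xi(x^{M_1})-\xi(x^{M_1+1})=\int_{t^{M_1+1}}^{t^{M_1}}\nabla\xi(X(s))\cdot V(s)\,ds,
\]
and uses $|\nabla^2\xi|\lesssim 1$, $|\nabla\xi|\gtrsim 1$ on $\partial\Omega$ to obtain the pointwise lower bound $|t^{M_1}-t^{M_1+1}|\geq C\,|v^{M_1}\cdot n(x^{M_1})|$ (equation~\eqref{min length}), valid at any non-grazing boundary point regardless of local convexity. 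Combining this with continuity of the normal component $|v^{M_1}(y,u)\cdot n(x^{M_1}(y,u))|\geq \tfrac12|v^{M_1}(x,v)\cdot n(x^{M_1}(x,v))|$ for small perturbations yields that the remaining distance $|X(s;t,y,u)-x^{M_1}(y,u)|$ is strictly shorter than the next segment, so no extra bounce fits. This is the missing quantitative ingredient in your sketch.
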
 	

	\hide	
	Then, for any $\varepsilon>0$, there exists $\delta_{x,v,\varepsilon}>0$ such that 
	\begin{equation*}
	\vert (x^i, v^i)(x,v) - (x^i, v^i)(y,u) \vert <\varepsilon,
	\end{equation*}
	whenever $\vert (x,v)-(y,u)\vert <\delta_{x,v,\varepsilon}$. In addition, $(x^i(y,u),v^i(y,u)) \notin \gamma_0$ for all $i\in \mathbb{N}$. \\
	\unhide

\begin{proof}
	\textit{Step 1.} Let us prove the case of \eqref{no_grazing}, first. Similar to the proof of (a) in Lemma \ref{basic}, for fixed $(x,v)$, we consider $\xi(x-v\ell)$ where $\xi(x-v\tb(x,v))=\xi(\xb(x,v))=0$ and take derivative
	\Be \label{implicit}
		\frac{\p}{\p\ell}\xi(x-v\ell)\vert_{\ell=\tb(x,v)} = \nabla\xi(\xb(x,v))\cdot v \neq 0,
	\Ee
	because there is no grazing bounce. By Lemma \ref{analytic IFT}, $\tb(x,v),\xb(x,v) = x-v\tb(x,v)$ and $v^{1}(x,v) = R_{\xb(x,v)}v$ are locally analytic function of $(x,v)$. By the same argument,
	\[
		(x^{i}, v^{i})\mapsto 	(x^{i+1}, v^{i+1}), 
	\]
	 is also analytic and by chain rule, we prove (a). Now let us prove (b). By \eqref{implicit} and chain rule, it is $t^{i}(t,x,v)$ is also locally analytic in $(x,v)$ for fixed $t$. Therefore, mapping
	 \[
	 \begin{split}
	 	(x,v) &\mapsto (t^{i}(t,x,v), x^{i}(x,v), v^{i}(x,v)) \mapsto (t^{M_{1}}(t,x,v), x^{M_{1}}(x,v), v^{M_{1}}(x,v)) \\
		&\mapsto (X(s;t,x,v), V(s;t,x,v))
	\end{split}
	 \]
	 is also analytic because we have local formula $X(s;t,x,v) = x^{M_{1}}(x,v) - v^{M_{1}}(x,v)(t^{M_{1}}(t,$\\
	 $x,v)-s)$ and $V(s;t,x,v)=v^{M_{1}}(x,v)$ when $s\notin t^{M_{1}}(t, x, v)$.  \\ 
	 Now let us prove (c). We first note that
	 $
	 	v^{i}(x,v)\cdot n(x^{i}(x,v))
	 $
	 is also locally analytic by (a). Therefore, there exists $\delta_{1}\ll 1$ such that if $|(y,u)-(x,v)|<\delta_1$, then 
	 \Be \label{normal angle}
	 	|v^{M_{1}}(y,u)\cdot n(x^{M_{1}}(y,u))| \geq \frac{1}{2}| v^{M_{1}}(x,v)\cdot n(x^{M_{1}}(x,v))|.
	 \Ee
	 By finite curvature of $\p\O$, we can estimate an upper bound of $|x^{M_{1}}(x,v)-x^{M_{1}+1}(x,v)|=|t^{M_{1}}(t,x,v)-t^{M_{1}+1}(t,x,v)|$. Since it suffices to control for small $|x^{M_{1}}(x,v)-x^{M_{1}+1}(x,v)|$
	 $ \ll 1$,
	 \begin{align*}
	 \begin{split}
		 	0 &= \xi(x^{M_{1}}(x,v)) - \xi(x^{M_{1}+1}(x,v)) = \int_{t^{M_{1}+1}}^{t^{M_{1}}} \frac{\p}{\p s}\xi(X(s;t,x,v)) ds \\
		 	&= \int_{t^{M_{1}+1}}^{t^{M_{1}}} \nabla\xi(X(s;t,x,v)) \cdot V(s;t,x,v) ds \\
		 	&= \int_{t^{M_{1}+1}}^{t^{M_{1}}} \Big[ \nabla\xi(x^{M_{1}}(x,v)) + \nabla^{2}\xi(x^{M_{1}}(x,v)) v^{M_{1}}|s-t^{M_{1}+1}(t,x,v)| \\
			&\qquad \qquad+ \mathcal{O}(|s-t^{M_{1}}(t,x,v)|^{2}) \Big] \cdot v^{M_{1}}(x,v) ds. \\
	 \end{split}
	 \end{align*}
	 Using $|\nabla^{2}\xi|\lesssim 1$ and $\min_{x\in \partial \O}|\nabla\xi(x)| \gtrsim 1$, we derive
	 \Be \label{min length}
			C| v^{M_{1}}(x,v)\cdot n(x^{M_{1}}(x,v))| \leq |t^{M_{1}}(t,x,v)-t^{M_{1}+1}(t,x,v)|,\quad \text{for some generic $C>0$.}
	 \Ee
	 If $t^{M_{1}}(t, y, u) \leq s $, $\mathcal{N}(y,u,N(t-s)) \leq M_{1}$ by definition. So we only consider $t^{M_{1}}(t, y, u) > s $ case. Choose $\delta_{2}\ll 1$ (smaller than $\delta_{1}$ WLOG) so that
	 \[
	 		|X(s;t,y,u) - x^{M_{1}}(t,y,u)| \leq \frac{C}{2}| v^{M_{1}}(x,v)\cdot n(x^{M_{1}}(x,v))|,
	 \] 
	 whenever $|(x,v)-(y,u)| < \delta_{2}$. By \eqref{normal angle} and \eqref{min length}, 
	 \begin{equation}\begin{split}\notag
	 	|X(s;t,y,u) - x^{M_{1}}(t,y,u)| &\leq \frac{C}{2}| v^{M_{1}}(x,v)\cdot n(x^{M_{1}}(x,v)) \leq C|v^{M_{1}}(y,u)\cdot n(x^{M_{1}}(y,u))| \\
	 	& \leq |t^{M_{1}}(t,y,u)- t^{M_{1}+1}(t,y,u)|.
	 \end{split}\end{equation}
	 Hence, there cannot be extra bounce between time $s$ and $t^{M_{1}}(t,y,u)$. We finish the proof of (c) for the case of \eqref{no_grazing}.  \\
	 
	\noindent \textit{Step 2.} Now, we prove $\notin \gamma_0^{I}$ case. Note that $\gamma_{0}^{I+}$ cannot  happen in finite bounce for backward in time trajectory. We exclude $\gamma_0^{I}$ and only consider concave grazing $\gamma_{0}^{C}$ in addition.  \\	

	\noindent To prove (a), note that from (b) of Lemma \ref{basic}, we have continuity of backward in time trajectory. For each concave grazing, if (b-2) case happens, bounce number is reduced by $1$ and we get continuity of bounce number after some renumbering.   
	
	\hide
	(up to renumbering bounce number) Consider small perturbation $|(x,v)-(y,u)| < \delta \ll 1$. Wlog, let $(x^{c_{1}}(x,v), v^{c_{1}}(x,v))$ be first backward in time concave grazing. By (a) of \textit{Step 1} (non grazing case), the trajectory is locally analytic until $(c_{1}-1)$-th bounce and $|(x^{c_{1}-1}(y, u), v^{c_{1}-1}(y, u)) - (x^{c_{1}-1}(x, v), x^{c_{1}-1}(x, v))| < \varepsilon \ll 1$ for arbitrary small $\varepsilon$  we choose. For any small perturbation, there are three scenarios for perturbed $(x^{c_{1}}(y,u), v^{c_{1}}(y,u))$: {\color{blue}Question: Why we can exclude $\gamma_0^{I}$ case?} \\
	
	\noindent {\bf (i)} $c_{1}$-th bounce of trajectory from $(y,u)$ hits near $(x^{c_{1}}(x,v), v^{c_{1}}(x,v))$ grazingly, \\
	{\bf (ii)} $c_{1}$-th bounce of trajectory from $(y,u)$ hits near $(x^{c_{1}}(x,v), v^{c_{1}}(x,v))$ nongrazingly, \\
	{\bf (iii)} $c_{1}$-th bounce of trajectory from $(y,u)$ does not hit near $(x^{c_{1}}(x,v), v^{c_{1}}(x,v))$, but hit very close to $(x^{c_{1}+1}(x,v), v^{c_{1}+1}(x,v))$ nongrazingly. (From Lemma \ref{no consecutive}, $(x^{c_{1}+1}(x,v), v^{c_{1}+1}(x,v))\notin \gamma_0$.) \\ 
	
	{\color{blue} Quote result of Lemma \ref{basic}} \\
	Case (i) and (iii) are rather simple. For case (iii), $(x^{c_{1}}(y, u), v^{c_{i}}(y, u))$ is sufficiently close to $(x^{c_{1}+1}(x, v), v^{c_{i}+1}(x, v))$ obviously by continuity of nongrazing trajectory ignoring $(x^{c_{1}}(x, v), v^{c_{i}}(x, v))$. For case (i), it is obvious that $|x^{c_{1}}(y,u) - x^{c_{1}}(x,v)|$ is sufficiently small by smoothness of $\p\O$. We also easily get continuity of velocity from
	\[
		|v^{c_{1}}(y,u) - v^{c_{1}}(x,v)| = |v^{c_{1}-1}(y,u) - v^{c_{1}-1}(x,v)| < \varepsilon.
	\]
	$(x^{c_{1}+1}(y, u), v^{c_{i}+1}(y, u))$ is also sufficiently close to $(x^{c_{1}+1}(x, v), v^{c_{i}+1}(x, v))$ by continuity ignoring both $(x^{c_{1}}(x, v), v^{c_{i}}(x, v))$ and $(x^{c_{1}}(y, u), v^{c_{1}}(y, u))$. \\

	{\color{blue} Quote result of Lemma \ref{basic}} \\
	{\bf The most tricky part is case (ii).} We claim that for sufficiently small $\delta>0$, $(x^{c_{1}}(y, u), v^{c_{1}}(y, u))$ {\bf hits near $(x^{c_{1}}(x, v), v^{c_{1}}(x, v))$ only once} in the following sense: \\
	 Let us choose a small $\varepsilon > 0$. {\bf In particular, to distinguish small neighborhood of each $x^{i}(x,v)$, we always choose $\varepsilon$ sufficiently small so that
	 \[
	 	\varepsilon \ll \min_{1\leq i \leq M_{1}}|x^{i}(x,v) - x^{i+1}(x,v)|.
	 \]
	} 
	 We claim that there exists $\delta \ll 1$ such that if $|(x,v)-(y,u)| < \delta$, then
	\Be \label{close}
		|(x^{c_{1}}(x, v), v^{c_{1}}(x, v)) - (x^{c_{1}}(y, u), v^{c_{1}}(y, u))| < \varepsilon.
	\Ee
	{\color{red} Above claim will be proved in the proof of Lemma \ref{basic}} 
	
	So for all possible three cases, we proved (a) until its first bounce. We repeat above argument for second concave grazing $(x^{c_{2}}(x,v), v^{c_{2}}(x,v))$. Whenever case (iii) happen for each $\gamma_{0}^{C}$, bounce number will be reduced by $1$. This proves (a). \\
	\unhide
	
	Proof for (b) is nearly the same as \textit{Step 1}. From Lemma \ref{basic}, we use continuity of the following mapping
	\Be
	\begin{split}\notag
		(x,v) &\mapsto (t^{i}(t,x,v), x^{i}(x,v), v^{i}(x,v)) \mapsto (t^{M_{1}}(t,x,v), x^{M_{1}}(x,v), v^{M_{1}}(x,v))\\
		&  \mapsto (X(s;t,x,v), V(s;t,x,v)).
		\end{split}
	\Ee
	We omit the detail. Proof for (c) is also the same as \textit{Step 1} case. Note that whenever (b-2) of dichotomy in Lemma \ref{basic} happens, bounce number is reduced by $1$ and hence $\mathcal{N}(y,u,N(t-s))$ is maximal when there is no concave grazing. 
\end{proof}

\hide
By chain rule and finite number of bounce, it suffices to claim only for first bounce. In fact, these are the direct outcome of the  basic computation (\cite{CKL1, GKTT1})
\Be\label{nabla_x_bv_b}
\begin{split}
	&	\nabla_{x} \tb = \frac{ \nabla \xi(\xb)}{ \nabla \xi (\xb) \cdot v}
	,
	\  \ \
	\nabla_{v} \tb =
	- \tb 	\nabla_{x} \tb,
	\\
	&		\nabla_{x} x_{\mathbf{b}} =  I - \frac{v\otimes n(\xb)}{v\cdot n(\xb)},
	\  \ \ \nabla_{v} x_{\mathbf{b}} = - \tb  \nabla_{x} x_{\mathbf{b}},  \\
	&		\nabla_{x} x_{\mathbf{b}} =  I - \frac{v\otimes n(\xb)}{v\cdot n(\xb)},
	\  \ \ \nabla_{v} x_{\mathbf{b}} = - \tb  \nabla_{x} x_{\mathbf{b}},  \\
	& \nabla_{x} n(\xb) = \frac{1}{|\nabla\xi(\xb)|} \Big( I - n(\xb)\otimes n(\xb) \Big)\nabla^{2}\xi(\xb).  \\
\end{split}
\Ee
Since the trajectory does not graze for each bounce, above computations are well-defined and continuity of $x^{i}(x,v)$ follows. For continuity of $v^{i}(x,v)$ we note that $\vb(x,v) = (I - 2n(\xb)\otimes n(\xb))v$ which is locally differentiable by \eqref{nabla_x_bv_b}.
\unhide

Next, we define arrival time function $s(x, \varphi, v)$. Before, we define the function we should specify $\varphi$-coordinate of $x$ since winding number $\mathbf{w} > 1$ in general. \\

\begin{definition} \label{bfX}
	For given $(x,\varphi)\in \overline{\O}\times (-2\pi\mathbf{w},0]$, we use $\mathbf{x}=\mathbf{x}(x,\varphi)$  to denote position $x$ contained in $S_{\varphi}$. 
\end{definition}

\begin{lemma} \label{lem time s}
	 (Arrival time function $s(x,\varphi,v)$) Assume positive angular momentum (with respect to the $z$-axis) $\omega(x,v) > 0$ defined in \eqref{def_ang moment} and the forward in time trajectory from $(x,v)\in \overline{\O}\times \mathbb{S}^{2}$ is well-defined until it passes $S_{0}$ defined in \eqref{cross section}(i.e., forward in time trajectory does not belong $\gamma_0^{I+}$ until it passes $S_{0}$ cross-section). \\
	 \indent When $\phi(x) = \varphi$ in $\mod 2\pi$ (where $\phi$ is defined in Definition \ref{cross section}),  we define arrival time function $s(x,\varphi,v)$ as time (or distance since $|v|=1$) to reach $S_0$ from $(x, v) \in S_\varphi \times \S^{2}$ via forward in time trajectory. Then, the arrival time function $s(x,\varphi,v)$ which satisfies
	\Be \label{def time s}
		X(s(x,\varphi,v); 0, x, v) \in S_{0}
	\Ee
	is well-defined and continuous in $(x,\varphi,v)$. \\
	Moreover, if the trajectory does not belong to $\gamma_0$ (resp, $\gamma_0^{I}$) and 
	\Be \label{notin bdry}
		X(s(x,\varphi,v); 0, x, v) \notin \p S_{0},
	\Ee
	in addition, then both $s(x,\varphi,v)$ and $(X, V)(s(\cdot,\cdot, \cdot); 0, \cdot, \cdot)$ are also locally analytic (resp, locally continuous) in $(x,\varphi,v)$.
\end{lemma}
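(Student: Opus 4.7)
The plan is to prove the three assertions in three successive steps, in each of which the positive angular momentum $\omega(x,v)$ supplies the transversality needed to invert the crossing condition \eqref{def time s}. Because the toroidal domain satisfies the axial-symmetry condition \eqref{axis-sym} with $\varpi=\hat z$, the quantity $\omega$ is conserved along the full trajectory (both on free segments and through each specular reflection). Writing $X(s;0,x,v)=(r\cos\phi,r\sin\phi,X_3)$ in cylindrical coordinates, a direct computation gives $X_1V_2-X_2V_1=r^2\dot\phi$, which equals $\omega(x,v)$ under the positive-orientation convention. Hence
\[
\frac{d\phi(X(s;0,x,v))}{ds}=\frac{\omega(x,v)}{r(X(s;0,x,v))^2}\geq \frac{\omega(x,v)}{\left(\max_\tau\gamma_1(\tau)\right)^2}>0
\]
whenever the trajectory lies in the interior of a bouncing interval; across a specular bounce the value of $\phi\circ X$ is continuous because $X$ is, and the one-sided derivatives remain positive by the same identity. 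Lifting $\phi(X(\,\cdot\,;0,x,v))$ continuously from the initial value $\varphi$, this lifted angle is strictly increasing with a positive lower bound on its rate, so there exists a unique first $s=s(x,\varphi,v)$ at which it reaches $0$; this time is finite, bounded above by $|\varphi|(\max_\tau\gamma_1(\tau))^2/\omega(x,v)$, and by hypothesis the trajectory is defined up to it.

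For continuity, the exclusion of $\gamma_0^{I_+}$ up to time $s(x,\varphi,v)$ ensures the forward cycles $(\tf^k,\xf^k,\vf^k)(x,v)$ are well-defined up to the crossing. The forward-in-time analogue of Lemma~\ref{analyticity}(a)--(c), combined with Lemma~\ref{basic}, then guarantees that $(y,u)\mapsto(X(s;0,y,u),V(s;0,y,u))$ is continuous for every $s$ that is not a forward bouncing time, after the possible renumbering of concave grazings coming from dichotomy (b-2) of Lemma~\ref{basic}. Continuity of the lifted angle $\phi(X(s;0,y,u))$ in $(s,y,u)$ follows, and together with the uniform lower bound $d\phi/ds\geq \omega(x,v)/(\max_\tau\gamma_1(\tau))^2$ a standard two-sided implicit-function argument yields continuity of $s(x,\varphi,v)$: for any $\varepsilon>0$ one chooses $\delta>0$ so that $\phi(X(s(x,\varphi,v)\pm\varepsilon;0,y,u))$, continuously lifted from $\varphi'$, has opposite signs relative to $0$ for every $(y,u)$ with $|(y,u)-(x,v)|<\delta$ and $|\varphi'-\varphi|<\delta$, forcing $|s(y,\varphi',u)-s(x,\varphi,v)|<\varepsilon$.

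Under the non-grazing assumption and $X(s(x,\varphi,v);0,x,v)\notin\partial S_0$, the arrival time lies strictly inside some forward bouncing interval $(\tf^{k+1},\tf^k)$, and Lemma~\ref{analyticity}(a)--(b) makes $(s,x,v)\mapsto(X(s;0,x,v),V(s;0,x,v))$ jointly real-analytic in a neighborhood of $(s(x,\varphi,v),x,v)$. Because $S_0$ lies in the $xz$-plane, the crossing condition reduces to the analytic equation $F(s,x,v):=(X(s;0,x,v))_2=0$; at the crossing point $X_2=0$ while $X_1>0$ by the interior assumption, so
\[
\frac{\partial F}{\partial s}\bigg|_{s=s(x,\varphi,v)}=V_2(s;0,x,v)=\frac{\omega(x,v)}{X_1(s;0,x,v)}\neq 0
\]
by the angular momentum identity evaluated at $X_2=0$. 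The analytic implicit function theorem (Lemma~\ref{analytic IFT}) then gives a local analytic $(x,v)\mapsto s(x,\varphi,v)$, and composition with the analytic trajectory yields analyticity of $(X,V)(s(\cdot,\cdot,\cdot);0,\cdot,\cdot)$. Under only the $\gamma_0^I$-exclusion, the continuous version follows by the same argument, replacing Lemma~\ref{analyticity}(a) by its continuous counterpart from Lemma~\ref{basic} and the analytic implicit function theorem by the standard continuous one, which still applies because the transversality $V_2\neq 0$ is preserved. The main obstacle I expect is the bookkeeping in the merely continuous case: one must check that dichotomy (b-2) of Lemma~\ref{basic}, in which a concave grazing is replaced by a nearby non-grazing bounce under perturbation, does not break continuity of $s(x,\varphi,v)$, and that $X(s(x,\varphi,v);0,x,v)\notin\partial S_0$ is an open condition so that the analytic (resp.\ continuous) regime is stable under small perturbations of $(x,v)$.
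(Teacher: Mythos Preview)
Your proof is correct and follows essentially the same approach as the paper: both arguments use conservation of angular momentum to show the lifted angle $\phi(X(s))$ is strictly increasing with a uniform positive lower bound on its rate, yielding a unique well-defined crossing time, and then apply the analytic implicit function theorem to the scalar equation $X_2(s;0,x,v)=0$ using the transversality $\partial_s X_2=V_2\neq 0$ at the crossing, with Lemma~\ref{analyticity} supplying the regularity of the trajectory. Your treatment is somewhat more detailed in the continuity step (the explicit two-sided argument and the handling of the dichotomy from Lemma~\ref{basic}), but the core structure is identical to the paper's.
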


Before we give proof for above lemma, let us define some notation. When position and velocity(direction) are parametrized by $(\tau, \varphi)$ for grazing phase $(x,v) = (\sigma(\tau, \varphi), I^{2}(\tau, \varphi)) \in \gamma_0^{I-}$, we abbreviate
\Be \label{s_theta_phi}
s(\sigma(\tau, \varphi),\varphi, I^{2}(\tau, \varphi)) = s(\tau, \varphi),
\Ee
for notational convenience. 
Also, we use the following abbreviation for concave grazing case :
\begin{equation} \label{s_tau_varphi_eta} 
X(s(\tau,\varphi,\eta);0,\tau,\varphi,\eta):= X(s(\tau,\varphi,\eta);0,\sigma(\tau,\varphi),v_c(\eta)),  
\end{equation}
where we used $s(\tau,\varphi,\eta)=s(\sigma(\tau,\varphi),\varphi,v_c(\eta))$ with definition \eqref{concave para}. 
\begin{proof} [Proof of Lemma \ref{lem time s}]
	If forward in time trajectory does not belong to $\gamma_{0}^{I+}$, the trajectory is well-defined. Now, we consider $\varphi$-directional angular velocity. Since the distance from a point $x\in \p\O$ to the $z$-axis is given by $|x\times \hat{z}| =|(x_2,-x_1,0)|\geq C > 0$ for some uniform $C$, the angular velocity of the trajectory $(X(s;0,x,v), V(s;0,x,v))$ which is given by
	\Be \notag
	\begin{split}
		\omega_{\text{ang}}(X(s;0,x,v), V(s;0,x,v)) &:= \frac{1}{|X(s;0,x,v)\times \hat{z}|}\omega(X(s;0,x,v), V(s;0,x,v)) \\
		& = \frac{1}{|X(s;0,x,v)\times \hat{z}|}\omega(x,v)\\
		& = \frac{1}{| (X_2 (s;0,x,v), - X_1 (s;0,x,v),0)|}\omega(x,v)	\\
& \geq \frac{1}{C} \o (x,v),
	\end{split}
	\Ee
	is bounded from below. Hence $\varphi$-coordinate of $X(s;0,x,v)$ increases monotonically as time $s$ increases. Note that $\varphi(x)$ is not well-defined if we denote $\varphi(x)$ as (negative) $\varphi$-coordinate of $x$. 
	Then,
	\Be \label{ang vel}
		\Phi(s_{*}, x,\varphi,v) := \int_{0}^{s_{*}} \frac{\omega(\mathbf{x}(x,\varphi),v)}{|X(s;0,\mathbf{x}(x,\varphi),v)\times \hat{z}|} ds = -\varphi > 0,
	\Ee
	where $\mathbf{x}(x,\varphi)$ is defined in Definition \ref{bfX}. Since the integrand is uniformly positive and finite, $\Phi(s_{*}, x, \varphi, v)$ is monotone increasing function of $s_{*}$ and there exists a unique $s_{*}$ as a function of $(x,\varphi,v)$ which solves above equation. Defining $s_{*}=s(x,\varphi,v)$, $s(x,\varphi,v)$ is well-defined. Continuity of $s(x,\varphi,v)$ comes from \eqref{ang vel} because the integrand is continuous in $(x,\varphi,v)$ for all $0 \leq s \leq s_{*}$. \\
	
	Now, we assume forward in time trajectory does not belong $\gamma_0$ and $X(s(x,\varphi,v); 0, x,$\\
	$ v) \notin \p\O$, in addition. $s_{*} = s(x,\varphi,v)$ solves
	\[
		\tilde{\varphi}(s_{*}, x,\varphi,v) := X(s_{*}; 0, \mathbf{x}(x,\varphi), v) \cdot e_{2} = 0,
	\]
	where $e_{2}$ is $y$-direction vector, and $S_0$ belongs $xz$ plane. \\
	Note that $\tilde{\varphi}$ is locally analytic since $X(s; 0, x, v)$ is locally analytic by (b) of Lemma \ref{analyticity}. Since $(\p_{s}\tilde{\varphi})(s(x,\varphi,v), x, \varphi,v) = V(s(x,\varphi,v);0, \mathbf{x}(x,\varphi), v)\cdot e_{2} \neq 0$ by nonzero angular momentum, $s(x,\varphi,v)$ is locally analytic by implicit function theorem. 	
	\hide
	until it passes $S_0$. Obviously, $\Phi$ is analytic function of $(s_{*}, x, v)$. Taking $\frac{\p}{\p s_{*}}$,
	\[
		\frac{\p}{\p s_{*}} \Phi(s_{*}, x, v) = \frac{\omega(x,v)}{|X(s_{*};0,x,v)\times \hat{z}|} > 0.
	\]
	By implicit function theorem, $s_{*} = s(x,v)$ is local analytic function of $(x,v)$. Moreover, $X(s(x,v); 0, x, v) \notin \p S_{0}$ if and only if $s(x,v)\notin t^{i}(x,v)$ for any $i\in\mathbb{N}$. From analyticity of $s(x,v)$ and statement (b) of Lemma \ref{analyticity},
	$
		(X, V)(s(x,v); 0, x, v)		
	$
	is also locally analytic in $(x,v)$.  \\
	\unhide
\end{proof}

When arrival time function $s(x, \varphi, v)$ is well-defined, we use the following notations,
\Be \label{map XV}
	(X,V)(s)(E) := \{ (X,V)(s(x, \varphi, v); 0, x, v) \in \overline{S_0}\times \S^{2} : (x,v)\in E \}.
\Ee
In addition, if the trajectory is reversible, we also define
\Be \label{map XV -1}
	(X,V)^{-1}(s)(F) := \{ (y,u)\in \overline{\O}\times \S^{2} : (X,V)(s(y,u); 0, y, u)\in F  \}. 
\Ee

\hide
\begin{lemma} \label{lem conti S0} (Continuity from inflection to $S_{0}$) 
	Let us assume $(\theta, \varphi) \in U$ belongs to $\gamma_0^{I-}$ in the sense of
	\[
		(\sigma(\theta, \varphi), I^{2}(\theta, \varphi) ) \in \gamma_0^{I-}. \\
	\]
	If $(X, V)(s(\theta, \varphi); 0, \theta, \varphi)\in S_{0}^{\varepsilon}$ is well-defined, i.e., if there is no inflection grazing until arriving $S_0$, then mapping
	\[
		(X, V)(s(\theta, \varphi); 0, \theta, \varphi) \in S_0^{\varepsilon}\times \S^{2}
	\]
	is continuous at the point $(\theta, \varphi)$ where function $s$ is defined in Lemma \ref{def time s} ({\color{blue} Or make separate def for $s$}). \\
\end{lemma}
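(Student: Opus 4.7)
\textbf{Plan for Lemma \ref{lem conti S0}.} The plan is to split the forward trajectory at a small positive time $s_0>0$: on the interval $[0,s_0]$ the trajectory is a straight line that escapes the boundary via the third-order inflection structure of $\gamma_0^{I_-}$, and on $[s_0, s(\theta,\varphi)]$ the trajectory starts in the interior of $\Omega$ and is covered by Lemmas \ref{analyticity} and \ref{lem time s}. Continuity of the composite map at $(\theta,\varphi)$ then follows by composition.

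\textbf{Step 1 (third-order escape from the boundary).} I first observe that $(\sigma(\theta,\varphi), I^2(\theta,\varphi))\in\gamma_0^{I_-}$ together with the fact that $I^2$ is the tangential direction of vanishing normal curvature forces the function $f(s) := \xi(\sigma(\theta,\varphi) + s I^2(\theta,\varphi))$ to satisfy
\[
f(0) \ = \ f'(0) \ = \ f''(0) \ = \ 0,
\]
since $\nabla\xi(\sigma)\cdot I^2 = 0$ and $(I^2)^T\nabla^2\xi(\sigma) I^2 = 0$. The $\gamma_0^{I_-}$ definition ($\sigma + s I^2 \notin \bar\Omega$ for $s\in(-\delta,0)$, and $\sigma + s I^2 \in \Omega$ for $s\in(0,\delta)$ by the inflection dichotomy) then forces
\[
c(\theta,\varphi) \ := \ \frac{d^3}{ds^3}\xi(\sigma(\theta,\varphi) + s I^2(\theta,\varphi))\Big|_{s=0} \ < \ 0.
\]
Since $\sigma$ is real-analytic and $I^2$ is real-analytic in $(\theta,\varphi)$ on the relevant parameter range (Definition \ref{def:domain} and Lemma \ref{inflection}, away from $Z_h$), the coefficient $c(\theta',\varphi')$ is continuous and remains strictly negative on a neighborhood of $(\theta,\varphi)$. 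Combining with Taylor's theorem applied uniformly in the parameters yields constants $s_0, c_* > 0$ such that
\[
\xi\bigl(\sigma(\theta',\varphi') + s I^2(\theta',\varphi')\bigr) \ \le \ -c_* s^3, \qquad s \in (0,s_0],
\]
for every $(\theta',\varphi')$ in this neighborhood. Consequently no bounce occurs on $(0,s_0]$ along any perturbed trajectory, and the time-$s_0$ phase point
\[
(X_0, V_0)(\theta',\varphi') \ := \ \bigl(\sigma(\theta',\varphi') + s_0 I^2(\theta',\varphi'),\ I^2(\theta',\varphi')\bigr)
\]
lies strictly in $\Omega\times\S^2$ and depends continuously on $(\theta',\varphi')$.

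\textbf{Step 2 (continuous propagation to $S_0$) and conclusion.} By hypothesis, the base trajectory meets no $\gamma_0^I$ after time $s_0$ before arriving at $S_0^\varepsilon$, and the arrival point lies off $\p S_0$. The remaining finite-length trajectory, viewed with initial phase point $(X_0(\theta,\varphi), V_0(\theta,\varphi))$ at time $s_0$, therefore satisfies the ``$\notin\gamma_0^I$'' hypothesis of Lemma \ref{analyticity}; parts (a)--(c) of that lemma supply local continuity of the bounce data and of $(X,V)(s; s_0,\cdot,\cdot)$ for fixed $s$ (up to the harmless renumbering through concave grazings governed by the (b-1)/(b-2) dichotomy of Lemma \ref{basic}). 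The second clause of Lemma \ref{lem time s} then provides local continuity of the arrival-time function and hence of the composite $(X,V)(s(\cdot,\cdot,\cdot); s_0, \cdot, \cdot)$ at $(X_0(\theta,\varphi), V_0(\theta,\varphi))$. Composing this with the continuous map $(\theta',\varphi')\mapsto (X_0,V_0)(\theta',\varphi')$ from Step 1 yields the continuity of $(\theta',\varphi')\mapsto (X,V)(s(\theta',\varphi'); 0, \sigma(\theta',\varphi'), I^2(\theta',\varphi'))$ at $(\theta,\varphi)$.

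\textbf{Main obstacle.} The essential difficulty is Step 1: a direct implicit-function argument as in Lemma \ref{analyticity} breaks down at $s=0$ because $\nabla\xi(\sigma)\cdot I^2 = 0$ and $(I^2)^T\nabla^2\xi(\sigma) I^2 = 0$, so the trajectory is tangent to $\p\Omega$ to second order. Extracting a strict third-order escape rate from the $\gamma_0^{I_-}$ definition and making this rate uniform under perturbations of $(\theta,\varphi)$ via the analyticity of the inflection-direction field $I^2$ is the key new ingredient; everything after time $s_0$ reduces to the already-established non-$\gamma_0^I$ continuity theory of Lemmas \ref{analyticity} and \ref{lem time s}.
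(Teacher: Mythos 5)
Your argument is correct in substance, but it takes a genuinely different route from the paper. The paper never isolates the initial-tangency difficulty as a separate step: this continuity statement is in effect absorbed into Lemma \ref{lem time s}, where the arrival time is characterized through the monotone angular relation \eqref{ang vel} and continuity of $s(\cdot,\cdot,\cdot)$ and of $(X,V)(s(\cdot);0,\cdot)$ is read off from the soft continuity of the trajectory supplied by Lemmas \ref{basic} and \ref{analyticity}; when trajectories launched from $(\sigma(\tau,\varphi),I^{2}(\tau,\varphi))\in\gamma_0^{I_-}$ are needed (as in Lemma \ref{lem:open} and Proposition \ref{unif bounce}) those lemmas are simply invoked in the parameters $(\tau,\varphi)$, with the stability of the tangential launch left implicit. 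Your Step 1 makes exactly that point explicit and quantitative: a uniform lower bound $\xi(\sigma(\theta',\varphi')+sI^{2}(\theta',\varphi'))\le -c_* s^{3}$ on $(0,s_0]$ gives a uniform bounce-free initial segment with continuously varying data at time $s_0$, after which the rest of your argument coincides with the paper's machinery (Lemma \ref{analyticity} with the (b-1)/(b-2) renumbering of Lemma \ref{basic}, then the second clause of Lemma \ref{lem time s}, using that the arrival point lies in $S_0^{\varepsilon}$, hence off $\p S_0$). This buys a self-contained and verifiable treatment of the one step the paper's argument glosses over.

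One justification in Step 1 needs repair. The sign condition defining $\gamma_0^{I_-}$ only yields $c(\theta,\varphi)=\frac{d^3}{ds^3}\xi(\sigma+sI^{2})\big|_{s=0}\le 0$: a priori this third derivative could vanish, the sign change being produced by a higher odd-order term, in which case your uniform cubic bound (and its uniformity in $(\theta',\varphi')$) fails as written. Strict negativity is precisely the nondegeneracy encoded by the zero set $Z_h$ of Lemma \ref{inflection}: the curvature of the intersection branch of $T_{\sigma(\theta,\varphi)}\cap\p\O$ tangent to $I^{2}$, computed there, is a nonzero multiple of this cubic coefficient, and the paper records that $(\sigma(\tau,\varphi),I^{1,2}(\tau,\varphi))\notin\gamma_0^{I}$ whenever $\tau\in Z_h$. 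Hence the hypothesis $(\sigma(\theta,\varphi),I^{2}(\theta,\varphi))\in\gamma_0^{I_-}$ forces $\theta\notin Z_h$, which gives $c(\theta,\varphi)<0$, and since $Z_h$ is a finite set this persists on a neighborhood, restoring the uniformity you need. With that reference in place of the appeal to the bare $\gamma_0^{I_-}$ dichotomy, your proof goes through.
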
 
\begin{proof}
	Maybe from Lemma \ref{continuity}? \\
\end{proof}

{\color{blue}
We may need another verison of continuity lemma as the following.
\begin{lemma} \label{continuity nI} (Continuity for non-inflection) 
	Assume that $(x,v) \in cl(\Omega) \times \V^N$ satisfying (during traveling $L$ in backward, Let $
	|v|=1$)
	\[
	(X,V)(s; L, x,v) \notin \gamma_0^I \quad\text{for all}\quad 0\leq s \leq L
	\]
	($\gamma_0^C$ is possible) Then, for any $\epsilon>0$, there exists $\delta_{x,v,\epsilon}>0$ such that 
	\[
	(X,V)(s; L, y,u) \notin \gamma_0^I \quad\text{for all}\quad 0\leq s \leq L
	\]
	whenever $\vert (x,v)-(y,u)\vert <\delta_{x,v,\epsilon}$. 
\end{lemma}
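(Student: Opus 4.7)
I first reduce the claim to finitely many bounce events. Because the backward trajectory from $(x,v)$ never meets $\gamma_0^{I-}$ on $[0,L]$, Lemma~\ref{no infty bounce} (together with the excluded neighborhoods of $\tau_{1,*}$, $\tau_{2,*}$ and $Z_h$ that feed into its hypotheses) yields a finite bounce count $\mathcal{N}(x,v,L)=:M<\infty$. Statement~(c) of Lemma~\ref{analyticity}, applied in its $\notin\gamma_0^I$ form, then produces some $\delta_0>0$ such that $\mathcal{N}(y,u,L)\le M$ for every $(y,u)$ with $|(x,v)-(y,u)|<\delta_0$. Consequently only the first $M$ bounce events of any perturbed trajectory need to be examined.

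Next, I use that $\gamma_0^I$ is a \emph{closed} subset of $\partial\Omega\times\mathbb{S}^2$: by Lemma~\ref{inflection} it is the continuous image of the compact parameter set $(\tau,\varphi)\mapsto(\sigma(\tau,\varphi),I^{1,2}(\tau,\varphi))$, hence closed. For each reference bounce $(x^i(x,v),v^{i-1}(x,v))\notin\gamma_0^I$ with $1\le i\le M$, I fix an open neighborhood $U_i\subset\partial\Omega\times\mathbb{S}^2$ disjoint from $\gamma_0^I$ and of radius at most $\epsilon$. The aim is to choose $\delta$ so small that for every $(y,u)$ with $|(x,v)-(y,u)|<\delta$, the (possibly renumbered) $i$-th bounce of the perturbed trajectory lies in $U_i$ for all relevant $i$.

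The induction on $i$ is driven by Lemma~\ref{basic}. At a non-grazing reference bounce, part~(a) of that lemma gives continuous dependence in $(x,v)$, so a small enough $\delta$ keeps the perturbed bounce inside $U_i$. At a concave-grazing reference bounce $(x^i,v^{i-1})\in\gamma_0^C$, I invoke the dichotomy (b-1)/(b-2): in case~(b-1) the perturbed $i$-th bounce is close to $(x^i,v^{i-1})$ and thus lies in $U_i$; in case~(b-2) it is instead close to $(x^{i+1},v^i)$, which by Lemma~\ref{no consecutive} is non-grazing and by hypothesis not in $\gamma_0^I$, hence sits in $U_{i+1}$. In either branch, subsequent bounces are traced by continuity of the non-grazing evolution, and the induction continues with the remaining bounce counter decreased by at most one.

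The main obstacle is that concave-grazing perturbations branch, so the perturbed trajectory may not follow the reference one bounce-by-bounce. I handle this by organizing the argument as a finite decision tree of depth at most $M$: each concave-grazing node of the reference trajectory spawns two children (one for (b-1), one for (b-2)), and each leaf corresponds to a possible bounce pattern of the perturbed trajectory. Since the tree is finite, the minimum of all $\delta$-thresholds that arise along its branches is positive; setting $\delta_{x,v,\epsilon}$ to be that minimum (intersected with $\delta_0$) yields $(X,V)(s;L,y,u)\notin\gamma_0^I$ for every $s\in[0,L]$, completing the proof.
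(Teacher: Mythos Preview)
This lemma sits inside a \texttt{\textbackslash hide \ldots\ \textbackslash unhide} block in the source, so it is commented out and the paper contains no proof of it. There is nothing to compare against directly; the authors drafted the statement but ultimately routed the argument through Lemma~\ref{analyticity} (the $\notin\gamma_0^I$ branch), Lemma~\ref{basic}, and Lemma~\ref{lem:open} instead of proving this lemma as stated.

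Your overall strategy---reduce to finitely many bounces, use closedness of $\gamma_0^I$, then run the (b-1)/(b-2) dichotomy of Lemma~\ref{basic} along a finite decision tree---is exactly the machinery the paper deploys in the proofs of Lemma~\ref{lem:open}(a) and Proposition~\ref{unif bounce}. In that sense your plan is faithful to the paper's methods.

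There is, however, a genuine gap at the very first step. You invoke Lemma~\ref{no infty bounce} to obtain $\mathcal N(x,v,L)<\infty$, and you justify its hypotheses by saying ``together with the excluded neighborhoods of $\tau_{1,*}$, $\tau_{2,*}$ and $Z_h$ that feed into its hypotheses.'' But the lemma as stated only assumes $(x,v)\in\mathrm{cl}(\Omega)\times\mathbb V^N$ and non-inflection along the trajectory; it does \emph{not} assume $(x,v)\in X^\varepsilon$. The angular conditions \eqref{assump 1} and \eqref{assump 3} that Lemma~\ref{no infty bounce} inherits from Lemmas~\ref{compare}--\ref{difference} are secured in the paper precisely by the exclusion of the rings $R_{x,\tau_{1,*}}^\varepsilon$, $R_{x,\tau_{2,*}}^\varepsilon$, $R_{x,\tau_j^{\mathbf z}}^\varepsilon$ in the construction of $X^\varepsilon$ (see the proof of Lemma~\ref{trajectory invertible}). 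Without that restriction, a trajectory could accumulate near $\tau_{1,*}$ or $\tau_{2,*}$ with $\eta_i$ arbitrarily close to $\pi/2$, and Lemma~\ref{compare} fails. So either the lemma needs the additional hypothesis $(x,v)\in X^\varepsilon$ (which is presumably why the authors hid it and worked with $G_L^\varepsilon\subset X^\varepsilon$ instead), or you need a separate argument ruling out infinite bouncing in this more general setting---and none is given.

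A minor point: your claim that $\gamma_0^I$ is closed because it is the continuous image of a compact parameter set is slightly off, since the parameter domain $(\tau_{1,*},\tau_{2,*})\setminus Z_h\times[0,2\pi)$ is not compact. The paper handles this in the proof of Lemma~\ref{basic}(a) by noting that $I^{1,2}(\tau,\varphi)$ are explicit smooth directions, so the closure argument goes through after extending continuously to the endpoints; you should say that rather than invoke compactness directly.
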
 
} 
\unhide

\hide{\color{blue}
We control collection of bad phase sets those are nearly grazing set for each open covers contaning boundary $\p\O$.
\begin{lemma} \label{near_boundary}
	Let $\Omega\subset\mathbb{R}^2$ be an analytic non-convex domain, defined in Definition~\ref{AND}. For $\varepsilon \ll 1, N \gg 1$, there exist finite points 
	\[
	\{{x}_{1}^{nB},\cdots, {x}_{l_{nB}}^{nB}\} \subset cl({\O}),
	\]
	and their open neighborhoods 
	\[
	B({x}_1^{nB}, r_1^{nB}), \cdots, B({x}_{l_{nB}}^{nB}, r_{l_{nB}}^{nB}) \subset \mathbb{R}^{2}, 
	\]
	as well as corresponding open sets 
	\[
	\mathcal{O}_1^{nB}, \cdots, \mathcal{O}_{l_{nB}}^{nB} \subset \mathbb{V}^N, 
	\]
	with $\mathfrak{m}_{2}(\mathcal{O}_i^{nB}) \leq \varepsilon$ for all $i=1,\cdots,l_{nB}$ such that for every $x\in cl({\O})$ there exists $i\in \{1,\cdots, l_{nB}\}$ with $x\in B({x}_i^{nB}, r_i^{nB})$ and satisfies either 
	\[
	B(x_i^{nB}, r_i^{nB}) \cap \partial\Omega = \emptyset \quad \text{or} \quad |v^{\prime}\cdot \mathbf{n}(x^{\prime})| > \varepsilon/ N^4,
	\] 
	for all $x^{\prime}\in B({x}^{nB}_i, r^{nB}_i)\cap\partial\Omega$ and $v^{\prime}\in \mathbb{V}^N  \backslash \mathcal{O}_i^{nB}$ .
\end{lemma}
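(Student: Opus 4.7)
The plan is to build the cover pointwise on $\mathrm{cl}(\Omega)$ using the analyticity (hence Lipschitz regularity) of the outward normal, and then extract a finite subcover by compactness of $\mathrm{cl}(\Omega)$. First I would fix $\varepsilon\in(0,1)$ and the slab half-width $\delta:=2\varepsilon/N^{4}$, and attach to each base point $x_0\in\mathrm{cl}(\Omega)$ a candidate ball $B(x_0,r_{x_0})$ together with a bad velocity set $\mathcal{O}_{x_0}\subset\mathbb{V}^{N}$; the required statement will follow by Heine--Borel applied to $\{B(x_0,r_{x_0})\}_{x_0}$.

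For \textbf{interior base points} $x_0\notin\partial\Omega$ I would simply pick $r_{x_0}:=\tfrac{1}{2}\mathrm{dist}(x_0,\partial\Omega)$ so that $B(x_0,r_{x_0})\cap\partial\Omega=\emptyset$, and set $\mathcal{O}_{x_0}:=\emptyset$. The first alternative in the conclusion then holds trivially. For \textbf{base points on or near the boundary}, I would fix a reference boundary point $\tilde{x}_0\in\partial\Omega$ within distance $\mathrm{dist}(x_0,\partial\Omega)$ of $x_0$ and declare
\[
\mathcal{O}_{x_0}:=\{v\in\mathbb{V}^{N}:\ |v\cdot n(\tilde{x}_0)|<\delta\},
\]
i.e., a thin slab of half-width $\delta$ around the tangent direction at $\tilde{x}_0$, intersected with the annulus $\mathbb{V}^{N}$. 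Since $\mathbb{V}^{N}\subset\{|v|\le N\}$, this set is contained in a $\delta$-strip about a line inside a disk of radius $N$, so
\[
\mathfrak{m}_{2}(\mathcal{O}_{x_0})\ \le\ C\,N\,\delta\ \le\ \frac{2C\,\varepsilon}{N^{3}}\ <\ \varepsilon
\]
for $N\ge(2C)^{1/3}$, which is consistent with the hypothesis $N\gg 1$.

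To obtain a uniform non-grazing bound on all of $B(x_0,r_{x_0})\cap\partial\Omega$, I would use that $\partial\Omega$ is analytic and compact, so the Gauss map $n:\partial\Omega\to\mathbb{S}^{1}$ is Lipschitz with some constant $L_\Omega$ depending only on $\Omega$. Shrinking $r_{x_0}$ so that
\[
r_{x_0}\ <\ \frac{\delta}{2NL_\Omega}\ =\ \frac{\varepsilon}{N^{5}L_\Omega},
\]
we ensure $|n(x')-n(\tilde{x}_0)|<\delta/(2N)$ for every $x'\in B(x_0,r_{x_0})\cap\partial\Omega$. For any $v\in\mathbb{V}^{N}\setminus\mathcal{O}_{x_0}$ this gives
\[
|v\cdot n(x')|\ \ge\ |v\cdot n(\tilde{x}_0)|\,-\,|v|\,|n(x')-n(\tilde{x}_0)|\ >\ \delta\,-\,N\cdot\frac{\delta}{2N}\ =\ \frac{\delta}{2}\ >\ \frac{\varepsilon}{N^{4}},
\]
which is exactly the second alternative in the statement.

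Finally, the collection $\{B(x_0,r_{x_0})\}_{x_0\in\mathrm{cl}(\Omega)}$ is an open cover of the compact set $\mathrm{cl}(\Omega)$, so I would extract a finite subcover $\{B(x_i^{nB},r_i^{nB})\}_{i=1}^{l_{nB}}$ and take $\mathcal{O}_i^{nB}:=\mathcal{O}_{x_i^{nB}}$. The main (and essentially the only) obstacle is the quantitative propagation of the non-grazing bound from the reference point $\tilde{x}_0$ to the whole neighborhood, which relies on the uniform Lipschitz constant of $n$ on $\partial\Omega$; once that pointwise construction is in place, the rest is a compactness argument and routine measure bookkeeping. No analytic structure beyond $C^{1,1}$ of $\partial\Omega$ is actually used in this covering lemma, which is why the same statement works in the full 3D toroidal geometry studied in the rest of the paper.
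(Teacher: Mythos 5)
Your proposal is correct and follows essentially the same route as the paper's proof: an interior/boundary dichotomy for the balls, a near-grazing bad velocity set of measure $\lesssim\varepsilon$ attached to each boundary ball, propagation of the non-grazing bound over the ball via uniform control of the normal (the paper phrases this as a uniform bound on the variation of the normal angle coming from the bounded curvature, where you invoke a Lipschitz constant for the Gauss map), and a finite subcover by compactness of $\mathrm{cl}(\Omega)$. The only cosmetic differences are that the paper removes an angular sector around the tangential directions while you remove a slab $\{|v\cdot n(\tilde{x}_0)|<\delta\}$, and the paper additionally shrinks the radii so each boundary ball meets $\partial\Omega$ in a single arc, which is the implicit content of your global Euclidean Lipschitz constant for $n$ on the compact embedded boundary.
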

\begin{proof}
	By Definition~\ref{AND}, $\partial{\Omega} \in \mathbb{R}^2$ is a compact set in $\mathbb{R}^2$ and a union of the images of finite curves. For ${x}\in{\Omega}$, we define $r_{{x}}>0$ such that $B(x,r_{{x}})\cap \partial{\Omega}=\emptyset$. For each ${x}\in\partial{\Omega}$, we can define the outward unit normal direction $\mathbf{n}(x)$ and the outward normal angle $\theta_n({x})\in [0,2\pi)$ specified uniquely by $\mathbf{n}({x}) = (\cos\theta_n({x}), \ \sin\theta_n({x}) )$. Using the smoothness and uniform boundedness of curvature of the boundary $\partial{\Omega}$, there exist uniform $r_{\varepsilon, N} > 0$ such that for $r_{x} \leq r_{\varepsilon, N}$, 
	\begin{equation} \label{near-paral}
	| \ \theta_{{n}}({x}^{\prime}) -  \theta_{{n}}({x}) \ | <  \varepsilon / 2N^2 \ \  \ \text{for all }  \ {x}^{\prime} \in {B}({x},r_{{x}}) \cap \partial{\Omega}, 
	\end{equation}
	and ${B}({x},r_{{x}}) \cap \partial{\Omega}$ is a simply connected curve. \\
	\indent By compactness, we have finite integer $l_{nB} > 0$,  points $\{x_{i}^{nB}\}_{i=1}^{l_{nB}}$, and positive numbers $\{r_{i}^{nB}\}_{i=1}^{l_{nB}}$ such that 
	\begin{equation*}
	cl({\O}) \subset \bigcup_{i=1}^{l_{nB}} B(x^{nB}_{i}, r^{nB}_{i}), \quad r_{i}^{nB} \leq r_{\varepsilon,N}.
	\end{equation*}
	By above construction, for each $1\leq i \leq l_{nB}$, we have either
	\begin{equation} \label{int}
	B(x_{i}^{nB}, r_{i}^{nB}) \cap \p\O = \emptyset,
	\end{equation}
	or 
	\begin{equation} \label{bdry}
	x_{i}^{nB} \in \p\O \quad \text{and} \quad r_{i}^{nB} < r_{\varepsilon, N} \quad \text{so that} \quad (\ref{near-paral})\quad \text{holds}.
	\end{equation}
	For $i$ with case (\ref{int}), we set $\mathcal{O}^{nB}_{i} = \emptyset$. For $i$ with case (\ref{bdry}), we define
	\begin{equation*} \label{def O}
	\begin{split}
	\mathcal{O}^{nB}_i &:= \Big\{ v\in \mathbb{V}^N :  {v}= \big(|{v}|\cos\theta, |{v}|\sin\theta\big)  \quad \text{where} \quad \theta \in \Big( \big(\theta_i \pm \frac{\pi}{2}\big) -\frac{\varepsilon}{N^3}, \ \big(\theta_i \pm \frac{\pi}{2}\big) + \frac{\varepsilon}{N^3} \Big) \Big\},  \\
	\end{split}
	\end{equation*}
	where we abbreviated $\theta_{n}(x_{i}^{nB}) = \theta_{i}$. Obviously, $\mathfrak{m}_{2}(\mathcal{O}_{i}^{nB}) \leq \pi \frac{N^{2}}{4}\frac{\varepsilon/N^{2}}{\pi} \leq \varepsilon$ and
	\begin{eqnarray*}
		|v^{\prime} \cdot \mathbf{n}(x^{\prime})| &\geq& |{v}^{\prime}| \times \Big|(\cos\theta^{\prime},\sin\theta^{\prime}) \cdot  (\cos \theta_{{n}}({x}^{\prime}), \sin \theta_{{n}}({x}^{\prime})) \Big|\\
		&\geq& \frac{2}{N} \times \Big|\cos\big(\frac{\pi}{2} + \frac{\varepsilon}{N^{3}} \big) \Big| = \frac{2}{N} \Big|\sin\big(\frac{\varepsilon}{N^{3}} \big) \Big| ,\quad \varepsilon/N^{3} \ll 1,  \\
		&\geq& \frac{\varepsilon}{N^{4}} ,
	\end{eqnarray*}
	for $x^{\prime}\in B({x}^{nB}_i, r^{nB}_i)$ and $v^{\prime} = |v^{\prime}|(\cos\theta^{\prime}, \sin\theta^{\prime})\in \mathbb{V}^N  \backslash \mathcal{O}^{nB}_i$.  \\
\end{proof}
}\unhide 

We control the collection of bad phase sets that are nearly grazing sets for each open cover containing boundary $\p\O$.
\begin{lemma} \label{near_boundary}
	Let $S_0$ be the $\varphi=0$ cross-section defined in \eqref{cross section}. For $\varepsilon \ll 1$, there exist finite points 
	\[
	\{{x}_{1}^{nB},\cdots, {x}_{l_{nB}}^{nB}\} \subset \overline{S_0},
	\]
	and their spatial relatively open neighborhoods 
	\[
	B({x}_1^{nB}, r_1^{nB})\cap \overline{S_0}, \cdots, B({x}_{l_{nB}}^{nB}, r_{l_{nB}}^{nB})\cap \overline{S_0} , 
	\]
	as well as corresponding open sets in velocity
	\[
	\mathcal{O}_1^{nB}, \cdots, \mathcal{O}_{l_{nB}}^{nB} \subset \mathbb{S}^2, 
	\]
	with $\mathfrak{m}_2(\mathcal{O}_i^{nB}) \lesssim \varepsilon$ for all $i=1,\cdots,l_{nB}$ such that for every $x\in \overline{S_0}$, there exists $i\in \{1,\cdots, l_{nB}\}$ with $x\in B({x}_i^{nB}, r_i^{nB})\cap \overline{S_0}$ and satisfies either 
	\begin{eqnarray}
&	B(x_i^{nB}, r_i^{nB}) \cap \partial S_0 = \emptyset, &\notag  \\
	& \text{or}&\notag \\
	& |v^{\prime}\cdot n(x^{\prime})| > \varepsilon/2 \ \text{for all} \
	 x^{\prime}\in B({x}^{nB}_i, r^{nB}_i)\cap\partial S_0 \ \text{and } \  v^{\prime}\in \S^2  \backslash \mathcal{O}_i^{nB}.&\notag
	\end{eqnarray}
\end{lemma}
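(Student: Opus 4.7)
The plan is a straightforward compactness argument built on the local smoothness of the boundary curve $\partial S_0$. For each $x \in \overline{S_0}$ I choose a candidate radius $r_x>0$ according to whether $x$ lies on the boundary of $S_0$ or in its interior, and then use the Heine--Borel theorem to extract a finite subcover. The velocity obstruction sets $\mathcal{O}_i^{nB}\subset \mathbb{S}^2$ will be belts around great circles (the sets of unit vectors making small angle with the tangent line to $\partial S_0$), which have $\mathfrak{m}_2$-measure of order $\varepsilon$.

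First I would define the radii. If $x$ lies in the open interior of $S_0$, pick any $r_x>0$ small enough that $B(x,r_x)\cap \partial S_0 = \emptyset$; this is possible because $\partial S_0$ is closed. If $x \in \partial S_0$, then since $\partial S_0$ is an analytic convex closed curve (in particular $C^1$ with continuous unit normal $n$), I can pick $r_x>0$ so small that for every $x' \in B(x,r_x)\cap \partial S_0$ we have the near-parallel estimate
\[
|n(x') - n(x)| < \tfrac{\varepsilon}{4}.
\]
Then I define the bad velocity neighborhood at $x$ as the open belt
\[
\mathcal{O}_x^{nB} := \{ v \in \mathbb{S}^2 : |v\cdot n(x)| < \varepsilon \}.
\]
Because this belt is a tubular neighborhood of a great circle of angular width comparable to $\varepsilon$, a direct calculation (or a parametrization by spherical coordinates with north pole at $n(x)$) gives $\mathfrak{m}_2(\mathcal{O}_x^{nB}) \lesssim \varepsilon$.

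Next I would verify the lower bound on $|v' \cdot n(x')|$. For any $x' \in B(x,r_x)\cap \partial S_0$ and any $v' \in \mathbb{S}^2 \setminus \mathcal{O}_x^{nB}$, the triangle inequality yields
\[
|v'\cdot n(x')| \geq |v'\cdot n(x)| - |v'\cdot (n(x')-n(x))| \geq \varepsilon - \tfrac{\varepsilon}{4} > \tfrac{\varepsilon}{2}.
\]
This is the required non-grazing estimate.

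Finally I would invoke compactness: the collection $\{B(x,r_x)\}_{x\in \overline{S_0}}$ is an open cover of the compact set $\overline{S_0}$, so there is a finite subcover indexed by $x_1^{nB},\dots, x_{l_{nB}}^{nB}$ with radii $r_1^{nB},\dots, r_{l_{nB}}^{nB}$ and corresponding bad sets $\mathcal{O}_i^{nB}:= \mathcal{O}_{x_i^{nB}}^{nB}$ (taking $\mathcal{O}_i^{nB}=\emptyset$ whenever $B(x_i^{nB},r_i^{nB})\cap \partial S_0 = \emptyset$). Every $x\in \overline{S_0}$ lies in some $B(x_i^{nB}, r_i^{nB})$, and the conclusion follows. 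There is no real obstacle here; the only point requiring care is keeping the belt uniformly thin across a neighborhood, which is handled by the near-parallel estimate above and the continuity of $n$ on the analytic curve $\partial S_0$.
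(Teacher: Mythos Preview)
Your proof is correct and follows essentially the same approach as the paper: choose small balls for interior points avoiding $\partial S_0$, use continuity of the unit normal along the analytic curve $\partial S_0$ to control nearby normals, define the bad velocity set as a thin belt of near-grazing directions with measure $O(\varepsilon)$, and extract a finite subcover by compactness. Your presentation is in fact a bit cleaner than the paper's: you work directly with $|n(x')-n(x)|<\varepsilon/4$ and the belt $\{|v\cdot n(x)|<\varepsilon\}$, whereas the paper parametrizes $n(x)=(\cos\theta_n(x),0,\sin\theta_n(x))$, controls $|\theta_n(x')-\theta_n(x)|<\varepsilon/2$, and writes out $\mathcal{O}_i^{nB}$ explicitly in spherical coordinates before computing the lower bound---but the content is identical.
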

\begin{proof}
	 For ${x}\in S_0$, we choose $r_{{x}}>0$ such that $B(x,r_{{x}})\cap \partial S_0=\emptyset$. For each ${x}\in\partial{S_0}$, we can define the outward unit normal direction $n(x)$ and the outward normal angle $\theta_n({x})\in [0,2\pi)$ specified uniquely by $n({x}) = (\cos\theta_n({x}), 0,\sin\theta_n({x}) )$. Using the smoothness and uniform boundedness of curvature of the boundary $\partial{\Omega}$, there exists uniform $r_{\varepsilon} > 0$ such that for $r_{x} \leq r_{\varepsilon}$, 
	\begin{equation} \label{near-paral}
	| \ \theta_{{n}}({x}^{\prime}) -  \theta_{{n}}({x}) \ | <  \varepsilon/2 \ \  \ \text{for all }  \ {x}^{\prime} \in {B}({x},r_{{x}}) \cap \partial S_0.
	\end{equation}
	\indent By compactness, we have finite integer $l_{nB} > 0$,  points $\{x_{i}^{nB}\}_{i=1}^{l_{nB}}$, and positive numbers $\{r_{i}^{nB}\}_{i=1}^{l_{nB}}$ such that 
	\begin{equation*}
	\overline{S_0} \subset \bigcup_{i=1}^{l_{nB}} B(x^{nB}_{i}, r^{nB}_{i}), \quad r_{i}^{nB} \leq r_{\varepsilon}.
	\end{equation*}
	By above construction, for each $1\leq i \leq l_{nB}$, we have either
	\begin{equation} \label{int}
	B(x_{i}^{nB}, r_{i}^{nB}) \cap \partial S_0 = \emptyset,
	\end{equation}
	or 
	\begin{equation} \label{bdry}
	x_{i}^{nB} \in \partial S_0 \quad \text{and} \quad r_{i}^{nB} < r_{\varepsilon} \quad \text{so that} \quad (\ref{near-paral})\quad \text{holds}.
	\end{equation}
	For $i$ with case (\ref{int}), we set $\mathcal{O}^{nB}_{i} = \emptyset$. For $i$ with case (\ref{bdry}), we define
	{\small
	\begin{equation*} \label{def O}
	\begin{split}
	\mathcal{O}^{nB}_i := &\Big\{ v\in \mathbb{S}^2 :  {v}= \big(\sin \phi \cos\theta, \sin \phi \sin \theta , \cos \phi\big) \quad \text{where} \\
	& \ \phi \in \Big(2\pi-\theta_i (mod \; \pi) , 2\pi-\theta_i + \varepsilon (mod \; \pi) \Big)\cup \Big(3\pi-\theta_i-\varepsilon (mod \; \pi),3\pi-\theta_i (mod \; \pi) \Big),\\
& \ \theta \in \Big(\sqrt{\varepsilon},2\pi-\sqrt{\varepsilon}\Big)  \Big\},  \\
	\end{split}
	\end{equation*}
	}
	where we abbreviated $\theta_{n}(x_{i}^{nB}) = \theta_{i}$. Obviously, $\mathfrak{m}_2(\mathcal{O}_{i}^{nB}) \leq \frac{1}{3}\times\varepsilon\times 2\pi \lesssim \varepsilon$ and
	\begin{eqnarray*}
		|v^{\prime} \cdot n(x^{\prime})| &\geq& |{v}^{\prime}| \times \Big|(\sin \phi^{\prime}\cos\theta^{\prime},\sin \phi^{\prime}\sin\theta^{\prime},\cos \phi^{\prime}) \cdot  (\cos \theta_{{n}}({x}^{\prime}),0, \sin \theta_{{n}}({x}^{\prime})) \Big|\\
		&=&   \Big| \sin \phi^{\prime}\cos \theta_n(x^{\prime})+\cos \phi^{\prime}\sin \theta_n (x^{\prime})-(1-\cos \theta^{\prime})\sin\phi^{\prime}\cos \theta_n(x^{\prime})  \Big|\\
		&\geq&  |\sin\varepsilon | -\frac{\varepsilon}{2},\quad \varepsilon \ll 1,  \\
		&\geq& \frac{\varepsilon}{2} ,
	\end{eqnarray*}
	for $x^{\prime}\in B({x}^{nB}_i, r^{nB}_i)$ and $v^{\prime} = (\sin\phi^{\prime}\cos\theta^{\prime}, \sin\phi^{\prime}\sin\theta^{\prime},\cos \phi^{\prime})\in \mathbb{S}^2  \backslash \mathcal{O}^{nB}_i$.   
\end{proof}

Motivated by angular momentum conservation, we define $\tau$-ring and small angular momentum direction on $\mathbb{S}^{2}$.  Recall that we assume angular momentum $\omega > 0$, WLOG.  
\begin{definition} \label{def ring}
	We define $\tau$-angular momentum ring on $\mathbb{S}^{2}$ centered at $x\in \overline{\O}$. For $\tau_{1,*}\leq\tau\leq\tau_{2,*}$, 
	\Be
	\begin{split}\label{small_ang}
		R_{x, \tau} &:= \{ v\in\mathbb{S}^{2} :  \omega(\sigma(\tau,\varphi), I^{2}(\tau, \varphi)) = \omega(x,v)  \},  \\
		R_{x,\tau}^{\varepsilon} &:= \{ v\in\mathbb{S}^{2} :  |\omega(\sigma(\tau,\varphi), I^{2}(\tau,\varphi)) - \omega(x,v) | < \varepsilon  \},  \\
	\end{split}
	\Ee
	(it is equivalent to use $I_{1}$ instead of $I_{2}$, of course) and other small regimes in $\S^{2}$,
	\Be \label{small_direc}
	\begin{split}
		R_{\perp} &:= \{ v\in\mathbb{S}^{2} :  |v_{\varphi} | =0  \}, \quad
		 R_{\perp}^{\varepsilon} := \{ v\in\mathbb{S}^{2} :  |v_{\varphi} | < \varepsilon  \},  \\
		R_{\hat{\varphi}} &:= \{ v\in\mathbb{S}^{2} : |v_{\varphi}| = 1 \}, 
		\quad R_{\hat{\varphi}}^{\varepsilon} := \{ v\in\mathbb{S}^{2} :  1-\varepsilon < |v_{\varphi}| \},  \\
		R_{sym} &:= \{ v\in\mathbb{S}^{2} :  |v_{x}| = |v_{y}| \},  \quad R_{sym}^{\varepsilon} := \{ v\in\mathbb{S}^{2} :  ||v_{x}| - |v_{y}|| < \varepsilon \},  \\
	\end{split}
	\Ee
	where we used the coordinate $v=(v_x,v_{\varphi},v_y)$ defined in \eqref{coord}. Note that it is easy to check  $\mathfrak{m}_{2}(R_{x,\tau}^{\varepsilon}) \leq C\varepsilon$ and $\mathfrak{m}_{2}(R_{i}^{\varepsilon})\leq C\varepsilon $ for both all $i\in \{ \perp, \hat{\varphi}, sym\}$ with some generic constant $C>0$. Here, $\mathfrak{m}_{2}$ is Liouville measure on $\mathbb{S}^{2}$. Note that $R_{x,\tau}^{\varepsilon}$ and $R_{i}^{\varepsilon}$ are open on $\S^{2}$. \\
\end{definition}
\hide
\begin{center}

\tikzset{every picture/.style={line width=0.75pt}} 

\begin{tikzpicture}[x=0.75pt,y=0.75pt,yscale=-1,xscale=1]

\draw   (251,1362.5) .. controls (251,1321.91) and (266.67,1289) .. (286,1289) .. controls (305.33,1289) and (321,1321.91) .. (321,1362.5) .. controls (321,1403.09) and (305.33,1436) .. (286,1436) .. controls (266.67,1436) and (251,1403.09) .. (251,1362.5) -- cycle ;
\draw    (286,1362.5) -- (372.88,1333.33) ;
\draw    (286,1363.67) -- (375.32,1391.67) ;
\draw  [dash pattern={on 4.5pt off 4.5pt}] (355.12,1363.08) .. controls (354.45,1347.3) and (362.4,1334.5) .. (372.88,1334.5) .. controls (383.36,1334.5) and (392.4,1347.3) .. (393.08,1363.08) .. controls (393.75,1378.87) and (385.8,1391.67) .. (375.32,1391.67) .. controls (364.84,1391.67) and (355.8,1378.87) .. (355.12,1363.08) -- cycle ;
\draw    (286,1362.5) -- (387,1343.83) ;
\draw    (286,1363.67) -- (391,1380) ;
\draw   (431,1357.5) .. controls (431,1313.59) and (466.59,1278) .. (510.5,1278) .. controls (554.41,1278) and (590,1313.59) .. (590,1357.5) .. controls (590,1401.41) and (554.41,1437) .. (510.5,1437) .. controls (466.59,1437) and (431,1401.41) .. (431,1357.5) -- cycle ;
\draw    (459,1417.5) -- (560.71,1296.53) ;
\draw [shift={(562,1295)}, rotate = 130.06] [color={rgb, 255:red, 0; green, 0; blue, 0 }  ][line width=0.75]    (10.93,-3.29) .. controls (6.95,-1.4) and (3.31,-0.3) .. (0,0) .. controls (3.31,0.3) and (6.95,1.4) .. (10.93,3.29)   ;
\draw    (567,1412.25) -- (451.46,1304.36) ;
\draw [shift={(450,1303)}, rotate = 43.04] [color={rgb, 255:red, 0; green, 0; blue, 0 }  ][line width=0.75]    (10.93,-3.29) .. controls (6.95,-1.4) and (3.31,-0.3) .. (0,0) .. controls (3.31,0.3) and (6.95,1.4) .. (10.93,3.29)   ;
\draw    (464,1290) -- (552.5,1423.75) ;
\draw    (440,1319) -- (577,1398.5) ;
\draw    (450,1406) -- (573,1305) ;
\draw  [dash pattern={on 4.5pt off 4.5pt}] (444.41,1319.54) .. controls (437.88,1321.36) and (436.13,1316.51) .. (440.48,1308.71) .. controls (444.84,1300.91) and (453.66,1293.12) .. (460.19,1291.3) .. controls (466.71,1289.48) and (468.47,1294.32) .. (464.11,1302.12) .. controls (459.75,1309.92) and (450.93,1317.72) .. (444.41,1319.54) -- cycle ;
\draw  [dash pattern={on 4.5pt off 4.5pt}] (549.67,1292.71) .. controls (547.09,1286.79) and (550.52,1284.14) .. (557.33,1286.79) .. controls (564.14,1289.44) and (571.75,1296.38) .. (574.33,1302.29) .. controls (576.91,1308.21) and (573.48,1310.86) .. (566.67,1308.21) .. controls (559.86,1305.56) and (552.25,1298.62) .. (549.67,1292.71) -- cycle ;
\draw  [dash pattern={on 4.5pt off 4.5pt}] (558.51,1425.51) .. controls (551.6,1427.44) and (549.13,1423.4) .. (552.99,1416.49) .. controls (556.85,1409.58) and (565.58,1402.42) .. (572.49,1400.49) .. controls (579.4,1398.56) and (581.87,1402.6) .. (578.01,1409.51) .. controls (574.15,1416.42) and (565.42,1423.58) .. (558.51,1425.51) -- cycle ;
\draw  [dash pattern={on 4.5pt off 4.5pt}] (446.67,1412.71) .. controls (444.09,1406.79) and (447.52,1404.14) .. (454.33,1406.79) .. controls (461.14,1409.44) and (468.75,1416.38) .. (471.33,1422.29) .. controls (473.91,1428.21) and (470.48,1430.86) .. (463.67,1428.21) .. controls (456.86,1425.56) and (449.25,1418.62) .. (446.67,1412.71) -- cycle ;
\draw   (141.35,1288.45) .. controls (158.48,1295.29) and (170.95,1333.64) .. (169.2,1374.11) .. controls (167.45,1414.58) and (152.14,1441.84) .. (135,1435) .. controls (117.86,1428.16) and (105.39,1389.8) .. (107.15,1349.33) .. controls (108.9,1308.86) and (124.21,1281.6) .. (141.35,1288.45) -- cycle ;
\draw    (137,1365) -- (137,1432) ;
\draw [shift={(137,1434)}, rotate = 270] [color={rgb, 255:red, 0; green, 0; blue, 0 }  ][line width=0.75]    (10.93,-3.29) .. controls (6.95,-1.4) and (3.31,-0.3) .. (0,0) .. controls (3.31,0.3) and (6.95,1.4) .. (10.93,3.29)   ;
\draw    (137,1365) -- (162,1433) ;
\draw    (137,1365) -- (113,1433) ;
\draw  [dash pattern={on 4.5pt off 4.5pt}] (125.66,1425.34) .. controls (137.81,1423.8) and (152.68,1425.15) .. (158.87,1428.37) .. controls (165.06,1431.59) and (160.22,1435.45) .. (148.07,1437) .. controls (135.92,1438.54) and (121.05,1437.19) .. (114.86,1433.97) .. controls (108.67,1430.75) and (113.51,1426.89) .. (125.66,1425.34) -- cycle ;
\draw    (136.86,1365.97) -- (125,1438) ;
\draw    (136.86,1365.97) -- (148.07,1437) ;
\draw    (286,1363.67) -- (391.07,1363.09) ;
\draw [shift={(393.07,1363.08)}, rotate = 179.69] [color={rgb, 255:red, 0; green, 0; blue, 0 }  ][line width=0.75]    (10.93,-3.29) .. controls (6.95,-1.4) and (3.31,-0.3) .. (0,0) .. controls (3.31,0.3) and (6.95,1.4) .. (10.93,3.29)   ;
\draw    (471.5,1425) -- (549.5,1286) ;

\draw (128,1330) node [anchor=north west][inner sep=0.75pt]    {$S_{\varphi }$};
\draw (124,1443) node [anchor=north west][inner sep=0.75pt]    {$R_{\perp }^{\varepsilon }$};
\draw (313.66,1443) node [anchor=north west][inner sep=0.75pt]    {$R_{\hat{\varphi }}^{\varepsilon }$};
\draw (498,1330) node [anchor=north west][inner sep=0.75pt]    {$S_{\varphi }$};
\draw (491,1443) node [anchor=north west][inner sep=0.75pt]    {$R_{sym}^{\varepsilon }$};
\draw (278,1330) node [anchor=north west][inner sep=0.75pt]    {$S_{\varphi }$};
\draw (395,1349) node [anchor=north west][inner sep=0.75pt]    {$v_{\varphi }$};

\end{tikzpicture}
\end{center}
\unhide

\begin{figure}[h]
\centering
\includegraphics[width=15cm]{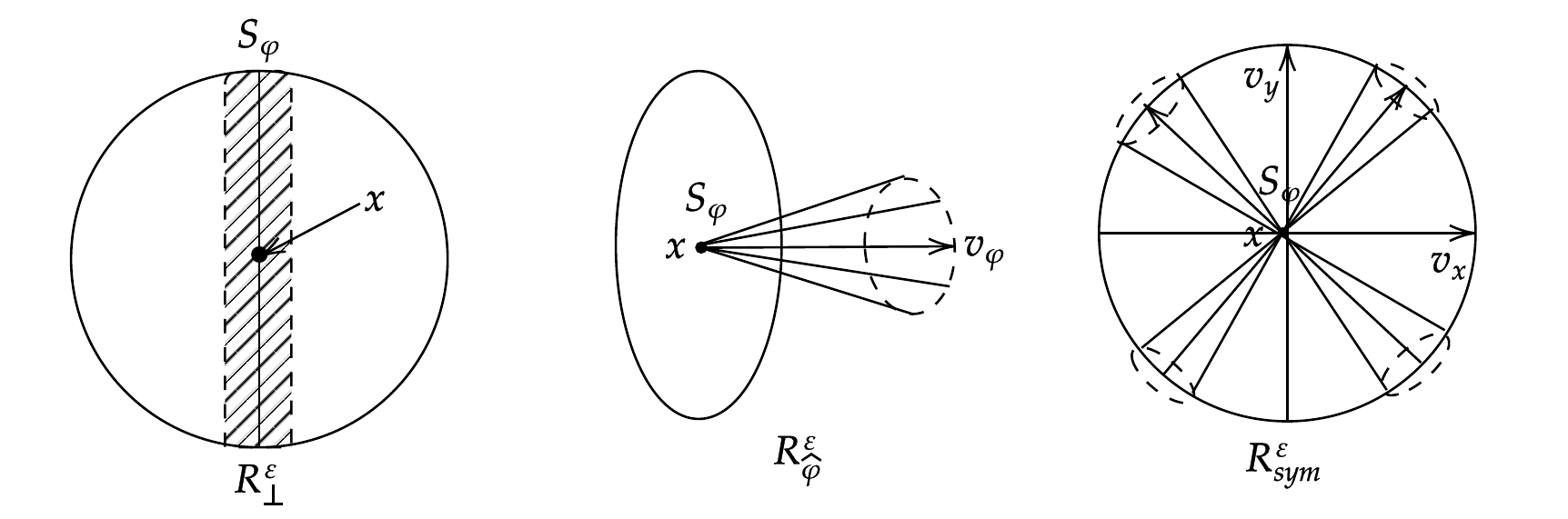}
\caption{Sets $R_{\perp}^{\varepsilon},R_{\hat{\varphi}}^{\varepsilon}$, and $R_{sym}^{\varepsilon}$}
\label{sets R}
\end{figure}

In Figure \ref{sets R}, $x$ is located at the center of cross-section $S_{\varphi}$. In $R_{\perp}^{\varepsilon}$, $S_{\varphi}$ containing $x$ was projected to be shown as above. Hence, the underlined part of the figure is the same with projection of $R_{\perp}^{\varepsilon}$. In $R_{\hat{\varphi}}^{\varepsilon}$, velocity $v_{\varphi} \in \S^2$ is perpendicular to cross-section $S_{\varphi}$. $R_{\hat{\varphi}}^{\varepsilon}$ is the set of velocity which perturbs $v_{\hat{\varphi}}$ as much as $\varepsilon$. Lastly, the set $R_{sym}^{\varepsilon}$ consists of velocity which perturbs direction satisfying $v_x=v_y$ in \eqref{coord}.  \\

\hide We define $S_{0}^{\delta}$
\Be
	S_{0}^{\delta} = \{ x\in S_{0} : \text{dist}(\p S_{0}, x) > \delta \},\quad \delta > 0.
\Ee
Small $\delta \ll 1$ is TBD later. \unhide 
\hide
\textcolor{red}{
Now, we remove some small parts of velocity from $\S^{2}$ and define open set
\Be \label{def X int}
X^{\ephalf, \circ} := \Big\{ (x,v) \in {S_{0}} \times \mathbb{S}^{2} : v \in \mathbb{S}^{2}\backslash \ cl\{ R_{x,\frac{\pi}{2}}^{\ephalf} \cup R_{x,\pi}^{\ephalf} \cup R_{x,\frac{3\pi}{2}}^{\ephalf} \cup R_{x, \perp}^{\ephalf} \cup R_{x, \hat{\varphi}}^{\ephalf} \cup R_{x, sym}^{\ephalf} \cup \mathcal{O}_i^{nB}\} \textrm{ for some } i \textrm{ satisfying } x\in B(x_i,r_i)\Big\}. \\
\Ee 
}
\unhide
	\begin{lemma} \label{lem nbd R}
		Let $x\in \overline{S_0}$, $\varepsilon \ll 1$, and $\tau \in[\tau_{1,*},\tau_{2,*}]$. Recall definition of $R_{x,\tau}^{\varepsilon}$ in \eqref{small_ang}. For each $x\in \overline{S_{0}}$, there exists $r^{\prime}=r^{\prime}(x,\varepsilon)$ such that 
		\begin{equation*}
			{R_{y, \tau}^{\varepsilon}} \subset R_{x, \tau}^{2\varepsilon}, 
		\end{equation*}
		whenever  $y\in B(x,r^{\prime}(x,\varepsilon)) \cap \overline{S_{0}}$. 
	\end{lemma}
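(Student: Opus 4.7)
The plan is to apply the triangle inequality together with the Lipschitz dependence of the angular momentum $\omega$ on the spatial variable.

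First I would take $v\in R_{y,\tau}^{\varepsilon}$, which by \eqref{small_ang} means $|\omega(\sigma(\tau,\varphi),I^{2}(\tau,\varphi))-\omega(y,v)|<\varepsilon$. The triangle inequality then gives
\[
|\omega(\sigma(\tau,\varphi),I^{2}(\tau,\varphi))-\omega(x,v)|
\leq \varepsilon + |\omega(x,v)-\omega(y,v)|,
\]
so it suffices to show $|\omega(x,v)-\omega(y,v)|<\varepsilon$ whenever $|y-x|<r^{\prime}(x,\varepsilon)$. (Note that the quantity $\omega(\sigma(\tau,\varphi),I^{2}(\tau,\varphi))$ does not depend on the choice of $\varphi$, by axis-symmetry of $\Omega$, so the definition of $R_{x,\tau}^{\varepsilon}$ is unambiguous.)

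Next I would exploit the explicit form $\omega(x,v)=|(x\times\hat z)\cdot v|$ from \eqref{def_ang moment}. By the reverse triangle inequality and bilinearity of the scalar triple product,
\[
|\omega(x,v)-\omega(y,v)|
= \bigl||(x\times\hat z)\cdot v|-|(y\times\hat z)\cdot v|\bigr|
\leq |((x-y)\times\hat z)\cdot v|
\leq |x-y|\,|\hat z|\,|v|
= |x-y|,
\]
since $|v|=1$ on $\mathbb{S}^{2}$ and $|\hat z|=1$. Thus the map $x\mapsto\omega(x,v)$ is $1$-Lipschitz, uniformly in $v\in\mathbb{S}^{2}$ and $\tau$.

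Finally, choosing $r^{\prime}(x,\varepsilon):=\varepsilon$ (in fact independently of $x$ and $\tau$) forces $|\omega(x,v)-\omega(y,v)|<\varepsilon$ for every $y\in B(x,r^{\prime})\cap\overline{S_{0}}$, which combined with the triangle inequality above yields $v\in R_{x,\tau}^{2\varepsilon}$ as desired. There is no real obstacle here; the only minor subtlety is verifying that $\omega(\sigma(\tau,\varphi),I^{2}(\tau,\varphi))$ really is $\varphi$-independent so that the inclusion $R_{y,\tau}^{\varepsilon}\subset R_{x,\tau}^{2\varepsilon}$ makes sense, and this follows immediately from the rotational invariance of the norm $|x\times\hat z|$ and the construction of $I^{2}(\tau,\varphi)$ as a rotation of $I^{2}(\tau,0)$ about the $z$-axis.
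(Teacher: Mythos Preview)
Your proof is correct and follows essentially the same approach as the paper: both use the triangle inequality to reduce the inclusion to controlling $|\omega(x,v)-\omega(y,v)|$, and both invoke the continuity of $\omega$ in its spatial argument. Your version is slightly sharper in that you compute the explicit Lipschitz constant $1$ (yielding $r'=\varepsilon$ uniformly in $x$ and $\tau$), whereas the paper simply asserts existence of such an $r'(x,\varepsilon)$ from the definition of $\omega$; your remark on the $\varphi$-independence of $\omega(\sigma(\tau,\varphi),I^2(\tau,\varphi))$ is also a welcome clarification that the paper leaves implicit.
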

	\begin{proof}
		Suppose that $u \in {R_{y,\tau}^{\varepsilon}}$. Then, the velocity $u$ satisfies $\vert \omega(\sigma(\tau,\varphi),I^2(\tau,\varphi))-\omega(y,u)\vert \leq\varepsilon$. By definition of angular momentum \eqref{def_ang moment}, we choose $r^{\prime}=r^{\prime}(x,\varepsilon)$ such that 
		\begin{align*}
		\vert \omega(x,u)-\omega (y,u) \vert <\varepsilon,
		\end{align*}
		if $y\in B(x,r^{\prime}(x,\varepsilon))\cap \overline{S_0}$. Thus, if $(y,u)\in (B(x,r^{\prime}(x,\varepsilon))\cap \overline {S_0})\times R_{y,\tau}^\varepsilon$, we have 
		\begin{align*}
		\vert \omega(\sigma(\tau,\varphi), I^2(\tau,\varphi))-\omega (x,u)\vert &= \vert \omega(\sigma(\tau,\varphi),I^2(\tau,\varphi))-\omega(y,u)+\omega(y,u)-\omega (x,u)\vert \\
		&\leq \vert\omega(\sigma(\tau,\varphi),I^2(\tau,\varphi))-\omega(y,u) \vert + \vert \omega(y,u)-\omega (x,u)\vert \\
		&< 2\varepsilon,
		\end{align*}
		which means $u \in R_{x,\tau}^{2\varepsilon}$.  
	\end{proof}


\begin{lemma} \label{lem Xe}
	For given $\varepsilon \ll 1$, there exists a compact set $X^{\varepsilon}\subset\overline{S_0}\times \S^2$ such that the followings hold: if $(x,v)\in X^{\varepsilon}$, \\
	(i) 
	\begin{align*}
	& |v\cdot n(x)| > \varepsilon/2 \ \text{for} \
	x \in \partial S_0 ,
	\end{align*}
	(ii) 
	\[
		 v \notin \bigcup_{j=1}^{\ell} {R_{x,\tau_j^{\mathbf{z}}}^{\varepsilon}} \cup {R_{x,\tau_{1,*}}^{\varepsilon}} \cup {R_{x,\tau_{2,*}}^{\varepsilon}} \cup {R_{\perp}^{\varepsilon}} \cup {R_{ \hat{\varphi}}^{\varepsilon}} \cup {R_{sym}^{\varepsilon}},
	\]
	(iii) 
	\begin{align*}
	\mathfrak{m}_{2}((\{\overline{S_0}\times \S^2\}\backslash X^{\e})_x) \lesssim \varepsilon, 
	\end{align*}
	where we used notation
	\begin{align} \label{proj_v} 
	A_x:=\{y \in Y: (x,y) \in A\},
	\end{align}
	for $A \subset X\times Y$. 
\end{lemma}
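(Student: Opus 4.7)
The plan is to realise $X^\varepsilon$ as the complement, inside the compact product $\overline{S_0}\times\S^2$, of a relatively open set of locally-excluded velocities assembled from Lemma~\ref{near_boundary} and Lemma~\ref{lem nbd R}. The compactness of $X^\varepsilon$ will then be automatic, and properties (i)--(iii) will be read off from the construction.

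First I would apply Lemma~\ref{near_boundary} to produce a finite cover $\{B(x_i^{nB},r_i^{nB})\cap\overline{S_0}\}_{i=1}^{l_{nB}}$ together with open velocity sets $\mathcal{O}_i^{nB}\subset\S^2$ of $\S^2$-measure $\lesssim\varepsilon$, so that on each ball either $B(x_i^{nB},r_i^{nB})\cap\partial S_0=\emptyset$ or $|v\cdot n(x)|>\varepsilon/2$ for every $x\in B(x_i^{nB},r_i^{nB})\cap\partial S_0$ and every $v\in\S^2\setminus\mathcal{O}_i^{nB}$; this single cover already carries the boundary non-grazing condition~(i). Next, set $T:=\{\tau_{1,*},\tau_{2,*},\tau_1^{\mathbf z},\ldots,\tau_\ell^{\mathbf z}\}$, which is finite by Lemma~\ref{inflection}. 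For each $x\in\overline{S_0}$ and each $\tau\in T$ Lemma~\ref{lem nbd R} provides a radius $r'(x,\tau,\varepsilon)>0$; taking $\rho(x,\varepsilon):=\min_{\tau\in T}r'(x,\tau,\varepsilon)$ and invoking the compactness of $\overline{S_0}$, I would extract a finite subcover $\{B(x_i^R,\rho_i)\cap\overline{S_0}\}_{i=1}^{l_R}$ with $\rho_i\leq\rho(x_i^R,\varepsilon)$.

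With these two covers in hand I would set
\begin{align*}
\mathcal{B}^\varepsilon &:= \bigcup_{i=1}^{l_{nB}}\bigl[(B(x_i^{nB},r_i^{nB})\cap\overline{S_0})\times\mathcal{O}_i^{nB}\bigr] \\
&\quad\cup\bigcup_{i=1}^{l_R}\bigcup_{\tau\in T}\bigl[(B(x_i^R,\rho_i)\cap\overline{S_0})\times R_{x_i^R,\tau}^{2\varepsilon}\bigr] \\
&\quad\cup\bigl[\overline{S_0}\times(R_\perp^{\varepsilon}\cup R_{\hat\varphi}^{\varepsilon}\cup R_{sym}^{\varepsilon})\bigr],
\end{align*}
which is relatively open in $\overline{S_0}\times\S^2$, and define $X^\varepsilon:=(\overline{S_0}\times\S^2)\setminus\mathcal{B}^\varepsilon$. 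Since $X^\varepsilon$ is closed inside the compact set $\overline{S_0}\times\S^2$, it is compact. Property~(i) is immediate from the first piece of $\mathcal{B}^\varepsilon$. For~(ii), given $(x,v)\in X^\varepsilon$ and $\tau\in T$, one chooses an index $i$ with $x\in B(x_i^R,\rho_i)$; the inclusion $\rho_i\leq r'(x_i^R,\tau,\varepsilon)$ together with Lemma~\ref{lem nbd R} gives $R_{x,\tau}^{\varepsilon}\subset R_{x_i^R,\tau}^{2\varepsilon}$, and since $v\notin R_{x_i^R,\tau}^{2\varepsilon}$ one obtains $v\notin R_{x,\tau}^{\varepsilon}$; the three $x$-independent rings are excluded directly. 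For~(iii), the fibre $(\mathcal{B}^\varepsilon)_x$ consists of at most $|I_x^{nB}|$ copies of $\mathcal{O}_i^{nB}$, at most $|T|\cdot|I_x^R|$ rings $R_{x_i^R,\tau}^{2\varepsilon}$, and the three fixed rings, each of $\S^2$-measure $\lesssim\varepsilon$.

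The principal technical obstacle is keeping the implicit constant in~(iii) independent of $\varepsilon$: a priori the cover multiplicities $|I_x^{nB}|$ and $|I_x^R|$ could grow as the covers are refined. I would address this by taking a Besicovitch-type refinement of both covers, producing covers of $\overline{S_0}$ whose pointwise multiplicity is bounded by a universal constant depending only on $\dim\overline{S_0}$; alternatively one can work with a rescaled parameter $\varepsilon':=\varepsilon/C_\Omega$ inside Step~1 and Step~2 and then relabel, which only multiplies the final measure estimate by the $\Omega$-dependent constant $C_\Omega$. Either route yields the desired bound $\mathfrak{m}_2((\mathcal{B}^\varepsilon)_x)\lesssim\varepsilon$ with a constant independent of $\varepsilon$.
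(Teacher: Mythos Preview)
Your construction is correct and follows the same overall blueprint as the paper: build an open bad set from the near-boundary cover of Lemma~\ref{near_boundary} and the ring-cover coming from Lemma~\ref{lem nbd R}, then take its complement. The one genuine difference is how you control the fibre measure in~(iii). You assemble the bad set as a \emph{union} $\bigcup_i (B_i\times\mathcal O_i)$, so that $(\mathcal B^\varepsilon)_x$ picks up one copy of $\mathcal O_i$ for every ball $B_i$ containing $x$; this forces you to bound the covering multiplicity, which you do via a Besicovitch refinement. The paper instead removes, at each $x$, only the \emph{intersection} $\bigcap_{j\in I_x}\mathcal O_j$ over all balls containing $x$: since it suffices that $v$ avoid \emph{one} $\mathcal O_j$ to get $|v\cdot n(x)|>\varepsilon/2$ (respectively $v\notin R_{x,\tau}^\varepsilon$), this smaller excision still yields (i)--(ii), and the fibre is now bounded by a single $\mathcal O_j$ regardless of multiplicity. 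Your route is perfectly valid and arguably more transparent set-theoretically; the paper's intersection trick is slicker because it eliminates the multiplicity issue outright, with no appeal to Besicovitch.
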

\begin{proof}
	Recall that we constructed  the relatively open cover $\bigcup_{i=1}^{l_{nB}}\{B(x_i^{nB},r_i^{nB})\cap \overline{S_0}\}$ for $\overline S_0$ and corresponding open sets in velocity $\mathcal{O}_j^{nB}$ for $j=1,\cdots,l_{nB}$ in Lemma \ref{near_boundary}. For given $x\in \overline{S_0}$, we define an index set 
\begin{align} \label{index_nB}
	I_x^{nB}:=\{i\in\mathbb{N}: x \in B(x_i^{nB},r_i^{nB})\cap \overline{S_0}\},
\end{align}
to collect relatively open sets containing $x$. Notice that 
\begin{align} \label{near_size} 
	\mathfrak{m}_2(\bigcap_{i \in I_{x}^{nB}}\mathcal{O}_i^{nB})\leq \mathfrak{m}_2(\mathcal{O}_j^{nB})\lesssim \varepsilon,
\end{align}
for all $j\in I_{x}^{nB}$. Moreover, if $v \notin \bigcap_{i \in I_x^{nB}} \mathcal{O}_{i}^{nB}$ for fixed $x\in \overline{S_0}$, then it holds that  
\begin{align*}
	  \vert v \cdot n(x)\vert> \varepsilon /2, \quad  \text{if } x \in \p S_0,
\end{align*}
from Lemma \ref{near_boundary}. 
Hence, the following  set 
\begin{align} \label{bad phase set1}
	\bigcup_{x\in \overline{S_0}}\Big\{(x,v):v \in \bigcap_{i \in I_x^{nB}}\mathcal{O}_i^{nB}\Big\}
\end{align}
contains all near grazing points in phase space. To cover above \eqref{bad phase set1}, we construct an open cover
\begin{align} \label{NB}
	\mathcal{NB}:=\bigcup_{i=1}^{l_{nB}} \Big \{ (x,v)\in \overline{S_0}\times \S^2: x \in B(x_i^{nB},r_i^{nB})\cap \overline{S_0},\; v \in \bigcap_{j\in I_x^{nB}}\mathcal{O}_j^{nB}\Big\}_{\varepsilon},
\end{align}
where we used the following general notation $(\cdot)_{\varepsilon}$, a $\varepsilon$-neighborhood for a set in a metric space,
\begin{align} \label{extension domain}
A_{\varepsilon}:=\{ x\in O : dist(x, A)<\varepsilon\},\quad \text{$O$ is a metric space and $A\subset O$}.
\end{align}
It follows from \eqref{near_size} and \eqref{NB} that 
\Be \label{NB e}
	\mathfrak{m}_{2}(\mathcal{NB}_{x}) \lesssim \mathfrak{m}_{2}((\mathcal{O}_{j}^{nB})_{\varepsilon}) \lesssim \varepsilon,\quad \forall j\in I_{x}^{nB},
\Ee
where we used the notation $A_x$ in \eqref{proj_v}. \\ 
\indent Meanwhile, by \eqref{near_size} and definition of $\mathcal{NB}$ in \eqref{NB}, for any $x \in \overline{S_0}$, there exists a velocity $v \in \S^2 \backslash (\mathcal{NB})_x$ where the notation $(\mathcal{NB})_x$ is defined in \eqref{proj_v}. 
Hence, if we define the projection operator $\mathcal{P}_1$ for $A\subset X\times Y$ as
\begin{align} \label{proj operator}
	\mathcal{P}_1(A):=\{x \in X: (x,y)\in A \text{ for some } y\in Y\}, 
\end{align}
then we obtain $\mathcal{P}_1((\mathcal{NB})^c)=\overline{S_0}$ where $(\mathcal{NB})^{c}$ is a compact set
\Be \label{def NBc}
	(\mathcal{NB})^{c} := \{\overline{S_0}\times\S^{2}\}\backslash \mathcal{NB}. 
\Ee

	Now, recall definition \eqref{def NBc} and we define 
	\Be \label{Y}
		Y := \Big\{ (x,v) \in (\mathcal{NB})^c : v \in  \bigcup_{j=1}^{\ell} {R_{x,\tau_j^{\mathbf{z}}}^{\varepsilon}}\cup {R_{x,\tau_{1,*}}^{\varepsilon}} \cup {R_{x,\tau_{2,*}}^{\varepsilon}}\cup {R_{\perp}^{\varepsilon}} \cup {R_{ \hat{\varphi}}^{\varepsilon}} \cup {R_{sym}^{\varepsilon}}   \Big\},
	\Ee
	where $R_{x,\tau}$, $R_{\perp,\hat{\varphi},sym}$, and $\ell$ are defined in \eqref{small_ang}, \eqref{small_direc}, and \eqref{Zh_ell}, respectively. 
	Then, using compactness of $\mathcal{P}_1((\mathcal{NB})^c)=\overline{S_0}$ and the argument in the proof of Lemma \ref{lem nbd R}, we obtain an open cover for $\mathcal{P}_1((\mathcal{NB})^c)=\overline{S_0}$, 
	 \Be \label{Y epsilon}
		\Big\{B(x_i^R,r_i^R)\cap \overline{S_0}\Big \}_{i=1}^{l_R},
	\Ee
	and corresponding each open set $\mathcal{O}_i^{R} \subset \S^{2}$ which is defined by 
	\begin{align} \label{O_i^R}
		 \mathcal{O}_i^{R}:=\bigcup_{j=1}^{\ell} {R_{x_i^R,\tau_j^{\mathbf{z}}}^{2\varepsilon}}\cup {R_{x_i^R,\tau_{1,*}}^{2\varepsilon}} \cup {R_{x_i^R,\tau_{2,*}}^{2\varepsilon}}\cup {R_{\perp}^{2\varepsilon}} \cup {R_{\hat{\varphi}}^{2\varepsilon}} \cup {R_{sym}^{2\varepsilon}}, \quad  1\leq i \leq l_R,
	\end{align}
	satisfying $\mathfrak{m}_2(\mathcal{O}_i^{R})\lesssim \varepsilon$ for all $1\leq i \leq l_R$. Also, from Lemma \ref{lem nbd R}, we have 
	\begin{align} \label{O_i^R 2}
		\bigcup_{j=1}^{\ell} {R^{\varepsilon}_{x,\tau_j^{\mathbf{z}}}}\cup {R^{\varepsilon}_{x,\tau_{1,*}}} \cup {R^{\varepsilon}_{x,\tau_{2,*}}}\cup {R^{\varepsilon}_{\perp}} \cup {R^{\varepsilon}_{ \hat{\varphi}}} \cup {R^{\varepsilon}_{sym}} \subset \mathcal{O}_i^R, 
	\end{align}
	for all $x \in B(x_i^R,r_i^R)$ and sufficiently small $r_{i}^{R}$. 
	 Similar as \eqref{index_nB}, for given $x\in \mathcal{P}_1((\mathcal{NB})^c)=\overline{S_0}$, we define an index set 
\begin{align} \label{index_R}
	I_x^{R}:=\{i\in\mathbb{N}: x\in B(x_i^R,r_i^R)\cap \overline{S_0}\}. 
\end{align}
Next, let us consider
\Be \label{bad phase set2}
	\bigcup_{x\in \overline{S_0}} \Big \{(x,v): v \in  \bigcap_{i\in I_x^R} \mathcal{O}_i^R\Big \},
\Ee
which contains  $Y$ of \eqref{Y}. Similar to what we did in \eqref{NB}, we also construct an open cover for \eqref{bad phase set2} as
\begin{align} \label{def R}
	\mathcal{R}:=\bigcup_{i=1}^{l_{R}} \Big \{ (x,v)\in (\mathcal{NB})^c: x \in B(x_i^{R},r_i^{R})\cap \overline{S_0},\; v \in \bigcap_{j\in I_x^{R}}\mathcal{O}_j^{R}\Big\}_{\varepsilon},
\end{align}
where we have used the notation $A_{\varepsilon}$ in \eqref{extension domain}. Let us define compact set $X^{\e}$,
\begin{align} \label{def Xe}
	X^{\varepsilon}:=(\mathcal{NB})^c\backslash\mathcal{R}. 
\end{align}
By the same reason for $\mathcal{P}_1((\mathcal{NB})^c)=\overline{S_0}$, we also get 
\begin{align} \label{Xe proj}
	\mathcal{P}_1(X^{\varepsilon})=\overline{S_0},
\end{align}
where we have used the notation $\mathcal{P}_1$ in \eqref{proj operator}. Now we check that $X^{\varepsilon}$ in \eqref{def Xe} satisfies statements (i)-(iii). Assume $(x,v)\in X^{\varepsilon}$. First, since $X^{\varepsilon}\subset (\mathcal{NB})^c$, we have 
\begin{align*}
	\vert v \cdot n(x) \vert > \varepsilon/2, \quad \text{for  } x\in \p S_0. 
\end{align*}

\noindent Moreover, since we exclude set $\mathcal{R}$ in \eqref{def Xe} and the definition \eqref{O_i^R} of $\mathcal{O}_i^R$, it satisfies that 
\begin{align*}
	v \notin \bigcup_{j=1}^{\ell} {R^{\varepsilon}_{x,\tau_j^{\mathbf{z}}}}\cup {R^{\varepsilon}_{x,\tau_{1,*}}} \cup {R^{\varepsilon}_{x,\tau_{2,*}}}\cup {R^{\varepsilon}_{\perp}} \cup {R^{\varepsilon}_{ \hat{\varphi}}} \cup {R^{\varepsilon}_{sym}} ,
\end{align*}
by \eqref{Y} and \eqref{O_i^R 2}. Lastly, for fixed $x \in \overline{S_0}$, 
\begin{align} \label{Xe c small}
\begin{split}
	\mathfrak{m}_{2}(( \{\overline{S_0}\times \S^2\}\backslash X^{\e})_x) &= \mathfrak{m}_{2}((\mathcal{NB}\cupdot\mathcal{R})_x) \\
	&\leq \mathfrak{m}_{2}(\mathcal{R}_x) + \mathfrak{m}_{2}(\mathcal{NB}_x) \\
	&\leq \mathfrak{m}_{2}((\mathcal{O}_{j}^{R})_{\varepsilon}) + \mathfrak{m}_{2}(\mathcal{NB}_x) \\
	&\lesssim \varepsilon,\quad \forall j\in I_{x}^{R},
\end{split}
\end{align}
by \eqref{NB e},\eqref{def R}, and the fact $\mathfrak{m}_2(\mathcal{O}_j^R) \lesssim \e$ for all $j=1,2,\dots,l_R$. Here, we used notation $A_x$ in \eqref{proj_v}. $\varepsilon$-neighborhood notation \eqref{extension domain} comes from the definition \eqref{def R}. 
\end{proof}

We decompose above set depending on whether its backward in time trajectory belongs $\gamma_{0}^{I}$ before it travels given length $L > 0$. We decompose $X^{\varepsilon}= G^{\varepsilon}_{L} \cupdot B^{\varepsilon}_{L}$ where $\cupdot$ means a disjoint union. 

\Be \label{B,G sets}
\begin{split}
	B^{\varepsilon}_{L} &:= \left \{ (x,v)\in X^{\varepsilon} :
	\begin{array}{cc}
	 \exists k \in \mathbb{N} \text{ such that } (x^{k}(x,v), v^{k-1}(x,v))\in \gamma_{0}^{I-}, \\ \text{and} \  \sum_{j=1}^k \vert x^j(x,v)-x^{j-1}(x,v) \vert \leq L
	 \end{array}
	  \right \}, \\
	G^{\varepsilon}_{L} &:= \left \{ (x,v)\in X^{\varepsilon} :
	\begin{array}{cc}
	 \nexists k \in \mathbb{N} \text{ such that } (x^{k}(x,v), v^{k-1}(x,v))\in \gamma_{0}^{I-}, \\ \text{as long as } \  \sum_{j=1}^k \vert x^j(x,v)-x^{j-1}(x,v) \vert \leq L 
	 \end{array}\right \}  .
\end{split}	
\Ee
\hide 
{\color{red} {\bf NOT USED, Remove later}	
We further disjointly split $G_{L}^{\varepsilon}$ into $G_{L}^{\varepsilon} = G_{L}^{\varepsilon, C} \cup G_{L}^{\varepsilon, N} $ where
\Be
\begin{split}
	G^{\varepsilon, C}_{L} &:= \left \{ (x,v)\in G_{L}^{\varepsilon} \text{ such that } \exists k \in \mathbb{N} : (x^{k}(x,v), v^{k-1}(x,v))\in \gamma_{0}^{C}, \ \text{and} \  \sum_{j=1}^k \vert x^j(x,v)-x^{j-1}(x,v) \vert \leq L \right \} \\
	G^{\varepsilon, N}_{L} &:= \left \{ (x,v)\in G_{L}^{\varepsilon} \text{ such that } \nexists k \in \mathbb{N} : (x^{k}(x,v), v^{k-1}(x,v))\in \gamma_{0}^{C}, \ \text{if} \  \sum_{j=1}^k \vert x^j(x,v)-x^{j-1}(x,v) \vert \leq L \right \},
\end{split}	
\Ee
so in the case of $G_{L}^{\varepsilon, N}$ all bounces are nongrazing up to traveling $L$. Note that we have disjoint union,
\[
	X^{\varepsilon} = G^{\varepsilon}_{L} \cupdot B^{\varepsilon}_{L} = \big( G^{\varepsilon, C}_{L} \cupdot G^{\varepsilon, N}_{L} \big) \cupdot B^{\varepsilon}_{L}. 
\]
} 
\unhide

\begin{lemma} \label{trajectory invertible}
For $(x,v) \in X^{\varepsilon}$, if 
\begin{align} \label{no grazing cond}
	(x^k(x,v),v^{k-1}(x,v)) \notin \gamma_0^I, \; \forall k \quad \text{during finite travel length}\; L,
\end{align}
then the number of bounces in the trajectory within finite travel length $L$ is finite. In particular, $\mathcal{N}(x,v,L) < \infty$ when $(x,v)\in G_{L}^{\varepsilon}$, i.e., the number of bounces up to travel length $L$ is finite. Under the assumption of finite bounce, trajectory \eqref{XV} is reversible in time. 
\end{lemma}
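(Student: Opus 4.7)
The plan is a contradiction argument: suppose that within travel length $L$ the backward cycle has infinitely many bounces $\{x^k(x,v)\}_{k\ge 1}$. Since each segment has nonzero length, by the pigeon-hole the bouncing points must admit an accumulation point $x^\infty=\sigma(\tau_\infty,\varphi_\infty)\in\partial\Omega$, with $\Delta\tau_i,\Delta\varphi_i\to 0$. The first step is to rule out accumulation in the uniformly convex part. If $\tau_\infty\notin[\tau_{1,*}-\varepsilon,\tau_{2,*}+\varepsilon]$, Lemma~\ref{lem:nograzing_V} together with the standard velocity lemma (Lemma~1 of \cite{Guo10}, Lemma~2.7 of \cite{KimLee}) forces a lower bound on consecutive chord lengths and thus prevents accumulation. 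Hence $\tau_\infty\in[\tau_{1,*}-\varepsilon,\tau_{2,*}+\varepsilon]$, and we may assume the tail $\tau_i$ stays in this interval.

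The second step is to verify the hypotheses of Lemma~\ref{no infty bounce}, namely the assumptions \eqref{assump 1} of Lemma~\ref{compare} and \eqref{assump 3} of Lemma~\ref{difference}, from the definition of $X^{\varepsilon}$. By hypothesis $(x^k,v^{k-1})\notin\gamma_0^{I}$ for every bounce, so Lemma~\ref{difference} applies provided the auxiliary angle conditions hold. Here the crux is angular momentum conservation: $\omega(x,v)=\omega(x^k,v^{k-1})$ along the cycle. Because $v\notin R_{x,\tau_{1,*}}^{\varepsilon}\cup R_{x,\tau_{2,*}}^{\varepsilon}\cup\bigcup_j R_{x,\tau_j^{\mathbf z}}^{\varepsilon}$ by definition of $X^{\varepsilon}$, a bounce $x^k=\sigma(\tau_k,\varphi_k)$ with $\tau_k$ close to $\tau_{1,*}$, $\tau_{2,*}$, or any $\tau_j^{\mathbf z}$ produces a velocity whose angular momentum is uniformly separated from that of the local inflection directions $I^{1,2}(\tau_k,\varphi_k)$. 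Translating this separation to the tangent-plane angle $\eta_k$ computed in Lemma~\ref{compare} yields \eqref{assump 1}, and translating it at $\tau_j^{\mathbf z}$ to the comparison $|\tan\eta_k-\tan\vartheta_k|\gtrsim\varepsilon$ gives \eqref{assump 3}, with constants depending only on $\varepsilon$ and $\Omega$. With both assumptions available, Lemma~\ref{no infty bounce} produces $\sum|x^k-x^{k-1}|=\infty$, contradicting $\sum|x^k-x^{k-1}|\le L$. Therefore $\mathcal{N}(x,v,L)<\infty$, which is precisely the conclusion for $(x,v)\in G_L^{\varepsilon}$ since such $(x,v)$ automatically satisfies the no-$\gamma_0^{I-}$ hypothesis on bounces within length $L$.

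For reversibility, once the number of bounces on $[t-L/|v|,t]$ is finite, the backward cycle \eqref{bcycle} is a finite composition of straight-line segments joined by the specular reflection $R_{x^k}$. Each $R_{x^k}$ is an involution, and on each segment the flow is just $x\mapsto x-\tau v$ with a well-defined exit time (no grazing by hypothesis, cf.\ \eqref{implicit}). Running the forward cycle \eqref{fcycle} from the terminal point $(x^{M},-v^{M-1})$ therefore recovers $(x,-v)$ after exactly $M$ bounces, which is the precise time-reversal statement $X(s;t,x,v)=X(t;s,X(s;t,x,v),-V(s;t,x,v))$ plus the corresponding identity for $V$.

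The main obstacle is the second step: one must show that the velocity-side exclusion $v\notin R_{x,\tau_*}^{\varepsilon}$ at the \emph{initial} phase point $(x,v)$ propagates into a uniform lower bound on the tangent-plane angles $\eta_k$ at every future bounce landing near $\tau_*\in\{\tau_{1,*},\tau_{2,*},\tau_j^{\mathbf z}\}$. This relies on two ingredients that must be assembled carefully: (a) the angular-momentum identity $\omega(\sigma(\tau_k,\varphi_k),v^{k-1})=\omega(x,v)$, which is a conserved quantity of the specular billiard in an axis-symmetric domain; and (b) the explicit formula $\tan\vartheta_k=\sqrt{|\gamma_2'(\tau_k)|/(\kappa(\tau_k)\gamma_1(\tau_k))}$ from Lemma~\ref{inflection}, which allows one to convert ``velocity-angular-momentum away from that of $I^{1,2}(\tau_k,\varphi_k)$'' into ``tangent-plane angle $\eta_k$ away from $\vartheta_k$'', with a quantitative bound uniform in the bounce index. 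Once this quantitative conversion is in place, the contradiction is immediate from Lemma~\ref{no infty bounce}.
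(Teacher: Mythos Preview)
Your proposal is correct and follows essentially the same route as the paper: verify the angle hypotheses \eqref{assump 1} and \eqref{assump 3} of Lemmas~\ref{compare} and~\ref{difference} by using angular-momentum conservation together with the exclusions $v\notin R_{x,\tau_{1,*}}^{\varepsilon}\cup R_{x,\tau_{2,*}}^{\varepsilon}\cup\bigcup_j R_{x,\tau_j^{\mathbf z}}^{\varepsilon}$ built into $X^{\varepsilon}$, and then invoke Lemma~\ref{no infty bounce}. The only cosmetic differences are that you run the contradiction explicitly (whereas the paper just cites Lemma~\ref{no infty bounce} after checking its hypotheses) and that you expand the reversibility argument, while the paper simply asserts it as obvious once the bounce count is finite.
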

\begin{proof}
In $X^\varepsilon$, constructed in Lemma \ref{lem Xe}, we exclude sets $R_{x,\tau_{1,*}}^{\varepsilon}$ and $R_{x,\tau_{2,*}}^{\varepsilon}$ where $R_{x,\tau}^{\varepsilon}$ in \eqref{small_ang}. Consider the backward in time trajectory starting from $x$ with $v$ for $(x,v)\in X^{\varepsilon}$. To apply Lemma \ref{no infty bounce}, we first check whether \eqref{assump 1} in Lemma \ref{compare} holds. We rewrite $x^i(x,v)$ using $\sigma$: 
\begin{align*}
	x^i(x,v)=\sigma(\tau_i,\varphi_i)\in \p\O, \quad \tau_i\in[a,b), \; \varphi_i\in(-2\pi\mathbf{w},0],
\end{align*}
for all $1\leq i \leq \mathcal{N}(x,v,L)$ where $\mathcal{N}(x,v,L)$ in \eqref{nongrazing NB}. If $\tau_i \notin [\tau_{1,*}-\varepsilon,\tau_{1,*}+\varepsilon]\cup [\tau_{2,*}-\varepsilon,\tau_{2,*}+\varepsilon]$ for all $1 \leq i \leq \mathcal{N}(x,v,L)$, it does not matter to apply Lemma \ref{compare}. Hence, we suppose that there exists $j$ such that  
\begin{align*}
	x^j(x,v)=\sigma(\tau_j,\varphi_j), \quad \tau_j\in[\tau_{1,*}-\varepsilon,\tau_{1,*}+\varepsilon]\cup [\tau_{2,*}-\varepsilon,\tau_{2,*}+\varepsilon].
\end{align*}
Recall definition of $\eta_j$ in Lemma \ref{compare}. Since an angle $\vartheta_j$ between $\hat{\tilde{z}}$ and $I_2(\tau_j,\varphi_j)$ satisfies
\begin{align*}
	\tan \vartheta_j = \sqrt{\frac{\kappa(\tau_j)\gamma_1(\tau_j)}{\vert \gamma_2'(\tau_j)\vert}},
\end{align*}
and sets $R_{x,\tau_{1,*}}^{\varepsilon},R_{x,\tau_{2,*}}^{\varepsilon}$ are excluded in the definition $X^{\varepsilon}$, we have 
\begin{align*}
	\tan \eta_j \leq \sqrt{\frac{\kappa(\tau_j)\gamma_1(\tau_j)}{\vert \gamma_2'(\tau_j)\vert}}-\varepsilon,
\end{align*}
by angular momentum conservation. This implies that $\eta_j \leq \frac{\pi}{2}-C\varepsilon$ for some positive constant $C$ due to $\tau_j \in[\tau_{1,*}-\varepsilon,\tau_{1,*}+\varepsilon]\cup [\tau_{2,*}-\varepsilon,\tau_{2,*}+\varepsilon]$. It remains to check whether \eqref{assump 3} in Lemma \ref{difference} holds. Similarly, we suppose that there exists $j$ such that
\begin{align*}
	x^j(x,v)=\sigma(\tau_j,\varphi_j), \quad \tau_j\in (Z_h)_{\varepsilon},
\end{align*}
where $(Z_h)_{\varepsilon}$ was defined in \eqref{Z_h nbh}. Since we also exclude $\bigcup_{i=1}^{\ell} R_{x,\tau_i^{\mathbf{z}}}^{\varepsilon}$ in $X^{\varepsilon}$, it holds that 
\begin{align*}
	\vert \tan \eta_j - \tan \vartheta_j \vert \geq \varepsilon, 
\end{align*}
which is identical to \eqref{assump 3}. Therefore, we can apply Lemma \ref{no infty bounce} 
and obtain finite number of bounce during travel length $L$ whenever $(x,v)$ satisfies \eqref{no grazing cond}. $G_{L}^{\varepsilon}$ satisfies condition of this Lemma by definition \eqref{B,G sets}. Reversibility of trajectory with under finite bounce is also obvious. 
\end{proof}
\hide
{\color{red}

Similar as above definitions, we can also define slightly larger corresponding sets
\[	
	B_{L}^{\ephalf, \circ}, \ G_{L}^{\ephalf, \circ}, \ G_{L}^{\ephalf, C, \circ}, \ G_{L}^{\ephalf, N, \circ}
\]
by changing $X^{\varepsilon}$ into $X^{\varepsilon, \circ}$ in above definitions. We have disjoint union
\[
	X^{\ephalf, \circ} = B_{L}^{\ephalf, \circ} \cup G_{L}^{\ephalf, C, \circ} \cup G_{L}^{\ephalf, N, \circ}.
\]
}
\unhide
\hide
\begin{definition}
	We define $N(\cdot, \cdot, L) :  B_{L}^{\varepsilon} \rightarrow \R_{+}$
	\Be
	\begin{split}
		N(x,v,L) &:= \inf \left \{ k \in \mathbb{N} :	(x^{k}, v^{k-1}(x,v))\in \gamma_{0}^{I-},  \  \sum_{j=1}^k \vert \xb^j(x,v)-\xb^{j-1}(x,v) \vert \leq L 	\right \}, \\
	\end{split}
	\Ee
	where $v^{0}=v$ and $x^{0}=x$. \\
\end{definition}
 \unhide
	
	When backward in time trajectory belongs to $\gamma_0^{I}$, the trajectory cannot be defined anymore. Hence, using reversibility of trajectory, we can construct corresponding backward in time image of $B_{L}^{\varepsilon}$  in $(\tau, \varphi)$ domain as follows.
{\small
	\Be \label{def cal B}
	\begin{split}
		\mathcal{B}_{L}^{\varepsilon}&:= \Big\{ (\tau, \varphi)\in [a,b)\times (-2\pi\mathbf{w}, 0]  : (X,V)(s( \sigma(\tau, \varphi), \varphi, I^{2}(\tau, \varphi)); 0, \sigma(\tau, \varphi), I^{2}(\tau, \varphi)) \in B_{L}^{\varepsilon} \Big\}  \\
		&= (X,V)^{-1}(s)(B_{L}^{\varepsilon}),
	\end{split}
	\Ee}
by definition \eqref{map XV -1}. In particular, we have one-to-one correspondence between $B_{L}^{\varepsilon}$ and $\mathcal{B}_{L}^{\varepsilon}$.  \\
	
	\begin{lemma} \label{lem:open} Recall definition $G^{\varepsilon}_L$ and $B^{\varepsilon}_L$ in \eqref{B,G sets}. The sets have the following properties: \\
		 (a) $G_{L}^{\varepsilon}$ is open in $X^{\varepsilon}$. \\
		 (b) $B_{L}^{\varepsilon}$ closed in $X^{\varepsilon}$ and $\mathcal{B}_{L}^{\varepsilon}$ is also closed. Both are compact. 
	\end{lemma}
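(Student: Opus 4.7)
For (a), take any $(x,v) \in G_L^{\varepsilon}$. Lemma \ref{trajectory invertible} gives a finite bounce number $M := \mathcal{N}(x,v,L) < \infty$, and by definition $(x^k(x,v), v^{k-1}(x,v)) \notin \gamma_0^{I_-}$ for every $1 \leq k \leq M$. The plan is to combine three facts: (i) the closedness of $\gamma_0^{I_-}$ as a subset of $\p\O \times \S^2$, which follows from the smooth parameterization $(\sigma(\tau,\varphi), I^2(\tau,\varphi))$ over the closed condition $t_\mathbf{b}(x,-v)\neq 0$; (ii) the continuity statement (c) of Lemma \ref{analyticity}, which yields $\delta_1 > 0$ with $\mathcal{N}(y,u,L) \leq M$ for $|(y,u)-(x,v)| < \delta_1$ (after the possible renumbering caused by concave grazing bounces merging per Lemma \ref{basic} (b-2)); and (iii) the continuity (a)--(b) of Lemma \ref{analyticity} for the bounce points. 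Fix a compact neighborhood $K$ of $\{(x^k(x,v),v^{k-1}(x,v))\}_{k=1}^M$ disjoint from $\gamma_0^{I_-}$, which exists by (i). By (iii) we can shrink to $\delta_2 \leq \delta_1$ so that every bounce of the trajectory from $(y,u)$ within length $L$ lies in $K$, hence outside $\gamma_0^{I_-}$. Therefore $(y,u) \in G_L^{\varepsilon}$, proving openness.

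For the closedness part of (b), we simply observe $B_L^{\varepsilon} = X^{\varepsilon} \setminus G_L^{\varepsilon}$. Since $X^{\varepsilon}$ is compact by Lemma \ref{lem Xe} and $G_L^{\varepsilon}$ is open in $X^{\varepsilon}$ by (a), $B_L^{\varepsilon}$ is closed in the compact set $X^{\varepsilon}$ and thus compact.

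For $\mathcal{B}_L^{\varepsilon}$, the plan is to use the forward-in-time map together with conservation of angular momentum. Let $(\tau_n,\varphi_n) \in \mathcal{B}_L^{\varepsilon}$ converge to $(\tau_*, \varphi_*)$, with corresponding $(x_n,v_n) = (X,V)(s(\tau_n,\varphi_n);0,\sigma(\tau_n,\varphi_n), I^2(\tau_n,\varphi_n)) \in B_L^{\varepsilon}$. By compactness of $B_L^{\varepsilon}$ just established, along a subsequence $(x_n,v_n) \to (x_*,v_*) \in B_L^{\varepsilon} \subset X^{\varepsilon}$. The decisive observation is that the angular momentum is preserved along each trajectory, so
\begin{equation*}
\omega(x_*,v_*) = \lim_{n\to\infty} \omega(\sigma(\tau_n,\varphi_n), I^2(\tau_n,\varphi_n)).
\end{equation*}
If $\tau_* \in \{\tau_{1,*}, \tau_{2,*}\} \cup Z_h$, then by continuity of $I^2$ where it is defined (and the saddle-point analysis of Lemma \ref{inflection}) the right-hand side would force $v_* \in R_{x_*, \tau_{j,*}}^{\varepsilon/2}$ or $v_* \in R_{x_*, \tau_j^{\mathbf{z}}}^{\varepsilon/2}$, contradicting $(x_*,v_*) \in X^{\varepsilon}$ by (ii) of Lemma \ref{lem Xe}. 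Hence $\tau_* \in [\tau_{1,*}+\varepsilon,\tau_{2,*}-\varepsilon] \setminus (Z_h)_{\varepsilon}$, where $I^2(\tau_*,\varphi_*)$ is well-defined and continuous. By Lemma \ref{lem time s}, $s(\tau,\varphi)$ and $(X,V)(s(\tau,\varphi);0,\sigma(\tau,\varphi), I^2(\tau,\varphi))$ are continuous at $(\tau_*,\varphi_*)$, so passing to the limit gives $(X,V)(s(\tau_*,\varphi_*);0,\sigma(\tau_*,\varphi_*),I^2(\tau_*,\varphi_*)) = (x_*,v_*) \in B_L^{\varepsilon}$, i.e.\ $(\tau_*,\varphi_*) \in \mathcal{B}_L^{\varepsilon}$. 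Boundedness in $[\tau_{1,*}+\varepsilon, \tau_{2,*}-\varepsilon] \times [-2\pi\mathbf{w},0]$ combined with closedness yields compactness.

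The main obstacles are two-fold. First, in (a) one must handle the delicate dichotomy of Lemma \ref{basic} (b): a small perturbation of $(x,v)$ may change the bounce count by merging the concave grazing bounce with its neighbor, so a careful renumbering of the bounces is needed before invoking continuity, and one must verify that the $\gamma_0^{I_-}$ avoidance survives this renumbering. Second, in the closedness argument for $\mathcal{B}_L^{\varepsilon}$, one must rule out the degenerate limits $\tau_* \to \tau_{1,*},\tau_{2,*}$, or $\tau_*\in Z_h$; the construction of $X^{\varepsilon}$ via the excluded rings $R_{x,\tau}^{\varepsilon}$ was designed precisely to force angular momentum conservation to push limits away from these singular values, and this is where the $\varepsilon$-margin built into Lemma \ref{lem Xe} is consumed.
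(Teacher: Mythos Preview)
Your proof is correct and, for part (a) and the first half of (b), essentially identical to the paper's: both use Lemma~\ref{trajectory invertible} to get a finite bounce number, Lemma~\ref{analyticity}(c) to cap $\mathcal{N}(y,u,L)$, and Lemma~\ref{analyticity}(a) for continuity of the bounce points, then conclude via the complement $B_L^\varepsilon = X^\varepsilon\setminus G_L^\varepsilon$. One minor slip: your parenthetical ``the closed condition $t_{\mathbf b}(x,-v)\neq 0$'' is backwards---that condition is open---but the closedness of $\gamma_0^{I}$ you actually need is asserted in the paper's proof of Lemma~\ref{basic}(a), so this does not affect the argument.

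The genuine difference is in the treatment of $\mathcal{B}_L^\varepsilon$. The paper argues tersely: the forward map $(\tau,\varphi)\mapsto (X,V)(s(\tau,\varphi);0,\sigma(\tau,\varphi),I^2(\tau,\varphi))$ is continuous by Lemma~\ref{lem time s}, $(\sigma,I^2)$ is a local diffeomorphism, and the inverse image of the closed set $B_L^\varepsilon$ is closed. Taken literally this only yields closedness \emph{relative to the domain of the map}, and the paper does not spell out why a limiting $(\tau_*,\varphi_*)$ cannot escape to $\tau_{1,*},\tau_{2,*}$ or $Z_h$. You handle exactly this point by a sequential argument: angular momentum is preserved along trajectories, and since any limit $(x_*,v_*)$ lies in $X^\varepsilon$, property (ii) of Lemma~\ref{lem Xe} forbids $v_*\in R_{x_*,\tau_{1,*}}^\varepsilon\cup R_{x_*,\tau_{2,*}}^\varepsilon\cup\bigcup_j R_{x_*,\tau_j^{\mathbf z}}^\varepsilon$, forcing $\tau_*$ to stay in the interior domain \eqref{Dom:thetaphi}. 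This is a more explicit route and makes visible the role of the excluded rings in the construction of $X^\varepsilon$; the paper's formulation leaves this implicit in its appeal to the local diffeomorphism. Both arguments are valid and rest on the same ingredients, but yours is the cleaner justification of compactness in the ambient parameter space.
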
	
	\begin{proof} 
		\textbf{proof of (a) :} Let $(x,v) \in G^{\varepsilon}_{L}$ and $(y,u) \in X^{\varepsilon}$. By  Lemma \ref{trajectory invertible}, $\mathcal{N}(x,v,L) < \infty$. Also, from (c) in Lemma \ref{analyticity}, we can choose small $\delta_1>0$ such that $\mathcal{N}(y,u,L)\leq \mathcal{N}(x,v,L)$ if $\vert (x,v)-(y,u)\vert < \delta_1$. Let us write $\mathcal{N}_{x,v} = \mathcal{N}(x,v,L)$ to simplify notation here. Since $(x,v) \in G^{\varepsilon}_L$, for all $\varepsilon>0$, there exists $\delta_2>0$ such that 
	\begin{align*}
		\vert (x^j(x,v),v^{j-1}(x,v))-(x^j(y,u),v^{j-1}(y,u))\vert <\varepsilon, \quad \forall 1\leq j \leq \mathcal{N}_{x,v},
	\end{align*}
whenever $\vert (x,v)-(y,u) \vert <\delta$ by using (a) of Lemma \ref{analyticity}. Moreover, $(x^j(y,u),v^{j-1}(y,u))$\\
$ \notin \gamma_0^{I_-}$ for all $1\leq j \leq \mathcal{N}_{y,u}(\leq \mathcal{N}_{x,v})$. Hence, if we take $\delta=\min\{\delta_1,\delta_2\}$, then $(y,u)\in G^{\varepsilon}_{L}$ whenever $\vert (x,v)-(y,u)\vert <\delta$.   \\
		\textbf{proof of (b) :} Note that $X^{\varepsilon} = G_{L}^{\varepsilon}\cupdot B_{L}^{\varepsilon}$. By (a) of Lemma \ref{lem:open}, we directly deduce $B^{\varepsilon}_L$ is closed in $X^\varepsilon$. For $(x,v)\in B_{L}^{\varepsilon}$, there exists $k\in \mathbb{N}$ such that $(x^k(x,v),v^{k-1}(x,v))\in \gamma_0^{I_-}$ and $(x^j(x,v),v^{j-1}(x,v))\notin \gamma_0^{I_-}$ for all $1\leq j \leq k-1$. By definition of $B_{L}^{\varepsilon}\subset X^{\varepsilon}$, the number of bounce in trajectory during travel length $L$ is finite. Again, by Lemma \ref{trajectory invertible}, forward in time trajectory from $\mathcal{B}_{L}^{\varepsilon}$ is well-defined until it arrives $\overline{S_0}$. By Lemma \ref{lem time s}, $(X(s(\cdot); 0, \cdot), V(s(\cdot); 0, \cdot))$ is continuous on $\mathcal{B}_{L}^{\varepsilon}$. Using local diffeomorphism $(\sigma, I^2) : (\tau, \varphi)\mapsto (\sigma(\tau, \varphi), I^2(\tau, \varphi))$ and the fact that the inverse image of closed set under continuous function is closed, we finish the proof.   
		\end{proof}
	\hide
	{\color{red} {\bf NOT USED, Remove later}	
	We can similalry construct backward in time image of $G_{L}^{\varepsilon,C}$  in $(\tau, \varphi, \eta)$ 	
	\Be  \label{def cal G}
	\begin{split}
		\mathcal{G}_{L}^{\varepsilon,C} &:= \Big\{ (\tau, \varphi,\eta) \in U\times (0,1) : (\sigma(\tau, \varphi), v_{c}(\eta))\in(X,V)^{-1}(s)( G_{L}^{\varepsilon,C})  \Big\}  \\
	\end{split}
	\Ee		
	\hide
	\Be  \label{def cal G int}
	\begin{split}
		{\mathcal{G}_{L}^{\ephalf, C, \circ} &:= \Big\{ (\theta, \varphi,\tau) \in U\times (0,1) : (\sigma(\theta, \varphi), v_{c}(\tau))\in(X,V)^{-1}(s)( G_{L}^{\ephalf, C, \circ})  \Big\} \\
	\end{split}
	\Ee		
	\unhide
	where $v_{c}(\eta)$ for $0 < \eta< 1$ means direction for concave grazing $\gamma_0^{C}$.	Note that there is no one-to-one correspondence between $G_{L}^{\varepsilon,C}$ and $\mathcal{G}_{L}^{\varepsilon,C}$ since there might be several concave grazing $\gamma_0^C$  in the trajectory. \\

Using the forward trajectory, we define the number of bounces from an inflection point $\sigma(\tau,\varphi)$ with the direction $I^2(\tau,\varphi)$. Here, $I^2(\tau,\varphi)$ are directions defined in Proposition \ref{inflection}. There is no forward in time trajectory propagation with $I^{1}$ direction.  \\ }
\unhide

\begin{proposition} \label{unif bounce}
Recall the definition \eqref{nongrazing NB} of $\mathcal{N}(x,v,L)$. We have the following uniform finite number of bounce
\begin{align*}
	\sup_{(x,v)\in B_{L}^{\varepsilon}}\mathcal{N}(x,v,L) < K=K(\varepsilon,L),
\end{align*}
where the constant $K$ depends on $\varepsilon$ and $L$. \\
i.e., Up to travel length $L > 0$ and for $(x,v)\in B_{L}^{\varepsilon}$, backward in time trajectory which experiences inflection grazing $\gamma_{0}^{I-}$ within travel length $L$ cannot enjoy arbitrarily large number of bounce.
\end{proposition}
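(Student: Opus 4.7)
The plan is to combine the local upper bound on bounce number furnished by Lemma \ref{analyticity}(c) with the compactness of $B_{L}^{\varepsilon}$ established in Lemma \ref{lem:open}(b), via a finite subcover argument.

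First, fix an arbitrary $(x,v) \in B_{L}^{\varepsilon}$ and set $M := \mathcal{N}(x,v,L)$, which is finite because $(x,v) \in B_{L}^{\varepsilon}$ means the backward trajectory terminates at bounce $M+1$ through contact with $\gamma_{0}^{I_-}$. To apply Lemma \ref{analyticity}(c), I need $(x^{j}(x,v), v^{j-1}(x,v)) \notin \gamma_{0}^{I}$ for all $1 \leq j \leq M$. The definition of $\mathcal{N}$ in \eqref{nongrazing NB} directly excludes $\gamma_{0}^{I_-}$. The exclusion of $\gamma_{0}^{I_+}$ is automatic: if one had $(x^{j}, v^{j-1}) \in \gamma_{0}^{I_+}$, then by definition \eqref{inflection +-} we would have $x^{j} + s v^{j-1} \notin \overline{\Omega}$ for small $s>0$, but $v^{j-1}$ is the post-reflection outgoing velocity at $x^{j}$ used to travel (in forward time) from $x^{j}$ to $x^{j-1}$, which requires entering the interior of $\Omega$, a contradiction.

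With the hypothesis verified, Lemma \ref{analyticity}(c) provides $\delta_{x,v} > 0$ such that every $(y,u) \in X^{\varepsilon}$ with $|(y,u) - (x,v)| < \delta_{x,v}$ satisfies $\mathcal{N}(y,u,L) \leq M$. Collecting these neighborhoods over $(x,v) \in B_{L}^{\varepsilon}$ yields an open cover of $B_{L}^{\varepsilon}$. By compactness (Lemma \ref{lem:open}(b)), I extract a finite subcover attached to points $\{(x_i,v_i)\}_{i=1}^{N} \subset B_{L}^{\varepsilon}$, and then set
\[
K(\varepsilon, L) := \max_{1 \leq i \leq N} \mathcal{N}(x_i, v_i, L).
\]
Every $(y,u) \in B_{L}^{\varepsilon}$ lies in one of the balls of the subcover, so $\mathcal{N}(y,u,L) \leq K(\varepsilon, L)$, as claimed.

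The main point to be careful about is the applicability of Lemma \ref{analyticity}(c): concretely, that small perturbations of $(x,v)$ cannot insert extra (possibly near-grazing) bounces between the $M$ original nongrazing ones and so artificially inflate $\mathcal{N}$. This is precisely what the curvature/normal-velocity estimate \eqref{min length} rules out in the proof of that lemma, by providing a quantitative minimum separation between consecutive bounces that persists under perturbations satisfying \eqref{normal angle}. No new computation is required for the present proposition beyond this invocation of already proved continuity results and the topological compactness argument.
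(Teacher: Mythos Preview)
Your argument has a genuine gap at the point where you invoke Lemma \ref{analyticity}(c). That lemma's proof (and its intended scope) covers only the situation where the $M_1$-th bounce exhausts, or nearly exhausts, the travel length $L$---i.e., the length-$L$ marker $s$ falls between $t^{M_1}$ and $t^{M_1+1}$, so that the estimate \eqref{min length} forbids an extra bounce between $t^{M_1}(t,y,u)$ and $s$. For $(x,v)\in B_L^\varepsilon$, however, the backward trajectory terminates at bounce $M+1$ because $(x^{M+1},v^{M})\in\gamma_0^{I_-}$, and this can happen at length $\sum_{j=1}^{M+1}|x^j-x^{j-1}|$ strictly smaller than $L$. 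A generic perturbation $(y,u)$ will \emph{not} hit $\gamma_0^{I_-}$ at its $(M+1)$-th bounce (inflection grazing is a codimension-one condition, and Lemma \ref{basic} gives no continuity statement when the target bounce lies in $\gamma_0^I$). The perturbed trajectory then continues past that point with remaining travel budget $L-\ell>0$, and nothing in your argument prevents it from accruing many further bounces within length $L$; in that case $\mathcal{N}(y,u,L)>M$, contradicting the local bound you need for the cover.

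The paper avoids this obstruction by reversing the viewpoint: it parameterizes $B_L^\varepsilon$ by the inflection point itself through the compact set $\mathcal{B}_L^\varepsilon$ in $(\tau,\varphi)$, and counts bounces for the \emph{forward} trajectory starting at $(\sigma(\tau,\varphi),I^2(\tau,\varphi))$. Forward from an inflection, the map $(\tau,\varphi)\mapsto (\xf^j,\vf^{j-1})$ is continuous (the discontinuity of the billiard map at inflection grazing lives on the \emph{backward} side), so the case analysis of Lemma \ref{basic} and Lemma \ref{analyticity} legitimately controls the forward bounce count locally. Compactness of $\mathcal{B}_L^\varepsilon$ then gives a uniform bound, which transfers back to $\mathcal{N}(x,v,L)$ via the one-to-one correspondence. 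In short, the direction you run the trajectory matters: the paper's choice to start from the inflection and go forward is exactly what makes the compactness argument close.
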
 
\begin{proof} 
\hide
 {\color{blue} Basically comes from continuity, compactness of $\mathcal{B}^{\varepsilon}_{L}$, and Lemma \ref{no consecutive}. }   \\
	{\color{blue} (Change using Lemmas \ref{analyticity}, \ref{lem time s} etc,) We use forward trajectory and use Lem \ref{lem time s}.} \\
\unhide
	Let us recall $\mathcal{B}_{L}^{\varepsilon}$ is compact by (b) in Lemma \ref{lem:open}. 
	\hide
	and
	\Be
	\begin{split}
		\mathcal{B}_{L}^{\varepsilon} \subset \mathcal{B}_{L}^{\ephalf, \circ} &:= (X, V)^{-1}(s)(B_{L}^{\ephalf, \circ}) 
	\end{split}
	\Ee
	\unhide
\hide
Note that if this set
\begin{equation*}
	 \mathcal{A} = \left \{ k \in \mathbb{N} : \sum_{j=1}^k \vert \xf^j(\sigma(\theta,\varphi),v^I(\theta,\varphi))-\xf^{j-1}(\sigma(\theta,\varphi),v^I(\theta,\varphi)) \vert >L \right \}
\end{equation*}
is empty, then there exists $j$ such that $[\xf^j(\sigma(\theta,\varphi),v^I(\theta,\varphi)),\vf^{j-1}((\sigma(\theta,\varphi),v^I(\theta,\varphi))]\in \gamma_0^I$. In this case, we regard $\mathcal{N}(\theta,\varphi,NT_0)$ as 
\begin{align*}
	\inf \left\{ j\in \mathbb{N}: [\xf^j(\sigma(\theta,\varphi),v^I(\theta,\varphi)),\vf^{j-1}((\sigma(\theta,\varphi),v^I(\theta,\varphi))]\in \gamma_0^I\right\}.
\end{align*}
{\color{blue} (Move above into definition part)} \\
 
{\color{blue} Step 1 [$(\theta,\varphi)$ such that $\mathcal{A}\neq\emptyset$]} 
\unhide
Let us choose $(\tau, \varphi) \in \mathcal{B}_{L}^{\varepsilon}$. Using the forward trajectory, we define the number of bounce $\mathcal{N}(\tau,\varphi,L)$ during whole travel length $L$: 
{\small
\begin{align} \label{forward n}
	\begin{split}
	\mathcal{N}(\tau,\varphi,L):=\sup \left \{ k\in \mathbb{N} : (\xf^j,\vf^{j-1}(\tau,\varphi))\notin \gamma_0^{I_-} \quad \forall 1\leq j\leq k, \; \sum_{j=1}^k \vert \xf^j(\tau,\varphi)-\xf^{j-1}(\tau,\varphi)\vert \leq L \right \},
	\end{split}
\end{align}
}
where we used abbreviation $(\xf^j,\vf^{j-1})(\tau,\varphi)=(\xf^j,\vf^{j-1})(\sigma(\tau,\varphi),I^2(\tau,\varphi))$. We suppose that $[\xf^i(\tau,\varphi),\vf^{i-1}(\tau,\varphi)] \notin \gamma_0^I$ for all $1\leq i \leq \mathcal{N}(\tau,\varphi,L)$. Next, we split two cases:

\textit{(Case 1)} There is no concave grazing in a finite travel length. i.e., $[\xf^i (\tau,\varphi),\vf^{i-1}(\tau,\varphi)]$\\
$ \notin \gamma_0^C$ for all $1\leq i \leq \mathcal{N}(\tau,\varphi,L)$. \\

\textit{(Case 2)} There is a concave grazing at least once. i.e., There exists $j\in \mathbb{N}$ such that $[\xf^j(\tau,\varphi),\vf^{j-1}(\tau,\varphi)]\in \gamma_0^C$. \\
In \textit{(Case 1)}, $(x^i(\tau,\varphi),v^i(\tau,\varphi))$ and $(X(s;0,\sigma(\tau,\varphi),I^2(\tau,\varphi)),V(s;0,\sigma(\tau,\varphi),I^2(\tau,\varphi)))$ are continuous in $(\tau,\varphi)$ by (a) and (b) of Lemma \ref{analyticity}. Hence, there exists $r^1=r^1_{\tau,\varphi,\epsilon}>0$ such that 
\begin{align*}
	\vert \mathcal{N}(\tau,\varphi,L)-\mathcal{N}(\tau^*,\varphi^*,L) \vert \leq 1, 
\end{align*}
whenever $\vert (\tau,\varphi)-(\tau^*,\varphi^*) \vert \leq r^1_{\tau,\varphi,\varepsilon}$. \\
In \textit{(Case 2)}, we use the fact that there is no consecutive concave grazing in $\Omega$. Suppose that $[\xf^k(\tau,\varphi),\vf^{k-1}(\tau,\varphi)] \notin \gamma_0^C$ for all $1\leq k<j$ and $[\xf^j(\tau,\varphi),\vf^{j-1}(\tau,\varphi)] \in \gamma_0^C$. If we choose sufficiently small $r^2=r^2_{j,\tau,\varphi,\varepsilon}>0$ such that $\vert(\tau,\varphi)-(\tau^*,\varphi^*)\vert<r^2_{j,\tau,\varphi,\varepsilon}$, then $[\xf^k(\tau^*,\varphi^*),\vf^{k-1}(\tau^*,\varphi^*)]\notin \gamma_0^C$ for all $1\leq k <j$ by continuity argument. Similar to (Case 1), we have 
\begin{align*}
	\vert \mathcal{N}(\tau,\varphi,\tf^{j-1}(\tau,\varphi))-\mathcal{N}(\tau^*,\varphi^*,\tf^{j-1}(\tau,\varphi))\vert \leq 1,
\end{align*}
where $\tf^{j-1}(\tau,\varphi):=\tf^{j-1}(\sigma(\tau,\varphi),I^2(\tau,\varphi))$. Moreover, one obtains that 
\begin{align} \label{before grazing}
	\vert (\xf^{j-1}(\tau,\varphi),\vf^{j-1}(\tau,\varphi))-(\xf^{j-1}(\tau^*,\varphi^*),\vf^{j-2}(\tau^*,\varphi^*))\vert <\delta,
\end{align}
where $\delta>0$ was defined in Lemma \ref{basic}. Note that $ (\xf^{j+1}(\tau,\varphi),\vf^{j}(\tau,\varphi)) \notin \gamma_0^C$ since there is no consecutive concave grazing from Lemma \ref{no consecutive}. 
We split $[\xf^j(\tau^*,\varphi^*),\vf^{j-1}(\tau^*,\varphi^*)]$ into three cases:\\

\textit{(i)} $\overline{\xf^{j-1}(\tau^*,\varphi^*)\xf^j(\tau^*,\varphi^*)}$ does not bounce near $\xf^j(\tau,\varphi)$: This case coincides with (b-2) in Lemma \ref{basic} for $\xf^j$. Hence, from (b-2) in Lemma \ref{basic} and \eqref{before grazing}, we obtain 
\begin{align*}
	\vert (\xf^{j+1}(\tau,\varphi),\vf^{j+1}(\tau,\varphi))-(\xf^j(\tau^*,\varphi^*),\vf^j(\tau^*,\varphi^*))\vert < \varepsilon,
\end{align*}
if $\vert (\tau,\varphi)-(\tau^*,\varphi^*)\vert <r^2_{j,\tau,\varphi,\varepsilon}$. Also, (a) in Lemma \ref{analyticity} implies that $\tf^j$ is locally continuous function of $(\tau,\varphi)$ up to some renumbered bounce number, that is, $\vert \tf^{j+1}(\tau,\varphi)-\tf^j(\tau^*,\varphi^*)\vert <\varepsilon$. Hence, $\mathcal{N}(\tau,\varphi,\tf^{j+1}(\tau,\varphi))=\mathcal{N}(\tau^*,\varphi^*,\tf^{j}(\tau^*,\varphi^*))+1$, and
{\footnotesize
\begin{align*}
	\vert \left(\mathcal{N}(\tau,\varphi,\tf^{j+1}(\tau,\varphi))-\mathcal{N}(\tau,\varphi,\tf^{j-1}(\tau,\varphi))\right)-\left(\mathcal{N}(\tau^*,\varphi^*,\tf^{j+1}(\tau,\varphi)-\mathcal{N}(\tau^*,\varphi^*,\tf^{j-1}(\tau,\varphi))\right)\vert \leq 2.
\end{align*}
}
\textit{(ii)} $\xf^j(\tau^*,\varphi^*)$ is located near $\xf^j(\tau,\varphi)$ and $[(\xf^j(\tau^*,\varphi^*),\vf^{j-1}(\tau^*,\varphi^*)]\notin \gamma_0^C:$  (b-1) in Lemma \ref{basic} includes this case. Applying (b-1) in Lemma \ref{basic} and \eqref{before grazing} yields that 
\begin{align*}
	\vert (\xf^{k}(\tau,\varphi),\vf^{k}(\tau,\varphi))-(\xf^k(\tau^*,\varphi^*),\vf^k(\tau^*,\varphi^*))\vert < \varepsilon, \quad k=j,j+1,
\end{align*}
if $\vert (\tau,\varphi)-(\tau^*,\varphi^*)\vert <r^2_{j,\tau,\varphi,\varepsilon}$. Similarly, in this case, it holds that $\vert \tf^k(\tau,\varphi)-\tf^k(\tau,\varphi) \vert<\varepsilon$ for $k=j,j+1$ due to continuity of $\tf^j$. Hence, 
{\footnotesize
\begin{align*}
	\vert \left(\mathcal{N}(\tau,\varphi,\tf^{j+1}(\tau,\varphi))-\mathcal{N}(\tau,\varphi,\tf^{j-1}(\tau,\varphi))\right)-\left(\mathcal{N}(\tau^*,\varphi^*,\tf^{j+1}(\tau,\varphi)-\mathcal{N}(\tau^*,\varphi^*,\tf^{j-1}(\tau,\varphi))\right)\vert \leq 1. 
\end{align*}
 }
\textit{(iii)} $\xf^j(\tau^*,\varphi^*)$ is located near $\xf^j(\tau,\varphi)$ and $[(\xf^j(\tau^*,\varphi^*),\vf^{j-1}(\tau^*,\varphi^*)]\in \gamma_0^C:$ This case is also included in (b-1) in Lemma \ref{basic}. Thus, similar to (2-2), we obtain the same result above. \\

By Lemma \ref{no consecutive}, the maximum number of bounces where concave grazing occurs is $[ \frac{\mathcal{N}(\tau,\varphi,L)}{2}]$ during a finite travel length $L$. We set $r^2_{\tau,\varphi,\varepsilon}:= \min \{ r^1_{\tau,\varphi,\varepsilon},\bigcup_{j\in \mathcal{C}_{\tau,\varphi}}r^2_{j,\tau,\varphi,\varepsilon}\}$ where $\mathcal{C}_{\tau,\varphi}=\{l \in \mathbb{N}: (\xf^l(\tau,\varphi),\vf^{l-1}(\tau,\varphi))\in \gamma_0^C\}\;(\max \mathcal{C}_{\tau,\varphi}\leq\mathcal{N}(\tau,\varphi,L)).$ Considering three cases above, we derive that 
\begin{align*}
	\vert \mathcal{N}(\tau,\varphi,L)- \mathcal{N}(\tau^*,\varphi^*,L) \vert \lesssim \mathcal{N}(\tau,\varphi,L),
\end{align*}
if $\vert (\tau,\varphi)-(\tau^*,\varphi^*) \vert < r^2_{\tau,\varphi,\varepsilon}$. \\


To get a uniform number of bounces, we use a compactness argument. 
Since the range $\mathcal{B}_{L}^{\varepsilon}$ of $(\tau,\varphi)$  is compact, using a compactness argument gives uniform boundedness for the number of bounces. For each $(\tau,\varphi) \in \mathcal{B}_{L}^{\varepsilon}$, we construct a small ball $B_{\delta_{\tau,\varphi,\varepsilon}}(\tau,\varphi)\subset \R^2$, where $\delta_{\tau,\varphi,\varepsilon}:= \min\{r^1_{\tau,\varphi,\varepsilon}, r^2_{\tau,\varphi,\varepsilon}\}$. Then, for $(\tau^*,\varphi^*)\in B_{\delta_{\tau,\varphi,\varepsilon}}(\tau,\varphi)$, it satisfies that 
\begin{equation*}
	\vert \mathcal{N}(\tau,\varphi,L)- \mathcal{N}(\tau^*,\varphi^*,L) \vert \lesssim \mathcal{N}(\tau,\varphi,L).
\end{equation*}
Thus, by compactness of $\mathcal{B}_L^{\varepsilon}$, there exists a finite open covering $\cup_{j=1}^l B_{\delta^j_{\tau_j,\varphi_j,\varepsilon}}(\tau_j,\varphi_j)$. Since we exclude $R_{x,\tau_{1,*}}^{\varepsilon},R_{x,\tau_{2,*}}^{\varepsilon}$ in $X^{\varepsilon}$ and $B_L^{\varepsilon}$ is a subset of $X^{\varepsilon}$, for any $(\tau,\varphi)\in \mathcal{B}_L^\varepsilon$, we have 
\begin{equation*}
	\mathcal{N}(\tau,\varphi,L) \lesssim \max_{1\leq j \leq l} \mathcal{N}(\tau_j,\varphi_j,L)<+\infty,
\end{equation*}
where the finiteness can be obtained by Lemma \ref{no infty bounce}. Then, trajectory is deterministic and reversible, which gives one-to-one correspondence between $B_{L}^{\varepsilon}$ and $\mathcal{B}_{L}^{\varepsilon}$. Hence, we finish the proof.   
\end{proof}

\subsection{Uniform number of bounce away from inflection grazing $\gamma_0^{I}$}
Through Proposition \ref{unif bounce}, we obtain that an inflection grazing could only happen within the maximum $K$ bounce in backward in time trajectory starting at $(x,v)\in B_{L}^{\varepsilon}$ defined in \eqref{B,G sets}. To remove all inflection grazing, for fixed $x\in \overline{S_0}$, we will construct bad sets $\{ (B_j)_x\}_{j=1}^K\subset \mathbb{S}^2$ where inflection grazing does happen on the $j$-th bounce within the backward in time trajectory starting from $x$. Moreover, the bad sets also contain concave grazing to use analyticity such as Lemma \ref{analyticity} by excluding them. \\
Let $K$ be an upper bound for the number of bounces in Proposition \ref{unif bounce}. Now we make slightly different decomposition of $X^{\varepsilon}$ as follows for fixed $x\in \overline{S_{0}}$,
\Be \label{def GB}
\begin{split}
	(G_1)_x&:= \{v \in \S^2 : (x,v)\in X^{\varepsilon}, \ (x^1(x,v),v^0(x,v))\notin \gamma_0^C \cup \gamma_0^I\},\\ 
	(B_1)_x&:=\{v \in \S^2: (x,v)\in X^{\varepsilon}, \ (x^1(x,v),v^0(x,v))\in \gamma_0^C \cup \gamma_0^I\},\\ 
	(G_2)_x&:=\{v \in (G_1)_x: (x^2(x,v),v^1(x,v))\notin \gamma_0^C \cup \gamma_0^I\},\\ 
	(B_2)_x&:=\{v \in (G_1)_x: (x^2(x,v),v^1(x,v))\in \gamma_0^C \cup \gamma_0^I\},\\ 
	\vdots\\
	(G_{K})_x&:= \{v \in (G_{K-1})_x: (x^{K}(x,v),v^{K-1}(x,v))\notin \gamma_0^C \cup \gamma_0^I\},\\ 
	(B_{K})_x&:=\{v \in (G_{K-1})_x: (x^{K}(x,v),v^{K-1}(x,v))\in \gamma_0^C \cup \gamma_0^I\}.
\end{split}
\Ee
\hide
{\color{red} (remove?)Using Definition \ref{decomp BL}, we can simply write (check)
	\Be
	\begin{split}
		(G_{j})_{x} &:= \{v \in (G_{j-1})_{x} : (x,v)\notin B_{L}^{\varepsilon, j}\cup G_{L}^{\varepsilon, C, j}  \}, \\ 
		(B_{j})_{x} &:= \{v \in (G_{j-1})_{x} : (x,v)\in B_{L}^{\varepsilon, j}\cup G_{L}^{\varepsilon, C, j} \},\\ 
	\end{split}
	\Ee
} 
\unhide
We note that for fixed $x\in \overline{S_{0}}$,
\Be \label{BG decomp}
\begin{split}
	(X^{\varepsilon})_{x} &:= \{v\in \S^{2} : (x,v)\in X^{\varepsilon} \} \\
	&= (B_1)_{x} \cupdot (G_1)_{x} = (B_1)_{x} \cupdot \big(  (B_2)_{x}  \cupdot (G_2)_{x} \big)\\
	&=\cdots \\
	&= \Big( \bigcupdot_{j=1}^{K} (B_j)_{x} \Big) \cupdot (G_K)_{x}.
\end{split}
\Ee 
For fixed $x\in \overline{S_{0}}$, we assume that $(B_j)_x$ in \eqref{def GB} has measure zero for each $1\leq j\leq K$. (this will be proved in Proposition \ref{mB zero}.) If we define (set of bad directions) 
\begin{align} \label{def Ki}
\mathcal{BK}_x := \bigcup _{j=1}^{K} (B_j)_x \subset \S^{2},
\end{align}
there exists an open set $\mathcal{BK}_x^{\varepsilon} \subset \S^{2}$ such that
\Be \label{Ki epsilon}
	\mathcal{BK}_{x} \subset \mathcal{BK}_x^{\varepsilon},\quad \mathfrak{m}_{2}\big( \mathcal{BK}_x^{\varepsilon}\big) < \varepsilon.
\Ee 


\begin{lemma}\label{continuity 2} For $x\in \overline{S_0}$, $\varepsilon>0$, recall definition $\mathcal{BK}_x$ in \eqref{def Ki}. For fixed $0< \varepsilon \ll1$. For each $x\in \overline{S_{0}}$, there exists $r=r(x,\varepsilon)$ such that 
	\begin{equation*}
	\mathcal{BK}_y \subset \mathcal{BK}_{x}^{\varepsilon},
	\end{equation*}
	if $y\in B(x,r(x,\varepsilon)) \cap \overline{S_{0}}$.    \\
\end{lemma}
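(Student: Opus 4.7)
The argument will proceed by contradiction and compactness. Suppose the conclusion fails at some $x \in \overline{S_0}$: then there exists a sequence $y_n \in \overline{S_0}$ with $y_n \to x$ and directions $u_n \in \mathcal{BK}_{y_n}$ with $u_n \notin \mathcal{BK}_x^{\varepsilon}$. Since $\mathcal{BK}_{y_n} \subset (X^{\varepsilon})_{y_n}$ and $X^{\varepsilon}$ is compact by Lemma~\ref{lem Xe}, after extracting a subsequence we obtain $u_n \to u^{*} \in \mathbb{S}^2$ with $(x,u^{*}) \in X^{\varepsilon}$. Because $\mathcal{BK}_x^{\varepsilon}$ is open and contains $\mathcal{BK}_x$, the fact that $u_n \notin \mathcal{BK}_x^{\varepsilon}$ for all $n$ implies $u^{*} \notin \mathcal{BK}_x$. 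The goal is therefore to contradict this last statement by showing $u^{*} \in (B_{j^{*}})_x$ for some index $j^{*}$.

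To find $j^{*}$, use the decomposition \eqref{def GB} together with the uniform bound $K = K(\varepsilon, L)$ from Proposition~\ref{unif bounce}: for each $n$ there is a minimal index $j_n \in \{1, \ldots, K\}$ with $(x^{j_n}(y_n,u_n), v^{j_n-1}(y_n,u_n)) \in \gamma_0^C \cup \gamma_0^I$. Pigeon-holing, we may assume $j_n \equiv j^{*}$ along the subsequence, so that for $j < j^{*}$ the bounces are not in $\gamma_0^C \cup \gamma_0^I$. I will then claim that along the reference trajectory emanating from $(x, u^{*})$ there must be a grazing of type $\gamma_0^C \cup \gamma_0^I$ at some bounce $j' \leq K$, yielding $u^{*} \in (B_{j'})_x \subset \mathcal{BK}_x$ and the desired contradiction. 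The dichotomy is: (i) if the trajectory from $(x, u^{*})$ has such a grazing within travel length $L$, we are done; (ii) if not, then the first $M_1 \leq K$ bounces of $(x, u^{*})$ satisfy the hypothesis \eqref{no_Igrazing} of Lemma~\ref{analyticity}, so parts (a) and (c) of that lemma furnish a neighborhood of $(x, u^{*})$ on which both the bounce count is bounded by $M_1$ and the map $(y,u) \mapsto (x^j(y,u), v^{j-1}(y,u))$ is continuous (using Lemma~\ref{basic} to handle possible concave grazings on the limit trajectory). Passing to the limit along $(y_n, u_n)$, we would obtain convergence of bounce data, but $(x^{j^{*}}(y_n, u_n), v^{j^{*}-1}(y_n, u_n)) \in \gamma_0^C \cup \gamma_0^I$ and the closedness of $\gamma_0^C \cup \gamma_0^I$ near the limit point would then force the limiting $j^{*}$-th bounce of $(x, u^{*})$ to be grazing as well, contradicting case (ii).

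The main obstacle is the final step of ruling out that the limit bounce lies in $\gamma_0^V$ (convex grazing) rather than $\gamma_0^C \cup \gamma_0^I$. This is where the exclusions in the construction of $X^{\varepsilon}$ play the decisive role: conservation of the angular momentum $\omega$ along the trajectory, together with the exclusion $u^{*} \notin R_{x,\tau_{1,*}}^{\varepsilon} \cup R_{x,\tau_{2,*}}^{\varepsilon}$ enforced in Lemma~\ref{lem Xe}, guarantees that any grazing bounce of the trajectory of $(x, u^{*})$ must occur at a boundary point $\sigma(\tau, \varphi)$ with $\tau$ uniformly bounded away from $\{\tau_{1,*}, \tau_{2,*}\}$; by Lemma~\ref{lem:nograzing_V} such a bounce cannot be convex and so lies in $\gamma_0^C \cup \gamma_0^I$. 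A secondary technical point is that the continuity of the map from $(y,u)$ to the $j$-th bounce may break down at a concave grazing on the reference trajectory; this is handled by the dichotomy (b-1)/(b-2) of Lemma~\ref{basic} together with the renumbering scheme used in the proof of Proposition~\ref{unif bounce}, both of which preserve continuity up to a bounded shift in the bounce index that is absorbed by the uniform bound $K$.
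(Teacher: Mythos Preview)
Your approach is essentially the paper's: both argue by contradiction via the continuity of the bounce data supplied by Lemma~\ref{analyticity}(a). Your sequential--compactness formulation makes explicit the uniformity in $v$ needed to produce a single radius $r(x,\varepsilon)$, which the paper's three-line proof leaves to the reader.

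One correction worth noting: your dichotomy (i)/(ii) and the $\gamma_0^V$ discussion are unneeded, and the angular-momentum argument you propose for excluding $\gamma_0^V$ is not right as stated (the exclusions $R^{\varepsilon}_{x,\tau_{1,*}}\cup R^{\varepsilon}_{x,\tau_{2,*}}$ do not prevent a convex grazing in the outer part $\tau\notin(\tau_{1,*},\tau_{2,*})$, nor along a convex direction at a saddle point). The simpler reason---implicit in the paper---is that a backward billiard cycle with positive segment lengths can never land in $\gamma_0^V$: the open segment from $x^{k-1}$ to $x^k$ lies in $\Omega$, so from $x^k$ the direction $+v^{k-1}$ re-enters $\Omega$ and hence $t_{\mathbf b}(x^k,-v^{k-1})>0$, contradicting the definition of $\gamma_0^V$. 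Consequently $u^{*}\notin\mathcal{BK}_x$ (i.e.\ $u^{*}\in(G_K)_x$) already forces the first $K$ bounces of $(x,u^{*})$ to be non-grazing in the full sense $\notin\gamma_0$; Lemma~\ref{analyticity} under hypothesis~\eqref{no_grazing} then gives analytic bounce maps in a neighborhood of $(x,u^{*})$, so $(x^{j^{*}}(y_n,u_n),v^{j^{*}-1}(y_n,u_n))$ converges to the non-grazing $j^{*}$-th bounce of $(x,u^{*})$, contradicting the closedness of $\gamma_0$. This is exactly the paper's argument once the sequence is in hand.
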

\begin{proof}
	We use a contradiction argument to prove Lemma. Suppose that there exists $v\in \S^2$ satisfying 
	\begin{equation*}
	v\in (\mathcal{BK}_y) \cap (\mathcal{BK}_{x}^{\varepsilon})^c. 
	\end{equation*}
	Then, one obtains that  $(x^j(y,v),v^j(y,v))\in \gamma_0$ for some $1\leq j \leq K$ and $(x^k(x,v),v^k(x,$\\
	$v))\notin \gamma_0$ for all $1\leq k \leq K$. By (a) in Lemma \ref{analyticity}, one obtains that $(x^i(x,v),v^i(x,v))$ is locally continuous of $(x,v)$, which leads to a contradiction.  
\end{proof} 


\hide
\begin{definition}
For each cover $B(x_i,r(x_i,\varepsilon))\cap \overline{S_0}$ in above argument, we define corresponding sets
\Be \label{def bad O}
\mathcal{O}^{\mathcal{K}}_{i} := \bigcup_{j=1}^l R_{x_i,\tau_{j}^{\mathbf{z}}}^{2\varepsilon}\cup R_{x_i,\tau_{1,*}}^{2\varepsilon} \cup R_{x_i, \tau_{2,*}}^{2\varepsilon} \cup R_{\perp}^{2\varepsilon} \cup R_{\hat{\varphi}}^{2\varepsilon} \cup R_{sym}^{2\varepsilon} \cup \mathcal{O}_i^{nB} \cup (\mathcal{K}_{x_{i}})_{ \varepsilon},\quad 1\leq i\leq l_{K}, 
\Ee
where $\mathfrak{m}_{2}(R^{2\varepsilon}_{x_{i}, *}) \lesssim \varepsilon$ for all $1\leq i \leq l_{K}$ and $*\in \bigcup_{j=1}^l \{ \tau_{j}^{\mathbf{z}}\}\cup\{\tau_{1,*},\tau_{2,*}\}$. Also, by Lemma \ref{near_boundary}, we have $\mathfrak{m}_{2}(\mathcal{O}_i^{nB})\leq \varepsilon$ for all $1\leq i \leq l_K$. Note that each $\mathcal{O}^{\mathcal{K}}_{i}$ is open in $\mathbb{S}^{2}$ so that  $(\mathcal{O}^{\mathcal{K}}_{i})^{c}$ is compact on $\S^{2}$.  
\end{definition}
\unhide

\begin{lemma} \label{construct K}
	For fixed $x\in \overline{S_0}$, assume that $\mathfrak{m}_2((B_j)_x)=0$ for all $1\leq j \leq K$ where we defined $(B_j)_x$ in \eqref{BG decomp}. Recall the definition \eqref{def Xe} of $X^{\varepsilon}$ in Lemma \ref{lem Xe}. Then, there exists a compact set $\mathcal{K}^c\subset \overline{S_0}\times \S^2$ such that if $(x,v) \in \mathcal{K}^c$, 
	\[
	(x^{i}(x,v), v^{i-1}(x,v)) \notin \gamma_{0}^{I}\cup \gamma_0^{C},\quad \text{for first $K$ bounce},
	\]
	and
	\[
	(x^{i}(x,v), v^{i-1}(x,v)) \notin \gamma_{0}^{I},\quad \text{up to whole travel length $L$}.
	\]
	Moreover, for $x\in \overline{S_0}$, 
	\begin{align*}
	\mathfrak{m}_2((\{\overline{S_0}\times\S^2\}\backslash \mathcal{K}^c)_x) \lesssim \varepsilon,
	\end{align*}
	 where we have used the notation $A_x$ in \eqref{proj_v}. 
\end{lemma}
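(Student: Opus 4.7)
The plan mirrors the construction of $X^{\varepsilon}$ carried out in Lemma \ref{lem Xe}, but now builds in the additional constraint that the backward trajectory avoids $\gamma_0^I \cup \gamma_0^C$ for its first $K$ bounces. First I would invoke the hypothesis $\mathfrak{m}_2((B_j)_x)=0$ for all $1\leq j\leq K$: since $K=K(\varepsilon,L)$ is finite by Proposition \ref{unif bounce}, the union $\mathcal{BK}_x=\bigcup_{j=1}^K(B_j)_x\subset \mathbb{S}^2$ defined in \eqref{def Ki} has two-dimensional Lebesgue measure zero, so by outer regularity there is an open set $\mathcal{BK}_x^{\varepsilon}\subset \mathbb{S}^2$ as in \eqref{Ki epsilon} with $\mathcal{BK}_x\subset \mathcal{BK}_x^{\varepsilon}$ and $\mathfrak{m}_2(\mathcal{BK}_x^{\varepsilon})<\varepsilon$.

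Next, by Lemma \ref{continuity 2}, for every $x\in \overline{S_0}$ there is a radius $r(x,\varepsilon)>0$ such that $\mathcal{BK}_y\subset \mathcal{BK}_x^{\varepsilon}$ for every $y\in B(x,r(x,\varepsilon))\cap \overline{S_0}$. Using compactness of $\overline{S_0}$, I would then extract a finite subcover $\{B(x_i^{\mathcal{K}},r_i^{\mathcal{K}})\cap \overline{S_0}\}_{i=1}^{l_{\mathcal{K}}}$ and set the corresponding ``bad'' velocity sets $\mathcal{O}_i^{\mathcal{K}}:=\mathcal{BK}_{x_i^{\mathcal{K}}}^{\varepsilon}\subset \mathbb{S}^2$, each with $\mathfrak{m}_2(\mathcal{O}_i^{\mathcal{K}})<\varepsilon$. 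Define the index set
\begin{equation}\label{index_K}
I_x^{\mathcal{K}}:=\{i\in\mathbb{N}:\, x\in B(x_i^{\mathcal{K}},r_i^{\mathcal{K}})\cap \overline{S_0}\},
\end{equation}
so that whenever $x\in B(x_i^{\mathcal{K}},r_i^{\mathcal{K}})\cap\overline{S_0}$ we have $\mathcal{BK}_x\subset \mathcal{O}_i^{\mathcal{K}}$ for each $i\in I_x^{\mathcal{K}}$.

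Now I would build the open set
\begin{equation*}
\mathcal{K}:=\bigcup_{i=1}^{l_{\mathcal{K}}}\Big\{(x,v)\in X^{\varepsilon}:\,x\in B(x_i^{\mathcal{K}},r_i^{\mathcal{K}})\cap \overline{S_0},\; v\in\bigcap_{j\in I_x^{\mathcal{K}}}\mathcal{O}_j^{\mathcal{K}}\Big\}_{\varepsilon},
\end{equation*}
with $(\cdot)_{\varepsilon}$ the $\varepsilon$-neighborhood operator from \eqref{extension domain}, and set $\mathcal{K}^c:=X^{\varepsilon}\setminus \mathcal{K}$. Since $X^{\varepsilon}$ is compact by Lemma \ref{lem Xe} and $\mathcal{K}$ is open in $\overline{S_0}\times\mathbb{S}^2$, the set $\mathcal{K}^c$ is compact. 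For the properties: if $(x,v)\in \mathcal{K}^c$, then $v\notin \mathcal{BK}_x$, so by the decomposition \eqref{BG decomp} together with \eqref{def GB}, $v\in(G_K)_x$, meaning that $(x^i(x,v),v^{i-1}(x,v))\notin \gamma_0^C\cup \gamma_0^I$ for all $1\leq i\leq K$; and since $(x,v)\in X^{\varepsilon}$ excludes $\gamma_0^I$ grazings occurring beyond the $K$-th bounce by Proposition \ref{unif bounce} (which bounds the number of bounces in $B_L^{\varepsilon}$ by $K$), no inflection grazing appears for the remainder of the travel length $L$.

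Finally, for the measure bound: for fixed $x\in \overline{S_0}$, using \eqref{Xe c small} together with the construction of $\mathcal{K}$,
\begin{align*}
\mathfrak{m}_2\big((\{\overline{S_0}\times\mathbb{S}^2\}\setminus\mathcal{K}^c)_x\big) &\leq \mathfrak{m}_2\big((\{\overline{S_0}\times\mathbb{S}^2\}\setminus X^{\varepsilon})_x\big)+\mathfrak{m}_2(\mathcal{K}_x)\\
&\lesssim \varepsilon + \mathfrak{m}_2\big((\mathcal{O}_j^{\mathcal{K}})_{\varepsilon}\big)\lesssim \varepsilon,\quad j\in I_x^{\mathcal{K}},
\end{align*}
which is the desired estimate. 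The main obstacle is really the preceding measure-zero claim $\mathfrak{m}_2((B_j)_x)=0$ (to be established in Proposition \ref{mB zero} via axial symmetry, analyticity of $F_j$, and Lusin-type arguments); once that is granted, the present lemma is a compactness-plus-continuity packaging argument in the spirit of Lemma \ref{lem Xe}, with no further geometric input required.
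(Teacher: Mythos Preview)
Your proposal is correct and follows essentially the same approach as the paper: build small open neighborhoods $\mathcal{BK}_x^{\varepsilon}$ of the measure-zero bad directions via \eqref{Ki epsilon}, use Lemma \ref{continuity 2} and compactness of $\overline{S_0}$ to extract a finite cover with associated bad velocity sets $\mathcal{O}_i^{\mathcal{K}}$, define $\mathcal{K}$ as the $\varepsilon$-thickened union and $\mathcal{K}^c:=X^{\varepsilon}\setminus\mathcal{K}$, and then read off the trajectory properties from $v\notin\mathcal{BK}_x$ together with Proposition \ref{unif bounce}, with the measure bound coming from \eqref{Xe c small} plus $\mathfrak{m}_2((\mathcal{O}_j^{\mathcal{K}})_\varepsilon)\lesssim\varepsilon$. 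The paper's proof is organized identically.
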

\begin{proof}
	Let us define 
	\Be \label{E}
		E := \Big\{ (x,v) \in X^{\varepsilon} : v \in  \mathcal{BK}_x   \Big\},
	\Ee
	where $X^{\varepsilon}$ is defined in \eqref{def Xe} and $\mathcal{BK}_x$ is defined in \eqref{def Ki}. Recall that $\mathcal{P}_1(X^{\varepsilon})=\overline{S_0}$ from \eqref{Xe proj} in proof of Lemma \ref{lem Xe}. Using compactness of $\mathcal{P}_1(X^{\varepsilon})=\overline{S_0}$, Lemma \ref{continuity 2} and \eqref{Ki epsilon} from our assumption, we obtain an open cover for $\mathcal{P}_1(X^{\varepsilon})=\overline{S_0}$ 
	 \Be \label{open cover_K}
		\Big\{B(x_i^{\mathcal{K}},r_i^{\mathcal{K}})\cap \overline{S_0}\Big \}_{i=1}^{l_K},
	\Ee
	and corresponding each open set $\mathcal{O}_i^{\mathcal{K}} \subset \S^{2}$ which is defined by 
	\begin{align} \label{def bad O}
		 \mathcal{O}_i^{\mathcal{K}}:=\mathcal{BK}_{x_{i}^{\mathcal{K}}}^{\varepsilon}, \quad  1\leq i \leq l_K,
	\end{align}
	satisfying $\mathbf{m}_2(\mathcal{O}_i^{\mathcal{K}})< \varepsilon$ for all $1\leq i \leq l_K$ and  
	\begin{align} \label{O_i^K 2}
		\mathcal{BK}_x \subset \mathcal{O}_i^{\mathcal{K}}, 
	\end{align}
	for all $x \in B(x_i^{\mathcal{K}},r_i^{\mathcal{K}})$ and sufficiently small $r_{i}^{\mathcal{K}}$. 
	 Similar to \eqref{index_nB} and \eqref{index_R}, for given $x\in \mathcal{P}_1(X^{\varepsilon})=\overline{S_0}$, we define an index set 
\begin{align} \label{index_K}
	I_x^{\mathcal{K}}:=\Big\{i\in\mathbb{N}: x\in B(x_i^{\mathcal{K}},r_i^{\mathcal{K}})\cap \overline{S_0}\Big\}. 
\end{align}
We consider the following set containing set $E$ defined in \eqref{E}:
\begin{align} \label{bad phase set 3} 
 \bigcup_{x\in \overline{S_0}} \Big \{(x,v)\in X^{\varepsilon}: v \in  \bigcap_{i\in I_x^{\mathcal{K}}}\mathcal{O}_i^{\mathcal{K}}\Big \}.
\end{align}
Similar to \eqref{def R} in proof of Lemma \ref{lem Xe}, we construct an open cover for \eqref{bad phase set 3} as
\begin{align} \label{K}
	\mathcal{K}:=\bigcup_{i=1}^{l_{K}} \Big \{ (x,v)\in X^{\varepsilon}: x \in B(x_i^{\mathcal{K}},r_i^{\mathcal{K}})\cap \overline{S_0},\; v \in \bigcap_{j\in I_x^{\mathcal{K}}}\mathcal{O}_j^{\mathcal{K}}\Big\}_{\varepsilon},
\end{align}
where we have used the notation $A_{\varepsilon}$ in \eqref{extension domain}. Throughout this paper, we denote a compact set $X^{\varepsilon}\backslash \mathcal{K}$ as  
\begin{align} \label{Kc}
	\mathcal{K}^c := X^{\varepsilon} \backslash \mathcal{K}. 
\end{align}
Assume that $(x,v) \in \mathcal{K}^c=X^{\varepsilon}\backslash\mathcal{K}$. 
Since definition \eqref{K} of $\mathcal{K}$ and \eqref{O_i^K 2}, $v\notin \mathcal{BK}_x= \bigcup_{j=1}^K (B_j)_x$. This implies that 
\begin{align*}
	(x^i(x,v),v^{i-1}(x,v)) \notin \gamma_0^I \cup \gamma_0^C, \quad \forall 1\leq i \leq K. 
\end{align*}
From Proposition \ref{unif bounce}, during the travel length $L$, the maximum number of bounce that inflection grazing can occur is $K$. For $(x,v)\in \mathcal{K}^c$, there is no inflection grazing for first $K$ bounce. Hence,  $(x^i(x,v),v^{i-1}(x,v))\notin \gamma_0^I$ during the travel length $L$. For fixed $x \in \overline{S_0}$, 
\begin{align} \label{def Kx}
	\mathcal{K}_x=\Big\{v \in (X^{\varepsilon})_x: v \in \bigcap_{j\in I_x^{\mathcal{K}}}\mathcal{O}_j^{\mathcal{K}}\Big\}_{\varepsilon},
\end{align}
where we used notation $A_x$ in \eqref{proj_v}. Notice that the notation $A_\varepsilon$ in \eqref{extension domain} is caused by the definition \eqref{K}. 
From the definition \eqref{Kc} of $\mathcal{K}^c$, one obtains that 
\begin{align*}
	\{\overline{S_0}\times \S^2\} \backslash \mathcal{K}^c =(\{\overline{S_0}\times \S^2\} \backslash X^{\e}) \cupdot \mathcal{K}. 
\end{align*}
Thus, 
\begin{align}\label{Kc small} 
	\begin{split}
	\mathfrak{m}_{2}(( \{\overline{S_0}\times \S^2\}\backslash \mathcal{K}^c)_x) 
	&\leq \mathfrak{m}_{2}((\{\overline{S_0}\times \S^2\} \backslash X^\e)_x) + \mathfrak{m}_{2}(\mathcal{K}_x) \\
	&\leq \mathfrak{m}_{2}((\{\overline{S_0}\times \S^2\} \backslash X^\e)_x) + \mathfrak{m}_{2}((\mathcal{O}_j^{\mathcal{K}})_{\varepsilon}) \\
	&\lesssim \varepsilon, \quad \forall j \in I_x^{\mathcal{K}},
	\end{split}
\end{align}
by using \eqref{Xe c small},\eqref{def Kx} and $\mathfrak{m}_2(\mathcal{O}_j^{\mathcal{K}})\lesssim \e$ for all $j=1,2,\dots,l_K$. 
\end{proof}
\hide
From above lemma, we define $\mathcal{K}\in \overline{S_{0}}\times \S^{2}$
\Be \label{K}
\begin{split}
	\mathcal{K} &:=  \bigcup_{i=1}^{l_{\mathcal{K}}} \Big\{ (B(x_{i}, r_{i}) \cap \overline{S_0})\times \mathcal{O}_{i}^{\mathcal{K}}  \Big\},
\end{split}
\Ee
Hence $\mathcal{K}$ is an open $\varepsilon$-neighborhood. By taking complement,
\Be \label{Kc}
\begin{split}
	\mathcal{K}^{c} &:=  \Big\{ (x,v)\in \overline{S_{0}}\times \S^{2} \ : \ x\in \{ B(x_i,r(x_i,\varepsilon)) \cap \overline{S_{0}} \}, \ v\in (\mathcal{O}_{i}^{\mathcal{K}})^{c} \ \text{for some $1\leq i \leq l_{K}$} \Big\}. 
\end{split}
\Ee
\unhide

\hide
\begin{remark}
Note that above $\mathcal{K}^{c}$ is good set in which trajectory does not undergo $\gamma_0^{I}$. So $\mathcal{K}$ corresponds to $\mathcal{IB}^{c}$ in \cite{cylinder}. Also for fixed $x\in \overline{S_{0}}$, measure $|\{v \in \S^2 : \ (x,v)\in \mathcal{K} \}| \lesssim \varepsilon$ since such $v$ must belong to every other $\mathcal{O}_{j}^{\mathcal{K}}$ for all $j$ such that $x$ belongs. So, using definiton $\mathcal{K}$, we can briefly rewrite above lemma as following Lemma \ref{mB zero}. If $(x,v)\in \mathcal{K}^{c}$, then its trajectory does not graze up to $K$ bounce and there is no inflection grazing for whole length $L$.
\end{remark}
\unhide

Now we obtain uniform number of bounce on $\mathcal{K}^{c}=X^{\varepsilon} \backslash \mathcal{K}$.

\begin{lemma} \label{bounce K}
	$\mathcal{N}(x,v, L)$ in \eqref{nongrazing NB} is pointwisely finite for each $(x,v) \in \mathcal{K}^{c}$ by Proposition \ref{no infty bounce}. We have a uniform number of bounces on $\mathcal{K}^{c}$ set, i.e.,
	\[
	\sup_{(x,v)\in \mathcal{K}^{c} } \mathcal{N}(x,v,L) \leq M=M(\varepsilon,L),
	\]
	where the constant $M$ depends on $\varepsilon$ and $L$. 
\end{lemma}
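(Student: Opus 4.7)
The plan is to combine pointwise finiteness with local upper semi-continuity of the bounce counter and then close by compactness of $\mathcal{K}^{c}$.

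First, I would verify pointwise finiteness $\mathcal{N}(x,v,L)<\infty$ for each $(x,v)\in\mathcal{K}^{c}$. By construction of $\mathcal{K}^{c}$ in Lemma \ref{construct K}, the backward trajectory from $(x,v)$ avoids $\gamma_{0}^{I}$ during the entire travel length $L$. Moreover $\mathcal{K}^{c}\subset X^{\varepsilon}$, so by the verification already carried out in the proof of Lemma \ref{trajectory invertible}, the non-degeneracy assumptions \eqref{assump 1} and \eqref{assump 3} needed for Lemmas \ref{compare} and \ref{difference} are satisfied. Consequently the hypothesis of Lemma \ref{no infty bounce} holds, and we conclude $\mathcal{N}(x,v,L)<\infty$.

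Second, I would extract a local upper bound. Fix $(x,v)\in\mathcal{K}^{c}$ and set $M_{x,v}:=\mathcal{N}(x,v,L)$. Since the trajectory never encounters $\gamma_{0}^{I}$ within $L$, the ``resp'' branch of Lemma \ref{analyticity}(c) applies: there exists $\delta_{x,v}>0$ such that
\begin{equation*}
\mathcal{N}(y,u,L)\leq M_{x,v}\qquad\text{whenever } |(y,u)-(x,v)|<\delta_{x,v}.
\end{equation*}
This gives an open neighborhood of $(x,v)$ in $\overline{S_{0}}\times \mathbb{S}^{2}$ on which the bounce counter is controlled by $M_{x,v}$.

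Third, I would close by compactness. By Lemma \ref{construct K} (and the closedness of $X^{\varepsilon}$ together with openness of $\mathcal{K}$ in $X^{\varepsilon}$), the set $\mathcal{K}^{c}=X^{\varepsilon}\setminus\mathcal{K}$ is compact. The open cover
\begin{equation*}
\mathcal{K}^{c}\subset\bigcup_{(x,v)\in\mathcal{K}^{c}} B_{\delta_{x,v}}\!\bigl((x,v)\bigr)
\end{equation*}
admits a finite subcover indexed by $\{(x_{i},v_{i})\}_{i=1}^{m}$. Setting
\begin{equation*}
M=M(\varepsilon,L):=\max_{1\leq i\leq m}\mathcal{N}(x_{i},v_{i},L)<\infty,
\end{equation*}
we obtain $\sup_{(x,v)\in\mathcal{K}^{c}}\mathcal{N}(x,v,L)\leq M$, as required.

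The main technical obstacle is ensuring that the upper-semicontinuity statement of Lemma \ref{analyticity}(c) is applicable at every point of $\mathcal{K}^{c}$; this is precisely what the construction of $\mathcal{K}$ in Lemma \ref{construct K} was designed for, since it removes all phase points whose forward/backward orbit meets $\gamma_{0}^{I}$ within length $L$. Concave grazing $\gamma_{0}^{C}$ may still occur past the first $K$ bounces, but the ``resp.\ $\notin\gamma_{0}^{I}$'' version of Lemma \ref{analyticity}(c) is exactly what tolerates this situation (at the cost of possibly relabeling bounces via the dichotomy of Lemma \ref{basic}), so no additional argument is needed.
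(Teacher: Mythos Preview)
Your proposal is correct and follows essentially the same route as the paper: establish pointwise finiteness from the absence of inflection grazing on $\mathcal{K}^{c}$, obtain a local upper bound on the bounce count by continuity (you cite Lemma~\ref{analyticity}(c) directly, while the paper points to the counting argument of Proposition~\ref{unif bounce}, which is the same mechanism), and then close by compactness of $\mathcal{K}^{c}$.
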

\begin{proof}
	 Since we have excluded $B_{L}^{\varepsilon} \subset \cupdot_{j=1}^{K} (B_j)$ uniformly in the definition of $\mathcal{K}^{c}$,  there is no more inflection grazing bounce. Now, using similar argument which was used in the proof of Proposition \ref{unif bounce} (for counting) and compactness of $\mathcal{K}^{c}$ (because $\mathcal{K}$ is open in $X^\varepsilon\subset \overline{S_0}\times \S^2$), we get the result.  
\end{proof}

Using above number of bounces away from $\mathcal{K}$, let us assume that $M$ of Lemma \ref{bounce K} is bigger than $K$ of Proposition \ref{unif bounce} WLOG. Since $(x^j(x,v),v^{j-1}(x,v))\notin\gamma_0^{I}$ for $(x,v)\in \mathcal{K}^c$ and $K+1\leq j \leq M$, we define the following (similar to \eqref{def GB})
\Be \label{def GB2}
\begin{split}
	(G_{K+1})_x&:= \{v \in (G_{K})_x: (x^{K+1}(x,v),v^{K}(x,v))\notin \gamma_0^C  \},\\ 
	(B_{K+1})_x&:=\{v \in (G_{K})_x:  (x^{K+1}(x,v), v^{K}(x,v))\in \gamma_0^C  \}, \\
	\vdots \\
	(G_{M})_x&:= \{v \in (G_{M-1})_x: (x^{M}(x,v),v^{M-1}(x,v))\notin \gamma_0^C  \},\\ 
	(B_{M})_x&:=\{v \in (G_{M-1})_x: (x^{M}(x,v),v^{M-1}(x,v))\in \gamma_0^C  \}, \\
\end{split}
\Ee
with decomposition 
\Be \label{BG decomp M}
\begin{split}
	(X^{\varepsilon})_{x} := \{v\in \S^{2} : (x,v)\in X^{\varepsilon} \}  
	= \Big( \cupdot_{j=1}^{M} (B_j)_{x} \Big) \cupdot (G_M)_{x}. \\
\end{split}
\Ee \\

\section{Construction of Bad sets}

\subsection{Construction of $\mathcal{IB}$ set} 
In this section, we prove that $\mathfrak{m}_{2} ((B_{j})_{x}) = 0$ for all $1\leq j \leq M$ which we have assumed from \eqref{def Ki}.
\hide
\begin{remark} \label{Remark2}
	Now we should classify small neighborhood which cause trouble. To argue small size of such neighborhood, we need analyticity. So we will consider $S_0^{\delta}$ to exclude $\delta$ layer on $S_0$. Note that this looks essential since we need analyticity of $X(s(\cdot, \cdot); 0, \cdot, \cdot)$ which is guaranteed when $X(s(x,v);0,x,v)\notin \p S_0$ by the second statement of Lemma \ref{lem time s}.  \\
\end{remark}
\unhide
$\newline$
\hide
\noindent \textbf{5-1. Bad set is measure zero when $x\in \overline{S_{0}}$} \\
To obtain analyticity of trajectories and $X(s(\cdot, \cdot);0, \cdot, \cdot)$, we define $S_{0}^{\delta}$ ($\delta$-layer removed domain)
\Be \label{S_0 delta}
S_{0}^{\delta} = \{ x\in S_{0} : \text{dist}(\p S_{0}, x) > \delta \},\quad \delta > 0.
\Ee 
\unhide

\begin{lemma}\label{Lusin}(Lusin's property)
	Let $f:\R^n\rightarrow \R^n$ be a Lipschitz continuous function. Then, $f(E)$ has measure zero in $\R^n$ for a measure zero set $E\subset \R^n$. 
\end{lemma}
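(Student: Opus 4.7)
The plan is to use the standard covering argument: if $f$ is $L$-Lipschitz, then $f$ maps a set of diameter $d$ into a set of diameter at most $Ld$, so multiplying a cover of $E$ by the factor $L$ controls the outer measure of $f(E)$.

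First, I would fix the Lipschitz constant $L$ of $f$ and let $\e > 0$ be arbitrary. Since $E \subset \R^n$ has Lebesgue measure zero, by definition of the outer measure I can choose a countable cover of $E$ by open balls $\{B(x_i, r_i)\}_{i \in \mathbb{N}}$ with
\begin{equation*}
E \subset \bigcup_{i=1}^{\infty} B(x_i, r_i), \qquad \sum_{i=1}^{\infty} \omega_n r_i^n \;<\; \frac{\e}{L^n},
\end{equation*}
where $\omega_n$ denotes the Lebesgue measure of the unit ball in $\R^n$.

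Next, I would use the Lipschitz bound $|f(y) - f(x_i)| \leq L |y - x_i|$ valid for every $y \in \R^n$. This immediately gives the inclusion
\begin{equation*}
f(B(x_i, r_i)) \;\subset\; B(f(x_i),\, L r_i) \quad \text{for every } i,
\end{equation*}
and hence
\begin{equation*}
f(E) \;\subset\; \bigcup_{i=1}^{\infty} B(f(x_i),\, L r_i).
\end{equation*}
Taking outer measures and using countable subadditivity,
\begin{equation*}
\mathfrak{m}_n(f(E)) \;\leq\; \sum_{i=1}^{\infty} \omega_n (L r_i)^n \;=\; L^n \sum_{i=1}^{\infty} \omega_n r_i^n \;<\; L^n \cdot \frac{\e}{L^n} \;=\; \e.
\end{equation*}
Since $\e > 0$ was arbitrary, $\mathfrak{m}_n(f(E)) = 0$, completing the proof.

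There is no real obstacle here; the only subtle point is that $f(E)$ is not a priori measurable, so the argument should be interpreted in terms of outer measure. Because outer measure zero implies Lebesgue measurability (with measure zero), this subtlety causes no trouble. The proof is essentially a one-paragraph calculation, and I do not expect to need any additional machinery beyond the definition of Lebesgue outer measure and countable subadditivity.
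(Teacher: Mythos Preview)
Your proof is correct and follows essentially the same covering argument as the paper: cover $E$ by balls of small total volume, use the Lipschitz bound to enlarge the radii by the factor $L$, and conclude by countable subadditivity of outer measure. The only cosmetic difference is that you normalize the cover so the final bound is $\e$ rather than $L^n\e$, and you explicitly note the outer-measure subtlety; otherwise the arguments are identical.
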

\begin{proof}
	Let $E$ be a set of measure zero in $\R^n$. Then, for any $\varepsilon>0$, there exist countably many open balls $E\subset \cup_{j=1}^\infty B(x_j,r_j)$ with 
\begin{align*}
	\sum_{j=1}^\infty \mathfrak{m}_{n} (B(x_j,r_j)) \leq \varepsilon, 
\end{align*}
where $\mathfrak{m}_{n}$ is Lebesgue measure on $\R^n$ and $B(x,r)$ be an open ball centered at $x$ with radius $r$. By Lipschitz continuity of $f$, for each $j$, $f(B(x_j,r_j))$ is contained in a ball of radius $L\cdot r_j$ centered $f(0)$ where $L$ is the Lipschitz constant of $f$. Thus, 
\begin{align*}
	\mathfrak{m}_{n}^* (f(B(x_j,r_j)))\leq L^n \mathfrak{m}_{n} (B(x_j,r_j)),
\end{align*}
where $\mathfrak{m}_{n}^*$ be Lebesgue outer measure on $\R^n$. Hence, we get 
\begin{align*}
	\mathfrak{m}_{n}^* (f(E)) \leq L^n \sum_{j=1}^\infty \mathfrak{m}_{n} (B(x_j,r_j)) <L^n \varepsilon. 
\end{align*}
Since $\varepsilon>0$ was arbitrary, $f(E)$ has zero outer measure, which means $f(E)$ is a null set in $\R^n$. 
\end{proof}

\begin{proposition} \label{mB zero}
	 Let $M$ be the constant defined in Lemma \ref{bounce K} and $S_0$ be $\varphi=0$ cross-section in \eqref{cross section}. Here, the constant $M=M(\varepsilon,L)$ depends on $\varepsilon$ and travel length $L$. We denote $\overline{S_0}$ in Definition \ref{varphi cross section} as closure of $S_0$. Recall that we constructed bad direction sets $\{B_j\}_{j=1}^M$ in \eqref{def GB} and \eqref{def GB2}. Then, for fixed $x\in \overline{S_{0}}$,
	\[
		\mathfrak{m}_{2} ((B_{j})_{x} )= 0, \quad 1\leq j \leq M.
	\]
\end{proposition}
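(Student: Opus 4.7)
I would split
$(B_j)_x = (B_j)_x^{I} \cupdot (B_j)_x^{C}$
according to whether the $j$-th backward bounce of $(x,v)$ lies in $\gamma_0^{I_-}$ or in $\gamma_0^{C}$. Since the first $j-1$ backward bounces of any $v \in (B_j)_x$ are strictly non-grazing and the total bounce count up to length $L$ is at most $M$ (Lemma \ref{bounce K}), Lemma \ref{trajectory invertible} makes the backward trajectory reversible, so each $v \in (B_j)_x$ arises as $v = V(s_j;\,0,\sigma(\tau,\varphi),u)$ for some grazing starting phase $(\sigma(\tau,\varphi),u) \in \gamma_0^{I_-}\cup\gamma_0^{C}$ whose forward trajectory reaches $\overline{S_0}$ after exactly $j-1$ intermediate non-grazing bounces, where $s_j$ is the forward arrival time. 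The goal is then to show that the preimage (in the parameter space of such grazing data) of the constraint $X_j := X(s_j;\,0,\sigma(\tau,\varphi),u) = x$ is small enough that its image under the analytic map $V_j := V(s_j;\cdots)$ has 2-dimensional measure zero in $\S^2$, using Lemma \ref{lem time s}, Lemma \ref{analyticity}, Lemma \ref{analytic_zeros}, and Lemma \ref{Lusin}.

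\textbf{Inflection case.} Here $u = I^{2}(\tau,\varphi)$ and the parameter space is the 2-dimensional domain $\{(\tau,\varphi): \tau \in (\tau_{1,*},\tau_{2,*})\setminus Z_h,\ \varphi \in (-2\pi\mathbf{w},0]\}$ from Lemma \ref{inflection}. By the grazing-free prefix and the condition $|v\cdot n(x)|>\varepsilon/2$ built into $X^\varepsilon$, Lemmas \ref{analyticity}(b) and \ref{lem time s} imply that both $F_j(\tau,\varphi):=X_j(\tau,\varphi)-x$ and $V_j(\tau,\varphi)$ are real-analytic. Lemma \ref{analytic_zeros} then yields a dichotomy: either $F_j \not\equiv 0$ on each connected component, in which case $\mathfrak{m}_2(F_j^{-1}(0))=0$ and Lemma \ref{Lusin} applied to the locally Lipschitz $V_j$ gives $\mathfrak{m}_2\bigl(V_j(F_j^{-1}(0))\bigr)=0$; or $F_j \equiv 0$ on an open set. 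In the second alternative I would use the axial-symmetry identity
\begin{equation*}
X(s;\,0,\sigma(\tau,\varphi+\psi),I^2(\tau,\varphi+\psi)) = \mathcal{R}_\psi\, X(s;\,0,\sigma(\tau,\varphi),I^2(\tau,\varphi)),
\end{equation*}
where $\mathcal{R}_\psi$ is rotation by $\psi$ around the $z$-axis. Differentiating $F_j \equiv 0$ in $\varphi$ at the arrival point $x \in S_0$ (where $\hat{z}\times x = r\,\hat{\varphi}(0)$ with $r=|x\times\hat{z}|>0$) gives
\begin{equation*}
V_j(\tau,\varphi)\,\partial_\varphi s_j(\tau,\varphi) + r\,\hat{\varphi}(0) \;=\; 0,
\end{equation*}
and since $\partial_\varphi s_j \neq 0$ by positivity of the angular momentum in \eqref{ang vel}, we conclude $V_j \parallel \hat{\varphi}(0)$. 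This places $V_j \in R_{\hat{\varphi}}^\varepsilon$, contradicting $v \in X^\varepsilon$ by Lemma \ref{lem Xe}(ii). So the second alternative is vacuous, and $\mathfrak{m}_2((B_j)_x^I)=0$.

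\textbf{Concave case.} Now $u = v_c(\eta)$ with $\eta\in(0,1)$, giving a 3-dimensional parameter space while the constraint $X_j(\tau,\varphi,\eta)=x$ imposes two analytic equations (both components in $\overline{S_0}\subset\R^2$). The dimensions no longer match directly, so I would invoke the analytic implicit function theorem (Lemma \ref{analytic IFT}): on the analytic open set where some $2\times 2$ minor of $\partial X_j/\partial(\tau,\varphi,\eta)$ is nondegenerate, the solution set is a real-analytic 1-manifold and its image under the Lipschitz $V_j$ is a 1-dimensional subset of $\S^2$, hence $\mathfrak{m}_2$-null. On the complementary analytic stratum where every $2\times 2$ minor vanishes, the rank of $\partial X_j$ drops to $\leq 1$; by connectedness of the ambient analytic domain, this stratum is either lower-dimensional (again null after $V_j$, using Lemma \ref{Lusin}) or forces the only non-trivial column of $\partial X_j$ to be $\partial_\varphi X_j$, in which case the same axial-symmetry computation as in the inflection case forces $V_j \parallel \hat{\varphi}(0)$ and again contradicts the exclusion of $R_{\hat{\varphi}}^\varepsilon$ in Lemma \ref{lem Xe}(ii).

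\textbf{Main obstacle.} The hardest part will be the degenerate sub-case of the concave analysis where $\partial X_j$ has rank $\leq 1$: one must show by real analyticity, combined with the explicit exclusion of the velocity neighbourhoods $R_{\hat\varphi}^\varepsilon \cup R_\perp^\varepsilon \cup R_{sym}^\varepsilon \cup \bigcup_{j}R_{x,\tau^{\mathbf{z}}_j}^\varepsilon$ in the definition of $X^\varepsilon$, that this degenerate stratum cannot carry a positive-measure image in $\S^2$. This requires propagating the analytic dependence bounce-by-bounce through the reflection operator (controlled by Lemmas \ref{analyticity} and \ref{analytic IFT}) and using the finiteness of the winding number $\mathbf{w}$ and of $M=M(\varepsilon,L)$ to split the parameter space into finitely many real-analytic pieces on which Lemma \ref{analytic_zeros} applies cleanly.
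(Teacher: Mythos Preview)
Your overall strategy—splitting into inflection and concave grazing, parametrizing the grazing data, pushing forward along the analytic trajectory map, and ruling out the identically-zero alternative via axial symmetry and the excluded velocity regions of $X^\varepsilon$—matches the paper, and your inflection case is essentially correct.

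The gap is in the concave case. Your rank stratification of the $2\times 3$ Jacobian $\partial X_j/\partial(\tau,\varphi,\eta)$ does not close: if the rank-$\leq 1$ locus is a $2$-dimensional analytic subvariety of the $3$-dimensional parameter domain, the zero set of $F_j$ on it can itself be $2$-dimensional, and Lusin (Lemma~\ref{Lusin}) gives no control on its image in $\S^2$ since the dimensions no longer match. Your fallback—that rank $\leq 1$ on an open set ``forces the only non-trivial column of $\partial X_j$ to be $\partial_\varphi X_j$''—is unjustified: rank $1$ only forces the three columns to be proportional, not that $\partial_\tau X_j$ and $\partial_\eta X_j$ vanish, so the axial-symmetry identity does not directly yield $V_j\parallel\hat\varphi(0)$. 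The paper avoids this by privileging $\varphi$ from the start: the rotation identity gives the $S_0$-projection of $\partial_\varphi\tilde F_j$ explicitly as $(V_1,V_3)\,\partial_\varphi s$, which is nonzero unless $V\parallel\hat\varphi(0)$ (excluded by $R^\varepsilon_{\hat\varphi}$). Hence Lemma~\ref{analytic IFT} can \emph{always} be applied in $\varphi$, collapsing to a genuine $2$-parameter domain $(\tau,\eta)$ with a single scalar analytic constraint, where Lemmas~\ref{analytic_zeros} and~\ref{Lusin} apply in matching dimension. The residual identically-zero subcase is then excluded via $R^\varepsilon_{sym}$ (forcing $V_1=V_3$)—an exclusion you never invoke, which signals that your degenerate branch is incomplete. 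A secondary point: you do not distinguish $x\in S_0$ from $x\in\partial S_0$; Lemma~\ref{lem time s} requires interior arrival for analyticity of $s$, and the paper handles boundary $x$ separately via the forward bounce map $\xf^j$.
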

\begin{proof} For fixed $x\in \overline{S_0}$ and $1\leq j \leq M$, we defined the set $(B_j)_x$ as 
\begin{align*}
	(B_j)_x= \{v \in (G_{j-1})_x : (x^{j}(x,v),v^{j-1}(x,v))\in \gamma_0^C \cup \gamma_0^I\},
\end{align*}
where $(G_{j-1})_x$ was constructed inductively in \eqref{def GB} and \eqref{def GB2}. To get $\mathfrak{m}_{2} ((B_{j})_{x}) = 0$, 
	We separate into concave grazing ($\gamma_0^C$) and inflection grazing ($\gamma_0^I$). \\
\noindent {\bf Step 1} In this step, we deal with concave grazing case in $(B_j)_x$. We claim 
\begin{align*}
\mathfrak{m}_{2}(\{v \in (G_{j-1})_{x} : (x^j(x,v),v^{j-1}(x,v))\in \gamma_0^C\} )= 0.
\end{align*}
\noindent{(\bf{Interior part $S_0$})} Firstly, we consider the interior of $\overline{S_0}$. Let $x\in {S_0}$. Note that we exclude $R_{x,\tau_{1,*}}^{\varepsilon}$ and $R_{x,\tau_{2,*}}^{\varepsilon}$ from $(X^{\varepsilon})_x$ defined in \eqref{BG decomp M}. Moreover, inflection grazing does not happen in trajectory between $(x,v)$ and $(x^j(x,v),v^{j-1}(x,v))$. From Lemma \ref{no infty bounce}, one obtains that trajectory is invertible. Using an invertible map $(X,V)(s(\cdot))$, $(x,v)$ can be expressed by
\begin{align*}
\begin{split}
	(x,v) = (X, V)(s(\sigma(\tau, \varphi),\varphi, v_c(\eta)); 0, \sigma(\tau, \varphi), v_{c}(\eta)),
\end{split}
\end{align*}
where $s(\cdot,\cdot,\cdot)$ be the arrival time defined in \eqref{def time s}, $\sigma(\cdot,\cdot)$ be the parametrization of boundary $\p\O$ in \eqref{sigma para}, and $v_c(\cdot)$ be parametrization for concave grazing in \eqref{concave para}. 
We reparametrize into
\Be \label{Q_L^e}
\begin{split} 
	Q_{L}^{\varepsilon, j}&:= \left\{ (\tau, \varphi, \eta) : 
	\begin{array}{cc}
	(\sigma(\tau, \varphi), v_{c}(\eta)) = (x^{j}(y,u), v^{j-1}(y,u))\in \gamma_{0}^{C}\\
	 \ \text{for some} \ (y,u)\in {S_0}\times \S^2, \ u\in (G_{j-1})_{y}
	\end{array}
	 \right\}. 
\end{split}
\Ee
We consider 
\begin{align*}
	\tilde{F}_j(\tau,\varphi,\eta):= X(s(\sigma(\tau,\varphi),\varphi,v_c(\eta));0, \sigma(\tau,\varphi), v_{c}(\eta)) - x, \quad (\tau, \varphi, \eta) \in Q_{L}^{\varepsilon,  j}.   \\  
\end{align*}

First, for analyticity of the function $\tilde{F}_j(\tau,\varphi,\eta)$ on $Q_{L}^{\varepsilon,j}$ later, we should claim that $Q_{L}^{\varepsilon,j}$ is open in $(\tau,\varphi,\eta)$ space. 
Note that $\sigma$ in \eqref{sigma para} and $v_{c}$ in \eqref{concave para} are all locally smooth. Now, let us consider forward in time trajectory from $(\sigma(\tau, \varphi), v_{c}(\eta)) = (x^{j}(y,u), v^{j-1}(y,u))$. By definition of $(G_{j-1})_{y}$ and Lemma \ref{trajectory invertible}, the trajectory is non-grazing until it arrives at $(y,u)$ and hence the map
\[	
	(\tau, \varphi, \eta) \mapsto (\sigma(\tau, \varphi), v_{c}(\eta))  \mapsto (X,V)(s(\sigma(\tau, \varphi),
	\varphi,v_c(\eta)); 0, \sigma(\tau, \varphi), v_{c}(\eta)) \in S_0 \times \S^{2}
\]
is continuous with finite number of bounce by Lemma \ref{lem time s}. Now, for sufficiently small $\varepsilon > 0$, there exists $\delta  \ll 1$ such that if $| (\tau, \varphi, \eta) - (\tau^{\prime}, \varphi^{\prime}, \eta^{\prime}) | < \delta$, then
\[
	|(y,u) - (y^{\prime}, u^{\prime})| < \varepsilon,
\] 
where 
\[
	(y^{\prime}, u^{\prime}) := (X,V)(s(\sigma(\tau^{\prime}, \varphi^{\prime}),\varphi^{\prime}, v_c(\eta^{\prime})); 0, \sigma(\tau^{\prime}, \varphi^{\prime}), v_{c}(\eta^{\prime}))  \in S_0\times \S^{2},
\]
or equivalently,
\[
	(\sigma(\tau^{\prime}, \varphi^{\prime}), v_{c}(\eta^{\prime})) = (x^{j}(y^{\prime}, u^{\prime}), v^{j-1}(y^{\prime}, u^{\prime})) \in \gamma_0^{C}.
\] 
Since $(y,u)\in G_{L}^{\varepsilon}$ and $G_{L}^{\varepsilon}$ is open by (a) in Lemma \ref{lem:open}, choosing $\varepsilon$ sufficiently small, $(y^{\prime}, u^{\prime}) \in G_{L}^{\varepsilon}$ either. Moreover,  since $x\in S_0$, if we choose sufficiently small $\varepsilon$ so that $\varepsilon < \frac{1}{2}\text{dist}(x, \p S_0)$ WLOG, applying Lemma \ref{analyticity} for forward in time trajectory from $(\sigma(\tau, \varphi), v_{c}(\eta))$, we obtain $u^{\prime} \in (G_{j-1})_{x^{\prime}}$.  
And, by definition \eqref{Q_L^e} of $Q_{L}^{\varepsilon,j}$, we obtain that $(\tau^{\prime}, \varphi^{\prime}, \eta^{\prime}) \in Q_{L}^{\varepsilon, j}$ which means that $Q_{L}^{\varepsilon, j}$ is open. Moreover, $\tilde{F}_j$ is locally analytic on $Q_{L}^{\varepsilon,j}$ by Lemma \ref{lem time s}. \\  

Now, let us use coordinate vectors $\{\hat{x},\hat{y},\hat{\varphi}\}$ where $\hat{x},\hat{y}$ are perpendicular coordinate in $S_0$ and $\hat{\varphi}$ is perpendicular to $S_0$ such as \eqref{coord}. Specifically, since cross section $S_0$ in \eqref{cross section} is on $xz$-plane, $\hat{x}=(1,0,0),\hat{y}=(0,0,1),$ and $\hat{\varphi}=(0,1,0)$. Using this coordinate, we write $\tilde{F}_{j}$ as 
\begin{align} \label{F_j coord}
	\begin{bmatrix}  \tilde{F}_j^{x} \\ \tilde{F}_j^y \\  \tilde{F}_j^{\varphi} \end{bmatrix}
	= \begin{bmatrix} X_1(\tau,\varphi,\eta)-x_1 \\ X_3(\tau,\varphi,\eta)-x_3 \\ X_2(\tau,\varphi,\eta)-x_2\end{bmatrix}=\begin{bmatrix} X_1(\tau,\varphi,\eta)-x_1 \\ X_3(\tau,\varphi,\eta)-x_3 \\ 0 \end{bmatrix},
\end{align}
where $X(s(\sigma(\tau,\varphi),\varphi,v_c(\eta));0, \sigma(\tau,\varphi), v_{c}(\eta))=(X_1(\tau,\varphi,\eta),X_2(\tau,\varphi,\eta),X_3(\tau,\varphi,\eta)),$\\$ x=(x_1,x_2,x_3) \in S_0$. 
By definition of $s(\sigma(\tau,\varphi),\varphi,\eta)$ in \eqref{def time s}, we know that $\tilde{F}_j^\varphi\equiv 0$. Note that we abbreviate notation:
\begin{align} \label{notation abbrev}
	(X,V)(s(\tau,\varphi,\eta);0,\tau,\varphi,\eta):= (X,V)(s(\sigma(\tau,\varphi),\varphi,v_c(\eta));0,\sigma(\tau,\varphi),v_c(\eta)). 
\end{align}
We split things into several cases: \\

\textit{(Case 1)} $X(s(\tau,\varphi,\eta);0,\tau,\varphi,\eta)$ is not a constant vector with respect to $(\tau,\varphi,\eta)\in Q_{L}^{\varepsilon,j}$. \\
Assume that $\tilde{F}_{j}=0$ at some point $(\tau_0,\varphi_0,\eta_0) \in Q_{L}^{\varepsilon,  j}$. Motivated by the $z$-axial symmetry structure, we further split cases:
\begin{align*}
\begin{cases} \textit{(Case 1-a)}\;\left. \frac{\partial \tilde{F}_j}{\partial \varphi}\right \vert_{(\tau_0,\varphi_0,\eta_0)}  \neq 0, \\ \\ \textit{(Case 1-b)}\;\left. \frac{\partial \tilde{F}_j}{\partial \varphi}\right \vert_{(\tau_0,\varphi_0,\eta_0)}  = 0. \end{cases}
\end{align*}
In \textit{(Case 1-a)}, at $(\tau_0,\varphi_0,\eta_0)$, we have either
\begin{align*}
	\left.\frac{\partial \tilde{F}_j^{x}}{\partial \varphi}\right \vert_{(\tau,\varphi,\eta)=(\tau_0,\varphi_0,\eta_0)} \neq 0 \quad \textrm{ or } \quad \left.\frac{\partial \tilde{F}_j^{y}}{\partial \varphi}\right \vert_{(\tau,\varphi,\eta)=(\tau_0,\varphi_0,\eta_0)} \neq 0,
\end{align*}
where $\tilde{F}_j=(\tilde{F}_j^x,\tilde{F}_j^y,\tilde{F}_j^\varphi)$ in \eqref{F_j coord}. WLOG, we assume that $\left. \frac{\partial \tilde{F}_j^{x}}{\partial \varphi}\right \vert_{(\tau_0,\varphi_0,\eta_0)}\neq0$. Applying Lemma \ref{analytic IFT}, there exists an open set $U_{\tau_0,\eta_0}\subset \R^2$ containing $(\tau_0,\eta_0)$ such that there exists an analytic function $\varphi=\varphi(\tau,\eta)$ such that 
\begin{align*}
\varphi(\tau_0,\eta_0)=\varphi_0 \quad \text{and}\quad \tilde{F}_j^x(\tau,\varphi(\tau,\eta),\eta)=0 \; \text{for all }(\tau,\eta)\in \tilde{U}_{\tau_0,\eta_0},
\end{align*}
where $ \tilde{U}_{\tau_0,\eta_0}:=\Big\{(\tau,\eta)\in U_{\tau_0,\eta_0}: (\sigma(\tau,\varphi(\tau,\eta)),v_c(\eta))=(x^j(y,u),v^{j-1}(y,u))\in \gamma_0^C \; \text{for}$\\
$\text{some } (y,u)\in S_0 \times \S^2, u\in (G_{j-1})_y\Big\}$. 
 If $\tilde{F}_j^y(\tau,\varphi(\tau,\eta),\eta)$ is not identically zero on $\tilde{U}_{\tau_0,\eta_0}$, then the zero set $\mathcal{Z}_{x}^{F,j,C}$ of $\tilde{F}_j^{y}$ 
\begin{align} \label{zero set}
	\mathcal{Z}_{x}^{F,j,C}:=\{(\tau,\eta) \in \tilde{U}_{\tau_0,\eta_0} : \tilde{F}_j^{y}(\tau,\varphi(\tau,\eta),\eta)=0\}
\end{align}
has measure zero in $\R^2$ by Lemma \ref{analytic_zeros}. Note that the map
\begin{align*}
	(\tau,\eta)\mapsto V(s(\tau,\varphi(\tau,\eta),\eta);0,\sigma(\tau,\varphi(\tau,\eta)),v_c(\eta))
\end{align*}
is locally analytic on $\tilde{U}_{\tau_0,\eta_0}$ because $\varphi(\tau,\eta)$ is locally analytic on $U_{\tau_0,\eta_0}$ and Lemma \ref{lem time s}. Also, the zero set $\mathcal{Z}_x^{F,j,C}$ of the analytic function $\tilde{F}_j^y$ is compact. Due to the fact that analytic function on compact set is Lipschitz function, we can apply Lemma \ref{Lusin}, and then
\begin{align} \label{zero image}
\left \{v\in \S^2 : v=V(s(\tau,\varphi(\tau,\eta),\eta);0,\sigma(\tau,\varphi(\tau,\eta)),v_c(\eta)) \quad \textrm{for } (\tau,\eta) \in \mathcal{Z}_x^{F,j,C}\right \}
 \end{align}
 has measure zero in $\S^2$. \\
 
 Otherwise, if $\tilde{F}_j^y(\tau,\varphi(\tau,\eta),\eta)$ is identically zero on $\tilde{U}_{\tau_0,\eta_0}$, then $\tilde{F}_j^{x}(\tau,\varphi(\tau,\eta),\eta) = \tilde{F}_j^{y}(\tau,\varphi(\tau,\eta),\eta)$ for all $(\tau,\eta)\in \tilde{U}_{\tau_0,\eta_0}$. By differentiating $\tilde{F}_j$ with respect to $\varphi$, then from $z$-axial symmetry of our domain $\O$, 
 \begin{equation} \label{dvarphi F}
 \begin{split}
 	\left. \frac{\partial \tilde{F}_j}{\partial \varphi} \right \vert _{(\tau,\varphi,\eta)=(\tau_0,\varphi_0,\eta_0)}&= \left. \frac{\partial X(s;0,\tau,\varphi,\eta)}{\partial s} \frac{\partial s}{\partial \varphi} \right \vert_{s=s(\tau_0,\varphi_0,\eta_0), (\tau,\varphi,\eta)=(\tau_0,\varphi_0,\eta_0)} \\
 	& \quad + \left. \frac{\partial X(s;0,\tau,\varphi,\eta)}{\partial \varphi} \right \vert_{s=s(\tau_0,\varphi_0,\eta_0), (\tau,\varphi,\eta)=(\tau_0,\varphi_0,\eta_0)}.
  \end{split}
 \end{equation}
We claim 
 \[
 	\left. \frac{\partial X(s;0,\tau,\varphi,\eta)}{\partial \varphi} \right \vert_{s=s(\tau_0,\varphi_0,\eta_0),(\tau,\varphi,\eta)=(\tau_0,\varphi_0,\eta_0)}= k_{\tau_0, \varphi_0, \eta_0}\hat{\varphi}(0), 
 \]
 for some constant $k_{\tau_0, \varphi_0, \eta_0}$, where $\hat{\varphi}(0)$ in Definition \ref{cross section}. First, let us define the $z$-axial rotation operator 
 \begin{align*}
 	R_{\Delta \varphi} = \begin{pmatrix}
 		\cos\Delta\varphi & -\sin\Delta\varphi & 0 \\
 		 \sin\Delta\varphi & \cos\Delta\varphi & 0 \\
 		 0 & 0 & 1 \\
 	\end{pmatrix},
 \end{align*}
 for some $\Delta \varphi \in \R$.  If we write $(y,u) = R_{\Delta \varphi}(x,v)$, by axial symmetric property of $\O$, we have $R_{\Delta \varphi}\overline{\O} = \overline{\O}$ for any $\Delta\varphi \in \R$. Hence, $R_{-\Delta \varphi} X(s;0, y,u) = X(s; 0, x, v)$ which is equivalent to 
 \Be\label{axisym}
 	X(s; 0, R_{\Delta\varphi}x, R_{\Delta\varphi}v) = R_{\Delta\varphi}X(s; 0, x, v).
 \Ee
 Therefore, for fixed $s\in \R$, using \eqref{axisym} gives 
 \begin{equation} \label{deriv X,1}
 \begin{split}
 	&\left.\frac{\partial X(s;0,\tau,\varphi,\eta)}{\partial \varphi}\right\vert_{(\tau,\varphi,\eta)=(\tau_0,\varphi_0,\eta_0)}\\
	& = \left.\frac{\partial X(s;0,\sigma(\tau,\varphi),v_c(\eta))}{\partial \varphi}\right\vert_{(\tau,\varphi,\eta)=(\tau_0,\varphi_0,\eta_0)}\\
	 &=\lim_{\Delta\varphi\rightarrow 0} \frac{1}{\Delta\varphi}\big( X(s; 0, \sigma(\tau_0, \varphi_0+\Delta\varphi),v_c(\eta_0)) - X(s; 0, \sigma(\tau_0, \varphi_0),v_c(\eta_0)) \big)  \\
 	&= \lim_{\Delta\varphi\rightarrow 0} \frac{1}{\Delta\varphi} (R_{\Delta\varphi}-I) X(s; 0, \sigma(\tau_0, \varphi_0), v_c(\eta_0))  \\
 	&= \begin{pmatrix}
 	0 & -1 & 0 \\
 	1 & 0 & 0 \\
 	0 & 0 & 0 \\
 	\end{pmatrix}
 	X(s; 0, \tau_0, \varphi_0, \eta_0).  \\
 	 \end{split}
 \end{equation}
At $s=s(\tau_0,\varphi_0,\eta_0)=s(\sigma(\tau_0,\varphi_0),\varphi_0,v_c(\eta_0))$ in Lemma \ref{lem time s}, from the calculation above, we have 
 \begin{align} \label{deriv X,2} 
 	\begin{split}
 	\left. \frac{\partial X(s;0,\tau,\varphi,\eta)}{\partial \varphi} \right \vert_{s=s(\tau_0,\varphi_0,\eta_0),(\tau,\varphi,\eta)=(\tau_0,\varphi_0,\eta_0)}&=\begin{pmatrix}
 	0 & -1 & 0 \\
 	1 & 0 & 0 \\
 	0 & 0 & 0 \\
 	\end{pmatrix}
 	X(s(\tau_0,\varphi_0,\eta_0); 0, \tau_0, \varphi_0, \eta_0)\\
	&=
	\begin{pmatrix}
 	0 & -1 & 0 \\
 	1 & 0 & 0 \\
 	0 & 0 & 0 \\
 	\end{pmatrix}
 	\begin{pmatrix}
	X_1(\tau_0,\varphi_0,\eta_0) \\ 0 \\ X_3(\tau_0,\varphi_0,\eta_0)
	\end{pmatrix}\\
	&=\begin{pmatrix}
	0 \\ X_1(\tau_0,\varphi_0,\eta_0) \\ 0 
	\end{pmatrix}\\
	&=k_{\tau_0,\varphi_0,\eta_0}\hat{\varphi}(0),
	\end{split}
 \end{align}
 where $X(s(\tau_0,\varphi_0,\eta_0);0,\tau_0,\varphi_0,\eta_0)=(X_1(\tau_0,\varphi_0,\eta_0),0,X_3(\tau_0,\varphi_0,\eta_0))\in S_0$ and $\hat{\varphi}(0)$ was defined in Definition \ref{cross section}. When we write vectors $V(s(\tau_0,\varphi_0,\eta_0);0,\sigma(\tau_0,\varphi_0),v_c$\\
 $(\eta_0))$ and $\hat{\varphi}(0)$ using $\{\hat{x},\hat{y},\hat{\varphi}\}$ coordinates as 
 \begin{align*}
 	V(s(\tau_0,\varphi_0,\eta_0);0,\sigma(\tau_0,\varphi_0),v_c(\eta_0))=\begin{pmatrix}
	V_1(\tau_0,\varphi_0,\eta_0) \\ V_3(\tau_0,\varphi_0,\eta_0) \\ V_2(\tau_0,\varphi_0,\eta_0)
	\end{pmatrix}, \quad \hat{\varphi}(0)=\begin{pmatrix}
	0 \\ 0 \\ 1
	\end{pmatrix}, 
 \end{align*}
 using \eqref{F_j coord},\eqref{dvarphi F}, and \eqref{deriv X,2} gives that 
 \begin{align*}
 	\left. \frac{\partial \tilde{F}_j}{\partial \varphi} \right \vert _{(\tau,\varphi,\eta)=(\tau_0,\varphi_0,\eta_0)}&=V(s(\tau_0,\varphi_0,\eta_0);0,\sigma(\tau_0,\varphi_0),v_c(\eta_0)) \frac{\partial s}{\partial \varphi}(\tau_0,\varphi_0,\eta_0) + k_{\tau_0,\varphi_0,\eta_0}\hat{\varphi}(0)\\
	&=\begin{pmatrix}
	V_1(\tau_0,\varphi_0,\eta_0) \frac{\partial s}{\partial \varphi}(\tau_0,\varphi_0,\eta_0) \\ V_3(\tau_0,\varphi_0,\eta_0) \frac{\partial s}{\partial \varphi}(\tau_0,\varphi_0,\eta_0) \\ V_2(\tau_0,\varphi_0,\eta_0) \frac{\partial s}{\partial \varphi}(\tau_0,\varphi_0,\eta_0) +k_{\tau_0,\varphi_0,\eta_0}
	\end{pmatrix}\\
	&=\left. \begin{bmatrix}
	 \frac{\partial \tilde{F}_j^x}{\partial \varphi} \\ \frac{\partial \tilde{F}_j^y}{\partial \varphi} \\ 0 
	\end{bmatrix}\right\vert_{(\tau,\varphi,\eta)=(\tau_0,\varphi_0,\eta_0)}.
 \end{align*}
 
Now, let us project the last two lines onto the cross-section $S_{0}$. If  $\tilde{F}_j^{x}(\tau,\varphi(\tau,\eta),\eta) = \tilde{F}_j^{y}(\tau,\varphi(\tau,\eta),\eta)$ hold as our assumption, above equation yields that 
\begin{align*}
	V_1(\tau_0,\varphi_0,\eta_0)=V_3(\tau_0,\varphi_0,\eta_0).
\end{align*}
 However, this is not possible, since we have excluded $R_{sym}^{\varepsilon}$ from $X^{\varepsilon}$ in Lemma \ref{lem Xe}. Therefore, $\tilde{F}_j^y(\tau,\varphi(\tau,\eta),\eta)$ cannot be identically zero. \\


Lastly, we investigate \textit{(Case 1-b)}. We use the calculation $\frac{\partial \tilde{F}_j}{\partial \varphi}$ in \textit{(Case 1-a)}. From conservation of angular momentum and \eqref{ang vel}, we have 
\begin{align*}
	\int_0^{s(\sigma(\tau,\varphi),\varphi,\eta)} \frac{\omega(\mathbf{x}(x,\varphi),v)}{|X(s;0,\mathbf{x}(x,\varphi),v)\times \hat{z}|} ds =\int_0^{s(\sigma(\tau,\varphi),\varphi,\eta)} \frac{\omega(x,v)}{|X(s;0,x,v)\times \hat{z}|}ds= -\varphi.
\end{align*}
By taking $\varphi$-derivative from the equation above, we have
 $\frac{\partial s}{\partial \varphi}<0$ (remind that we are assuming positive orientation, i.e., trajectory with positive angular momentum only) and 
\begin{equation*}
	\left. \frac{\partial \tilde{F}_j}{\partial \varphi} \right \vert _{(\tau_0,\varphi_0,\eta_0)}=V(s(\tau_0,\varphi_0,\eta_0);0,\sigma(\tau_0,\varphi_0),v_c(\eta_0)) \frac{\partial s}{\partial \varphi}(\tau_0,\varphi_0,\eta_0) +k_{\tau_0,\varphi_0,\eta_0}\hat{\varphi}(0) = 0,
\end{equation*}
the vector $V(s(\tau_0,\varphi_0,\eta_0);0,\sigma(\tau_0,\varphi_0),v_c(\eta_0))$ has $\hat{\varphi}$ direction. This is not possible since we have excluded $R_{\hat{\varphi}}^{\varepsilon}$ from $X^{\varepsilon}$ in Lemma \ref{lem Xe}. Hence, \textit{(Case 1-b)} cannot happen.  \\

\textit{(Case 2)} $X(s(\tau,\varphi,\eta);0,\tau,\varphi,\eta)$ is a constant vector for all $(\tau, \varphi, \eta)\in Q_{L}^{\varepsilon,  j}$ where we defined the set $Q_L^{\varepsilon,j}$ in \eqref{Q_L^e}.  \\
In this case, it holds that 
\begin{align*}
		\frac{\partial \tilde{F}_j}{\partial \varphi} = \frac{\partial}{\partial \varphi}( X(s(\tau,\varphi,\eta);0,\tau,\varphi,\eta))=0, \quad \forall (\tau,\varphi,\eta) \in Q_L^{\varepsilon,j}.
\end{align*}
From the same argument in \textit{(Case 1-b)}, this case cannot happen. \\

Now we use compactness. Since $\tilde{F}_{j}$ is continuous in its domain $Q_{L}^{\varepsilon, j}$ by Lemma \ref{lem time s}, the zero set $\mathcal{T}_{x}^{F,j,C}$ of the function $\tilde{F}_j$ 
\begin{align} \label{zero set T}
\mathcal{T}_{x}^{F,j,C} := \{ (\tau, \varphi, \eta) : \tilde{F}_{j} (\tau,\varphi,\eta)= 0 \}
\end{align}
is compact. For each point $(\tau,\varphi,\eta)$ in the set $\mathcal{T}_{x}^{F,j,C}$, we construct an open set $U_{\tau,\eta}$ from Lemma \ref{analytic IFT} and zero set such as \eqref{zero set} by repeating above argument. If we consider projection $\Pi_{(\tau, \eta)}$ onto $\tau\times \eta$ plane, $\Pi_{(\tau, \eta)}\mathcal{T}_{x}^{F,j,C}:=\{(\tau,\eta):(\tau,\varphi,\eta)\in \mathcal{T}_x^{F,j,C}\}$ is still compact. So, we have finite subcover for $\Pi_{(\tau, \eta)}\mathcal{T}_{x}^{F,j,C}$, and hence image under $V(s)$ map as in \eqref{zero image} is still zero.  \\

\noindent{\bf(Boundary $\p S_0$)} Now, we consider the boundary $\partial S_0$. Let $x \in \partial S_0$. Using the map $(\xf^j,\vf^{j-1})$, we have 
\begin{equation*}
	(x,v)=(\xf^j,\vf^{j-1})(\tau,\varphi,\eta),
\end{equation*}
where $\xf^j(\tau,\varphi,\eta):=X(\tf^j(\sigma(\tau,\varphi),v^c(\eta));0,\sigma(\tau,\varphi),v^c(\eta)), \vf^{j-1}(\tau,\varphi,\eta)$
$:=V(\tf^j(\sigma(\tau,\varphi),$\\
$v^c(\eta));0,\sigma(\tau,\varphi),v^c(\eta))$. Define 
\Be \label{B_j,C}
	\tilde{B}^{j,C}:=\{(\tau,\varphi,\eta)\in [a,b)\times (-2\pi \mathbf{w},0]\times (0,1):(\xf^i,\vf^{i-1})(\tau,\varphi,\eta)\notin \gamma_0^C \cup \gamma_0^I, \quad \forall 1\leq i \leq j\},
\Ee
where $\mathbf{w}$ be the winding number in \eqref{winding number}. Similar to $Q_{L}^{\varepsilon,j}$, we should check that $\tilde{B}^{j,C}$ is open for analyticity of $\xf^j$. Note that $\sigma$ and $v_{c}$ are all locally smooth. Now, let us consider forward in time trajectory from $(\sigma(\tau,\varphi),v_c(\eta))$ for $(\tau,\varphi,\eta)\in \tilde{B}^{j,C}$. By definition of $\tilde{B}^{j,C}$, 
\begin{align*}
	(\xf^i,\vf^{i-1})(\tau,\varphi,\eta) \notin \gamma_0^C \cup \gamma_0^I, \quad \forall 1\leq i \leq j. 
\end{align*}
Hence, by (a) of Lemma \ref{analyticity}, for sufficiently small $\varepsilon>0$, there exists $\delta \ll 1$ such that if $\vert (\tau,\varphi,\eta)-(\tau^{\prime},\varphi^{\prime},\eta^{\prime}) \vert <\delta$, then $\xf^i$ and $\vf^{i-1}$ are analytic at $(\tau^{\prime},\varphi^{\prime},\eta^{\prime})$ for all $1\leq i \leq j$. Moreover, 
\begin{align*}
	(\xf^i,\vf^{i-1})(\tau^{\prime},\varphi^{\prime},\eta^{\prime}) \notin \gamma_0^I \cup \gamma_0^C, \quad \forall 1\leq i \leq j, 
\end{align*}
which implies $(\tau^{\prime},\varphi^{\prime},\eta^{\prime}) \in \tilde{B}^{j,C}$. 
We consider 
\begin{align}\label{def tilde H}
	\tilde{H}_j(\tau,\varphi,\eta):=\xf^j(\tau,\varphi,\eta)-x, \quad (\tau,\varphi,\eta)\in \tilde{B}^{j,C},
\end{align}
which is locally analytic due to (a) in Lemma \ref{analyticity}. Similar to the interior $S_0$ case, we split:

\textit{(Case 1)} $\xf^j(\tau,\varphi,\eta)$ is a constant point in $\p\O$ for all $(\tau,\varphi,\eta)\in \tilde{B}^{j,C}$.\\
If we put $\xf^j$ into the calculation \eqref{deriv X,1} and \eqref{deriv X,2} instead of $X(s;0,\tau,\varphi,\eta)$, then
\begin{align}\label{deriv xf} 
\frac{\partial \xf^j}{\partial \varphi}=k_{\tau,\varphi,\eta}\hat {\varphi}(0),
\end{align}
for some constant $k_{\tau,\varphi,\eta}\in \R$, where we defined $\hat\varphi(0)$ in Definition \ref{varphi cross section}. To ensure that $\xf^j(\tau,\varphi,\eta)$ is a constant point for all $(\tau,\varphi,\eta)\in \tilde{B}^{j,C}$, $\frac{\partial \xf^j}{\partial \varphi}$ must be a zero vector for all $(\tau,\varphi,\eta)\in \tilde{B}^{j,C}$. Thus, this case does not happen. \\

\textit{(Case 2)} $\xf^j(\tau,\varphi,\eta)$ is not a constant point in $\p\O$ for all $(\tau,\varphi,\eta)\in \tilde{B}^{j,C}$. \\
Recall the function $\tilde{H}_j$ defined in \eqref{def tilde H}. We assume that $\tilde{H}_j=0$ at some point $(\tau_1,\varphi_1,\eta_1)\in \tilde{B}^{j,C}$. Through the coordinate $\hat{x}=(1,0,0),\hat{y}=(0,0,1)$, and $\hat{\varphi}=(0,1,0)$ such as \eqref{coord} and the calculation \eqref{deriv xf} above, we have 
\begin{align}\label{deriv tilde H}
	\tilde{H}_j(\tau,\varphi,\eta)=\begin{bmatrix} \tilde{H}_j^{x} \\ \tilde{H}_j^y\\ \tilde{H}_j^{\varphi}  \end{bmatrix}=
	\begin{bmatrix}
	{\xf^j}_{,1}(\tau,\varphi,\eta)-x_1 \\ {\xf^{j}}_{,3}(\tau,\varphi,\eta)-x_3 \\ {\xf^j}_{,2}(\tau,\varphi,\eta)-x_2
	\end{bmatrix}
	 \quad \textrm{ and } \quad 
	\frac{\partial \tilde{H}_j}{\partial \varphi}=\begin{bmatrix}   \frac{ \partial \tilde{H}_j^{x}}{\partial \varphi} \\ \frac{\partial \tilde{H}_j^y}{\partial \varphi} \\ \frac{\partial \tilde{H}_j^{\varphi}}{\partial \varphi}\end{bmatrix}
	= \frac{\partial \xf^j}{\partial \varphi}=k_{\tau,\varphi,\eta}\begin{bmatrix} 0 \\ 0 \\ 1 \end{bmatrix},
\end{align}
where $\xf^j(\tau,\varphi,\eta)=({\xf^j}_{,1}(\tau,\varphi,\eta),{\xf^j}_{,2}(\tau,\varphi,\eta),{\xf^j}_{,3}(\tau,\varphi,\eta))$ and $x=(x_1,x_2,x_3)$. Hence, $\left. \frac{\partial \tilde{H}_j^{\varphi}}{\partial \varphi}\right \vert _{(\tau_1,\varphi_1,\eta_1)}\neq 0$. Using Lemma \ref{analytic IFT}, there exists an open set $V_{\tau_1,\eta_1}\subset \R^2$ containing $(\tau_1,\eta_1)$ such that there exists an analytic function $\varphi=\varphi(\tau,\eta)$ such that 
\begin{align} \label{implicit result}
\varphi(\tau_1,\eta_1)=\varphi_1 \quad \text{ and } \quad \tilde{H}_j^{\varphi}(\tau,\varphi(\tau,\eta),\eta)=0 \text{ for all } (\tau,\eta)\in \tilde{V}_{\tau_1,\eta_1},
\end{align}
where $\tilde{V}_{\tau_1,\eta_1}:=\Big \{ (\tau,\eta)\in V_{\tau_1,\eta_1}: (\xf^i,\vf^{i-1})(\tau,\varphi(\tau,\eta),\eta)\notin \gamma_0^C\cup \gamma_0^I, \quad \forall 1\leq i \leq j\Big\}$. If $\tilde{H}_j^x$ or $\tilde{H}_j^y$ is identically zero on $\tilde{V}_{\tau_1,\eta_1}$, then $\tilde{H}_j^\varphi(\tau,\varphi(\tau,\eta),\eta)=\tilde{H}_j^x(\tau,\varphi(\tau,\eta),\eta)$ or $\tilde{H}_j^\varphi(\tau,\varphi(\tau,\eta),\eta)=\tilde{H}_j^y(\tau,\varphi(\tau,\eta),\eta)$ for all $(\tau,\eta)\in \tilde{V}_{\tau_1,\eta_1}$ due to \eqref{implicit result}. Since  $\frac{\partial \tilde{H}_j^{\varphi}}{\partial \varphi}\neq  \frac{\partial \tilde{H}_j^{x}}{\partial \varphi}$ and $ \frac{\partial \tilde{H}_j^{\varphi}}{\partial \varphi}\neq  \frac{\partial \tilde{H}_j^{y}}{\partial \varphi}$ in \eqref{deriv tilde H}, $\tilde{H}_j^x$ and $\tilde{H}_j^y$ cannot be identically zero. Thus, the zero set $\mathcal{Z}_{x}^{H,j,C}$ of $\tilde{H}_j$
\begin{align} \label{zero set 2}
	\mathcal{Z}_{x}^{H,j,C}:=\{(\tau,\eta)\in \tilde{V}_{\tau_1,\eta_1} : \tilde{H}_j^x(\tau,\varphi(\tau,\eta),\eta)=\tilde{H}_j^y(\tau,\varphi(\tau,\eta),\eta)=0\}
\end{align}
has measure zero in $\R^2$ by Lemma \ref{analytic_zeros}. Applying Lemma \ref{Lusin} yields that 
\Be \label{vf map}
	\{v\in \S^2: v=\vf^{j-1}(\tau,\varphi(\tau,\eta),\eta) \quad \textrm{for } (\tau,\eta)\in \mathcal{Z}_x^{H,j,C}\}
\Ee
has measure zero in $\S^2$. Similar to the zero set $\mathcal{T}_x^{F,j,C}$ in \eqref{zero set T}, we construct the zero set ${T}_x^{H,j,C}$ of $\tilde{H}_j$ as:
\begin{align*}
{T}_{x}^{H,j,C} := \{ (\tau, \varphi, \eta) : \tilde{H}_{j} (\tau,\varphi,\eta)= 0 \}.
\end{align*}
The zero set ${T}_x^{H,j,C}$ is also compact. For each point $(\tau,\varphi,\eta)\in {T}_x^{H,j,C}$, we construct an open set $V_{\tau,\eta}$ and zero set such as \eqref{zero set 2} by repeating above argument. If we consider projection $\Pi_{(\tau, \eta)}$ onto $\tau\times \eta$ plane, $\Pi_{(\tau, \eta)}{T}_{x}^{H,j,C}:=\{(\tau,\eta):(\tau,\varphi,\eta)\in {T}_x^{H,j,C}\}$ is still compact. So, we have finite subcover for $\Pi_{(\tau, \eta)}{T}_{x}^{H,j,C}$, and hence image under $\vf^{j-1}$ map as in \eqref{vf map} is still zero.

{\bf Step 2} In this step, we deal with inflection grazing $\gamma_0^I$ in $(B_j)_x$. We claim 
\[
\mathfrak{m}_{2} \{ v \in (G_{j-1})_{x} : (x^{j}(x,v), v^{j-1}(x,v))\in \gamma_0^{I} \} = 0, \\
\] 		
where $(G_{j-1})_x$ was constructed inductively in \eqref{def GB} and \eqref{def GB2}. Since $v\in (G_{j-1})_{x}$, trajectory does not graze between its first bounce and $(j-1)$-th bounce. \\

\noindent{\bf (Interior case $S_0$)} We firstly consider $x\in S_0$. Since $(G_{j-1})_x\subset (X^\varepsilon)_x$ and inflection grazing does not happen in trajectory between $(x,v)$ and $(x^j(x,v),v^{j-1}(x,v))$, the map $(X,V)(s(\cdot))$ is invertible by Lemma \ref{no infty bounce}. Using invertible map $(X , V )(s(\cdot))$, 
\begin{align*}
\begin{split}
	(x,v) = (X , V)(s(\sigma(\tau, \varphi),\varphi,I^2(\tau,\varphi)); 0, \sigma(\tau, \varphi), I^{2}(\tau, \varphi)),
\end{split}
\end{align*}
where $s(\cdot,\cdot,\cdot)$ be the arrival time defined in \eqref{def time s}, $\sigma(\cdot,\cdot)$ be the parametrization of boundary $\p\O$ in \eqref{sigma para}, and $I^2(\cdot,\cdot)$ be parametrization for inflection grazing in Lemma \ref{inflection}. 
Let us define 
{\footnotesize
\Be \label{P_L^e}
\begin{split}
	P_{L}^{\varepsilon, j}:= \Big\{ (\tau, \varphi) : (\sigma(\tau, \varphi), I^{2}(\tau, \varphi)) = (x^{j}(y,u), v^{j-1}(y,u))\in \gamma_0^{I} \ \text{for some} \ (y,u)\in X^{\varepsilon}, \ u\in (G_{j-1})_{y} \Big\}.  \\
\end{split}
\Ee
}
Note that $P_{L}^{\varepsilon, j}$ is also disjoint for all $1\leq  j \leq K$. By applying (b) of Lemma \ref{analyticity} to forward in time trajectory and Lemma \ref{lem time s}, We consider
\begin{equation} \label{def F_j}
F_j(\tau,\varphi):=X(s(\sigma(\tau,\varphi),\varphi,I^2(\tau,\varphi));0,\sigma(\tau,\varphi),I^{2}(\tau,\varphi)) - x , \quad (\tau, \varphi)\in P_{L}^{\varepsilon, j}. \\
\end{equation}

In \textbf{Step 1}, we showed that $Q_L^{\varepsilon,j}$ is open. Similarly, we can prove that the set $P_L^{\varepsilon,j}$ is open to obtain analyticity of the function $F_j$. Note that $\sigma$ in \eqref{sigma para} and $I^2$ in Lemma \ref{inflection} are all locally smooth. Now, let us consider forward in time trajectory from $(\sigma(\tau, \varphi), I^2(\tau,\varphi)) = (x^{j}(y,u), v^{j-1}(y,u))$. By definition of $(G_{j-1})_{y}$ and Lemma \ref{trajectory invertible}, the trajectory is non-grazing until it arrives at $(y,u)$, and hence the map
\[	
	(\tau, \varphi) \mapsto (\sigma(\tau, \varphi), I^2(\tau,\varphi))  \mapsto (X,V)(s(\sigma(\tau, \varphi),\varphi,
	I^2(\tau,\varphi)); 0, \sigma(\tau, \varphi), I^2(\tau,\varphi)) \in S_0 \times \S^{2}
\]
is continuous with finite number of bounce by Lemma \ref{lem time s}. Now, for sufficiently small $\varepsilon > 0$, there exists $\delta  \ll 1$ such that if $| (\tau, \varphi) - (\tau^{\prime}, \varphi^{\prime}) | < \delta$, then
\[
	|(y,u) - (y^{\prime}, u^{\prime})| < \varepsilon,
\] 
where 
\[
 (y^{\prime}, u^{\prime}) := (X,V)(s(\sigma(\tau^{\prime}, \varphi^{\prime}),\varphi^{\prime}, I^2(\tau^{\prime},\varphi^{\prime})); 0, \sigma(\tau^{\prime}, \varphi^{\prime}), I^2(\tau^{\prime},\varphi^{\prime})) \in S_0\times \S^{2},
\]
or equivalently,
\[
	(\sigma(\tau^{\prime}, \varphi^{\prime}), I^2(\tau^{\prime},\varphi^{\prime})) = (x^{j}(y^{\prime}, u^{\prime}), v^{j-1}(y^{\prime}, u^{\prime})) \in \gamma_0^{I}.
\] 
Proving open property of $P_{L}^{\varepsilon, j}$ and analyticity of $F_{j}$ is quite similar to what we did in {\bf Step 1}. We know that $(y,u)\in G_{L}^{\varepsilon}$ and $G_{L}^{\varepsilon}$ is open by (a) in Lemma \ref{lem:open}. Thus, for $\varepsilon$ sufficiently small, $(y^{\prime}, u^{\prime}) \in G_{L}^{\varepsilon}$. Using the fact that $x$ is not on the boundary, choosing sufficiently small $\varepsilon$ and applying Lemma \ref{analyticity} for forward in time trajectory from $(\sigma(\tau, \varphi), v_{c}(\eta))$, yields $u^{\prime} \in (G_{j-1})_{x^{\prime}}$.  
And, by definition \eqref{P_L^e} of $P_{L}^{\varepsilon,j}$, we obtain that $(\tau^{\prime}, \varphi^{\prime}, \eta^{\prime}) \in P_{L}^{\varepsilon, j}$ which means that $P_{L}^{\varepsilon, j}$ is open. Moreover, ${F}_j$ is locally analytic on $P_{L}^{\varepsilon, j}$ by Lemma \ref{lem time s}. \\  

Repeat \eqref{dvarphi F} of \textbf{Step 1} to claim the case of $F_j$ being a constant vector in $P_L^{\varepsilon,j}$ cannot happen by excluding $R_{sym}^{\varepsilon}$ in $X^{\varepsilon}$. Then, the zero set $\mathcal{Z}_x^{F,j,I}$ of the analytic function $F_j$
\begin{align*}
	\mathcal{Z}_{x}^{F,j,I}:=\{(\tau,\varphi)\in P_L^{\varepsilon,j}:F_j(\tau,\varphi)=0\}
\end{align*}
has measure zero in $\R^2$ due to Lemma \ref{analytic_zeros}. We can use Lemma \ref{Lusin} directly since $\mathcal{Z}_x^{F,j,I}$ has the same dimension with $\S^2$. Thus, from Lemma \ref{Lusin}, we get 
\begin{align*}
	\{v\in \S^2: v=V(s(\tau,\varphi);0,\sigma(\tau,\varphi),I^2(\tau,\varphi)) \; \textrm{for } (\tau,\varphi) \in \mathcal{Z}_x^{F,j,I}\}
\end{align*}
has measure zero in $\S^2$. \\ 

\noindent{(\bf Boundary case $\p S_0$)} Next, let $x\in \partial S_0$. Using the map $(\xf^j,\vf^{j-1})$, 
\begin{align*}
	(x,v)=(\xf^j,\vf^{j-1})(\tau,\varphi),
\end{align*}
where $\xf^j(\tau,\varphi):=X(\tf^j(\sigma(\tau,\varphi),I^2(\tau,\varphi));0,\sigma(\tau,\varphi),I^2(\tau,\varphi)), \vf^{j-1}(\tau,\varphi):=V(\tf^j(\sigma(\tau,$\\
$\varphi),I^2(\tau,\varphi));0,\sigma(\tau,\varphi),I^2(\tau,\varphi))$. Define 
\begin{align*}
	\tilde{B}^{j,I}:=\{(\tau,\varphi)\in [a,b)\times (-2\pi \mathbf{w},0]: (\xf^i,\vf^{i-1})(\tau,\varphi)\notin \gamma_0^C \cup \gamma_0^I, \quad \forall 1\leq i \leq j\},
\end{align*}
where $\mathbf{w}$ be the winding number in \eqref{winding number}. By the same reason for openness of $\tilde{B}^{j,C}$ defined in \eqref{B_j,C}, the set $\tilde{B}^{j,I}$ is also open. From Lemma \ref{analyticity}, the function 
\begin{align*}
	H_j(\tau,\varphi):=\xf^j(\tau,\varphi)-x, \quad (\tau,\varphi) \in \tilde{B}^{j,I},
\end{align*}
is locally analytic on $\tilde{B}^{j,I}$. Using the same arguments in \textbf{Step 1} and Lemma \ref{analytic_zeros}, $H_j$ cannot be a constant vector and the zero set $\mathcal{Z}_{x}^{H,j,I}$ of $H_j$ has measure zero in $\R^2$. Then, the image $\vf^{j-1}(\mathcal{Z}_{x}^{H,j,I})$ has measure zero in $\S^2$ by applying Lemma \ref{Lusin} directly. 

\hide
Therefore, the zero set of $F_{j}$ is of measure zero in $\R^d$[08,ARMA], i.e., for the zero set $Z_{x,j}^I$ of $F_j$ 
\begin{align*}
Z_{x,j}^I:= \{(\theta,\varphi):F_{j}(\theta,\varphi)=0\}
\end{align*} 
has measure zero in $\R^2$. Considering $V_{\mathbf{f}}(s(\cdot); 0, \cdot)$ mapping from $Z_{x,j}^I$ to $\S^2$ and definition of $(*)$,
\begin{align*}
(*) = \Big\{ V_{\mathbf{f}}(s(\theta,\varphi):0,\sigma(\theta,\varphi),I^{2}(\theta,\varphi))  : (\theta, \varphi) \in Z_{x,j}^{I} \Big\}
\end{align*}
Applying Lemma \ref{Lusin} to the map $V(\theta,\varphi)$, we finish Step 2.  
\unhide

\end{proof}
\hide
{\color{red} (Remove?)
Let $x \in cl(\Omega) $. Let us fix $x\in \overline{S_{0}^{\delta}}$ and travel length $L>0$. We construct the following sets,  
\Be \label{def X delta}
X^{\varepsilon, \delta} := \Big\{ (x,v) \in \overline{S_{0}^{\delta}}\times \mathbb{S}^{2} : v \in \mathbb{S}^{2}\backslash \{ R_{x,\frac{\pi}{2}}^{\varepsilon} \cup R_{x,\pi}^{\varepsilon} \cup R_{x,\frac{3\pi}{2}}^{\varepsilon} \cup R_{x, \perp}^{\varepsilon} \cup R_{x, \hat{\varphi}}^{\varepsilon} \cup R_{x, sym}^{\varepsilon} \} \Big\} \\
\Ee
{\color{red}(NOte: Since we exclude near boundary points, we don't have to consider nB set.)} Similar as \eqref{BG}, we also define
\Be \label{BG delta}
\begin{split}
	B^{\varepsilon, \delta}_{L} &:= \left \{ (x,v)\in X^{\varepsilon, \delta} : \exists k \in \mathbb{N} : (x^{k}, v^{k-1}(x,v))\in \gamma_{0}^{I-}, \ \text{and} \  \sum_{j=1}^k \vert \xb^j(x,v)-\xb^{j-1}(x,v) \vert \leq L \right \}, \\
	G^{\varepsilon, \delta}_{L} &:= \left \{ (x,v)\in X^{\varepsilon, \delta} : \nexists k \in \mathbb{N} : (x^{k}, v^{k-1}(x,v))\in \gamma_{0}^{I-}, \ \text{and} \  \sum_{j=1}^k \vert \xb^j(x,v)-\xb^{j-1}(x,v) \vert \leq L \right \}  \\
\end{split}	
\Ee
From the above definitions \eqref{def X delta} and \eqref{BG delta}, we have 
\[
X^{\varepsilon, \delta}\subset X^{\varepsilon}, \quad X^{\varepsilon, \delta} = B_{L}^{\varepsilon, \delta}\cupdot G_{L}^{\varepsilon, \delta}, \quad B_{L}^{\varepsilon, \delta} \subset B_{L}^{\varepsilon},\quad G_{L}^{\varepsilon, \delta} \subset G_{L}^{\varepsilon} 
\]
Therefore, from Proposition \ref{unif bounce}, we also obtain
\Be \label{unif bounce delta}
\sup_{(x,v)\in B_{L}^{\varepsilon, \delta}}N(x,v,L) < C_{\varepsilon, L},\quad \forall \delta>0.
\Ee
} 

\noindent {\bf 5-2. Construction of $\mathcal{IB}$ set } \\
\unhide
\begin{definition}
From Lemma \ref{bounce K}, we define $\mathcal{BM}_{x}$ similar to $\mathcal{BK}_{x}$ replacing $K$ into $M$ in \eqref{def Ki},
	\begin{align} \label{def IBi}
	\mathcal{BM}_x := \bigcup _{j=1}^{M} (B_j)_x \subset \S^{2}.
	\end{align}
	Similar to \eqref{Ki epsilon}, by Lemma \ref{mB zero}, there exists an open set $\mathcal{BM}_x^{\varepsilon}\subset \S^2$ such that 
	\begin{align} \label{IB epsilon}
		\mathcal{BM}_x \subset \mathcal{BM}_x^{\varepsilon}, \quad \mathfrak{m}_2\left( \mathcal{BM}_x^{\varepsilon}\right) <\varepsilon. 
	\end{align} 
\end{definition}
		
	\begin{lemma} \label{S2 version}($\overline{S_0}\times\S^2$ version) Let $S_0$ be $\varphi=0$ cross-section in \eqref{cross section} and $\varepsilon>0$. Recall the definition \eqref{Kc} of $\mathcal{K}^c$. There exists a compact set $\mathcal{Y}^{\varepsilon}\subset \overline{S_0}\times \S^2$ such that  
		\Be \label{IB number}
			\sup_{(x,v)\in \mathcal{Y}^{\e} } \mathcal{N}(x,v,L) \leq M, 
		\Ee
		where we defined $\mathcal{N}(x,v,L)$ in \eqref{nongrazing NB}, the constant $M$ in Lemma \ref{bounce K}
		 Moreover, if $(x,v)\in \mathcal{Y}^\e$, then 
		 
		
		\Be \label{nongrazing}
		[x^{j}(x,v), v^{j-1}(x,v)] \notin \gamma_{0}^{I}\cup \gamma_0^{C} ,\quad \forall 1\leq j \leq M,
		\Ee
		and 
		\Be \label{unif nongrazing}
	\inf_{1\leq i \leq M}|v^{i-1}(x,v)\cdot n(x^{i}(x,v))| \geq C^*_{\varepsilon, L} > 0,
		\Ee
	where the constant $C_{\varepsilon,L}^{*}$ does not depend on $x$. In addition, for $x\in \overline{S_0}$, 
	\Be \label{IB x small}
		\mathfrak{m}_2((\{\overline{S_0}\times \S^2\} \backslash \mathcal{Y}^\e)_x) \lesssim \varepsilon,
	\Ee 
	where we used the notation $A_x$ in \eqref{proj_v}. 

	\end{lemma}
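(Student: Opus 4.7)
The plan is to run the same open-cover construction used in Lemma \ref{construct K}, but now with $\mathcal{BM}_{x}$ in place of $\mathcal{BK}_{x}$, and then upgrade the resulting bad-direction set to control all $M$ bounces rather than just the first $K$. By Proposition \ref{mB zero} we have $\mathfrak{m}_{2}((B_{j})_{x})=0$ for every $1\leq j\leq M$, hence $\mathfrak{m}_{2}(\mathcal{BM}_{x})=0$, and \eqref{IB epsilon} produces an open $\mathcal{BM}_{x}^{\varepsilon}\supset \mathcal{BM}_{x}$ with $\mathfrak{m}_{2}(\mathcal{BM}_{x}^{\varepsilon})<\varepsilon$ for each $x\in\overline{S_{0}}$.

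The first step is to prove the $\mathcal{BM}$-analogue of Lemma \ref{continuity 2}: for each $x\in\overline{S_{0}}$ there exists $r=r(x,\varepsilon)>0$ with $\mathcal{BM}_{y}\subset\mathcal{BM}_{x}^{\varepsilon}$ whenever $y\in B(x,r)\cap\overline{S_{0}}$. This is a direct contradiction argument: any $v\in\mathcal{BM}_{y}\setminus\mathcal{BM}_{x}^{\varepsilon}$ would correspond to a trajectory from $(x,v)$ that is non-grazing throughout its first $M$ bounces, to which Lemma \ref{analyticity}(a) applies, but whose small perturbation $(y,v)$ is forced to be grazing at some $j\leq M$, contradicting the local continuity of $(x^{j},v^{j-1})$ off $\gamma_{0}$. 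By compactness of $\overline{S_{0}}$, I then extract a finite subcover $\{B(x_{i}^{\mathcal{M}},r_{i}^{\mathcal{M}})\cap\overline{S_{0}}\}_{i=1}^{l_{M}}$ together with open sets $\mathcal{O}_{i}^{\mathcal{M}}:=\mathcal{BM}_{x_{i}^{\mathcal{M}}}^{\varepsilon}\subset\mathbb{S}^{2}$ of measure less than $\varepsilon$, and for $x\in\overline{S_{0}}$ define the index set $I_{x}^{\mathcal{M}}:=\{i:x\in B(x_{i}^{\mathcal{M}},r_{i}^{\mathcal{M}})\cap\overline{S_{0}}\}$.

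Next, I mirror \eqref{K}--\eqref{Kc} by setting
\begin{equation*}
\mathcal{M}:=\bigcup_{i=1}^{l_{M}}\Big\{(x,v)\in \mathcal{K}^{c}:x\in B(x_{i}^{\mathcal{M}},r_{i}^{\mathcal{M}})\cap\overline{S_{0}},\ v\in\bigcap_{j\in I_{x}^{\mathcal{M}}}\mathcal{O}_{j}^{\mathcal{M}}\Big\}_{\varepsilon},\qquad \mathcal{Y}^{\varepsilon}:=\mathcal{K}^{c}\setminus\mathcal{M}.
\end{equation*}
Since $\mathcal{K}^{c}$ is compact by Lemma \ref{construct K} and $\mathcal{M}$ is open in $\overline{S_{0}}\times\mathbb{S}^{2}$, the set $\mathcal{Y}^{\varepsilon}$ is compact. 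Property \eqref{IB number} is inherited from Lemma \ref{bounce K} via $\mathcal{Y}^{\varepsilon}\subset\mathcal{K}^{c}$. Property \eqref{nongrazing} holds by construction: for $(x,v)\in \mathcal{Y}^{\varepsilon}$ we have $v\notin \mathcal{BM}_{x}$, so $(x^{j}(x,v),v^{j-1}(x,v))\notin\gamma_{0}^{I}\cup\gamma_{0}^{C}$ for all $1\leq j\leq M$. The measure bound \eqref{IB x small} is identical to the computation in \eqref{Kc small}: $\mathfrak{m}_{2}((\{\overline{S_{0}}\times\mathbb{S}^{2}\}\setminus\mathcal{Y}^{\varepsilon})_{x})\leq \mathfrak{m}_{2}((\{\overline{S_{0}}\times\mathbb{S}^{2}\}\setminus\mathcal{K}^{c})_{x})+\mathfrak{m}_{2}(\mathcal{M}_{x})\lesssim\varepsilon$ using $\mathfrak{m}_{2}(\mathcal{O}_{j}^{\mathcal{M}})<\varepsilon$ and the $\varepsilon$-neighborhood notation \eqref{extension domain}.

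The main obstacle is the uniform non-grazing bound \eqref{unif nongrazing}. The plan is to exploit compactness of $\mathcal{Y}^{\varepsilon}$ combined with \eqref{nongrazing}: the map
\begin{equation*}
(x,v)\in \mathcal{Y}^{\varepsilon}\ \longmapsto\ \Psi(x,v):=\min_{1\leq i\leq M}|v^{i-1}(x,v)\cdot n(x^{i}(x,v))|
\end{equation*}
is continuous because each $(x^{i},v^{i-1})$ is locally analytic on $\mathcal{Y}^{\varepsilon}$ by Lemma \ref{analyticity}(a) (we have guaranteed $(x^{i},v^{i-1})\notin\gamma_{0}$ for $1\leq i\leq M$), and $\Psi(x,v)>0$ on $\mathcal{Y}^{\varepsilon}$ by \eqref{nongrazing}. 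Compactness then yields $C^{*}_{\varepsilon,L}:=\inf_{\mathcal{Y}^{\varepsilon}}\Psi>0$, which is independent of $(x,v)$ as required. The delicate point is ensuring that $\Psi$ is genuinely continuous up to the boundary of $\mathcal{Y}^{\varepsilon}$; this is where the $\varepsilon$-neighborhood thickening in the definition of $\mathcal{M}$ is essential, since it pushes the boundary of $\mathcal{Y}^{\varepsilon}$ strictly away from the grazing set and lets Lemma \ref{analyticity}(c) guarantee bounce-number stability on a full neighborhood of every point of $\mathcal{Y}^{\varepsilon}$.
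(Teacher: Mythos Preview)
Your construction of $\mathcal{Y}^{\varepsilon}$ is essentially identical to the paper's: the paper also builds a finite cover $\{B(x_i^{\mathcal{IB}},r_i^{\mathcal{IB}})\cap\overline{S_0}\}$ with associated open bad-direction sets $\mathcal{O}_i^{\mathcal{IB}}:=\mathcal{BM}_{x_i^{\mathcal{IB}}}^{\varepsilon}$, defines the open set $\mathcal{IB}$ exactly as your $\mathcal{M}$, and sets $\mathcal{Y}^{\varepsilon}:=\mathcal{K}^c\setminus\mathcal{IB}$. Your verification of \eqref{IB number}, \eqref{nongrazing} and \eqref{IB x small} matches the paper's.

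For \eqref{unif nongrazing} you take a slightly different route. The paper first obtains, for each $x\in\overline{S_0}$ and $v\notin\mathcal{BM}_x$, a pointwise lower bound $C^*_{x,\varepsilon,L}>0$, and then sets $C^*_{\varepsilon,L}:=\min_{1\le i\le l_{IB}} C^*_{x_i^{\mathcal{IB}},\varepsilon,L}$, i.e.\ a minimum over the finitely many cover centers. Your argument via the continuous function $\Psi$ and compactness of $\mathcal{Y}^{\varepsilon}$ is a legitimate (and arguably cleaner) alternative.

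There is one small gap in your argument for \eqref{unif nongrazing}: you assert $\Psi(x,v)>0$ ``by \eqref{nongrazing}'', but \eqref{nongrazing} only excludes $\gamma_0^{I}\cup\gamma_0^{C}$, not the convex grazing set $\gamma_0^{V}$. The paper handles this explicitly by observing that on $\mathcal{K}^c$ the backward trajectory has finitely many bounces and is therefore reversible, whereas a convex grazing bounce $(x^{j},v^{j-1})\in\gamma_0^{V}$ would force $t_{\mathbf b}(x^{j},v^{j-1})=0$ and freeze the cycle (see Remark \ref{cycles_not_defined}), contradicting $\mathcal{N}(x,v,L)\le M$. You need this remark both for $\Psi>0$ and for the applicability of Lemma \ref{analyticity}(a) (which requires \emph{all} bounces to be non-grazing, not just non-$\gamma_0^{I}\cup\gamma_0^{C}$) in your continuity claim for $\Psi$.
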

	\begin{proof}
		Consider the following set  
	\begin{align} \label{F}
		F=\Big\{ (x,v) \in \mathcal{K}^c : v \in  \mathcal{BM}_x   \Big\},
	\end{align}
	where $\mathcal{K}^c$ in \eqref{Kc} is compact and $\mathcal{BM}_x$ is defined in \eqref{def IBi}. Recall that the projection operator $\mathcal{P}_1$ in \eqref{proj operator}. Note that $\mathcal{P}_1(\mathcal{K}^c)=\overline{S_0}$ by the same argument to deduce $\mathcal{P}_1((\mathcal{NB})^c)=\overline{S_0}$ in proof of Lemma \ref{lem Xe}. For each $x\in \mathcal{P}_1(\mathcal{K}^c)=\overline{S_0}$, by Lemma \ref{bounce K} and definition of $\mathcal{BM}_x $ in \eqref{def IBi},  
\begin{align*}
		(x^i(x,v),v^{i-1}(x,v))\notin \gamma_0^I\cup \gamma_0^C, \quad \text{up to whole travel length $L$},
\end{align*}
for $v \notin \mathcal{BM}_x$. Notice that the trajectory with a finite number of bounces is reversible and deterministic. If convex grazing occurs during whole travel length $L$,  the forward in time trajectory cannot be defined. Hence, there is no convex grazing during travel length $L$. Thus, there exists a positive constant $C_{x,\varepsilon,L}^{*}>0$ depending on $x,\varepsilon,L$ satisfying
\begin{align}\label{vn lb}
	\vert v^{i-1}(x,v)\cdot n(x^{i}(x,v))| \geq C^*_{x,\varepsilon, L}>0, \quad \text{up to whole travel length $L$}.
\end{align}
By compactness of $\mathcal{P}_1(\mathcal{K}^c)=\overline{S_0}$, Lemma \ref{continuity 2}, Lemma \ref{mB zero} and \eqref{IB epsilon}, we have an open cover for $\mathcal{P}_1(\mathcal{K}^c)=\overline{S_0}$ 
	 \Be \label{open cover_IB}
		\Big\{B(x_i^{\mathcal{IB}},r_i^{\mathcal{IB}})\cap \overline{S_0}\Big \}_{i=1}^{l_{IB}},
	\Ee
	and corresponding each open set $\mathcal{O}_i^{\mathcal{IB}} \subset \S^{2}$ which is defined by 
	\begin{align} \label{def bad IB}
		 \mathcal{O}_i^{\mathcal{IB}}:=\mathcal{BM}_{x_{i}^{\mathcal{IB}}}^{\varepsilon}, \quad  1\leq i \leq l_{IB},
	\end{align}
	satisfying $\mathfrak{m}_2(\mathcal{O}_i^{\mathcal{IB}})< \varepsilon$ for all $1\leq i \leq l_{IB}$ and  
	\begin{align} \label{O_i^IB prop}
		\mathcal{BM}_x \subset \mathcal{O}_i^{\mathcal{IB}}, 
	\end{align}
	for all $x \in B(x_i^{\mathcal{IB}},r_i^{\mathcal{IB}})$ and sufficiently small $r_{i}^{\mathcal{IB}}$. 
	 Similar to \eqref{index_K} in Lemma \ref{construct K}, for given $x\in \mathcal{P}_1(\mathcal{K}^c)=\overline{S_0}$, we define an index set 
\begin{align} \label{index_IB}
	I_x^{\mathcal{IB}}:=\Big\{i\in\mathbb{N}: x\in B(x_i^{\mathcal{IB}},r_i^{\mathcal{iB}})\cap \overline{S_0}\Big\}. 
\end{align}
Now, we consider the following set 
\begin{align}\label{bad phase set 4} 
	\bigcup_{x\in \overline{S_0}} \Big \{(x,v)\in \mathcal{K}^c: v \in  \bigcap_{i\in I_x^{\mathcal{IB}}}\mathcal{O}_i^{\mathcal{IB}}\Big \},
\end{align} 
which contains set $F$ in \eqref{F}. Next, we construct an open cover for \eqref{bad phase set 4} as
\begin{align} \label{IB}
	\mathcal{IB}:=\bigcup_{i=1}^{l_{IB}} \Big \{ (x,v)\in \mathcal{K}^c: x \in B(x_i^{\mathcal{IB}},r_i^{\mathcal{IB}})\cap \overline{S_0},\; v \in \bigcap_{j\in I_x^{\mathcal{IB}}}\mathcal{O}_j^{\mathcal{IB}}\Big\}_{\varepsilon},
\end{align}
where we have used the notation $A_{\varepsilon}$ in \eqref{extension domain}. In this paper, we define 
\begin{align} \label{IBc}
	\mathcal{Y}^\e:= \mathcal{K}^c \backslash \mathcal{IB}, 
\end{align}
which is compact since $\mathcal{K}^c$ is compact and $\mathcal{IB}$ is open. 
\hide
From Lemma \ref{bounce K} and definition of $\mathcal{BM}_x$ in \eqref{def IBi}, 
		\begin{align*}
			(x^i(x,v),v^{i-1}(x,v))\notin \gamma_0^I\cup \gamma_0^C, \quad \text{up to whole travel length $L$}.
		\end{align*}
		for $(x,v) \in \mathcal{IB}^c$ in \eqref{IBc}. Notice that the trajectory with a finite number of bounce is reversible and deterministic. If convex grazing occurs during whole travel length $L$,  the forward in time trajectory cannot be defined. Hence, there is no convex grazing during travel length $L$. Thus, there exist some positive constant $C_{x,\varepsilon,L}^{*}>0$ depending on $x,\varepsilon,L$ satisfying
\begin{align*}
	\vert v^{i-1}(x,v)\cdot \mathbf{n}(x^{i}(x,v))| \geq C^*_{x,\varepsilon, L}>0, \quad \text{up to whole travel length $L$}
\end{align*}
for $(x,v)\in \mathcal{IB}^c$.  By compactness, there exist a relatively open cover $\bigcup_{i=1}^{l_{\mathcal{IB}}} B(x_i,r(x_i,\varepsilon))$ for $\overline{S_0}$ and corresponding $\mathcal{O}_i^{\mathcal{IB}}$ defined in \eqref{def bad IB}.  
 Using Lemma \ref{continuity 2}, we have 
		\begin{align*}
			\mathcal{O}_x^{\mathcal{IB}} \subset \mathcal{O}_i^{\mathcal{IB}}.
		\end{align*}
		 In \eqref{IB epsilon}, we have $\mathfrak{m}_2(\mathcal{IB}_{x_i})_{\varepsilon} < \varepsilon$, which implies that $\mathfrak{m}_2(\mathcal{O}_i^{\mathcal{IB}}) \lesssim \varepsilon$. 
\unhide	
		By definition \eqref{IBc} of the set $\mathcal{Y}^\e$, we directly deduce the relation $\mathcal{Y}^\e\subset \mathcal{K}^c$. Hence, from \eqref{bounce K}, we obtain \eqref{IB number}. Assume that $(x,v) \in \mathcal{Y}^\e$. By definition \eqref{IB} of $\mathcal{IB}$ and \eqref{O_i^IB prop}, $v\notin \mathcal{BM}_x= \bigcup_{j=1}^M (B_j)_x$. This implies that 
\begin{align*}
	(x^j(x,v),v^{j-1}(x,v)) \notin \gamma_0^I \cup \gamma_0^C, \quad \forall 1\leq j \leq M. 
\end{align*}
Also, \eqref{unif nongrazing} can be obtained if we choose $C^*_{\varepsilon,L}:=\min_{1\leq i \leq l_{IB}} C_{x_i^{\mathcal{IB}},\varepsilon,L}^*$ in \eqref{vn lb}.  For fixed $x \in \overline{S_0}$, 
\begin{align} \label{def IBx}
	\mathcal{IB}_x=\Big\{v \in (\mathcal{K}^c)_x: v \in \bigcap_{j\in I_x^{\mathcal{IB}}}\mathcal{O}_j^{\mathcal{IB}}\Big\}_{\varepsilon},
\end{align}
where the notation $A_x$ is defined in \eqref{proj_v} and the notation $A_\varepsilon$ in \eqref{extension domain} comes from the definition \eqref{K}. Since 
\begin{align*}
	\{\overline{S_0} \times \S^2\} \backslash \mathcal{Y}^\e=(\{\overline{S_0}\times \S^2\}\backslash\mathcal{K}^c) \cupdot \mathcal{IB},
\end{align*}
we obtain
\begin{align*}
	\begin{split}
	\mathfrak{m}_2((\{\overline{S_0}\times \S^2\}\backslash \mathcal{Y}^\e)_x) &\leq \mathfrak{m}_2((\{\overline{S_0}\times \S^2\}\backslash \mathcal{K}^c)_x)+\mathfrak{m}_2(\mathcal{IB}_x)\\
	&\leq \mathfrak{m}_2((\{\overline{S_0}\times \S^2\}\backslash \mathcal{K}^c)_x)+\mathfrak{m}_2((\mathcal{O}_j^{\mathcal{IB}})_\e)\\
	&\lesssim\varepsilon, \quad \forall j \in I_x^{\mathcal{IB}}, 
	\end{split}
\end{align*}
from \eqref{Kc small},\eqref{def IBx}, and $\mathfrak{m}_2(\mathcal{O}_j^{\mathcal{IB}})\lesssim \e$ for all $j=1,2,\dots,l_{IB}$. 
	\end{proof}
	
	\hide
	i.e.,
	\[
	\text{Lemma 22. If $(x,v)\in \mathcal{IB}^{c}$, then its trajectory does not graze for whole length $L$.}
	\]  \\ 
	\unhide 
	\hide
	Among finite open balls $\{ B(x_i,r(x_i,\varepsilon)) \cap \overline{S_{0}} \}_{i=1}^{l_{IB}}$, we choose all $x_{i}\in \overline{S_{0}^{\delta}}$ for fixed $\delta > 0$ and intersect with $\overline{S_{0}^{\delta}}$. By rename those $x_{i}$'s again, we get 
	\[
		\{ B(x_i,r(x_i,\varepsilon)) \cap \overline{S_{0}^{\delta}} \}_{i=1}^{l_{IB}^{\prime}},\quad l_{IB}^{\prime} \leq l_{IB}.
	\]
	{\color{blue}(To prevent possible issue later, note that $l_{\mathcal{IB}}$ is independent to $\delta$)}
	We cannot claim that $\cup_{i=1}^{l_{IB}^{\prime}}\{ B(x_i,r(x_i,\varepsilon)) \cap \overline{S_{0}^{\delta}} \}$ covers  $\overline{S_0^{\delta}}$ yet. We consider $\overline{S_{0}^{\delta}} \backslash \Big\{ \cup_{i=1}^{l_{IB}^{\prime}}\{ B(x_i,r(x_i,\varepsilon)) \cap \overline{S_{0}^{\delta}} \} \Big\}$ which is closed set, hence compact. By similarly as we did before, we gain finite open cover $ \cup_{i=1}^{l_{IB}^{\prime\prime}}\{ B(x_i,r(x_i,\varepsilon)) \cap \overline{S_{0}^{\delta}} \} $ for $\overline{S_{0}^{\delta}} \backslash \Big\{ \cup_{i=1}^{l_{IB}^{\prime}}\{ B(x_i,r(x_i,\varepsilon)) \cap \overline{S_{0}^{\delta}} \} \Big\}$ where $x_{i}\in \overline{S_{0}^{\delta}}$. 	{\color{blue}(To prevent possible issue later, note that $l^{\prime\prime}_{\mathcal{IB}}$ is independent to $\delta$??)} We reindex and rewriting $l_{\mathcal{IB}}$ instead of $l_{\mathcal{IB}}^{\prime} +l_{\mathcal{IB}}^{\prime\prime}$, we finally gain open cover of $\overline{S_{0}^{\delta}}$ :
	\[
		\cup_{i=1}^{l_{IB}} \{ B(x_i,r(x_i,\varepsilon)) \cap \overline{S_{0}^{\delta}} \},\quad x_{i}\in \overline{S_{0}^{\delta}} \\
	\]
	\unhide

\hide
	\newpage

\begin{lemma} \label{Unif nongrazing}
	{\color{blue}(Uniform nongrazing indepedent to $(x,v)$)} Let a trajectory bounce $M$ times nongrazingly. Then taking mimimum, 
	\[
	|v^{i}(x,v)\cdot \mathbf{n}(x^{i}(x,v))| \geq \bar{\delta}_{M} > 0.
	\]
	Then, there exists $\delta(\varepsilon, M) \ll 1$ so that the trajectory stays in $(\p S_0)^{\delta}$ sufficiently short time. 
\end{lemma}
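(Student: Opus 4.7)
The plan is to convert the uniform non-grazing lower bound $\bar{\delta}_M$ on each of the $M$ bounces into a linear lower bound on the distance $\mathrm{dist}(X(s;t,x,v),\partial\Omega)$ near each bounce, and then sum over the (finitely many) bounces. Since $\partial S_0\subset \partial\Omega$, every neighborhood $(\partial S_0)^{\delta}$ is contained in the $\delta$-tubular neighborhood of $\partial\Omega$, so controlling time spent near $\partial\Omega$ will suffice.

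First, fix $i\in\{1,\dots,M\}$ and work on the straight segment $X(s;t,x,v)=x^i+(s-t^i)v^i$ for $s$ close to $t^i$. Using the analytic defining function $\xi$ from \eqref{def domain} together with $\xi(x^i)=0$, Taylor expansion gives
\begin{equation*}
\xi(X(s)) = (s-t^i)\,\nabla\xi(x^i)\cdot v^i + \tfrac{1}{2}(s-t^i)^2\,v^i\cdot\nabla^2\xi(\bar{x})v^i,
\end{equation*}
for some $\bar{x}$ on the segment. Because $|\nabla\xi|\geq c_\Omega>0$ on $\partial\Omega$ and $\nabla\xi(x^i)\cdot v^i = |\nabla\xi(x^i)|\,v^i\cdot n(x^i)$ with $|v^i\cdot n(x^i)|\geq \bar{\delta}_M$, while $|\nabla^2\xi|\lesssim 1$ uniformly, there exists $\rho=\rho(\bar{\delta}_M)>0$ such that
\begin{equation*}
|\xi(X(s))|\geq \tfrac{c_\Omega\bar{\delta}_M}{2}\,|s-t^i|\qquad\text{whenever } |s-t^i|\leq \rho.
\end{equation*}
Comparing $|\xi|$ with $\mathrm{dist}(\cdot,\partial\Omega)$ on a smooth bounded domain yields $\mathrm{dist}(X(s),\partial\Omega)\geq c'\bar{\delta}_M|s-t^i|$ on this local window, for a purely geometric $c'>0$.

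Next, the above bound shows that the subinterval of $[t^{i+1},t^i]$ on which $X(s)$ lies within distance $\delta$ of $\partial\Omega$ has length at most $\delta/(c'\bar{\delta}_M)$ near each bounce endpoint (for $\delta$ below a threshold depending only on $\bar{\delta}_M$ and curvature bounds; for larger intermediate $s$, $X(s)$ is already well-separated from $\partial\Omega$ by the straight-line geometry and compactness of $\partial\Omega$). Summing this contribution over the at most $M$ bounces gives
\begin{equation*}
\mathrm{Leb}\bigl(\{s\in[0,L]: X(s;t,x,v)\in (\partial\Omega)^\delta\}\bigr) \leq \frac{2M\delta}{c'\bar{\delta}_M}.
\end{equation*}
Given any prescribed target $\varepsilon>0$, choosing $\delta(\varepsilon,M):=c'\bar{\delta}_M\varepsilon/(2M)$ (and smaller than the threshold above) makes the right-hand side at most $\varepsilon$. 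Since $(\partial S_0)^\delta\subset (\partial\Omega)^\delta$, the same bound governs the time spent in $(\partial S_0)^\delta$.

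The main technical point to pin down carefully is the passage from the local Taylor estimate (valid on $|s-t^i|\leq\rho$) to a global-in-segment estimate uniformly over all bounces: I would handle this by using that between two bounces the trajectory travels a length bounded below by $2\rho$ when $\tb^i\geq 2\rho$, and by continuity/compactness of $\partial\Omega$ to rule out the trajectory re-entering $(\partial\Omega)^\delta$ away from bouncing times. No serious obstacle is expected, since $M$ and $\bar{\delta}_M$ are fixed hypotheses and all constants are geometric.
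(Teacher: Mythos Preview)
Your high–level strategy—bound the time spent near the boundary on each of the $M$ segments and sum—is exactly the paper's sketch (which in fact consists of only two lines: ``per bounce the time in $(\partial S_0)^\delta$ is at most $D(\delta)$, so choose $\delta$ with $M\cdot D(\delta)\ll 1$''). Your Taylor expansion of $\xi$ along the trajectory near each bounce is the right mechanism and is the same computation the paper uses elsewhere (cf. the derivation of \eqref{min length}).

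There is, however, a real gap in your route through $(\partial\Omega)^\delta$. You claim the straight segment from $x^i$ to $x^{i+1}$ can only lie in $(\partial\Omega)^\delta$ near its two endpoints, and you propose to ``rule out the trajectory re-entering $(\partial\Omega)^\delta$ away from bouncing times'' by continuity/compactness. In a \emph{convex} domain that is automatic, but here $\Omega$ is a torus: a chord with both endpoints on the outer boundary can pass arbitrarily close to the inner (concave) boundary at an intermediate point without touching it. At such a near-miss, $\nabla\xi(X(s_*))\cdot v^i=0$, so your first-order estimate gives nothing, and the approach is governed by the second-order term $v^i\cdot\nabla^2\xi(X(s_*))v^i$, whose sign and size are not controlled by $\bar\delta_M$. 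Hence the displayed bound $2M\delta/(c'\bar\delta_M)$ is not justified as stated.

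Two fixes are available. First, drop the inclusion $(\partial S_0)^\delta\subset(\partial\Omega)^\delta$ and argue directly with $\partial S_0$: since in the paper's setting the trajectory has nonzero angular momentum (cf. the exclusion of $R_\perp^\varepsilon$ in Lemma~\ref{lem Xe}), the $\hat\varphi$-component of $v$ is bounded below, so each segment crosses the plane $S_0$ transversally in at most $O(\mathbf{w})$ points, and near each crossing the time spent in a $\delta$-tube of the curve $\partial S_0$ is $O(\delta)$. Second, if you insist on working through $(\partial\Omega)^\delta$, you must separately treat the intermediate close approaches; there the best uniform estimate is $O(\sqrt{\delta})$ per near-miss (from the quadratic profile of $\xi$), and you need to bound the number of such near-misses per segment—doable from analyticity of $s\mapsto\xi(X(s))$, but not from the non-grazing hypothesis alone.
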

\begin{proof}
	For each bounce, the trajectory with $|v|=1$ can stay in $(\p S_0)^{\delta}$ no longer than {\color{blue} $D(\delta)$(function which is decreasing in $\delta$)}. \\
	Hence, we choose $\delta \ll 1$ so that $D(\delta)\times M \ll 1$ as small as we want(?)  \\
\end{proof}

\unhide 
\begin{definition}
Recall that the compact set $\mathcal{K}^c\subset \overline{S_0}\times \S^2$ in proof of Lemma \ref{construct K}. Let us consider cross section $S_{\varphi}$ defined in \eqref{cross section} for fixed $\varphi \in [0,2\pi)$. Similar to $\mathcal{K}^c$ in \eqref{Kc}, using $\varphi$-symmetry in our domain $\O$, there exists a compact set $\mathcal{J}^c \subset \overline{S_{\varphi}} \times \S^2$ such that if $(x,v)\in \mathcal{J}^c$,
\begin{align*}
	(x^i(x,v),v^{i-1}(x,v))\notin \gamma_0^I \cup \gamma_0^C, \quad \text{for first $K$ bounce},
\end{align*}
and 
\begin{align*}
	(x^i(x,v),v^{i-1}(x,v))\notin \gamma_0^I, \quad \text{up to whole travel length $L$.}
\end{align*}
To extend the compact subset $\mathcal{J}^c\subset\overline{S_{\varphi}}\times \S^2$ to $\overline{S_{\varphi}} \times \mathbb{V}^N$, we define the following compact set
 \begin{align} \label{Jc_N}
 	\mathcal{V}^c := \Big \{(x,v)\in \mathcal{P}_1(\mathcal{J}^c) \times \mathbb{V}^N: \frac{v}{\vert v \vert} \in (\mathcal{J}^c)_x\Big \},
 \end{align}
where we used the projection operator $\mathcal{P}_1$ in \eqref{proj operator} and $A_x$ in \eqref{proj_v}. Note that  $\mathcal{P}_1(\mathcal{J}^c)=\mathcal{P}_1(\mathcal{V}^c)=\overline{S_\varphi}$. 
\end{definition}
\begin{remark}
Recall the notation $A_x$ in \eqref{proj_v}. Similar to \eqref{Kc small} in proof of Lemma \ref{construct K}, it holds that 
\begin{align*}
	\mathfrak{m}_2((\{\overline{S_\varphi}\times \S^2\} \backslash \mathcal{J}^c)_x)\lesssim \e, 
\end{align*}
for fixed $x \in \overline{S_\varphi}$. Since $\mathcal{V}^c$ can be obtained by scaling velocities in $\mathcal{J}^c$ from $1/N$ to $N$, we also deduce that for fixed $x \in \overline{S_\varphi}$ 
\begin{align} \label{Vc small} 
	\mathfrak{m}_3((\{\overline{S_\varphi}\times \mathbb{V}^N \} \backslash \mathcal{V}^c)_x) \leq C_N \e, 
\end{align}
where the constant $C_N$ depends on $N$. 
\end{remark}

\subsection{Phase Decomposition theorem}

\begin{theorem} \label{bad set thm}(Generalized version)
Let us consider cross section $S_{\varphi}$ defined in \eqref{cross section} for fixed $\varphi\in [0,2\pi)$. Then, for any $\varepsilon>0$, there exists a compact set $\mathcal{Z}^{\e}\subset \overline{S_\varphi}\times \mathbb{V}^N$ such that the following hold: if $(x,v)\in \mathcal{Z}^{\e}$, then  
\begin{equation*}
	[x^j(x,v),v^{j-1}(x,v)]\notin \gamma_0^C\cup\gamma_0^I,  \quad 1\leq j \leq M,
\end{equation*}
where $M$ was defined in Lemma \ref{bounce K}. Hence its backward in time trajectory up to a travel length $L=NT$ is always uniformly non-grazing. i.e.,
\begin{align*}
	\inf_{1\leq i \leq M}|v^{i-1}(x,v)\cdot n(x^{i}(x,v))| \geq C^*_{\varepsilon,L} > 0.
\end{align*}
Here, the constant $C^*_{\varepsilon,L}$ does not depend on $x$. Moreover, for fixed $x \in \overline{S_\varphi}$, we obtain 
\begin{align} \label{Ze c small} 
	\mathfrak{m}_3((\{\overline{S_{\varphi}}\times \mathbb{V}^N\}\backslash \mathcal{Z}^{\e})_x) \leq C_N \e,
\end{align}
where the constant $C_N$ depends only on $N$ and the notation $A_x$ was defined in \eqref{proj_v}. 
\end{theorem}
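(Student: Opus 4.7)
The plan is to reduce the theorem entirely to Lemma~\ref{S2 version} by invoking two elementary invariances of the specular billiard map: the axial symmetry of $\Omega$ about the $z$-axis, and the geometric scale invariance of the billiard under $v\mapsto cv$ for $c>0$ (which only reparametrizes time). All the substantive work---uniform control of the number of bounces, removal of inflection/concave grazing, and the measure-zero property of the bad-direction sets---has already been packaged into Proposition~\ref{unif bounce}, Lemma~\ref{bounce K}, Proposition~\ref{mB zero}, and Lemma~\ref{S2 version}; what remains is transport and rescaling.

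First I would apply Lemma~\ref{S2 version} on the reference cross section $S_0$ to produce a compact set $\mathcal{Y}^{\varepsilon}\subset\overline{S_0}\times\mathbb{S}^2$ on which none of the first $M$ backward bounces is grazing, the uniform lower bound $\inf_{1\le i\le M}|v^{i-1}\cdot n(x^i)|\ge C^*_{\varepsilon,L}$ holds with a constant independent of $x$, and the slice complement has $\mathbb{S}^2$-measure $\lesssim\varepsilon$. Next I would push $\mathcal{Y}^{\varepsilon}$ from $\varphi=0$ to $\varphi$ by the rotation $R_{\varphi}$ about the $z$-axis. Because $R_{\varphi}\Omega=\Omega$ one has $n(R_{\varphi}x)=R_{\varphi}n(x)$, and the specular reflection in \eqref{reflectionO} commutes with $R_{\varphi}$, so iterating \eqref{bcycle} yields $x^j(R_{\varphi}x,R_{\varphi}v)=R_{\varphi}x^j(x,v)$ and $v^j(R_{\varphi}x,R_{\varphi}v)=R_{\varphi}v^j(x,v)$. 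The set $\mathcal{J}^c:=\{(R_{\varphi}x,R_{\varphi}v):(x,v)\in\mathcal{Y}^{\varepsilon}\}$ then inherits all three properties on $\overline{S_{\varphi}}\times\mathbb{S}^2$: the classification of bounces \eqref{grazing} is rotation invariant, $|(R_{\varphi}v)\cdot n(R_{\varphi}x^i)|=|v\cdot n(x^i)|$, and rotations preserve the Liouville measure on $\mathbb{S}^2$.

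I would then promote $\mathcal{J}^c$ from unit speeds to the full annulus $\mathbb{V}^N$ via the set $\mathcal{V}^c$ of \eqref{Jc_N},
\[
\mathcal{Z}^{\varepsilon}:=\mathcal{V}^c=\bigl\{(x,v)\in\overline{S_{\varphi}}\times\mathbb{V}^N: v/|v|\in(\mathcal{J}^c)_x\bigr\}.
\]
Since a billiard trajectory depends only on the direction $v/|v|$ up to a time rescaling, the non-grazing conclusion and the uniform lower bound transfer verbatim. The travel-length condition is exactly the one handled by the choice $L=NT$ of Remark~\ref{LNT}: for $v\in\mathbb{V}^N$ and $s\in[0,T]$ the trajectory covers length $|v|T\le NT=L$, so the $M$-bounce control from Lemma~\ref{S2 version} applies uniformly on $\mathbb{V}^N$. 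Compactness of $\mathcal{Z}^{\varepsilon}$ is immediate from continuity of the product map $(x,\omega,r)\mapsto(x,r\omega)$ on the compact factor $\mathcal{J}^c\times[1/N,N]$.

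Finally, the slice measure bound \eqref{Ze c small} reduces to a one-line polar-coordinate calculation: writing $v=r\omega$ with $r\in[1/N,N]$, $\omega\in\mathbb{S}^2$, for fixed $x\in\overline{S_{\varphi}}$,
\[
\mathfrak{m}_3\bigl((\{\overline{S_{\varphi}}\times\mathbb{V}^N\}\setminus\mathcal{Z}^{\varepsilon})_x\bigr)=\int_{1/N}^{N}r^2\,\mathfrak{m}_2\bigl(\mathbb{S}^2\setminus(\mathcal{J}^c)_x\bigr)\,dr\lesssim N^3\varepsilon,
\]
which gives $C_N\asymp N^3$. In this sense there is no real obstacle at this final stage; the only substantive verifications are the rotation/scaling intertwinings with the billiard cycles, both of which are immediate from the axial-symmetry identity \eqref{axisym} and the definition \eqref{bcycle}. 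The genuinely hard work---the nonexistence of accumulating bounces away from inflection (Lemma~\ref{no infty bounce}), the uniform bounce count (Lemma~\ref{bounce K}), and the measure-zero of the bad sets $(B_j)_x$ (Proposition~\ref{mB zero})---has already been done upstream.
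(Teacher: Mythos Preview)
Your proposal is correct and follows essentially the same route as the paper: both reduce to Lemma~\ref{S2 version} via the rotational symmetry of $\Omega$ and the speed-invariance of the billiard cycles, then radially extend from $\mathbb{S}^2$ to $\mathbb{V}^N$. Your presentation is slightly more direct---you rotate the finished set $\mathcal{Y}^{\varepsilon}$ and then scale, whereas the paper first transports $\mathcal{K}^c$ to $S_\varphi$ (calling it $\mathcal{J}^c$), scales it to $\mathcal{V}^c$, and then re-runs the open-cover/exclusion argument of Lemma~\ref{S2 version} on $S_\varphi$ to carve out $\mathcal{JB}$ and set $\mathcal{Z}^{\varepsilon}=\mathcal{V}^c\setminus\mathcal{JB}$---but the two constructions are equivalent up to relabeling. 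One small caveat: ``transfer verbatim'' for the lower bound is slightly loose, since $|v^{i-1}(x,v)\cdot n(x^i)|=|v|\,|\hat v^{i-1}\cdot n(x^i)|$ picks up a factor $|v|\in[1/N,N]$; the resulting constant is $C^*_{\varepsilon,L}/N$, which is still uniform in $x$ and is all the theorem claims.
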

\begin{proof}
	For any $x\in \overline\Omega$, there exists $\varphi \in [0,2\pi)$ such that $x \in \overline{S_\varphi}$. 
	Since our domain is symmetric with respect to $\varphi$,  
	using compactness argument in proof of Lemma \ref{S2 version}, we can find a relatively open cover for $\mathcal{P}_1(\mathcal{V}^c)=\overline{S_{\varphi}}$ 
	\begin{align*}
	\bigcup_{i=1}^{l_{JB}} \{B(x_i^{\mathcal{JB}},r_i^{\mathcal{JB}}) \cap \overline{S_\varphi}\},
	\end{align*}
	and corresponding velocity set 
	\begin{align*}
	\mathcal{J}_i^{\mathcal{JB}} \subset \S^2,
	\end{align*}
	with $\mathfrak{m}_2(\mathcal{J}_i^{\mathcal{JB}})\lesssim\varepsilon$ for all $i=1,2,\dots,l_{JB}$ such as \eqref{open cover_IB} and \eqref{def bad IB}. We define the following velocity subset of $\mathbb{V}^N$ 
	\begin{align*}
		\mathcal{V}_i^{\mathcal{JB}} := \left \{ v\in \mathbb{V}^N: \frac{v}{\vert v \vert} \in \mathcal{J}_i^{\mathcal{JB}}\right\},
	\end{align*}
	and then $\mathfrak{m}_3(\mathcal{V}_i^{\mathcal{JB}})\leq C_N\varepsilon$ for all $i=1,2,\dots,l_{JB}$ because the set $\mathcal{V}_i^{\mathcal{JB}}$ can be obtained by scaling vectors in $\mathcal{J}_i^{\mathcal{JB}}$ from $1/N$ to $N$. Here, the constant $C_N$ increases as $N$ increases. Similar as \eqref{IB} in Lemma \ref{S2 version}, we construct an open cover $\mathcal{JB}$ such as 
\begin{align} \label{JB}
\mathcal{JB} := \bigcup_{i=1}^{l_{JB}} \Big \{ (x,v)\in \mathcal{V}^c: x \in B(x_i^{\mathcal{JB}},r_i^{\mathcal{JB}})\cap \overline{S_\varphi},\; v \in \bigcap_{j\in I_x^{\mathcal{JB}}}\mathcal{V}_j^{\mathcal{JB}}\Big\}_{\varepsilon},
\end{align}
where $A_{\e}$ was defined in \eqref{extension domain} and 
\begin{align} \label{index_JB}
I_x^{\mathcal{JB}}:=\Big\{i\in\mathbb{N}: x\in B(x_i^{\mathcal{JB}},r_i^{\mathcal{JB}})\cap \overline{S_\varphi}\Big\}.
\end{align}
Let us consider a compact set 
\begin{align} \label{def Ze} 
	\mathcal{Z}^\e:= \mathcal{V}^c \backslash \mathcal{JB}. 
\end{align} 
Since 
\begin{align*}
	\{\overline{S_\varphi}\times \mathbb{V}^N \} \backslash \mathcal{Z}^\e = (\{\overline{S_\varphi}\times \mathbb{V}^N \} \backslash \mathcal{V}^c) \cupdot \mathcal{JB},
\end{align*} 
it follows from \eqref{Vc small},\eqref{JB}, and $\mathfrak{m}_3(\mathcal{V}_j^{\mathcal{JB}})\leq C_N \e$ for all $j=1,2,\dots,l_{JB}$ that for fixed $x\in \overline{S_{\varphi}}$ 
\begin{align} \label{Ze c small}
	\begin{split}
		\mathfrak{m}_3((\{\overline{S_\varphi}\times \mathbb{V}^N\}\backslash \mathcal{Z}^\e)_x) &\leq \mathfrak{m}_3((\{\overline{S_\varphi}\times \mathbb{V}^N\}\backslash \mathcal{V}^c)_x)+\mathfrak{m}_3(\mathcal{JB}_x)\\
	&\leq \mathfrak{m}_3((\{\overline{S_\varphi}\times \mathbb{V}^N\}\backslash \mathcal{V}^c)_x)+\mathfrak{m}_3((\mathcal{V}_j^{\mathcal{JB}})_\e)\\
	&\leq C_N\varepsilon, \quad \forall j \in I_x^{\mathcal{JB}}, 
	\end{split}
\end{align}
where we have used the notation $A_x$ in \eqref{proj_v} and $A_\e$ in \eqref{extension domain}. \\

	For $v \in \mathbb{V}^N$, the size of velocity $v$ must be $N$ for the highest number of bounces during a fixed time travel length $T$. This case is the same as the size of velocity is $1$ and time travel length $NT$. Hence, in the two cases away from bad directions, the bouncing points $(x^i(x,v),v^{i-1}(x,v))$ are the same for all $i=1,2,\dots,M$. From Lemma \ref{S2 version}, if $(x,v) \in \mathcal{Z}^\e$ in \eqref{def Ze}, then $(x^j(x,v),v^{j-1}(x,v))$ is non-grazing for all $j=1,2,\dots,M$. Moreover, if we put a whole travel length as $L=NT$, then
\begin{align*}
	\inf_{1\leq i \leq M}|v^{i-1}(x,v)\cdot n(x^{i}(x,v))| \geq C^*_{\varepsilon, L} > 0,
\end{align*}  
where the constant $C^*_{\varepsilon, L}$ is independent of $x$. 
\end{proof}

\section{Transversality and Nondegeneracy condition}
In this section, we briefly explain nondegeneracy via triple Duhamel iteration technique which was constructed in \cite{KimLee}. In fact, Theorem \ref{bad set thm} plays role of velocity lemma (see Lemma 2.7 in \cite{KimLee}) and guarantees uniform non-grazing with uniformly finite number of bounces away from some sets with sufficiently small measure. Instead of repeating full detail of iteration scheme in \cite{KimLee} again, we provide a brief scheme of the triple iteration with some modification considering Theorem \ref{bad set thm}. \\

First, let us define local parametrization. For a given point ${p} \in \partial{\Omega}$, we choose sufficiently small $r_{1}>0, r_{2}>0$, and an bijective smooth mapping
\begin{equation}\label{eta}
	\begin{split}
		\eta_{{p}}:  \{\X_{{p}} \in \mathbb{R}^{3}:  \X_{{p},3}<0  \} \cap B(0; r_{1}) \ &\rightarrow  \ \Omega \cap B({p}; r_{2}),\\
		\X_{{p}}=(\X_{{p},1},\X_{{p},2},\X_{{p},3})	 \ &\mapsto \  (x_{1},x_{2},x_{3}) = \eta_{{p}}  (\X_{{p},1},\X_{{p},2},\X_{{p},3}),
	\end{split}
\end{equation} 
where $\eta_{ { {p}}}(\X_{ {{p}},1},\X_{ {{p}},2},\X_{ {{p}},3}) \in \p \Omega$ if and only if $\X_{ {{p}},3}=0$. We also define the transformed velocity field at $\eta_{{p}} (\X_{{p}})$ as  
\begin{equation}\label{v_p}
	\mathbf{v}_{ i}(\X_{{p}}) : = \frac{\p_{i} \eta_{{p}} (\X_{{p}})}{\sqrt{g_{{p},ii} (\X_{{p}})}} \cdot v,
\end{equation}	

\noindent where      
\begin{equation}\label{orthonormal_eta}
	\Big\{ \frac{\p_{1} \eta_{{p}}}{ \sqrt{g_{{p},11}}}, \frac{\p_{2} \eta_{{p}}}{ \sqrt{g_{{p},22}}},\frac{\p_{3} \eta_{{p}}}{ \sqrt{g_{{p},33}}}\Big\} \text{ is orthonormal at } \X_{{p},3} =0.
\end{equation}

\noindent For the second derivative $\p_i\p_j \eta_{p}$, we define Christoffel symbol $\Gamma{ij}^{k}$ by
\begin{equation} \label{Gamma}
	\p_{ij}\eta_p = \sum_{k} \Gamma_{p,ij}^{k} \p_{k}\eta_p.
\end{equation}

\noindent Without loss of generality, we set $g_{{p}, 33} (\X_{{p},1},\X_{{p},2},\X_{{p},3})=1$ whenever it is defined. The outward normal on the boundary is, for $x =\eta_{{p}} (\X_{{p},1}, \X_{{p},2},0)  \in \p\Omega$, 
\begin{equation}\label{normal_eta}
	n(x) = n(\eta_{{p}} (\X_{{p},1}, \X_{{p},2},0)) = 
	\p_{3} \eta_{{p}} (\X_{{p},1},\X_{{p},2},0)
	.%
\end{equation}


\noindent For each $k=0,1,2,3, \cdots,$ we assume that ${p}^{k} \in \p\Omega$ is chosen to be close to $x^{k}$ as in (\ref{eta}). Then, we define 
\begin{equation}\label{x^k}
	\begin{split}
		\X^{k}_{{p}^{k}}&:=(\X^{k}_{{p}^{k},1}, \X^{k}_{{p}^{k},2} , 0 )  \  \text{ 
			such that } \    x^{k}= \eta_{{p}^{k}} (\X^{k}_{{p}^{k}}), \\   \V^{k}_{p^{k},i} &:=\V_{i}^{k} (\X^{k}_{{p}^{k}}) = \frac{\p_{i} \eta_{{p}^{k}} (\X^{k}_{{p}^{k}})}{\sqrt{g_{{p}^{k},ii} (\X^{k}_{{p}^{k}})}} \cdot v^{k}, 
	\end{split}
\end{equation}
and speed-direction components
\begin{equation}\label{hat_v}
	|\V^{k}_{p^{k}}| = \sqrt{ ({\V}^{k}_{{p}^{k},1})^{2} + ({\V}^{k}_{{p}^{k},2})^{2} + ({\V}^{k}_{{p}^{k},3})^{2}},	
	\	\  \hat{\V}^{k}_{{p}^{k},1} = \frac{\V^{k}_{{p}^{k},1}}{|\V^{k}_{{p}^{k}}|},   \ \ 
	\hat{\V}^{k}_{{p}^{k},2} = \frac{\V^{k}_{{p}^{k},2}}{|\V^{k}_{{p}^{k}}|}. 
\end{equation}	

\noindent {\it\bf Step 1} To use advantage of $\frac{d X(s)}{d|v|}$, we first study change of variable between local coordinate and speed-direction coordinate. Let us assume $x\in B(x_{i}, r(x_{i}, \varepsilon))\cap \overline{S_{\varphi}}$ and $v\notin \mathcal{V}_{i}^{\mathcal{IB}}\cap \mathbb{V}_{N}$ where $B(x_{i}, r(x_{i}, \varepsilon))$ and $\mathcal{V}_{i}^{\mathcal{IB}}$ were defined in Theorem \ref{bad set thm}. Then by Theorem \ref{bad set thm} and \eqref{x^k}, $|\V_{p^{k},3}^{k}(t,x,v)| > \delta_{2}(\varepsilon, N) > 0$ for all $1\leq k \leq M$ for fixed $M$. (Note that we choose $M$ to be the uniform number of bounce $M_{\varepsilon, NT}$ what we obtained in Lemma \ref{bounce K}.) Applying Lemma 2.9 of \cite{KimLee} directly, we can obtain
\begin{equation}\label{Jac_hat}
	\bigg|\det\left[\begin{array}{cc|cc} 
		\frac{\p \X^{k}_{{p}^{k},1}}{\p \X^{1}_{{p}^{1},1}} & \frac{\p \X^{k}_{{p}^{k},1}}{\p \X^{1}_{{p}^{1},2}} & \frac{\p \X^{k}_{{p}^{k},1}}{\p \hat{\V}^{1}_{{p}^{1},1}}  & \frac{\p \X^{k}_{{p}^{k},1}}{\p  \hat{\V}^{1}_{{p}^{1},2}}  \\
		\frac{\p \X^{k}_{{p}^{k},2}}{\p \X^{1}_{{p}^{1},1}} & \frac{\p \X^{k}_{{p}^{k},2}}{\p \X^{1}_{{p}^{1},2}} & \frac{\p \X^{k}_{{p}^{k},2}}{\p \hat{\V}^{1}_{{p}^{1},1}}  & \frac{\p \X^{k}_{{p}^{k},2}}{\p \hat{\V}^{1}_{{p}^{1},2}}  \\ \hline
		\frac{\p \hat{\V}^{k}_{{p}^{k},1}}{\p \X^{1}_{{p}^{1},1}} & \frac{\p  \hat{\V}^{k}_{{p}^{k},1}}{\p \X^{1}_{{p}^{1},2}} & \frac{\p   \hat{\V}^{k}_{{p}^{k},1}}{\p \hat{\V}^{1}_{{p}^{1},1}}  & \frac{\p   \hat{\V}^{k}_{{p}^{k},1}}{\p \hat{\V}^{1}_{{p}^{1},2}}  \\
		\frac{\p   \hat{\V}^{k}_{{p}^{k},2}}{\p \X^{1}_{{p}^{1},1}} & \frac{\p   \hat{\V}^{k}_{{p}^{k},2}}{\p \X^{1}_{{p}^{1},2}} & \frac{\p   \hat{\V}^{k}_{{p}^{k},2}}{\p \hat{\V}^{1}_{{p}^{1},1}}  & \frac{\p   \hat{\V}^{k}_{{p}^{k},2}}{\p \hat{\V}^{1}_{{p}^{1},2}}  
	\end{array}\right] \bigg| \  >  \ \epsilon_{ \Omega,N, \delta_{2}} >0,
\end{equation}	
where $ t^{1}  =   t^{1} (t,x,v),$ $\X^{1}_{p^{1},i}  =  \X^{1}_{p^{1},i} (t,x,v),$ $\hat{\V}^{1}_{p^{1},i} = \hat{\V}^{1}_{p^{1},i} (t,x,v)$ and 
$$
\X^{k}_{{p}^{k},i} =   \X^{k}_{{p}^{k},i} (t^{1}, \X^{1}_{p^{1},1}, \X^{1}_{p^{1},2},  \hat{\V}^{1}_{p^{1},1},  \hat{\V}^{1}_{p^{1},2}, |\V^{1}_{p^{1}}|), \ \ \ 
\hat{\V}^{k}_{{p}^{k},i} =   \hat{\V}^{k}_{{p}^{k},i} (t^{1}, \X^{1}_{p^{1},1}, \X^{1}_{p^{1},2},  \hat{\V}^{1}_{p^{1},1},  \hat{\V}^{1}_{p^{1},2}, |\V^{1}_{p^{1}}|).$$
Here, the constant $\epsilon_{ \Omega, N, \delta_{2}} = \epsilon_{ \Omega, \varepsilon, N} > 0$ does not depend on $t$ and $x$.  \\

\noindent {\it \bf Step 2 } Now we study geometric decomposition for Jabocian of $X(s;t,x,v)$ with respect to speed-direction coordinate. We define specular basis first.
\begin{definition}[Specular Basis]
	Recall the specular cycles $(t^{k},x^{k}, v^{k})$ in section 1.2. Assume non-grazing
	\begin{equation}\label{no_grazing_0}
		n(x^{k}) \cdot v^{k} \neq 0,\quad 1\leq k \leq M,
	\end{equation}
	for each bounce we consider. Also, recall $\eta_{p^{k}}$ in (\ref{eta}). Specular basis is an orthonormal basis of $\R^{3}$ which is defined by 
	\begin{equation}\label{orthonormal_basis}
		\begin{split}
			&\mathbf{ {e}}^{k}_{0}  := \frac{v^{k}}{|v^{k}|}  
			, \\ 
			&\mathbf{  {e}}^{k}_{\perp,1}
			:=  \mathbf{e}^{k}_{0} \times \frac{\p_{2} \eta_{p^{k}}(x^{k})}{ \sqrt{g_{{p}^{k},22}(x^{k})}   } \bigg{/}  \Big| \mathbf{e}^{k}_{0} \times \frac{\p_{2} \eta_{p^{k}} (x^{k})}{ \sqrt{g_{{p}^{k},22}(x^{k})}   }   \Big|
			, \\  &\mathbf{  {e}}^{k}_{\perp,2}   :=  \mathbf{e}^{k}_{0} \times  \mathbf{  {e}}^{k}_{\perp,1},
		\end{split}
	\end{equation}
	for each $k$ and $(n(x^{k}), v^{k})$. 
\end{definition}	

From uniform nondegeneracy \eqref{Jac_hat}, we study the following Jacobian with respect to speed-direction coordinate,
\[
\det \frac{\p X(s;t,x,v)}{ \p(|\V_{p^{k}}^{k}|, \hat{\V}_{p^{k},1}^{k}, \hat{\V}_{p^{k},2}^{k}  )}.
\] 
Since perturbation of speed $|\V_{p^{k}}^{k}|=|v|$ does not change bouncing points. Using this advantage, we can obtain (see (3.32) of \cite{KimLee})
\begin{eqnarray*}
	&
	&
	\left[\begin{array}{ccc}
		\frac{\p X (s)}{\p |v|}\cdot\mathbf{e}_0^{k} &  \frac{\p X(s)}{\p \hat{v}_{1}}\cdot\mathbf{e}_0^{k} &  \frac{\p X(s)}{\p \hat{v}_{2}}\cdot\mathbf{e}_0^{k}   \\
		\frac{\p X(s)}{\p |v|}\cdot\mathbf{e}_{\perp,1}^{k} &  \frac{\p X(s)}{\p \hat{v}_{1}}\cdot\mathbf{e}_{\perp,1}^{k} &  \frac{\p X(s)}{\p \hat{v}_{2}}\cdot\mathbf{e}_{\perp,1} ^{k}  \\ 
		\frac{\p X(s)}{\p |v|}\cdot\mathbf{e}_{\perp,2} ^{k}&  \frac{\p X(s)}{\p \hat{v}_{1}}\cdot\mathbf{e}_{\perp,2} ^{k}&  \frac{\p X(s)}{\p \hat{v}_{2}}\cdot\mathbf{e}_{\perp,2}  ^{k}	
	\end{array}\right]
	=
	\left[\begin{array}{c|cc}
		-(t-s) & 
		{\bf (*)_{1,2}}
		\\ \hline
		\mathbf{0}_{2,1} &     
		{\bf (**)_{2,2}}
	\end{array}\right]_{3\times 3}.
	\notag \\
\end{eqnarray*} 
To check full rank property of above matrix, we study determinant of lower right ${\bf (**)_{2,2}}$ which is generated by directional derivative $\frac{\p X(s)}{\p \hat{\V}_{p^{k},i}^{k}}$. To study ${\bf (**)_{2,2}}$ efficiently, we define specular matrix (Definition 3.3 in \cite{KimLee}).

\begin{definition}[Specular Matrix]
	For fixed $k \in \mathbb{N}$ and a $C^{1}$-map $Y: (y_{1}, y_{2} ) \mapsto Y(y_{1}, y_{2} )\in \O$, assume (\ref{no_grazing_0})
	with 	${x}^{k} ={x}^{k} (t, Y( {y}_{1},  {y}_{2}), |v|, \hat{v}_{1}, \hat{v}_{2})$ and ${v}^{k} ={v}^{k} (t, Y( {y}_{1},  {y}_{2}), $\\
	$|v|, \hat{v}_{1}, \hat{v}_{2})$. We define the $4\times4$ specular transition matrix $\mathcal{S}^{k, p^{k}, Y}= \mathcal{S}^{k, p^{k}, Y}(t,y_{1},y_{2},\\ |v|, \hat{v}_{1}, \hat{v}_{2})$ as 
	\begin{equation}\label{specular_transition_matrix}
		\mathcal{S}^{k, p^{k}, Y}
		:= \left[\begin{array} {c|c}
			\mathcal{S}_{1}^{k, p^{k}, Y}	 &0_{2\times2}\\ \hline
			\mathcal{S}_{2}^{k, p^{k}, Y} & \mathcal{S}_{3}^{k, p^{k}, Y}
		\end{array} \right]_{4\times 4},
	\end{equation}
	where 
	\begin{equation}\begin{split}\notag
			\mathcal{S}_{1}^{k, p^{k}, Y} &: = 
			\left[\begin{array}{cc}	\p_{1} \eta_{ {p}^{k}} \cdot \mathbf{e}^{k}_{\perp,1} & 
				\p_{2} \eta_{{p}^{k}} \cdot \mathbf{e}^{k}_{\perp,1}\\
				\p_{1} \eta_{{p}^{k}} \cdot \mathbf{e}^{k}_{\perp,2} & 
				\p_{2} \eta_{{p}^{k}} \cdot \mathbf{e}^{k}_{\perp,2}
			\end{array}
			\right]_{2\times 2} ,\\
			\mathcal{S}_{2}^{k, p^{k}, Y} &: = 	
			\left[\begin{array}{cc}
				\Big(  \sum_{\ell=1}^{3} \p_{1} \big[ \frac{\p_{\ell} \eta_{{p}^{k}}}{\sqrt{g_{ {{p}^{k}},\ell\ell}}}  \big] \hat{\mathbf{v}}^{k}_{ {{p}^{k}},\ell}  \Big) \cdot \mathbf{e}^{k}_{\perp,1}   
				&   \Big(  \sum_{\ell=1}^{3} \p_{2} \big[ \frac{\p_{\ell} \eta_{{p}^{k}}}{\sqrt{g_{ {{p^{k}}},\ell\ell}}}  \big] \hat{\mathbf{v}}^{k}_{ { {p}},\ell}  \Big) \cdot \mathbf{e}_{\perp,1}  ^{k}
				\\
				\Big(  \sum_{\ell=1}^{3} \p_{1} \big[ \frac{\p_{\ell} \eta_{{p^{k}}}}{\sqrt{g_{ { {p^{k}}},\ell\ell}}}  \big] \hat{\mathbf{v}}^{k}_{ {{p^{k}}},\ell}  \Big) \cdot \mathbf{e}^{k}_{\perp,2}   & 
				\Big(  \sum_{\ell=1}^{3} \p_{2} \big[ \frac{\p_{\ell} \eta_{{p^{k}}}}{\sqrt{g_{ {{p^{k}}},\ell\ell}}}  \big] \hat{\mathbf{v}}^{k}_{ {{p^{k}}},\ell}  \Big) \cdot \mathbf{e}^{k}_{\perp,2}  
			\end{array} \right]_{2\times 2},\\
			\mathcal{S}_{3} ^{k, p^{k}, Y}& : = 
			\left[\begin{array}{cc}
				\Big[ \frac{\p_{1} \eta_{p^{k}}}{\sqrt{g_{p^{k},11}}} - \frac{\p_{3} \eta_{p^{k}}}{\sqrt{g_{p^{k},33}}}  \frac{\hat{\mathbf{v}}^{k}_{p^{k},1}}{\hat{\mathbf{v}}^{k}_{p^{k},3}} \Big] \cdot \mathbf{e}^{k}_{\perp,1}
				& \Big[ \frac{\p_{2} \eta_{p^{k}}}{\sqrt{g_{p^{k},22}}} - \frac{\p_{3} \eta_{p^{k}}}{\sqrt{g_{p^{k},33}}}  \frac{\hat{\mathbf{v}}^{k}_{p^{k},2}}{\hat{\mathbf{v}}^{k}_{p^{k},3}} \Big] \cdot \mathbf{e}^{k}_{\perp,1}\\
				\Big[ \frac{\p_{1} \eta_{p^{k}}}{\sqrt{g_{p^{k},11}}} - \frac{\p_{3} \eta_{p^{k}}}{\sqrt{g_{p^{k},33}}}  \frac{\hat{\mathbf{v}}^{k}_{p^{k},1}}{\hat{\mathbf{v}}^{k}_{p^{k},3}} \Big] \cdot \mathbf{e}^{k}_{\perp,2}
				& \Big[ \frac{\p_{2} \eta_{p^{k}}}{\sqrt{g_{p^{k},22}}} - \frac{\p_{3} \eta_{p^{k}}}{\sqrt{g_{p^{k},33}}}  \frac{\hat{\mathbf{v}}^{k}_{p^{k},2}}{\hat{\mathbf{v}}^{k}_{p^{k},3}} \Big] \cdot \mathbf{e}^{k}_{\perp,2}
			\end{array} \right]_{2\times 2},
	\end{split} \end{equation}
	where $\eta_{p^{k}}$ and $g_{p^{k}}$ are evaluated at $x^{k}(t, Y( {y}_{1},  {y}_{2}), |v|, \hat{v}_{1}, \hat{v}_{2})$. We also define the $4\times 4$ specular matrix $\mathcal{R}^{k,p^{k},Y} = \mathcal{R}^{k,p^{k},Y} (t,y_{1},y_{2}, |v|, \hat{v}_{1}, \hat{v}_{2})$ as
	\begin{equation}\label{specular_matrix}
		\mathcal{R}^{k, p^{k}, Y}: = 	\mathcal{S}^{k, p^{k}, Y}
		\frac{\p ( 
			\mathbf{x}^{k}_{ {p}^{k},1}, \mathbf{x}^{k}_{ {p}^{k},2},
			\hat{\mathbf{v}}^{k}_{ {p}^{k},1}, \hat{\mathbf{v}}^{k}_{ {p}^{k},1}
			)}{\p (y_{1},y_{2}, \hat{v}_{1}, \hat{v}_{2})}
		,
	\end{equation}	
	where $\mathbf{x}^{k}_{{p}^{k}}=\mathbf{x}^{k}_{{p}^{k}}(t, Y( {y}_{1},  {y}_{2}), |v|, \hat{v}_{1}, \hat{v}_{2})$, $\mathbf{v}^{k}_{{p}^{k}}=\mathbf{v}^{k}_{{p}^{k}}(t, Y( {y}_{1},  {y}_{2}), |v|, \hat{v}_{1}, \hat{v}_{2})$.  Here, $\hat{v}_{i}$ is directional component in global Eulerian coordinate $\hat{v}_{i} = \frac{v_{i}}{|v|}$.  \\
\end{definition}

Once we perform triple iteration, we can generate extra variables $y_{1}$ and $y_{2}$. Explicitly, we have 
\begin{eqnarray*}
	&&\left[\begin{array}{ccc}
		\mathbf{e}_{0}^{k} & \mathbf{e}_{\perp,1}^{k} & \mathbf{e}_{\perp,2}^{k}
	\end{array}\right]^{-1} 
	\frac{\p X (s; t, Y_{x}(y_{1},y_{2}), |v|, \hat{v}_{1}, \hat{v}_{2})}{ \p (|v |, \hat{v}_{1} , \hat{v}_{2} , y_{1}, y_{2})} \\
	&&
	= 
	\underbrace{\left[\begin{array}{ccccc}
			\frac{\p X}{\p |v |} \cdot \mathbf{e}_{0}^{k} & \frac{\p X}{\p \hat{v} _{1}} \cdot \mathbf{e}_{0}^{k} & \frac{\p X}{\p \hat{v} _{2}} \cdot \mathbf{e}_{0}^{k}& \frac{\p X}{\p y_{1}} \cdot \mathbf{e}_{0}^{k} & \frac{\p X}{\p y_{2}} \cdot \mathbf{e}_{0}^{k} \\
			\frac{\p X}{\p |v |} \cdot \mathbf{e}_{\perp,1}^{k} & \frac{\p X}{\p \hat{v} _{1}} \cdot \mathbf{e}_ {\perp,1}^{k} & \frac{\p X}{\p \hat{v} _{2}} \cdot \mathbf{e}_ {\perp,1}^{k}& \frac{\p X}{\p y_{1}} \cdot \mathbf{e}_ {\perp,1} ^{k}& \frac{\p X}{\p y_{2}} \cdot \mathbf{e}_ {\perp,1}^{k}\\
			\frac{\p X}{\p |v |} \cdot \mathbf{e}_{\perp,2}^{k} & \frac{\p X}{\p \hat{v} _{1}} \cdot \mathbf{e}_ {\perp,2} ^{k}& \frac{\p X}{\p \hat{v} _{2}} \cdot \mathbf{e}_ {\perp,2}^{k}& \frac{\p X}{\p y_{1}} \cdot \mathbf{e}_ {\perp,2} ^{k}& \frac{\p X}{\p y_{2}} \cdot \mathbf{e}_ {\perp,2}^{k}
		\end{array} \right]} .
\end{eqnarray*}
Using the \textit{specular basis} (\ref{orthonormal_basis}) and the \textit{specular matrix} (\ref{specular_matrix}), the underbraced term becomes
\begin{eqnarray*}
	{  \left[\begin{array}{c|c}
			- (t-s) & \substack{	
				-  | {v}^{k} |    \nabla_{\hat{v} _{1}, \hat{v} _{2},y_{1}, y_{2}}   t^{k}+
				\nabla_{\hat{v} _{1}, \hat{v} _{2}, y_{1}, y_{2}}   \mathbf{x}^{k}_{ {p}^{k}, \ell} \p_{\ell} \eta_{ {p}^{k}} \cdot \mathbf{e}_{0}^{k}
				\\ - (t^{k}-s) | {v}^{k} |  
				\sum_{j=1}^{2}
				\bigg( \sum_{\ell=1}^3  \frac{\p}{\p {\mathbf{x}^{k}_{ {p}^{k},j}}}
				\Big[  \frac{\p_\ell \eta_{ {p}^{k}}}{\sqrt{g_{ {p}^{k},\ell\ell}}} \Big] \hat{\mathbf{v}}^k_{{p}^{k},\ell} \bigg)
				\nabla_{\hat{v} _{1}, \hat{v} _{2}, y_{1}, y_{2}} \mathbf{x}^k_{ {p}^{k},j} 
			}\\ \hline
			\substack{ 0 \\ 
				0 }
			& *_{2 \times 4} 
		\end{array} \right]_{3\times 5}}, 
\end{eqnarray*}
where the lower right $2 \times 4$-submatrix equals
\begin{equation}\label{R_24}\begin{split}
		&\left[\begin{array}{cccc}
			\mathcal{R}_{1,1}^{k,{p}^{k},Y_{x}} & 	\mathcal{R}_{1,2}^{k,{p}^{k},Y_{x}}  & 	\mathcal{R}_{1,3}^{k,{p}^{k},Y_{x}} & 	\mathcal{R}_{1,4}^{k,{p}^{k},Y_{x}}\\
			\mathcal{R}_{2,1} ^{k,{p}^{k},Y_{x}}& 	\mathcal{R}_{2,2} ^{k,{p}^{k},Y_{x}} & 	\mathcal{R}_{2,3}^{k,{p}^{k},Y_{x}} & 	\mathcal{R}_{2,4}^{k,{p}^{k},Y_{x}}
		\end{array} \right]  \\
	&
		- (t^{k}-s) | {v}^{k} |
		\left[\begin{array}{cccc}
			\mathcal{R}_{3,1} ^{k,{p}^{k},Y_{x}}& 	\mathcal{R}_{3,2}^{k,{p}^{k},Y_{x}} &	\mathcal{R}_{3,3} ^{k,{p}^{k},Y_{x}}& 	\mathcal{R}_{3,4}^{k,{p}^{k},Y_{x}}\\
			\mathcal{R}_{4,1} ^{k,{p}^{k},Y_{x}}& 	\mathcal{R}_{4,2}^{k,{p}^{k},Y_{x}}	 & 	\mathcal{R}_{4,3} ^{k,{p}^{k},Y_{x}}& 	\mathcal{R}_{4,4}^{k,{p}^{k},Y_{x}}
		\end{array} \right].
\end{split}\end{equation}
Here $\mathcal{R}^{k,p^{k},Y_{x}}_{i,j}$ is defined in (\ref{specular_matrix}) with $x^{k} = x^{k} (t, Y_{x}(y_{1},y_{2}), |v|, \hat{v}_{1}, \hat{v}_{2})$ and $v^{k} = v^{k} (t,Y_{x}$\\
$(y_{1},y_{2}), |v|, \hat{v}_{1}, \hat{v}_{2})$. From the exact form of underbraced submatrix, choosing $2\times 2$ submatrix in the underbraced matrix is equivalent to choosing two independent variables $\{\zeta_{1}, \zeta_{2}\}$ among $\{ \hat{v}_{1}, \hat{v}_{2}, y_{1}, y_{2}\}$. (Here, $(y_{1}, y_{2})$ is introduced to represent a local spatial coordinate of a manifold which is constructed by $\p_{|v|}, \p_{\hat{v}_{i}}$ in its second Duhamel iteration step.) \\

\noindent {\it\bf Step 3} From the existence of $\{\zeta_{1}, \zeta_{2}\}\subset \{ \hat{v}_{1}, \hat{v}_{2}, y_{1}, y_{2}\}$ which gives nondegeneracy of $2\times 2$ submatrix of \eqref{R_24}, we can prove full rank property. 
We recall Lemm 3.7 of \cite{KimLee}. Away from arbitrary small time intervals near $C^{1}$ function $\psi_{i}^{k}:B_{\varepsilon}(t,x,v)\rightarrow \R$ (for $i=1,2,3$), we obtain nondegeneracy. The following Lemma is same as Lemma 3.7 in \cite{KimLee}. We obtain the following Lemma by  replacing $\mathbf{e}_{3}$ into $\hat{\varphi}(\varphi)$ WLOG. \\

\begin{lemma} \label{lemma rank 3} [$S_{\varphi}$ version for Lemma 3.7 in \cite{KimLee}]
	Let $x\in S_{\varphi}$. We also assume at most $M$ bounces during $[t-1,t]$ and
	\begin{equation}\begin{split}\label{con_v0}
			\frac{1}{N} \leq |v | \leq N,  \ 
			\frac{1}{N} \leq |v\cdot \hat{\varphi}(\varphi)|, \ \frac{1}{N} \leq| n(x^{1}(t, x  ,v )) \cdot \hat{\varphi}(\varphi)|, \\ \text{and} \  {| n(x^{k}(t, x  ,v )) \cdot v^{k}(t, x  ,v ) |} \geq \delta_{1} > 0
			,\quad 1\leq k \leq M.
		\end{split}
	\end{equation} 
	where $\hat{\varphi}(\varphi)$ is a unit vector that is perpendicular to the cross-section $S_{\varphi}$. Let a $C^{1}$-map $Y_{x}: (y_{1},y_{2}) \mapsto Y_{x}(y_{1},y_{2}) \in \bar{\O}$ with $Y_{x}(0, 0) = x$ and $\| Y\|_{C^{1}_{x,y_{1},y_{2}}}\lesssim1$. We assume that 
	\begin{equation} \label{Y times Y}
	\Big|\Big(\frac{\p Y_{x}(0,0)}{\p y_{1}} \times \frac{\p Y_{x}(0,0)}{\p y_{2}}\Big) \cdot 
	R_{ x^{1}(t, x ,v)  } v^{1}(t, x ,v)  \Big| > \delta_{3}>0.
	\end{equation} 
	%
	%
	
	For $k\in \mathbb{N}$ with $ t^{k}\geq t-1$, there exists $\e>0$ and finitely many $C^{0,\gamma}$-functions $\psi^{k}_{i}  : B_{\varepsilon}(t,x,v)\rightarrow \R$ with $\|\psi^{k}_{i}  \|_{C^{0,\gamma}_{t,x}}\lesssim 1$, and there exists a constant $\epsilon_{\delta_{1}, \delta_{3}, N, \O, \delta_{*} } >0$ and $\{\zeta_{1}, \zeta_{2}\} \subset \{  \hat{v} _{1},  \hat{v}_{2}, y_{1}, y_{2}\}$ such that 
	\begin{eqnarray*} 
	&& \text{if } \  \min_{i}|s- \psi_{i}^{k}(t, 
	Y_{x}(y_{1}, y_{2})
	,v)|>\delta_{*}\nonumber\\
	&& \ \ \ 
	\  \text{and} \ (s;t,Y_{x}(y_{1}, y_{2}),v) \in [ \max\{t-1, t^{k+1}\}, \min \{t- \frac{1}{N}, t^{k}\}] \times  B_{\e}(t, x, v), \nonumber \\
	&&\text{then}  \  \det \left(
	\frac{\p X (s; t, Y_{x}(y_{1}, y_{2}), |v|,\hat{v} _{1},  \hat{v}_{2}  )}{\p (|v |,\zeta_{1}, \zeta_{2} )}\right)> \epsilon_{ \delta_{1},  \delta_{3},N,  \O, \delta_{*}}
	>0.\nonumber  \\
	\end{eqnarray*}
\end{lemma}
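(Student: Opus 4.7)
The plan is to follow the structure of Lemma 3.7 in \cite{KimLee} verbatim, substituting the role of $\mathbf{e}_3$ (the axial direction in the cylindrical case) by $\hat{\varphi}(\varphi)$, which is the outward normal to the cross-section $S_\varphi$ at position $x\in S_\varphi$. The hypotheses \eqref{con_v0} have been stated precisely so that the three ingredients used in \cite{KimLee} --- a lower bound on the component of $v$ in the distinguished direction, a lower bound on the component of $n(x^1)$ in that same direction, and uniform non-grazing along $M$ bounces --- are all available here with $\hat{\varphi}(\varphi)$ replacing $\mathbf{e}_3$.

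First I would invoke the geometric decomposition derived in \emph{Step 2} of the preceding discussion, which expresses
\[
\left[\mathbf{e}_{0}^{k}\ \mathbf{e}_{\perp,1}^{k}\ \mathbf{e}_{\perp,2}^{k}\right]^{-1}
\frac{\partial X(s; t, Y_x(y_1,y_2), |v|, \hat v_1, \hat v_2)}{\partial(|v|, \hat v_1, \hat v_2, y_1, y_2)}
\]
as a $3\times 5$ block matrix whose top-left entry is $-(t-s)$, whose lower left $2\times 1$ block is zero, and whose lower right $2\times 4$ block is the matrix in \eqref{R_24}. Consequently, $\det \frac{\partial X}{\partial(|v|, \zeta_1, \zeta_2)}$ equals $-(t-s)$ times a $2\times 2$ minor of \eqref{R_24} corresponding to the two columns indexed by $\zeta_1,\zeta_2\in\{\hat v_1,\hat v_2,y_1,y_2\}$. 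Since $s\leq t-1/N$, the factor $-(t-s)$ is bounded below by $1/N$, and the problem reduces to producing a choice of $\{\zeta_1,\zeta_2\}$ for which the $2\times 2$ minor is bounded below away from a finite set of $s$-values.

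Next, I would observe that each entry of the $2\times 4$ block in \eqref{R_24} has the affine form $\mathcal{R}_{i,j}^{k,p^k,Y_x} - (t^k - s)|v^k|\,\mathcal{R}_{i+2,j}^{k,p^k,Y_x}$, so that for any choice of columns $\{\zeta_1,\zeta_2\}$ the corresponding $2\times 2$ determinant is a polynomial of degree at most two in the single variable $(t^k-s)$, with coefficients depending on $(t,Y_x(y_1,y_2),v)$ through the specular matrix entries. The heart of the argument is then to show that, among the six possible pairs $\{\zeta_1,\zeta_2\}\subset\{\hat v_1,\hat v_2,y_1,y_2\}$, at least one yields a polynomial whose coefficients are not all small --- more precisely, whose sup-norm over the coefficients is bounded below by a quantity $c(\delta_1,\delta_3,N,\Omega)>0$. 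The $\hat v$-$\hat v$ minors are controlled by the specular matrix $\mathcal{S}^{k,p^k,Y_x}$ together with the factorization $\mathcal{R}^{k,p^k,Y_x} = \mathcal{S}^{k,p^k,Y_x}\cdot\frac{\partial(\mathbf{x}^k_{p^k},\hat{\mathbf{v}}^k_{p^k})}{\partial(y,\hat v)}$, using $|n(x^k)\cdot v^k|\geq \delta_1$ and the uniform-in-$N$ transversality of $v$ to $\hat{\varphi}(\varphi)$ as in \cite{KimLee}; the $y$-$y$ and mixed $y$-$\hat v$ minors are activated by the assumption \eqref{Y times Y}, which exactly ensures that $(\partial_{y_1} Y\times \partial_{y_2} Y)(0,0)$ has a component along $R_{x^1}v^1$ of size $\delta_3$, i.e.\ that the $(y_1,y_2)$-image is transverse to the first outgoing ray. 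I would argue by continuity (in $(t,x,v)$) to upgrade the pointwise estimate at $(t,x,v)$ to a uniform one on a small ball $B_\varepsilon(t,x,v)$.

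Once a non-trivial quadratic (in $s$) with coefficients of size $\gtrsim c(\delta_1,\delta_3,N,\Omega)$ is in hand, standard stability of roots of polynomials (combined with the fact that the specular matrix entries inherit $C^{0,\gamma}$-dependence on $(t,x,v)$ from the H\"older regularity of the backward bouncing maps, as recorded in \cite{KL_holder}) produces finitely many $C^{0,\gamma}$-functions $\psi_i^k(t,Y_x(y_1,y_2),v)$ --- namely the at most two roots of the polynomial, possibly with two additional values recording the endpoints where the leading coefficient itself might degenerate --- such that outside $\delta_*$-neighborhoods of these $\psi_i^k$ the $2\times 2$ determinant is bounded below by $\epsilon_{\delta_1,\delta_3,N,\Omega,\delta_*}>0$. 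Multiplying by the uniformly nonzero factor $-(t-s)$ gives the claim.

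The main obstacle I anticipate is \emph{Step 2}: producing, uniformly across all $k\leq M$, a pair $\{\zeta_1,\zeta_2\}$ for which the quadratic coefficients admit the explicit lower bound $c(\delta_1,\delta_3,N,\Omega)$. The cylindrical version in \cite{KimLee} exploits that $\mathbf{e}_3$ is a Killing direction for the flow and is literally orthogonal to every cross-section; here $\hat{\varphi}(\varphi)$ plays that role only locally on $S_\varphi$, so I would need to verify that the algebraic identities behind the case analysis of \cite{KimLee} are either invariant under the local rotation that carries $S_0$ to $S_\varphi$, or can be reduced to the case $\varphi=0$ by the axial symmetry \eqref{axisym}. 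Given the axial symmetry of $\Omega$, this reduction is available, which is exactly what licenses the claim in the excerpt that the lemma is obtained from \cite{KimLee} by ``replacing $\mathbf{e}_3$ by $\hat{\varphi}(\varphi)$.''
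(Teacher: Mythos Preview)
Your proposal is correct and aligns with the paper's approach: the paper does not give an independent proof of this lemma but simply remarks that it is obtained from Lemma~3.7 of \cite{KimLee} by replacing the distinguished direction $e_3$ with $\hat{\varphi}(\varphi)$, noting that the choice of fixed direction in \cite{KimLee} is arbitrary. Your write-up is in fact more detailed than what the paper provides, spelling out the block-matrix reduction via \eqref{R_24}, the quadratic-in-$s$ structure of the $2\times 2$ minors, and the root-extraction step that yields the $\psi_i^k$; the paper leaves all of this implicit in the citation.
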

Before we apply above Lemma to the following Proposition, we again note that the second and third conditions of \eqref{con_v0} are written as
\begin{equation*}
\begin{split} 
\frac{1}{N} \leq |v_{ 3}|, \quad \frac{1}{N} \leq| n(x^{1}(t, x  ,v )) \cdot e_{3}|, 
\end{split}
\end{equation*} 
 in \cite{KimLee}, where the choice of $e_{3}$ can be changed into any other fixed direction vector in $\S^{2}$ WLOG. Since we are dealing with $x\in S_{\varphi}$, we choose $\hat{\varphi}(\varphi)$ instead of $e_{3}$ which is more suitable in our domain. \\


 Modifying Theorem 3.9 of \cite{KimLee} into $\mathcal{Z}^{\varepsilon}$ version ($\mathcal{Z}^{\varepsilon}$ is obtained in Theorem \ref{bad set thm}), we obtain the following Proposition.  \\

\begin{proposition}\label{prop_full_rank} [$\mathcal{Z}^{\varepsilon}$ version for Theorem 3.9 in \cite{KimLee}]
	Fix any arbitrary $(t,x,v) \in [T,T+1] \times \overline{\O} \times \VN$. 
	There exists $\delta_{2}>0$ and $C^{1}$-function $\psi^{\ell_{0}, \vec{\ell}, k}$ for $k \leq M$ where $\psi^{\ell_{0}, \vec{\ell}, k}$ is defined locally around $(T+ \delta_{2} \ell_{0}, X(T+ \delta_{2} \ell_{0};t,x,v), \delta_{2} \vec{\ell})$ with $(\ell_{0},\vec{\ell}) = (\ell_{0}, \ell_{1}, \ell_{2},\ell_{3}) \in \{ 0,1, \cdots, \lfloor\frac{1}{\delta_{2}}\rfloor+1\} \times \{- \lfloor\frac{N}{\delta_{2}}\rfloor-1, \cdots, 0 , \cdots, \lfloor\frac{N}{\delta_{2}}\rfloor+1 \}^{3}$ and $ \| \psi^{\ell_{0}, \vec{\ell}, k}  \|_{C^{1}}   \leq C_{N,\Omega, \varepsilon, \delta_{2} }< \infty$. 
	
	Moreover, 
	if 
	\begin{equation}\label{Xs_OVi}
		(X(s;t,x,v),u) \in \mathcal{Z}^{\varepsilon},
	\end{equation}
	
	\begin{equation}\label{su_ell0}
		(s,u) \in [T+ (\ell_{0} -1) \delta_{2},T+ (\ell_{0} +1) \delta_{2} ] \times B(\delta_{2} \vec{\ell}; 2\delta_{2}),
	\end{equation}
	
	\begin{equation}\label{sprime_k}
		s^{\prime} \in \big[ t^{k+1} (T+ \delta_{2} \ell_{0}; X(T+ \delta_{2} \ell_{0} ;t,x,v), \delta_{2} \vec{\ell})+\frac{1}{N}, \ t^{k } (T+ \delta_{2} \ell_{0}; X(T+ \delta_{2} \ell_{0} ;t,x,v), \delta_{2} \vec{\ell})-\frac{1}{N}\big], 
	\end{equation}
	and 
	\begin{equation}\label{sprime_psi}
		|s^{\prime} -   \psi^{\ell_{0}, \vec{\ell}, k}  (T+ \delta_{2} \ell_{0} , X( T+ \delta_{2}\ell_{0} ; t,x,v), \delta_{2} \vec{\ell} ) | > N^{2} (1 + \| \psi^{\ell_{0}, \vec{\ell}, k} \|_{C^{1}}) \delta_{2} ,
	\end{equation}
	then 
	\begin{equation}\label{lower_1}
		\big| \p_{|u| } X(s^{\prime} ; s, X(s;t,x,v), u) \times   \p_{\hat{u}_{1}} X(s^{\prime} ; s, X(s;t,x,v), u)
		\big| > \epsilon_{\Omega, N, \varepsilon, \delta_{2}}.
	\end{equation}
	Here $\epsilon_{\Omega, N, \varepsilon, \delta_{2}}>0$ does not depend on $T, t,x,v$. \\

	There exists $\delta_{3}>0$ and $C^{1}$-functions 
	\begin{equation}\label{three_psi}
		\psi^{\ell_{0}, \vec{\ell}, k, m_{0}, \vec{m}, k^{\prime}}_{1},\psi^{\ell_{0}, \vec{\ell}, k, m_{0}, \vec{m}, k^{\prime}}_{2},\psi^{\ell_{0}, \vec{\ell}, k, m_{0}, \vec{m}, k^{\prime}}_{3},\end{equation}
	for $k^{\prime} \leq M$ where $\psi^{\ell_{0}, \vec{\ell}, k, m_{0}, \vec{m}, k^{\prime}}_{n}$ is defined locally around $(T+ \delta_{3}m_{0} ; X(T+ \delta_{3}m_{0}; T+ \delta_{2} \ell_{0}, X ( T+ \delta_{2} \ell_{0}; t,x,v   ), \delta_{2} \vec{\ell}  ), \delta_{3} \vec{m} )$ for some $(m_{0}, \vec{m}) = (m_{0}, m_{1}, m_{2},m_{3})  \in\{ 0,1, \cdots, \lfloor\frac{1}{\delta_{3}}\rfloor+1\} \times \{- \lfloor\frac{N}{\delta_{3}}\rfloor-1, \cdots, 0 , \cdots, \lfloor\frac{N}{\delta_{3}}\rfloor+1 \}^{3}$ with $0< \delta_{3} \ll1$.
	
	Moreover, if we assume (\ref{Xs_OVi}), (\ref{su_ell0}), (\ref{sprime_k}), (\ref{sprime_psi}),
	\begin{equation}\label{sprime_j}
	\begin{split}
		&(X(s^{\prime};s, X(s;t,x,v),u), u^{\prime}) \in \mathcal{Z}^{\varepsilon}, \ \text{and} \\
		&\quad\quad \Big|\Big( \frac{\p_{y_{1}} Y_{X(s^{\prime};s, X(s;t,x,v),u)}(0,0)  \times  \p_{y_{2}} Y_{X(s^{\prime};s, X(s;t,x,v),u)}(0,0) }{|\p_{y_{1}} Y_{X(s^{\prime};s, X(s;t,x,v),u)}(0,0)  \times  \p_{y_{2}} Y_{X(s^{\prime};s, X(s;t,x,v),u)}(0,0) |}\Big) \cdot 
		u^{\prime}  \Big| > \frac{1}{N} > 0,
	\end{split}
	\end{equation}
	\begin{equation}\label{sprimeprime_kprime}
		\begin{split}
			s^{\prime\prime}  \in& \  \Big[t^{k^{\prime}+1}  (T+ \delta_{3}m_{0} ; X(T+ \delta_{3}m_{0}; T+ \delta_{2} \ell_{0}, X ( T+ \delta_{2} \ell_{0}; t,x,v   ), \delta_{2} \vec{\ell}  ),  \delta_{3} \vec{m} ) + \frac{1}{N} \\ &   \ \ 
			\ \ , t^{k^{\prime} }(T+ \delta_{3}m_{0} ; X(T+ \delta_{3}m_{0}; T+ \delta_{2} \ell_{0}, X ( T+ \delta_{2} \ell_{0}; t,x,v   ), \delta_{2} \vec{\ell}  ), \delta_{3} \vec{m} ) - \frac{1}{N}\Big],
		\end{split}
	\end{equation} 
	and
	\begin{equation}\begin{split}\label{sprimeprime_psi}
			&\min_{n=1,2,3}|s^{\prime\prime} - \psi^{\ell_{0}, \vec{\ell},  k,  m_{0}, \vec{m}, k^{\prime}}_{n}   (T+ \delta_{3}m_{0} ; X(T+ \delta_{3}m_{0}; T+ \delta_{2} \ell_{0}, X ( T+ \delta_{2} \ell_{0}; t,x,v   ), \delta_{2} \vec{\ell}  ), \delta_{3} \vec{m} )  |\\
			&>N^{2} (1+ \max_{n=1,2,3} \| \psi^{ \ell_{0}, \vec{\ell},  k, m_{0}, \vec{m}, k^{\prime}}_{n}  \|_{C^{1}} ) (\delta_{3}),\end{split}\end{equation}
	then for each $\ell_{0}, \vec{\ell},  k, m_{0}, \vec{m}, k^{\prime}$ we can choose two distinct variables $\{\zeta_{1}, \zeta_{2}\} \subset \{|u|, \hat{u}_{1}, 
	\hat{u}_{1}^{\prime} , \hat{u}_{2}^{\prime}  \}$ such that $(|u^{\prime}|, \zeta_{1}, \zeta_{2} ) \mapsto X(s^{\prime\prime};s^{\prime},X( s^{\prime}; s, X(s;t,x,v),u), u^{\prime} ) $ is one-to-one locally and 
	\begin{equation}\label{lower_zeta12}
		\Big|\det \left( \frac{\p X(s^{\prime\prime};s^{\prime},X( s^{\prime}; s, X(s;t,x,v),u), u^{\prime} ) }{\p (|u^{\prime}|, \zeta_{1}, \zeta_{2})} \right)
		\Big| > \epsilon^{\prime}_{\Omega, N, \varepsilon, \delta_{2}, \delta_{3} }.
	\end{equation}
	Here $\epsilon^{\prime}_{\Omega, N, \varepsilon, \delta_{2}, \delta_{3} } > 0$ does not depend on $T,t,x,v$.  \\
\end{proposition}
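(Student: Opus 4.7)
The plan is to transport the triple Duhamel iteration machinery of Theorem 3.9 in \cite{KimLee} to our setting, with Theorem \ref{bad set thm} playing the role that the velocity lemma plays in the convex case. The crucial input from Theorem \ref{bad set thm} is that if $(x,v)\in\mathcal{Z}^\varepsilon$, then the backward trajectory up to length $L=NT$ undergoes at most $M=M(\varepsilon,L)$ bounces, and each bounce is uniformly non-grazing with $|v^{k-1}\cdot n(x^k)|\geq C^*_{\varepsilon,L}$ independent of $x$. This replaces the role played by Lemma 2.7 (velocity lemma) of \cite{KimLee} and is precisely the geometric information needed to run the specular-basis/specular-matrix analysis of Section 3 there.

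First I would discretize $[T,T+1]\times\mathbb{V}^N$ by a mesh of size $\delta_2$, indexed by $(\ell_0,\vec\ell)$, and for each mesh cell fix a reference triple $(T+\delta_2\ell_0,\,X(T+\delta_2\ell_0;t,x,v),\,\delta_2\vec\ell)$. On each cell, use the local parametrization $\eta_{p^k}$ of \eqref{eta} at each bouncing point $x^k$. The hypothesis \eqref{Xs_OVi} combined with Theorem \ref{bad set thm} supplies all the standing assumptions $|v^k_{p^k,3}|>\delta_2(\varepsilon,N)$ needed to run the argument leading to \eqref{Jac_hat}, and then the geometric decomposition of Step 2 above yields the specular-matrix form \eqref{R_24}. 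The bouncing times $t^k$ themselves contribute the singular values of $s^\prime$ to be avoided in \eqref{sprime_k}, while the additional degeneracy loci where the $2\times 2$ subdeterminant of \eqref{R_24} vanishes contribute the $C^{0,\gamma}$ functions from Lemma \ref{lemma rank 3}; a final smoothing/localization step upgrades these to the $C^1$ functions $\psi^{\ell_0,\vec\ell,k}$ on each discrete cell (this is where the $(1+\|\psi\|_{C^1})\delta_2$ buffer in \eqref{sprime_psi} is absorbed). The first conclusion \eqref{lower_1} then follows by selecting the appropriate $2\times 2$ minor of the $2\times 3$ block obtained by differentiating $X(s^\prime;s,X(s;t,x,v),u)$ in $(|u|,\hat u_1)$, using Lemma \ref{lemma rank 3} with the cross-section direction $\hat\varphi(\varphi)$ replacing $e_3$ (the hypothesis $1/N\leq|v\cdot\hat\varphi(\varphi)|$ in \eqref{con_v0} holds on $\mathcal{Z}^\varepsilon$ because $R_\perp^\varepsilon$ is excluded there).

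For the triple-iteration part I would perform one more Duhamel iteration starting from $(s^\prime,X(s^\prime;s,X(s;t,x,v),u),u^\prime)$, with the role of the base submanifold played by a local $C^1$ patch $Y_{X(s^\prime;\cdot)}(y_1,y_2)$ constructed from the two-parameter family produced in the previous step; concretely, $\partial_{y_1}Y$ and $\partial_{y_2}Y$ are taken as $\partial_{|u|}X$ and $\partial_{\hat u_1}X$ normalized appropriately. The lower bound \eqref{lower_1} guarantees the transversality hypothesis \eqref{Y times Y} of Lemma \ref{lemma rank 3}, while \eqref{sprime_j} furnishes the non-tangency of $u^\prime$ to this patch. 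A second application of Lemma \ref{lemma rank 3} (with $\delta_3$ in place of $\delta_2$ and a new set of $C^1$ singular functions $\psi^{\ell_0,\vec\ell,k,m_0,\vec m,k^\prime}_n$, $n=1,2,3$) selects two variables $\{\zeta_1,\zeta_2\}\subset\{\hat u_1,\hat u_2,\hat u_1^\prime,\hat u_2^\prime\}$ such that the $3\times 3$ Jacobian $\partial X(s^{\prime\prime};\cdots)/\partial(|u^\prime|,\zeta_1,\zeta_2)$ achieves the lower bound \eqref{lower_zeta12}; the factor $|u^\prime|$ contributes the $-(t-s^{\prime\prime})\mathbf{e}_0^{k^\prime}$ direction exactly as in the geometric decomposition, so nondegeneracy reduces to nondegeneracy of the $2\times 2$ block provided by Lemma \ref{lemma rank 3}.

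The main obstacle I anticipate is the combinatorial bookkeeping: at each bounce $k$ (and $k^\prime$ in the second iteration) the specular matrix $\mathcal{R}^{k,p^k,Y}$ can in principle degenerate in different ways, so one has to enumerate the degeneracy loci, check that each is a $C^1$ hypersurface in $s^\prime$ (respectively $s^{\prime\prime}$) given the other variables, and control their $C^1$ norms uniformly on each mesh cell. A secondary difficulty is the $C^{0,\gamma}\rightarrow C^1$ upgrading of the $\psi$ functions, which requires that the defining equations are transverse in $s$; this is where the uniform lower bound $|v^k\cdot n(x^k)|\geq C^*_{\varepsilon,L}$ from Theorem \ref{bad set thm} is used a second time, ensuring the implicit function theorem gives $C^1$ (indeed analytic on each smooth piece) dependence. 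Once these two points are established, the rest of the argument follows verbatim the chain of computations between (3.32) and Theorem 3.9 of \cite{KimLee}, and the explicit constants $\epsilon_{\Omega,N,\varepsilon,\delta_2}$ and $\epsilon^\prime_{\Omega,N,\varepsilon,\delta_2,\delta_3}$ can be extracted by a standard compactness argument on $\mathcal{Z}^\varepsilon$.
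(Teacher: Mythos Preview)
Your proposal is correct and follows essentially the same route as the paper: reduce to Theorem 3.9 of \cite{KimLee} via Lemma \ref{lemma rank 3}, using Theorem \ref{bad set thm} in place of the velocity lemma to supply the uniform non-grazing hypotheses of \eqref{con_v0}. The paper's own proof is in fact briefer than yours---it simply checks the four conditions in \eqref{con_v0} one by one (invoking the exclusion of $R_\perp^\varepsilon$ for the second and the axisymmetric equivalence $|v\cdot\hat\varphi(\varphi)|=0\Leftrightarrow|n(x^1)\cdot\hat\varphi(\varphi)|=0$ for the third, a point you did not spell out) and then defers entirely to \cite{KimLee}, without the discretization bookkeeping or $C^{0,\gamma}\to C^1$ concerns you raise.
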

\begin{proof}
	Main proof is nearly the same as the proof of Theorem 3.9 in \cite{KimLee}. Proof of  Theorem 3.9 in \cite{KimLee} comes from Lemma 3.7 in \cite{KimLee}. Similarly, Proposition \ref{prop_full_rank} can be proved using Lemma \ref{lemma rank 3}. Instead of providing full proof, we just explain why it is possible to apply Lemma \ref{lemma rank 3} and Theorem \ref{bad set thm}. \\
	\indent Let us combine Theorem \ref{bad set thm} and Lemma \ref{lemma rank 3}. First, when we assume Theorem \ref{bad set thm}, we should check that $(x,v)\in \mathcal{Z}^{\varepsilon}$
		satisfies \eqref{con_v0} in Lemma \ref{lemma rank 3}. The first and fourth conditions of \eqref{con_v0} hold obviously by the statement of Theorem \ref{bad set thm}, by the definition of $\VN$ and choosing $\delta_{1} = C^{*}_{\varepsilon, L}$ of Theorem \ref{bad set thm}. Let us check the second and third conditions. \\
		\indent For the second condition, we recall Lemma \ref{lem Xe} in which we excluded $R_{\perp}^{\varepsilon}$. If we consider  Lemma \ref{lem Xe} in general $S_{\varphi}$ cross-section, excluding $R_{\perp}^{\varepsilon}$ exactly corresponds to $\frac{1}{N} \leq |v\cdot \hat{\varphi}(\varphi)|$ for some proper $\varepsilon \ll 1$ and $N \gg 1$. The third condition of \eqref{con_v0} is also similar because it is easy to observe 
		\[
		|v\cdot \hat{\varphi}(\varphi)|=0  \Longleftrightarrow  | n(x^{1}(t, x  ,v )) \cdot \hat{\varphi}(\varphi)|=0,\quad x\in \overline{S_{\varphi}},
		\]
		by $z$-axisymmetric structure. In conclusion, $(x,v)\in \mathcal{Z}^{\varepsilon}$ satisfies all conditions in \eqref{con_v0}. \\
		\indent Now, above argument is applied to $(X(s;t,x,v),u) \in \mathcal{Z}^{\varepsilon}$ and $(X(s^{\prime};s, X(s;t,x,v),u)$\\
		$, u^{\prime}) \in \mathcal{Z}^{\varepsilon}$ by \eqref{Xs_OVi} and \eqref{sprime_j}. Moreover, condition \eqref{sprime_j} implies \eqref{Y times Y} putting $x^0 = X(s^{\prime};s, X(s;t,x,v),u)$. Therefore, the proof of Theorem 3.9 of \cite{KimLee} can be directly applied to prove Proposition \ref{prop_full_rank}. Note that $\delta_{1}$ of Lemma \ref{lemma rank 3} is chosen by $\delta_{1} = C^{*}_{\varepsilon, L}$ by Theorem \ref{bad set thm}. So choosing $L = NT_0 = N$ (we choose time interval $T_0=1$ here), we get the dependence of  $\epsilon^{\prime}_{\Omega, N, \varepsilon, \delta_{2}, \delta_{3} }$. We finish the proof. 
\end{proof}

\section{Triple Duhamel iterations}

Note that the kernel $\mathbf{k}(v,u)$ satisfies (see \cite{Guo10, Glassey})
\Be \notag
\mathbf{k}(v,u) \lesssim \big( \frac{1}{|v-u|} + |v-u| \big) e^{ -\frac{1}{8}|v-u|^{2} - \frac{1}{8}\frac{ ||v|^{2} - |u|^{2}|^{2} }{|v-u|^{2}}}.  
\Ee

Now we can write an equation for $f$ from \eqref{Boltzmann} and \eqref{pert}: 

Motivated by characteristic $(X(s;t,x,v), V(s;t,x,v))$, we define mild solution of \eqref{E_eqtn}: 
\Be \label{mild soln}
\begin{split}
	f(t,x,v) &= e^{-\int_{0}^{t}\nu(V(\tau;t,x,v)) d\tau}  f_0(X(0;t,x,v), V(0;t,x,v))  \\
	&\quad + \int_{0}^{t} e^{-\int_{s}^{t}\nu(V(\tau;t,x,v)) d\tau } \int_{u\in\R^{3}} k(V(s;t,x,v), u) f(s, X(s;t,x,v), u) du ds \\
	&\quad + \int_{0}^{t} e^{-\int_{s}^{t}\nu(V(\tau;t,x,v)) d\tau } \Gamma(f,f)(s, X(s; t,x,v), V(s;t,x,v)) ds.   \\
\end{split}	
\Ee

In this section, we also briefly explain triple Duhamel expansion which was explained in \cite{KimLee}.
First, let $f$ solves linearized Boltzmann equation (\ref{lin eq})
\begin{equation} \label{lin eq}
	\begin{split}
		&\p_t f  + v\cdot\nabla  f + \nu f   =
		Kf ,\quad f(t,x,v) = f(t, x, R_x v) \ \ \text{on $x\in \p\O$}.
	\end{split}
\end{equation}
Let $h :=  wf$ where $w=(1+|v|)^{\b}, \ \b > 5/2$. Then the linearized Boltzmann equation becomes
\begin{equation*} \label{equation_h}
	\begin{split}
		&  \p_t h + v\cdot\nabla_x h + \nu h =  K_w h ,\quad  K_{w}h := wK(\frac{h}{w}).
	\end{split}
\end{equation*}

\noindent Let us define,
\begin{equation*}\label{E_G}
	\begin{split}
		E(v,t,s) &:= \exp \Big\{ -\int_{s}^{t} \nu(V(\tau; t, x, v)) \Big\}.  \\
	\end{split}
\end{equation*}
Along the backward in time trajectory,
\begin{equation}\begin{split}\notag
		&\frac{d}{d s} \Big( E(v,t,s) h (s,X(s;t,x,v),V(s;t,x,v)) \Big) \\
		&=  {E(v,t,s) } \big[ K_w h  \big](s,X(s;t,x,v),V(s;t,x,v)).\end{split}
\end{equation}
By integrating from $0$ to $t$, we obtain the first Duhamel expansion,
\begin{equation} \begin{split}\label{Duhamel_once}
		h (t,x,v) =& E(v,t,0) h_0(X(0), V(0))\\  
		&+ \int^{t}_{0} E(v,t,s) \int_{\R^{3}} k_{w}(u,V(s)) h(s,  X(s;t,x,v),u)   \dd u  \dd s, \end{split}
\end{equation}
where we abbreviated $X(s) = X(s;t,x,v)$. Recall the standard estimates (see Lemma 4 and Lemma 5 in \cite{GKTT2017})
\begin{equation}\label{est_kw}
	\int_{\R^{3}} |k_{w} (v,u)| \dd u \leq C_{K}\langle v\rangle^{-1}. 
\end{equation}

\noindent From double and triple iteration, we get
\begin{eqnarray} 
	&&h (t,x,v) \notag \\ 
	&=& E(v,t,0) h_0(X(0), V(0)) \notag  \\
	&&+ \int^{t}_{0} E(v,t,s) \int_{u} k_{w}(u, V(s)) E(u,s,0) h_0(X(0), V(0)) duds  \notag  \\
	&& + \int^{t}_{0}  E(v,t,s) \int_{u} k_{w}(u, V(s)) 
	\int^{s}_{0} E(u,s,s^{\prime}) \notag \\
	&&\quad \times\int_{u^{\prime}}  k_{w}(u^\prime, V(s^{\prime})) h(s ^{\prime} , 
	X(s^{\prime}) ,u^{\prime} )   du^{\prime}  ds^{\prime}  du  d s   \notag  \\
	&=& E(v,t,0) h_0(X(0), V(0)) \notag \\
	&&+ \int^{t}_{0} E(v,t,s) \int_{u} k_{w}(u,V(s)) E(u,s,0) h_0 (X(0), V(0))  duds \notag \\
	&& + \int^{t}_{0}  E(v,t,s) \int_{u} k_{w}(u,V(s)) 
	\int^{s}_{0} E(u,s,s^{\prime}) \notag\\
&&\quad\times 	 \int_{u^{\prime}}
	k_{w}(u^\prime, V(s^{\prime})) E(u^{\prime}, s^{\prime}, 0) h_0(X(0), V(0)) du^{\prime}  ds^{\prime}  du  ds \notag  \\
	&& + \int^{t}_{0}  E(v,t,s) \int_{u} k_{w}(u, V(s)) 
	\int^{s}_{0} E(u,s,s^{\prime}) \int_{u^{\prime}}
	k_{w}(u^\prime, V(s^{\prime})) E(u^{\prime}, s^{\prime}, s^{\prime\prime}) \quad \quad  \label{triple}   \\
	&&\quad\times \int_{u^{\prime\prime}} k_{w}(u^{\prime\prime}, V(s^{\prime\prime})) h(s^{\prime\prime}, X(s^{\prime\prime}), u^{\prime\prime})  du^{\prime\prime}  ds^{\prime\prime} du^{\prime}  ds^{\prime}  du  ds, \notag 
\end{eqnarray}
where we abbreviated notations (similar for $V$),
\begin{equation}\label{XV prime}
	\begin{split}
		X(s) &:= X(s;t,x,v) ,\quad X(s^{\prime}) := X^{\prime}(s^{\prime};s,X(s;t,x,v),u),  \\
		X(s^{\prime\prime})  &:= X(s^{\prime\prime};s^{\prime}, X^{\prime}(s^{\prime};s,X(s;t,x,v),u), u^{\prime}).
	\end{split}
\end{equation}

Now, we choose $m({N})$ so that 
\begin{equation} \label{opeator k split}
	k_{w,m}(u,v) := \mathbf{1}_{\{ |u-v|\geq\frac{1}{m}, \ |u|\leq m \}} k_{w}(u,v) ,
\end{equation}
satisfies $\int_{\mathbb{R}^3} |k_{w,m}(u,v)-k_{w}(u,v)| \ \dd u \leq \frac{1}{{N}}$ for sufficiently large ${N}\geq 1$. From \eqref{opeator k split} and uniform lower bound of collision frequency, we can rewrite \eqref{triple} as
\begin{align}
	& h(t,x,v) \notag  \\
	&= E(v,t,0) h_0(X(0), V(0))  \notag \\
	&\quad + \int^{t}_{0} E(v,t,s) \int_{u\in \VN} k_{w,m}(u, V(s)) E(u,s,0) h_0 ( X(0), V(0))  duds \notag \\
	&\quad + \int^{t}_{0}  E(v,t,s) \int_{u\in \VN} k_{w,m}(u, V(s)) 
	\int^{s}_{0} E(u,s,s^{\prime}) \notag\\
	& \quad \quad \times  \int_{u^{\prime}}
	k_{w}(u^\prime, V(s^{\prime})) E(u^{\prime}, s^{\prime}, 0) h_0(X(0), V(0)) du^{\prime}  ds^{\prime}  du  ds \notag  \\
	&\quad + \int^{t}_{0}  E(v,t,s) \int_{u} k_{w,m}(u, V(s)) 
	\int^{s}_{0} E(u,s,s^{\prime}) \int_{u^{\prime}\in \VN}
	k_{w,m}(u^\prime, V(s^{\prime})) E(u^{\prime}, s^{\prime}, s^{\prime\prime}) \notag\\
	&\quad\quad \times  \int_{u^{\prime\prime}\in \VN} k_{w,m}(u^{\prime\prime}, V(s^{\prime\prime})) h(s^{\prime\prime}, X(s^{\prime\prime}), u^{\prime\prime})   \notag \\
	&\quad + O_{\O}(\frac{1}{N}) \sup_{0\leq s \leq t}\|h(s)\|_{L^{\infty}} \notag \\
	&\lesssim e^{-\frac{\nu_0}{2}t}\|h_0\|_{L^{\infty}} + O_{\O}(\frac{1}{N}) \sup_{0\leq s \leq t}\|h(s)\|_{L^{\infty}} \notag \\
	&\quad + \int^{t}_{0}  E(v,t,s) \int_{u} k_{w,m}(u, V(s)) 
	\int^{s}_{0} E(u,s,s^{\prime}) \int_{u^{\prime}\in \VN}
	k_{w,m}(u^\prime, V(s^{\prime})) E(u^{\prime}, s^{\prime}, s^{\prime\prime})  \notag \\
	&\quad\quad \times  \int_{u^{\prime\prime}\in \VN} k_{w,m}(u^{\prime\prime}, V(s^{\prime\prime})) h(s^{\prime\prime}, X(s^{\prime\prime}), u^{\prime\prime}).   \label{main}  
\end{align}

\noindent Let us analyze \eqref{main}. 
\hide
{\color{red}REMOVE(?) Using Theorem \ref{bad set thm}
\begin{equation}
	\begin{split}
		&\exists i_{s} \in \{ 1,2,\cdots, l_{IB}\} \quad\text{such that}\quad X(s) \in B(x_{i_{s}}, r(x_{i_{s}},\varepsilon))  \\
		&\exists j_{s,s^{\prime}} \in \{ 1,2,\cdots, l_{IB}\} \quad\text{such that}\quad X(s^{\prime}; s, X(s;t,x,v), u) \in \mathcal{V}_{j_{s,s^{\prime}}}^{\mathcal{IB}} B(x_{j_{s,s^{\prime}}}, r(x_{j_{s,s^{\prime}}},\varepsilon)) 
	\end{split}
\end{equation}
} 
\unhide
We define following sets for fixed $\delta, n,\vec{n},k,m,\vec{m}, k^{\prime}$, where Proposition \ref{prop_full_rank} does not hold,
{\small
\begin{equation*}
	\begin{split}
		R_1 &:= \{ u \ \vert \ 
		(X(s;t,x,v), u) \notin \mathcal{Z}^{\varepsilon} \ \text{and} \ u\notin  B( \vec{n}\delta ; 2\delta )
		\} ,    \\
		R_2 &:= \{ s^{\prime} \ \vert \ |s-s^{\prime}| \leq \delta \} , \\
		R_3 &:= \{ s^{\prime} \ \vert \ | s^{\prime} -  \psi_{1}^{n,\vec{n},k,m,\vec{m},k^{\prime}} ( n \delta, X(n \delta;t,x,v), \vec{n}\delta ) |\lesssim_{N} \delta  \|\psi_{1} \|_{C^{1}} \} ,  \\
		R_4 &:= \{ u^{\prime} \ \vert \ 
		(X(s'; s, X(s;t,x,v),u), u^{\prime}) \notin \mathcal{Z}^{\varepsilon} \ \text{and} \ u^{\prime}\notin  B( \vec{m}\delta ; 2\delta )
		\} ,   \\
		R_5 &:= \{ u^{\prime} \ \vert \ 
	\Big|\Big( \frac{\p_{y_{1}} Y_{X(s^{\prime};s, X(s;t,x,v),u)}(0,0)  \times  \p_{y_{2}} Y_{X(s^{\prime};s, X(s;t,x,v),u)}(0,0) }{|\p_{y_{1}} Y_{X(s^{\prime};s, X(s;t,x,v),u)}(0,0)  \times  \p_{y_{2}} Y_{X(s^{\prime};s, X(s;t,x,v),u)}(0,0) |}\Big) \cdot 
	u^{\prime}  \Big| \leq \delta
		\} ,   \\
		R_6 &:= \{ s^{\prime\prime} \ \vert \ |s^{\prime}-s^{\prime\prime}| \leq \delta \} ,  \\
		R_7 &:= \{ s^{\prime\prime} \ \vert \  \min_{r=1,2} | s^{\prime\prime} -  \psi_{r}^{n,\vec{n},k,m,\vec{m},k^{\prime}} ( m \delta, X( m \delta; n \delta, X(n \delta; t,x,v ), \vec{m}\delta  ), \vec{n}\delta  ) | \lesssim_{N} \delta \min_{r=1,2} \|\psi_{r} \|_{C^{1} } \}. \\
	\end{split}
\end{equation*}
}
Therefore, \eqref{main} can be controlled by  
\begin{align} 
	& \lesssim \sum_{n =0}^{[t/\delta]+1} \sum_{|\vec{n}| \leq N}  \sum_{m=0}^{[t/\delta]+1} \sum_{|\vec{m}| \leq N} \sum_{k=1}^{M} \sum_{k^{\prime}=1}^{M} \int^{(n+1) \delta}_{ (n-1) \delta} \ \int^{t^{k}-\delta}_{t^{k+1}+\delta} \int^{t^{k^{\prime}}-\delta}_{t^{k^{\prime}+1}+\delta} e^{-{\nu_0}(t-s^{\prime\prime})}   \notag \\ 
	& \quad\quad  \times \int_{ u, u^{\prime}, u^{\prime\prime} \in \VN} \ | h( s^{\prime\prime},X(s^{\prime\prime}), u^{\prime\prime} ) |  \ \mathbf{1}_{R_{1}^{c} \cap R_{2}^{c} \cap R_{3}^{c} \cap R_{4}^{c} \cap R_{5}^{c} \cap R_{6}^{c}} \label{main2} \\
	& \quad + B + R.  \notag
\end{align}
Here $B$ term corresponds to when the trajectory locates near bouncing points and $R$ corresponds to where $(u, s^{\prime}, u^{\prime}, s^{\prime\prime})$ belong to at least one of $R_1\sim R_7$. Exact definition and estimates for $B$ and $R$ are given as following
\begin{equation} \label{BR}
	\begin{split}
		B &\leq \int_0^t \int_0^s \int_0^{s^{\prime}} e^{- {\nu_0}(t-s^{\prime\prime})} \int_{u\in\VN} k_{w,m}(u,v) \int_{u^{\prime}\in\VN} k_{w,m}(u^{\prime},u)   \\
		& \quad\quad \times \int_{u^{\prime\prime}\in \VN} k_{w,m}(u^{\prime\prime},u^{\prime}) h (s^{\prime\prime},X^{\prime\prime}(s^{\prime\prime}),u^{\prime\prime})   \mathbf{1}_{|s^{\prime}-t^{k}|\leq \delta  \ \text{or} \ |s^{\prime\prime}-t^{k^{\prime}}|\leq \delta }  \\
		&\leq C_{N}\delta \sup_{0\leq s \leq t} \|h(s)\|_\infty,  \\
		R &\leq \int_0^t \int_0^s \int_0^{s^{\prime}} e^{- {\nu_0}(t-s^{\prime\prime})} \int_{u\in \VN} k_{w,m}(u,v) \int_{ u^{\prime}\in\VN} k_{w,m}(u^{\prime},u)   \\
		& \quad\quad \times \int_{ u^{\prime\prime}\in\VN} k_{w,m}(u^{\prime\prime},u^{\prime}) h (s^{\prime\prime},X^{\prime\prime}(s^{\prime\prime}),u^{\prime\prime}) \ \mathbf{1}_{ R_1 \cup R_2 \cup R_3 \cup R_4 \cup R_5 \cup R_6 \cup R_7 }  \\
		&\leq C_{N} \delta \sup_{0\leq s \leq t} \|h(s)\|_\infty.  \\
	\end{split}
\end{equation}
	In particular, for about $R$ estimate, deriving smallness $(\delta+\varepsilon)$ from $R_{2}, R_{3}, R_{5}, R_{6}, R_{7}$ is easy. To derive smallness from $R_{1}$ and $R_{4}$, we use \eqref{Ze c small} in Theorem \ref{bad set thm} to obtain
\[
	\int_{u} k_{w,m}(u,V(s)) \mathbf{1}_{R_{1}} \lesssim \delta,\quad 
	\int_{u^{\prime}} k_{w,m}(u^{\prime},V(s^{\prime})) \mathbf{1}_{R_{4}} \lesssim \delta,
\]
choosing $\varepsilon$ sufficiently small depending on $N \gg 1$ in \eqref{Ze c small}.  \\
For \eqref{main2}, we are away from two sets $B$ and $R$. Under the condition of $(u, s^{\prime}, u^{\prime}, s^{\prime\prime}) \in R_{1}^{c} \ \cap \ R_{2}^{c} \ \cap R_{3}^{c} \ \cap R_{4}^{c} \ \cap R_{5}^{c} \ \cap R_{6}^{c} \ \cap R_{7}^{c}$, indices $n, \vec{n}, k, m, \vec{m}, k^{\prime}$ are determined so that
\begin{equation} \label{XuXu}
\begin{split}
	t \ &\in \ [  (n-1)\delta,  (n+1)\delta ],   \\
	(X(s;t,x,v), u) \ &\in \ \mathcal{Z}^{\varepsilon},  \\
	u \ &\in \ B(\vec{n}\delta;2\delta), \\
	(X(s^{\prime};s,X(s;t,x,v),u), u^{\prime}) \ &\in \ \mathcal{Z}^{\varepsilon}, \\
	u^{\prime} \ &\in \ B(\vec{m}\delta;2\delta),  \\
	\Big|\Big( \frac{\p_{y_{1}} Y_{X(s^{\prime};s, X(s;t,x,v),u)}(0,0)  \times  \p_{y_{2}} Y_{X(s^{\prime};s, X(s;t,x,v),u)}(0,0) }{|\p_{y_{1}} Y_{X(s^{\prime};s, X(s;t,x,v),u)}(0,0)  \times  \p_{y_{2}} Y_{X(s^{\prime};s, X(s;t,x,v),u)}(0,0) |}\Big) \cdot 
	u^{\prime}  \Big| &\geq \delta.
\end{split}
\end{equation}
We can apply Proposition \ref{prop_full_rank} which gives local time-independent lower bound of 
\[
\Big| \det(\frac{\p(X(s^{\prime\prime}))}{\p(|u^{\prime}|, \zeta_1, \zeta_2)}) \Big| \ \geq \ \epsilon^{\prime}_{\delta } .
\]
Note that $\{ \zeta_{1}, \zeta_{2}\} \subset\{ |u|, \hat{u}_{1}, \hat{u}^{\prime}_{1},  \hat{u}^{\prime}_{2}  \}$ are chosen variables in Theorem \ref{prop_full_rank} and $\{ \zeta_{3}, \zeta_{4}\} \subset\{ |u|, \hat{u}_{1}, \hat{u}^{\prime}_{1},  \hat{u}^{\prime}_{2}  \}$ are unchosen variables. Let us use $\mathcal{P}$ to denote projection of 
\[
	B(\vec{n}\delta;2\delta) \cap (\mathcal{Z}^{\varepsilon})_{X(s;t,x,v)}  \times
	B(\vec{m}\delta;2\delta) \cap (\mathcal{Z}^{\varepsilon})_{X(s^{\prime};s,X(s;t,x,v),u)},
\]
 into $\R^3$ which corresponds to $(|u^{\prime}|, \zeta_1, \zeta_2)$ components. Note that both $(\mathcal{Z}^{\varepsilon})_{X(s;t,x,v)}$ and $(\mathcal{Z}^{\varepsilon})_{X(s^{\prime};s,X(s;t,x,v),u)} $ are defined using definition \eqref{proj_v}. If we choose sufficiently small $\delta$, there exist small $r_{\delta,n,\vec{n}, k,m,\vec{m}, k^{\prime}}$ such that there exist one-to-one map $\mathcal{M}$, 
\begin{eqnarray*}
	\mathcal{M} &:& \mathcal{P} \Big( B(\vec{n}\delta;2\delta) \cap (\mathcal{Z}^{\varepsilon})_{X(s;t,x,v)}  \times
	B(\vec{m}\delta;2\delta) \cap (\mathcal{Z}^{\varepsilon})_{X(s^{\prime};s,X(s;t,x,v),u)}  \Big)  \\
	&& \quad\quad \mapsto
	B( X(s^{\prime\prime}; s^{\prime}, X(s^{\prime};s, X(s;t,x,v), u), u^{\prime}), r_{\delta,n,\vec{n}, k,m,\vec{m}, k^{\prime}} ).
\end{eqnarray*} 
Now, we perform change of variable for  in \eqref{main2} to obtain

\begin{equation} \label{MAIN}
	\begin{split}	
		\text{\eqref{main2}} &\leq  \sum_{n =0}^{[t/\delta]+1} \sum_{|\vec{n}| \leq N}  \sum_{m=0}^{[t/\delta]+1} \sum_{|\vec{m}| \leq N} \sum_{k=1}^{M} \sum_{k^{\prime}=1}^{M} \int^{(n+1) \delta}_{ (n-1) \delta} \ \int^{t^{k}-\delta}_{t^{k+1}+\delta} \int^{t^{k^{\prime}}-\delta}_{t^{k^{\prime}+1}} e^{-{\nu_0}(t-s^{\prime\prime})}  \\ 
		& \quad\quad \times \int_{u^{\prime\prime}} \dd u^{\prime\prime} \int_{\hat{u}_2, \zeta_3, \zeta_4} \ \mathbf{1}_{ |u|\leq N, |u^{\prime}|\leq N, |u^{\prime\prime}|\leq N } \ \dd \hat{u}_2 \dd \zeta_{3} \dd \zeta_{4} \\ 
		& \quad\quad \times \int_{ |u^{\prime}|, \zeta_1, \zeta_2 } \ \dd |u^{\prime}| \dd \zeta_{1} \dd \zeta_{2} \ 
		| h( s^{\prime\prime},X( s^{\prime\prime}), u^{\prime\prime} ) | \  \dd s \dd s^{\prime} \dd s^{\prime\prime}  
		\\
		&\leq  \sum_{n =0}^{[t/\delta]+1} \sum_{|\vec{n}| \leq N}  \sum_{m=0}^{[t/\delta]+1} \sum_{|\vec{m}| \leq N} \sum_{k=1}^{M} \sum_{k^{\prime}=1}^{M} \int^{(n+1) \delta}_{ (n-1) \delta} \ \int^{t^{k}-\delta }_{t^{k+1}+\delta } \int^{t^{k^{\prime}}-\delta }_{t^{k^{\prime}+1}+\delta } e^{-{\nu_0}(t-s^{\prime\prime})}  \\ 
		& \quad\quad \times \int_{\hat{u}_2, \zeta_3, \zeta_4} \ \mathbf{1}_{ |u|\leq N, |u^{\prime}|\leq N, |u^{\prime\prime}|\leq N } \ \dd \hat{u}_2 \dd \zeta_{3} \dd \zeta_{4}    \\
		& \quad\quad \times \int_{u^{\prime\prime}} \int_{ 	B( X(s^{\prime\prime}), r_{\delta,n,\vec{n},i,k,m,\vec{m},j,k^{\prime}} ) } \ 
		| h ( s^{\prime\prime}, x, u^{\prime\prime} ) | \
		\frac{1}{ \epsilon^{\prime}_{\Omega, N, \varepsilon, \delta } } \dd x \dd u^{\prime\prime} \dd s \dd s^{\prime} \dd s^{\prime\prime}  
		\\
		&\leq \ C_{\O, N, \varepsilon, \delta} \int_{0}^{t} e^{-{\nu_0}(t-s^{\prime\prime})} \ \int_{\O}\int_{|u^{\prime\prime}|\leq N}  |h(s^{\prime\prime},x,u^{\prime\prime})| \  \dd u^{\prime\prime} \dd x \dd s^{\prime\prime}  \\
		&\leq \ C_{t, \O, N, \varepsilon, \delta} \int_{0}^{t}  \| h(s) \|_{L^{2}_{x,v}} ds.  \\
	\end{split}	
\end{equation}

We collect \eqref{MAIN}, \eqref{main2}, \eqref{BR}, and choose sufficiently large $N$ and (corresponding) sufficiently small $\varepsilon, \delta$ to obtain
\Be \label{h infty est}
\|h(t)\|_{L^{\infty}} \lesssim e^{-\frac{\nu_0}{2}t}\|h(0)\|_{L^{\infty}} + C_{t}\int_{0}^{t} \|h(s)\|_{L^{2}} ds.
\Ee

\section{Linear and nonlinear decay : Proof of main theorem}
This part is also standard and similar as \cite{cylinder} and \cite{Guo10}. Let us briefly explain key steps. We first start with linear $L^{2}_{x,v}$ decay.
\subsection{Linear $L^{2}$ decay}
We use a standard coercivity estimate (Proposition 1.4 in \cite{KimLee}):
\begin{proposition}\label{prop:coercivity}Let $f$ solves linearized Boltzmann equation \eqref{lin eq} with the specular BC. Furthermore, we assume \eqref{normalization}. Then there exists $C>0$ such that for all $N \in \mathbb{N}$, 
	\Be\label{coerciveN}
	\int^{N+1}_N \| \mathbf{P} f (t) \|_2^2 \dd t \leq  C \int^{N+1}_{N} 
	\| ( \mathbf I - \mathbf{P} ) f(t) \|_\nu^2 \dd t.
	\Ee
	\end{proposition}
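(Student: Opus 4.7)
The plan is to prove this coercivity estimate by the standard contradiction/compactness argument of Guo \cite{Guo10}, as executed in Proposition 1.4 of \cite{KimLee}, adapted from a strictly convex $\Omega$ to our axis-symmetric toroidal geometry. The only genuinely new point is to verify that the three normalization conditions in \eqref{normalization} still suffice to kill the macroscopic kernel in this non-convex but axis-symmetric setting.

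Suppose \eqref{coerciveN} fails. After translating in time one produces a sequence $f_k$ of solutions of \eqref{lin eq} under the normalization \eqref{normalization} with
\[ \int_0^1 \|\mathbf{P}f_k(t)\|_2^2\,dt = 1 \quad\text{and}\quad \int_0^1 \|(\mathbf{I}-\mathbf{P})f_k(t)\|_\nu^2\,dt \to 0. \]
Extract a weak $L^2$-limit $f$ on $[0,1]\times\Omega\times\R^3$. The vanishing of $(\mathbf{I}-\mathbf{P})f_k$ in $L^2_tL^2_\nu$ forces $f=\mathbf{P}f$ of the form \eqref{nullL} with macroscopic coefficients $(a,b,c)(t,x)$, while passage to the limit in \eqref{lin eq} (using $Lf_k=L(\mathbf{I}-\mathbf{P})f_k\to 0$) yields free transport $\partial_t f + v\cdot\nabla_x f = 0$ with the specular BC and the three conservation laws inherited from \eqref{normalization}. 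Testing this transport equation against a complete family of Hermite polynomials in $v$ produces the macroscopic equations
\[ \partial_t a + \nabla\cdot b = 0, \qquad \partial_t b + \nabla\!\left(a+\tfrac{2}{\sqrt 6}c\right) = 0, \qquad \partial_t c + \tfrac{1}{\sqrt 6}\nabla\cdot b = 0, \]
together with $\partial_i c = 0$ and the Killing relations $\partial_i b_j + \partial_j b_i = 0$. Hence $b(t,x) = \alpha(t) + \omega(t)\times x$ for $\R^3$-valued functions $\alpha,\omega$ of $t$ alone.

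The specular BC applied to $\mathbf{P}f$ (parity in $v\mapsto R_xv$) forces $b\cdot n=0$ on $\partial\Omega$, so $\alpha(t)+\omega(t)\times x$ must be tangent to $\partial\Omega$ everywhere. Using the parametrization $\sigma(\tau,\varphi)$ and normal from Definition \ref{def:domain} -- with $\gamma$ a strictly convex, real-analytic, non-circular closed curve revolved about the $z$-axis -- a direct linear-algebra check (plugging $\hat{x}$, $\hat{y}$, $\hat z$ translations and $\hat x$-, $\hat y$-, $\hat z$-axis rotations into the explicit $n(\sigma(\tau,\varphi))$) shows that the only infinitesimal rigid motion tangent to all of $\partial\Omega$ is the axial rotation $\omega_3\hat z\times x$. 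Hence $\alpha\equiv 0$, $\omega=\omega_3\hat z$, and $b(t,x)=\omega_3(t)\,\hat z\times x$. The zero angular-momentum normalization then gives
\[ 0 = \iint(x\times\hat z)\cdot v\sqrt\mu\,f\,dx\,dv = -\,\omega_3(t)\int_\Omega|x\times\hat z|^2\,dx, \]
and since $\gamma_1>0$ keeps $\Omega$ bounded away from the $z$-axis, we conclude $\omega_3\equiv 0$, so $b\equiv 0$. The remaining macroscopic equations force $a,c$ to be constants, and the mass and energy normalizations then force $a=c=0$. Thus $\mathbf{P}f\equiv 0$, contradicting $\|\mathbf{P}f_k\|_{L^2([0,1]\times\Omega\times\R^3)}\equiv 1$ (passed to the limit via standard velocity-averaging compactness of the finite-dimensional-in-$v$ quantity $\mathbf{P}f_k$).

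The main obstacle I anticipate is the Killing-field step: verifying that the axis-symmetric but non-convex $\partial\Omega$ of Definition \ref{def:domain} admits only the one tangential infinitesimal rigid motion. For strictly convex domains this is classical (cf.\ \cite{Guo10,KimLee}); here one must use analyticity and non-circularity of the cross-section $\gamma$ to rule out any second independent Killing field (e.g.\ if $\gamma_1^2+\gamma_2^2$ were constant then $\hat x$- and $\hat y$-rotations would also be tangential, which our generic $\gamma$ excludes). All the other steps -- derivation of the macroscopic equations, strong $L^2_{t,x}$ compactness of $(a_k,b_k,c_k)$, and extraction of the contradiction from \eqref{normalization} -- are quantitative repetitions of the arguments in \cite{Guo10,KimLee} and should transfer verbatim.
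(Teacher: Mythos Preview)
Your proposal is correct and follows exactly the approach the paper invokes: the paper does not prove Proposition~\ref{prop:coercivity} in the main text but simply cites it as ``a standard coercivity estimate (Proposition 1.4 in \cite{KimLee})'' obtained via Guo's contradiction method \cite{Guo10}, and your argument is precisely that contradiction/compactness scheme specialized to the axis-symmetric toroidal domain. Your identification of the Killing-field step as the only geometry-dependent ingredient is accurate, and your observation that $\gamma_1^2+\gamma_2^2$ cannot be constant (since $\gamma$ is a closed curve lying entirely in $\{\gamma_1>0\}$) correctly rules out the degenerate extra symmetries.

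One remark worth adding: the paper's Appendix develops a genuinely different, \emph{constructive} proof of the same coercivity estimate via explicit test functions and a geometric decomposition of the momentum $b$ into tangential and normal parts, but only for the special case of a periodic cylinder with annulus cross-section (Proposition~\ref{test coercivity}). That method avoids compactness entirely and yields an explicit constant, at the price of requiring a global triply orthogonal coordinate chart and exact solvability of the auxiliary $\zeta_i$ systems \eqref{bn zeta eq}, \eqref{btan zeta eq}, \eqref{btan2 zeta eq}---which the general toroidal domain of Definition~\ref{def:domain} does not obviously admit. So for the main theorem the contradiction route you outline is indeed the one the paper relies on.
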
 
\begin{remark}
In \cite{Guo10}, Guo obtained the coercivity estimate \eqref{coerciveN} by using contradiction method. When we consider a periodic cylindrical domain with an annulus cross-section, we derive the same coercivity estimate through a constructive method in Appendix, Proposition \ref{test coercivity}.
\end{remark}
\begin{corollary}\label{decay_U}Assume the same conditions of Proposition \ref{prop:coercivity}. Then we have following estimate
	\begin{equation}\label{U_decay}
		\sup_{0 \leq t}e^{\lambda t} 	\|f(t)\|_{L^{2}_{x,v}}^{2} \lesssim \| f_{0} \|_{L^{2}_{x,v}}^{2},
	\end{equation}
	for some $\lambda>0$.
\end{corollary}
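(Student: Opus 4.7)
The plan is to combine the standard $L^2$ energy identity for \eqref{lin eq} with the constructive coercivity estimate \eqref{coerciveN} of Proposition \ref{prop:coercivity}, and then bootstrap the resulting one-step contraction into exponential decay by a discrete Gronwall argument.

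First, I would test \eqref{lin eq} against $f$ itself and integrate over $\Omega \times \R^3$. The transport term $v\cdot \nabla_x f$ contributes a pure boundary term $\frac{1}{2}\int_{\partial\Omega\times\R^3}(v\cdot n) |f|^2 \, \mathrm{d}S\,\mathrm{d}v$, which vanishes because the specular reflection $f(t,x,v)=f(t,x,R_xv)$ makes the integrand an odd function of the normal component of $v$ (the change of variables $v\mapsto R_xv$ fixes the tangential part, reverses $v\cdot n$, and leaves $|f|^2$ invariant). The collision term yields $\int fL f\,\mathrm{d}v\,\mathrm{d}x$, which by \eqref{lowerL} is bounded below by $\delta_0 \|(\mathbf{I}-\mathbf{P})f\|_{L^2_\nu}^2$ for some $\delta_0>0$. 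Thus
\begin{equation}\notag
\frac{1}{2}\frac{d}{dt}\|f(t)\|_{L^2_{x,v}}^2 + \delta_0\,\|(\mathbf{I}-\mathbf{P})f(t)\|_{L^2_\nu}^2 \leq 0,
\end{equation}
so $\|f(t)\|_{L^2}$ is non-increasing in time.

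Next, for each integer $N\geq 0$ I would integrate this differential inequality over $[N,N+1]$ and combine with \eqref{coerciveN}. The energy inequality gives
\begin{equation}\notag
\|f(N+1)\|_{L^2}^2 + 2\delta_0 \int_N^{N+1}\|(\mathbf{I}-\mathbf{P})f(t)\|_{L^2_\nu}^2\,\mathrm{d}t \leq \|f(N)\|_{L^2}^2,
\end{equation}
while adding \eqref{coerciveN} and using $\|f\|_{L^2_\nu}^2 \gtrsim \|\mathbf{P}f\|_{L^2}^2 + \|(\mathbf{I}-\mathbf{P})f\|_{L^2_\nu}^2$ produces, with some constant $C_1>0$,
\begin{equation}\notag
\int_N^{N+1}\|f(t)\|_{L^2}^2\,\mathrm{d}t \leq C_1 \int_N^{N+1}\|(\mathbf{I}-\mathbf{P})f(t)\|_{L^2_\nu}^2\,\mathrm{d}t.
\end{equation}
Because $\|f(t)\|_{L^2}^2$ is non-increasing, the left-hand side is $\geq \|f(N+1)\|_{L^2}^2$, and substituting back into the energy inequality yields
\begin{equation}\notag
\|f(N+1)\|_{L^2}^2 \leq \frac{C_1}{C_1+2\delta_0}\,\|f(N)\|_{L^2}^2 =: \eta\,\|f(N)\|_{L^2}^2,\qquad \eta\in(0,1).
\end{equation}

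Iterating this one-step contraction gives $\|f(N)\|_{L^2}^2 \leq \eta^N \|f_0\|_{L^2}^2$. For an arbitrary $t\geq 0$, write $t=N+\tau$ with $N=\lfloor t\rfloor$ and $\tau\in[0,1)$; the monotonicity of $\|f(\cdot)\|_{L^2}$ then upgrades the discrete bound to the continuous statement
\begin{equation}\notag
\|f(t)\|_{L^2_{x,v}}^2 \leq \|f(N)\|_{L^2_{x,v}}^2 \leq \eta^{t-1} \|f_0\|_{L^2_{x,v}}^2 = \frac{1}{\eta} e^{-\lambda t}\|f_0\|_{L^2_{x,v}}^2,
\end{equation}
with $\lambda = -\log\eta > 0$, which is exactly \eqref{U_decay}. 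The only nontrivial input is the coercivity \eqref{coerciveN}; everything else is standard manipulation once the specular boundary terms are seen to cancel, so there is no real obstacle here apart from bookkeeping of constants.
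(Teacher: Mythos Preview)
Your proof is correct and uses the same ingredients as the paper---the $L^2$ energy identity with vanishing specular boundary terms, the microscopic lower bound \eqref{lowerL}, the coercivity \eqref{coerciveN}, and the monotonicity of $\|f(t)\|_2$---but you package the conclusion differently. The paper multiplies the equation by $e^{\lambda t}$ and runs the energy estimate on $e^{\lambda t}f$ over $[0,N]$, obtaining directly $e^{2\lambda N}\|f(N)\|_2^2 \leq \|f_0\|_2^2$ once $\lambda$ is chosen small relative to the coercivity constant; monotonicity on $[N,t]$ then gives the continuous-time bound. You instead extract a one-step contraction $\|f(N+1)\|_2^2 \leq \eta\,\|f(N)\|_2^2$ with $\eta<1$ and iterate. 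Both are standard; your route is arguably a touch cleaner since it avoids having to argue that \eqref{coerciveN} transfers to the weighted quantity $e^{\lambda s}f$ on each unit interval (which the paper handles implicitly by the near-constancy of $e^{\lambda s}$ on $[k,k+1]$), while the paper's route makes the admissible range of $\lambda$ explicit as $\lambda < \delta_L/(2C)$.
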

\begin{proof} 
	From Green's identity (or $L^{2}$ energy estimate) for (\ref{lin eq}) in a time interval $[0,N]$, 
	\begin{equation*}\label{L2_f} 
		\|f(N)\|_{2}^{2} + \int^{N}_{0} \iint_{ U\times\R^{3}}  f Lf 
		\leq \|f(0)\|_{2}^{2}  .
	\end{equation*}
	
	\noindent From \eqref{lin eq}, for any $\lambda>0$, we get
	\begin{equation*}\label{eqtn_lamda}
		\big[\p_{t}  + v\cdot \nabla_{x} \big] (e^{\lambda t} f)  + L (e^{ \lambda t}f )
		= \lambda e^{\lambda t} f,
	\end{equation*}
	Green's identity (or $L^{2}$ energy estimate) yields 
	\begin{equation}\label{energy_f}  
		\| e^{\lambda t}f(N)\|_{2}^{2} + \underbrace{\int^{N}_{0} \iint_{ U\times\R^{3}}  e^{2\lambda s} f Lf } _{(I)}
		-  
		{
			\lambda \int^{N}_{0} \iint_{ U\times\R^{3}}
			|e^{\lambda s} f(s)|^{2}}
		\leq \|f(0)\|_{2}^{2} 
		. 
	\end{equation}
	
	\noindent Let us consider $(I)$ in (\ref{energy_f}). From semi-positivity of linearized Boltzmann operator $L$, $(I)$ is bounded from below as 
	\begin{eqnarray*}
		%
		(I)\geq  
		\delta_{L} 
		\int^{N}_{0} \iint_{ U\times\R^{3}} \langle v\rangle | e^{ \lambda s}(\mathbf{I} - \mathbf{P})f|^{2} \geq  \delta_{L} \int^{N}_{0} \| e^{ \lambda s} (\mathbf{I} - \mathbf{P}) f  \|_{\nu}^{2}.
	\end{eqnarray*}
	By time translation, we apply coercivity estimate \eqref{coerciveN} to obtain
	\begin{equation*} \label{I}
		\begin{split} 
			(I)
			&\geq   \frac{\delta_{L} }{2} \int^{N}_{0} \| e^{ \lambda s}(\mathbf{I} - \mathbf{P}) f \|_{\nu}^{2}  + \frac{\delta_{L} }{2 C} \int^{N}_{0} \|  e^{ \lambda s} \mathbf{P} f \|_{2}^{2}  \geq  \frac{\delta_{L} }{2 C} \int^{N}_{0} \|  e^{ \lambda s}  f \|_{2}^{2},
		\end{split}
	\end{equation*}
	where $\|\cdot\|_{\nu} = \|\cdot \sqrt{\nu}\|_{L^{2}_{x,v}}$. Therefore, we derive 
	\begin{equation}\label{energy_N0} 
		e^{2\lambda N}\| f(N)\|_{2}^{2} 
		+ \Big(
		\frac{\delta_{L} }{2C} - \lambda
		\Big)
		\int^{N}_{0} \| e^{ \lambda s} f (s)\|_{2}^{2} ds 
		\leq \| f_0\|_{2}^{2}. 
	\end{equation}
	
	\noindent For a time interval $[N,t]$, it is not hard to get 
	\begin{equation}\label{energy_tN}
		\| f (t) \|_{2}^{2}  \leq \| f(N) \|_{2}^{2}.
	\end{equation}
	
	\noindent Choosing $\lambda \ll 1$, from (\ref{energy_N0}) and (\ref{energy_tN}), we conclude that 
	\begin{equation*}
		e^{\lambda t } \| f(t) \|_{2}^{2} = e^{\lambda (t-N)} e^{\lambda N }\| f(N) \|_{2}^{2}
		\leq 2 \| f_0 \|_{2}^{2},
	\end{equation*}
	and obtain (\ref{U_decay}).   
\end{proof}

\subsection{Nonlinear $L^{\infty}$ decay}
Now, we are ready to derive nonlinear $L^{\infty}_{x,v}$ decay from previous $L^{2}_{x,v}$ decay.
\begin{proof} [\textbf{Proof of Theorem \ref{main theorem}}] 
	From \eqref{h infty est}, for $T \leq t < T+1$,
	\begin{equation*} 
		\begin{split}
			\sup_{s\in[T,t]} \|h(s)\|_\infty & \lesssim \ e^{-\nu_{0} (t-T) } \|h (T)\|_{\infty} +  \int_{T}^{t} \|f (s)\|_2 \dd s  .
		\end{split}
	\end{equation*}
	
	We assume that $m \leq t < m+1$ and define $\lambda^{*}:=\min\{ \nu_{0}, \lambda\}$, where $\lambda$ is some constant from Corollary~\ref{decay_U}. We use (\ref{h infty est}) repeatedly for each time step, $[k,k+1), \ k\in\mathbb{N}$ and Corollary~\ref{decay_U} to perform $L^{2}-L^{\infty}$ bootstrap,
	\begin{equation*} \label{h infty decay}
		\begin{split}
			\|h(t)\|_{\infty} 
			&\lesssim e^{-m \nu_{0} } \|h(0)\|_{\infty} + \sum_{k=0}^{m-1} e^{-k \nu_{0} } \int_{m-1-k}^{m-k} \|f(s)\| \dd s  \\
			&\lesssim e^{-m \nu_{0} } \|h(0)\|_{\infty} + \sum_{k=0}^{m-1} e^{-k  \nu_{0} } \int_{m-1-k}^{m-k} e^{-\lambda(m-1-k)} \|f(0)\| \dd s  \lesssim  e^{- \lambda^{*}  t } \|h(0)\|_{\infty} . \\
		\end{split}
	\end{equation*} 
	
	For nonlinear problem from Duhamel principle,
	\begin{equation} \label{Duhamel}
		\begin{split}
			h  &:= U(t)h_0 + \int_{0}^{t} U(t-s) w  \Gamma(\frac{h }{w},\frac{h }{w}) (s)  \dd s ,  \\
			\|h (t) \|_{\infty} &\lesssim e^{- \lambda^{*} t } \|h(0)\|_{\infty} + \Big\| \int_{0}^{t}  U(t-s) w \Gamma\big( \frac{h }{w}, \frac{h }{w} \big)(s) \dd s \Big\|_{\infty} , 
		\end{split}
	\end{equation}
	where $U(t)$ is a semi-group for linearized Boltzmann equation. Using Duhamel's principle again, we have
	\begin{equation*} \label{double duhamel}
		U(t-s) = G(t-s) + \int_s^t G(t-s_1) K_{w}U(s_1-s) \dd s_1,
	\end{equation*}
	where $G(t)$ is semi-group for the system
	\begin{equation*} \label{step 2 eq}
		\begin{split}
			&\p_t h + v\cdot\nabla_x h  +  \nu h = 0,\quad \text{and}\quad |G(t)h_0| \leq e^{-\nu_{0}t} |h_0|.
		\end{split}
	\end{equation*}
	
	\noindent For the last term in (\ref{Duhamel}), using standard nonlinear estimate $|w\Gamma(f,f)| \lesssim \langle v \rangle \|wf\|_{\infty}^{2}$,
	we obtain (see \cite{Guo10, cylinder, KimLee} for detail)
	{\small
	\begin{equation*} \label{nu increase}
		\begin{split}
			& \Big\| \int_{0}^{t}  U(t-s) w \Gamma\big( \frac{h }{w}, \frac{h }{w} \big)(s) \dd s \Big\|_{\infty}   \\
			&\leq  \Big\| \int_{0}^{t}  G(t-s) w  \Gamma\big( \frac{h }{w}, \frac{h }{w} \big)(s) \dd s \Big\|_{\infty}   +   \Big\| \int_{0}^{t}  \int_{s}^{t} G(t-s_1) K_{w}  U(s_1-s) w \Gamma\big( \frac{h }{w}, \frac{h }{w} \big)(s) \dd s_1 \dd s  \Big\|_{\infty}   \\
			&\leq 
			C e^{- \lambda^{*} t} \Big(\sup_{0\leq s\leq \infty} e^{ \lambda^{*} s} \|h (s)\|_{\infty} \Big)^{2}.
		\end{split}
	\end{equation*}	
	}
	\noindent Therefore, for sufficiently small $\|h_0\|_{\infty} \ll 1$, we have uniform bound
	\begin{equation*} \label{uniform bound} 
		\sup_{0\leq t\leq \infty} e^{\lambda^{*} t } \|h(t)\|_\infty \ll 1,
	\end{equation*} 
	and hence global decay and uniqueness. Also note that positivity of $F$ is standard by sequence $F^{\ell}$ which solves
	\begin{equation}\notag
		\begin{split}
			&\p_{t } F^{\ell+1} + v\cdot \nabla F^{\ell+1} = Q_{+} (F^{\ell},F^{\ell}) - \nu(F^{\ell}) F^{\ell+1},  \ \ F|_{t=0} = F_{0}, \\
			&F^{\ell+1} (t,x,v) = F^{\ell+1} (t,x,R_{x}v) \ \ \ \text{on}  \ \  \p U.
		\end{split}
	\end{equation} 
	From $F_{0} \geq 0$ and $ F^{\ell} \geq 0$, we have $F^{\ell+1} \geq 0$.	
\end{proof}

\section{Appendix: $L^2$ coercivity in an axisymmetric domain}
In Appendix, we consider the linearized hard-sphere Boltzmann equation 
	\begin{equation} \label{linear BE}
		\p_{t}f + v\cdot\nabla_{y} f + Lf = g,
	\end{equation}
	with specular boundry condition 
	\begin{align} \label{specular}
	f(t,x,R_{y}v) = f(t,y,v) \text{ on } y\in\p U,
	\end{align}
	where $R_y:=(\mathbf{I}-2n(y)\otimes n(y))$. We consider a periodic cylindrical domain with annulus cross-section:
	\begin{align} \label{domain}
		U:= \O \times [0,H] \subset \R^3, \quad \p U: =\p \O \times (0,H), 
	\end{align}
	where $\O:=\{(x,y) \in \R^2: 0<R_1<x^2+y^2<R_2\}$ is an annulus and $[0,H]$ is a periodic interval. It can be parametrized by using a global triply orthogonal coordinate system: for $y\in U$,
	\begin{align} \label{ann domain}
		y = \eta(\theta,z, r) = (r\cos \theta, r \sin \theta, z)=(\eta_1(\theta,z,r),\eta_2(\theta,z,r),\eta_3(\theta,z,r)), 
	\end{align}
	where $0<R_{1}< r< R_{2},\; 0 \leq \theta \leq 2\pi,\; 0\leq z \leq H$.\\
	
	The main object of Appendix is to control the physical macroscopic quantities:
	\begin{align*}
		a(t,y)&= \int_{\R^3} f(t,y,v) \sqrt{\mu} dv,\\ 
		b(t,y)&= \int_{\R^3} vf(t,y,v) \sqrt{\mu} dv, \\ 
		c(t,y)&= \int_{\R^3} \frac{\vert v \vert^2 -3 }{2} f(t,y,v) \sqrt{\mu} dv. 
	\end{align*} 
	We provide a constructive proof of the coercivity estimate in the presence of the specular reflection boundary condition. Firstly, we introduce the mass and energy ($a$ and $c$)  estimates in Proposition \ref{prop_massenergy}. On the other hand, when it comes to the momentum component $b$, we use the global triply orthogonal coordinate system \eqref{ann domain} to transform the momentum part such as \eqref{b_xi}. Then, we divide it into the tangential momentum and normal momentum. Unlike mass and energy, momentum is not fully conserved, so it needs to be handled using the delicate geometric method. The proof consists of construction of correction term $\zeta$ to derive the Laplace-Beltrami operator \eqref{Beltrami}. 
	
	\subsection{Transformed equations and macroscopic parts} \label{transform}  
	In Appendix, we only consider the periodic cylindrical domain with an annulus cross-section. However, Lemma \ref{identities}, Lemma \ref{D bel difference}, and Lemma \ref{dv/dx} in section \ref{transform} hold for a domain with global triply orthogonal coordinate system of a single chart as well. \\
\indent Within Appendix, we use variables $(\theta,z,r)$ as $x_1,x_2$ and $x_3$ respectively. To distinguish derivatives, we use the following derivative symbols.
		\[
		\p_{y_{i}} := \frac{\p}{\p y_{i}},\quad \p_{i} := \frac{\p}{\p x_{i}},\quad \nabla_y := (\p_{y_{1}}, \p_{y_{2}}, \p_{y_{3}}),\quad \nabla := (\p_{1}, \p_{2}, \p_{3}).
		\]
		We also define
		\begin{equation} \label{diff operator}
			\begin{split}
				g_{ij} := \langle \p_{i}\eta, \p_{j}\eta \rangle, \quad D_{i} := \frac{1}{\sqrt{g_{ii}}} \p_{i},\quad D:= (D_{1}, D_{2}, D_{3}).
			\end{split}
		\end{equation}
		In the periodic cylindrical domain with annulus cross-section, it is directly computed that
		\begin{align} \label{g_ij}
			g_{11}=r^2, \quad g_{22}=g_{33}=1, \; \text{and} \quad g_{ij}=0, \quad \text{whenever } i \neq j.
		\end{align}
  		We define
		\begin{equation} \label{v_x}
			\V := 
			\begin{bmatrix}
				\V_{{1}}\\
				\V_{{2}}\\
				\V_{{3}}
			\end{bmatrix}
			=
			Q^{T}
			\begin{bmatrix}
				v_{{1}}\\
				v_{{2}}\\
				v_{{3}}
			\end{bmatrix},\quad
			Q := \begin{bmatrix}
				D_{1}\eta_{1} & D_{2}\eta_{1} & D_{3}\eta_{1}  \\
				D_{1}\eta_{2} & D_{2}\eta_{2} & D_{3}\eta_{2}  \\
				D_{1}\eta_{3} & D_{2}\eta_{3} & D_{3}\eta_{3}  \\
			\end{bmatrix},
		\end{equation}
		where $Q$ is orthonormal matrix, $QQ^{T} = Q^{T}Q = I$.  \\
		
		Given the global triply orthogonal coordinate system \eqref{ann domain}, we define transformed local mass $a_{x}$ and local energy $c_{x}$ by,
		\begin{equation} \label{ac_x}
		a_{x} := a\circ\eta \quad\text{and} \quad c_{x} := c\circ\eta. 
		\end{equation} 
		For the local momentum vector $b_{x}$, it is crucial to take account of geometry: we define
		\begin{equation}  \label{b_xi}
		\begin{split}
		b_{x_{i}} &:= \int_{\mathbb{R}^{3}}  \V_{i} f \sqrt{\mu(\V)} \dd \V  
		= \int_{\mathbb{R}^{3}}  D_{i}\eta\cdot v f \sqrt{\mu(\V)} \dd \V 
		= D_{i}\eta \cdot b.
		\end{split}
		\end{equation}
		Therefore, we get that, as \eqref{v_x},
		\begin{equation} \label{b_x}
		\begin{bmatrix}
		b_{x_{1}}\\
		b_{x_{2}}\\
		b_{x_{3}}
		\end{bmatrix}
		=
		Q^{T}
		\begin{bmatrix}
		b_{{1}}\\
		b_{{2}}\\
		b_{{3}}
		\end{bmatrix}.
		\end{equation}
		As the mapping in the velocity space is isometric (see $Q^{T}$ is an orthonormal matrix), we note that 
		\begin{equation} \label{isometry}
		|\V| = |v|,\quad |b_{x}| = |b|,\quad \mu = \frac{1}{\sqrt{(2\pi)^3}}e^{-\frac{|v|^{2}}{2}} =\frac{1}{\sqrt{(2\pi)^3}} e^{-\frac{|\V|^{2}}{2}}.
		\end{equation}

		In the coordinates, now we rewrite the transport operator in terms of $\V$ and local derivatives $\nabla$. Since
		\begin{equation}
		\begin{split}\notag
		\p_{i} &:= \frac{\p}{\p x_{i}} = \sum_{j} \frac{\p y_{j}}{\p x_{i}} \frac{\p}{\p y_{j}} ,
		\end{split}
		\end{equation}
		we have
		\begin{equation} \label{glo loc deriva}
		\begin{split}
		\begin{bmatrix}
		\p_{1} \\
		\p_{2} \\
		\p_{3}
		\end{bmatrix}
		=
		\begin{bmatrix}
		\sqrt{g_{11}} & 0 & 0 \\
		0 & \sqrt{g_{22}} & 0 \\
		0 & 0 & \sqrt{g_{33}} \\
		\end{bmatrix}
		Q^{T}
		\begin{bmatrix}
		\p_{y_{1}} \\
		\p_{y_{2}} \\
		\p_{y_{3}}
		\end{bmatrix}
		,\quad 
		\begin{bmatrix}
		D_{1} \\
		D_{2} \\
		D_{3}
		\end{bmatrix}
		=
		Q^{T}
		\begin{bmatrix}
		\p_{y_1} \\
		\p_{y_2} \\
		\p_{y_3}
		\end{bmatrix}.
		\end{split}
		\end{equation}
		Therefore, identities (\ref{v_x}) and (\ref{glo loc deriva}) yield that 
		\begin{equation} \label{loc transport}
		\begin{split}
		v\cdot \nabla_y \	
		&= 
		\begin{bmatrix}
		\V_{1} &
		\V_{2} &
		\V_{3}
		\end{bmatrix}
		Q^{T} Q
		\begin{bmatrix}
		D_{1} \\
		D_{2} \\
		D_{3}
		\end{bmatrix} 
		= \V \cdot D {\color{red}.}
		\end{split}
		\end{equation}
		
		We also note that, from (\ref{v_x}) and (\ref{b_x}), 
		\begin{equation} \label{bv}
		\begin{split}
		b\cdot v 
		&= 
		\begin{bmatrix}
		b_{x_{1}} &
		b_{x_{2}} &
		b_{x_{3}}
		\end{bmatrix}
		Q^{T}
		Q
		\begin{bmatrix}
		\V_{1}\\
		\V_{2}\\
		\V_{3}
		\end{bmatrix}
		=
		b_{x}\cdot \V.
		\end{split}
		\end{equation}
	Together with $a_x$ and $c_x$ defined in \eqref{ac_x}, 
		 we can rewrite the macroscopic part $\mathbf{P}f$ as
		\begin{equation}\label{Pf}
		\begin{split}
		\mathbf{P}f := \Big( a_{x} + b_{x}\cdot \V + \frac{|\V|^{2} - 3}{2} \Big)\sqrt{\mu}.
		\end{split}
		\end{equation}

		Now we consider some important properties of $D_{i}:=\frac{1}{\sqrt{g_{ii}}} \p_{i}$. By the normalization in \eqref{diff operator} and \eqref{g_ij},
		\Be
		\langle D_{i}\eta, D_{j}\eta \rangle = \delta_{ij}.{\color{red} \label{orth}}
		\Ee
		We define $D_{ij}$ and $\Gamma_{D,ij}^{k}$ by 
		\begin{equation} \label{D gamma def}
		\begin{split}
			D_{ij} &:= D_{i}D_{j}, \quad \Gamma_{D,ij}^{k} := \langle D_{ij} \eta , D_{k}\eta \rangle.
		\end{split}
		\end{equation}
		Notice that we calculated the Christoffel symbol in the domain $U$
		\begin{align} \label{christoffel}
			 \Gamma_{D,33}^{1} = \Gamma_{D,33}^{2} = 0 ,\quad \Gamma_{D,22}^{1} = \Gamma_{D,22}^{3} = 0, \quad \Gamma_{D,11}^{3} = -\frac{1}{r}, \quad \Gamma_{D,11}^{2} = 0. 
		\end{align}
		
				\begin{lemma} \label{identities}
			\noindent (1) We have
			\begin{equation} \label{swap index}
			\Gamma_{D,ij}^{k} = - \Gamma_{D,ik}^{j}, \quad \Gamma_{D,ij}^{j} = 0,\quad \text{for all} \ \ i,j,k\in\{1,2,3\}.  \\
			\end{equation}
			\noindent (2) We have the following commutator:
			\begin{equation} \label{noncommu}
			\begin{split}
			D_{i} D_{j} - D_{j} D_{i} = \Gamma_{D,jj}^{i} D_{j} - \Gamma_{D,ii}^{j} D_{i},\quad i\neq j.
			\end{split}
			\end{equation}
			\noindent (3) When all $i,j,k \in \{1,2,3\}$ are distinct,
			\begin{equation} \label{geo cond}
				\Gamma_{D,ij}^{k} = 0.
			\end{equation}
		\end{lemma}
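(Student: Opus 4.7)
The plan is to derive all three identities from the single orthonormality relation $\langle D_i\eta, D_j\eta\rangle = \delta_{ij}$ together with the fact that ordinary partial derivatives commute. Nothing in the lemma uses the specific geometry of the annular cylinder beyond this orthogonality, so I would prove it in the stated generality for any triply orthogonal chart.

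For part (1), I would simply apply $D_i$ to the identity $\langle D_j\eta, D_k\eta\rangle = \delta_{jk}$, use the product rule, and read off $\Gamma_{D,ij}^k + \Gamma_{D,ik}^j = 0$; specializing $k=j$ gives $\Gamma_{D,ij}^j = 0$. Both assertions in (1) are then immediate.

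For part (2), the idea is that $[D_i,D_j]$ is a first-order operator (since $\partial_i$ and $\partial_j$ commute), so it suffices to identify its coefficients. I would first compute $[D_i,D_j]\eta$ directly from $\partial_i\partial_j\eta=\partial_j\partial_i\eta$ and the definition $D_i=g_{ii}^{-1/2}\partial_i$; this yields
\begin{equation*}
[D_i,D_j]\eta \;=\; \frac{D_j\sqrt{g_{ii}}}{\sqrt{g_{ii}}}\,D_i\eta \;-\; \frac{D_i\sqrt{g_{jj}}}{\sqrt{g_{jj}}}\,D_j\eta.
\end{equation*}
Next, I would compute $\Gamma_{D,jj}^i = \langle D_jD_j\eta, D_i\eta\rangle$ for $i\neq j$: the term proportional to $D_j\eta$ in $D_jD_j\eta$ is orthogonal to $D_i\eta$, and the remaining term is handled by differentiating $\langle\partial_i\eta,\partial_j\eta\rangle=0$, giving $\Gamma_{D,jj}^i = -(D_i\sqrt{g_{jj}})/\sqrt{g_{jj}}$. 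Substituting back and using that $\{D_k\eta\}$ is an orthonormal frame (so a vector field is determined by its action on $\eta$) gives (2).

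For part (3), after reducing $\Gamma_{D,ij}^k$ to the single scalar $(g_{ii}g_{jj}g_{kk})^{-1/2}\,\partial_i\partial_j\eta\cdot\partial_k\eta$ (the first-derivative terms drop out by orthogonality when $j\neq k$), I would run a standard cyclic argument: differentiating $\langle\partial_a\eta,\partial_b\eta\rangle=0$ in the third variable for each of the three pairings $(i,j),(j,k),(k,i)$ produces three linear relations among $\partial_i\partial_j\eta\cdot\partial_k\eta$, $\partial_j\partial_k\eta\cdot\partial_i\eta$, and $\partial_i\partial_k\eta\cdot\partial_j\eta$; the only solution is that all three vanish. This is the one place where distinctness of $i,j,k$ is essential, and it is the main (though entirely classical) content of the lemma. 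The rest of the work is bookkeeping, which I would verify once against the explicit Christoffel symbols in \eqref{christoffel} as a sanity check.
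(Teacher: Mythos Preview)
Your proposal is correct and follows essentially the same route as the paper. Part~(1) is identical. For part~(2) the paper is slightly more direct: it computes the commutator of the operators $g_{ii}^{-1/2}\partial_i$ and $g_{jj}^{-1/2}\partial_j$ \emph{as operators}, observing that the second-order terms cancel by $\partial_i\partial_j=\partial_j\partial_i$ and then identifying the surviving first-order coefficients as $\Gamma_{D,jj}^i$ and $-\Gamma_{D,ii}^j$ via the computation $\partial_i(g_{kk}^{-1/2})=\sqrt{g_{ii}/g_{kk}}\,\Gamma_{D,kk}^i$; your detour through the action on $\eta$ is equivalent but unnecessary. For part~(3) the paper runs the cyclic argument at the level of the $\Gamma_D$ themselves, using the already-proved (1) and (2) to show $\Gamma_{D,ij}^k=-\Gamma_{D,ki}^j$ and then cycling to get $\Gamma_{D,ij}^k=-\Gamma_{D,ij}^k$; your version reduces first to the bare quantities $\partial_i\partial_j\eta\cdot\partial_k\eta$ and runs the classical Lam\'e argument there. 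Both are standard and equally short; the paper's version has the small advantage of reusing (2) rather than re-deriving the underlying identity.
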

		\begin{proof}
			(1) We get \eqref{swap index} directly from \eqref{orth} by taking derivatives. 
			
			\noindent (2) First we compute that, for $i \neq k$,
			\begin{equation} \label{1 over g}
			\begin{split}
				\p_{i}\big( \frac{1}{\sqrt{g_{kk}}}\big) &= -\frac{1}{\sqrt{g_{kk}}^{3}} \langle \p_{ik}\eta, \p_{k}\eta \rangle = -\sqrt{\frac{g_{ii}}{g_{kk}}} \langle \frac{1}{\sqrt{g_{kk}g_{ii}}} \p_{ki}\eta, \frac{1}{\sqrt{g_{kk}}}\p_{k}\eta \rangle  \\
				&= -\sqrt{\frac{g_{ii}}{g_{kk}}} \langle \frac{1}{\sqrt{g_{kk}}}\p_{k}\big( \frac{1}{\sqrt{g_{ii}}}\p_{i}\eta\big), \frac{1}{\sqrt{g_{kk}}}\p_{k}\eta \rangle \\
				&= -\sqrt{\frac{g_{ii}}{g_{kk}}} \langle D_{ki}\eta, D_{k}\eta \rangle = -\sqrt{\frac{g_{ii}}{g_{kk}}} \Gamma_{D,ki}^{k} = \sqrt{\frac{g_{ii}}{g_{kk}}} \Gamma_{D,kk}^{i},  \\
			\end{split}
			\end{equation}
			where we used (\ref{swap index}) at the last equality. Therefore we conclude \eqref{noncommu} as 
			\begin{equation}\notag
			\begin{split}
				D_{i} D_{j} - D_{j} D_{i} &= \frac{1}{\sqrt{g_{ii}}} \p_{i} \big( \frac{1}{\sqrt{g_{jj}}} \p_{j} \big) - \frac{1}{\sqrt{g_{jj}}} \p_{j} \big( \frac{1}{\sqrt{g_{ii}}} \p_{i} \big)  \\
				&= \frac{1}{\sqrt{g_{ii}}} \p_{i} \big( \frac{1}{\sqrt{g_{jj}}} \big)  \p_{j} - \frac{1}{\sqrt{g_{jj}}} \p_{j} \big( \frac{1}{\sqrt{g_{ii}}} \big) \p_{i}  \\
				&= \frac{1}{\sqrt{g_{jj}}} \Gamma_{D,jj}^{i} \p_{j} - \frac{1}{\sqrt{g_{ii}}} \Gamma_{D,ii}^{j} \p_{i}  \\
				&= \Gamma_{D,jj}^{i}D_{j} - \Gamma_{D,ii}^{j}D_{i}.  \\
			\end{split}
			\end{equation}			
			
			\noindent (3) When $i,j,k$ are all distinct, using (\ref{swap index}) and (\ref{noncommu}), we derive that 
			\begin{equation}\notag
			\begin{split}
				\Gamma_{D,ij}^{k} &= - \Gamma_{D,ik}^{j} = - \langle D_{j}\eta, D_{ik}\eta \rangle  \\
				&= - \langle D_{j}\eta, D_{ki}\eta + \Gamma_{D,kk}^{i} D_{k}\eta - \Gamma_{D,ii}^{k} D_{i}\eta \rangle = - \Gamma_{D,ki}^{j}.
			\end{split}
			\end{equation}		
		Now	using above relation successively we derive that 
			\[
				\Gamma_{D,ij}^{k} = - \Gamma_{D,ki}^{j} = \Gamma_{D,jk}^{i} = - \Gamma_{D,ij}^{k}.
			\]	
		Therefore we conclude (\ref{geo cond}).
		\end{proof}

					\begin{definition}
		Define the Laplacian operator in the normalized $D$-derivatives:
		\begin{equation} \label{def D laplacian}
		\Delta_{D} := \big( D_{1}^{2} + D_{2}^{2} + D_{3}^{2} \big).
		\end{equation}
	
We also define the Laplace-Beltrami operator in the coordinates:
	\Be
	\Delta_{bel} := \Delta_{y} = \sum_{i,j} \frac{1}{\sqrt{|g|}} \p_{i} \big( \sqrt{|g|} g_{ij} \p_{j} \big) .\label{Beltrami}
	\Ee

	\end{definition}

		\begin{lemma} \label{D bel difference}
		Two operator $\Delta_D$ and $\Delta_{bel}$ have the following relation:
			\begin{equation} \label{eqtn:D bel difference}
			\begin{split}
			\Delta_{bel} &= \Delta_{D} - \sum_{i} \sum_{k\neq i} \Gamma_{D,kk}^{i} D_{i}.	\\
			\end{split}
			\end{equation}
		\end{lemma}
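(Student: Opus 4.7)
The plan is to expand the Laplace-Beltrami operator in the orthogonal coordinate system and convert the ordinary partial derivatives $\partial_{i}$ into the normalized derivatives $D_{i}$, accounting carefully for the first-order correction terms that arise.

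Because the coordinates are globally orthogonal, we have $g_{ij}=0$ for $i\neq j$ and $|g| = g_{11}g_{22}g_{33}$, so the definition \eqref{Beltrami} collapses to
\begin{equation*}
\Delta_{bel} = \sum_{i} \frac{1}{\sqrt{|g|}} \partial_{i}\!\left( \frac{\sqrt{|g|}}{g_{ii}} \partial_{i} \right) = \sum_{i} \frac{1}{\sqrt{|g|}} \partial_{i}\!\left( \frac{\sqrt{|g|}}{\sqrt{g_{ii}}} D_{i} \right).
\end{equation*}
Applying the product rule and using the elementary identity $\frac{1}{\sqrt{g_{ii}}}\partial_{i} D_{i} = D_{i}^{2}$, this rearranges as $\Delta_{bel} = \Delta_{D} + \sum_{i} A_{i} D_{i}$, where the first-order coefficient is
\begin{equation*}
A_{i} := \frac{1}{\sqrt{|g|}} \partial_{i}\!\left( \frac{\sqrt{|g|}}{\sqrt{g_{ii}}} \right) = D_{i}\log \sqrt{|g|} + \partial_{i}\!\left( \frac{1}{\sqrt{g_{ii}}} \right).
\end{equation*}

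The claim reduces to proving $A_{i} = -\sum_{k\neq i} \Gamma_{D,kk}^{i}$. I split the sum $D_{i}\log\sqrt{|g|} = \sum_{k} D_{i}\log \sqrt{g_{kk}}$ into the $k=i$ term and the $k\neq i$ terms. For $k\neq i$, the computation \eqref{1 over g} inside the proof of Lemma \ref{identities} gives
\begin{equation*}
D_{i}\log\sqrt{g_{kk}} \;=\; \frac{\partial_{i}\sqrt{g_{kk}}}{\sqrt{g_{ii}}\,\sqrt{g_{kk}}} \;=\; -\Gamma_{D,kk}^{i},
\end{equation*}
which is exactly the reformulation of \eqref{1 over g} after multiplying through by $-g_{kk}$. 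For $k=i$, the contribution $D_{i}\log\sqrt{g_{ii}} = \frac{\partial_{i}\sqrt{g_{ii}}}{g_{ii}}$ cancels the remaining piece $\partial_{i}(1/\sqrt{g_{ii}}) = -\frac{\partial_{i}\sqrt{g_{ii}}}{g_{ii}}$ exactly. Summing everything yields $A_{i} = -\sum_{k\neq i}\Gamma_{D,kk}^{i}$, which is the claim.

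I do not anticipate a serious obstacle: the argument is a direct algebraic expansion using only the orthogonality of the coordinates (so that the Laplace-Beltrami takes its diagonal form), the commutator/derivative identities already established in Lemma \ref{identities}, and the reformulation \eqref{1 over g}. The one point that needs care is bookkeeping the diagonal ($k=i$) piece, where the formula \eqref{1 over g} does not directly apply (it was stated for $i\neq k$), but here the two diagonal contributions from $D_{i}\log\sqrt{g_{ii}}$ and $\partial_{i}(1/\sqrt{g_{ii}})$ cancel identically, leaving only the off-diagonal Christoffel sum.
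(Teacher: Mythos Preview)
Your proof is correct and follows essentially the same approach as the paper: expand $\Delta_{bel}$ in orthogonal coordinates, apply the product rule to extract $\Delta_D$ plus first-order terms, and identify the first-order coefficients with the normalized Christoffel symbols via \eqref{1 over g}. The only cosmetic difference is that the paper cancels the diagonal metric factor at the outset by writing $\sqrt{|g|}/g_{ii}=\sqrt{g_{jj}g_{kk}}$, whereas you organize the computation via logarithmic derivatives and cancel the $k=i$ contribution explicitly at the end; both routes are equivalent.
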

		\begin{proof}
			Using \eqref{identities}, we derive that 
			\begin{equation}
			\begin{split}\notag
			\Delta_{bel} &= \frac{1}{\sqrt{|g|}} \sum_{i=1}^{3} \p_{{i}} \big( \sqrt{|g|}\frac{1}{g_{ii}} \p_{{i}} \big) = \frac{1}{\sqrt{|g|}} \sum_{i=1}^{3} \p_{{i}} \big( \sqrt{g_{jj}g_{kk}} D_{i} \big)  \\
			&= \sum_{i=1}^{3} D_{i}^{2} + \frac{1}{\sqrt{|g|}} \sum_{i=1}^{3} \Big( \sqrt{g_{kk}} \p_{x_{i}}\sqrt{g_{jj}} + \sqrt{g_{jj}} \p_{x_{i}}\sqrt{g_{kk}} \Big) D_{i} \\
			&= \sum_{i} D_{i}^{2} +  \sum_{i=1}^{3} \Big( \langle \frac{\p_{ij}\eta}{\sqrt{g_{ii}g_{jj}}}, \frac{\p_{j}\eta}{\sqrt{g_{jj}}} \rangle + \langle \frac{\p_{ik}\eta}{\sqrt{g_{ii}g_{kk}}}, \frac{\p_{k}\eta}{\sqrt{g_{kk}}} \rangle \Big) D_{i}  \\
			&= \sum_{i} D_{i}^{2} + \sum_{i=1}^{3} \Big( \langle \frac{1}{g_{jj}}\p_{j}\big( \frac{1}{\sqrt{g_{ii}}}\p_{i}\eta \big), \frac{\p_{j}\eta}{\sqrt{g_{jj}}} \rangle + \langle \frac{1}{g_{kk}}\p_{k}\big( \frac{1}{\sqrt{g_{ii}}}\p_{i}\eta \big), \frac{\p_{k}\eta}{\sqrt{g_{kk}}} \rangle \Big) D_{i}  \\
			&= \sum_{i} D_{i}^{2} + \sum_{i=1}^{3} \Big( \langle D_{ji}\eta, D_{j}\eta \rangle + \langle D_{ki}\eta, D_{k}\eta \rangle \Big)D_{i}  \\
			&= \Delta_{D} - \sum_{i=1}^{3} \Big( \langle D_{jj}\eta, D_{i}\eta \rangle + \langle D_{kk}\eta, D_{i}\eta \rangle \Big)D_{i}  = \Delta_{D} - \sum_{i=1}^{3} \big( \Gamma_{D,jj}^{i} + \Gamma_{D,kk}^{i} \big)D_{i}  \\
			&= \Delta_{D} - \sum_{i} \sum_{k\neq i} \Gamma_{D,kk}^{i} D_{i}.
			\end{split}
			\end{equation}
		\end{proof}
		
		\begin{lemma} \label{dv/dx}
			We have the following differentiation identity:
			\begin{equation*}
				D_{i} \V_{j} = \sum_{k} \Gamma_{D,ij}^{k} \V_{k}.
			\end{equation*}
		\end{lemma}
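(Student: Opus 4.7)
The plan is to reduce the identity to the expansion of the second derivative $D_{ij}\eta$ in the orthonormal frame $\{D_1\eta, D_2\eta, D_3\eta\}$. I would begin from the definition in \eqref{b_xi} (cf.\ \eqref{v_x}), namely $\V_j = D_j\eta \cdot v$, where $v\in\mathbb{R}^3$ is independent of the spatial coordinates. Applying $D_i = \frac{1}{\sqrt{g_{ii}}}\p_i$ therefore commutes past $v$, giving the clean identity
\begin{equation*}
D_i \V_j = D_i(D_j\eta \cdot v) = D_{ij}\eta \cdot v.
\end{equation*}

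Next I would invoke the orthonormality relation $\langle D_k\eta, D_\ell \eta\rangle = \delta_{k\ell}$, which was recorded just after \eqref{diff operator} as a consequence of the normalization in the definition of $D_i$ and the vanishing of the off-diagonal entries of $g$. Since $\{D_1\eta, D_2\eta, D_3\eta\}$ is thus an orthonormal basis of $\mathbb{R}^3$ at each point, I can expand the vector $D_{ij}\eta\in\mathbb{R}^3$ as
\begin{equation*}
D_{ij}\eta = \sum_{k=1}^{3} \langle D_{ij}\eta, D_k\eta\rangle \, D_k\eta = \sum_{k=1}^{3} \Gamma_{D,ij}^{k}\, D_k\eta,
\end{equation*}
where the last equality is simply the definition \eqref{D gamma def} of the Christoffel symbols $\Gamma_{D,ij}^{k}$.

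Combining these two displays, I would take the inner product of the expansion with $v$ to obtain
\begin{equation*}
D_i \V_j = D_{ij}\eta \cdot v = \sum_{k=1}^{3} \Gamma_{D,ij}^{k}\, (D_k\eta \cdot v) = \sum_{k=1}^{3} \Gamma_{D,ij}^{k}\, \V_k,
\end{equation*}
which is the claimed identity. The argument is essentially bookkeeping once one recognizes that the Christoffel symbols $\Gamma_{D,ij}^{k}$ are by construction the coefficients of $D_{ij}\eta$ in the orthonormal frame; there is no genuine obstacle. The only point requiring a moment of care is that $v$ is treated as a spatially constant vector in $\mathbb{R}^3$, so that $D_i$ annihilates it and passes through the dot product without generating additional terms.
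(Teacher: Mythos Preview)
Your proof is correct and follows essentially the same route as the paper: both start from $\V_j = D_j\eta\cdot v$, differentiate through the constant $v$ to get $D_i\V_j = D_{ij}\eta\cdot v$, expand $D_{ij}\eta = \sum_k \Gamma_{D,ij}^{k} D_k\eta$ in the orthonormal frame, and then dot with $v$. The paper's version is just a terser one-line chain of equalities.
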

		\begin{proof}
		From definition (\ref{v_x}),
		\begin{equation*}
		\begin{split}
			D_{i} \V_{j} &= D_{i} \langle D_{j}\eta, v \rangle = \langle D_{ij}\eta, v \rangle  \\
			&=  \langle \sum_{k} \Gamma_{D,ij}^{k}D_{k}\eta, v \rangle = \sum_{k}\Gamma_{D,ij}^{k} \langle D_{k}\eta, v \rangle =  \sum_{k} \Gamma_{D,ij}^{k} \V_{k}.
		\end{split}
		\end{equation*}
		\end{proof}


		\subsection{Mass and Energy estimates}
	The main purpose of this section is to control macroscopic quantities:
	\begin{align*}
		a(t,y)= \int_{\R^3} f(t,y,v) \sqrt{\mu} dv, \quad c(t,y) = \int_{\R^3} \frac{\vert v \vert^2 -3}{2} f(t,y,v) \sqrt{\mu} dv. 
	\end{align*}
		\begin{proposition} \label{prop_massenergy}
		Let $U$ be a domain with global triply orthogonal coordinate system of a single chart. Assume that $f$ solves linearized Boltzmann equation \eqref{linear BE} with the specular boundary condition $f(t,y,v)= f(t,y,R_{y}v)$ on $ y\in\p U,$ where $R_y:=(I-2n(y) \otimes n(y))$. 
		\hide
		\begin{equation*} 
			\begin{split}
				f_{t} + (v\cdot\nabla_{y})f + Lf &= g, \quad y\in U,   \\
				f(t,y,v) &= f(t,y,R_{y}v), \ \ y\in\p U.		
			\end{split}
		\end{equation*} 
		\unhide
		Then we have the following mass and energy estimate:
		\begin{equation} 
			\begin{split}
				\|a\|_{2}^{2} &\lesssim \frac{d}{dt}G(t) + \|b\|_{2}^{2} + \|\sqrt{\nu}(\mathbf{I-P})f\|_{2}^{2} + \| g \|_{2}^{2} ,   \\
				\|c\|_{2}^{2} &\lesssim \frac{d}{dt}G(t) + \varepsilon\|b\|_{2}^{2} + \|\sqrt{\nu}(\mathbf{I-P})f\|_{2}^{2} + \| g \|_{2}^{2} ,\quad \vert G(t) \vert \lesssim \|f(t)\|_{2}^{2},\quad \varepsilon \ll 1. 
			\end{split}
		\end{equation}
	\end{proposition}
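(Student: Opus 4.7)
The plan is to carry out a constructive test-function argument in the spirit of the Esposito--Guo--Kim--Marra method, adapted to the specular reflection boundary condition. The key observation that makes the specular case tractable in this geometry is the compatibility between the \emph{Neumann} boundary condition for an auxiliary Poisson problem and the specular symmetry $f(t,y,v)=f(t,y,R_y v)$: if the test function has the form $\psi(y,v)=\Psi(v)\cdot\nabla_y\phi(y)$ with $\Psi$ a vector-valued Burnett-type function in $v$ that is covariant under $v\mapsto R_y v$, and $\phi$ satisfies $n\cdot\nabla_y\phi=0$ on $\partial U$, then $\nabla\phi$ is tangential on $\partial U$, and the boundary flux
\[
\int_{\partial U\times\mathbb{R}^3}(v\cdot n)\,f(t,y,v)\,\Psi(v)\cdot\nabla\phi(y)\,dv\,dS_y
\]
vanishes by the change of variable $v\mapsto R_y v$ (which flips the sign of $v\cdot n$ while preserving $\Psi\cdot\nabla\phi$ on the boundary).

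\textbf{Mass estimate.} First, fix a vector-valued microscopic function $\Psi_a(v)=A(|v|^2)\,v\sqrt{\mu(v)}$, with the scalar factor $A$ chosen so that
\[
\int_{\mathbb{R}^3} v_i\,\Psi_{a,j}(v)\sqrt{\mu}\,dv=\delta_{ij},\qquad \int_{\mathbb{R}^3}\Psi_{a,j}\sqrt{\mu}\,dv=0,\qquad \int_{\mathbb{R}^3}\tfrac{|v|^2-3}{2}v_i\Psi_{a,j}\sqrt{\mu}\,dv=0.
\]
Using the mass normalization condition $\int_U a\,dy=0$, solve
\[
-\Delta_y \phi_a=a\ \text{ in }U,\qquad n\cdot\nabla_y\phi_a=0\ \text{ on }\partial U,
\]
so that $\|\phi_a\|_{H^2(U)}\lesssim \|a\|_2$ by standard Neumann elliptic regularity (the cylindrical annular geometry with periodicity in $z$ requires no convexity). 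Testing \eqref{linear BE} against $\psi_a(y,v):=\Psi_a(v)\cdot\nabla\phi_a(y)$ and integrating by parts in $y$ gives
\[
\frac{d}{dt}\underbrace{\int_{U\times\mathbb{R}^3} f\,\psi_a\,dv\,dy}_{=:G_a(t)} \;-\;\int f\,\Psi_{a,j}(v)\,v_i\,\partial_i\partial_j\phi_a\,dv\,dy\;+\;\int Lf\,\psi_a\;=\;\int g\,\psi_a,
\]
with no boundary contribution by the tangentiality argument above. Splitting $f=\mathbf{P}f+(\mathbf{I}-\mathbf{P})f$ in the second term and using the normalization of $\Psi_a$, the $\mathbf{P}f$ contribution collapses to $-\int a\,\Delta\phi_a\,dy=\|a\|_2^2$ up to a cross term with $b$, controlled by $\|b\|_2\,\|\phi_a\|_{H^2}\lesssim \|b\|_2\|a\|_2$ (which is absorbed into $\|a\|_2^2+\|b\|_2^2$). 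The $(\mathbf{I}-\mathbf{P})f$ contribution and the $Lf$ term (which, since $\Psi_a$ is microscopic, reduces to $\int (\mathbf{I}-\mathbf{P})f\,(L\Psi_a)\cdot\nabla\phi_a$) are handled by Cauchy--Schwarz against $\|\sqrt{\nu}(\mathbf{I}-\mathbf{P})f\|_2$, and the source term by Cauchy--Schwarz against $\|g\|_2$. Since $|G_a(t)|\lesssim \|f\|_2\|\phi_a\|_{H^1}\lesssim \|f\|_2^2$, this yields the required mass bound.

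\textbf{Energy estimate.} Repeat the same strategy with a different Burnett vector $\Psi_c(v)$ tuned so that $\int\tfrac{|v|^2-3}{2}v_i\Psi_{c,j}\sqrt{\mu}\,dv=\delta_{ij}$ with $\int\Psi_{c,j}\sqrt{\mu}\,dv=0$ and $\int v_i\Psi_{c,j}\sqrt{\mu}\,dv=0$, and with $\phi_c$ solving $-\Delta\phi_c=c$ under homogeneous Neumann data (solvable since $\int c\,dy=0$ by the energy normalization). The only new feature is the cross-coupling with $b$, which now appears with a coefficient that can be rendered $\varepsilon$-small by inserting a small parameter in Cauchy--Schwarz and absorbing the leftover into $\|\sqrt{\nu}(\mathbf{I}-\mathbf{P})f\|_2^2$; this produces the $\varepsilon\|b\|_2^2$ factor on the right-hand side.

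\textbf{Main obstacle.} The delicate point is the boundary vanishing. Unlike the diffuse case, where the boundary integral is absorbed by the damping of the incoming distribution, here one needs an algebraic cancellation. The cancellation hinges on two simultaneous conditions: (i) $\Psi(v)$ must be a purely radial function of $|v|$ times $v$ (so that $\Psi(R_y v)=R_y \Psi(v)$), and (ii) $\nabla\phi$ must be tangential on $\partial U$. The first constraint is what forces the specific Burnett form of $\Psi_a,\Psi_c$; the second is exactly the Neumann condition. Verifying that these two constraints are simultaneously compatible with the orthogonality relations required on $\Psi$ to isolate $a$ and $c$ (without re-introducing uncontrolled coupling to $b$) is the crux of the proof. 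In the present cylindrical--annulus geometry with periodicity in $z$, the Neumann problem is standard and the argument goes through cleanly; the reason we restrict to this geometry is precisely that the axisymmetric momentum component forces additional care (which is where the $\|b\|_2^2$ term on the right, rather than a full control of $b$, enters).
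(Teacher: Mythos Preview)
Your overall strategy is the same as the paper's: a Burnett-type test function $\Psi(v)\cdot\nabla\phi$ with $\Psi(v)=A(|v|^2)v\sqrt\mu$, together with a Neumann Poisson problem $-\Delta\phi=a$ (resp.\ $c$), so that the specular symmetry $v\mapsto R_y v$ and the tangentiality of $\nabla\phi$ on $\partial U$ kill the boundary flux. Your boundary-cancellation argument is in fact a clean coordinate-free rephrasing of what the paper checks coordinate-by-coordinate.

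However, there is a genuine gap in your handling of the time derivative. You write
\[
\int_{U\times\mathbb{R}^3}\partial_t f\,\psi_a\,dv\,dy\;=\;\frac{d}{dt}\int_{U\times\mathbb{R}^3} f\,\psi_a\,dv\,dy,
\]
but this is false: $\psi_a=\Psi_a(v)\cdot\nabla\phi_a(t,y)$ depends on $t$ through $\phi_a=(-\Delta)_N^{-1}a(t)$, so the correct identity has an extra term $-\int f\,\Psi_a\cdot\nabla(\partial_t\phi_a)$. This term is \emph{not} small for free, and it is precisely what produces the $\|b\|_2^2$ on the right-hand side of the mass estimate (and the $\varepsilon\|b\|_2^2$ in the energy estimate). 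In your main transport integral, the $b$-cross-term you allude to actually vanishes identically: $\int v_k v_i\,\Psi_{a,j}\sqrt\mu\,dv$ is odd and equals zero, so $\mathbf{P}f$ contributes exactly $-\int a\,\Delta\phi_a=\|a\|_2^2$ with no $b$-coupling.

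The paper closes this gap by testing the equation a second time, now against $\varphi_a=\sqrt\mu\,(-\Delta)_N^{-1}(\partial_t a)$, which yields $\|\nabla(-\Delta)_N^{-1}(\partial_t a)\|_2^2$ on one side; on the other side the transport term, after integration by parts, pairs $\mathbf{P}f$ against $v\sqrt\mu$ and produces $\int b\cdot\nabla(-\Delta)_N^{-1}(\partial_t a)$, whence $\|\nabla(-\Delta)_N^{-1}(\partial_t a)\|_2^2\lesssim \|b\|_2^2+\|\sqrt\nu(\mathbf{I}-\mathbf{P})f\|_2^2+\|g\|_2^2+\varepsilon\|a\|_2^2$. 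Feeding this back into the missing term gives the stated bound. For the energy estimate the mechanism for the $\varepsilon$ is different from what you describe: the choice $\beta_c=5$ makes $\int\mathbf{P}f\,\partial_t\Psi_c$ vanish identically, so only $(\mathbf{I}-\mathbf{P})f$ survives in that term, and Young's inequality against $\|\nabla(-\Delta)_N^{-1}(\partial_t c)\|_2$ is what inserts the small $\varepsilon$ in front of $\|b\|_2^2$.
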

	\begin{proof}
	A starting point of the proof is the weak formulation:
		\begin{equation} \label{glo IBP re} 
		\begin{split}
		&\iint_{\p U\times\mathbb{R}^{3}} f^b \Psi^{b} (v\cdot n) dA dv - \iint_{ U\times\mathbb{R}^{3}} \mathbf{P}f (v\cdot \nabla_{y}) \Psi dy dv \\
		&= - \iint_{ U\times\mathbb{R}^{3}} \Psi \p_{t} f dy dv - \iint_{ U\times\mathbb{R}^{3}} \Psi L f dy dv + \iint_{ U\times\mathbb{R}^{3}} \Psi g dy dv \\
		&\quad+ \iint_{ U\times\mathbb{R}^{3}} (\mathbf{I-P})f (v\cdot \nabla_{y}) \Psi dy dv .
		\end{split}
		\end{equation}

		\noindent To estimate the mass part $a$, we pick $\Psi_{a} = \sum_{i=1}^{3} \phi_{i}(v) \psi_{i}(y)$ with
		\begin{equation} \label{mass test} 
		\begin{split}
		\phi_{i}(v) &= (|v|^2 - \b_a) v_{i} \sqrt{\mu},  \\
		\psi_{i}(y) &= \p_{y_{i}} u_{a}.
		\end{split}	
		\end{equation}
		Here, $u_{a}$ solves
		\begin{equation} \label{u_a eq} 
			- \Delta u_{a} = a(x),\quad \p_{n} u_{a} \vert_{\p U} = 0 ,
		\end{equation}
		and the constant $\b_a=10$ is chosen so that
		\begin{equation} \label{b_a cst} 
		\int_{\mathbb{R}^3} (|v|^2-\b_a)(\frac{|v|^2-3}{2}) v_i^2 \mu dv = 0,\quad i=1,2,3.
		\end{equation}   
Note that the Poisson equation with Neumann boundary condition is uniquely solvable as we have the total mass conservation $\int_{ U} a dy = 0$. When the solution uniquely well-defined, we use the notation 
\Be
u_a =- (-\Delta)_N^{-1} a.\label{Delta^-1_N}
\Ee

For the boundary contribution in \eqref{glo IBP re}, we rewrite the integral in the $x$-coordinate. Using $|\V|=|v|$ in \eqref{isometry} and $v\cdot\nabla_{y} = \V\cdot D$ by (\ref{loc transport}), we have that 
		\begin{equation} \label{mass bdry}
		\begin{split}
		\iint_{\p U\times\mathbb{R}^{3}} f^b \Psi_{a}^{b} (v\cdot n) = \sum_{i=1}^{3} \iint_{\p U\times\mathbb{R}^{3}} f^{b} \V_{3} (|\V|^2 - \b_a) \mu \V_{i} D_{i} u_{a} = 0.  
		\end{split}
		\end{equation}
	This term always vanishes because when $i=1,2$ velocity integration is odd; while when $i=3$ the Neumann boundary condition implies $D_{3}u_{a}\vert_{\p U}=0$.

	For the second term in LHS of (\ref{glo IBP re}), using (\ref{b_a cst}) and (\ref{u_a eq}), we derive that 
		\begin{equation} \label{mass int}
		\begin{split}
		& - \iint_{ U\times\mathbb{R}^{3}} \mathbf{P}f(v\cdot\nabla)\Psi_{a}  \\
		&= - \iint_{ U\times\mathbb{R}^{3}} \sqrt{\mu}\Big( a + b\cdot v + c\frac{|v|^{2}-3}{2}\Big) (v\cdot\nabla)\Psi_{a}  \\
		&= - \sum_{i,j=1}^{3} \iint_{ U\times\mathbb{R}^{3}} a {\mu} (|v|^2 - \b_a) v_{i} v_{j} \p_{y_{j}}^{2} u_{a} dy   \\
		&= - C \sum_{i=1}^{3} \int_{ U} a \p_{y_{i}}^{2} u_{a} dy =  C \|a\|^{2}_{2} . 
		\end{split}	
		\end{equation}
	Here, $C= \int_{\R^3} \mu (\vert v \vert^2 -\b_a) v_i^2 dv = \int_{\R^3} \mu (\vert v \vert^2 -10) v_i^2 dv=-5\neq 0$ for all $i=1,2,3.$

		For the RHS of (\ref{glo IBP re}), from the elliptic estimate to \eqref{u_a eq}, we use $\|u_{a}\|_{H^{2}}^2 \lesssim \|a\|_{2}^{2}$. Then we could conclude that 
		\begin{equation} \label{mass RHS}
		\begin{split}
		&\text{RHS of (\ref{glo IBP re})} +\frac{d}{dt}\iint_{ U\times\mathbb{R}^{3}} f \Psi_{a} dydv \\
		&\lesssim    \Big| \iint_{ U\times\mathbb{R}^{3}} f \p_{t}\Psi_{a} dydv \Big| + \varepsilon \|u_{a}\|_{H^{2}}^{2} + \|\sqrt{\nu}(\mathbf{I-P})f\|_{2}^{2} + \| g \|_{2}^{2} \\
		&\lesssim  \underbrace{ \Big| \iint_{ U\times\mathbb{R}^{3}} f \p_{t}\Psi_{a} dydv \Big| }_{(**)} + \varepsilon \|a\|_{2}^{2} + \|\sqrt{\nu}(\mathbf{I-P})f\|_{2}^{2} + \| g \|_{2}^{2}.  
		\end{split}
		\end{equation} 
Here, we set	$ G(t) = \iint_{ U\times\mathbb{R}^{3}} f \Psi_{a} dydv$ which satisfies that $ G(t)\lesssim \|f(t)\|_{2}^{2}$.
	
	To estimate the part $(**)$, we device another test function:
		\begin{equation} \label{a time test}
			\varphi_{a} = - (-\Delta)^{-1}_N (\p_t a)\sqrt{\mu}  . 
		\end{equation} 
		Note that $(-\Delta)^{-1}_N (\p_t a)$ is uniquely well-defined as the mass conservation $\int_{ U} a(t) = \int_{ U}\int_{\mathbb{R}^{3}} f_{0}\sqrt{\mu} = 0$ for all $t \geq 0$. When this test function combined with $\p_t f$, we get
		\begin{equation*}
		\begin{split}
		\iint_{ U \times \mathbb{R}^{3}} \p_t f \sqrt{\mu} \Delta^{-1}_N (\p_t a) &= \| \nabla\Delta_N^{-1} (\p_t a) \|^{2}_{2}.
		\end{split}
		\end{equation*}
		On the other hand, considering other terms, except $\p_t f$,
		\begin{equation} \label{c time 12 required}
		\begin{split}
		\|\nabla\Delta_N^{-1} (\p_{t} a) \|^{2}_{2} &\lesssim \Big| \iint_{ U\times\mathbb{R}^3} \sqrt{\mu} \Delta^{-1}_N (\p_t a) \Big( - v\cdot \nabla_{y} f + g - L f  \Big)  \Big| \\
		&\lesssim \Big| \iint_{ U\times\mathbb{R}^3} (\mathbf{P}f + (\mathbf{I}-\mathbf{P})f) \ \nabla\cdot\big( v\sqrt{\mu} \Delta^{-1}_N (\p_t a) \big) \Big| \\
		&\quad + \ \Big| \iint_{ U\times\mathbb{R}^3} \sqrt{\mu} \Delta^{-1}_N (\p_t a) \big( g - Lf \big) \Big| \\
		&\lesssim \varepsilon \|\nabla\Delta_N^{-1} (\p_{t} a) \|^{2}_{2} + \varepsilon\|a\|_{2}^{2} + \Big( \|\sqrt{\nu}(\mathbf{I}-\mathbf{P})f\|_{2}^{2} +  \|b\|^{2}_{2} + \|g\|_{2}^{2} \Big) , \\
		\end{split}	
		\end{equation}
		where we used Poincare inequality and the Young's inequality with sufficiently small $\varepsilon \ll 1$. Therefore we get 
		\begin{equation} \label{mid step for a}
		\|\nabla\Delta_N^{-1} (\p_{t} a) \|^{2}_{2} \lesssim  \varepsilon\|a\|_{2}^{2} + \Big( \|\sqrt{\nu}(\mathbf{I}-\mathbf{P})f\|_{2}^{2} +  \|b\|^{2}_{2} + \|g\|_{2}^{2} \Big).
		\end{equation}
		Using (\ref{mid step for a}) and $\Vert \cdot \Vert_{2} \leq \Vert \cdot \Vert_{\nu}$ , we can estimate $(**)$ of \eqref{mass RHS} by
		\begin{equation} \label{a time est}
		\begin{split}
			\Big| \iint_{ U\times\mathbb{R}^{3}} f \p_{t}\Psi_{a} dydv \Big|  
			&\lesssim \Big| \iint_{ U\times\mathbb{R}^{3}} \mathbf{P}f \p_{t}\Psi_{a} dydv \Big| + \Big| \iint_{ U\times\mathbb{R}^{3}} (\mathbf{I-P})f \p_{t}\Psi_{a} dydv \Big|  \\
			&\lesssim  \|b\|_{2}^{2} +  \|\nabla\Delta_N^{-1} (\p_{t} a) \|^{2}_{2} + \|(\mathbf{I-P})f\|_{2}^{2}  \\
			&\lesssim \varepsilon\|a\|_{2}^{2} + \|b\|_{2}^{2} + \|\sqrt{\nu}(\mathbf{I}-\mathbf{P})f\|_{2}^{2} + \|g\|_{2}^{2}.
		\end{split}
		\end{equation}
		
		Combining \eqref{glo IBP re}, (\ref{mass bdry}), (\ref{mass int}), (\ref{mass RHS}), and (\ref{a time est}), we gain
		\begin{equation} \label{mass est}
		\begin{split}
		\|a\|_{2}^{2} &\lesssim \frac{d}{dt}G(t) + \|b\|_{2}^{2} + \|\sqrt{\nu}(\mathbf{I-P})f\|_{2}^{2} + \| g \|_{2}^{2} ,\quad \vert G(t)\vert \lesssim \|f(t)\|_{2}^{2}. \\
		\end{split}
		\end{equation} 
	
		\noindent Similar as above mass estimate, in energy $c$ estimate, we pick $\Psi_{c} = \sum_{i=1}^{3} \phi_{i}(v) \psi_{i}(y)$ where,
		\begin{equation} \label{energy test} 
		\begin{split}
		\phi_{i}(v) &= (|v|^2 - \b_c) v_{i} \sqrt{\mu},  \\
		\psi_{i}(y) &= \p_{y_{i}} u_{c},	\\
		\end{split}	
		\end{equation}
		when each are read on $k$-chart and constant $\b_c=5$ is chosen so that
		\begin{equation} \label{b_c cst} 
		\int_{\mathbb{R}^3} (|v|^2-\b_c)v_i^2 \mu dv = 0,\quad i=1,2,3,
		\end{equation}  
		and $u_{c}$ solves,
		\begin{equation} \label{u_c eq}  
		- \Delta u_{c} = c(y),\quad \p_{n} u_{c} \vert_{\p U} = 0.
		\end{equation}
		Note that we have energy conservation $\int_{ U} c dy = 0$. Similar as (\ref{mass bdry}), we use oddness integral in $\V_{3}$ and Neumann condition of $u_{c}$ to get
		\begin{equation} \label{energy bdry}
		\begin{split}
		\iint_{\p U} f^b \Psi_{c}^{b} (v\cdot n) = \sum_{i=1}^{3} \iint f^{b} \V_{3} (|\V|^2 - \b_c) \mu \V_{i} D_{i} u_{c} = 0.  
		\end{split}
		\end{equation}
		For transport term of LHS, similar as (\ref{mass int}),
		\begin{equation} \label{energy int}
		\begin{split}
		- \iint_{ U\times\mathbb{R}^{3}} \mathbf{P}f(v\cdot\nabla)\Psi_{c} dydv
		&= - \sum_{i,j=1}^{3} \iint_{U\times \R^3} c {\mu} (|v|^2 - \b_c) \frac{\vert v \vert^2-3}{2} v_{i} v_{j} \p_{y_{j}}^{2} u_{c} dydv  \\
		&= - C \sum_{i=1}^{3} \int_{U} c \p_{y_{i}}^{2} u_{c} dy  =  C \|c\|^{2}_{2} . 
		\end{split}	
		\end{equation}
	Here, the constant $C=\int_{\R^3} \mu (\vert v \vert^2 -\b_c)\left(\frac{\vert v \vert^2 -3}{2}\right) v_i^2 dv = \int_{\R^3} \mu (\vert v \vert^2 -5)\left(\frac{\vert v \vert^2 -3}{2}\right) v_i^2 dv=5\neq 0$ for all $i=1,2,3.$

		For the RHS of (\ref{glo IBP re}), similar as (\ref{mass RHS}),
		\begin{equation} \label{energy RHS}
		\begin{split}
		\text{RHS of (\ref{glo IBP re})} &\lesssim \Big| \iint_{ U\times\mathbb{R}^{3}} \Psi_{c} \p_{t} f dydv \Big| + \varepsilon \|u_{c}\|_{H^{2}}^{2} + \|(\mathbf{I-P})f\|_{2}^{2} + \| g \|_{2}^{2} ,\quad \varepsilon \ll 1, \\
		&\lesssim \frac{d}{dt}G(t) + \underbrace{ \Big| \iint_{ U\times\mathbb{R}^{3}} f \p_{t}\Psi_{c} dydv \Big| }_{(**)} + \varepsilon \|c\|_{2}^{2} + \|\sqrt{\nu}(\mathbf{I-P})f\|_{2}^{2} + \| g \|_{2}^{2},\\
		\end{split}
		\end{equation} 
		where $\vert G(t)\vert \lesssim \|f(t)\|_{2}^{2}$.
		To estimate time part $(**)$, we apply another test function,
		\begin{equation} \label{c time test}
		\varphi_{c} = \big(\frac{|v|^{2}-3}{2}\big)\sqrt{\mu} \Delta^{-1}_N (\p_t c).
		\end{equation} 
		Note that $\Delta^{-1}_{N} (\p_{t} c)$ is well defined by energy conservation. When this test function combined with $\p_t f$, 
		\begin{equation*}
		\begin{split}
		\iint_{ U \times \mathbb{R}^{3}}  \p_t f \big(\frac{|v|^{2}-3}{2}\big)\sqrt{\mu} \Delta^{-1}_N (\p_t c) &= \| \nabla\Delta_N^{-1} (\p_t c) \|^2_{2} .
		\end{split}
		\end{equation*}
		On the other hand, considering other terms, except $\p_t f$,
		\begin{equation} \label{c time 12 required}
		\begin{split}
		\|\nabla\Delta_N^{-1} (\p_{t} c) \|^{2}_{2} &\lesssim \Big| \iint_{ U\times\mathbb{R}^3} \big(\frac{|v|^{2}-3}{2}\big) \sqrt{\mu} \Delta^{-1}_N (\p_t c) \Big( - v\cdot \nabla_{y} f + g - L f  \Big)  \Big| \\
		&\lesssim \Big| \iint_{ U\times\mathbb{R}^3} (\mathbf{P}f + (\mathbf{I}-\mathbf{P})f) \ \nabla\cdot\big( v \big(\frac{|v|^{2}-3}{2}\big) \sqrt{\mu} \Delta^{-1}_N (\p_t c) \big) \Big| \\
		&\quad + \ \Big| \iint_{ U\times\mathbb{R}^3} \big(\frac{|v|^{2}-3}{2}\big) \sqrt{\mu} \Delta^{-1}_N (\p_t c) \big( g - Lf \big) \Big| \\
		&\lesssim \varepsilon \|\nabla\Delta_N^{-1} (\p_{t} c) \|^{2}_{2} + \|b\|_{2}^{2} + \|c\|_{2}^{2} + \|\sqrt{\nu}(\mathbf{I}-\mathbf{P})f\|_{2}^{2} + \|g\|_{2}^{2} , \\
		\end{split}	
		\end{equation}
		where we used Poincare inequality and sufficiently small $\varepsilon \ll 1$ comes from Young's inequality. Therefore we get 
		\begin{equation} \label{mid step for c}
		\|\nabla\Delta_N^{-1} (\p_{t} c) \|^{2}_{2} \lesssim  \|b\|_{2}^{2} + \|c\|_{2}^{2} + \|\sqrt{\nu}(\mathbf{I}-\mathbf{P})f\|_{2}^{2} + \|g\|_{2}^{2}.
		\end{equation}
		Using (\ref{mid step for c}) and $\Vert \cdot \Vert_2 \leq \Vert \cdot \Vert_{\nu}$, we can estimate $(**)$ by
		\begin{equation} \label{c time est}
		\begin{split}
		\Big| \iint_{ U\times\mathbb{R}^{3}} f \p_{t}\Psi_{c} dydv\Big|  
		&\lesssim \Big| \iint_{ U\times\mathbb{R}^{3}} \mathbf{P}f \p_{t}\Psi_{c} dydv\Big| + \Big| \iint_{ U\times\mathbb{R}^{3}} (\mathbf{I-P})f \p_{t}\Psi_{c} dydv\Big|  \\
		&= \Big| \iint_{ U\times\mathbb{R}^{3}} (\mathbf{I-P})f \p_{t}\Psi_{c} dydv\Big|  \\
		&\lesssim \varepsilon\|\nabla\Delta_N^{-1} (\p_{t} c) \|^{2}_{2} + \|(\mathbf{I-P})f\|_{2}^{2}  \\
		&\lesssim \varepsilon\|b\|_{2}^{2} + \varepsilon\|c\|_{2}^{2} + \|\sqrt{\nu}(\mathbf{I}-\mathbf{P})f\|_{2}^{2} + \|g\|_{2}^{2},
		\end{split}
		\end{equation}
		where we used 
		\begin{equation}  
		\begin{split}
			\iint_{ U\times\mathbb{R}^{3}} \mathbf{P}f \p_{t}\Psi_{c} &= \sum_{i=1}^{3} \iint_{ U\times\mathbb{R}^{3}} \Big(a  + b\cdot v + c \frac{|v|^2-3}{2}\Big) {\mu} (|v|^{2} - \b_{c}) v_{i} \p_{i} u_{c}  \\
			&= \sum_{i=1}^{3} \iint_{ U\times\mathbb{R}^{3}} b_{i} {\mu} v_{i}^{2} (|v|^{2} - \b_{c}) \p_{i} u_{c} = 0,  \\
		\end{split}
		\end{equation} 
		because of our choice \eqref{b_c cst}. 
		
		Combining (\ref{energy bdry}), (\ref{energy int}), (\ref{energy RHS}), and (\ref{c time est}), we gain
		\begin{equation} \label{energy est}
		\begin{split}
		\|c\|_{2}^{2} &\lesssim \frac{d}{dt}G(t) + \varepsilon\|b\|_{2}^{2} + \|\sqrt{\nu}(\mathbf{I-P})f\|_{2}^{2} + \| g \|_{2}^{2} ,\quad \vert G(t)\vert \lesssim \|f(t)\|_{2}^{2},\quad \varepsilon \ll 1.  \\
		\end{split}
		\end{equation} 
		
		From (\ref{mass est}) and (\ref{energy est}), we prove Proposition \ref{prop_massenergy}.
		\end{proof} 
		
		\subsection{Momentum $b$ Estimate}
		The main difficulty comes from estimating $\|b\|_{2}$. In contrast with the mass and energy estimates, momentum $b$ is not entirely conserved and therefore we ought to distinguish the ``tangential'' momenta and ``normal'' momentum to the boundary as (\ref{b_xi}). 
		In this section we use the global triply orthogonal coordinate system \eqref{ann domain}. 
		
		For the momentum estimate, we express the weak formulation \eqref{glo IBP re} in the coordinate system of \eqref{ann domain}. Let us use $\mathbb{R}^{3}_{+}$ for the domain of $\eta$, namely $\eta^{-1}( U)$. We use a set of test functions $\Psi = \phi(\V) \psi(x)$, which depends on the coordinates and transformed velocity \eqref{v_x}. Now we expand interior integral of the LHS of (\ref{glo IBP re}) as 
		\begin{align}
		- \iint_{ U\times\mathbb{R}^{3}} \mathbf{P}f (v\cdot \nabla) \Psi  dy dv  
		&=	- \sum_{j,m} \int_{\mathbb{R}^{3}_{+} }   \Big( \int_{\mathbb{R}^{3}} \V_{j} \V_{m} \phi(\V) {\mu}  d \V\Big) b_{x_{m}} D_{j} [\psi(x)] \sqrt{|g|} dx \notag	\\   
		&\quad - \sum_{j,m} \int_{\mathbb{R}^{3}_{+} } \Big( \int_{\mathbb{R}^{3}} \V_{j} \V_{m}  D_{j} [\phi(\V)] \mu d \V  \Big) b_{x_{m}} \psi(x) \sqrt{|g|} dx.\label{Pf2}
		\end{align} 
	For the second term \eqref{Pf2}, we use the integration by parts in velocity, (\ref{D gamma def}), and Lemma \ref{dv/dx}: 
		\begin{align}
		- \iint_{ U\times\mathbb{R}^{3}} \mathbf{P}f (v\cdot \nabla) \Psi  dy dv&=	- \sum_{j,m} \int_{\mathbb{R}^{3}_{+} } \Big( \int_{\mathbb{R}^{3}} \V_{j} \V_{m} \phi(\V) {\mu} \Big) b_{x_{m}} D_{j} \psi(x) \sqrt{|g|} dx 	\\   
		&\quad + \sum_{j,m} \sum_{\ell\neq m} \int_{\mathbb{R}^{3}_{+} } \Big( \int_{\mathbb{R}^{3}} \phi(\V)  \V_{j} \V_{\ell}  {\mu} \Big) b_{x_{m}} \Gamma^{\ell}_{D,jm} \psi(x) \sqrt{|g|} dx 	\\  
		&\quad + \sum_{j,m} \sum_{\ell\neq j} \int_{\mathbb{R}^{3}_{+} } \Big( \int_{\mathbb{R}^{3}} \phi(\V)  \V_{m} \V_{\ell}  {\mu} \Big) b_{x_{m}} \Gamma^{\ell}_{D,jj}  \psi(x) \sqrt{|g|} dx .	 
		\end{align} 
		Using oddness in velocoity integration, we can expand as following.
		\begin{equation} \label{loc wk form} 
		\begin{split}
		& - \iint_{ U\times\mathbb{R}^{3}} \mathbf{P}f (v\cdot \nabla) \Psi  dy dv  \\
		&= - \sum_{j=1}^{3} \int_{\mathbb{R}^{3}_{+} } \Big( \int_{\mathbb{R}^{3}} \V^{2}_{j}  \phi(\V) {\mu} \Big) b_{x_{j}} D_{j} \psi(x) \sqrt{|g|} dx 	\quad (j=m)	\\   
		&\quad - \sum_{j\neq m} \int_{\mathbb{R}^{3}_{+} } \Big( \int_{\mathbb{R}^{3}} \V_{j} \V_{m} \phi(\V) {\mu} \Big) b_{x_{m}} D_{j} \psi(x) \sqrt{|g|} dx 	\quad (j\neq m)	\\ 
		&\quad  + \sum_{j=\ell\neq m} \int_{\mathbb{R}^{3}_{+} } \Big( \int_{\mathbb{R}^{3}} \phi(\V)  \V^{2}_{j}{\mu} \Big) b_{x_{m}} \Gamma^{j}_{D,jm} \psi(x) \sqrt{|g|} dx  \quad (j=\ell, \ \ell\neq m)		\\   
		&\quad + \sum_{j\neq\ell, \ell\neq m} \int_{\mathbb{R}^{3}_{+} } \Big( \int_{\mathbb{R}^{3}} \phi(\V)  \V_{j} \V_{\ell}  {\mu} \Big) b_{x_{m}} \Gamma^{\ell}_{D,jm} \psi(x) \sqrt{|g|} dx  \quad (j\neq\ell, \ \ell\neq m)		\\ 
		&\quad + \sum_{m=\ell\neq j} \int_{\mathbb{R}^{3}_{+} } \Big( \int_{\mathbb{R}^{3}} \phi(\V)  \V^{2}_{m}  {\mu} \Big) b_{x_{m}} \Gamma^{m}_{D,jj}  \psi(x) \sqrt{|g|} dx  \quad (m=\ell, \ \ell\neq j)		\\   
		&\quad + \sum_{m\neq\ell,\ell\neq j} \int_{\mathbb{R}^{3}_{+} } \Big( \int_{\mathbb{R}^{3}} \phi(\V)  \V_{m} \V_{\ell}  {\mu} \Big) b_{x_{m}} \Gamma^{\ell}_{D,jj}  \psi(x) \sqrt{|g|} dx . \quad (m\neq\ell, \ \ell\neq j) 		\\     
		\end{split}
		\end{equation}
		
		\subsubsection{Normal momentum $b_{x_{3}}$ estimate.}
		We try three types of test functions, Symmetric, Non-symmetric 1, and Non-symmetric 2.  \\
		\textit{(Symmetric case)}
		We try test function $\Psi_{3,1} = \phi_{3}(\V) \psi_{3}(x)$, where
		\begin{equation} \label{3D bn sym test}
		\begin{split}
		\phi_{3}(\V) &= (\V^{2}_{3} - \b_b) \sqrt{\mu},\quad\text{where}\quad \int_{\mathbb{R}^3} (\V^{2}_{i} - \b_b) \mu d\V = 0,	\\
		\psi_{3}(x) &= \zeta_{3}(x) D_{3}u_{3},
		\end{split}
		\end{equation}
		where $\zeta_{3}(x)$ to be determined and $u_{3}$ solves Dirichlet problem,
		\begin{equation} \label{u3 eq}
		-\Delta_{bel} u_{3} = b_{x_{3}},\quad u_{3}\vert_{\p U} = 0.
		\end{equation}
		Above $\phi_{3}$ yields only $\V^{2}_{3}\sqrt{\mu}$-terms in (\ref{loc wk form}). With multiplying proper normalization constants to each $\phi_{i}$'s, LHS of (\ref{glo IBP re}) becomes 
		\begin{equation} \label{3D bn sym}
		\begin{split}
		&\iint_{\p U\times\mathbb{R}^{3}} f^b \Psi_{3,1}^{b} (v\cdot n) dA dv - \iint_{ U\times\mathbb{R}^{3}} \mathbf{P}f (v\cdot \nabla_{y}) \Psi_{3,1} dy dv \\
		&= - \int_{\mathbb{R}^{3}_{+}} \big( \underbrace{ b_{x_{3}} \zeta_{3} D^{2}_{3} u_{3} }_{main} + b_{x_{3}} D_{3}\zeta_{3} D_{3} u_{3} \big) \sqrt{|g|} dx	\\
		&\quad + \int_{\mathbb{R}^{3}_{+}} \Big( - b_{x_{1}}\Gamma^{1}_{D,33} - b_{x_{2}}\Gamma^{2}_{D,33} + b_{x_{3}}\Gamma^{3}_{D,11} + b_{x_{3}}\Gamma^{3}_{D,22} \Big) \zeta_{3} D_{3} u_{3} \sqrt{|g|} dx	\\
		&\quad + C \int_{x_{3}=0} \Big(\underbrace{ \int_{\mathbb{R}^{3}} f^{b} \phi_{3}(\V) \V_{3} }_{ =0 \ \text{oddness in} \ \V_{3} }\Big) \psi_{3}(x) \sqrt{g_{11}g_{22}} dA .
		\end{split}
		\end{equation} 
		
		\noindent \textit{(Non-symmetric case 1)}
		We try test function $\Psi_{3,2} = \sum_{i=1,2} \phi_{i}(\V) \psi_{i}(x)$, where
		\begin{equation} \label{3D bn nonsym1 test}
		\begin{split}
		\phi_{i}(\V) &= \V_{i} \V_{3} \sqrt{\mu},	\\
		\psi_{i}(x) &= \zeta_{3}(x) D_{i}u_{3},
		\end{split}
		\end{equation}
		for $i=1,2$. Above $\phi_{1}$ and $\phi_{2}$ yield only $\V_{3}\V_{1}\sqrt{\mu}$ and $\V_{2}\V_{3}\sqrt{\mu}$ terms in (\ref{loc wk form}) respectively. With multiplying proper normalization constants to each $\phi_{i}$'s, LHS of (\ref{glo IBP re}) becomes 
		{\footnotesize
		\begin{equation} \label{3D bn nonsym1}
		\begin{split}
		&\iint_{\p U\times\mathbb{R}^{3}} f^b \Psi_{3,2}^{b} (v\cdot n) dA dv - \iint_{ U\times\mathbb{R}^{3}} \mathbf{P}f (v\cdot \nabla_{y}) \Psi_{3,2} dy dv \\
		&= - \int_{\mathbb{R}^{3}_{+}} \big( b_{x_{1}} \zeta_{3} D_{31} u_{3}   + b_{x_{1}} D_{3}\zeta_{3} D_{1}u_{3} +  b_{x_{3}} \zeta_{3} D_{11}u_{3}  + b_{x_{3}} D_{1}\zeta_{3} D_{1}u_{3} \big) \sqrt{|g|} dx	\\
		&\quad - \int_{\mathbb{R}^{3}_{+}} \big( b_{x_{2}} \zeta D_{32} u_{3} + b_{x_{2}} D_{3}\zeta_{3} D_{2}u_{3} + b_{x_{3}} \zeta_{3} D_{22}u_{3} + b_{x_{3}} D_{2}\zeta_{3} D_{2}u_{3} \big) \sqrt{|g|} dx	\\
		&\quad + \int_{\mathbb{R}^{3}_{+}} \Big( 2 b_{x_{3}}\Gamma^{1}_{D,33} + 2 b_{x_{1}}\Gamma^{3}_{D,11} + b_{x_{3}}\Gamma^{1}_{D,22} + b_{x_{1}}\Gamma^{3}_{D,22} + \underbrace{ b_{x_{2}}\Gamma^{3}_{D,12} + b_{x_{2}}\Gamma^{1}_{D,32} }_{=0 \ \text{by} \ (\ref{geo cond})} \Big) \zeta_{3} D_{1} u_{3} \sqrt{|g|} dx	\\
		&\quad + \int_{\mathbb{R}^{3}_{+}} \Big( 2 b_{x_{2}}\Gamma^{3}_{D,22} + 2 b_{x_{3}}\Gamma^{2}_{D,33} + \underbrace{ b_{x_{1}}\Gamma^{3}_{D,21} + b_{x_{1}}\Gamma^{2}_{D,31} }_{=0 \ \text{by} \ (\ref{geo cond})} + b_{x_{2}}\Gamma^{3}_{D,11} + b_{x_{3}}\Gamma^{2}_{D,11} \Big) \zeta_{3} D_{2} u_{3} \sqrt{|g|} dx	\\
		&\quad + C \int_{x_{3}=0} \Big(\int_{\mathbb{R}^{3}} f^{b} \phi_{1}(\V) \V_{3}\Big) \underbrace{\psi_{1}(x)}_{=0} \sqrt{g_{11}g_{22}} dA  + C \int_{x_{3}=0} \Big(\int_{\mathbb{R}^{3}} f^{b} \phi_{2}(\V) \V_{3}\Big) \underbrace{\psi_{2}(x)}_{=0} \sqrt{g_{11}g_{22}} dA ,
		\end{split}
		\end{equation} 
		}
		where last two terms are zero by Dirichlet boundary condition of $u_{3}$.  \\
		
		\noindent \textit{(Non-symmetric case 2)}
		We try test function $\Psi_{3,3} = \sum_{i=1,2} \phi_{i}(\V) \psi_{i}(x)$, where
		\begin{equation} \label{3D bn nonsym2 test}
		\begin{split}
		\phi_{i}(\V) &= (\V^{2}_{i} - \b_{b}) \sqrt{\mu},\quad\text{where}\quad \int_{\mathbb{R}^3} (\V^{2}_{i} - \b_{b}) \mu d\V = 0  ,	\\
		\psi_{i}(x) &= \zeta_{3}(x) D_{3}u_{3},
		\end{split}
		\end{equation}
		for $i=1,2$. Above $\phi_{1}$ and $\phi_{2}$ yield only $\V^{2}_{1}\sqrt{\mu}$ and $\V^{2}_{2}\sqrt{\mu}$ terms in (\ref{loc wk form}) respectively. With multiplying proper normalization constants to each $\phi_{i}$'s, LHS of (\ref{glo IBP re}) becomes
		{\footnotesize
		\begin{equation} \label{3D bn nonsym2}
		\begin{split}
		&\iint_{\p U\times\mathbb{R}^{3}} f^b \Psi_{3,3}^{b} (v\cdot n) dA dv - \iint_{ U\times\mathbb{R}^{3}} \mathbf{P}f (v\cdot \nabla_{y}) \Psi_{3,3} dy dv \\
		&= - \int_{\mathbb{R}^{3}_{+}} \big( b_{x_{1}} \zeta_{3} D_{13} u_{3} + b_{x_{1}} D_{1}\zeta_{3} D_{3} u_{3} \big) \sqrt{|g|} dx	\\
		&\quad - \int_{\mathbb{R}^{3}_{+}} \big( b_{x_{2}} \zeta D_{23} u_{3} + b_{x_{2}} D_{2}\zeta_{3} D_{3} u_{3} \big) \sqrt{|g|} dx	\\
		&\quad + \int_{\mathbb{R}^{3}_{+}} \Big( - b_{x_{2}}\Gamma^{2}_{D,11} - b_{x_{3}}\Gamma^{3}_{D,11} + b_{x_{1}}\Gamma^{1}_{D,22} + b_{x_{1}}\Gamma^{1}_{D,33} \Big) \zeta_{3} D_{3} u_{3} \sqrt{|g|} dx	\\
		&\quad + \int_{\mathbb{R}^{3}_{+}} \Big( - b_{x_{1}}\Gamma^{1}_{D,22} - b_{x_{3}}\Gamma^{3}_{D,22} + b_{x_{2}}\Gamma^{2}_{D,11} + b_{x_{2}}\Gamma^{2}_{D,33} \Big) \zeta_{3} D_{3} u_{3} \sqrt{|g|} dx	\\
		&\quad + C \int_{x_{3}=0} \Big(\underbrace{ \int_{\mathbb{R}^{3}} f^{b} \phi_{1}(\V) \V_{3} }_{ =0 \ \text{oddness in} \ \V_{3} }\Big) \psi_{1}(x) \sqrt{g_{11}g_{22}} dA + C \int_{x_{3}=0} \Big(\underbrace{ \int_{\mathbb{R}^{3}} f^{b} \phi_{2}(\V) \V_{3} }_{ =0 \ \text{oddness in} \ \V_{3} }\Big) \psi_{2}(x) \sqrt{g_{11}g_{22}} dA .
		\end{split}
		\end{equation} 
		}
		\subsubsection{Condition for $\zeta_{3}(x)$}
		We add (\ref{3D bn sym}), (\ref{3D bn nonsym1}), and $(-1)\times$(\ref{3D bn nonsym2}). This is equivalent to applying test function
		\begin{equation} \label{Psib3}
		\Psi_{b,3} := \Psi_{3,1} + \Psi_{3,2} - \Psi_{3,3},
		\end{equation}
		to (\ref{loc wk form}). Also using (\ref{noncommu}), 
		\begin{equation} \label{bn part}
		\begin{split}
		& - \iint_{ U\times\mathbb{R}^{3}} \mathbf{P}f (v\cdot \nabla) \Psi_{b,3}  dydv+ \iint_{\p U\times\mathbb{R}^{3}} f^b \Psi_{b,3}^{b} (v\cdot n) dAdv \\
		&= - \int_{\mathbb{R}^{3}_{+}} b_{x_{3}} \zeta_{3} \big( D_{1}^{2} + D_{2}^{2} + D_{3}^{2}  \big) u_{3} \sqrt{|g|} dx	\\
		& + \int_{\mathbb{R}^{3}_{+}} b_{x_{3}} \Big( 2\Gamma_{D,33}^{1}\zeta_{3} - D_{1}\zeta_{3} + \Gamma_{D,22}^{1}\zeta_{3} \Big) D_{1} u_{3} \sqrt{|g|} dx	\\
		& + \int_{\mathbb{R}^{3}_{+}} b_{x_{3}} \Big( 2\Gamma_{D,33}^{2}\zeta_{3} - D_{2}\zeta_{3} + \Gamma_{D,11}^{2}\zeta_{3} \Big) D_{2} u_{3} \sqrt{|g|} dx	\\
		& + \int_{\mathbb{R}^{3}_{+}} b_{x_{3}} \Big( 2\Gamma_{D,11}^{3}\zeta_{3} + 2\Gamma_{D,22}^{3}\zeta_{3}  - D_{3}\zeta_{3}  \Big) D_{3} u_{3} \sqrt{|g|} dx	\\
		& + \sum_{i=1,2} \int_{\mathbb{R}^{3}_{+}} b_{x_{i}} \Big( -\Gamma_{D,33}^{i}\zeta_{3} + D_{i}\zeta_{3} \Big)	D_{3} u_{3} \sqrt{|g|} dx	\\
		& + \sum_{i=1,2} \int_{\mathbb{R}^{3}_{+}} \Big( \Gamma_{D,22}^{3}\zeta_{3} + \Gamma_{D,11}^{3}\zeta_{3}  - D_{3}\zeta_{3} \Big) b_{x_{i}}D_{i}u_{3} \sqrt{|g|} dx .
		\end{split}
		\end{equation}
		If we match index $3\rightarrow r,1 \rightarrow \theta, 2 \rightarrow z$ and pick 
		\begin{align} \label{zeta r}
			\zeta_{3} = \zeta_{r} = \frac{1}{r} 
		\end{align}
		 $\zeta_{3}(x)$ satisfies
		\begin{equation} \label{bn zeta eq}
		\begin{cases}
		D_{1}\zeta_{3} &=  \Gamma_{D,33}^{1}\zeta_{3} 	\\
		D_{2}\zeta_{3} &= \Gamma_{D,33}^{2}\zeta_{3}	\\
		D_{3}\zeta_{3} &= \Gamma_{D,11}^{3}\zeta_{3} + \Gamma_{D,22}^{3}\zeta_{3},	\\
		\end{cases}
		\quad\text{and}\quad c \leq |\zeta_{3}| \leq C,
		\end{equation}
		for uniformly positive $c$ and $C$ due to \eqref{ann domain}, \eqref{diff operator} and \eqref{christoffel}. Then above system becomes,
		\begin{equation} \label{bn final sum}
		\begin{split}
		& - \iint_{ U\times\mathbb{R}^{3}} \mathbf{P}f (v\cdot \nabla) \Psi_{b,3}  dydv + \iint_{\p U\times\mathbb{R}^{3}} f^b \Psi_{b,3}^{b} (v\cdot n) dAdv \\
		&= - \int_{\mathbb{R}^{3}_{+}} b_{x_{3}} \zeta_{3} \big( D_{1}^{2} + D_{2}^{2} + D_{3}^{2}  \big) u_{3} \sqrt{|g|} dx + \int_{\mathbb{R}^{3}_{+}} b_{x_{3}}	\zeta_{3} \sum_{i=1}^{3} \sum_{k\neq i} \Gamma_{D,kk}^{i}D_{i}u_{3} \sqrt{|g|} dx	\\
		&= - \int_{\mathbb{R}^{3}_{+}} b_{x_{3}} \zeta_{3} \Delta_{bel} u_{3} \sqrt{|g|} dx = \int_{U} \zeta_{3} b^{2}_{x_{3}} dy,
		\end{split}
		\end{equation}
		where we used Lemma \ref{D bel difference} and (\ref{u3 eq}).  \\
		
		With $\Psi_{b,3}$, which is defined in (\ref{Psib3}), the RHS of (\ref{glo IBP re}) is controlled by 
		\begin{equation} \label{bn RHS}
		\begin{split}
		\text{RHS of (\ref{glo IBP re})} &\lesssim \Big| \iint_{ U\times\mathbb{R}^{3}} \Psi_{b,3} \p_{t} f dy \Big| + \varepsilon \|u_{3}\|_{H^{2}}^{2} + \|(\mathbf{I-P})f\|_{2}^{2} + \| g \|_{2}^{2} ,\quad \varepsilon \ll 1. \\
		&\lesssim \frac{d}{dt}G(t) + \underbrace{ \Big| \iint_{ U\times\mathbb{R}^{3}} f \p_{t}\Psi_{b,3} dy \Big| }_{(**)} + \varepsilon \|b_{x_{3}}\|_{2}^{2} + \|\sqrt{\nu}(\mathbf{I-P})f\|_{2}^{2} + \| g \|_{2}^{2}, \\
		\end{split}
		\end{equation}
		where $\vert G(t)\vert \lesssim \|f(t)\|_{2}^{2}$. Here, we used $\|u_{3}\|_{H^{2}}^2 \lesssim \|b_{x_{3}}\|_{2}^{2}$ and $\Vert \cdot \Vert_2 \leq \Vert \cdot \Vert_{\nu}$.  \\
		
		To estimate time part $(**)$, we apply another test function,
		\begin{equation} \label{b3 time test}
		\varphi_{3} = \V_{3} \sqrt{\mu} \Delta^{-1}_{D} \p_{t}b_{x_{3}}.
		\end{equation} 
		When this test function combined with $\p_t f$, we get
		\begin{equation*}
		\begin{split}
		\iint_{ U \times \mathbb{R}^{3}} \p_t f \V_{3} \sqrt{\mu} \Delta^{-1}_{D}  \p_{t}b_{x_{3}} &= \| \nabla\Delta_{D}^{-1} \p_{t}b_{x_{3}} \|^{2}_{2}.
		\end{split}
		\end{equation*}
		On the other hand, considering other terms, except $\p_t f$,
		\begin{equation} \label{b3 time 12 required}
		\begin{split}
		\|\nabla\Delta_{D}^{-1} \p_{t}b_{x_{3}} \|^{2}_{2} &\lesssim \Big| \iint_{ U\times\mathbb{R}^3} \V_{3} \sqrt{\mu} \Delta^{-1}_D \p_{t}b_{x_{3}} \Big( - v\cdot \nabla f + g - L f  \Big)  \Big| \\
		&\lesssim \Big| \iint_{ U\times\mathbb{R}^3} (\mathbf{P}f + (\mathbf{I}-\mathbf{P})f) \ \nabla\cdot\big( v \V_{3} \sqrt{\mu} \Delta^{-1}_{D} \p_{t}b_{x_{3}} \big) \Big| \\
		&\quad + \ \Big| \iint_{ U\times\mathbb{R}^3} \V_{3} \sqrt{\mu} \Delta^{-1}_{D} \p_{t}b_{x_{3}} \big( g - Lf \big) \Big| \\
		&\lesssim \varepsilon \|\nabla\Delta_{D}^{-1} \p_{t}b_{x_{3}} \|^{2}_{2} + \Big( \|a\|_{2}^{2} + \|c\|_{2}^{2} + \|\sqrt{\nu}(\mathbf{I}-\mathbf{P})f\|_{2}^{2} + \|g\|_{2}^{2} \Big),  \\
		\end{split}	
		\end{equation}
		from Poincare inequality and sufficiently small $\varepsilon \ll 1$ comes from Young's inequality. Therefore we get 
		\begin{equation} \label{mid step for b3}
		\|\nabla\Delta_{D}^{-1} \p_{t}b_{x_{3}} \|^{2}_{2} \lesssim  \|a\|_{2}^{2} + \|c\|_{2}^{2} + \|\sqrt{\nu}(\mathbf{I}-\mathbf{P})f\|_{2}^{2} + \|g\|_{2}^{2}.
		\end{equation}
		Using (\ref{mid step for b3}), we can estimate $(**)$ in (\ref{bn RHS}) by
		\begin{equation} \label{b3 time est}
		\begin{split}
		\Big| \iint_{ U\times\mathbb{R}^{3}} f \p_{t}\Psi_{b,3} dy \Big|  
		&\lesssim \Big| \iint_{ U\times\mathbb{R}^{3}} \mathbf{P}f \p_{t}\Psi_{b,3} dy \Big| + \Big| \iint_{ U\times\mathbb{R}^{3}} (\mathbf{I-P})f \p_{t}\Psi_{b,3} dy \Big|  \\
		&\lesssim \big( \|c\|_{2} \|\nabla\Delta_{D}^{-1} \p_{t}b_{x_{3}} \|_{2} \big) +  \big( \varepsilon\|\nabla\Delta_{D}^{-1} \p_{t}b_{x_{3}} \|^{2}_{2} + \|(\mathbf{I-P})f\|_{2}^{2} \big) \\
		&\lesssim \varepsilon\|a\|_{2}^{2} + \|c\|_{2}^{2} + \|\sqrt{\nu}(\mathbf{I}-\mathbf{P})f\|_{2}^{2} + \|g\|_{2}^{2},
		\end{split}
		\end{equation}
		where we used 
		\begin{equation}  
		\begin{split}
		&\iint_{ U\times\mathbb{R}^{3}} \mathbf{P}f \p_{t}\Psi_{b,3}  \\
		&=  \iint_{ U\times\mathbb{R}^{3}} \mathbf{P}f \p_{t}\Psi_{3,1} + \iint_{ U\times\mathbb{R}^{3}} \mathbf{P}f \p_{t}\Psi_{3,2} - \iint_{ U\times\mathbb{R}^{3}} \mathbf{P}f \p_{t}\Psi_{3,3}  \\
		&\leq \iint_{ U\times\mathbb{R}^{3}} \Big(a  + b\cdot v + c \frac{|v|^2-3}{2}\Big) \sqrt{\mu} (\V_{3}^{2} - \b_{b}) \sqrt{\mu} \zeta_{3} D_{3} \Delta_{D}^{-1} \p_{t}b_{x_{3}}  \\
		&\quad + \underbrace{ \sum_{i=1}^{2} \iint_{ U\times\mathbb{R}^{3}} \Big(a  + b_{x}\cdot \V + c \frac{|v|^2-3}{2}\Big) \sqrt{\mu} (\V_{i} \V_{3}) \sqrt{\mu} \zeta_{3} D_{3} \Delta_{D}^{-1} \p_{t}b_{x_{3}} }_{=0 \ \text{odd in velocity}}  \\
		&\quad - \sum_{i=1}^{2} \iint_{ U\times\mathbb{R}^{3}} \Big(a  + b\cdot v + c \frac{|v|^2-3}{2}\Big) \sqrt{\mu} (\V_{i}^{2} - \b_{b}) \sqrt{\mu} \zeta_{3} D_{3} \Delta_{D}^{-1} \p_{t}b_{x_{3}} \\
		&\lesssim \|c\|_{2} \|\nabla\Delta_{D}^{-1} \p_{t} b_{x_{3}} \|_{2} \\
		&\lesssim \varepsilon\|a\|_{2}^{2} + \|c\|_{2}^{2} + \|\sqrt{\nu}(\mathbf{I}-\mathbf{P})f\|_{2}^{2} + \|g\|_{2}^{2}.
		\end{split}
		\end{equation} 
		
		Combining (\ref{bn final sum}), (\ref{bn RHS}), and (\ref{b3 time est}), we gain
		\begin{equation} \label{b3 est}
		\begin{split}
		\|b_{x_{3}}\|_{2}^{2} &\lesssim \frac{d}{dt}G(t) + \varepsilon\|a\|_{2}^{2} + \|c\|_{2}^{2} + \|\sqrt{\nu}(\mathbf{I-P})f\|_{2}^{2} + \| g \|_{2}^{2} ,\quad \vert G(t) \vert \lesssim \|f(t)\|_{2}^{2}. \\
		\end{split}
		\end{equation}

		\subsubsection{Tangential momentum $b_{x_{1}}$ estimate} 
		\hfill
		
		\noindent \textit{(Symmetric case)}
		We try test function $\Psi_{1,1} = \phi_{1}(\V) \psi_{1}(x)$, where
		\begin{equation} \label{3D btan sym test}
		\begin{split}
		\phi_{1}(\V) &= (\V^{2}_{1} - \b_b) \sqrt{\mu},\quad\text{where}\quad \int_{\mathbb{R}^3} (\V^{2}_{i} - \b_b) \mu d\V = 0 ,	\\
		\psi_{1}(x) &= \zeta_{1}(x) D_{1}u_{1},
		\end{split}
		\end{equation}
		where $\zeta_{1}(x)$ to be determined and $u_{1}$ solves Neumann problem,
		\begin{equation} \label{u1 eq}
		-\Delta_{bel} u_{1} = \frac{1}{\zeta_1} b_{x_{1}} - \langle \frac{1}{\zeta_1} b_{x_{1}} \rangle := \widetilde{ \frac{1}{\zeta_1} b_{x_{1}} }  ,\quad \p_{n} u_{1}\vert_{\p U} = 0, \quad \text{where} \quad \langle f \rangle := \fint_{ U} f = \frac{1}{| U|}\int_{ U} f.
		\end{equation}
		Above $\phi_{1}$ yields only $\V_{1}^{2}\sqrt{\mu}$ terms in (\ref{loc wk form}). With multiplying proper normalization constants to each $\phi_{i}$'s, LHS of (\ref{glo IBP re}) becomes 
		\begin{equation} \label{3D btan sym}
		\begin{split}
		& - \iint_{ U\times\mathbb{R}^{3}} \mathbf{P}f (v\cdot \nabla) \Psi_{1,1}  dydv + \iint_{\p U\times\mathbb{R}^{3}} f^b \Psi_{1,1}^{b} (v\cdot n) dAdv \\
		&= - \int_{\mathbb{R}^{3}_{+}} \big( b_{x_{1}} \zeta_{1} D^{2}_{1} u_{1} + b_{x_{1}} D_{1}\zeta_{1} D_{1} u_{1} \big) \sqrt{|g|} dx	\\
		&\quad + \int_{\mathbb{R}^{3}_{+}} \Big( - b_{x_{2}}\Gamma^{2}_{D,11} - b_{x_{3}}\Gamma^{3}_{D,11} + b_{x_{1}}\Gamma^{1}_{D,22} + b_{x_{1}}\Gamma^{1}_{D,33} \Big) \zeta_{1} D_{1} u_{1} \sqrt{|g|} dx	\\
		&\quad + C \int_{x_{3}=0} \Big(\underbrace{ \int_{\mathbb{R}^{3}} f^{b} \phi_{1}(\V) \V_{3} }_{ =0 \ \text{oddness in} \ \V_{3} } \Big) \psi_{1}(x) \sqrt{g_{11}g_{22}} dA .
		\end{split}
		\end{equation} 
		
		\noindent \textit{(Non-symmetric case 1)}
		We try test function $\Psi_{1,2} = \sum_{i=2,3} \phi_{i}(\V) \psi_{i}(x)$, where
		\begin{equation} \label{3D btan nonsym1 test}
		\begin{split}
		\phi_{i}(\V) &= \V_{i} \V_{1} \sqrt{\mu},	\\
		\psi_{i}(x) &= \zeta_{1}(x) D_{i}u_{1},
		\end{split}
		\end{equation}
		for $i=2,3$. Above $\phi_{2}$ and $\phi_{3}$ yield only $\V_{1}\V_{2}\sqrt{\mu}$ and $\V_{3}\V_{1}\sqrt{\mu}$ terms in (\ref{loc wk form}) respectively. With multiplying proper normalization constants to each $\phi_{i}$'s, LHS of (\ref{glo IBP re}) becomes 
		{\footnotesize
		\begin{equation} \label{3D btan nonsym1}
		\begin{split}
		& - \iint_{ U\times\mathbb{R}^{3}} \mathbf{P}f (v\cdot \nabla) \Psi_{1,2}  dydv + \iint_{\p U\times\mathbb{R}^{3}} f^b \Psi_{1,2}^{b} (v\cdot n) dAdv \\
		&= - \int_{\mathbb{R}^{3}_{+}} \big( b_{x_{2}} \zeta_{1} D_{12} u_{1}  + b_{x_{2}} D_{1}\zeta_{1} D_{2}u_{1} + b_{x_{1}} \zeta_{1} D_{22}u_{1} + b_{x_{1}} D_{2}\zeta_{1} D_{2}u_{1} \big) \sqrt{|g|} dx	\\
		&\quad - \int_{\mathbb{R}^{3}_{+}} \big(  b_{x_{3}} \zeta_{1} D_{13} u_{1}   + b_{x_{3}} D_{1}\zeta_{1} D_{3}u_{1} + b_{x_{1}} \zeta_{1} D_{33}u_{1} + b_{x_{1}} D_{3}\zeta_{1} D_{3}u_{1} \big) \sqrt{|g|} dx	\\
		&\quad + \int_{\mathbb{R}^{3}_{+}} \Big( 2 b_{x_{1}}\Gamma^{2}_{D,11} + 2 b_{x_{2}}\Gamma^{1}_{D,22} + \underbrace{ b_{x_{3}}\Gamma^{2}_{D,13} + b_{x_{3}}\Gamma^{1}_{D,23} }_{=0 \ \text{by} \ (\ref{geo cond})} + b_{x_{1}}\Gamma^{2}_{D,33} + b_{x_{2}}\Gamma^{1}_{D,33} \Big) \zeta_{1} D_{2} u_{1} \sqrt{|g|} dx	\\
		&\quad + \int_{\mathbb{R}^{3}_{+}} \Big( 2 b_{x_{3}}\Gamma^{1}_{D,33} + 2 b_{x_{1}}\Gamma^{3}_{D,11} + b_{x_{3}}\Gamma^{1}_{D,22} + b_{x_{1}}\Gamma^{3}_{D,22} + \underbrace{ b_{x_{2}}\Gamma^{3}_{D,12} + b_{x_{2}}\Gamma^{1}_{D,32} }_{=0 \ \text{by} \ (\ref{geo cond})} \Big) \zeta_{1} D_{3} u_{1} \sqrt{|g|} dx	\\
		&\quad + C  \int_{x_{3}=0} \Big( \underbrace{ \int_{\mathbb{R}^{3}} f^{b} \phi_{2}(\V) \V_{3} }_{ =0 \ \text{oddness in} \ \V_{3} } \Big) \psi_{2}(x) \sqrt{g_{11}g_{22}} dA  + C \int_{x_{3}=0} \Big(\int_{\mathbb{R}^{3}} f^{b} \phi_{3}(\V) \V_{3}\Big) \underbrace{\psi_{3}(x)}_{=0 } \sqrt{g_{11}g_{22}} dA ,
		\end{split}
		\end{equation} 
		}
		where we used oddness in velocity integration and Neumann boundary condition.\\
		
		\noindent \textit{(Non-symmetric case 2)}
		We try test function $\Psi_{1,3} = \sum_{i=2,3} \phi_{i}(\V) \psi_{i}(x)$, where
		\begin{equation} \label{3D btan nonsym2 test}
		\begin{split}
		\phi_{i}(\V) &= (\V^{2}_{i} - \b_{b}) \sqrt{\mu},\quad\text{where}\quad \int_{\mathbb{R}^3} (\V^{2}_{i} - \b_{b}) \mu d\V = 0,	\\
		\psi_{i}(x) &= \zeta_{1}(x) D_{1}u_{1},
		\end{split}
		\end{equation}
		for $i=2,3$. Above $\phi_{2}$ and $\phi_{3}$ yield only $\V^{2}_{2}\sqrt{\mu}$ and $\V^{2}_{3}\sqrt{\mu}$ terms in (\ref{loc wk form}) respectively. Multiplying proper normalization constants to each $\phi_{i}$'s, LHS of (\ref{glo IBP re}) becomes
		{\footnotesize 
		\begin{equation} \label{3D btan nonsym2}
		\begin{split}
		& - \iint_{ U\times\mathbb{R}^{3}} \mathbf{P}f (v\cdot \nabla) \Psi_{1,3}  dydv + \iint_{\p U\times\mathbb{R}^{3}} f^b \Psi_{1,3}^{b} (v\cdot n) dAdv \\
		&= - \int_{\mathbb{R}^{3}_{+}} \big( b_{x_{2}} \zeta_{1} D_{21} u_{1} + b_{x_{2}} D_{2}\zeta_{1} D_{1} u_{1} \big) \sqrt{|g|} dx	\\
		&\quad - \int_{\mathbb{R}^{3}_{+}} \big(  b_{x_{3}} \zeta_{1} D_{31} u_{1}  + b_{x_{3}} D_{3}\zeta_{1} D_{1} u_{1} \big) \sqrt{|g|} dx	\\
		&\quad + \int_{\mathbb{R}^{3}_{+}} \Big( - b_{x_{1}}\Gamma^{1}_{D,22} - b_{x_{3}}\Gamma^{3}_{D,22} + b_{x_{2}}\Gamma^{2}_{D,11} + b_{x_{2}}\Gamma^{2}_{D,33} \Big) \zeta_{1} D_{1} u_{1} \sqrt{|g|} dx	\\
		&\quad + \int_{\mathbb{R}^{3}_{+}} \Big( - b_{x_{1}}\Gamma^{1}_{D,33} - b_{x_{2}}\Gamma^{2}_{D,33} + b_{x_{3}}\Gamma^{3}_{D,11} + b_{x_{3}}\Gamma^{3}_{D,22} \Big) \zeta_{1} D_{1} u_{1} \sqrt{|g|} dx	\\
		&\quad + C \int_{x_{3}=0} \Big(\underbrace{ \int_{\mathbb{R}^{3}} f^{b} \phi_{1}(\V) \V_{3} }_{ =0 \ \text{oddness in} \ \V_{3} }\Big) \psi_{1}(x) \sqrt{g_{11}g_{22}} dA + C \int_{x_{3}=0} \Big(\underbrace{ \int_{\mathbb{R}^{3}} f^{b} \phi_{2}(\V) \V_{3} }_{ =0 \ \text{oddness in} \ \V_{3} }\Big) \psi_{2}(x) \sqrt{g_{11}g_{22}} dA .
		\end{split}
		\end{equation} 
		}
		\subsubsection{Condition for $\zeta_{1}(x)$}
		We add (\ref{3D btan sym}), (\ref{3D btan nonsym1}), and $(-1)\times$(\ref{3D btan nonsym2}). This is equivalent to applying test function
		\begin{equation} \label{Psib1}
		\Psi_{b,1} := \Psi_{1,1} + \Psi_{1,2} - \Psi_{1,3},
		\end{equation}
		to (\ref{loc wk form}). Using (\ref{noncommu}),
		\begin{equation}  \label{btan part}
		\begin{split}
		&- \iint_{ U\times\mathbb{R}^{3}} \mathbf{P}f (v\cdot \nabla) \Psi_{b,1}  dydv + \iint_{\p U\times\mathbb{R}^{3}} f^b \Psi_{b,1}^{b} (v\cdot n)dAdv \\
		&= - \int_{\mathbb{R}^{3}_{+}} b_{x_{1}} \zeta_{1} \big( D_{1}^{2} + D_{2}^{2} + D_{3}^{2}  \big) u_{1} \sqrt{|g|} dx	\\
		& + \int_{\mathbb{R}^{3}_{+}} b_{x_{1}} \Big( 2\Gamma_{D,22}^{1}\zeta_{1} + 2\Gamma_{D,33}^{1}\zeta_{1} - D_{1}\zeta_{1} \Big) D_{1} u_{1} \sqrt{|g|} dx	\\
		& + \int_{\mathbb{R}^{3}_{+}} b_{x_{1}} \Big( 2\Gamma_{D,11}^{2}\zeta_{1} - D_{2}\zeta_{1} + \Gamma_{D,33}^{2}\zeta_{1} \Big) D_{2} u_{1} \sqrt{|g|} dx	\\
		& + \int_{\mathbb{R}^{3}_{+}} b_{x_{1}} \Big( 2\Gamma_{D,11}^{3}\zeta_{1} - D_{3}\zeta_{1} + \Gamma_{D,22}^{3}\zeta_{1} \Big) D_{3} u_{1} \sqrt{|g|} dx	\\
		& + \sum_{i=2,3} \int_{\mathbb{R}^{3}_{+}} b_{x_{i}} \Big( -\Gamma_{D,11}^{i}\zeta_{1} + D_{i}\zeta_{1} \Big)	D_{1} u_{1} \sqrt{|g|} dx	\\
		& + \sum_{i=2,3} \int_{\mathbb{R}^{3}_{+}} \Big( \Gamma_{D,22}^{1}\zeta_{1} + \Gamma_{D,33}^{1}\zeta_{1}  - D_{1}\zeta_{1} \Big) b_{x_{i}}D_{i}u_{1} \sqrt{|g|} dx .
		\end{split}
		\end{equation}	
		Similar to the normal momentum estimate, if we match index $3\rightarrow r,1 \rightarrow \theta, 2 \rightarrow z$ and pick 
		\begin{align} \label{zeta r}
			\zeta_{1} = \zeta_{\theta} = \frac{1}{r},
		\end{align}
		 $\zeta_{1}(x)$ satisfies:
		\begin{equation} \label{btan zeta eq}
		\begin{cases}
		D_{1}\zeta_{1} &= \Gamma_{D,22}^{1}\zeta_{1} + \Gamma_{D,33}^{1}\zeta_{1}	\\
		D_{2}\zeta_{1} &= \Gamma_{D,11}^{2}\zeta_{1}	\\
		D_{3}\zeta_{1} &= \Gamma_{D,11}^{3}\zeta_{1}	\\
		\end{cases},
		\quad\text{and}\quad c \leq |\zeta_{1}| \leq C, 
		\end{equation}
		for uniformly positive $c$ and $C$ from \eqref{ann domain}, \eqref{diff operator} and \eqref{christoffel}. Then above system becomes nearly same as (\ref{bn final sum}) and using Lemma \ref{D bel difference}, the LHS of (\ref{glo IBP re}) is
		\begin{equation} \label{btan final sum}
		\begin{split}
		&- \iint_{ U\times\mathbb{R}^{3}} \mathbf{P}f (v\cdot \nabla) \Psi_{b,1}  dydv + \iint_{\p U\times\mathbb{R}^{3}} f^b \Psi_{b,1}^{b} (v\cdot n) dAdv\\
		&= - \int_{\mathbb{R}^{3}_{+}} b_{x_{1}} \zeta_{1} \big( D_{1}^{2} + D_{2}^{2} + D_{3}^{2}  \big) u_{1} \sqrt{|g|} dx + \int_{\mathbb{R}^{3}_{+}} b_{x_{1}}	\zeta_{1} \sum_{i=1}^{3} \sum_{k\neq i} \Gamma_{D,kk}^{i}D_{i}u_{1} \sqrt{|g|} dx,	\\
		&= - \int_{\mathbb{R}^{3}_{+}} b_{x_{1}} \zeta_{1} \Delta_{bel} u_{1} \sqrt{|g|} dx  \\
		&=  \int_{ U} b_{x_{1}}^{2} dy - \frac{1}{| U|} \int_{ U} \zeta_{1} b_{x_{1}} \int_{ U} \frac{1}{\zeta_1}b_{x_{1}}.  \\
		\end{split}
		\end{equation}

		With $\Psi_{b,1}$, which is defined in (\ref{Psib1}), the RHS of (\ref{glo IBP re}) is controlled by 
		\begin{equation} \label{btan RHS}
		\begin{split}
		\text{RHS of (\ref{glo IBP re})} &\lesssim \Big| \iint_{ U\times\mathbb{R}^{3}} \Psi_{b,1} \p_{t} f dy \Big| + \varepsilon \|u_{1}\|_{H^{2}}^{2} + \|(\mathbf{I-P})f\|_{2}^{2} + \| g \|_{2}^{2} ,\quad \varepsilon \ll 1. \\
		&\lesssim \frac{d}{dt}G(t) + \underbrace{ \Big| \iint_{ U\times\mathbb{R}^{3}} f \p_{t}\Psi_{b,1} dy \Big| }_{(**)} + \varepsilon \|b_{x_{1}}\|_{2}^{2} + \|\sqrt{\nu}(\mathbf{I-P})f\|_{2}^{2} + \| g \|_{2}^{2},\\
		\end{split}
		\end{equation}
		where $\vert G(t)\vert \lesssim \|f(t)\|_{2}^{2}$. Here, we used $\|u_{1}\|_{H^{2}}^2 \lesssim \|b_{x_{1}}\|_{2}^{2}$ and $\Vert \cdot \Vert_{2} \leq \Vert \cdot \Vert_{\nu}$.  \\
		
		To estimate time part $(**)$ in (\ref{btan RHS}), we apply another test function,
		\begin{equation} \label{b1 time test}
		\varphi_{1} = \V_{1} \sqrt{\mu} \Delta^{-1}_{N} \widetilde{ \p_{t}b_{x_{1}} } .
		\end{equation} 
		When this test function is combined with $\p_t f$, we get
		\begin{equation*}
		\begin{split}
		&\iint_{ U \times \mathbb{R}^{3}} \p_t f \V_{1}\sqrt{\mu} \Delta^{-1}_{N} \widetilde{ \p_{t}b_{x_{1}} } = \int_{ U} \p_t b_{x_{1}} \Delta_{N}^{-1}\widetilde{ \p_{t}b_{x_{1}} } \\
		&= \int_{ U} \nabla\cdot\nabla \Delta_{N}^{-1} \widetilde{ \p_t b_{x_{1}} } \Delta_{N}^{-1} \widetilde{ \p_{t}b_{x_{1}} } + \langle \p_{t} b_{x_{1}} \rangle \underbrace{ \int_{ U} \Delta_{N}^{-1} \widetilde{ \p_{t}b_{x_{1}}} }_{=0} \\
		&= \| \nabla\Delta_{N}^{-1} \widetilde{ \p_{t}b_{x_{1}} }   \|^{2}_{2}.
		\end{split}
		\end{equation*}
		On the other hand, considering other terms except $\p_t f$,
		\begin{equation} \label{b1 time 12 required}
		\begin{split}
		\|\nabla\Delta_{N}^{-1} \widetilde{ \p_{t}b_{x_{1}} } \|^{2}_{2} 
		&\lesssim \Big| \iint_{ U\times\mathbb{R}^3} \V_{1}\sqrt{\mu} \Delta^{-1}_N \widetilde{ \p_{t}b_{x_{1}} } \Big( - v\cdot \nabla f + g - L f  \Big)  \Big| \\
		&\lesssim \Big| \iint_{ U\times\mathbb{R}^3} (\mathbf{P}f + (\mathbf{I}-\mathbf{P})f) \ \nabla\cdot\big( v \V_{1} \sqrt{\mu} \Delta^{-1}_{N} \widetilde{ \p_{t}b_{x_{1}} } \big) \Big| \\
		&\quad + \ \Big| \iint_{ U\times\mathbb{R}^3} \V_{1}\sqrt{\mu} \Delta^{-1}_{N} \widetilde{ \p_{t}b_{x_{1}} } \big( g - Lf \big) \Big| \\
		&\lesssim \varepsilon \|\nabla\Delta_{N}^{-1} \widetilde{  \p_{t}b_{x_{1}}  } \|^{2}_{2}  + \Big( \|a\|_{2}^{2} + \|c\|_{2}^{2} + \|\sqrt{\nu}(\mathbf{I}-\mathbf{P})f\|_{2}^{2} + \|g\|_{2}^{2} \Big) , \\
		\end{split}	
		\end{equation}
		where sufficiently small $\varepsilon \ll 1$ comes from Young's inequality and we used Poincare inequality. Therefore we get 
		\begin{equation} \label{mid step for b1}
		\|\nabla\Delta_{N}^{-1} \widetilde{ \p_{t}b_{x_{1}} } \|^{2}_{2} \lesssim  \|a\|_{2}^{2} + \|c\|_{2}^{2} + \|\sqrt{\nu}(\mathbf{I}-\mathbf{P})f\|_{2}^{2} + \|g\|_{2}^{2}.
		\end{equation}
		Using (\ref{mid step for b1}), we can estimate $(**)$ in (\ref{btan RHS}) by
		\begin{equation} \label{b1 time est}
		\begin{split}
		\Big| \iint_{ U\times\mathbb{R}^{3}} f \p_{t}\Psi_{b,1} dy \Big|  
		&\lesssim \Big| \iint_{ U\times\mathbb{R}^{3}} \mathbf{P}f \p_{t}\Psi_{b,1} dy \Big| + \Big| \iint_{ U\times\mathbb{R}^{3}} (\mathbf{I-P})f \p_{t}\Psi_{b,1} dy \Big|  \\
		&\lesssim \|c\|_{2} \|\nabla\Delta_{N}^{-1} \widetilde{ \p_{t}b_{x_{1}}} \|_{2}  +  \big( \varepsilon\|\nabla\Delta_{N}^{-1}  \widetilde{ \p_{t}b_{x_{1}} } \|^{2}_{2} + \|(\mathbf{I-P})f\|_{2}^{2} \big) \\
		&\lesssim \varepsilon\|a\|_{2}^{2} + \|c\|_{2}^{2} + \|\sqrt{\nu}(\mathbf{I}-\mathbf{P})f\|_{2}^{2} + \|g\|_{2}^{2},
		\end{split}
		\end{equation}
		where we used 
		\begin{equation}  
		\begin{split}
		&\iint_{ U\times\mathbb{R}^{3}} \mathbf{P}f \p_{t}\Psi_{b,1}  \\
		&=  \iint_{ U\times\mathbb{R}^{3}} \mathbf{P}f \p_{t}\Psi_{1,1} + \iint_{ U\times\mathbb{R}^{3}} \mathbf{P}f \p_{t}\Psi_{1,2} - \iint_{ U\times\mathbb{R}^{3}} \mathbf{P}f \p_{t}\Psi_{1,3}  \\
		&\leq \iint_{ U\times\mathbb{R}^{3}} \Big(a  + b\cdot v + c \frac{|v|^2-3}{2}\Big) \sqrt{\mu} (\V_{1}^{2} - \b_{b}) \sqrt{\mu} \zeta_{1} D_{1} \Delta_{N}^{-1} \widetilde{\p_{t} b_{x_{1}}}  \\
		&\quad + \underbrace{ \sum_{i=2}^{3} \iint_{ U\times\mathbb{R}^{3}} \Big(a  + b_{x}\cdot \V + c \frac{|v|^2-3}{2}\Big) \sqrt{\mu} (\V_{i} \V_{1}) \sqrt{\mu} \zeta_{1} D_{1} \Delta_{N}^{-1}\widetilde{\p_{t} b_{x_{1}}} }_{=0 \ \text{odd in velocity}}  \\
		&\quad - \sum_{i=2}^{3} \iint_{ U\times\mathbb{R}^{3}} \Big(a  + b\cdot v + c \frac{|v|^2-3}{2}\Big) \sqrt{\mu} (\V_{i}^{2} - \b_{b}) \sqrt{\mu} \zeta_{1} D_{1} \Delta_{N}^{-1}\widetilde{\p_{t} b_{x_{1}}}  \\
		&\lesssim \|c\|_{2} \|\nabla\Delta_{N}^{-1}  \widetilde{  \p_{t}b_{x_{1}} } \|_{2} \\
		&\lesssim \varepsilon\|a\|_{2}^{2} + \|c\|_{2}^{2} + \|\sqrt{\nu}(\mathbf{I}-\mathbf{P})f\|_{2}^{2} + \|g\|_{2}^{2} .
		\end{split}
		\end{equation} 
		
		Combining (\ref{btan zeta eq}), (\ref{btan final sum}), (\ref{btan RHS}), and (\ref{b1 time est}), we gain
		\begin{equation} \label{b1 est}
		\begin{split}
		\int_{ U} b_{x_{1}}^{2} dy &\lesssim \frac{d}{dt}G(t) + \varepsilon\|a\|_{2}^{2} + \|c\|_{2}^{2} + \|\sqrt{\nu}(\mathbf{I-P})f\|_{2}^{2} + \| g \|_{2}^{2} + \int_{ U} \zeta_{1} b_{x_{1}} \int_{ U} \frac{1}{\zeta_1} b_{x_{1}},  \\
		\end{split}
		\end{equation} 
		where $\vert G(t)\vert \lesssim \|f(t)\|_{2}^{2}$. 
		
		\subsubsection{Tangential momentum $b_{x_{2}}$ estimate.}
		This is nearly similar as what we computed for $b_{x_{1}}$ and $u_{1}$ case. We define
		\begin{equation} \label{u2 eq}
		-\Delta_{bel} u_{2} = \frac{1}{\zeta_2} b_{x_{2}} - \langle \frac{1}{\zeta_2} b_{x_{2}} \rangle := \widetilde{\frac{1}{\zeta_2}b_{x_{2}}},\quad \p_{n} u_{2}\vert_{\p U} = 0.
		\end{equation}
		\noindent \textit{(Symmetric case)} $\Psi_{2,1} = \phi_{2}(\V) \psi_{2}(x)$, where
		\begin{equation} \label{3D btan2 sym test}
		\begin{split}
		\phi_{2}(\V) &= (\V^{2}_{2} - \b_b) \sqrt{\mu},\quad\text{where}\quad \int_{\mathbb{R}^3} (\V^{2}_{i} - \b_b) \mu d\V = 0 ,	\\
		\psi_{2}(x) &= \zeta_{2}(x) D_{2}u_{2}.
		\end{split}
		\end{equation}
		\noindent \textit{(Non-symmetric case 1)}
		We try test function $\Psi_{2,2} = \sum_{i=1,3} \phi_{i}(\V) \psi_{i}(x)$, where
		\begin{equation} \label{3D btan2 nonsym1 test}
		\begin{split}
		\phi_{i}(\V) &= \V_{i} \V_{2} \sqrt{\mu},	\\
		\psi_{i}(x) &= \zeta_{2}(x) D_{i}u_{2},
		\end{split}
		\end{equation}
		for $i=1,3$. \\
		\noindent \textit{(Non-symmetric case 2)}
		We try test function $\Psi_{2,3} = \sum_{i=1,3} \phi_{i}(\V) \psi_{i}(x)$, where
		\begin{equation} \label{3D btan2 nonsym2 test}
		\begin{split}
		\phi_{i}(\V) &= (\V^{2}_{i} - \b_b) \sqrt{\mu},\quad\text{where}\quad \int_{\mathbb{R}^3} (\V^{2}_{i} - \b_{b}) \mu d\V = 0  ,	\\
		\psi_{i}(x) &= \zeta_{2}(x) D_{2}u_{2},
		\end{split}
		\end{equation}
		for $i=1,3$. 
		We just change role of index $1$ and $2$ to gain. If we match index $3\rightarrow r,1 \rightarrow \theta, 2 \rightarrow z$ and pick 
		\begin{align} \label{zeta r}
			\zeta_{2} = \zeta_{z} = 1,
		\end{align}
		 $\zeta_{2}(x)$ satisfies:
		\begin{equation} \label{btan2 zeta eq}
		\begin{cases}
		D_{1}\zeta_{2} &= \Gamma_{D,22}^{1}\zeta_{2}	\\
		D_{2}\zeta_{2} &= \Gamma_{D,11}^{2}\zeta_{2} + \Gamma_{D,33}^{2}\zeta_{2}	\\
		D_{3}\zeta_{2} &= \Gamma_{D,22}^{3}\zeta_{2}	\\
		\end{cases}
		\quad\text{and}\quad c \leq |\zeta_{2}| \leq C, 
		\end{equation}
		with uniform positive constants $c$ and $C$ due to \eqref{diff operator} and \eqref{christoffel}. Considering above three test function cases, by defining 
		\begin{equation} \label{Psib2}
		\Psi_{b,2} = \Psi_{2,1} + \Psi_{2,2} - \Psi_{2,3},
		\end{equation}
		we get
		\begin{equation} \label{btan2 final sum}
		\begin{split}
		&- \iint_{ U\times\mathbb{R}^{3}} \mathbf{P}f (v\cdot \nabla) \Psi_{b,2}  dydv + \iint_{\p U\times\mathbb{R}^{3}} f^b \Psi_{b,2}^{b} (v\cdot n) dAdv \\
		&=  \int_{ U}  b_{x_{2}}^{2} dy - \frac{1}{| U|} \int_{ U} \zeta_{2} b_{x_{2}} \int_{ U} \frac{1}{\zeta_2} b_{x_{2}}.  \\
		\end{split}
		\end{equation}
		
		With $\Psi_{b,2}$, similar as (\ref{b1 est}), we have   
		\begin{equation} \label{b2 est}
		\begin{split}
		\int_{ U}  b_{x_{2}}^{2} dy &\lesssim \frac{d}{dt}G(t) + \varepsilon\|a\|_{2}^{2} + \|c\|_{2}^{2} + \|\sqrt{\nu}(\mathbf{I-P})f\|_{2}^{2} + \| g \|_{2}^{2} + \int_{ U} \zeta_{2} b_{x_{2}} \int_{ U} \frac{1}{\zeta_2} b_{x_{2}}, \\
		\end{split}
		\end{equation} 
		where $\vert G(t)\vert \lesssim \|f(t)\|_{2}^{2}$.

		\hide
		\begin{remark}
		If $U$ is a periodic in-$x_2$ cylindrical domain with global triply orthogonal coordinate system, then it is parametrized by
		\begin{align*}
			\eta(x_1,x_2,x_3) := (\eta_1(x_1,x_3), \eta_2(x_1,x_3), x_2), \quad (x_1,x_3) \in [c_1,C_1] \times [c_2,C_2]\; \text{and }0\leq x_2 \leq H,
		\end{align*}
		for fixed $H>0$. Since 
		\begin{align*}
		\sqrt{g_{22}} =1, \quad \Gamma_{D,22}^{1}=\Gamma_{D,22}^{3}=\Gamma_{D,11}^{2}=\Gamma_{D,33}^{2}=0, 
		\end{align*}
		we obtain $\zeta_2=\sqrt{g_{22}}=1$ from 
		\begin{align*}
			\begin{cases}
		D_1 \zeta_2 = \Gamma_{D,22}^{1} \zeta_2 =0,\\ 
		D_2 \zeta_2 = (\Gamma_{D,11}^{2} + \Gamma_{D,33}^{2})\zeta_2 =0, \\
		D_3 \zeta_2 = \Gamma_{D,22}^{3}\zeta_2 =0. 
			\end{cases}
		\end{align*}
		If $U$ is an axis-symmetric domain with respect to $z$-axis, then it can be parametrized by 
		\begin{align*}
			\eta(x_1,x_2,x_3) = (\eta_1 (x_2,x_3) \cos x_1, \eta_1 (x_2,x_3) \sin x_1, \eta_2 (x_2, x_3)),		
		\end{align*}
		for $0\leq x_1 \leq 2\pi \; \text{and } (x_2,x_3) \in [d_1,D_1] \times [d_2,D_2].$ Since 
		\begin{align*}
			&\sqrt{g_{11}}= \eta_1, \sqrt{g_{22}}=\sqrt{(\p_2 \eta_1)^2 +(\p_2 \eta_2)^2}, \sqrt{g_{33}} = \sqrt{(\p_3 \eta_1)^2 + (\p_3 \eta_2)^2} \\
			&\Gamma_{D,11}^{2} = -\frac{\partial_2 \eta_1}{\eta_1 \sqrt{(\p_2 \eta_1)^2 + (\p_2 \eta_2)^2}}, \; \Gamma_{D,11}^{3} = -\frac{ \p_3 \eta_1}{\eta_1 \sqrt{(\p_3 \eta_1)^2 + (\p_3 \eta_2)^2}}, \; \Gamma_{D,22}^{1}=\Gamma_{D,33}^{1} =0,
		\end{align*}
		we obtain $\zeta_1= \frac{1}{\sqrt{g_{11}}}=\frac{1}{\eta_1}$ from 
		\begin{align*}
			\begin{cases}
				D_1 \zeta_1 = \frac{1}{\eta_1} \p_1 \zeta_1 = (\Gamma_{D,22}^1 + \Gamma_{D,33}^1) \zeta_1 = 0, \\
				D_2 \zeta_1 = \frac{1}{\sqrt{(\p_2 \eta_1)^2 + (\p_2 \eta_2)^2}} \p_2 \zeta_1 = \Gamma_{D,11}^2 \zeta_1 =-\frac{\partial_2 \eta_1}{\eta_1 \sqrt{(\p_2 \eta_1)^2 + (\p_2 \eta_2)^2}} \zeta_1,\\
				D_3 \zeta_1 = \frac{1}{\sqrt{(\p_3 \eta_1)^2 + (\p_3 \eta_2)^2}} \p_3 \zeta_1 = \Gamma_{D,11}^3 \zeta_1 =-\frac{\partial_3 \eta_1}{\eta_1 \sqrt{(\p_3 \eta_1)^2 + (\p_3 \eta_2)^2}} \zeta_1.
			\end{cases}
		\end{align*}
		\end{remark}
		\subsection{Mean estimate.}
		From (\ref{b1 est}) and (\ref{b2 est}), if we must control 
		\[
		\int_{ U} \zeta_{1} b_{x_{1}} \int_{ U} \frac{1}{\sqrt{g_{11}}} b_{x_{1}}   \quad\text{and}\quad  \int_{ U} \zeta_{2} b_{x_{2}} \int_{ U} \frac{1}{\sqrt{g_{22}}} b_{x_{2}}  ,  \\
		\]
		to close coercivity estimate, where these terms come from mean of (\ref{u1 eq}) and (\ref{u2 eq}). \\

		\begin{lemma} \label{lemma mean}
			Let us assume 
			\begin{align*}
			0 < c \leq \sqrt{g_{ii}} \leq C, \quad  \vert \zeta_i \vert \leq C, \; \text{and } \; \vert D\zeta_i \vert \leq C, \quad  i=1,2,
			\end{align*}
			for uniform positive constant $c$ and $C$, where $D$ is differential operator defined in \eqref{diff operator}.  For $\varepsilon>0$, we have the following mean estimate for tangential momentum for $i=1,2$,
			\begin{equation} \label{mean est}
			\int_{U} \zeta_{i} b_{x_{i}} \int_{U} \frac{1}{\sqrt{g_{ii}}} b_{x_{i}}
			\lesssim \frac{d}{dt} G(t) + \varepsilon\|a\|_{2}^{2} + \|c\|_{2}^{2} + \varepsilon\|b_{x_{i}}\|_{2}^{2} + \|\sqrt{\nu}(\mathbf{I-P})f\|_{2}^{2} + \|g\|_{2}^{2},
			\end{equation}
			where $G(t)$ satisfies $\vert G(t) \vert \lesssim \|f(t)\|_{2}^{2}$.
		\end{lemma}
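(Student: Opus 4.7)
The plan is to exploit the conservation of angular momentum around the $z$-axis and of total $z$-momentum, both of which are available for the periodic cylindrical annulus $U = \Omega \times [0,H]$. In the cylindrical coordinates \eqref{ann domain} one has $\zeta_1 = 1/r$ and $\zeta_2 = 1$, so that $\zeta_1^{-1}\V_1 = r\V_1 = x_1 v_2 - x_2 v_1 = (x\times v)_3$ and $\zeta_2^{-1}\V_2 = v_3$. Consequently the two mean factors coming from \eqref{b1 est}--\eqref{b2 est},
\[
\mathcal{A}_1(t) := \int_U \zeta_1^{-1} b_{x_1}\, dy = \iint_{U\times\R^3}(x\times v)_3\, f\sqrt{\mu}\, dv dy, \qquad \mathcal{A}_2(t) := \int_U b_{x_2}\, dy = \iint v_3\, f\sqrt{\mu}\, dv dy,
\]
are precisely the total angular momentum of $f\sqrt{\mu}$ around the $z$-axis and the total $z$-momentum of $f\sqrt{\mu}$, respectively.

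The first step would compute $\mathcal{A}_i'(t)$ by testing \eqref{linear BE} against $\zeta_i^{-1}\V_i\sqrt{\mu}$. Since each $v_k\sqrt{\mu}$ lies in $\ker L$, and $\zeta_i^{-1}\V_i\sqrt{\mu}$ is a $y$-dependent linear combination of those generators, the $L f$-term vanishes. Integration by parts in $y$ produces an interior piece $\iint f\, v\cdot\nabla_y(\zeta_i^{-1}\V_i)\sqrt{\mu}$ which is identically zero by direct computation ($v\cdot\nabla_y(x_1 v_2 - x_2 v_1) = v_1 v_2 - v_2 v_1 = 0$ and $v\cdot\nabla_y v_3 = 0$), and a boundary piece. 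On $\partial\Omega\times(0,H)$ the outward normal $n$ is purely radial, so $(x\times v)_3$ and $v_3$ are invariant under the specular reflection $v \mapsto R_y v$ while $v\cdot n$ flips sign; the specular BC $f(y,v) = f(y, R_y v)$ then renders the integrand odd under $v \mapsto R_y v$ and kills it by the standard change of variables. The contribution at $z = 0, H$ cancels by periodicity. This yields the conservation-type identity
\[
\mathcal{A}_i'(t) = \iint_{U\times\R^3} \zeta_i^{-1}\V_i\, g\sqrt{\mu}\, dv dy, \qquad |\mathcal{A}_i'(t)| \lesssim \|g(t)\|_2.
\]

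To close the mean product $\mathcal{E}_i(t)\mathcal{A}_i(t)$, with $\mathcal{E}_i(t) := \int_U \zeta_i b_{x_i}\, dy$, I would first bound $|\mathcal{E}_i| \lesssim \|b_{x_i}\|_2$ via Cauchy-Schwarz and $c\leq \zeta_i \leq C$, and then apply Young's inequality $|\mathcal{E}_i\mathcal{A}_i| \leq \frac{\varepsilon}{2}\|b_{x_i}\|_2^2 + \frac{C}{\varepsilon}\mathcal{A}_i^2$. Packaging $\mathcal{A}_i^2$ into a total time-derivative is done by choosing $G(t) := \frac{1}{2}\mathcal{A}_i(t)^2$, which satisfies $|G(t)| \lesssim \|b_{x_i}(t)\|_2^2 \lesssim \|f(t)\|_2^2$ and $G'(t) = \mathcal{A}_i\mathcal{A}_i'$. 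Under the angular-momentum normalization in \eqref{normalization} we have $\mathcal{A}_1(0) = 0$; an analogous $z$-momentum normalization arranges $\mathcal{A}_2(0) = 0$.

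The main obstacle is reconciling the pointwise-in-$t$ form of \eqref{mean est} with the intrinsically time-integrated nature of the conservation law, which gives only $\mathcal{A}_i(t) = \int_0^t \mathcal{A}_i'\, ds$ under $\mathcal{A}_i(0) = 0$. The resolution either (i) absorbs the remaining $\mathcal{A}_i^2$ into $\frac{d}{dt}G(t)$ in the spirit of the time-derivative trick used throughout Section~10.2, or (ii) invokes the structural property of the intended nonlinear source: when $g = \Gamma(f, f)$, the collisional invariance $\iint \zeta_i^{-1}\V_i\, \Gamma(f, f)\sqrt{\mu} = 0$ (inherited from \eqref{Coll_Inv} applied to the generators $(x\times v)_3\sqrt{\mu}$ and $v_3\sqrt{\mu}$ of $\ker L$) forces $\mathcal{A}_i'(t)\equiv 0$, hence $\mathcal{A}_i(t) \equiv 0$ throughout the evolution and the mean term drops out entirely, leaving only the harmless terms already controlled in Proposition \ref{prop_massenergy}.
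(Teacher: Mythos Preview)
Your approach diverges from the paper's and, for the lemma as stated (with a general source $g$), it does not close.

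First, a bookkeeping point: the lemma asks you to control $\int_U \zeta_i b_{x_i}\cdot\int_U \frac{1}{\sqrt{g_{ii}}}b_{x_i}$, and in the annulus $\frac{1}{\sqrt{g_{ii}}}=\zeta_i$ (indeed $\zeta_1=1/r=\frac{1}{\sqrt{g_{11}}}$ and $\zeta_2=1=\frac{1}{\sqrt{g_{22}}}$). So the object is $\big(\int_U \zeta_i b_{x_i}\big)^2$, \emph{not} the product $\int_U \zeta_i b_{x_i}\cdot\int_U \zeta_i^{-1}b_{x_i}$ appearing in \eqref{b1 est}--\eqref{b2 est}. Your conserved quantities $\mathcal A_1=\int r\,b_{x_1}$ and $\mathcal A_2=\int b_{x_2}$ correspond to $\int \zeta_i^{-1}b_{x_i}$, not to $\int \frac{1}{\sqrt{g_{ii}}}b_{x_i}=\int \zeta_i b_{x_i}$, so the conservation machinery is pointed at the wrong factor.

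Even setting that aside, the core gap is the one you flag as ``the main obstacle'': your identity gives only $|\mathcal A_i'(t)|\lesssim\|g(t)\|_2$, which is a time-integrated control. Resolution (i) does not work: with $G=\tfrac12\mathcal A_i^2$ one has $G'=\mathcal A_i\mathcal A_i'$, and no manipulation converts $\mathcal A_i^2$ into $\frac{d}{dt}G$ plus admissible errors. Resolution (ii) imposes the extra hypothesis that $g$ satisfies the collisional invariance, which is outside the lemma's scope. The paper instead tests \eqref{linear BE} against $\phi_i=x_i\,\tfrac{|v|^2-3}{2}\sqrt{\mu}$: since $(\V\cdot D)x_i=\V_i/\sqrt{g_{ii}}$, the bulk term produces exactly $M\int_U \frac{1}{\sqrt{g_{ii}}}b_{x_i}$ (the $a$ and $c$ pieces drop by parity, the boundary term vanishes by oddness in $\V_3$), yielding a pointwise identity $\int_U \frac{1}{\sqrt{g_{ii}}}b_{x_i}=\frac{d}{dt}\iint f\phi_i+O(\|(\mathbf I-\mathbf P)f\|_2+\|g\|_2)$ valid for \emph{arbitrary} $g$. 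Multiplying by $\int_U\zeta_i b_{x_i}$ and handling the residual $\int_U\zeta_i\partial_t b_{x_i}$ via a second test function $\zeta_i\V_i\sqrt{\mu}$ then gives \eqref{mean est}. The weight $\tfrac{|v|^2-3}{2}$ is the key device your proposal is missing: it trades the mean of $b_{x_i}$ for a pointwise-in-$t$ time-derivative of a bounded $G(t)$ without invoking any conservation law.
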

		\begin{proof}
		When $i$ is index for tangential direction, we try test function 
		\[
			\phi_{i}(t,v,x) = \phi_{i}(v,x) = x_{i} \big( \frac{|v|^{2} - 3}{2} \big) \sqrt{\mu},
		\]
		to Boltzmann equation then we get
		\begin{equation} \label{mean wk}
		\begin{split}
		0 &= \iint_{U\times \mathbb{R}^{3}} \phi_{i} \p_{t} f + \iint_{U\times \mathbb{R}^{3}} \phi_{i} (v\cdot\nabla) f + \iint_{U\times \mathbb{R}^{3}} \phi_{i} \big( Lf - g \big) \\
		&= \frac{d}{dt}\iint_{U\times \mathbb{R}^{3}} f\phi_{i}(t) + \iint_{\p U\times \mathbb{R}^{3}} \phi_{i}(v\cdot n) f^{b} - \underbrace{ \iint_{U\times \mathbb{R}^{3}} f (v\cdot\nabla) x_{i} \big( \frac{|v|^{2} - 3}{2} \big) \sqrt{\mu} }_{(***)}  \\
		&\quad + \iint_{U\times \mathbb{R}^{3}} x_{i} \big( \frac{|v|^{2} - 3}{2} \big) \sqrt{\mu} \big( Lf - g \big).
		\end{split}
		\end{equation}	
		Note that $(***)$ becomes,
		\begin{equation} \label{3*}
		\begin{split}
		(***) 
		&= \iint_{U\times \mathbb{R}^{3}} f (\V\cdot D) x_{i} \big( \frac{|v|^{2} - 3}{2} \big) \sqrt{\mu}  \\
		&= \int_{U} \frac{1}{\sqrt{g_{ii}}} \int_{\mathbb{R}^{3}} f \V_{i} \sqrt{\mu} \big( \frac{|v|^{2} - 3}{2} \big)  \\
		&= \int_{U} \frac{1}{\sqrt{g_{ii}}} \Big( \int_{\mathbb{R}^{3}} \mathbf{P}f \frac{|v|^{2}-3}{2} \V_{i} \sqrt{\mu} dv + \int_{\mathbb{R}^{3}} (\mathbf{I-P})f \frac{|v|^{2}-3}{2} \V_{i} \sqrt{\mu} dv  \Big)   \\
		&= \int_{U} \frac{1}{\sqrt{g_{ii}}} \Big( \int_{\mathbb{R}^{3}} b_{x_{i}} \V_{i}^{2} \frac{|v|^{2}-3}{2} {\mu} dv + \int_{\mathbb{R}^{3}} (\mathbf{I-P})f \frac{|v|^{2}-3}{2} \V_{i} \sqrt{\mu} dv  \Big)   \\
		&= M \int_{U} \frac{1}{\sqrt{g_{ii}}} b_{x_{i}} + \int_{U} \frac{1}{\sqrt{g_{ii}}} \int_{\mathbb{R}^{3}} (\mathbf{I-P})f \frac{|v|^{2}-3}{2} \V_{i} \sqrt{\mu} dv  ,\quad M:= \int_{\mathbb{R}^{3}} \V_{i}^{2} \frac{|v|^{2}-3}{2} {\mu} dv > 0.   \\
		\end{split}
		\end{equation}
		From (\ref{3*}) and (\ref{mean wk}),
		\begin{equation} \label{star pre est}
		\begin{split}
		\int_{U} \frac{M}{\sqrt{g_{ii}}} b_{x_{i}}
		&= \frac{d}{dt}\iint_{U\times \mathbb{R}^{3}} f\phi_{i}(t) - \int_{U} \frac{1}{\sqrt{g_{ii}}} \int_{\mathbb{R}^{3}} (\mathbf{I-P})f \frac{|v|^{2}-3}{2} \V_{i} \sqrt{\mu} dv  \\
		&\quad + \iint_{U\times \mathbb{R}^{3}} x_{i} \big( \frac{|v|^{2} - 3}{2} \big) \sqrt{\mu} \big( Lf - g \big).
		\end{split}
		\end{equation}
		Multiplying $\int \zeta_{i}b_{x_{i}}$ to (\ref{star pre est}) and using uniform positivity of $\frac{M}{\sqrt{g_{ii}}}$ (since $0 < c \leq \sqrt{g_{ii}} \leq C$), 
		\begin{equation} \label{pre mean est}
		\begin{split}
			M \int_{U} \zeta_{i}b_{x_{i}} \int_{U} \frac{1}{\sqrt{g_{ii}}} b_{x_{i}}
			&\lesssim \int_{U} \zeta_{i}b_{x_{i}} \frac{d}{dt} \iint_{U\times \mathbb{R}^{3}} f\phi_{i} + \int_{U} \zeta_{i}b_{x_{i}} \big( \|(\mathbf{I-P})f\|_{2} + \|g\|_{2} \big)  \\
			&\lesssim \frac{d}{dt} \underbrace{ \Big( \int_{U} \zeta_{i}b_{x_{i}} \iint_{U\times \mathbb{R}^{3}} f\phi_{i} \Big) }_{\sim G(t)} - \iint_{U\times \mathbb{R}^{3}} f\phi_{i} \int_{U} \zeta_{i} \p_{t}b_{x_{i}} + \varepsilon\|b_{x_{i}}\|_{2}^{2} + \|(\mathbf{I-P})f\|_{2}^{2} + \|g\|_{2}^{2}  \\
			&\lesssim \frac{d}{dt} G(t) - \int_{U} c x_{i} \underbrace{ \int_{U} \zeta_{i} \p_{t}b_{x_{i}} }_{A} + \varepsilon\|b_{x_{i}}\|_{2}^{2} + \|(\mathbf{I-P})f\|_{2}^{2} + \|g\|_{2}^{2},
		\end{split}
		\end{equation}
		where $\varepsilon>0$ comes from Young's inequality. 
		To estimate $A$, we use test function $\tilde{\phi} = \zeta_{i} v_{x_{i}}\sqrt{\mu}$. Since $\tilde{\phi}$ is uniformly bounded, we obtain,
		\begin{equation} \label{dt b}
		\begin{split}
			A &= \int_{U} \zeta_{i} \p_{t}b_{x_{i}} = -\iint_{U\times \mathbb{R}^{3}} \tilde{\phi} (v\cdot\nabla) f - \iint_{U\times \mathbb{R}^{3}} \tilde{\phi}(Lf - g)  \\
			&= - \iint_{\p U\times \mathbb{R}^{3}} \zeta_{i} v_{x_{i}} (v\cdot n) f^{b} + \iint_{U\times \mathbb{R}^{3}} v_{x_{i}} f\sqrt{\mu} (v_{x}\cdot D)\zeta_{i} + \iint_{U\times \mathbb{R}^{3}} \zeta_{i} f\sqrt{\mu} (v_{x}\cdot D)v_{x_{i}} - \iint_{U\times \mathbb{R}^{3}} \tilde{\phi}(Lf - g)  \\
			&\lesssim \|a\|_{2} + \|c\|_{2} + \|(\mathbf{I-P})f\|_{2} + \|g\|_{2},
		\end{split}
		\end{equation}
		where we used $|\zeta_{i}|, \  |D\zeta_{i}| \leq C$. Note that we assumed $|\zeta_{i}| \leq C$ was assumed in (\ref{btan zeta eq}) and (\ref{btan2 zeta eq}) already. Second condition $|D\zeta_{i}| \leq C$ also follows from three equations of $\zeta_{i}$ in (\ref{btan zeta eq}) and (\ref{btan2 zeta eq}) with uniform bound of $|\zeta_{i}| \leq C$ and $\Gamma_{D,ij}^{k}$. 
		Now, from (\ref{pre mean est}),(\ref{dt b}), and $\Vert \cdot \Vert_2 \leq \Vert \cdot \Vert_{\nu}$, we obtain
		\begin{equation*}
			\int_{U} \zeta_{i}b_{x_{i}} \int_{U} \frac{1}{\sqrt{g_{ii}}} b_{x_{i}}
			\lesssim \frac{d}{dt} G(t) + \varepsilon\|a\|_{2}^{2} + \|c\|_{2}^{2} + \varepsilon\|b_{x_{i}}\|_{2}^{2} + \|\sqrt{\nu}(\mathbf{I-P})f\|_{2}^{2} + \|g\|_{2}^{2}.
		\end{equation*}
		\end{proof}
		\unhide
		
		\begin{proposition} \label{test coercivity}
			Let $ U$ be an annular cylinder domain \eqref{domain} which is periodic in vertical $z$-direction. We assume that $f$ solves the linearized Boltzmann equation \eqref{linear BE} with specular boundary condition \eqref{specular}. 
			\hide
			Assume that there exist uniform positive constants $c$ and $C$ such that $0 < c \leq \sqrt{g_{ii}} \leq C$ for all $i=1,2,3$. Assuming that $\zeta_{3}$ satisfies (\ref{bn zeta eq}), and if $\zeta_{1}$ and $\zeta_{2}$ satisfy (\ref{btan zeta eq}),(\ref{btan2 zeta eq}), and 
			\begin{align*}
				\vert D\zeta_i \vert \leq C, \quad i=1,2,
			\end{align*}
			\unhide
			 then we have coercivity estimate 
			\begin{equation} \label{ann Pf est}
			\begin{split}
			\|\mathbf{P}f\|_{\nu}^{2} &\lesssim \frac{d}{dt}G(t) + \|(\mathbf{I-P})f\|_{\nu}^{2} + \| g \|_{2}^{2} ,\quad \vert G(t) \vert \lesssim \|f(t)\|_{2}^{2}, \\
			\end{split}
			\end{equation}
			\hide
			Moreover, if $U$ is an axis-symmetric domain, \eqref{ann Pf est} also holds 
			\unhide
			under the assumption for angular momentum and momentum with respect to $z$-axis:
			\begin{align} \label{conserve}
				\int_{U \times \R^3} \{(y-y_0) \times z\} \cdot v f(t,y,v) \sqrt{\mu} dydv=0 \text{ and } \int_{U\times \R^3} v_3f(t,y,v) \sqrt{\mu} dydv=0,				\end{align}
		for all $t\geq0$. 
		\end{proposition}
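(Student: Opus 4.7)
The plan is to combine the four component estimates developed in Section 8.2 and 8.3 (the mass estimate and energy estimate from Proposition \ref{prop_massenergy}, the normal momentum estimate \eqref{b3 est}, and the tangential momentum estimates \eqref{b1 est} and \eqref{b2 est}) into a single coercivity inequality, with the conservation laws \eqref{conserve} used precisely to kill the two nonlocal obstructions appearing in \eqref{b1 est}--\eqref{b2 est}.

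First, I would show that the mean obstructions $\int_U\zeta_i b_{x_i}\cdot\int_U \zeta_i^{-1}b_{x_i}$ for $i=1,2$ actually vanish under \eqref{conserve}. Since $D_2\eta=(0,0,1)=\hat z$, the transformation \eqref{v_x} gives $\V_2 = v_3$, hence $b_{x_2}(y) = \int_{\R^3} v_3 f(t,y,v)\sqrt\mu\,dv$; with $\zeta_2\equiv 1$, the offending term reduces to $(\int_U b_{x_2}\,dy)^2=(\int\!\!\int v_3 f\sqrt\mu\,dydv)^2=0$ by the second identity in \eqref{conserve}. For the angular component, $D_1\eta=(-\sin\theta,\cos\theta,0)$ is the angular unit vector $\hat\theta$, so $b_{x_1}=\int \hat\theta\cdot v\,f\sqrt\mu\,dv$. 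Choosing $\zeta_1=1/r$ as in \eqref{zeta r}, and writing $dy=r\,dr\,d\theta\,dz$, a direct computation shows $\int_U \zeta_1^{-1}b_{x_1}\,dy=\int r b_{x_1}\,dy=-\int\!\!\int\{y\times \hat z\}\cdot v f\sqrt\mu\,dydv$, which vanishes by the first identity in \eqref{conserve} (taking $y_0=0$, the $z$-axis being the axis of symmetry). Thus both mean corrections in \eqref{b1 est} and \eqref{b2 est} drop out.

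Next, I would add $\|b_{x_1}\|_2^2+\|b_{x_2}\|_2^2+\|b_{x_3}\|_2^2$ using the cleaned-up versions of \eqref{b3 est}, \eqref{b1 est}, \eqref{b2 est}, and invoke the isometry $|b_x|=|b|$ from \eqref{isometry} to arrive at
\[
\|b\|_2^2\lesssim \frac{d}{dt}G(t)+\varepsilon\|a\|_2^2+\|c\|_2^2+\|\sqrt\nu(\mathbf{I-P})f\|_2^2+\|g\|_2^2.
\]
Combined with the energy estimate, an absorption argument for small $\varepsilon\ll 1$ yields $\|b\|_2^2+\|c\|_2^2\lesssim \tfrac{d}{dt}G+\varepsilon\|a\|_2^2+\|\sqrt\nu(\mathbf{I-P})f\|_2^2+\|g\|_2^2$. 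Feeding this into the mass estimate and absorbing once more gives $\|a\|_2^2+\|b\|_2^2+\|c\|_2^2\lesssim \tfrac{d}{dt}G+\|\sqrt\nu(\mathbf{I-P})f\|_2^2+\|g\|_2^2$. Because the $\langle v\rangle$-weighted Gaussian moments of the macroscopic basis $\{1,v,|v|^2-3\}\sqrt\mu$ are finite and non-degenerate, \eqref{Pf} implies $\|\mathbf{P}f\|_\nu^2\simeq \|a\|_2^2+\|b\|_2^2+\|c\|_2^2$, which yields the target estimate \eqref{ann Pf est}.

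The main obstacle is the correct identification at Step 1 that the chosen $\zeta_i$'s — forced on us in \eqref{bn zeta eq}, \eqref{btan zeta eq}, \eqref{btan2 zeta eq} by the algebraic requirement that the test-function combinations collapse to the Laplace--Beltrami operator (Lemma \ref{D bel difference}) — align exactly with the geometric weights in the conservation laws \eqref{conserve}. Concretely, one must verify that the volume factor $\sqrt{|g|}=r$ paired with $\zeta_1=1/r$ reproduces the lever arm $r$ of angular momentum, and that $\zeta_2=1$ pairs with the Cartesian $z$-momentum. This matching is precisely what makes the periodic annular cylinder tractable by this constructive method, and it is the step where an alternative axis-symmetric geometry would need its own dedicated analysis. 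The remaining ingredients — Young's inequality, elliptic regularity for $u_1,u_2,u_3$ on the annulus, Poincaré's inequality for mean-zero data, and absorption for $\varepsilon\ll 1$ — are routine.
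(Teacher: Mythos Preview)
Your proposal is correct and follows essentially the same approach as the paper's proof: both first verify that the mean obstruction terms $\int_U \zeta_i b_{x_i}\int_U \zeta_i^{-1} b_{x_i}$ vanish under the conservation laws \eqref{conserve} (the paper does exactly your computation identifying $\int_U \zeta_1^{-1} b_{x_1}$ with the angular momentum and $\int_U \zeta_2^{-1} b_{x_2}$ with the $z$-momentum), then combine \eqref{b3 est}, \eqref{b1 est}, \eqref{b2 est} with Proposition \ref{prop_massenergy} and close by absorption for small $\varepsilon$.
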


		\begin{proof}
			From (\ref{b3 est}), (\ref{b1 est}), and (\ref{b2 est}), we have
			\begin{equation} \label{b with mean}
			\begin{split}
			\|b\|_{L^{2}_{y}}^{2} &\lesssim \frac{d}{dt}G(t) + \varepsilon\|a\|_{L^{2}_{y}}^{2} + \|c\|_{L^{2}_{y}}^{2} + \|\sqrt{\nu}(\mathbf{I-P})f\|_{2}^{2} + \| g \|_{2}^{2}  \\
			&\quad + \int_{ U} \zeta_{1} b_{x_{1}} \int_{ U} \frac{1}{\zeta_{1}} b_{x_{1}} + \int_{U} \zeta_{2} b_{x_{2}} \int_{ U} \frac{1}{\zeta_{2}} b_{x_{2}}, \quad \vert G(t) \vert \lesssim \|f(t)\|_{2}^{2}. \\
			\end{split}
			\end{equation} 
			WLOG, we set $y_0=(0,0,0)$ and $z=(0,0,1)$ in \eqref{conserve}. Then, we can rewrite \eqref{conserve} by using the global triply orthogonal coordinate system \eqref{ann domain}:
			\begin{align} \label{conserve zeta}
				\begin{split}
				&\int_{U\times \R^3} \{ (y-y_0) \times z\} \cdot v f(t,y,v) \sqrt{\mu} dydv \\
				&\quad = \int_{U \times \R^3} (r \sin \theta, -r \cos \theta,0) \cdot v f(t,y,v) \sqrt{\mu} dy dv =\int_{U} \frac{1}{\zeta_1} b_{x_1} =0,\\
				& \int_{U\times \R^3} v_3f(t,y,v) \sqrt{\mu} dydv\\ 
				&\quad =\int_{U} (0,0,1) \cdot v f(t,y,v) \sqrt{\mu}dydv = \int_{U} \frac{1}{\zeta_2} b_{x_{2}} =0. 
				\end{split}
			\end{align}
			Therefore, the last two terms in the RHS of (\ref{b with mean}) become 0, and then
			\hide
			Then, 
			\begin{align*}
				\Vert b \Vert_{L^{2}_{y}}^2 &\lesssim \frac{d}{dt} G(t) +\varepsilon \Vert a \Vert_{L^{2}_{y}}^2 +\Vert c \Vert_{L^{2}_{y}}^2 +  \|\sqrt{\nu}(\mathbf{I-P})f\|_{2}^{2} + \|g\|_{2}^{2}.
			\end{align*}
			\unhide
			 we obtain 
			\begin{align} \label{b esti}
				\Vert b \Vert_{L^{2}_{y}}^2 \lesssim \frac{d}{dt} G(t) +\varepsilon \Vert a \Vert_{L^{2}_{y}}^2 +\Vert c \Vert_{L^{2}_{y}}^2 +  \|\sqrt{\nu}(\mathbf{I-P})f\|_{2}^{2} + \|g\|_{2}^{2}.
			\end{align}
			Combining with Proposition \ref{prop_massenergy} and \eqref{b esti}, we have 
			\begin{align} \label{a,c esti}
				\begin{split}
				\Vert a \Vert_{L^{2}_{y}}^2 &\lesssim \frac{d}{dt} G(t) + \varepsilon \Vert a \Vert_{L^{2}_{y}}^2 +\Vert c \Vert_{L^{2}_{y}}^2 +  \|\sqrt{\nu}(\mathbf{I-P})f\|_{2}^{2} + \|g\|_{2}^{2},\\
				\Vert c \Vert_{L^{2}_{y}}^2 &\lesssim \frac{d}{dt} G(t) + \varepsilon^2 \Vert a \Vert_{L^{2}_{y}}^2 + \varepsilon \Vert c \Vert_{L^{2}_{y}}^2 +  \|\sqrt{\nu}(\mathbf{I-P})f\|_{2}^{2} + \|g\|_{2}^{2}.
				\end{split}
			\end{align}
			From the estimate for $a$ in \eqref{a,c esti}, if $\varepsilon <1/2$, we derive the following as:
			\begin{align} \label{a esti}
				\Vert a \Vert_{L^{2}_{y}}^2 \lesssim \frac{d}{dt} G(t) + \Vert c \Vert_{L^{2}_{y}}^2 + \|\sqrt{\nu}(\mathbf{I-P})f\|_{2}^{2} + \|g\|_{2}^{2}.
			\end{align}
			Using the above estimate and the estimate for $c$ in \eqref{a,c esti}, we can further estimate for $c$
			\begin{align*}
				\Vert c \Vert_{L^{2}_{y}}^2 \lesssim \frac{d}{dt} G(t) +\varepsilon \Vert c \Vert_{L^{2}_{y}}^2 + \|\sqrt{\nu}(\mathbf{I-P})f\|_{2}^{2} + \|g\|_{2}^{2},
			\end{align*}
			which implies 
			\begin{align} \label{final c}
				\Vert c \Vert_{L^{2}_{y}}^2 \lesssim \frac{d}{dt} G(t) + \|\sqrt{\nu}(\mathbf{I-P})f\|_{2}^{2} + \|g\|_{2}^{2}.
			\end{align}
			Furthermore, from \eqref{b esti}, \eqref{a esti}, and \eqref{final c}, we get
			\begin{align} \label{final a,b}
				\begin{split}
				\Vert a \Vert_{L^{2}_{y}}^2 \lesssim \frac{d}{dt} G(t) +\|\sqrt{\nu}(\mathbf{I-P})f\|_{2}^{2} + \|g\|_{2}^{2}, \\
				\Vert b \Vert_{L^{2}_{y}}^2 \lesssim \frac{d}{dt} G(t) +\|\sqrt{\nu}(\mathbf{I-P})f\|_{2}^{2} + \|g\|_{2}^{2}. 
				\end{split}
			\end{align}
			Using the fact that there exists $C_1>0$ such that $\Vert \mathbf{P}f \Vert_{\nu}^2 \leq C_1 \Vert \mathbf{P}f \Vert_{2}^2$, one obtains that 
			\begin{align*}
				\Vert \mathbf{P}f \Vert_{\nu}^2 \leq C_1 \Vert \mathbf{P}f \Vert_2^2=C_1 ( \Vert a \Vert_{L^{2}_{y}}^2 + \Vert b \Vert_{L^{2}_{y}}^2 + \Vert c \Vert_{L^{2}_{y}}^2) \lesssim \frac{d}{dt} G(t) +\|\sqrt{\nu}(\mathbf{I-P})f\|_{2}^{2} + \|g\|_{2}^{2},
			\end{align*}
			where the second inequality comes from \eqref{final c} and \eqref{final a,b}. 
			\end{proof}
\begin{remark}
In the proof of Proposition \ref{test coercivity}, two terms in \eqref{conserve zeta} 
\begin{align*}
	\int_{U} \frac{1}{\zeta_1} b_{x_1}  \quad \text{and} \quad \int_{U} \frac{1}{\zeta_2} b_{x_2} 
\end{align*}
are corresponding to angular momentum and momentum with respect to the $z$-axis. Without the assumption \eqref{conserve}, this explicitly shows that an estimate like \eqref{b esti} cannot be obtained. To ensure that the solution $F$ to the Boltzmann equation \eqref{Boltzmann} converges to the global Maxwellian $\mu$, it is necessary to assume \eqref{conserve}. 
\end{remark}
\begin{remark}
	To the best of our knowledge, this result is the first constructive proof of (hypo)coercivity of the linearized Boltzmann equation in the presence of specular reflection boundary, which is not trivial (e.g. a half space). The claim of \cite{YZ_JDE} could not hold as the angular momentum preserves in any axis-symmetric domains. 
\end{remark}
\begin{remark}
	In our proof we heavily rely on the orthogonal coordinate of a single chart. In general, we do not even have a global single chart of coordinate system. 
\end{remark}

\section*{Acknowledgement}
GK and DL are partly supported by the National Research Foundation of Korea(NRF) grant funded by the Korean government(MSIT)(No. NRF-2019R1C1C1010915 and No. RS-2023-00212304). CK is partly supported by NSF-DMS 1900923, NSF-CAREER 2047681, the Simons fellowship in Mathematics,and the Brain Pool fellowship funded by the Korean Ministry of Science and ICT. He thanks Yan Guo's comments about \cite{YZ_JDE} and kind hospitality of POSTECH during a stay in August/2022.
 

\begin{thebibliography}{10}

\bibitem{BKLY_BGK}
G.-C. Bae, G.~Ko, D.~Lee, and S.-B. Yun.
\newblock Large amplitude problem of bgk model: Relaxation to quadratic
  nonlinearity.
\newblock {\em arXiv preprint arXiv:2301.09857}, 2023.

\bibitem{CIP}
C.~Cercignani, R.~Illner, and M.~Pulvirenti.
\newblock {\em The Mathematical Theory of Dilute Gases}, volume 106 of {\em
  Applied Mathematical Sciences}.

\bibitem{ChenK}
H.~Chen and C.~Kim.
\newblock Regularity of {S}tationary {B}oltzmann {E}quation in {C}onvex
  {D}omains.
\newblock {\em Archive for Rational Mechanics and Analysis}, 244(1):1099--1222,
  2022.

\bibitem{ChenK2}
H.~Chen and C.~Kim.
\newblock Gradient {D}ecay in the {B}oltzmann theory of {N}on-isothermal
  boundary.
\newblock {\em preprint}, 2023.

\bibitem{Chernov}
N.~Chernov and R.~Markarian.
\newblock {\em Chaotic Billiards}, volume 127 of {\em Mathematical Surveys and
  Monographs}.

\bibitem{DV}
L.~Desvillettes and C.~Villani.
\newblock On the trend to global equilibrium for spatially inhomogeneous
  kinetic systems: the {B}oltzmann equation.
\newblock {\em Invent. Math.}, 159(2):245--316, 2005.

\bibitem{Diperna-Lion}
R.~J. DiPerna and P.-L. Lions.
\newblock On the {C}auchy problem for {B}oltzmann equations: global existence
  and weak stability.
\newblock {\em Ann. of Math. (2)}, 130(2):321--366, 1989.

\bibitem{DHWY2017}
R.~Duan, F.~Huang, Y.~Wang, and T.~Yang.
\newblock Global well-posedness of the {B}oltzmann equation with large
  amplitude initial data.
\newblock {\em Arch. Ration. Mech. Anal.}, 225(1):375--424, 2017.

\bibitem{DKL2020}
R.~Duan, G.~Ko, and D.~Lee.
\newblock The boltzmann equation with large-amplitude initial data and specular
  reflection boundary condition.
\newblock {\em arXiv preprint arXiv:2011.01503}, 2020.

\bibitem{EGKM}
R.~Esposito, Y.~Guo, C.~Kim, and R.~Marra.
\newblock Non-isothermal boundary in the {B}oltzmann theory and {F}ourier law.
\newblock {\em Comm. Math. Phys.}, 323(1):177--239, 2013.

\bibitem{EGM:BG}
R.~Esposito, Y.~Guo, and R.~Marra.
\newblock Validity of the {B}oltzmann equation with an external force.
\newblock {\em Kinet. Relat. Models}, 4(2):499--515, 2011.

\bibitem{Glassey}
R.~T. Glassey.
\newblock {\em The {C}auchy problem in kinetic theory}.
\newblock Society for Industrial and Applied Mathematics (SIAM), Philadelphia,
  PA, 1996.

\bibitem{StrainJAMS}
P.~T. Gressman and R.~M. Strain.
\newblock Global classical solutions of the {B}oltzmann equation without
  angular cut-off.
\newblock {\em J. Amer. Math. Soc.}, 24(3):771--847, 2011.

\bibitem{GP}
V.~Guillemin and A.~Pollack.
\newblock {\em Differential topology}.
\newblock Prentice-Hall, Inc., Englewood Cliffs, N.J., 1974.

\bibitem{GuoVPB}
Y.~Guo.
\newblock The {V}lasov-{P}oisson-{B}oltzmann system near {M}axwellians.
\newblock {\em Comm. Pure Appl. Math.}, 55(9):1104--1135, 2002.

\bibitem{Guo10}
Y.~Guo.
\newblock Decay and continuity of the {B}oltzmann equation in bounded domains.
\newblock {\em Arch. Ration. Mech. Anal.}, 197(3):713--809, 2010.

\bibitem{GHJO}
Y.~Guo, H.~J. Hwang, J.~W. Jang, and Z.~Ouyang.
\newblock The {L}andau equation with the specular reflection boundary
  condition.
\newblock {\em Arch. Ration. Mech. Anal.}, 236(3):1389--1454, 2020.

\bibitem{GKTT2016}
Y.~Guo, C.~Kim, D.~Tonon, and A.~Trescases.
\newblock B{V}-regularity of the {B}oltzmann equation in non-convex domains.
\newblock {\em Arch. Ration. Mech. Anal.}, 220(3):1045--1093, 2016.

\bibitem{GKTT2017}
Y.~Guo, C.~Kim, D.~Tonon, and A.~Trescases.
\newblock Regularity of the {B}oltzmann equation in convex domains.
\newblock {\em Invent. Math.}, 207(1):115--290, 2017.

\bibitem{Halpern}
B.~Halpern.
\newblock Strange billiard tables.
\newblock {\em Trans. Amer. Math. Soc.}, 232:297--305, 1977.

\bibitem{IS}
C.~Imbert and L.~E. Silvestre.
\newblock Global regularity estimates for the {B}oltzmann equation without
  cut-off.
\newblock {\em J. Amer. Math. Soc.}, 35(3):625--703, 2022.

\bibitem{JJKim}
J.~Jin and C.~Kim.
\newblock Damping of kinetic transport equation with diffuse boundary
  condition.
\newblock {\em SIAM J. Math. Anal.}, 54(5):5524--5550, 2022.

\bibitem{Kim2011}
C.~Kim.
\newblock Formation and propagation of discontinuity for {B}oltzmann equation
  in non-convex domains.
\newblock {\em Comm. Math. Phys.}, 308(3):641--701, 2011.

\bibitem{KimVP}
C.~Kim.
\newblock Nonlinear asymptotic stability of inhomogeneous steady solutions to
  boundary problems of vlasov-poisson equation.
\newblock {\em arXiv preprint arXiv:2210.00677}, 2022.

\bibitem{KimLee}
C.~Kim and D.~Lee.
\newblock The {B}oltzmann equation with specular boundary condition in convex
  domains.
\newblock {\em Comm. Pure Appl. Math.}, 71(3):411--504, 2018.

\bibitem{cylinder}
C.~Kim and D.~Lee.
\newblock Decay of the {B}oltzmann equation with the specular boundary
  condition in non-convex cylindrical domains.
\newblock {\em Arch. Ration. Mech. Anal.}, 230(1):49--123, 2018.

\bibitem{KL_holder}
C.~Kim and D.~Lee.
\newblock {H}\"older {R}egularity of the {B}oltzmann equation {P}ast an
  {O}bstacle.
\newblock {\em accepted in Comm. Pure Appl. Math.}, 2021.

\bibitem{YunKim}
C.~Kim and S.-B. Yun.
\newblock The {B}oltzmann {E}quation {N}ear a {R}otational {L}ocal
  {M}axwellian.
\newblock {\em SIAM Journal on Mathematical Analysis}, 44(4):2560--2598, 2012.

\bibitem{KLP2022}
G.~Ko, D.~Lee, and K.~Park.
\newblock The large amplitude solution of the {B}oltzmann equation with soft
  potential.
\newblock {\em J. Differential Equations}, 307:297--347, 2022.

\bibitem{KP2002}
S.~G. Krantz and H.~R. Parks.
\newblock {\em A primer of real analytic functions}.
\newblock Birkh\"{a}user Advanced Texts: Basler Lehrb\"{u}cher. [Birkh\"{a}user
  Advanced Texts: Basel Textbooks]. Birkh\"{a}user Boston, Inc., Boston, MA,
  second edition, 2002.

\bibitem{Diff_Geom}
A.~Pressley.
\newblock {\em Elementary differential geometry}.
\newblock Springer Undergraduate Mathematics Series. Springer-Verlag London,
  Ltd., London, second edition, 2010.

\bibitem{SA}
Y.~Shizuta and K.~Asano.
\newblock Global solutions of the boltzmann equation in a bounded convex
  domain.
\newblock {\em Proc. Japan Acad.}, 53A:3--5, 1977.

\bibitem{Ukai}
S.~Ukai.
\newblock On the existence of global solutions of mixed problem for non-linear
  {B}oltzmann equation.
\newblock {\em Proc. Japan Acad.}, 50(3):179--184, 1974.

\bibitem{Vidav}
I.~Vidav.
\newblock Spectra of perturbed semigroups with applications to transport
  theory.
\newblock {\em Journal of Mathematical Analysis and Applications}, 30:264--279,
  1970.

\bibitem{YZ_JDE}
H.~Yin and W.~Zhao.
\newblock The global existence and large time behavior of smooth compressible
  fluid in an infinitely expanding ball, {III}: {T}he 3-{D} {B}oltzmann
  equation.
\newblock {\em J. Differential Equations}, 264(1):30--81, 2018.

\end{thebibliography}

\end{document}